\documentclass[12pt,oneside,reqno,english]{amsart}
\usepackage{lmodern}
\usepackage{lmodern}
\usepackage[T1]{fontenc}
\usepackage[latin9]{luainputenc}
\usepackage{geometry}
\geometry{verbose,tmargin=1.27in,bmargin=1.27in,lmargin=1in,rmargin=1in,headsep=0.8cm}
\usepackage[active]{srcltx}
\usepackage{color}
\usepackage{babel}
\usepackage{amstext}
\usepackage{amsthm}
\usepackage{amssymb}
\usepackage{xcolor}
\usepackage{stmaryrd}
\usepackage{graphicx}
\usepackage{setspace}
\setstretch{1.15}
\usepackage[unicode=true,
 bookmarks=false,
 breaklinks=false,pdfborder={0 0 1},backref=section,colorlinks=true]
 {hyperref}
\hypersetup{
 linkcolor=blue,urlcolor=blue,citecolor=blue,anchorcolor=blue}

\makeatletter
\numberwithin{equation}{section}
\numberwithin{figure}{section}
\theoremstyle{plain}
\newtheorem{thm}{\protect\theoremname}[section]
\theoremstyle{plain}
\newtheorem{assumption}[thm]{\protect\assumptionname}
\theoremstyle{remark}
\newtheorem{rem}[thm]{\protect\remarkname}
\theoremstyle{remark}
\newtheorem{notation}[thm]{\protect\notationname}
\theoremstyle{plain}
\newtheorem{prop}[thm]{\protect\propositionname}
\theoremstyle{plain}
\newtheorem{lem}[thm]{\protect\lemmaname}
\theoremstyle{plain}
\newtheorem{cor}[thm]{\protect\corollaryname}

\@ifundefined{date}{}{\date{}}
\usepackage{babel}

\makeatother

\newcommand{\R}{\mathbb{R}} 
\newcommand{\M}{\mathcal{M}}


\providecommand{\assumptionname}{Assumption}
\providecommand{\lemmaname}{Lemma}
\providecommand{\corollaryname}{Corollary}
\providecommand{\notationname}{Notation}
\providecommand{\propositionname}{Proposition}
\providecommand{\remarkname}{Remark}
\providecommand{\theoremname}{Theorem}

\begin{document}
\title{Eyring-Kramers Law for the Underdamped Langevin Process}
\author{Seungwoo Lee, Mouad Ramil, Insuk Seo}

\address{Seungwoo Lee: Department of Mathematical Sciences, Seoul National University \\
e-mail: \texttt{swlee017@snu.ac.kr} }

\address{Mouad Ramil: Univ Rennes, INRIA, IRMAR [(Institut de Recherche Mathematique de Rennes)] - UMR 6625, F-35000 Rennes, France\\
e-mail: \texttt{mouad.ramil@inria.fr} }

\address{Insuk Seo: Department of Mathematical Sciences and Research Institute of Mathematics, Seoul National University, Republic of Korea. \\
e-mail: \texttt{insuk.seo@snu.ac.kr} }  

\begin{abstract}
In this article, we analyze the metastable behavior of the underdamped Langevin process $(q(t),p(t))_{t\geq0} \in \mathbb R^d \times \mathbb R^d$ described by the stochastic differential equation 
\begin{equation} 
  \left\{
    \begin{aligned}
        &\mathrm{d}q(t)= p(t) \mathrm{d}t , \\
        &\mathrm{d}p(t)=-(\nabla U(q(t))  + \gamma p(t))
        \mathrm{d}t +\sqrt{2\gamma\epsilon} \mathrm{d}B(t),
    \end{aligned}
\right.  
\end{equation}
where $\epsilon>0$ and $\gamma>0$ represent the temperature and the friction coefficient, respectively, and where $U:\mathbb R^d \rightarrow \mathbb R $ is a double-well potential. For this model, the Eyring-Kramers law, namely the sharp asymptotic of the transition time between metastable sets in the low temperature regime $\epsilon \to 0$ is a long-standing open problem because of the combined irreversibility of the model and the hypo-ellipticity of the corresponding generator. In this article, we develop a novel approach to overcome such difficulties and thus derive the Eyring-Kramers law for the underdamped Langevin process. 
\end{abstract}

\maketitle 

\section{Introduction}
The \emph{underdamped Langevin process} $(q^\epsilon(t),p^\epsilon(t))_{t\geq0}$ is the diffusion process described by the following stochastic differential equation (SDE): 
\begin{equation}\label{eq:Langevin intro}
  \left\{
    \begin{aligned}
        &\mathrm{d}q^\epsilon(t)=M^{-1} p^\epsilon(t) \mathrm{d}t , \\
        &\mathrm{d}p^\epsilon(t)=-\nabla U(q^\epsilon(t)) \mathrm{d}t -\gamma M^{-1}  p^\epsilon(t)
        \mathrm{d}t +\sqrt{2\gamma\epsilon} \mathrm{d}B(t),
    \end{aligned}
\right.  
\end{equation}
where, for $d=3N$, $N$ being the number of particles, $q^\epsilon(t)\in\R^{d}$ and $p^\epsilon(t)\in\R^{d}$ represent their position and momentum vector respectively, $U:\mathbb{R}^{d}\rightarrow \mathbb{R}$ is the interaction potential between the particles, $\gamma>0$ is the friction coefficient, $M\in \mathbb{R}^{d\times d}$
is the mass matrix, and $(B(t))_{t\geq0}$ is a $d$-dimensional Brownian motion accounting for the random thermal fluctuations. 

This process is a fundamental model in mathematical physics that describes the movement of particles and is also often used to describe the evolution of complex molecular systems evolving in a medium at a fixed temperature $\epsilon>0$. Given its importance, this process has been studied extensively in statistical mechanics and molecular dynamics~\cite{LeimMatt,LelSto16,Tuck}. This process has also many well-known applications: it is used for instance in material science to model the phase transition \textit{liquid $\to$ solid} of a given metal~\cite{AllenTild,LelRouSto10}. Simulations have shown that, when the temperature $\epsilon$ is low, this phase transition can take a very long time. Meanwhile, the process slightly oscillates around a local equilibrium. This phenomenon is called \emph{Metastability} and comes from the fact that the process needs to overcome an energetic barrier of the potential function $U$ in order to make a transition from one phase to another, which can take a very long amount of time in the low-temperature regime. 

To provide a more rigorous and intuitive mathematical explanation, consider, for example, a simple double-well potential function $U$ where we denote by $m$ (resp. $s$) the local (resp. global) minimum of $U$. These two minima are separated by a saddle point denoted by $\sigma$, see Figure~\ref{fig:doub} for a representation. 

\begin{figure}[h]
\begin{centering}
\includegraphics[scale=0.16]{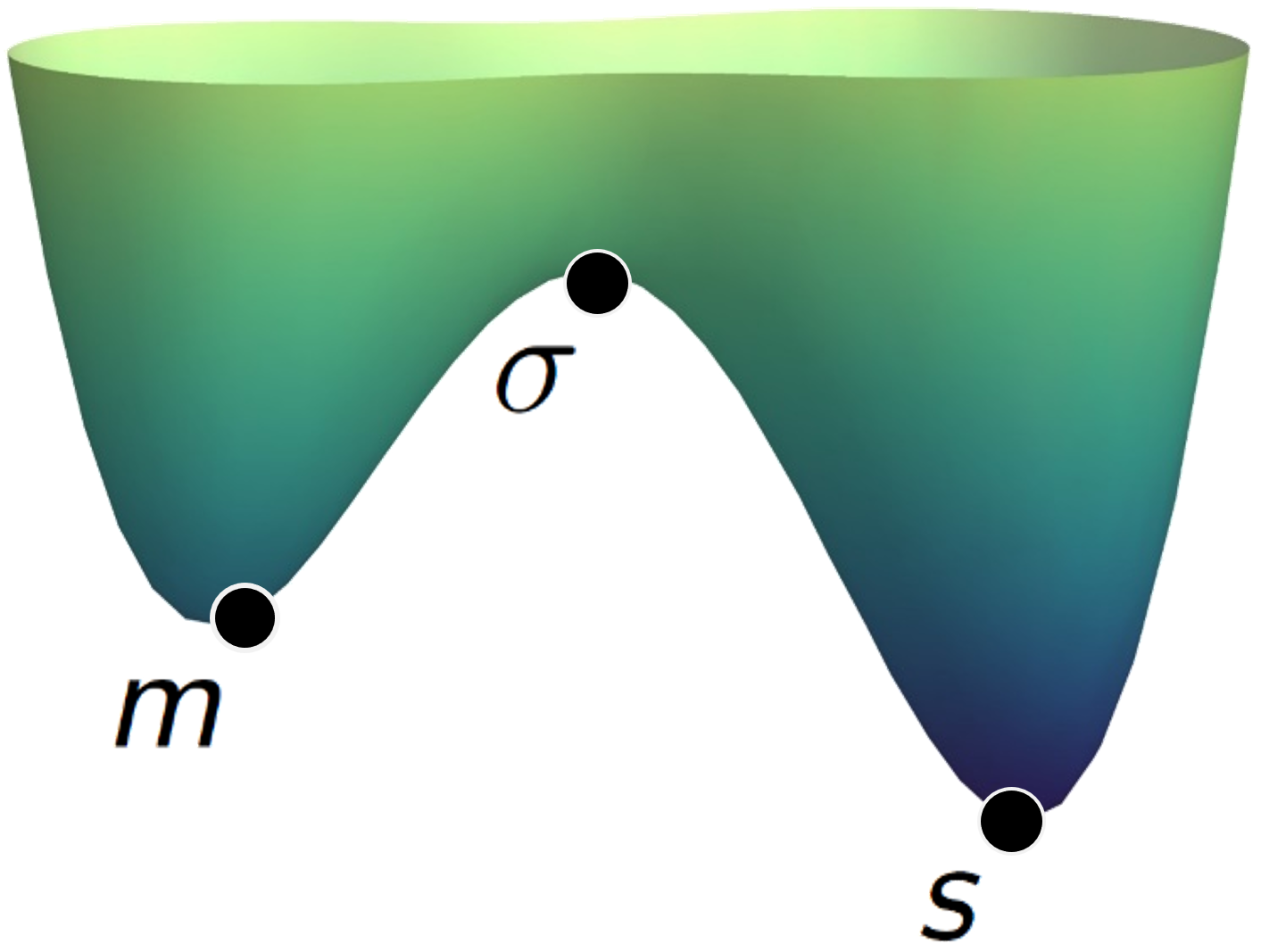}
\par\end{centering}
\caption{\label{fig:doub}An illustration of a double-well potential $U$.}
\end{figure}

As we will examine in detail later, in a stochastic dynamical system exhibiting metastability, if the process starts at $m$ or near it, it will spend a significant amount of time in the vicinity of $m$ because the energy barrier corresponding to $U(\sigma) - U(m)$ takes time to overcome. However, as time progresses and noise accumulates, the process will eventually surpass this energy barrier and transition to the valley where $s$ is located. This corresponds to the metastability phenomenon mentioned previously. In the case of the process $(q^\epsilon(t),p^\epsilon(t))_{t\geq0}$, it is expected to exhibit such a metastable transition from $(m,0)$ to $(s,0)$.

Such metastable behavior typically occurs in the low-temperature regime. Therefore, our main interest here is computing the low-temperature asymptotic (i.e., the $\epsilon\to 0$ regime in our model~\eqref{eq:Langevin intro}) of the mean hitting time of a small neighborhood of $(s,\,0)$ when starting from $(m,\,0)$ (or its neighborhood). This question has been investigated in the literature for various elliptic diffusion processes using diverse sets of tools. We refer the reader to the review~\cite{berglund2011kramers} for more details.

A sharp estimate of such mean transition time is referred to as the \textit{Eyring-Kramers law}. Despite being proven for numerous important models following the seminal work~\cite{BEGK}, the Eyring-Kramers law for the underdamped Langevin process described earlier in \eqref{eq:Langevin intro} remains an open problem. This is due to two challenging characteristics of the model: \textit{irreversibility and degeneracy}. In particular, while the methodology for the analysis of irreversible processes has been established in works such as~\cite{LS,LMS} authored by one of the authors of the current article, the issue of degeneracy of the underdamped Langevin process has remained a significant difficulty, preventing us to derive the Eyring-Kramers law for the mean transition time. \textit{The main achievement of the current article is to solve this long-standing open problem.}

Let us also mention previous advances made in the literature regarding the study of the underdamped Langevin process in the low-temperature regime. The sharp asymptotic of the low-lying spectrum of the associated kinetic Fokker-Planck operator was obtained in works like~\cite{herau2008tunnel,bony2024eyring} using analytic tools. Heuristically, we observe that the absolute value of the first eigenvalue behaves as the inverse of the mean first transition time when the temperature goes to zero. More recently, the authors also described the speed of convergence towards its invariant measure through the so-called cutoff phenomenon~\cite{cutoff} for general interaction forces.

To get a better grasp on metastability for this model, recent works by one of the authors have also focused on the study of the underdamped Langevin process when absorbed at the boundary of a domain~\cite{Hill,LelRamRey}. The question of the existence and convergence to a long-time equilibrium for this absorbed process has also been very active recently, see~\cite{champagnat2024quasi,benaim2025degenerate,guillin2024quasi,QSDLelRamRey}.

\subsection{Metastability of the overdamped Langevin process}
In the literature so far, the focus has mainly been on elliptic diffusion processes $(X^\epsilon(t))_{t\geq0}$ in $\R^d$ solutions to a SDE of the form:
\begin{equation}\label{eq:ov}
\mathrm{d}X^\epsilon(t)=-F(X^\epsilon(t))\mathrm{d}t+\sqrt{2\epsilon}\mathrm{d}B(t)\;,
\end{equation}
where $F:\mathbb{R}^d\rightarrow\mathbb{R}^d$ is a smooth vector field and $(B(t))_{t\geq0}$ is the standard $d$-dimensional Brownian motion. 
This process is called the \textit{overdamped Langevin process} and its infinitesimal generator is given by
$$\mathcal{L}_\epsilon:=-F(x)\cdot\nabla +\epsilon\,\Delta\;.$$
In particular, the process $(X^\epsilon(t))_{t\geq0}$ exhibits the metastability phenomenon when the zero-noise dynamics $\mathrm{d}X(t)=-F(X(t))\mathrm{d}t$ has multiple equilibria. 
The simplest case is the gradient case, where $F = \nabla U$ for a double-well potential $U$ as shown in Figure~\ref{fig:doub} which we also assume to be a Morse function. In this case, as discussed above, the dynamics $(X^\epsilon(t))_{t\geq0}$ exhibits metastability in the low-temperature regime where $\epsilon>0$ is small. That is, the dynamics starting from $m$ remains in a neighborhood of $m$ for a long time before transitioning to $s$. The corresponding Eyring-Kramers law has been established in~\cite{BEGK}. 

Let us recall here some of the important results in the literature. Let us define the open balls in $\R^d$ 
$$\M_\epsilon:=\mathrm{B}(m,\epsilon),\qquad \mathcal{S}_\epsilon:=\mathrm{B}(s,\epsilon).$$
Let $\tau^\epsilon_{\mathcal{S}_\epsilon}$ be the first hitting time of $\mathcal{S}_\epsilon$ for the process $(X^\epsilon(t))_{t\geq0}$ and we denote by $\mathbb{E}_m[\tau^\epsilon_{\mathcal{S}_\epsilon}]$ the expectation of $\tau^\epsilon_{\mathcal{S}_\epsilon}$ under the event $X_0^\epsilon=m$. We are interested here in the sharp asymptotic of $\mathbb{E}_m[\tau^\epsilon_{\mathcal{S}_\epsilon}]$ when $\epsilon$ goes to zero, which corresponds to the Eyring-Kramers law.

In their pioneered \emph{Large deviation} techniques, Freidlin and Ventsel have shown, see~\cite{ventsel1970small}, that 
\begin{equation}\label{eq:large dev}
\epsilon\log\mathbb{E}_m[\tau^\epsilon_{\mathcal{S}_\epsilon}]\underset{\epsilon\rightarrow0}{\longrightarrow}U(\sigma)-U(m).
\end{equation}
In particular, this ensures that  
\begin{equation}\label{eq:EK estimate}
\mathbb{E}_m[\tau^\epsilon_{\mathcal{S}_\epsilon}]=[1+o_{\epsilon}(1)]\,\kappa_\epsilon\,\mathrm{e}^{(U(\sigma)-U(m))/\epsilon}.
\end{equation}
To fully establish the Eyring-Kramers law, we need to establish the sharp low-temperature asymptotic of the sub-exponential prefactor $\kappa_\epsilon$. However, the computation of $\kappa_\epsilon$ is a non-trivial question and was only achieved years later in~\cite{BEGK} using \emph{Potential theory} techniques. Their approach namely relies on analyzing the low-temperature behavior of the following quantities:
\begin{enumerate}
    \item the function $h_{\mathcal{M}_{\epsilon},\,\mathcal{S_{\epsilon}}}(x):=\mathbb{P}_{x}(\tau_{\mathcal{M}_{\epsilon}}^{\epsilon}<\tau_{\mathcal{S}_{\epsilon}}^{\epsilon})$, called \emph{equilibrium potential function}, 
    \item the boundary integral $\int_{\partial\mathcal{M}_\epsilon}\mathrm{e}^{-U(x)/\epsilon}\eta_\epsilon(\mathrm{d}x)$, called \emph{capacity}, where $\eta_\epsilon$ is called the \emph{equilibrium measure}.
\end{enumerate} 
With a sufficient understanding of the low-temperature behavior of the two objects above, the Eyring-Kramers law then follows from a well-known identity in Potential Theory, which links the expectation $\mathbb{E}_m[\tau^\epsilon_{\mathcal{S}_\epsilon}]$ with the two previous quantities. In particular, for the overdamped Langevin process discussed above, one obtains that
$$\kappa_\epsilon=\kappa =\frac{2\pi}{\lambda^{\sigma}}\sqrt{\frac{-\det\mathbb{H}_{U}^{\sigma}}{\det\mathbb{H}_{U}^{m}}},$$
(which is independent of $\epsilon$) where $\mathbb{H}_{U}^{x}=\nabla^2 U(x)$ corresponds to the Hessian matrix of the potential $U$ at $x$ and $-\lambda^{\sigma}$ is the unique negative eigenvalue of the matrix $\mathbb{H}_{U}^{\sigma}$ which exists since we assume that $U$ is a Morse function.

The most crucial aspect of the above approach is obtaining the precise low-temperature asymptotic of the capacity. In this context, the potential theoretic approach developed in \cite{BEGK} proposes the use of variational principles related to the capacity, such as the \emph{Dirichlet principle} and the \emph{Thomson principle}. This method has been remarkably successful and has been applied not only to the overdamped Langevin process but also to many other models. These cases are systematically summarized in the monograph~\cite{Bovier}.

An important point to mention here is that this potential theoretic approach was originally developed for \textit{reversible} Markov processes, since the Dirichlet and Thomson principles are valid only under the reversibility of the model. In particular, the above overdamped Langevin process when $F=\nabla U$ is reversible with respect to its invariant Gibbs measure, which allowed this methodology to be successfully applied. However, the underdamped model~\eqref{eq:Langevin intro} is inherently irreversible.

Fortunately, in recent years, methods for applying Potential theory techniques to irreversible processes have been developed. For instance, in~\cite{gaudilliere2014dirichlet}, the Dirichlet principle and Thomson principle for irreversible Markov processes on discrete spaces were established, and these results were extended to the diffusion setting in \cite{LMS}.
The method for deriving the Eyring-Kramers law using these results has been progressively developed in~\cite{LS,LMS}, and has been applied to various models. 

Recently, a new alternative technique for estimating the capacity (without relying on variational principles) was developed in~\cite{JS}. It relies on an alternative expression of the capacity which involves a clever use of test functions, building on the inherent definition of the equilibrium measure as $\eta_\epsilon=-\mathcal{L}_\epsilon h_{\mathcal{M}_{\epsilon},\,\mathcal{S_{\epsilon}}}$ in the sense of distributions, see~\cite{Bovier}. Using these results, the Eyring-Kramers law has been extensively proven for more general cases beyond the reversible setting where $F = \nabla U$ in \eqref{eq:ov}.  

We note that, all these cases are restricted to situations where the invariant measure of the process $(X^\epsilon(t))_{t\geq0}$ defined in \eqref{eq:ov} is the Gibbs measure. When the invariant measure of the process $(X^\epsilon(t))_{t\geq0}$ is no longer a Gibbs measure then very little is known in the literature, although some advances have been made recently in~\cite{bouchet2016generalisation,le2024exit}.

 Let us also mention that in recent years, other techniques have been developed to deduce the Eyring-Kramers law, in particular involving geometric arguments, see for instance~\cite{avelin2023geometric}.

\subsection{Metastability of the underdamped Langevin process}

With regard to the underdamped Langevin process $(q^\epsilon(t),p^\epsilon(t))_{t\geq0}$ defined in~\eqref{eq:Langevin intro}, although the invariant measure is a well-known Gibbs measure, the study is very tricky. In fact, as mentioned earlier, this model not only faces the challenge of being irreversible but also suffers from degeneracy. That is, the dimension of the Brownian noise acting on the model is $d$, which is smaller than the model's dimension which is $2d$. As a result, the generator associated with this diffusion process is not elliptic but only hypo-elliptic.  

This makes it extremely difficult to apply existing methodologies to derive the Eyring-Kramers law of the underdamped Langevin process. The difficulty is not merely in estimating the capacity but arises fundamentally from the very definition of the capacity itself. Indeed, the capacity is ill-defined in this case since the boundary-regularity of the gradient of the equilibrium potential function is not known.   Additionally, the divergence theorem which allows us to compute the capacity as the $\mathrm{L}^2$ norm of the gradient of the equilibrium potential function cannot be justified. As a result, a rigorous definition of the capacity as a boundary integral and the corresponding Dirichlet and Thomson principle for the underdamped Langevin process are open questions and are actually the subject of an ongoing work. 

In this paper, we aim to overcome these challenges.  In order to do that, we recall the definition of the equilibrium measure, in a weak sense, which was mentioned in~\cite[Section 7.2.4]{Bovier} and from which the capacity can be derived. Namely, the equilibrium measure can be defined as $-\mathcal{L}_\epsilon h_{\mathcal{M}_\epsilon,\,\mathcal{S}_\epsilon}$ in the distributional sense, where $\mathcal{L}_\epsilon$ is the generator of the underdamped process. Then, the capacity corresponds to the boundary integral on $\partial\mathcal{M}_\epsilon$ of the equilibrium measure. Our complete method of deriving the Eyring-Kramers law then follows the scheme of proof below: 
\begin{enumerate}
    \item a deep study of the trajectory of the process~\eqref{eq:Langevin intro} based on previous results in~\cite{cutoff,LelRamRey} to deduce sharp bounds on the equilibrium potential,   
    \item derive an expression linking the expectation of the transition time and the capacity and estimate the capacity using well-thought test functions following the method developed in~\cite{JS}, while also avoiding boundary-regularity assumption on the equilibrium potential. 
\end{enumerate}
All of these approaches are novel methodologies that have never been attempted in the study of the underdamped Langevin process. Additionally, the arguments and computation involved in this methodology require a deep understanding on recent results on the underdamped Langevin process absorbed at the boundary of a domain developed in works such as~\cite{cutoff,Hill,LelRamRey,QSDLelRamRey}. All these points increase the complexity of the paper and deepen its argument, although the authors make every effort to present it as clearly as possible.

\section{Model and main results}

\subsection{Underdamped Langevin process}

In this paper, our main interest is to analyze the metastability phenomenon in the $\epsilon\to 0$ regime of \eqref{eq:Langevin intro} and establish the Eyring-Kramers law. Since the proof is long and complex, we aim to present it in the simplest possible manner by minimizing unnecessary notational complexity. In particular, we assume that $M$ in \eqref{eq:Langevin intro} is the identity matrix and consider a model in $d=3N$ dimensions.\footnote{Of course, our results can be immediately extended to the case where $M$ is not the identity matrix.} 

To describe this model concretely, we repeat the definition here. Let $d\geq1$, and we shall focus on the study of the underdamped Langevin process $(X^{\epsilon}(t)=(q^{\epsilon}(t),\,p^{\epsilon}(t)))_{t\geq0}$
in $\mathbb{R}^{d}\times\mathbb{R}^{d}$ which is solution to the following SDE: 
\begin{equation}
\left\{ \begin{aligned} & \mathrm{d}q^{\epsilon}(t)=p^{\epsilon}(t)\mathrm{d}t\;,\\
 & \mathrm{d}p^{\epsilon}(t)=-\nabla U(q^{\epsilon}(t))\mathrm{d}t-\gamma p^{\epsilon}(t)\mathrm{d}t+\sqrt{2\gamma\epsilon}\mathrm{d}B(t)\;,
\end{aligned}
\right.\label{eq:sde}
\end{equation}
where $U\in C^{2}(\mathbb{R}^{d},\mathbb{\,R})$ is the potential function,
$\gamma>0$ and $\epsilon>0$ denote the friction coefficient and
the temperature, respectively. In order to prove the Eyring-Kramers law for this process, we will investigate its behaviour when the temperature
$\epsilon>0$ is small.

We denote by $\mathcal{L}_{\epsilon}$ the infinitesimal generator associated with the underdamped Langevin process $(X^\epsilon (t))_{t\geq0}$. This generator is also called
kinetic Fokker-Planck operator, and writes for $(q,\;p)\in\mathbb{R}^{d}\times\mathbb{R}^{d}$,
\begin{equation}
\mathcal{L}_{\epsilon}=\langle p,\,\nabla_{q}\rangle-\langle\nabla U(q),\,\nabla_{p}\rangle-\gamma\langle p,\,\nabla_{p}\rangle+\gamma\epsilon\Delta_{p}\;.\label{eq:gen}
\end{equation}

\subsubsection*{Potential function $U$}

We make the following assumptions on the potential function $U$. 
\begin{assumption}[Double-well structure]
\label{ass:U} The function $U$ is a double-well potential and Morse
function which admits two local minima denoted by $m,\,s\in\mathbb{R}^{d}$. The unique saddle point linking these two minima is denoted by $\sigma\in\mathbb{R}^{d}$. Notice that we do not assume here any order between $U(m)$ and $U(s)$. In particular, one can have $U(m)<U(s)$ unlike Figure~\ref{fig:doub}.
\end{assumption} 

\begin{rem}
Assumption~\ref{ass:U} is a routine one. From the perspective of our argument, it is not crucial, as a similar discussion can be carried out even in the case of multiple saddle points, see~\cite[Section 3]{JS}. Nevertheless, since the notation becomes significantly more complex in such cases, we adopt this assumption to simplify the framework.
\end{rem}

\begin{assumption}[Non-negativity and growth of $U$]
 \label{ass:non-negU} Since $U$ is bounded from below by Assumption~\ref{ass:U} and since the coefficients in~\eqref{eq:sde} only depend on $\nabla U$, by adding
a suitable constant to $U$, we can assume that $U$ is non-negative. Thus, we shall
henceforth assume that $U$ is non-negative. Furthermore, we assume that $U$ satisfies the following growth conditions:  
\label{ass:nabla U} 
\begin{equation}
\liminf_{|q|\longrightarrow\infty}\frac{\langle q,\,\nabla U(q)\rangle}{|q|^{2}+U(q)}>0\;\;\;\;\text{and}\;\;\;\;\liminf_{|q|\rightarrow\infty}\left(|\nabla U(q)|-\beta\Delta U(q)\right)>0\;\label{eq:nablaU_cond}
\end{equation}
for some constant $\beta>0$. 
\end{assumption}

\begin{rem}
We note that the denominator of the first limit at \eqref{eq:nablaU_cond}
is non-zero thanks to the non-negativity assumption on $U$. We also remark
that the growth conditions of Assumption \ref{ass:nabla U} are essential for the tightness of
the Hamiltonian function defined below in \eqref{eq:ham} (cf. Proposition
\ref{prop:tight}) which implies the positive recurrence of the process
$(X^{\epsilon}(t))_{t\ge0}$ (cf. Proposition \ref{prop:pos_rec}). In particular, similar assumptions were made in many previous works for the study of the overdamped Langevin process. Note also that these conditions follow immediately if $U$ grows quadratically.
The use of these assumptions will be further discussed in Section \ref{sec3:recurrence}. 
\end{rem}

\subsubsection*{Invariant measure}

Define the Hamiltonian function associated with the potential $U$
as 
\begin{equation}
V(q,\,p)=U(q)+\frac{|p|^{2}}{2}\;.\label{eq:ham}
\end{equation}
Then, we can write the invariant measure of the process $(X^{\epsilon}(t))_{t\ge0}$
defined in \eqref{eq:sde} as
\begin{equation}
\mu_{\epsilon}(q,\,p)\mathrm{d}q\mathrm{d}p=\frac{1}{Z_{\epsilon}}e^{-V(q,\,p)/\epsilon}\mathrm{d}q\mathrm{d}p\;,\label{eq:invm}
\end{equation}
where 
\begin{equation}
Z_{\epsilon}=\int_{\mathbb{R}^{d}\times\mathbb{R}^{d}}e^{-V(q,\,p)/\epsilon}\mathrm{d}q\mathrm{d}p\label{eq:partition}
\end{equation}
is the normalization constant also called \textit{partition function}. Indeed, we prove
in Proposition \ref{prop:tight} that $Z_{\epsilon}<\infty$ for all
$\epsilon\in(0,1)$ under Assumption \ref{ass:nabla U}. 

We denote by $\mathcal{L}_{\epsilon}^{*}$ the adjoint generator of
$\mathcal{L}_{\epsilon}$ on the space $\mathrm{L}^{2}(\mu_{\epsilon}(q,\,p)\mathrm{d}q\mathrm{d}p)$,
i.e., 
\begin{equation}
\mathcal{L}_{\epsilon}^{*}=-\langle p,\,\nabla_{q}\rangle+\langle\nabla U(q),\,\nabla_{p}\rangle-\gamma\langle p,\,\nabla_{p}\rangle+\gamma\epsilon\Delta_{p}\;.\label{eq:adj_gen2}
\end{equation}

\subsection{Notation}

Let us now introduce several notations used throughout this work. 

\begin{notation}
\label{not:main}The following notations will be frequently used in
this article. 
\begin{itemize}
\item (Distance) For a given non-empty measurable set $\mathcal{C}\subset\mathbb{R}^{2d}$,
we denote by $\mathrm{d}(x,\,\mathcal{C})$ the Euclidean distance
of $x\in\mathbb{R}^{2d}$ to the set $\mathcal{C}$, namely, 
\[
\mathrm{d}(x,\,\mathcal{C})=\inf_{y\in\mathcal{C}}|y-x|\;.
\]
\item (Hitting time) For a given non-empty measurable set $\mathcal{C}\subset\mathbb{R}^{2d}$,
we denote by $\tau_{\mathcal{C}}^{\epsilon}$ the first hitting time
of $\mathcal{C}$ by the process \eqref{eq:sde}, i.e. 
\begin{equation}
\tau_{\mathcal{C}}^{\epsilon}:=\inf\{t\geq0:(q^{\epsilon}(t),\,p^{\epsilon}(t))\in\mathcal{C}\}\;.\label{eq:hitting time def}
\end{equation}
\item (Vector notation) We denote by 
\begin{equation}
x=(q,\,p)\in\mathbb{R}^{2d}\label{eq:xqp}
\end{equation}
 a $2d$-dimensional vector where $q,\,p\in\mathbb{R}^{d}$. 
\item (Ball) For $r>0$, we denote by $\textup{B}(x,\,r)$ or
$\mathrm{B}((q,\,p),\,r)$ the open ball centered at $x=(q,\,p)\in\mathbb{R}^{2d}$
with radius $r>0$. We simply write, for $q\in\mathbb{R}^{d}$ and
$r>0$, 
\begin{equation}
\textup{B}(q,\,r):=\textup{B}((q,\,0),\,r)\;.\label{eq:ball2}
\end{equation}
\item (Matrix) For a square matrix $A$, we denote by $A^{\dagger}$ the transpose
matrix of $A$, and denote by $\Vert A\Vert$ (resp. $\Vert A\Vert_{\textup{F}}$)
the triple norm (resp. Frobenius norm) of the matrix $A$,
i.e. 
\[
\Vert A\Vert=\sup_{|z|\neq0}\frac{|Az|}{|z|}\;,\qquad\Vert A\Vert_{\textup{F}}=\sqrt{\sum_{i,\,j}A_{i,\,j}^{2}}\;,
\]
where $A_{i,\,j}$ is the $(i,j)$-th coordinate of the matrix $A$. 
\item (Hessian) For $z\in\mathbb{R}^{n}$ and $f\in C^{2}(\mathbb{R}^{n},\,\mathbb{R})$,
we denote by $\mathbb{H}_{f}^{z}\in\mathbb{R}^{n\times n}$ the Hessian
matrix of $f$ at $z$, i.e., $\mathbb{H}_{f}^{z}=\nabla^2 f (z)$.
\end{itemize}
\end{notation}

\subsection{Main result: Eyring-Kramers law}

Our main goal in this work is to derive the Eyring-Kramers law
for the underdamped Langevin process $(X^\epsilon (t))_{t\geq0}$ defined by \eqref{eq:sde}. For $x\in\R^{2d}$, we shall denote here by $\mathbb{P}_x = \mathbb{P}^\epsilon_x$ the law of the process $(X^\epsilon (t))_{t\geq0}$
under which $X_0^\epsilon=x$. The expectation under $\mathbb{P}_x$ is denoted by $\mathbb{E}_x= \mathbb{E}^\epsilon_x$. We will abbreviate the dependency on $\epsilon$ to simplify the notation. 

Now, let
us define $\mathcal{M}_{\epsilon}$ and $\mathcal{S}_{\epsilon}$ as the following
balls around the minima $(m,\,0)$ and $(s,\,0)$ of $V$ (cf. Assumption \ref{ass:U} and \eqref{eq:ball2}), i.e.,
\begin{equation}
\mathcal{M}_{\epsilon}=\mathrm{B}(m,\,\epsilon),\qquad \mathcal{S}_{\epsilon}=\mathrm{B}(s,\,\epsilon)\;.\label{eq:neigh}
\end{equation}

The Eyring-Kramers law corresponds to the asymptotic of $\mathbb{E}_{(m,\,0)}(\tau_{\mathcal{S}_{\epsilon}}^{\epsilon})$
when $\epsilon$ goes to zero. In this section we shall state such
estimate for the underdamped Langevin process \eqref{eq:sde}. Additionally, we write from now on $a_\epsilon=o_{\epsilon}(1)$ whenever $a_\epsilon\underset{\epsilon\rightarrow0}{\longrightarrow}0$.

\begin{thm}[Eyring-Kramers law]
\label{thm:ek}We have that 
\begin{equation}
\mathbb{E}_{(m,\,0)}(\tau_{\mathcal{S}_{\epsilon}}^{\epsilon})=[1+o_{\epsilon}(1)]\frac{2\pi}{\mu^{\sigma}}\sqrt{\frac{-\det\mathbb{H}_{U}^{\sigma}}{\det\mathbb{H}_{U}^{m}}}\mathrm{e}^{(U(\sigma)-U(m))/\epsilon}
\end{equation}
where $-\lambda^{\sigma}$ is the unique negative eigenvalue of $\mathbb{H}_{U}^{\sigma}$ and $\mu^{\sigma}=\frac{-\gamma+\sqrt{\gamma^{2}+4\lambda^{\sigma}}}{2}>0$. 
\end{thm}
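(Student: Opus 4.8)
The plan is to derive the Eyring-Kramers law by finding a suitable replacement for the capacity, whose traditional Dirichlet-principle definition fails for the hypoelliptic generator $\mathcal{L}_\epsilon$, and then combining this with a sharp estimate of the equilibrium potential. Concretely, I would proceed along the following lines.

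\textbf{Step 1: A resolvent/Dirichlet-form identity for the mean hitting time.} Rather than use the classical potential-theoretic identity $\mathbb{E}_{\mathcal{M}_\epsilon}[\tau_{\mathcal{S}_\epsilon}^\epsilon] = \frac{1}{\mu_\epsilon(\mathcal{M}_\epsilon)}\int h_{\mathcal{M}_\epsilon,\mathcal{S}_\epsilon}\,\mathrm{d}\mu_\epsilon \big/ \mathrm{cap}(\mathcal{M}_\epsilon,\mathcal{S}_\epsilon)$, I would look for a variational expression of the relevant capacity-like quantity that only involves integrals of $\mathcal{L}_\epsilon$ applied to smooth test functions, never the gradient of $h_{\mathcal{M}_\epsilon,\mathcal{S}_\epsilon}$ on the boundary. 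Following the philosophy of \cite{JS}, I would write the capacity as $\inf_f \int (\mathcal{L}_\epsilon f)^2\,\mathrm{d}\mu_\epsilon$-type quantities (or a suitable bilinear pairing), where $f$ ranges over functions with prescribed boundary values on $\mathcal{M}_\epsilon$ and $\mathcal{S}_\epsilon$, so that upper and lower bounds come from inserting clever test functions and there is no need to know that $h$ is smooth up to the boundary.

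\textbf{Step 2: Sharp estimate of the equilibrium potential.} Using the trajectorial results of \cite{LelRamRey} and \cite{cutoff} on the underdamped Langevin process absorbed at the boundary of a domain — in particular precise control of exit points and exit times from neighborhoods of $m$ and of the saddle $\sigma$ — I would show that $h_{\mathcal{M}_\epsilon,\mathcal{S}_\epsilon}$ is, up to $o_\epsilon(1)$ errors, equal to $1$ on the basin of $m$, equal to $0$ on the basin of $s$, and makes its transition across a thin slab near $\sigma$ whose shape is governed by the linearized dynamics there. This gives $\int h_{\mathcal{M}_\epsilon,\mathcal{S}_\epsilon}\,\mathrm{d}\mu_\epsilon = [1+o_\epsilon(1)]\,\mu_\epsilon(\text{basin of }m)$, and a Laplace-type expansion of $\mu_\epsilon$ near the nondegenerate minimum $(m,0)$ of $V(q,p)=U(q)+|p|^2/2$ yields $\mu_\epsilon(\text{basin of }m) \approx Z_\epsilon^{-1}(2\pi\epsilon)^d (\det \mathbb{H}_U^m)^{-1/2}\mathrm{e}^{-U(m)/\epsilon}$.

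\textbf{Step 3: Capacity asymptotics via linearization at the saddle.} The dominant contribution to the capacity functional concentrates near the saddle point $(\sigma,0)$ of $V$, where one linearizes \eqref{eq:sde}: the drift matrix has eigenvalues determined by $\mathbb{H}_U^\sigma$ and the friction $\gamma$, and exactly one unstable direction with eigenvalue $\mu^\sigma = \tfrac{1}{2}(-\gamma + \sqrt{\gamma^2+4\lambda^\sigma})$. Choosing the test function in Step 1 to interpolate linearly along this unstable mode near $\sigma$ and performing a Gaussian integral, I expect the capacity to come out as $[1+o_\epsilon(1)]\,Z_\epsilon^{-1}(2\pi\epsilon)^{d}\cdot\frac{\mu^\sigma}{2\pi}\big/\sqrt{-\det\mathbb{H}_U^\sigma}\cdot\mathrm{e}^{-U(\sigma)/\epsilon}$ (the momentum Gaussian contributes $(2\pi\epsilon)^{d/2}$ and the $d$ position directions at the saddle contribute $(2\pi\epsilon)^{d/2}/\sqrt{|\det\mathbb{H}_U^\sigma|}$, while the $\mu^\sigma/(2\pi)$ prefactor encodes the irreversible crossing rate). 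Dividing the result of Step 2 by this capacity, the partition function $Z_\epsilon$ and the powers of $2\pi\epsilon$ cancel, leaving precisely $\frac{2\pi}{\mu^\sigma}\sqrt{-\det\mathbb{H}_U^\sigma / \det\mathbb{H}_U^m}\,\mathrm{e}^{(U(\sigma)-U(m))/\epsilon}$. A final step transfers the estimate from $\mathbb{E}_{\mathcal{M}_\epsilon}[\tau_{\mathcal{S}_\epsilon}^\epsilon]$ (averaged over $\mu_\epsilon$ restricted to $\mathcal{M}_\epsilon$) to $\mathbb{E}_{(m,0)}[\tau_{\mathcal{S}_\epsilon}^\epsilon]$, using that the process started at $(m,0)$ equilibrates inside $\mathcal{M}_\epsilon$ (and within the basin of $m$) on a timescale negligible compared to $\mathrm{e}^{(U(\sigma)-U(m))/\epsilon}$, together with the positive recurrence from Proposition~\ref{prop:pos_rec}.

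\textbf{Main obstacle.} The hardest part is Step 3 together with the rigorous justification that the capacity-like functional of Step 1 is genuinely sandwiched between matching upper and lower bounds despite the degeneracy: the upper bound (inserting a test function) is comparatively robust, but the lower bound normally exploits the Thomson principle — a divergence-free flow construction — which is exactly what is unavailable here. Circumventing this, presumably by an approximation argument that regularizes the hypoelliptic problem or by exploiting the explicit trajectorial control near $\sigma$ to build an admissible competitor directly, is where the real work lies; controlling the $o_\epsilon(1)$ error terms uniformly, especially the contributions away from $\sigma$ and the passage from a small ball to the full basin, is the other delicate point.
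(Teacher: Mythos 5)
Your architecture matches the paper's quite closely: a capacity replacement defined by integrating $\mathcal{L}_\epsilon$ against smooth test functions rather than taking gradients of the equilibrium potential, a Laplace asymptotic for $\int h\,\mathrm{d}\mu_\epsilon$ near $(m,0)$, a saddle-point computation with a test function interpolating along the unstable mode of the linearized drift, and a final transfer from a boundary-averaged hitting time to $\mathbb{E}_{(m,0)}$. That is indeed the skeleton of the paper's proof (Propositions~\ref{prop:cap}--\ref{prop:main2}).

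However, your ``main obstacle'' paragraph identifies the wrong bottleneck, and this reflects a genuine conceptual gap. You expect to sandwich the capacity between a test-function upper bound and some Thomson-type lower bound, and worry about the absence of the latter. The paper never performs such a sandwich. Instead, Proposition~\ref{prop:cap} establishes that the single expression
\[
C_\epsilon(\mathcal{M}_\epsilon,\mathcal{S}_\epsilon)=\int_{\mathbb{R}^{2d}} h^*_{\mathcal{M}_\epsilon,\mathcal{S}_\epsilon}(x)\,(-\mathcal{L}_\epsilon f(x))\,\mu_\epsilon(\mathrm{d}x)
\]
is \emph{independent} of the admissible test function $f$, and Proposition~\ref{prop:harm} gives an exact identity linking it to $\int_{\partial\mathcal{M}_\epsilon}\mathbb{E}_x[\tau^\epsilon_{\mathcal{S}_\epsilon}]\,\nu_\epsilon(\mathrm{d}x)$ via a harmonic-measure analogue $\nu_\epsilon$. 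The asymptotics of $C_\epsilon$ are then computed directly from this pairing, not bounded from two sides: one chooses $f=f_{\epsilon,\theta}$ built from the error-function profile $j_\epsilon$ along the unstable eigenvector of $\mathbb{H}_V\mathbb{M}$ (as you anticipated), and the crucial second ingredient is a sharp two-sided pointwise estimate on $h^*$ itself near the saddle, namely $h\ge 1-C\epsilon^{-\beta}e^{(V(x)-V(\sigma,0))/\epsilon}$ on $\mathcal{W}_m$ and $h\le C\epsilon^{-\beta}e^{(V(x)-V(\sigma,0))/\epsilon}$ on $\mathcal{W}_s$ (Proposition~\ref{prop:ineq h_AB}). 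This estimate plays the role your ``lower bound'' would have played, and it is derived not from any variational principle but from a Girsanov change of measure (Proposition~\ref{lem:first ineq h_AB}), a time-reversal density identity for the absorbed process (Proposition~\ref{prop:property density}), and Lyapunov/exit-time bounds (Propositions~\ref{prop:bound exponential exit time}, \ref{prop:exit time probability estimate}). The other technical core you do not anticipate is the justification of Propositions~\ref{prop:cap}--\ref{prop:harm} for the degenerate generator: the paper first proves the identity for the elliptically regularized process with generator $\mathcal{L}_{\alpha,\epsilon}$ on the truncated domain $\mathcal{O}_{N,\epsilon}$ (Sections~\ref{sec4:magicf}--\ref{sec5:lem_unifc}), then removes the regularization by a delicate coupling/Gr\"onwall argument plus a uniform-in-$\alpha$ convergence of hitting probabilities (Lemma~\ref{lem:unif_c}), which needs careful control of boundary velocities. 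Finally, the transfer from $\int_{\partial\mathcal{M}_\epsilon}\mathbb{E}_x[\tau]\,\nu_\epsilon$ to $\mathbb{E}_{(m,0)}[\tau]$ is not a soft ``fast equilibration'' argument as you suggest but a quantitative H\"older estimate on the scaled mean hitting time, proven via a hypoelliptic Harnack inequality after regularization (Proposition~\ref{prop:control v}, Lemma~\ref{lem:holder regularity of v}). So while your high-level plan aligns with the paper, the operative mechanism replacing the missing variational principles is an exact pairing identity plus Girsanov-derived pointwise bounds on $h$, not a matched pair of inequalities; and the bulk of the real work (regularization, time-reversal densities, Harnack) is not visible in your sketch.
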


\begin{rem}
Indeed, since $U$ is a Morse function by Assumption~\ref{ass:U}, its Hessian
matrix at its saddle point $\sigma$ admits a unique negative eigenvalue
$-\lambda^{\sigma}$. 
\end{rem}

The proof of this theorem follows the outline
of the potential theoretic approach of~\cite{BEGK,LS, LMS}, but
many of these steps fail due to the non-ellipticity of the infinitesimal generator $\mathcal{L}_\epsilon$ defined in~\eqref{eq:gen}.

\subsection{Sketch of proof}

Let us define the function $h_{\mathcal{M}_{\epsilon},\mathcal{\,S}_{\epsilon}}:\mathbb{R}^{2d}\rightarrow[0,\,1]$
such that 
\[
h_{\mathcal{M}_{\epsilon},\,\mathcal{S_{\epsilon}}}(x):=\mathbb{P}_{x}(\tau_{\mathcal{M}_{\epsilon}}^{\epsilon}<\tau_{\mathcal{S}_{\epsilon}}^{\epsilon})\;\;\;\;;\;x\in\mathbb{R}^{2d}\;.
\]
This function is called equilibrium potential function and played a primary role
in the proof of the Eyring-Kramers law for the overdamped Langevin process
in~\cite{BEGK,LMS,JS}. Note that the equilibrium potential function trivially satisfies 
\[
h_{\mathcal{M}_{\epsilon},\,\mathcal{S_{\epsilon}}}\equiv1\;\text{on }\mathcal{M}_{\epsilon}\;\;\;\text{and}\;\;\;h_{\mathcal{M}_{\epsilon},\,\mathcal{S_{\epsilon}}}\equiv0\;\text{on }\mathcal{S}_{\epsilon}\;.
\] However, given that $\mathcal{L}_\epsilon$ is only hypoelliptic, the regularity of $\nabla h_{\mathcal{M}_{\epsilon},\,\mathcal{S_{\epsilon}}}$ at $\partial(\mathcal{M}_{\epsilon}\cup\mathcal{S}_{\epsilon})$ is unknown.  In particular, one cannot define the capacity as a boundary integral involving the normal derivative of the equilibrium potential. Nor can we use the divergence theorem to state the following equivalency (which is well-known in the elliptic setting) between the capacity and the electrostatic energy 
\begin{equation}\label{eq:def capacity Langevin}
\mathrm{Cap}_\epsilon(\mathcal{M}_{\epsilon},\,\mathcal{S_{\epsilon}})=\epsilon\int_{(\overline{\mathcal{M}_{\epsilon}}\cup\overline{\mathcal{S}_{\epsilon}})^{c}}\left|\nabla_{p}h_{\mathcal{M}_{\epsilon},\,\mathcal{S}_{\epsilon}}(x)\right|^{2}\mu_{\epsilon}(x)\mathrm{d}x,
\end{equation}
All of these questions remain however the subject of an ongoing work.

In the meantime, one can define the equilibrium measure $\eta_\epsilon$ in a weak sense following the definition provided in~\cite{Bovier}. In order to do that let us first define the following function for $(q,\,p)\in\R^{2d}$,
\begin{equation}
h_{\mathcal{M}_{\epsilon},\,\mathcal{S}_{\epsilon}}^{*}(q,p):=h_{\mathcal{M}_{\epsilon},\,\mathcal{S}_{\epsilon}}(q,\,-p)\;.\label{eq:def h^*}
\end{equation}

\begin{prop}[Capacity]
\label{prop:cap} There exists a unique measure $\eta_\epsilon$ on $\partial\mathcal{M}_\epsilon$ such that, for any function $f\in C_{c}^{\infty}(\overline{\mathcal{S}_{\epsilon}}^{c})$,
\begin{equation}\label{eq:def eq measure}
\int_{\partial\mathcal{M}_\epsilon}f(x)\,\eta_\epsilon(\mathrm{d}x)=\int_{\mathbb{R}^{2d}}h_{\mathcal{M}_{\epsilon},\,\mathcal{S}_{\epsilon}}^{*}(x)(-\mathcal{L}_{\epsilon}f(x))\mu_{\epsilon}(x)\mathrm{d}x\;.
\end{equation}

In particular, the capacity $\mathrm{Cap}_\epsilon(\mathcal{M}_{\epsilon},\,\mathcal{S_{\epsilon}})$ is given by, for any function $f\in C_{c}^{\infty}(\overline{\mathcal{S}_{\epsilon}}^{c})$
satisfying $f=1$ on $\partial\mathcal{M}_{\epsilon}$,
\begin{equation}
\mathrm{Cap}_\epsilon(\mathcal{M}_{\epsilon},\,\mathcal{S_{\epsilon}}):=\int_{\partial\mathcal{M}_\epsilon}\eta_\epsilon(\mathrm{d}x)=\int_{\mathbb{R}^{2d}}h_{\mathcal{M}_{\epsilon},\,\mathcal{S}_{\epsilon}}^{*}(x)(-\mathcal{L}_{\epsilon}f(x))\mu_{\epsilon}(x)\mathrm{d}x\;.\label{eq:cap}
\end{equation}

\end{prop}

The following identity links the expectation of the first transition time to $\mathcal{S}_\epsilon$, starting from a neighborhood of $(m,\,0)$, with the capacity and the equilibrium potential. It is often used in Potential theory to derive the Eyring-Kramers law, see the seminal work~\cite{BEGK}. We prove here this identity in the highly non-trivial setting of the underdamped Langevin process~\eqref{eq:sde}. This identity along with Proposition~\ref{prop:cap} constitute a key step in the proof of Theorem \ref{thm:ek}.

\begin{prop}
\label{prop:harm} There exists a probability measure $\nu_{\epsilon}$
on $\partial\mathcal{M}_{\epsilon}$ such that
\begin{equation}
\int_{\partial\mathcal{M}_{\epsilon}}\mathbb{E}_{x}(\tau_{\mathcal{S}_{\epsilon}}^{\epsilon})\nu_{\epsilon}(\mathrm{d}x)=\frac{1}{\mathrm{Cap}_\epsilon(\mathcal{M}_{\epsilon},\,\mathcal{S}_{\epsilon})}\int_{\mathbb{R}^{2d}}h_{\mathcal{M}_{\epsilon},\,\mathcal{S}_{\epsilon}}^{*}(x)\mu_{\epsilon}(x)\mathrm{d}x\;.\label{eq:magicf}
\end{equation}
\end{prop}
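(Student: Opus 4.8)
The plan is to mimic the classical potential-theoretic identity
$$
\mathbb{E}_{\nu_\epsilon}(\tau_{\mathcal{S}_\epsilon}^\epsilon)=\frac{1}{\mathrm{Cap}_\epsilon(\mathcal{M}_\epsilon,\mathcal{S}_\epsilon)}\int h_{\mathcal{M}_\epsilon,\mathcal{S}_\epsilon}\,\mathrm{d}\mu_\epsilon,
$$
but without ever differentiating $h_{\mathcal{M}_\epsilon,\mathcal{S}_\epsilon}$, by working entirely at the level of the weak (distributional) formulation of the Dirichlet problem for the adjoint generator. First I would introduce the auxiliary function $w_\epsilon(x):=\mathbb{E}_x(\tau_{\mathcal{S}_\epsilon}^\epsilon)$ on $\overline{\mathcal{S}_\epsilon}^c$, which is the natural "Green-type" object: it solves $\mathcal{L}_\epsilon w_\epsilon=-1$ on $(\mathcal{M}_\epsilon\cup\mathcal{S}_\epsilon)^c$ in the appropriate weak sense and vanishes on $\partial\mathcal{S}_\epsilon$, with a Neumann-type condition on $\partial\mathcal{M}_\epsilon$ coming from the reflection/hitting structure. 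The key will be to test the equation $(-\mathcal{L}_\epsilon)f=\cdots$ appearing in Proposition~\ref{prop:cap} against $w_\epsilon$ (suitably mollified), while simultaneously testing $(-\mathcal{L}_\epsilon^*)(\text{something})$ against $h^*_{\mathcal{M}_\epsilon,\mathcal{S}_\epsilon}$, and matching the two boundary contributions on $\partial\mathcal{M}_\epsilon$. The measure $\nu_\epsilon$ is then \emph{defined} by this boundary term: concretely, $\nu_\epsilon$ should be the (normalized) trace on $\partial\mathcal{M}_\epsilon$ of the measure $\gamma\epsilon\,\partial_{n_p}h^*_{\mathcal{M}_\epsilon,\mathcal{S}_\epsilon}\,\mu_\epsilon$, or equivalently — to avoid invoking that very normal derivative — the measure characterized by $\int_{\partial\mathcal{M}_\epsilon}g\,\mathrm{d}\nu_\epsilon = C_\epsilon(\mathcal{M}_\epsilon,\mathcal{S}_\epsilon)^{-1}\int_{\mathbb{R}^{2d}}\widetilde{g}\,(-\mathcal{L}_\epsilon)(\text{harmonic extension of }g)\,\mathrm{d}\mu_\epsilon$ for test functions $g$ on $\partial\mathcal{M}_\epsilon$, where $\widetilde{g}$ is built from $h^*$; one must check this is a well-defined nonnegative measure of total mass one, the mass-one normalization being exactly the content of Proposition~\ref{prop:cap} applied with $f$ the constant-$1$ harmonic extension.

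In more detail, the steps in order: (i) establish that $w_\epsilon=\mathbb{E}_\cdot(\tau_{\mathcal{S}_\epsilon}^\epsilon)$ is finite and belongs to the right weighted Sobolev-type space so that it is a legitimate test function against $\mathcal{L}_\epsilon$ — here one uses positive recurrence (Proposition~\ref{prop:pos_rec}) and the Lyapunov/tightness estimates on the Hamiltonian $V$ (Proposition~\ref{prop:tight}) to control $w_\epsilon$ at infinity; (ii) write down the weak Dirichlet problems solved by $h_{\mathcal{M}_\epsilon,\mathcal{S}_\epsilon}$ (for $\mathcal{L}_\epsilon$, boundary data $1$ on $\partial\mathcal{M}_\epsilon$, $0$ on $\partial\mathcal{S}_\epsilon$) and by $h^*_{\mathcal{M}_\epsilon,\mathcal{S}_\epsilon}$ (which by the momentum-flip symmetry $p\mapsto -p$ solves the corresponding problem for the \emph{adjoint} $\mathcal{L}_\epsilon^*$ — this is the crucial use of~\eqref{eq:def h^*} and~\eqref{eq:adj_gen2}, since $\mathcal{L}_\epsilon^*$ is $\mathcal{L}_\epsilon$ conjugated by $p\mapsto -p$); (iii) perform the two integrations against $\mu_\epsilon$: on one hand $\int h^*_{\mathcal{M}_\epsilon,\mathcal{S}_\epsilon}(-\mathcal{L}_\epsilon w_\epsilon)\mathrm{d}\mu_\epsilon = \int h^*_{\mathcal{M}_\epsilon,\mathcal{S}_\epsilon}\,\mathrm{d}\mu_\epsilon$ (using $-\mathcal{L}_\epsilon w_\epsilon = 1$ on the complement of $\mathcal{M}_\epsilon\cup\mathcal{S}_\epsilon$, and that $h^*$ vanishes off... actually $h^*=0$ only on $\mathcal{S}_\epsilon$, so one gets $\int_{\mathcal{S}_\epsilon^c}h^*\mathrm{d}\mu_\epsilon$ plus a boundary term); on the other hand, by the adjoint relation $\int h^*_{\mathcal{M}_\epsilon,\mathcal{S}_\epsilon}(-\mathcal{L}_\epsilon w_\epsilon)\mathrm{d}\mu_\epsilon = \int (-\mathcal{L}_\epsilon^* h^*_{\mathcal{M}_\epsilon,\mathcal{S}_\epsilon})w_\epsilon\,\mathrm{d}\mu_\epsilon + (\text{boundary terms on }\partial\mathcal{M}_\epsilon\cup\partial\mathcal{S}_\epsilon)$, and since $h^*_{\mathcal{M}_\epsilon,\mathcal{S}_\epsilon}$ is $\mathcal{L}_\epsilon^*$-harmonic in the interior, the bulk term drops and only the boundary terms survive; (iv) identify the boundary term on $\partial\mathcal{M}_\epsilon$ as $\int_{\partial\mathcal{M}_\epsilon}w_\epsilon\,\mathrm{d}(C_\epsilon(\mathcal{M}_\epsilon,\mathcal{S}_\epsilon)\,\nu_\epsilon)$ and the term on $\partial\mathcal{S}_\epsilon$ as zero because $w_\epsilon=0$ there, while $h^*$-boundary data pairs against the (ill-defined) derivative of $w_\epsilon$ — so one must route the $\partial\mathcal{S}_\epsilon$ contribution through the $h$-side, where $h^*\equiv 0$ on $\mathcal{S}_\epsilon$ kills it cleanly. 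Combining (iii) gives $C_\epsilon(\mathcal{M}_\epsilon,\mathcal{S}_\epsilon)\int_{\partial\mathcal{M}_\epsilon}\mathbb{E}_x(\tau^\epsilon_{\mathcal{S}_\epsilon})\,\nu_\epsilon(\mathrm{d}x)=\int h^*_{\mathcal{M}_\epsilon,\mathcal{S}_\epsilon}\,\mathrm{d}\mu_\epsilon$, which is~\eqref{eq:magicf}.

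The main obstacle will be step (iii)–(iv): justifying the integration by parts and correctly bookkeeping the boundary terms when one of the two functions involved ($w_\epsilon$, or $h^*$) has unknown boundary regularity for its momentum-gradient. The resolution I would pursue is \emph{asymmetric regularization}: mollify only the function whose derivative we do not need to see, and express every boundary integral as a limit of interior integrals of products that stay controlled. Concretely, one can write $\int_{\partial\mathcal{M}_\epsilon}(\cdots)$ as $\lim_{\delta\to 0}\delta^{-1}\int_{\{d(x,\partial\mathcal{M}_\epsilon)<\delta\}}(\cdots)\mathrm{d}\mu_\epsilon$ and, inside this thin shell, replace the non-smooth factor by its solution of an auxiliary problem whose flux across $\partial\mathcal{M}_\epsilon$ is known (this is essentially where the definition of $C_\epsilon(\mathcal{M}_\epsilon,\mathcal{S}_\epsilon)$ via arbitrary test functions $f$ pays off — one gets a \emph{representation} of the flux that does not require $\nabla_p h$ to have a trace). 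A secondary technical point is proving that the resulting functional $g\mapsto C_\epsilon^{-1}\int h^*\,(-\mathcal{L}_\epsilon)(\mathrm{Ext}\,g)\,\mathrm{d}\mu_\epsilon$ on $C^\infty(\partial\mathcal{M}_\epsilon)$ is positivity-preserving (so that Riesz representation yields a genuine measure $\nu_\epsilon\geq 0$); this should follow from a maximum-principle / stochastic-representation argument for $h^*$, namely that $h^*$ is itself a hitting probability for the time-reversed process, hence nonnegative, and that the "flux" it induces on $\partial\mathcal{M}_\epsilon$ is an outward (hence signed-consistent) flux. Granting positivity, the total mass is forced to be $1$ by testing with $g\equiv 1$ and invoking Proposition~\ref{prop:cap}, which closes the argument.
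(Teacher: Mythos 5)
Your proposal takes a fundamentally different route from the paper, and the difference is precisely where the gap lies. You plan to work directly with the weak formulation for the hypoelliptic operator $\mathcal{L}_\epsilon$: test $\mathcal{L}_\epsilon w_\epsilon=-1$ against $h^*_{\mathcal{M}_\epsilon,\mathcal{S}_\epsilon}$, integrate by parts, and read off $\nu_\epsilon$ from the boundary term on $\partial\mathcal{M}_\epsilon$. You correctly identify the obstruction — the momentum-gradient of neither $h^*_{\mathcal{M}_\epsilon,\mathcal{S}_\epsilon}$ nor $w_\epsilon$ is known to have a trace on the boundary — but the proposed remedy (``asymmetric regularization'': express boundary integrals as $\delta^{-1}\int_{\{\mathrm{d}(x,\partial\mathcal{M}_\epsilon)<\delta\}}$ and substitute an auxiliary solution with known flux) is not a well-defined procedure. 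It presupposes that a flux across $\partial\mathcal{M}_\epsilon$ can be extracted from $h^*$ without its normal derivative, which is exactly the content you are trying to establish, not something you can invoke. The paper explicitly states that this integration by parts ``is not justified here'' and does not attempt to repair it by mollifying the functions.

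The paper's resolution is to regularize the \emph{process} rather than the functions: add $\alpha\epsilon\Delta_q$ to the generator (and a corresponding drift) so that $\mathcal{L}_{\alpha,\epsilon}$ becomes elliptic, of the exact form $\epsilon e^{V/\epsilon}\mathrm{div}(e^{-V/\epsilon}\mathbb{M}_\alpha\nabla)$ studied in~\cite{LMS}, with the same invariant measure $\mu_\epsilon$. At the perturbed level, the identity~\eqref{eq:magic_pur} holds with a genuine nonnegative surface measure $\eta_{\alpha,N,\epsilon}$ supplied by the elliptic theory — no regularity question arises. The bulk of the work (Lemmas~\ref{lem:gronw}, \ref{lem:conv h^*}, \ref{lem:unif_c}, \ref{lem:continuity probability}, \ref{lem:convergence proba until N}) is then the probabilistic control needed to take $\alpha\to 0$, $N\to\infty$, $M\to\infty$: Gr\"onwall-type coupling estimates between $X^{\alpha,\epsilon}$ and $X^\epsilon$, the fact that the process hits boundaries with nonzero velocity (so hitting times converge), Dini's theorem for uniform convergence on $\partial\mathcal{M}_\epsilon$, and weak convergence of the $\eta_{\alpha,N,\epsilon}$ via Riesz--Markov--Kakutani. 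Nonnegativity and total-mass-one of the limit $\nu_\epsilon$ are then inherited from the elliptic approximants (using Proposition~\ref{prop:cap} for the normalization by $C_\epsilon$), rather than argued from scratch via a maximum principle as you sketch. So while your outline reproduces the right \emph{target} identity and correctly uses the $p\mapsto-p$ symmetry to relate $h^*$ to the adjoint $\mathcal{L}_\epsilon^*$, the central technical step in your plan is a gap, and the missing idea is the process-level elliptic regularization that makes the potential-theoretic identity available before any limit is taken.
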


Propositions \ref{prop:cap} and \ref{prop:harm} are proven in Section
\ref{sec4:magicf}. They are first obtained for a regularized version of the differential operator $\mathcal{L}_\epsilon$ and then extended to the current setting. Additionally, in order to deduce the Eyring-Kramers law, we approximate the left side of
\eqref{eq:magicf} by $\mathbb{E}_{(m,\,0)}(\tau_{\mathcal{S}_{\epsilon}}^{\epsilon})$ using the following proposition which will be proven in Section
\ref{sec:Proof of prop:control w}. 

\begin{prop}
\label{prop:control v}For each $\beta\in (1/2,1]$, there exist constants $c=c(\beta),\,\delta=\delta(\beta),\,\epsilon_{0}=\epsilon_{0}(\beta)>0$
such that
\[
\mathbb{E}_{(m,\,0)}(\tau_{\mathcal{S}_{\epsilon}}^{\epsilon})(1-c\epsilon^{\delta})\leq\mathbb{E}_{x}(\tau_{\mathcal{S}_{\epsilon}}^{\epsilon})\leq\mathbb{E}_{(m,\,0)}(\tau_{\mathcal{S}_{\epsilon}}^{\epsilon})(1+c\epsilon^{\delta})
\]
for all $x\in\mathrm{B}(m,\,\epsilon^{\beta})$ and $\epsilon\in(0,\epsilon_{0})$. 
\end{prop}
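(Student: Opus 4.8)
The plan is to show that the function $x\mapsto \mathbb{E}_x(\tau_{\mathcal{S}_\epsilon}^\epsilon)$ is essentially constant on the shrinking ball $\mathrm{B}(m,\epsilon^\beta)$, with multiplicative error $O(\epsilon^\delta)$, by comparing the hitting time started from an arbitrary $x$ in this ball with the hitting time started from $(m,0)$. The natural device is a coupling/strong-Markov argument: starting from $x\in\mathrm{B}(m,\epsilon^\beta)$, the process will with overwhelming probability return to a fixed small deterministic neighborhood of $(m,0)$ — and more precisely hit $\mathcal{M}_\epsilon$, or a slightly larger reference ball — in a time that is negligible compared to the metastable timescale $\mathrm{e}^{(U(\sigma)-U(m))/\epsilon}$, before ever hitting $\mathcal{S}_\epsilon$. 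First I would fix a reference scale, say $\mathcal{M}'_\epsilon=\mathrm{B}(m,\epsilon)$ itself and an intermediate ball $\mathrm{B}(m,\epsilon^{\beta'})$ with $1/2<\beta'<\beta$, and decompose $\tau_{\mathcal{S}_\epsilon}^\epsilon$ under $\mathbb{P}_x$ using the strong Markov property at the first hitting time of $\partial\mathcal{M}_\epsilon$ (or of $(m,0)$'s neighborhood). The contribution of the "short excursion" phase must be controlled in two ways: (i) its expected duration is $o(\mathrm{e}^{(U(\sigma)-U(m))/\epsilon})$, in fact polynomially small or at worst $\mathrm{e}^{o(1)/\epsilon}$; and (ii) the probability of hitting $\mathcal{S}_\epsilon$ during this phase is exponentially small, of order $\mathrm{e}^{-c/\epsilon}$, since any such path must cross the saddle region, which is ruled out by large-deviations / Freidlin–Wentzell estimates adapted to the underdamped (hypoelliptic) dynamics.

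The key quantitative inputs I would invoke are the trajectorial estimates for the underdamped Langevin process absorbed at a domain boundary from~\cite{LelRamRey,cutoff}, together with the local analysis near the minimum $(m,0)$: near $(m,0)$ the drift is a stable linear vector field (the linearization of~\eqref{eq:sde} at $(m,0)$ has spectrum in the open left half-plane, since $U$ is Morse and $m$ is a local minimum), so the deterministic flow contracts toward $(m,0)$ exponentially fast, and a Gronwall-type estimate shows that, for $x\in\mathrm{B}(m,\epsilon^\beta)$, the process reaches $\mathcal{M}_\epsilon=\mathrm{B}(m,\epsilon)$ within time $O(\beta|\log\epsilon|)$ with probability $1-O(\epsilon^{\delta'})$ for a suitable $\delta'>0$ — the stochastic perturbation on this short time window is of size $O(\sqrt{\epsilon\,|\log\epsilon|})$, which is $\ll\epsilon^\beta$ is false, so in fact one should aim at the weaker statement that the process reaches $\mathrm{B}(m,\epsilon)$, not $\mathrm{B}(m,\epsilon^\beta)$, and accept a short equilibration time $O(|\log\epsilon|)$. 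Once we know that from any $x\in\mathrm{B}(m,\epsilon^\beta)$ the chain reaches $\partial\mathcal{M}_\epsilon$ quickly and without touching $\mathcal{S}_\epsilon$ (probability $\ge 1-\mathrm{e}^{-c/\epsilon}$), and symmetrically that from $(m,0)$ it exits $\mathcal{M}_\epsilon$ and, before hitting $\mathcal{S}_\epsilon$, revisits $\mathrm{B}(m,\epsilon^\beta)$ with probability $\ge 1-\mathrm{e}^{-c/\epsilon}$, a standard renewal-type estimate couples the two hitting times: writing $\mathbb{E}_x(\tau_{\mathcal{S}_\epsilon}^\epsilon)=\mathbb{E}_x(\text{short phase})+\mathbb{E}_x\big(\text{(remaining time}\mid\text{back near }m)\big)+(\text{exponentially small correction})$, one gets
\begin{equation}
\big|\mathbb{E}_x(\tau_{\mathcal{S}_\epsilon}^\epsilon)-\mathbb{E}_{(m,0)}(\tau_{\mathcal{S}_\epsilon}^\epsilon)\big|\le C\,|\log\epsilon|+\mathrm{e}^{-c/\epsilon}\,\mathbb{E}_{(m,0)}(\tau_{\mathcal{S}_\epsilon}^\epsilon),
\end{equation}
and since $\mathbb{E}_{(m,0)}(\tau_{\mathcal{S}_\epsilon}^\epsilon)\ge\mathrm{e}^{(U(\sigma)-U(m)-o(1))/\epsilon}$ by the lower large-deviations bound~\eqref{eq:large dev}, the right-hand side is $\le c'\epsilon^\delta\,\mathbb{E}_{(m,0)}(\tau_{\mathcal{S}_\epsilon}^\epsilon)$ for any $\delta<1$ and $\epsilon$ small. (The condition $\beta>1/2$ enters precisely in guaranteeing that the noise over the short equilibration window, of size $\sqrt{\epsilon|\log\epsilon|}$, is small enough relative to $\epsilon^\beta$ — strictly, one wants $\beta<1$, and $\beta>1/2$ ensures the starting ball is still "deep" inside the basin so the exit estimates apply; I would double-check which direction of the inequality is actually used and state it as in the proposition.)

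The main obstacle is obtaining the exit/return estimates with sufficient uniformity for the \emph{hypoelliptic} dynamics: unlike the overdamped case, one cannot directly use elliptic Harnack inequalities or classical Freidlin–Wentzell statements off the shelf, and near $(m,0)$ the momentum variable $p$ is degenerate at $t=0$ for deterministic initial data on $\mathrm{B}(q,\epsilon^\beta)=\mathrm{B}((q,0),\epsilon^\beta)$. I expect the bulk of the work to be in importing the right trajectorial lemmas from~\cite{LelRamRey,cutoff} — in particular a quantitative statement that, started in a small ball around $(m,0)$, the underdamped process stays in a slightly larger ball for time $O(|\log\epsilon|)$ and relaxes into $\mathcal{M}_\epsilon$ with probability $1-O(\epsilon^\delta)$ — and in checking that the "no crossing to $\mathcal{S}_\epsilon$" event has probability $1-\mathrm{e}^{-c/\epsilon}$ uniformly over starting points in $\mathrm{B}(m,\epsilon^\beta)$, which follows from the large-deviations lower bound on the quasi-potential barrier between the two wells but requires the hypoelliptic rate function and its lower-semicontinuity. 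Everything else — the strong Markov decomposition, the Gronwall contraction near $m$, and the final renewal bookkeeping — is routine once these two ingredients are in place.
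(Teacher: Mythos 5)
Your proposal takes a genuinely different (probabilistic, trajectorial) route from the paper, which proves the statement by an \emph{analytic} argument: it rescales the unknown function, setting $u_{\epsilon}(q,p)=\mathbb{E}_{(m+\sqrt{\epsilon}\,q,\,-\sqrt{\epsilon}\,p)}[\tau^{\epsilon}_{\mathcal{S}_{\epsilon}}]$, approximates the generator by a regularized elliptic operator, and then applies a hypoelliptic Harnack inequality and H\"older estimate (with $\epsilon$-uniform constants, precisely because the $\sqrt{\epsilon}$-rescaling makes the coefficients $\epsilon$-independent) to conclude $|u_{\epsilon}(x)-u_{\epsilon}(0,0)|\le (c_1 + c_2 u_\epsilon(0,0))\,|x|^{\lambda}$. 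Combined with a soft lower bound $\liminf u_{\epsilon}(0,0)>0$, this gives the statement with $\delta=\lambda(\beta-1/2)$. This also explains the exponent $\beta>1/2$: after rescaling, $\mathrm{B}(m,\epsilon^{\beta})$ becomes a ball of radius $\epsilon^{\beta-1/2}$ which shrinks exactly when $\beta>1/2$. Your heuristic about noise of size $\sqrt{\epsilon|\log\epsilon|}$ dominating or not dominating $\epsilon^{\beta}$ is not the mechanism.

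There is a genuine gap in your proposal: the renewal decomposition does not close. You write $\mathbb{E}_{x}(\tau_{\mathcal{S}_{\epsilon}}^{\epsilon})=\mathbb{E}_{x}(\text{short phase})+\mathbb{E}_{x}(\text{remaining time}\mid\text{back near }m)+(\text{correction})$, and identify the middle term with $\mathbb{E}_{(m,0)}(\tau_{\mathcal{S}_{\epsilon}}^{\epsilon})$. But what the strong Markov property at the hitting time $\sigma$ of $\partial\mathcal{M}_{\epsilon}$ actually gives is
\begin{equation*}
\mathbb{E}_{x}(\tau_{\mathcal{S}_{\epsilon}}^{\epsilon})=\mathbb{E}_{x}[\sigma]+\mathbb{E}_{x}\!\left[\mathbb{E}_{X^{\epsilon}(\sigma)}(\tau_{\mathcal{S}_{\epsilon}}^{\epsilon})\right]+O(\mathrm{e}^{-c/\epsilon})\mathbb{E}_{\cdot}(\tau_{\mathcal{S}_{\epsilon}}^{\epsilon})\;,
\end{equation*}
and the inner expectation $\mathbb{E}_{y}(\tau_{\mathcal{S}_{\epsilon}}^{\epsilon})$ is evaluated at a random point $y=X^{\epsilon}(\sigma)\in\partial\mathcal{M}_{\epsilon}\subset\mathrm{B}(m,\epsilon^{\beta})$ --- precisely the class of starting points you were trying to control in the first place. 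Nothing makes this average equal to the value at the single point $(m,0)$: you would need either (a) uniformity of $y\mapsto\mathbb{E}_{y}(\tau_{\mathcal{S}_{\epsilon}}^{\epsilon})$ over $\partial\mathcal{M}_{\epsilon}$ with error $O(\epsilon^{\delta})$, which is exactly the statement being proved, or (b) a Doeblin-type minorization / successful-coupling argument showing that the hitting distribution on $\partial\mathcal{M}_{\epsilon}$ (or the law at a fixed short time) is almost independent of the starting point. The latter is a legitimate alternative route, but it is not ``routine bookkeeping'': for the hypoelliptic dynamics it amounts to a local Harnack inequality or an explicit density lower bound, which is precisely the hard, degenerate-PDE input that the paper sources from its cited hypoelliptic Harnack reference after the $\sqrt{\epsilon}$-rescaling. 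In short, the ingredients you flag as the main obstacles (large deviations, exit estimates) are indeed routine; the one you label a ``standard renewal-type estimate'' is where the actual content lies and where your argument, as written, is circular.
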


We next introduce the key propositions providing sharp asymptotics when $\epsilon\rightarrow0$
of the two terms at the right-hand side of \eqref{eq:magicf}. 
Recall from \eqref{eq:partition} that $Z_{\epsilon}$ represents the
partition function and from Notation \ref{not:main} that $\mathbb{H}_{U}^{q}$ denotes the Hessian matrix of $U$ at $q\in\R^d$. 
\begin{prop}
\label{prop:main1} We have that 
\begin{equation}
\int_{\mathbb{R}^{2d}}h_{\mathcal{M}_{\epsilon},\,\mathcal{S}_{\epsilon}}^{*}(x)\,\mu_{\epsilon}(x)\mathrm{d}x=[1+o_{\epsilon}(1)]\frac{1}{Z_{\epsilon}}(2\pi\epsilon)^{d}\frac{1}{\sqrt{\det\mathbb{H}_{U}^{m}}}e^{-U(m)/\epsilon}\;.
\end{equation}
\end{prop}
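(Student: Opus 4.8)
The plan is to show that $h^{*}_{\mathcal{M}_{\epsilon},\mathcal{S}_{\epsilon}}\mu_{\epsilon}$ is, up to a super-exponentially small error, concentrated on a tiny neighbourhood of $(m,0)$, where $h^{*}_{\mathcal{M}_{\epsilon},\mathcal{S}_{\epsilon}}\approx 1$ and $\mu_{\epsilon}$ is a sharply peaked Gaussian treatable by Laplace's method. I would first note that, since $V(q,p)=V(q,-p)$, the measure $\mu_{\epsilon}$ is invariant under $p\mapsto -p$; combining this change of variables with \eqref{eq:def h^*} gives $\int_{\R^{2d}}h^{*}_{\mathcal{M}_{\epsilon},\mathcal{S}_{\epsilon}}\mu_{\epsilon}=\int_{\R^{2d}}h_{\mathcal{M}_{\epsilon},\mathcal{S}_{\epsilon}}\mu_{\epsilon}$, so it suffices to estimate the latter. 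Then I would fix $\delta\in(0,U(\sigma)-U(m))$ small enough that the bounded sublevel set $\{V<U(\sigma)-\delta\}$ has exactly two connected components $W_{m}\ni(m,0)$ and $W_{s}\ni(s,0)$ --- which holds because $U$ is a Morse double-well with only two minima and one saddle --- and put $W_{0}:=\{V\ge U(\sigma)-\delta\}$, so that $\R^{2d}=W_{m}\sqcup W_{s}\sqcup W_{0}$; the integral is split over these three pieces.

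On $W_{0}$, using $h\le 1$ and the elementary inequality $e^{-V/\epsilon}\le e^{U(\sigma)-\delta}\,e^{-(U(\sigma)-\delta)/\epsilon}\,e^{-V}$, valid on $W_{0}$ for $\epsilon\le 1$, together with $\int e^{-V}<\infty$ (Proposition~\ref{prop:tight}), one obtains $\int_{W_{0}}h_{\mathcal{M}_{\epsilon},\mathcal{S}_{\epsilon}}\mu_{\epsilon}\le (C_{\delta}/Z_{\epsilon})\,e^{-(U(\sigma)-\delta)/\epsilon}$. On $W_{s}$, where $h\equiv 0$ on $\mathcal{S}_{\epsilon}$, I would invoke the sharp upper bound on the equilibrium potential coming from the trajectorial analysis of \eqref{eq:sde} based on \cite{LelRamRey,cutoff}: reaching $\mathcal{M}_{\epsilon}\subset W_{m}$ from $W_{s}$ without first hitting $\mathcal{S}_{\epsilon}$ forces the process to climb to the level $U(\sigma)-\delta$, an event of probability at most $e^{-(U(\sigma)-U(s)-\delta-o_{\epsilon}(1))/\epsilon}$ uniformly on $W_{s}$; combined with the crude bound $\mu_{\epsilon}(W_{s})\le (C/Z_{\epsilon})e^{-U(s)/\epsilon}$, this bounds $\int_{W_{s}}h_{\mathcal{M}_{\epsilon},\mathcal{S}_{\epsilon}}\mu_{\epsilon}$ by $(C/Z_{\epsilon})e^{-(U(\sigma)-\delta-o_{\epsilon}(1))/\epsilon}$. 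Since $U(\sigma)>U(m)$, for $\delta$ small both contributions are $o_{\epsilon}(1)\cdot Z_{\epsilon}^{-1}(2\pi\epsilon)^{d}e^{-U(m)/\epsilon}$ and do not affect the leading order.

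It remains to evaluate $\int_{W_{m}}h_{\mathcal{M}_{\epsilon},\mathcal{S}_{\epsilon}}\mu_{\epsilon}$, which produces the main term. The bound $h\le 1$ gives $\int_{W_{m}}h_{\mathcal{M}_{\epsilon},\mathcal{S}_{\epsilon}}\mu_{\epsilon}\le\mu_{\epsilon}(W_{m})$, while for the matching lower bound I would fix $\beta\in(1/3,1/2)$, observe that $\mathrm{B}((m,0),\epsilon^{\beta})\subset W_{m}$ for small $\epsilon$, and use the companion trajectorial estimate $h_{\mathcal{M}_{\epsilon},\mathcal{S}_{\epsilon}}\ge 1-o_{\epsilon}(1)$ on $\mathrm{B}((m,0),\epsilon^{\beta})$ to get $\int_{W_{m}}h_{\mathcal{M}_{\epsilon},\mathcal{S}_{\epsilon}}\mu_{\epsilon}\ge [1-o_{\epsilon}(1)]\,\mu_{\epsilon}(\mathrm{B}((m,0),\epsilon^{\beta}))$. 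Both $\mu_{\epsilon}(W_{m})$ and $\mu_{\epsilon}(\mathrm{B}((m,0),\epsilon^{\beta}))$ are then computed by Laplace's method at the nondegenerate minimum $(m,0)$ of $V$, whose Hessian is the block matrix with diagonal blocks $\mathbb{H}_{U}^{m}$ and $I_{d}$: a Taylor expansion of $V$ on the ball (cubic remainder $O(\epsilon^{3\beta})$, hence relative error $O(\epsilon^{3\beta-1})=o_{\epsilon}(1)$ since $\beta>1/3$), the Gaussian integral $(2\pi\epsilon)^{d}(\det\mathbb{H}_{U}^{m})^{-1/2}$, negligibility of the tail outside $\mathrm{B}((m,0),\epsilon^{\beta})$ (valid since $\beta<1/2$, so $\epsilon^{\beta}\gg\sqrt\epsilon$), and the uniform bound $V\ge U(m)+c_{0}\epsilon^{2\beta}$ on $W_{m}\setminus\mathrm{B}((m,0),\epsilon^{\beta})$ together give $\mu_{\epsilon}(W_{m})=[1+o_{\epsilon}(1)]\,\mu_{\epsilon}(\mathrm{B}((m,0),\epsilon^{\beta}))=[1+o_{\epsilon}(1)]\,Z_{\epsilon}^{-1}(2\pi\epsilon)^{d}(\det\mathbb{H}_{U}^{m})^{-1/2}e^{-U(m)/\epsilon}$. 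Sandwiching and adding the negligible $W_{0}$ and $W_{s}$ contributions yields the stated asymptotic.

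The routine part is Laplace's method; the genuine obstacle is supplying the two sharp pointwise estimates on $h_{\mathcal{M}_{\epsilon},\mathcal{S}_{\epsilon}}$ used above --- namely $h\ge 1-o_{\epsilon}(1)$ on $\mathrm{B}((m,0),\epsilon^{\beta})$ and the exponential smallness of $h$ on $W_{s}$ relative to $e^{-(U(m)-U(s))/\epsilon}$. For the hypoelliptic dynamics \eqref{eq:sde} these are inaccessible to the usual potential-theoretic comparison (Dirichlet/Thomson) arguments, and must instead be extracted from a careful control of the trajectories of the underdamped process --- and of the same process killed on $\partial W_{m}$ or $\partial W_{s}$ --- along the lines of \cite{LelRamRey,cutoff}. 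The most delicate point is that the target $\mathcal{M}_{\epsilon}=\mathrm{B}(m,\epsilon)$ shrinks as $\epsilon\to 0$, so that establishing $h\approx 1$ near $(m,0)$ is a quantitative recurrence statement --- before escaping the well of $m$, the metastable dynamics must actually enter a ball of radius $\epsilon$ --- rather than a soft one.
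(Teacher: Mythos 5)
Your overall architecture is the same as the paper's: pass from $h^{*}$ to $h$ via the $p\mapsto -p$ symmetry, decompose $\mathbb R^{2d}$ into a high-energy collar, the well of $(m,0)$, and the well of $(s,0)$, control the collar by the tail estimate of Proposition~\ref{prop:tight}, extract the main term by Laplace at $(m,0)$ using $h\approx 1$ there, and kill the $s$-well. Your treatments of the $W_{0}$ and $W_{m}$ pieces are sound, and the identification of the trajectorial estimates on $h$ as the genuine input (supplied in the paper by Proposition~\ref{prop:ineq h_AB}, itself resting on Propositions~\ref{lem:first ineq h_AB} and~\ref{prop:bound exponential exit time}) is correct. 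The gap is in the $W_{s}$ piece, and it is not cosmetic.

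You bound $\int_{W_{s}} h\,\mu_{\epsilon}$ by $\|h\|_{L^{\infty}(W_{s})}\cdot\mu_{\epsilon}(W_{s})$ and claim the uniform estimate $h(x)\le e^{-(U(\sigma)-U(s)-\delta-o_{\epsilon}(1))/\epsilon}$ for all $x\in W_{s}$. This cannot hold: the hitting probability $h(x)=\mathbb P_{x}(\tau_{\mathcal M_{\epsilon}}<\tau_{\mathcal S_{\epsilon}})$ grows as $x$ approaches $\partial W_{s}=\{V=U(\sigma)-\delta\}$, where the remaining energy gap to the separatrix is only $\delta$; the bound actually proved (Proposition~\ref{prop:ineq h_AB}) is the $V$-dependent inequality $h(x)\le \tfrac{C}{\epsilon^{\beta}}e^{(V(x)-U(\sigma))/\epsilon}$, whose supremum over $W_{s}$ is of order $\tfrac{C}{\epsilon^{\beta}}e^{-\delta/\epsilon}$, attained near $\partial W_{s}$, not $e^{-(U(\sigma)-\delta-U(s))/\epsilon}$. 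Plugging the correct supremum into your sup-times-measure bound gives $\tfrac{C}{\epsilon^{\beta}Z_{\epsilon}}(2\pi\epsilon)^{d}e^{-(U(s)+\delta)/\epsilon}$, which is negligible against the target $\tfrac{1}{Z_{\epsilon}}(2\pi\epsilon)^{d}e^{-U(m)/\epsilon}$ only if $\delta>U(m)-U(s)$; since you also need $\delta<U(\sigma)-U(m)$ for $W_{0}$, the two constraints are compatible only when $U(\sigma)+U(s)>2U(m)$, which the assumptions do not guarantee. The difficulty is genuine: $s$ is the \emph{global} minimum, so $\mu_{\epsilon}(W_{s})$ dominates $\mu_{\epsilon}(W_{m})$ exponentially, and the compensating smallness of $h$ must be exploited pointwise, not in supremum. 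The paper integrates the $V$-dependent bound, letting the exponentials cancel exactly,
\[
\int_{\mathcal A\cap\mathcal W_{s}}h\,\mu_{\epsilon}\le \frac{C}{\epsilon^{\beta}Z_{\epsilon}}\int_{\mathcal A\cap\mathcal W_{s}}e^{(V(x)-U(\sigma))/\epsilon}e^{-V(x)/\epsilon}\,\mathrm d x=\frac{C|\mathcal A|}{\epsilon^{\beta}Z_{\epsilon}}e^{-U(\sigma)/\epsilon}\,,
\]
which is $o_{\epsilon}(1)\alpha_{\epsilon}$ with no constraint on $\delta$ beyond $\delta<U(\sigma)-U(m)$. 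Replacing your sup-bound argument on $W_{s}$ by this integrated bound closes the gap.
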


Next recall that $\mu^{\sigma}$ is the constant introduced in Theorem~\ref{thm:ek}. 
\begin{prop}
\label{prop:main2} We have that 
\begin{equation}
\mathrm{Cap}_{\epsilon}(\mathcal{M}_{\epsilon},\,\mathcal{S}_{\epsilon})=[1+o_{\epsilon}(1)]\frac{1}{Z_{\epsilon}}\frac{(2\pi\epsilon)^{d}}{2\pi}\frac{\mu^{\sigma}}{\sqrt{-\det\mathbb{H}_{U}^{\sigma}}}e^{-U(\sigma)/\epsilon}\;.
\end{equation}
 
\end{prop}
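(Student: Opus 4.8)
The plan is to evaluate the integral in \eqref{eq:cap} by choosing a convenient test function $f$ and then carrying out a Laplace-type asymptotic analysis localized near the saddle point $\sigma$. Since Proposition \ref{prop:cap} guarantees that $C_{\epsilon}(\mathcal{M}_{\epsilon},\,\mathcal{S}_{\epsilon})$ is independent of the admissible choice of $f$, I would pick $f$ to be a smooth cutoff that equals $1$ on $\mathcal{M}_{\epsilon}$ and on a neighborhood of the ``$m$-side'' of the saddle, equals $0$ near the ``$s$-side'' and on $\mathcal{S}_{\epsilon}$, and interpolates across a thin slab transverse to the one-dimensional unstable manifold through $\sigma$. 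With such a choice, $\mathcal{L}_\epsilon f$ is supported in that slab, and on the slab the equilibrium potential $h_{\mathcal{M}_{\epsilon},\,\mathcal{S}_{\epsilon}}$ (hence $h^*_{\mathcal{M}_{\epsilon},\,\mathcal{S}_{\epsilon}}$) is, up to $o_\epsilon(1)$ errors, essentially the indicator of being on the $m$-side, by the large-deviations picture and the fine trajectorial control imported from \cite{LelRamRey,cutoff}. This replaces the awkward product $h^*_{\mathcal{M}_{\epsilon},\,\mathcal{S}_{\epsilon}}\cdot(-\mathcal{L}_\epsilon f)$ inside the integral by $-\mathcal{L}_\epsilon f$ times a sharp transition profile concentrated around the codimension-one separating surface.

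Next I would perform the local analysis. Near $\sigma$, write $U(q)\approx U(\sigma)+\tfrac12\langle \mathbb{H}_U^\sigma(q-\sigma),\,q-\sigma\rangle$ and diagonalize $\mathbb{H}_U^\sigma$, separating the unique negative eigenvalue $-\lambda^\sigma$ (eigendirection $e_1$) from the positive ones. In the linearized Hamiltonian dynamics the relevant growing mode has rate $\mu^\sigma=\tfrac{-\gamma+\sqrt{\gamma^2+4\lambda^\sigma}}{2}$, which is exactly why this constant appears: the natural $\epsilon$-independent ``optimal'' test function is the one whose transition layer is aligned with the stable/unstable directions of the linearized flow $\dot q = p,\ \dot p = \lambda^\sigma q - \gamma p$ in the $(q_1,p_1)$ plane, and for this profile one computes $-\mathcal{L}_\epsilon f$ explicitly as a Gaussian in the transverse-to-layer coordinate. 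Integrating $e^{-V/\epsilon}$ against this, one gets: the factor $e^{-U(\sigma)/\epsilon}$ from the height of the barrier; $(2\pi\epsilon)^{d-1}/\sqrt{\prod_{j\geq2}\lambda_j^\sigma}$ from the $d-1$ stable $q$-directions where $V$ is minimized along the ridge; $(2\pi\epsilon)^{d}$-type contributions from the $p$-Gaussian $e^{-|p|^2/2\epsilon}$; and a one-dimensional factor $\mu^\sigma/(2\pi)\cdot(2\pi\epsilon)/\sqrt{\lambda^\sigma}$ coming from the $(q_1,p_1)$ layer integral, in which the $\mu^\sigma$ arises from the slope of the optimal profile and $\sqrt{\lambda^\sigma}$ from the remaining Gaussian weight. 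Collecting these and using $-\det \mathbb{H}_U^\sigma = \lambda^\sigma\prod_{j\geq2}\lambda_j^\sigma$ yields precisely $[1+o_\epsilon(1)]\,Z_\epsilon^{-1}(2\pi\epsilon)^d(2\pi)^{-1}\mu^\sigma(-\det\mathbb{H}_U^\sigma)^{-1/2}e^{-U(\sigma)/\epsilon}$.

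The upper and lower bounds should then be matched in the classical two-sided way: the computation above, made with the near-optimal explicit $f$, gives an upper bound on $C_\epsilon$ up to $[1+o_\epsilon(1)]$ (after justifying that the contributions of $\mathcal{L}_\epsilon f$ away from $\sigma$, and the error from replacing $h^*_{\mathcal{M}_{\epsilon},\,\mathcal{S}_{\epsilon}}$ by the sharp profile, are exponentially negligible or $o_\epsilon(1)$ relative to the main term); the matching lower bound follows either from a dual variational characterization of $C_\epsilon$ in the spirit of \cite{JS}, or — more in keeping with the present hypoelliptic framework — by exploiting the fact that \eqref{eq:cap} holds for \emph{every} admissible $f$, so that testing against a family of profiles that sweep through the optimal one forces the value to be at least the claimed asymptotic. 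I expect the main obstacle to be the rigorous justification that $h_{\mathcal{M}_{\epsilon},\,\mathcal{S}_{\epsilon}}$ is genuinely close to the sharp indicator-type profile on the thin slab near $\sigma$ at the relevant scale, with quantitative $o_\epsilon(1)$ control: this is where the degeneracy bites, since one cannot use elliptic boundary regularity of $\nabla_p h_{\mathcal{M}_{\epsilon},\,\mathcal{S}_{\epsilon}}$, and one must instead lean on the probabilistic trajectory estimates for the absorbed underdamped process from \cite{LelRamRey,Hill,cutoff} to show that, started from the slab, the process commits to one well or the other before any appreciable probability of crossing back, uniformly in the starting point.
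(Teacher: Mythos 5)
Your high-level strategy matches the paper's: use Proposition~\ref{prop:cap} to pick a convenient test function, localize near $(\sigma,0)$, identify the linearized unstable mode with rate $\mu^\sigma$, and extract the prefactor from a Gaussian/Laplace analysis while exploiting the sharp pointwise control on $h^*_{\mathcal{M}_{\epsilon},\,\mathcal{S}_{\epsilon}}$ from Proposition~\ref{prop:ineq h_AB}. But your final paragraph contains a conceptual error that must be removed before this can become a proof. The identity \eqref{eq:cap} is not a variational characterization: by Proposition~\ref{prop:cap} the integral $\int_{\mathbb{R}^{2d}} h^*_{\mathcal{M}_{\epsilon},\,\mathcal{S}_{\epsilon}}(x)(-\mathcal{L}_\epsilon f(x))\,\mu_\epsilon(\mathrm{d}x)$ \emph{equals} $C_\epsilon(\mathcal{M}_\epsilon,\mathcal{S}_\epsilon)$ for \emph{every} admissible $f$. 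So there are no upper and lower bounds to match: one good $f$, with two-sided $o_\epsilon(1)$ control of the error terms, immediately gives $C_\epsilon=[1+o_\epsilon(1)]\alpha_\epsilon$. Your appeal to a ``dual variational characterization'' or to ``a family of profiles sweeping the optimum'' reintroduces the Dirichlet/Thomson machinery that the paper (following \cite{JS}) is specifically designed to bypass, since it is unavailable here; this is the central point the construction is built around.

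There is also a gap in the mechanism you describe. A single thin-slab cutoff $f$ concentrated near $\sigma$ would, as $\theta\to0$, reduce $\int h^*(-\mathcal{L}_\epsilon f)\mu_\epsilon$ to a surface integral of $h^*$ across the separatrix -- but there $h^*$ is \emph{not} close to an indicator: it runs through all of $(0,1)$ over a layer of width $\sqrt\epsilon$, and Proposition~\ref{prop:ineq h_AB} gives no useful control at that scale, only deep inside $\mathcal{W}_m$ or $\mathcal{W}_s$. This is why the paper uses a two-scale test function: an error-function profile $j_\epsilon$ (chosen so that the linearized generator $\widetilde{\mathcal{L}}_\epsilon$ annihilates it, hence $-\mathcal{L}_\epsilon j_\epsilon$ is \emph{small}, not Gaussian -- see Proposition~\ref{prop:control Lj_epsilon}) on a box $\mathcal{K}_\epsilon$ of size $\sqrt{\epsilon\log(1/\epsilon)}$, glued by an outer cutoff $\varphi_\theta$ to the constant values $1$ and $0$ on $\mathcal{J}_\epsilon^m$ and $\mathcal{J}_\epsilon^s$. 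Taking $\theta\to0$ moves the surface integral onto the faces $\partial\mathcal{K}_\epsilon^{\pm}$, which sit far enough into each well that $h^*$ is exponentially close to $1$ (resp.\ $0$) there by Proposition~\ref{prop:ineq h_AB}; the Gaussian mass producing $\alpha_\epsilon$ comes from the factor $1-j_\epsilon$ in that surface integral (Proposition~\ref{prop:asymptotics boundary integral}), not from $-\mathcal{L}_\epsilon f$ itself. Without this two-scale decoupling, the computation on your slab runs into exactly the degeneracy you worry about at the end, and the estimates you hope to import from \cite{LelRamRey,Hill,cutoff} do not close the gap.
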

The proofs of Propositions \ref{prop:main1} and \ref{prop:main2} are
given in Section~\ref{sec:Estimate of numerator} and Section~\ref{sec:Esitmate of dominator}, respectively. At this moment, we complete the proof of Theorem~\ref{thm:ek} by assuming these propositions.  

\begin{proof}[Proof of Theorem~\ref{thm:ek}]
By Proposition~\ref{prop:harm} along with Propositions~\ref{prop:main1}
and~\ref{prop:main2}, 
\[
\int_{\partial\mathcal{M}_{\epsilon}}\mathbb{E}_{x}(\tau_{\mathcal{S}_{\epsilon}}^{\epsilon})\,\nu_{\epsilon}(\mathrm{d}x)=[1+o_{\epsilon}(1)]\frac{2\pi}{\mu^{\sigma}}\sqrt{\frac{-\det\mathbb{H}_{U}^{\sigma}}{\det\mathbb{H}_{U}^{m}}}\mathrm{e}^{(U(\sigma)-U(m))/\epsilon}\;.
\]
In order to conclude the proof it is sufficient to show that 
\begin{equation}
\int_{\partial\mathcal{M}_{\epsilon}}\mathbb{E}_{x}(\tau_{\mathcal{S}_{\epsilon}}^{\epsilon})\,\nu_{\epsilon}(\mathrm{d}x)=\mathbb{E}_{(m,\,0)}(\tau_{\mathcal{S}_{\epsilon}}^{\epsilon})[1+o_{\epsilon}(1)]\;.\label{eq:apu}
\end{equation}
To this end, take any $\beta\in(1/2,\,1]$ so that $\partial\mathcal{M}_{\epsilon}\subset\mathrm{B}(m,\,\epsilon^{\beta})$
for $\epsilon\in(0,\,1)$. By Proposition~\ref{prop:control v}, there
exist constants $c,\,\delta,\,\epsilon_{0}>0$ such that for all $x\in\partial\mathcal{M}_{\epsilon}$
and $\epsilon\in(0,\,\epsilon_{0})$, 
\[
\mathbb{E}_{(m,\,0)}(\tau_{\mathcal{S}_{\epsilon}}^{\epsilon})(1-c\epsilon^{\delta})\leq\mathbb{E}_{x}(\tau_{\mathcal{S}_{\epsilon}}^{\epsilon})\leq\mathbb{E}_{(m,\,0)}(\tau_{\mathcal{S}_{\epsilon}}^{\epsilon})(1+c\epsilon^{\delta})\;.
\]
As a result, 
\begin{align*}
\left|\int_{\partial\mathcal{M}_{\epsilon}}\mathbb{E}_{x}(\tau_{\mathcal{S}_{\epsilon}}^{\epsilon})\nu_{\epsilon}(\mathrm{d}x)-\mathbb{E}_{(m,\,0)}(\tau_{\mathcal{S}_{\epsilon}}^{\epsilon})\right| & \leq\mathbb{E}_{(m,\,0)}(\tau_{\mathcal{S}_{\epsilon}}^{\epsilon})\int_{\partial\mathcal{M}_{\epsilon}}\left|\frac{\mathbb{E}_{x}(\tau_{\mathcal{S}_{\epsilon}}^{\epsilon})}{\mathbb{E}_{(m,\,0)}(\tau_{\mathcal{S}_{\epsilon}}^{\epsilon})}-1\right|\nu_{\epsilon}(\mathrm{d}x)\\
 & \leq c\epsilon^{\delta}\mathbb{E}_{(m,\,0)}(\tau_{\mathcal{S}_{\epsilon}}^{\epsilon})\;,
\end{align*}
which concludes the proof of \eqref{eq:apu}. 
\end{proof}
\textbf{Outline of the article.} This
work is divided in several sections, each one focusing on the proof
of a main result. 

\begin{itemize}
 
   \item Section~\ref{sec3:recurrence} provides some key
properties satisfied by the process~\eqref{eq:sde} such as the positive
recurrence. 

   \item Sections~\ref{sec4:magicf} and \ref{sec5:lem_unifc} are dedicated to the proof of Propositions~\ref{prop:cap} and~\ref{prop:harm}. 

   \item Section~\ref{sec:Proof of prop:control w}
is devoted to the proof of Proposition~\ref{prop:control v}.  

   \item Sections~\ref{sec:Estimate of numerator}, \ref{sec:density estimate}, \ref{sec:Lyapunov H} and \ref{sec:proof op Prop bound exponential exit time} are dedicated to the proof of Proposition~\ref{prop:main1}  

\item Section~\ref{sec:Esitmate of dominator} focuses on the
proof of Proposition~\ref{prop:main2}.
\end{itemize}

\section{Preliminary analysis\label{sec3:recurrence}}

We prove here several key preliminary properties satisfied by the
underdamped Langevin process \eqref{eq:sde}. To simplify the discussion,
we henceforth always assume that $\epsilon\in(0,\;1)$, unless otherwise
specified. 

\subsection{Tightness of the invariant measure }

We first establish the tightness of the invariant measure \eqref{eq:invm}
which implies in particular the finiteness of the partition function. We recall from \eqref{eq:ham} the definition of the function $V$. 
 
\begin{prop}
\label{prop:tight}For any $a\geq0$, there exists a constant $C_{a}>0$
independent of $\epsilon\in(0,\,1)$ such that 
\begin{equation}
\int_{x\in\mathbb{R}^{2d}:V(x)\ge a}e^{-V(x)/\epsilon}\mathrm{d}x\le C_{a}e^{-a/\epsilon}\;.\label{eq:tightness stationary distrib}
\end{equation}
In particular, by the non-negativity of $U$ (cf. Assumption \ref{ass:non-negU}) which immediately implies the non-negativity of $V$, the partition function
$Z_{\epsilon}$ defined in \eqref{eq:partition} is finite. 
\end{prop}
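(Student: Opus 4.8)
The plan is to reduce the statement to the $\epsilon$-independent integrability of $e^{-U}$ over $\mathbb{R}^d$, and to obtain the latter from the first growth condition in \eqref{eq:nablaU_cond}. For the reduction, I would use the elementary observation that on $\{V\ge a\}$, since $V(x)-a\ge 0$ and $1/\epsilon\ge 1$ for $\epsilon\in(0,1)$, one has $\frac{V(x)-a}{\epsilon}\ge V(x)-a$, hence
\[
e^{-V(x)/\epsilon}=e^{-a/\epsilon}\,e^{-(V(x)-a)/\epsilon}\le e^{a}\,e^{-a/\epsilon}\,e^{-V(x)}\qquad\text{whenever }V(x)\ge a.
\]
Integrating this over $\{x:V(x)\ge a\}$ and using the product structure $V(q,p)=U(q)+|p|^2/2$ together with $\int_{\mathbb{R}^d}e^{-|p|^2/2}\mathrm{d}p=(2\pi)^{d/2}$ gives
\[
\int_{V(x)\ge a}e^{-V(x)/\epsilon}\mathrm{d}x\le e^{a}\,e^{-a/\epsilon}\,(2\pi)^{d/2}\int_{\mathbb{R}^d}e^{-U(q)}\mathrm{d}q,
\]
so it remains only to show the last integral is finite; then the claim holds with $C_a=e^{a}(2\pi)^{d/2}\int_{\mathbb{R}^d}e^{-U}$, and the case $a=0$ yields $Z_\epsilon<\infty$.

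To prove $\int_{\mathbb{R}^d}e^{-U}<\infty$, I would extract a quadratic lower bound on $U$ at infinity. Writing $2c>0$ for the liminf in the first part of \eqref{eq:nablaU_cond} (positive by the accompanying remark), there is $R>0$ with $\langle q,\nabla U(q)\rangle\ge c(|q|^2+U(q))\ge c|q|^2$ for all $|q|\ge R$, using $U\ge 0$ from Assumption~\ref{ass:non-negU}. Then for fixed $q$ with $\rho:=|q|\ge R$, write $q=\rho\omega$ with $\omega\in S^{d-1}$: the map $r\mapsto U(r\omega)$ is $C^1$ on $[R,\rho]$ with derivative $\omega\cdot\nabla U(r\omega)=\tfrac1r\langle r\omega,\nabla U(r\omega)\rangle\ge cr$, so a one-dimensional integration and $U(R\omega)\ge 0$ give $U(q)\ge\frac{c}{2}(\rho^2-R^2)$. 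Hence $U(q)\ge\frac{c}{2}|q|^2-\frac{cR^2}{2}$ on $\{|q|\ge R\}$, from which $e^{-U}\le e^{cR^2/2}e^{-c|q|^2/2}$ there (integrable), while on the bounded set $\{|q|<R\}$ continuity of $U$ makes $e^{-U}$ bounded; thus $\int_{\mathbb{R}^d}e^{-U}<\infty$ and the proposition follows.

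The argument is almost entirely routine; the only mildly delicate point is the passage in the second paragraph from the differential inequality on $\langle q,\nabla U(q)\rangle$ to a pointwise quadratic lower bound on $U$, handled by the radial integration above. I would also remark that the second condition in \eqref{eq:nablaU_cond} is not needed here: it plays no role in the tightness of the invariant measure and will instead be used for the Lyapunov-type control of the Hamiltonian $V$ (Proposition~\ref{prop:pos_rec}).
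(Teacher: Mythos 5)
Your proof is correct, and it takes a genuinely different, somewhat cleaner route than the paper. The key move you make that the paper does not is the monotonicity observation: on $\{V\ge a\}$ and for $\epsilon\in(0,1)$, one has $(V(x)-a)/\epsilon\ge V(x)-a$, hence $e^{-V(x)/\epsilon}\le e^{a}e^{-a/\epsilon}e^{-V(x)}$. This collapses the whole problem to the single $\epsilon$-independent question of whether $\int_{\mathbb{R}^{2d}}e^{-V}<\infty$, equivalently $\int_{\mathbb{R}^{d}}e^{-U}<\infty$, and makes the $\epsilon$-uniformity of $C_a$ manifest from the start. The paper instead attacks the integral directly and splits the domain three ways (large $|q|$; bounded $|q|$ with bounded $|p|$; bounded $|q|$ with $|p|\ge 2\sqrt{a}$), extracting the $e^{-a/\epsilon}$ factor separately in each regime and then absorbing stray $\epsilon^{d}$, $\epsilon^{d/2}$ factors using $\epsilon<1$. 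Both arguments reduce in the end to the same quadratic lower bound $U(q)\ge\frac{c}{2}|q|^2-\frac{c}{2}R^2$ for $|q|\ge R$, obtained by the same radial integration of the coercivity condition in \eqref{eq:nablaU_cond}; your derivation of it is essentially identical to the paper's (the paper also handles the $U(0)$ and $\sup_{|z|\le M}|\nabla U|$ terms explicitly, which you sidestep by using $U(R\omega)\ge 0$ directly). Your closing remark that the second condition in \eqref{eq:nablaU_cond} is not used here is accurate and matches the paper, which also only invokes the first. Net effect: the student's version is shorter and avoids the case analysis; the paper's version is more explicit about where each part of the domain contributes, which can be useful if one later wants sharper, $a$- or $\epsilon$-dependent constants, but for the statement as written your approach is fully adequate.
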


\begin{proof}
Let us fix $a\ge0$. Since $U$ is non-negative by Assumption \ref{ass:non-negU},
it follows from \eqref{eq:nablaU_cond} that there exist constants
$c,\,M>0$ such that, for all $|q|\geq M$, 
\begin{equation}
\langle q,\,\nabla U(q)\rangle\geq c|q|^{2}\;.\label{eq:311}
\end{equation}
Hence, for $|q|\geq M$, we have 
\begin{align*}
U(q) & =U(0)+\int_{0}^{1}\langle q,\,\nabla U(sq)\rangle\mathrm{d}s\\
 & =U(0)+\int_{0}^{M/|q|}\langle q,\,\nabla U(sq)\rangle\mathrm{d}s+\int_{M/|q|}^{1}\langle q,\,\nabla U(sq)\rangle\mathrm{d}s\\
 & \geq U(0)-M\sup_{|z|\leq M}|\nabla U(z)|+\int_{M/|q|}^{1}\frac{c|sq|^{2}}{s}\mathrm{d}s\\
 & =U(0)-M\sup_{|z|\leq M}|\nabla U(z)|+\frac{c}{2}|q|^{2}-\frac{c}{2}M^{2}\;,
\end{align*}
where the inequality follows from the trivial bound 
\[
\langle q,\,\nabla U(sq)\rangle\ge-|q|\,|\nabla U(sq)|
\]
along with \eqref{eq:311}. In particular, this ensures that there
exists a constant $M_{a}\geq M$ such that 
\begin{equation}
U(q)\geq a+\frac{c}{4}|q|^{2}\;\;\;\text{for all }|q|\ge M_{a}\;.\label{eq:lbUq}
\end{equation}
Now, we have {[}recall the notation $x=(q,\,p)$ from \eqref{eq:xqp}{]}
\begin{equation}
\int_{x\in\mathbb{R}^{2d}:V(x)\ge a}e^{-V(x)/\epsilon}\mathrm{d}x\le\left[\int_{x\in\mathbb{R}^{2d}:V(x)\ge a,\,|q|<M_{a}}+\int_{x\in\mathbb{R}^{2d}:|q|\geq M_{a}}\right]e^{-V(x)/\epsilon}\mathrm{d}x\;.\label{eq:312}
\end{equation}
It follows from~\eqref{eq:lbUq} and the definition of $V$  
that, for all $|q|\geq M_{a}$ and $p\in\mathbb{R}^{d}$, 
\[
V(x)\geq a+\frac{c}{4}|q|^{2}+\frac{1}{2}|p|^{2}\;.
\]
Therefore, a direct computation reveals that the second integral in
the right-hand side of \eqref{eq:312} is bounded by $C\mathrm{e}^{-a/\epsilon}\epsilon^{d}$
for some constant $C>0$ independent of $\epsilon$. 

Now it remains to bound the first integral in the right-hand side
of \eqref{eq:312}. To that end, we start by decomposing this integral
into 
\[
 \left[\int_{x\in\mathbb{R}^{2d}:V(x)\ge a,\,|q|<M_{a},\,|p|<2\sqrt{a}}+\int_{x\in\mathbb{R}^{2d}:|q|<M_{a},\,|p|\geq2\sqrt{a}}\right]e^{-V(x)/\epsilon}\mathrm{d}x\;.
\]
We can bound the first integral at the right-hand side by $C_{a}e^{-a/\epsilon}$
for some constant $C_{a}>0$ independent of $\epsilon$ as the domain of integration is bounded. For the second integral at the right-hand
side, using a trivial bound 
\[
V(x)\geq\frac{1}{2}|p|^{2}\geq a+\frac{1}{4}|p|^{2}\;,
\]
for $|p|\geq2\sqrt{a}$, we get
\[
\int_{x\in\mathbb{R}^{2d}:|q|<M_{a},\,|p|\geq2\sqrt{a}}\leq\mathrm{e}^{-a/\epsilon}|\mathrm{B}_d (0,\,M_{a})|\int_{\mathbb{R}^{d}}\mathrm{e}^{-|p|^{2}/4\epsilon}\mathrm{d}p\leq C_{a}\mathrm{e}^{-a/\epsilon}\epsilon^{d/2}
\]
for some constant $C_{a}>0$ also independent of $\epsilon$, where $\mathrm{B}_d (0,\,M_{a})$ denotes the $d$-dimensional ball centered at the origin with radius $M_a$.  This completes the proof. 

The last assertion of the statement is immediate from the first one
since we can take $a=0$ in \eqref{eq:tightness stationary distrib}
and recall Assumption \ref{ass:non-negU}. 
\end{proof}

\subsection{Non-explosion and positive recurrence }

The argument developed here relies on the use of a suitable Lyapunov function for the underdamped Langevin process~\eqref{eq:sde} which was established in~\cite[Definition 4.1]{cutoff}. We now introduce this function for our specific setting. 
\begin{lem}
\label{lem:lyap}There exist a constant $\lambda>0$, a compact set $\mathcal{K}\subset\mathbb{R}^{2d}$,
and a $C^{2}$-function $H:\mathbb{R}^{2d}\rightarrow[0,\,\infty)$
such that
\begin{equation}
\mathcal{L}_{\epsilon}H(x)+\lambda H(x)\leq0\;\;\;\text{ for all }x\in\mathbb{R}^{2d}\setminus\mathcal{K}\;.\label{eq:lyapunov H}
\end{equation}
\end{lem}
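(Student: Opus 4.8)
The plan is to construct the Lyapunov function $H$ explicitly by combining the Hamiltonian $V$ with a small cross-term involving $\langle q,\,p\rangle$, exactly as in the classical construction for hypoelliptic kinetic equations (and as was done in~\cite{cutoff}). Concretely, I would try an ansatz of the form $H(x) = \exp(\theta\, G(q,\,p))$ or $H(x) = (1 + G(q,\,p))^k$ for a suitable $\theta$ or $k$, where $G(q,\,p) = V(q,\,p) + \eta\,\langle q,\,p\rangle$ with $\eta>0$ a small parameter to be tuned; the cross-term is what breaks the degeneracy of the noise by coupling the $q$-direction (in which there is no diffusion) to the dissipative $p$-direction. The point is that $\mathcal{L}_\epsilon V = -\gamma|p|^2 + \gamma\epsilon d$, which controls $p$ but says nothing about large $q$, whereas $\mathcal{L}_\epsilon\langle q,\,p\rangle = |p|^2 - \gamma\langle q,\,p\rangle - \langle q,\,\nabla U(q)\rangle$, and the term $-\langle q,\,\nabla U(q)\rangle$ is, by the first condition in Assumption~\ref{ass:nabla U}, eventually bounded above by $-c(|q|^2 + U(q))$ for $|q|$ large.

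The key steps, in order, would be: (1) Compute $\mathcal{L}_\epsilon G$ using~\eqref{eq:gen} and the product rule; collecting terms, one gets roughly $\mathcal{L}_\epsilon G = -\gamma|p|^2 + \gamma\epsilon d + \eta|p|^2 - \eta\gamma\langle q,\,p\rangle - \eta\langle q,\,\nabla U(q)\rangle$. (2) For $\eta$ small enough the coefficient of $|p|^2$ is still negative $(-\gamma+\eta<0)$, while the new negative term $-\eta\langle q,\,\nabla U(q)\rangle$ now dominates in the $q$-direction; absorb the cross-term $-\eta\gamma\langle q,\,p\rangle$ using $2|\langle q,\,p\rangle| \le \delta^{-1}|q|^2 + \delta|p|^2$ with $\delta$ chosen so the $|p|^2$ coefficient stays negative and $\delta^{-1}|q|^2$ is dominated by the first condition of~\eqref{eq:nablaU_cond}. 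This yields a bound $\mathcal{L}_\epsilon G \le -\lambda_0(|p|^2 + |q|^2 + U(q)) + C$ outside a large ball, hence $\mathcal{L}_\epsilon G \le -\lambda_0' G + C$ on $\R^{2d}\setminus\mathcal{K}$ for a compact $\mathcal{K}$, using that $G$ is comparable to $V$ for $\eta$ small (so $G$ is nonnegative and grows like $|q|^2+|p|^2+U(q)$). (3) Pass from $G$ to $H$: for $H = \mathrm{e}^{\theta G}$ one has $\mathcal{L}_\epsilon H = \theta H(\mathcal{L}_\epsilon G + \theta\gamma\epsilon|\nabla_p G|^2)$, and since $|\nabla_p G|^2 = |p + \eta q|^2 \lesssim |p|^2 + |q|^2$, taking $\theta$ small enough (depending on $\epsilon$, but we only need existence for fixed $\epsilon$) keeps the bracket bounded above by $-\lambda/\theta$ outside a compact set, giving~\eqref{eq:lyapunov H}. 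One must also check $H$ is $C^2$ and $[0,\infty)$-valued, which is immediate for the exponential ansatz since $G$ is $C^2$ (as $U\in C^2$) and bounded below.

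Alternatively — and this is cleaner since the statement explicitly cites~\cite[Definition 4.1]{cutoff} — I would simply quote the function $H$ constructed there and verify that the drift inequality stated in that reference gives precisely~\eqref{eq:lyapunov H}, possibly after adjusting $\lambda$ and enlarging $\mathcal{K}$; the second condition in~\eqref{eq:nablaU_cond} involving $\beta$ is presumably what is needed there to handle an exponential-type Lyapunov function whose $\Delta_p$ contribution must be dominated. The main obstacle I anticipate is the bookkeeping in step (2): one has to simultaneously choose $\eta$, $\delta$, and the radius of $\mathcal{K}$ so that \emph{all three} bad terms (the $|p|^2$ coefficient, the cross-term, and the constant $\gamma\epsilon d$) are controlled, and verify that the resulting bound is genuinely of the form $-\lambda H$ rather than merely $-\lambda H + (\text{bounded})$ — the latter would only give a weaker "Lyapunov with a bounded perturbation" estimate, which is why confining the failure to a compact set $\mathcal{K}$ is essential and must be done carefully using the $\liminf$ hypotheses rather than pointwise bounds.
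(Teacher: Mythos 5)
Your proposal is essentially correct, and your ``alternative'' route at the end --- quoting the function $H$ from~\cite[Definition 4.1]{cutoff} --- is exactly what the paper does. The paper's $H$ is precisely of the form you guessed, namely $V(q,p) + \tfrac{\gamma-\lambda}{2}\langle q,p\rangle$ augmented by the quadratic term $\tfrac{(\gamma-\lambda)^2}{4}|q|^2$; this extra $|q|^2$ term is what makes $H$ manifestly nonnegative (your $G$ alone is not, since $\tfrac12|p|^2 + \eta\langle q,p\rangle$ can go negative), so the paper never needs the exponential ansatz of your step~(3). Your worry about absorbing the additive constant $d\gamma\epsilon$ is well-placed and is handled in the paper exactly as you anticipate: the radii $M$ and $R$ defining $\mathcal{K}$ are chosen so that the negative terms $-\tfrac{\gamma}{2}|p|^2$ and $-\tfrac{\gamma-\lambda}{2}(c - \lambda\tfrac{\gamma-\lambda}{2})|q|^2$ dominate $d\gamma\epsilon$ outside $\mathcal{K}$ (using $\epsilon < 1$), so one gets $\mathcal{L}_\epsilon H + \lambda H \le 0$ cleanly, not merely a bounded-perturbation estimate. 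One small point: your speculation that the second condition of Assumption~\ref{ass:nabla U} (the one involving $\beta$ and $\Delta U$) is what lets the reference handle an exponential Lyapunov function is not what is going on here --- the paper's proof of this lemma invokes only the first condition of~\eqref{eq:nablaU_cond}, via the lower bound $\langle q,\nabla U(q)\rangle \ge c(|q|^2 + U(q))$; the $\beta$-condition plays no role in this lemma.
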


\begin{proof}
By Assumption \eqref{ass:nabla U}, there exist constants $c,\,M_1 >0$
such that, for all $|q|\geq M_1$, 
\begin{equation}
\langle q,\,\nabla U(q)\rangle\geq c(|q|^{2}+U(q))\;.\label{eq:lower bound grad U}
\end{equation}
Let us now take a constant $\lambda\in(0,\,\gamma)$ small enough
(where $\gamma>0$ is the friction coefficient appeared in~\eqref{eq:sde})
so that 
\begin{equation}
\frac{\lambda(\gamma-\lambda)}{2}<c,\qquad\frac{2\lambda}{\gamma-\lambda}<c\;.\label{eq:lambda}
\end{equation}
Following the work in~\cite[Definition 4.1]{cutoff}, we define 
\begin{equation}
H(x):=\frac{1}{2}|p|^{2}+\frac{\gamma-\lambda}{2}\langle q,\,p\rangle+\frac{(\gamma-\lambda)^{2}}{4}|q|^{2}+U(q)\;.\label{def:H}
\end{equation}
Let us take a constant $M\in(M_{1},\,\infty)$ large enough so that
\begin{equation}
\frac{\gamma-\lambda}{2}\left(c-\lambda\frac{(\gamma-\lambda)}{2}\right)M^{2}\geq d\gamma\;.\label{eq:def M'}
\end{equation}
Then, let us define a constant $R>0$ large enough so that 
\begin{equation}
\frac{\gamma}{2}R^{2}\geq\frac{\gamma-\lambda}{2}\sup_{|q|\leq M}\left|\langle\nabla U(q),\,q\rangle-\frac{2\lambda}{\gamma-\lambda}U(q)-\lambda\frac{(\gamma-\lambda)}{2}|q|^{2}\right|+d\gamma\epsilon\;.\label{eq:lb_R}
\end{equation}
Let us now define the compact set 
\[
\mathcal{K}:=\{x=(q,\,p)\in\mathbb{R}^{2d}:|q|\leq M,\,|p|\leq R\}\;.
\]
A direct computation (see \cite[Lemma 4.3]{cutoff} for more details)
shows that for all $x\in\mathbb{R}^{2d}$, 
\[
\mathcal{L}_{\epsilon}H(x)+\lambda H(x)=-\frac{\gamma}{2}|p|^{2}-\frac{\gamma-\lambda}{2}\left(\langle\nabla U(q),\,q\rangle-\frac{2\lambda}{\gamma-\lambda}U(q)-\lambda\frac{(\gamma-\lambda)}{2}|q|^{2}\right)+d\gamma\epsilon\;.
\]

Now let $x\in\mathbb{R}^{2d}\setminus\mathcal{K}$ so that either
$|p|>R$, $|q|\leq M$ or $|q|>M$. In the former case, it follows
from \eqref{eq:lb_R} that 
\[
\mathcal{L}_{\epsilon}H(x)+\lambda H(x)\leq0\;.
\]
On the other hand, for the case $|q|>M$, by~\eqref{eq:lower bound grad U},
\eqref{eq:lambda}, and the assumption that $U$ is non-negative,
we get
\begin{align*}
\mathcal{L}_{\epsilon}H(x)+\lambda H(x) & \leq-\frac{\gamma-\lambda}{2}\left(\left(c-\frac{2\lambda}{\gamma-\lambda}\right)U(q)+\left(c-\lambda\frac{(\gamma-\lambda)}{2}\right)|q|^{2}\right)+d\gamma\epsilon\\
 & \leq-\frac{\gamma-\lambda}{2}\left(c-\lambda\frac{(\gamma-\lambda)}{2}\right)|q|^{2}+d\gamma\epsilon<0
\end{align*}
where the last inequality follows from \eqref{eq:def M'} (and the
assumption that $\epsilon\in(0,\,1)$).
\end{proof}
Henceforth, the function $H$ and the compact set $\mathcal{K}$ always
refer to the one appeared in the previous lemma. In the next proposition,
we verify that the underdamped Langevin process does not explode in
finite time and is positive recurrent under Assumption \ref{ass:nabla U}.
\begin{prop}
\label{prop:pos_rec} For all $\epsilon\in(0,\,1)$, the process $(X^{\epsilon}(t))_{t\geq0}$
defined in \eqref{eq:sde} does not explode in finite time almost-surely and is
positive recurrent. 
\end{prop}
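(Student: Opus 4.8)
\textbf{Proof plan for Proposition~\ref{prop:pos_rec}.} The plan is to use the Lyapunov function $H$ from Lemma~\ref{lem:lyap} as a standard certificate for both non-explosion and positive recurrence, following the classical approach of Meyn--Tweedie (or Khasminskii). The first preliminary observation I would record is that $H$ is \emph{coercive}, i.e. $H(x)\to\infty$ as $|x|\to\infty$. This follows from the explicit form~\eqref{def:H}: the quadratic part $\frac{1}{2}|p|^{2}+\frac{\gamma-\lambda}{2}\langle q,\,p\rangle+\frac{(\gamma-\lambda)^{2}}{4}|q|^{2}=\frac12\big|p+\frac{\gamma-\lambda}{2}q\big|^2$ is a perfect square, hence nonnegative, and together with $U(q)\ge 0$ and the growth condition~\eqref{eq:nablaU_cond} (which forces $U(q)\to\infty$, as already shown via~\eqref{eq:lbUq} in the proof of Proposition~\ref{prop:tight}) one gets $H(x)\to\infty$. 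I would also note that $H$ is bounded on the compact set $\mathcal{K}$, say by a constant $H_{\max}$.

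\textbf{Non-explosion.} Let $\tau_{n}:=\inf\{t\ge 0:|X^{\epsilon}(t)|\ge n\}$ be the exit time from the ball of radius $n$, and let $\tau_{\infty}=\lim_{n\to\infty}\tau_{n}$ be the explosion time. Fix a starting point $x_{0}$. On $\{t<\tau_n\}$, since $H$ is $C^{2}$, Itô's formula applies to $e^{\lambda t}H(X^{\epsilon}(t))$ (here $\lambda$ is the constant from Lemma~\ref{lem:lyap}), giving
\[
e^{\lambda (t\wedge\tau_n)}H(X^{\epsilon}(t\wedge\tau_n))=H(x_{0})+\int_{0}^{t\wedge\tau_n}e^{\lambda s}\big(\mathcal{L}_{\epsilon}H+\lambda H\big)(X^{\epsilon}(s))\,\mathrm{d}s+(\text{martingale}).
\]
By Lemma~\ref{lem:lyap}, $(\mathcal{L}_{\epsilon}H+\lambda H)(x)\le 0$ for $x\notin\mathcal{K}$, while on $\mathcal{K}$ it is bounded above by some constant $c_{0}=c_0(\epsilon)$ (by continuity and compactness); hence the integrand is bounded above by $c_{0}e^{\lambda s}\mathbf 1_{\mathcal{K}}(X^\epsilon(s))\le c_0 e^{\lambda s}$. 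Taking expectations kills the martingale term and yields $\mathbb{E}_{x_0}[e^{\lambda(t\wedge\tau_n)}H(X^{\epsilon}(t\wedge\tau_n))]\le H(x_{0})+\frac{c_0}{\lambda}e^{\lambda t}$. Since $H\ge 0$ and, by coercivity, $\inf_{|x|\ge n}H(x)=:a_n\to\infty$, we obtain $e^{\lambda t}a_n\,\mathbb{P}_{x_0}(\tau_n\le t)\le H(x_0)+\frac{c_0}{\lambda}e^{\lambda t}$, so $\mathbb{P}_{x_0}(\tau_n\le t)\le (H(x_0)e^{-\lambda t}+c_0/\lambda)/a_n\to 0$ as $n\to\infty$; thus $\mathbb{P}_{x_0}(\tau_\infty\le t)=0$ for every $t$, i.e. the process does not explode in finite time.

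\textbf{Positive recurrence.} Now that the process is known to be non-explosive and (being a nondegenerate-enough diffusion in the sense of satisfying the parabolic Hörmander condition, as it is hypoelliptic) is a Borel right process with the strong Feller/irreducibility properties, positive recurrence follows from a Foster--Lyapunov drift condition. Let $\tau_{\mathcal{K}}:=\inf\{t\ge 0:X^{\epsilon}(t)\in\mathcal{K}\}$ be the return time to $\mathcal{K}$. Applying Dynkin's formula to $H$ with the optional time $t\wedge\tau_{\mathcal{K}}\wedge\tau_n$ and using that $\mathcal{L}_{\epsilon}H\le -\lambda H\le 0$ on $\mathcal{K}^{c}$ gives, after letting $n\to\infty$ (justified by non-explosion and Fatou) and $t\to\infty$ (monotone convergence), $\mathbb{E}_{x}[\tau_{\mathcal{K}}]\le H(x)/\lambda\cdot\!\!\;$—more precisely, since $\mathcal{L}_\epsilon H\le -\lambda H\le -\lambda\,\inf_{\mathcal K^c}H$ is bounded away from $0$ off a larger compact, one gets $\mathbb{E}_{x}[\tau_{\mathcal{K}}]<\infty$ for all $x\notin\mathcal K$; strengthening this to a return time to a \emph{small set} contained in (or containing) $\mathcal{K}$ and invoking the Meyn--Tweedie criterion (e.g. \cite{cutoff} or standard references) yields positive Harris recurrence, hence the existence of a finite invariant measure, which by Proposition~\ref{prop:tight} is the normalized Gibbs measure $\mu_{\epsilon}$. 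Alternatively—and this is perhaps the cleanest route in the paper's context—one invokes the stronger Lyapunov inequality $\mathcal{L}_\epsilon H + \lambda H \le \lambda H_{\max}\mathbf 1_{\mathcal K}$ valid on all of $\mathbb R^{2d}$, which is exactly the Foster--Lyapunov condition (CD3) of Meyn--Tweedie and directly gives geometric ergodicity, in particular positive recurrence.

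\textbf{Main obstacle.} The delicate point is not the drift estimate—Lemma~\ref{lem:lyap} hands that to us—but the \emph{regularity/irreducibility input} needed to pass from a drift condition to genuine (positive Harris) recurrence for a \emph{hypoelliptic} diffusion: one must know that compact sets are ``small'' or ``petite,'' which requires the strong Feller property and irreducibility. For the underdamped Langevin process this is classical (the generator satisfies Hörmander's bracket condition since the vector fields $\partial_{p_i}$ together with their brackets with the drift span the whole tangent space), and it is exactly the kind of property already used in the references \cite{QSDLelRamRey,guillin2024quasi,cutoff}; so I would cite those rather than reprove it. The only genuinely new verification is the coercivity of $H$ together with the global one-sided bound $\mathcal L_\epsilon H + \lambda H \le \lambda H_{\max}\mathbf 1_{\mathcal K}$, both of which are immediate from~\eqref{def:H} and the computation displayed in the proof of Lemma~\ref{lem:lyap}.
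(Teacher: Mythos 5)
Your proposal is correct and follows the same route as the paper: both proofs rest entirely on the Lyapunov function $H$ of Lemma~\ref{lem:lyap}, deduce non-explosion from a Khasminskii/Varadhan-type criterion, and then invoke a Foster--Lyapunov criterion for positive recurrence. The paper simply observes that $\widetilde{H}=H+C_0$ satisfies $\mathcal L_\epsilon\widetilde H\le\widetilde H$ globally and outsources both conclusions to citations (Varadhan's lecture notes for non-explosion, Pinsky's Theorem~6.1.3 for positive recurrence); you instead reprove the non-explosion criterion by a stopping-time/It\^o argument, which is a reasonable self-contained alternative. Two small remarks. First, the claimed identity
\[
\tfrac12|p|^{2}+\tfrac{\gamma-\lambda}{2}\langle q,p\rangle+\tfrac{(\gamma-\lambda)^{2}}{4}|q|^{2}=\tfrac12\bigl|p+\tfrac{\gamma-\lambda}{2}q\bigr|^2
\]
is off: completing the square gives $\tfrac12\bigl|p+\tfrac{\gamma-\lambda}{2}q\bigr|^2+\tfrac{(\gamma-\lambda)^{2}}{8}|q|^{2}$, so the quadratic part is in fact a \emph{strictly} positive-definite form in $(q,p)$; this only strengthens the coercivity of $H$ (and you do not even need $U\to\infty$ to get it). Second, in the estimate $e^{\lambda t}a_n\,\mathbb{P}_{x_0}(\tau_n\le t)\le\cdots$ the factor $e^{\lambda t}$ should be dropped (on $\{\tau_n\le t\}$ one only has $e^{\lambda\tau_n}\ge1$), yielding $a_n\,\mathbb{P}_{x_0}(\tau_n\le t)\le H(x_0)+\tfrac{c_0}{\lambda}e^{\lambda t}$; the conclusion $\mathbb{P}_{x_0}(\tau_n\le t)\to0$ as $n\to\infty$ is unchanged. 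Your discussion of the irreducibility/petite-set input needed to turn the drift bound into positive Harris recurrence, including the verification of H\"ormander's condition, is a fair and useful expansion of what the paper treats implicitly by citing Pinsky.
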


\begin{proof}
We fix $\epsilon\in(0,\,1)$ throughout the proof. Let 
\[
C_{0}:=C_{0}(\epsilon)=\max\left\{ \sup_{x\in\mathcal{K}}\mathcal{L}_{\epsilon}H(x),\,0\right\} 
\]
and let $\widetilde{H}(x)=H(x)+C_{0}$. Since $H\ge0$ and since $\mathcal{L}_{\epsilon}H\le-\lambda H\le0$
on $\mathbb{R}^{2d}\setminus\mathcal{K}$ by Lemma \ref{lem:lyap},
we get 
\[
\mathcal{L}_{\epsilon}\widetilde{H}(x)=\mathcal{L}_{\epsilon}H(x)\leq C_{0}\leq\widetilde{H}(x)
\]
for all $x\in\mathbb{R}^{2d}$. Consequently, by \cite[page 197]{varadhan1980lectures},
the process \eqref{eq:sde} does not explode in finite time almost-surely.
In addition, one can then also deduce from the Lyapunov property in
\eqref{eq:lyapunov H} that the process $(X^{\epsilon}(t))_{t\geq0}$ is
positive recurrent by \cite[Theorem 6.1.3]{pinsky1995positive}. 
\end{proof}

\section{Proof of Proposition~\ref{prop:harm}\label{sec4:magicf} }

The main purpose of this section is to prove Proposition~\ref{prop:harm}. The first step is to consider a perturbation of the process~\eqref{eq:sde}.

\subsection{Perturbed underdamped Langevin process}

For $\alpha>0$, we define the perturbed underdamped Langevin process
$(X^{\alpha,\,\epsilon}(t)=(q^{\alpha,\,\epsilon}(t),\,p^{\alpha,\,\epsilon}(t)))_{t\ge0}$
by
\begin{equation}
\left\{ \begin{aligned} & \mathrm{d}q^{\alpha,\,\epsilon}(t)=p^{\alpha,\,\epsilon}(t)\mathrm{d}t-\alpha\nabla U(q^{\alpha,\,\epsilon}(t))\mathrm{d}t+\sqrt{2\alpha\epsilon}\mathrm{d}\widetilde{B}(t)\;,\\
 & \mathrm{d}p^{\alpha,\,\epsilon}(t)=-\nabla U(q^{\alpha,\,\epsilon}(t))\mathrm{d}t-\gamma p^{\alpha,\,\epsilon}(t)\mathrm{d}t+\sqrt{2\gamma\epsilon}\mathrm{d}B(t)\;,
\end{aligned}
\right.\label{eq:sde_pur}
\end{equation}
where $(\widetilde{B}(t))_{t\geq0}$ is a $d$-dimensional Brownian
motion in $\mathbb{R}^{d}$ independent of $(B(t))_{t\geq0}$. We
summarize the relevant information and notation regarding the process
$(X^{\alpha,\,\epsilon}(t))_{t\ge0}$.
\begin{itemize}
\item The infinitesimal generator associated with the process $(X^{\alpha,\,\epsilon}(t))_{t\geq0}$
can be written as 
\begin{equation}
\mathcal{L}_{\alpha,\,\epsilon}=\mathcal{L}_{\epsilon}+\alpha\left(-\langle\nabla U(q),\,\nabla_{q}\rangle+\epsilon\Delta_{q}\right)\;.\label{eq:gen_pur}
\end{equation}
We can rewrite the generator $\mathcal{L}_{\alpha,\,\epsilon}$ as
\[
\mathcal{L}_{\alpha,\,\epsilon}=\epsilon\mathrm{e}^{V(x)/\epsilon}\mathrm{div}\left(\mathrm{e}^{-V(x)/\epsilon}\mathbb{M}_{\alpha}\nabla\right)
\]
where 
\[
\mathbb{M}_{\alpha}=\begin{pmatrix}\alpha\mathbb{I}_{d} & \mathbb{I}_{d}\\
-\mathbb{I}_{d} & \gamma\mathbb{I}_{d}
\end{pmatrix}\in\mathbb{R}^{2d\times2d}\;.
\]
Therefore, the process $X^{\alpha,\,\epsilon}(t)$ is an overdamped
Langevin process studied in~\cite{LMS} and its generator $\mathcal{L}_{\alpha,\,\epsilon}$ is elliptic.
\item Following the same argument as in Proposition~\ref{prop:pos_rec},
we can readily show that the process $(X^{\alpha,\,\epsilon}(t))_{t\ge0}$
does not explode in finite time and is positive recurrent provided
that $\alpha$ and $\epsilon$ are sufficiently small. Indeed, the
Lyapunov function $H$ introduced in the proof of Proposition~\ref{prop:pos_rec}
also satisfies 
\[
\mathcal{L}_{\alpha,\,\epsilon}H+\lambda H\le0
\]
for small enough $\alpha$ and $\epsilon$, and thus we can repeat
the argument of Proposition~\ref{prop:pos_rec} to the process
$(X^{\alpha,\,\epsilon}(t))_{t\geq0}$. Henceforth, we always assume without
further mention that $\alpha$ and $\epsilon$ are sufficiently small
so that the non-explosion and positive recurrence of the process $(X^{\alpha,\,\epsilon}(t))_{t\ge0}$
are satisfied. Indeed, we only consider the regime $\alpha,\,\epsilon\rightarrow0$ in the rest of this article.
\item Recall the invariant measure $\mu_{\epsilon}$ defined in \eqref{eq:invm}.
Then, the adjoint operator $\mathcal{L}_{\alpha,\,\epsilon}^{*}$
of $\mathcal{L}_{\alpha,\,\epsilon}$ in $\mathrm{L}^{2}(\mu_{\epsilon}(x)\mathrm{d}x)$
is given by 
\begin{equation}
\mathcal{L}_{\alpha,\,\epsilon}^{*}=\epsilon\mathrm{e}^{V(x)/\epsilon}\mathrm{div}\left(\mathrm{e}^{-V(x)/\epsilon}\mathbb{M}_{\alpha}^{\dagger}\nabla\right)\;.\label{eq:gen_pur_adj}
\end{equation}
In particular one can check easily that for $x\in\mathbb{R}^{2d}$,
\[
\mathcal{L}_{\alpha,\,\epsilon}^{*}\big(\mathbf{1}_{\mathbb{R}^{2d}}\big)(x)=0
\]
such that the invariance of the distribution $\mu_{\epsilon}$ is preserved for the
process \eqref{eq:sde_pur} as well. 
\item Note that, if the initial position $x=(q,\,p)\in\mathbb{R}^{2d}$
is given, the laws of the processes $(X^{\epsilon}(t))_{t\ge0}$ and
$(X^{\alpha,\,\epsilon}(t))_{t\ge0}$ are totally determined by the
Brownian motions $(B(t))_{t\ge0}$ and $(\widetilde{B}(t))_{t\ge0}$.
We denote by $\mathbb{P}_{x}$, the law of coupling (by sharing $(B_{t})_{t\ge0}$
and $(\widetilde{B}(t))_{t\ge0}$) of all the processes $(X^{\epsilon}(t))_{t\ge0}$
and $(X^{\alpha,\,\epsilon}(t))_{t\ge0}$ starting from $x$ for any
$\alpha,\,\epsilon>0$. 
\item In addition, we henceforth always assume that, for all $\alpha,\,\epsilon>0$, the processes $(X^{\epsilon}(t))_{t\ge0}$
and $(X^{\alpha,\,\epsilon}(t))_{t\ge0}$ 
under consideration share the same Brownian motions $(B(t))_{t\ge0}$
and $(\widetilde{B}(t))_{t\ge0}$.
\end{itemize}
 
\subsection{Distance between $X^{\epsilon}(t)$ and $X^{\alpha,\,\epsilon}(t)$}

We can heuristically guess that the process $(X^{\alpha,\,\epsilon}(t))_{t\ge0}$
is close to $(X^{\epsilon}(t))_{t\ge0}$ when $\alpha$ is small.
In this subsection we quantify this observation. We start by a lemma
regarding the small time behaviour of the process $(X^{\epsilon}(t))_{t\ge0}$.
Recall the coupling $\mathbb{P}_{x}$ from the previous subsection. 
\begin{lem}
\label{lem:small time asymptotics} Let $E>0$ and let $x=(q,\,p)\in\mathbb{R}^{2d}$
satisfy $p\neq0$. 
\begin{enumerate}
\item Suppose that $V(x)=E$. Then, for all $\delta>0$, 
\[
\mathbb{P}_{x}\left[\sup_{t\in[0,\,\delta]}V(X^{\epsilon}(t))>E\quad\text{and}\quad\inf_{t\in[0,\,\delta]}V(X^{\epsilon}(t))<E\right]=1\;.
\]
\item Suppose that $z\in\mathbb{R}^{d}$ satisfies $|q-z|^{2}+|p|^{2}=E$.
Then, for all $\delta>0$, 
\[
\mathbb{P}_{x}\left[\sup_{t\in[0,\,\delta]}\left(|q^{\epsilon}(t)-z|^{2}+|p^{\epsilon}(t)|^{2}\right)>E\quad\text{and}\quad\inf_{t\in[0,\,\delta]}\left(|q^{\epsilon}(t)-z|^{2}+|p^{\epsilon}(t)|^{2}\right)<E\right]=1\;.
\]
\end{enumerate}
\end{lem}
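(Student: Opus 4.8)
The plan is to reduce each statement to a one-dimensional question about a real-valued continuous semimartingale obtained by composing the path $X^{\epsilon}(\cdot)$ with the relevant function, and to exploit the following phenomenon: although the Brownian noise in \eqref{eq:sde} is degenerate and acts only on the momentum coordinate, the momentum-gradient of both $V$ and of $x\mapsto|q-z|^{2}+|p|^{2}$ at the starting point $x=(q,p)$ is a nonzero multiple of $p$; since $p\neq0$ by hypothesis, the martingale part of the corresponding It\^o decomposition is nondegenerate for small times, which forces the scalar process to oscillate across the level $E$ instantaneously. Concretely, for part (1) a direct computation from \eqref{eq:gen} gives $\mathcal{L}_{\epsilon}V=-\gamma|p|^{2}+\gamma\epsilon d$, so by It\^o's formula
\[
V(X^{\epsilon}(t))=E+\int_{0}^{t}\big(-\gamma|p^{\epsilon}(s)|^{2}+\gamma\epsilon d\big)\,\mathrm{d}s+M_{t},\qquad M_{t}:=\sqrt{2\gamma\epsilon}\int_{0}^{t}\langle p^{\epsilon}(s),\,\mathrm{d}B(s)\rangle,
\]
where $M$ is a continuous local martingale with $\langle M\rangle_{t}=2\gamma\epsilon\int_{0}^{t}|p^{\epsilon}(s)|^{2}\,\mathrm{d}s$. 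For part (2), writing $\phi(q,p)=|q-z|^{2}+|p|^{2}$ one finds $\mathcal{L}_{\epsilon}\phi=2\langle p,\,q-z\rangle-2\langle\nabla U(q),\,p\rangle-2\gamma|p|^{2}+2\gamma\epsilon d$, and the martingale part is $2M_{t}$, with quadratic variation $8\gamma\epsilon\int_{0}^{t}|p^{\epsilon}(s)|^{2}\,\mathrm{d}s$; in both cases the drift is a locally bounded process.

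Next I would localize. Since $p^{\epsilon}(0)=p\neq0$ and $p^{\epsilon}$ has continuous paths, the stopping time $\tau:=\inf\{t\ge0:|p^{\epsilon}(t)|\notin(|p|/2,\,2|p|)\}\wedge1$ is almost surely strictly positive. On $[0,\tau)$ one has $|p|/2\le|p^{\epsilon}(s)|\le2|p|$ and $|q^{\epsilon}(s)-q|\le2|p|$, so the drift integrands above are bounded by a deterministic constant $K$, while $\langle M\rangle_{t}\ge\tfrac12\gamma\epsilon|p|^{2}t$ for $t\in[0,\tau)$; moreover, by continuity of $p^{\epsilon}$ at $0$, $\langle M\rangle_{t}/t\to2\gamma\epsilon|p|^{2}>0$ as $t\downarrow0$. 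Thus, near $t=0$, the scalar process reads ``time-changed Brownian motion plus a drift of size $O(t)$''.

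To conclude, by the Dambis--Dubins--Schwarz theorem there is a Brownian motion $\beta$ (on a possibly enlarged probability space) with $M_{t}=\beta_{\langle M\rangle_{t}}$. The law of the iterated logarithm for $\beta$ at the origin yields $\limsup_{u\downarrow0}\beta_{u}/u=+\infty$ and $\liminf_{u\downarrow0}\beta_{u}/u=-\infty$ almost surely; since $\langle M\rangle_{t}$ decreases continuously to $0$ as $t\downarrow0$ and $\langle M\rangle_{t}/t\to2\gamma\epsilon|p|^{2}$, this gives
\[
\limsup_{t\downarrow0}\frac{M_{t}}{t}=+\infty,\qquad\liminf_{t\downarrow0}\frac{M_{t}}{t}=-\infty\qquad\text{a.s.}
\]
As the drift contributes only a term bounded by $Kt$ for small $t$, we obtain $\limsup_{t\downarrow0}\big(V(X^{\epsilon}(t))-E\big)/t=+\infty$ and $\liminf_{t\downarrow0}\big(V(X^{\epsilon}(t))-E\big)/t=-\infty$ almost surely. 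In particular, for every $\delta>0$ there exist $t,t'\in(0,\delta)$ with $V(X^{\epsilon}(t))>E$ and $V(X^{\epsilon}(t'))<E$, which is exactly (1); part (2) then follows verbatim with $\phi$ in place of $V$ and $2M$ in place of $M$.

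The main obstacle is conceptual rather than computational: the degeneracy of the noise is precisely what makes the hypothesis $p\neq0$ indispensable, since the noise reaches the scalar process only through $\nabla_{p}V=p$ (resp. $\nabla_{p}\phi=2p$), and for $p=0$ the martingale part would vanish to higher order and the argument would collapse. The technical care needed lies in the localization step, ensuring that the a priori uncontrolled drift cannot interfere near $t=0$; once the scalar process is written as a time-changed Brownian motion plus an $O(t)$ term, the instantaneous sign changes of $V(X^{\epsilon}(t))-E$ are an immediate consequence of Brownian path behaviour.
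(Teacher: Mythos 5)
Your proposal is correct and uses the same Itô decomposition as the paper, but it makes rigorous a step the paper treats only informally. The paper writes the same It\^o formula and then simply asserts that the dominant first-order term is $\sqrt{2\gamma\epsilon}\langle p,\,B(t)\rangle$ and that this term ``visits positive and negative values infinitely often in any time interval $[0,\,\delta]$.'' You instead localize so that the drift is bounded and $\langle M\rangle_t/t\to2\gamma\epsilon|p|^{2}>0$, invoke Dambis--Dubins--Schwarz to write $M_t=\beta_{\langle M\rangle_t}$, and then use the law of the iterated logarithm at the origin to obtain $\limsup_{t\downarrow0}M_t/t=+\infty$ and $\liminf_{t\downarrow0}M_t/t=-\infty$ a.s., so the $O(t)$ drift cannot prevent sign changes of $V(X^{\epsilon}(t))-E$. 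This is a genuine strengthening in rigor: it exhibits precisely where the hypothesis $p\neq0$ enters (through the nondegeneracy of the instantaneous quadratic variation) and closes the gap left by the paper's ``dominant term'' assertion; the decomposition, the role of $\nabla_p V = p$, and the treatment of part (2) as a verbatim analogue with $\phi(q,p)=|q-z|^{2}+|p|^{2}$ are otherwise identical to the paper's.
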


\begin{proof}
(1) By It\^{o}'s formula, for all $t\geq0$, 
\begin{align*}
V(X^{\epsilon}(t))-E & =U(q^{\epsilon}(t))+\frac{1}{2}|p^{\epsilon}(t)|^{2}-E\\
 & =-\gamma\int_{0}^{t}|p^{\epsilon}(s)|^{2}\mathrm{d}s+\sqrt{2\gamma\epsilon}\int_{0}^{t}\langle p^{\epsilon}(s),\,\mathrm{d}B(s)\rangle+d\gamma\epsilon t\;.
\end{align*}
Therefore, for small times $t>0$, the dominant term
at the first order in time is $\sqrt{2\gamma\epsilon}\langle p,\,B(t)\rangle$
since $p\neq0$. In addition, due to its Brownian nature, this process
will visit infinitely often positive and negative values in any time
interval $[0,\,\delta]$ for $\delta>0$. This proves part (1).

\noindent (2) Since
\begin{align*}
\frac{1}{2}\mathrm{d}\left[|q^{\epsilon}(t)-z|^{2}+|p^{\epsilon}(t)|^{2}\right]
&=\left[\langle q^{\epsilon}(t)-z,\,p^{\epsilon}(t)\rangle+\langle p^{\epsilon}(t),\,-\gamma p^{\epsilon}(t)-\nabla U(q^{\epsilon}(t))\rangle+d\gamma\epsilon\right]\mathrm{d}t\\
&
+\sqrt{2\gamma\epsilon}\langle p^{\epsilon}(t),\mathrm{\,d}B(t)\rangle\;,
\end{align*}
by It\^{o}'s formula, we can write 
\[
|q^{\epsilon}(t)-z|^{2}+|p^{\epsilon}(t)|^{2}-E=\int_{0}^{t}F^{\epsilon}(s)\mathrm{d}s+2\sqrt{2\gamma\epsilon}\int_{0}^{t}\langle p^{\epsilon}(s),\,\mathrm{d}B(s)\rangle\;,
\]
where $t\mapsto F^{\epsilon}(t)$ is an almost-surely continuous function.
Again, the dominant term at the first order in time
is $2\sqrt{2\gamma\epsilon}\langle p,\,B(t)\rangle$ since $p\neq0$
which concludes the proof as in part (1). 
\end{proof}
Note that the result of the previous lemma is not trivial given the degeneracy of the 
process $(X^{\epsilon}(t))_{t\ge0}$. The condition
$p\neq0$ is crucial there to guarantee this result. 

Now let us provide an estimate on the distance between the processes $(X^{\epsilon}(t))_{t\ge0}$
and $(X^{\alpha,\,\epsilon}(t))_{t\ge0}$. The following notation will be used
frequently from now on. 
\begin{notation}
\label{not:hitting}For any measurable set $\mathcal{C}\subset\mathbb{R}^{2d}$
and $\alpha,\,\epsilon>0$, we denote by $\tau_{\mathcal{C}}^{\epsilon}$
and $\tau_{\mathcal{C}}^{\alpha,\,\epsilon}$ the hitting time of
$\mathcal{C}$ by the processes $(X^{\epsilon}(t))_{t\ge0}$
and $(X^{\alpha,\,\epsilon}(t))_{t\ge0}$, respectively. 
\end{notation}

\begin{lem}
\label{lem:gronw}Let $\mathcal{D}\subset\mathbb{R}^{2d}$ be a measurable set such that $\mathbb{R}^{2d}\setminus\mathcal{D}$ is bounded. Then,
there exists a constant $C=C(\mathcal{D})>0$ such that, for all $\epsilon,\,\alpha>0$
and $x\in\mathbb{R}^{2d}$, under the coupling $\mathbb{P}_{x}$,
namely under the assumption that the two processes $(X^{\epsilon}(t))_{t\ge0}$
and $(X^{\alpha,\,\epsilon}(t))_{t\ge0}$ simultaneously start at
$x\in\mathbb{R}^{2d}$ and share the Brownian motions $(B(t))_{t\ge0}$
and $(\widetilde{B}(t))_{t\ge0}$, it holds that 
\begin{equation}\label{eq:ineq gronwall process alpha eps}
    \left|X^{\alpha,\,\epsilon}(t)-X^{\epsilon}(t)\right|\leq\mathrm{e}^{Ct}\left[\alpha\int_{0}^{t}|\nabla U(q^{\epsilon}(r))|\mathrm{d}r+\sqrt{2\alpha\epsilon}\sup_{r\in[0,\,t]}|\widetilde{B}(r)|\right]
\end{equation}
 for all $t\in[0,\,\tau_{\mathcal{D}}^{\alpha,\;\epsilon}\land\tau_{\mathcal{D}}^{\epsilon}]$.
\end{lem}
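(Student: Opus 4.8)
The plan is to obtain the differential inequality \eqref{eq:ineq gronwall process alpha eps} by subtracting the two SDEs \eqref{eq:sde} and \eqref{eq:sde_pur}, taking norms, and applying Gr\"onwall's lemma, using crucially that the coefficients of the unperturbed dynamics are Lipschitz on the bounded set $\mathbb{R}^{2d}\setminus\mathcal{D}$ that the coupled processes have not yet left. First I would write, under the coupling $\mathbb{P}_x$ (so $X^{\epsilon}(0)=X^{\alpha,\epsilon}(0)=x$ and the Brownian motions $(B(t))$, $(\widetilde B(t))$ are shared),
\[
q^{\alpha,\epsilon}(t)-q^{\epsilon}(t)=\int_0^t\big(p^{\alpha,\epsilon}(r)-p^{\epsilon}(r)\big)\mathrm{d}r-\alpha\int_0^t\nabla U(q^{\alpha,\epsilon}(r))\mathrm{d}r+\sqrt{2\alpha\epsilon}\,\widetilde B(t),
\]
\[
p^{\alpha,\epsilon}(t)-p^{\epsilon}(t)=-\int_0^t\big(\nabla U(q^{\alpha,\epsilon}(r))-\nabla U(q^{\epsilon}(r))\big)\mathrm{d}r-\gamma\int_0^t\big(p^{\alpha,\epsilon}(r)-p^{\epsilon}(r)\big)\mathrm{d}r,
\]
where, importantly, the stochastic integrals $\sqrt{2\gamma\epsilon}\int_0^t\langle\cdot,\mathrm dB(r)\rangle$ cancel exactly because $B$ is shared, leaving only Lebesgue integrals plus the deterministic-looking term $\sqrt{2\alpha\epsilon}\,\widetilde B(t)$.

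Next I would set $\Delta(t):=|X^{\alpha,\epsilon}(t)-X^{\epsilon}(t)|$ and, restricting to $t\le\tau_{\mathcal{D}}^{\alpha,\epsilon}\wedge\tau_{\mathcal{D}}^{\epsilon}$, estimate. On the compact set $K:=\overline{\mathbb{R}^{2d}\setminus\mathcal{D}}$ the map $q\mapsto\nabla U(q)$ is Lipschitz with some constant $L=L(\mathcal{D})$ and bounded in norm; since both $q^{\epsilon}(r)$ and $q^{\alpha,\epsilon}(r)$ lie in the relevant bounded region for $r\le t$, we get $|\nabla U(q^{\alpha,\epsilon}(r))-\nabla U(q^{\epsilon}(r))|\le L\,\Delta(r)$ and $|\nabla U(q^{\alpha,\epsilon}(r))|\le L'$. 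Adding the two displayed identities and using $|a+b|\le|a|+|b|$ termwise yields, for a suitable $C_1=C_1(\mathcal{D})>0$,
\[
\Delta(t)\le C_1\int_0^t\Delta(r)\,\mathrm{d}r+\alpha\int_0^t|\nabla U(q^{\epsilon}(r))|\,\mathrm{d}r+\sqrt{2\alpha\epsilon}\,\sup_{r\in[0,t]}|\widetilde B(r)|,
\]
where I have kept the term $\alpha\int_0^t|\nabla U(q^{\epsilon}(r))|\mathrm dr$ as it appears in the statement rather than bounding it by $\alpha L' t$; this is the clean forcing term. Gr\"onwall's inequality applied to $t\mapsto\Delta(t)$ with forcing $g(t):=\alpha\int_0^t|\nabla U(q^{\epsilon}(r))|\mathrm dr+\sqrt{2\alpha\epsilon}\sup_{r\le t}|\widetilde B(r)|$ (which is nondecreasing in $t$) then gives $\Delta(t)\le g(t)\,\mathrm e^{C_1 t}$, which is exactly \eqref{eq:ineq gronwall process alpha eps} with $C=C_1$.

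The only subtle point — and the step I would be most careful about — is the justification that the Lipschitz/boundedness constants for $\nabla U$ can be taken uniform and independent of $\alpha,\epsilon$: this is precisely where the hypothesis that $\mathbb{R}^{2d}\setminus\mathcal{D}$ is bounded enters, since then $\nabla U$ restricted to the closure $K$ of $\mathbb{R}^{2d}\setminus\mathcal{D}$ (a compact set) is Lipschitz with a finite constant depending only on $\mathcal{D}$, and for $t\le\tau_{\mathcal{D}}^{\alpha,\epsilon}\wedge\tau_{\mathcal{D}}^{\epsilon}$ both trajectories $q^{\epsilon}(r),q^{\alpha,\epsilon}(r)$ for $r\le t$ stay in $K$. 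A secondary technical point is that Gr\"onwall's lemma is being applied pathwise (for each fixed realization of $(B,\widetilde B)$) and $t\mapsto\Delta(t)$ is almost surely continuous, so the integral form of Gr\"onwall's lemma applies directly with no measurability obstruction; the bound then holds $\mathbb{P}_x$-almost surely simultaneously for all $t$ in the stated random interval. Everything else is a routine termwise estimate.
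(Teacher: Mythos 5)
Your proof follows the same route as the paper's: subtract the two SDEs (so the shared $B$-martingale cancels), use that $\nabla U$ is Lipschitz on the compact set $\overline{\mathbb{R}^{2d}\setminus\mathcal{D}}$, and conclude by pathwise Gr\"onwall. The one small point you pass over — which the paper also leaves implicit — is that the perturbation term in the $q$-equation is $\alpha\nabla U(q^{\alpha,\epsilon})$, not $\alpha\nabla U(q^{\epsilon})$, so to obtain the forcing term exactly as in~\eqref{eq:ineq gronwall process alpha eps} one writes $\alpha|\nabla U(q^{\alpha,\epsilon}(r))|\le\alpha|\nabla U(q^{\epsilon}(r))|+\alpha L\,\Delta(r)$ and absorbs $\alpha L\,\Delta(r)$ into the Gr\"onwall integral, which is harmless provided $\alpha$ is bounded (the paper in fact only uses the lemma as $\alpha\to0$).
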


\begin{proof}
Consider the process $(\widehat{X}^{\alpha,\,\epsilon}(t))_{t\geq0}=(\widehat{q}^{\alpha,\,\epsilon}(t),\widehat{p}^{\alpha,\,\epsilon}(t))_{t\geq0}$ where $\widehat{q}^{\alpha,\,\epsilon}(t)=q^{\alpha,\,\epsilon}(t)-q^{\epsilon}(t)$
and $\widehat{p}^{\alpha,\,\epsilon}(t)=p^{\alpha,\,\epsilon}(t)-p^{\epsilon}(t)$
so that by \eqref{eq:sde} and \eqref{eq:sde_pur}, we have
\begin{equation}
\left\{ \begin{aligned} & \mathrm{d}\widehat{q}^{\alpha,\,\epsilon}(t)=\widehat{p}^{\alpha,\,\epsilon}(t)\mathrm{d}t-\alpha\nabla U(q^{\alpha,\,\epsilon}(t))\mathrm{d}t+\sqrt{2\alpha\epsilon}\mathrm{d}\widetilde{B}(t)\;,\\
 & \mathrm{d}\widehat{p}^{\alpha,\,\epsilon}(t)=-\left[\nabla U(q^{\alpha,\,\epsilon}(t))-\nabla U(q^{\epsilon}(t))\right]\mathrm{d}t-\gamma\widehat{p}^{\alpha,\,\epsilon}(t)\mathrm{d}t\;.
\end{aligned}
\right.\label{eq:sde_pur-1}
\end{equation}
Let $t\in[0,\,\tau_{\mathcal{D}}^{\alpha,\;\epsilon}\land\tau_{\mathcal{D}}^{\epsilon}]$. Then, it is easy to see that there exists a constant $C>0$ independent of $\alpha$ and $\epsilon$ such that
\begin{align*}
|\widehat{X}^{\alpha,\,\epsilon}(t)|&\leq C\int_0^t|\widehat{X}^{\alpha,\,\epsilon}(r)|\mathrm{d}r+ \alpha\int_{0}^{t}|\nabla U(q^{\alpha,\,\epsilon}(r))|\mathrm{d}r+\int_0^t|\nabla U(q^{\alpha,\,\epsilon}(r))-\nabla U(q^{\epsilon}(r))|\mathrm{d}r\\
&+\sqrt{2\alpha\epsilon}\sup_{r\in[0,\,t]}|\widetilde{B}(r)|.
\end{align*}

Furthermore, given that $U\in C^2(\R^d,\R)$ and that $\mathbb{R}^{2d}\setminus\mathcal{D}$ is a bounded set, there exists $L=L(\mathcal{D})>0$ such that for all $x=(q,p),x'=(q',p')\in\mathbb{R}^{2d}\setminus\mathcal{D}$,
\[
\left|\nabla U(q)-\nabla U(q')\right|\le L|q-q'|\;.
\]
Injecting into the previous inequality involving $|\widehat{X}^{\alpha,\,\epsilon}(t)|$, one easily deduces~\eqref{eq:ineq gronwall process alpha eps} using Gr\"onwall's inequality.

\end{proof}

\subsection{Approximation of $h_{\mathcal{M}_{\epsilon},\,\mathcal{S}_{\epsilon}}^{*}$}

For a constant $N>0$ large enough so that 
\begin{equation}
\mathcal{M}_{\epsilon},\,\mathcal{S}_{\epsilon}\subset\textup{B}(0,\,N)\;\text{ for all }\epsilon\in(0,\,1)\;,\label{eq:condN}
\end{equation}
we define  
\begin{align}
\mathcal{O}_{N,\,\epsilon} & :=\mathcal{S}_{\epsilon}\cup\{x\in\mathbb{R}^{2d}:|x|>N\}\;\;\text{and}\label{eq:definition O_N}\\
\mathcal{D}_{N,\,\epsilon} & :=\textup{B}(0,\,N)\setminus(\overline{\mathcal{M}_{\epsilon}}\cup\overline{\mathcal{S}_{\epsilon}})\;.\nonumber 
\end{align}
As $\mathcal{L}_{\alpha,\,\epsilon}^{*}$ is an elliptic operator,
by~\cite[Theorems 8.3, 8.8 and 9.19]{GT} there exists a unique solution
$h_{\alpha,\,N,\,\epsilon}^{*}$ in $C^{2}(\mathcal{D}_{N,\,\epsilon})\cap C_{b}(\overline{\mathcal{D}_{N,\,\epsilon}})$
to the boundary-value problem 
\[
\left\{ \begin{aligned}\mathcal{L}_{\alpha,\,\epsilon}^{*}h_{\alpha,\,N,\,\epsilon}^{*}(x) & =0, &  & x\in\mathcal{D}_{N,\,\epsilon}\;,\\
h_{\alpha,\,N,\,\epsilon}^{*}(x) & =1, &  & x\in\partial\mathcal{M}_{\epsilon}\;,\\
h_{\alpha,\,N,\,\epsilon}^{*}(x) & =0, &  & x\in\partial\mathcal{O}_{N,\,\epsilon}\;,
\end{aligned}
\right.
\]
which corresponds to the equilibrium potential between $\mathcal{M}_{\epsilon}$
and $\mathcal{O}_{N,\,\epsilon}$ with respect to the diffusion process
associated with the generator $\mathcal{L}_{\alpha,\,\epsilon}^{*}$
defined in \eqref{eq:gen_pur_adj}. Furthermore, $h_{\alpha,\,N,\,\epsilon}^{*}$ can also be written as stated in the following lemma.
\begin{lem}\label{lem:expr h*} For all $x=(q,p)\in\mathcal{D}_{N,\,\epsilon}$,
\begin{equation}\label{eq:expr h alpha N eps}
h_{\alpha,\,N,\,\epsilon}^{*}(x)=\mathbb{P}_{(q,-p)}(\tau_{\mathcal{M}_{\epsilon}}^{\alpha,\,\epsilon}<\tau_{\mathcal{O}_{N,\,\epsilon}}^{\alpha,\,\epsilon})\,.
\end{equation} 
\end{lem}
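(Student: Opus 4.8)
The plan is to identify $h_{\alpha,\,N,\,\epsilon}^{*}$ as a probabilistic representation of the solution to the stated Dirichlet problem, using the fact that $\mathcal{L}_{\alpha,\,\epsilon}^{*}$ is precisely the generator of the time-reversed (or momentum-flipped) perturbed process. The key observation is the relation between $\mathcal{L}_{\alpha,\,\epsilon}^{*}$, defined in \eqref{eq:gen_pur_adj}, and $\mathcal{L}_{\alpha,\,\epsilon}$, defined in \eqref{eq:gen_pur}: conjugating by the momentum-flip map $R(q,p)=(q,-p)$ turns $\mathbb{M}_\alpha$ into $\mathbb{M}_\alpha^{\dagger}$ (since flipping $p$ sends the off-diagonal blocks $\pm\mathbb{I}_d$ to $\mp\mathbb{I}_d$ while leaving the diagonal blocks and $V$ invariant, as $V(q,-p)=V(q,p)$). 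Concretely, for any $g\in C^2$, one has $\mathcal{L}_{\alpha,\,\epsilon}^{*}(g\circ R) = (\mathcal{L}_{\alpha,\,\epsilon}\, g)\circ R$. Hence if $\widetilde h(q,p) := h_{\alpha,\,N,\,\epsilon}^{*}(q,-p)$, then $\widetilde h$ solves $\mathcal{L}_{\alpha,\,\epsilon}\widetilde h = 0$ on $R(\mathcal{D}_{N,\,\epsilon}) = \mathcal{D}_{N,\,\epsilon}$ (the domain is invariant under $R$ since $\mathcal{M}_\epsilon$, $\mathcal{S}_\epsilon$, $\mathrm{B}(0,N)$ are all balls centered on the $q$-axis), with boundary values $1$ on $R(\partial\mathcal{M}_\epsilon)=\partial\mathcal{M}_\epsilon$ and $0$ on $\partial\mathcal{O}_{N,\,\epsilon}$.

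First I would carefully set up the conjugation identity $\mathcal{L}_{\alpha,\,\epsilon}^{*}(g\circ R) = (\mathcal{L}_{\alpha,\,\epsilon} g)\circ R$, either by the divergence-form computation using $\mathbb{M}_\alpha^{\dagger}$ versus $\mathbb{M}_\alpha$ and $R$-invariance of $V$, or by directly comparing the coefficient expressions \eqref{eq:gen_pur} and \eqref{eq:gen_pur_adj} after the substitution $p\mapsto -p$, which transforms $-\langle p,\nabla_q\rangle \leftrightarrow \langle p,\nabla_q\rangle$, $\langle\nabla U,\nabla_p\rangle \leftrightarrow -\langle\nabla U,\nabla_p\rangle$, $-\gamma\langle p,\nabla_p\rangle$ invariant, $\Delta_p$ invariant, $-\langle\nabla U,\nabla_q\rangle$ invariant, $\Delta_q$ invariant. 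Second, I would invoke the standard Feynman–Kac / Dynkin representation for the elliptic operator $\mathcal{L}_{\alpha,\,\epsilon}$: since the perturbed process $(X^{\alpha,\,\epsilon}(t))_{t\ge0}$ is non-explosive and positive recurrent (as established after \eqref{eq:gen_pur}), and since $\mathcal{L}_{\alpha,\,\epsilon}$ is uniformly elliptic with $C^2$ coefficients on the bounded domain $\mathcal{D}_{N,\,\epsilon}$ which has $C^2$ boundary, the unique bounded solution of $\mathcal{L}_{\alpha,\,\epsilon}\widetilde h = 0$ with the given boundary data admits the representation $\widetilde h(q,p) = \mathbb{P}_{(q,p)}(\tau_{\mathcal{M}_\epsilon}^{\alpha,\,\epsilon} < \tau_{\mathcal{O}_{N,\,\epsilon}}^{\alpha,\,\epsilon})$. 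Third, evaluating at $(q,-p)$ and using $\widetilde h(q,-p) = h_{\alpha,\,N,\,\epsilon}^{*}(q,p)$ yields \eqref{eq:expr h alpha N eps}.

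A few technical points need care. One must check that $\tau_{\mathcal{M}_\epsilon}^{\alpha,\,\epsilon}\wedge\tau_{\mathcal{O}_{N,\,\epsilon}}^{\alpha,\,\epsilon} < \infty$ almost surely starting from any $x\in\mathcal{D}_{N,\,\epsilon}$, so that the probabilistic formula is well-defined; this follows from non-explosion together with the fact that the process exits the bounded domain $\mathcal{D}_{N,\,\epsilon}$ in finite time (e.g. by ellipticity and boundedness of the domain, or from positive recurrence). One must also verify the boundary regularity needed for the probabilistic representation to match the PDE solution continuously up to the boundary — this is exactly why the problem was stated with $h_{\alpha,\,N,\,\epsilon}^{*}\in C^2(\mathcal{D}_{N,\,\epsilon})\cap C_b(\overline{\mathcal{D}_{N,\,\epsilon}})$ via \cite[Theorems 8.3, 8.8, 9.19]{GT}, so the same regularity applies to $\widetilde h$ and the identification of boundary values is legitimate.

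The main obstacle, such as it is, is purely bookkeeping: getting the sign flips in the conjugation identity exactly right and confirming that each of the three sets $\mathcal{M}_\epsilon$, $\mathcal{S}_\epsilon$, and $\{|x|>N\}$ (hence $\mathcal{D}_{N,\,\epsilon}$ and $\mathcal{O}_{N,\,\epsilon}$) is invariant under the momentum flip $R$ — this holds because $\mathcal{M}_\epsilon=\mathrm{B}(m,\epsilon)=\mathrm{B}((m,0),\epsilon)$ and $\mathcal{S}_\epsilon=\mathrm{B}((s,0),\epsilon)$ are centered at points with zero momentum and $|R(x)|=|x|$. Beyond that, the argument is a routine application of the Feynman–Kac correspondence for non-degenerate elliptic operators, which is available precisely because we have passed to the perturbed process $\mathcal{L}_{\alpha,\,\epsilon}$; the whole point of this lemma is to record the probabilistic meaning of the regularized equilibrium potential before letting $\alpha\to0$ and $N\to\infty$ in later sections.
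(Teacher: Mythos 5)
Your proposal is correct and follows essentially the same route as the paper: establish the momentum-flip conjugation identity $\mathcal{L}_{\alpha,\,\epsilon}^{*}(g\circ R)=(\mathcal{L}_{\alpha,\,\epsilon}g)\circ R$, observe that $\mathcal{M}_{\epsilon}$, $\mathcal{S}_{\epsilon}$, and $\mathrm{B}(0,N)$ are $R$-invariant so that $(q,p)\mapsto h_{\alpha,\,N,\,\epsilon}^{*}(q,-p)$ solves the Dirichlet problem~\eqref{eq:boundary val probl} for the elliptic operator $\mathcal{L}_{\alpha,\,\epsilon}$, and conclude by uniqueness of that solution together with the Feynman--Kac representation of~\cite[Theorem 6.5.1]{Friedman}. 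The paper compresses the conjugation check into the phrase ``one can easily check''; your write-out of the sign bookkeeping and the finiteness/regularity caveats fills in exactly what is implicit there.
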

\begin{proof}
Similarly, by~\cite[Theorems 8.3, 8.8 and 9.19]{GT} there also exists a unique solution
$h_{\alpha,\,N,\,\epsilon}$ in $C^{2}(\mathcal{D}_{N,\,\epsilon})\cap C_{b}(\overline{\mathcal{D}_{N,\,\epsilon}})$
to the boundary-value problem 
\begin{equation}\label{eq:boundary val probl}
    \left\{ \begin{aligned}\mathcal{L}_{\alpha,\,\epsilon}h_{\alpha,\,N,\,\epsilon}(x) & =0, &  & x\in\mathcal{D}_{N,\,\epsilon}\;,\\
h_{\alpha,\,N,\,\epsilon}(x) & =1, &  & x\in\partial\mathcal{M}_{\epsilon}\;,\\
h_{\alpha,\,N,\,\epsilon}(x) & =0, &  & x\in\partial\mathcal{O}_{N,\,\epsilon}\;.
\end{aligned}
\right.
\end{equation}
Moreover, by~\cite[Theorem 6.5.1]{Friedman}, $h_{\alpha,\,N,\,\epsilon}$ is given by 
$$h_{\alpha,\,N,\,\epsilon}(x)=\mathbb{P}_{x}(\tau_{\mathcal{M}_{\epsilon}}^{\alpha,\,\epsilon}<\tau_{\mathcal{O}_{N,\,\epsilon}}^{\alpha,\,\epsilon})\;.$$ 
Furthermore, using the definition of $h_{\alpha,\,N,\,\epsilon}^{*}$, one can easily check that the function 
$$(q,p)\in\R^{2d}\mapsto h_{\alpha,\,N,\,\epsilon}^{*}(q,-p)$$ is also a solution to the boundary-value problem~\eqref{eq:boundary val probl}. Therefore, by the uniqueness of such solution, the equality~\eqref{eq:expr h alpha N eps} immediately follows. 
\end{proof}
From now on, the function $h_{\alpha,\,N,\,\epsilon}^{*}$ is extended to $\R^{2d}$ under the equality~\eqref{eq:expr h alpha N eps}.
Now recall the equilibrium potential $h_{\mathcal{M}_{\epsilon},\,\mathcal{S}_{\epsilon}}^{*}$
which is defined in~\eqref{eq:def h^*}. The next lemma shows that $h_{\alpha,\,N,\,\epsilon}^{*}$
indeed approximates this function in the regime $\alpha\rightarrow0$
and $N\rightarrow\infty$. 
\begin{lem}
\label{lem:conv h^*} For all $x\in\R^{2d}$, we have that
\[
\lim_{N\rightarrow\infty}\lim_{\alpha\rightarrow0}h_{\alpha,\,N,\,\epsilon}^{*}(x)=h_{\mathcal{M}_{\epsilon},\,\mathcal{S}_{\epsilon}}^{*}(x)\;.
\]
\end{lem}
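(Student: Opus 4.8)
The plan is to prove the convergence in two stages, interchanging the order of limits exactly as in the statement: first send $\alpha \to 0$ with $N$ fixed, then send $N \to \infty$. The central tool is the pathwise coupling from Lemma~\ref{lem:gronw} together with the small-time oscillation property of Lemma~\ref{lem:small time asymptotics}, which handles the degeneracy of the process.

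\textbf{Step 1: the limit $\alpha \to 0$ for fixed $N$.} Fix $x = (q,p) \in \mathbb{R}^{2d}$; by Lemma~\ref{lem:expr h*} we must analyze $\mathbb{P}_{(q,-p)}(\tau_{\mathcal{M}_{\epsilon}}^{\alpha,\,\epsilon} < \tau_{\mathcal{O}_{N,\,\epsilon}}^{\alpha,\,\epsilon})$ and show it converges to $h_{\mathcal{M}_{\epsilon},\,\mathcal{S}_{\epsilon}}(q,-p) = \mathbb{P}_{(q,-p)}(\tau_{\mathcal{M}_{\epsilon}}^{\epsilon} < \tau_{\mathcal{O}_{N,\,\epsilon}}^{\epsilon})$. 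Working under the coupling $\mathbb{P}_{(q,-p)}$, I would apply Lemma~\ref{lem:gronw} with $\mathcal{D} = \mathcal{D}_{N,\,\epsilon}$ (whose complement is bounded once one intersects with $\mathrm{B}(0,N)$, or more precisely one uses $\mathcal{D} = \mathrm{B}(0,N+1)\setminus(\overline{\mathcal{M}_\epsilon}\cup\overline{\mathcal{S}_\epsilon})$ suitably): up to the first exit time from this region the two trajectories stay within $\mathrm{e}^{Ct}\bigl[\alpha\int_0^t |\nabla U(q^\epsilon(r))|\,\mathrm{d}r + \sqrt{2\alpha\epsilon}\,\sup_{r\le t}|\widetilde B(r)|\bigr]$ of each other. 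Since $|\nabla U(q^\epsilon(r))|$ is almost surely locally bounded along the path and the process $X^\epsilon$ is non-explosive (Proposition~\ref{prop:pos_rec}), for any fixed time horizon $T$ the right-hand side tends to $0$ almost surely as $\alpha \to 0$, uniformly on $[0,T]$. On the almost-sure event where $X^\epsilon$ enters $\mathcal{M}_\epsilon$ (resp.\ $\mathcal{O}_{N,\,\epsilon}$) strictly before the other — which, by the oscillation property of Lemma~\ref{lem:small time asymptotics}, it does transversally at the relevant boundary spheres, since $p^\epsilon$ does not vanish identically on a time interval — the perturbed process $X^{\alpha,\,\epsilon}$ must do the same for $\alpha$ small. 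This gives $\mathbf{1}\{\tau_{\mathcal{M}_\epsilon}^{\alpha,\,\epsilon} < \tau_{\mathcal{O}_{N,\,\epsilon}}^{\alpha,\,\epsilon}\} \to \mathbf{1}\{\tau_{\mathcal{M}_\epsilon}^{\epsilon} < \tau_{\mathcal{O}_{N,\,\epsilon}}^{\epsilon}\}$ $\mathbb{P}_{(q,-p)}$-almost surely, and dominated convergence yields $\lim_{\alpha\to 0} h_{\alpha,\,N,\,\epsilon}^*(x) = \mathbb{P}_{(q,-p)}(\tau_{\mathcal{M}_{\epsilon}}^{\epsilon} < \tau_{\mathcal{O}_{N,\,\epsilon}}^{\epsilon})$.

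\textbf{Step 2: the limit $N \to \infty$.} It remains to show $\mathbb{P}_{(q,-p)}(\tau_{\mathcal{M}_{\epsilon}}^{\epsilon} < \tau_{\mathcal{O}_{N,\,\epsilon}}^{\epsilon}) \to \mathbb{P}_{(q,-p)}(\tau_{\mathcal{M}_{\epsilon}}^{\epsilon} < \tau_{\mathcal{S}_{\epsilon}}^{\epsilon}) = h_{\mathcal{M}_{\epsilon},\,\mathcal{S}_{\epsilon}}^*(x)$ as $N\to\infty$. Recall $\mathcal{O}_{N,\,\epsilon} = \mathcal{S}_\epsilon \cup \{|x| > N\}$, so $\tau_{\mathcal{O}_{N,\,\epsilon}}^{\epsilon} = \tau_{\mathcal{S}_\epsilon}^\epsilon \wedge \tau_{\{|x|>N\}}^\epsilon$, and the events are nested: $\{\tau_{\mathcal{M}_\epsilon}^{\epsilon} < \tau_{\mathcal{O}_{N,\,\epsilon}}^{\epsilon}\}$ increases to $\{\tau_{\mathcal{M}_\epsilon}^{\epsilon} < \tau_{\mathcal{S}_\epsilon}^\epsilon\} \cap \{\tau_{\mathcal{M}_\epsilon}^\epsilon < \infty\}$ as $N\to\infty$, because $\tau_{\{|x|>N\}}^\epsilon \to \infty$ almost surely by non-explosion (Proposition~\ref{prop:pos_rec}). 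Finally, since the process is positive recurrent (Proposition~\ref{prop:pos_rec}), it hits the ball $\mathcal{M}_\epsilon$ in finite time almost surely, so on $\{\tau_{\mathcal{M}_\epsilon}^\epsilon < \tau_{\mathcal{S}_\epsilon}^\epsilon\}$ one automatically has $\tau_{\mathcal{M}_\epsilon}^\epsilon < \infty$; hence the increasing limit equals $\{\tau_{\mathcal{M}_\epsilon}^{\epsilon} < \tau_{\mathcal{S}_\epsilon}^\epsilon\}$ up to a null set, and monotone convergence finishes the proof.

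\textbf{Main obstacle.} The delicate point is Step~1: justifying that $\mathbf{1}\{\tau_{\mathcal{M}_\epsilon}^{\alpha,\,\epsilon} < \tau_{\mathcal{O}_{N,\,\epsilon}}^{\alpha,\,\epsilon}\} \to \mathbf{1}\{\tau_{\mathcal{M}_\epsilon}^{\epsilon} < \tau_{\mathcal{O}_{N,\,\epsilon}}^{\epsilon}\}$ almost surely. Uniform closeness of the paths on a fixed time interval is not by itself enough, because the limit trajectory could in principle graze the boundary spheres $\partial\mathcal{M}_\epsilon$ or $\partial\mathcal{S}_\epsilon$ tangentially, in which case an arbitrarily small perturbation could change which set is hit first. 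This is precisely where Lemma~\ref{lem:small time asymptotics} (part (2), applied with the relevant center $m$ or $s$ and appropriate radius $\epsilon^2$) is essential: it guarantees that whenever $X^\epsilon$ touches such a sphere at a point with nonzero momentum, it immediately and strictly crosses it in both directions, so the hitting is genuinely transversal; one must also rule out the measure-zero possibility that $X^\epsilon$ reaches the sphere exactly with $p^\epsilon = 0$, which follows from absolute continuity of the law of $X^\epsilon(t)$ (hypoellipticity, or a short direct argument). Handling this degeneracy carefully, while keeping track that $N$ is fixed during the $\alpha\to0$ limit, is the heart of the argument.
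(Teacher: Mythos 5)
Your proposal mirrors the paper's proof in both structure and substance: the same two-stage limit, the same use of the Gr\"onwall coupling (Lemma~\ref{lem:gronw}) for the $\alpha\to0$ step, the same invocation of Lemma~\ref{lem:small time asymptotics}-(2) to convert proximity of paths into agreement of hitting-order indicators, and the same positive-recurrence/dominated- or monotone-convergence argument for the $N\to\infty$ step. Your correct observation that the events are nested in $N$ is a small variant allowing monotone convergence in place of the paper's dominated convergence, but this is cosmetic.

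One place where your justification is imprecise: you claim that the event $\{p^\epsilon(\tau_{\partial\mathcal{M}_\epsilon}^\epsilon)=0\}$ is negligible ``from absolute continuity of the law of $X^\epsilon(t)$ (hypoellipticity, or a short direct argument).'' That reasoning is not sound as stated. Absolute continuity of the law of $X^\epsilon(t)$ at a \emph{fixed} time $t$ does not control the law of the process evaluated at a \emph{random} stopping time; showing that the (random) hitting time of the sphere carries no mass on the zero-velocity section requires a genuine argument about the boundary behaviour of the killed process, not a Fubini-over-$t$ observation. The paper relies for this on \cite[Lemma 3.1]{Hill}, which establishes precisely that the process attains $\partial\mathcal{M}_\epsilon$ (and $\partial\mathcal{S}_\epsilon$, $\partial\mathrm{B}(0,N)$) with non-zero velocity almost surely. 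Without this lemma or an equivalent, Step~1 of your argument has a gap at exactly the spot you flag as delicate. Since you do correctly identify this as the heart of the difficulty and the rest of your outline is sound, I would call this a missing citation rather than a wrong idea, but the hand-waved justification you offer would not be accepted as a proof.
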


\begin{proof}
By \eqref{eq:def h^*} and Lemma~\ref{lem:expr h*}, it is enough
to prove that for all $x\in\R^{2d}$,
\begin{equation}
\lim_{N\rightarrow\infty}\lim_{\alpha\rightarrow0}\mathbb{P}_{x}(\tau_{\mathcal{M}_{\epsilon}}^{\alpha,\,\epsilon}<\tau_{\mathcal{O}_{N,\,\epsilon}}^{\alpha,\,\epsilon})=\mathbb{P}_{x}(\tau_{\mathcal{\mathcal{M}}_{\epsilon}}^{\epsilon}<\tau_{\mathcal{S}_{\epsilon}}^{\epsilon})\;.\label{eq:dlim}
\end{equation}
Since this is immediate when $x\in\mathcal{M}_{\epsilon}$, we fix
$x\in\mathbb{R}^{2d}\setminus\mathcal{M}_{\epsilon}.$ We divide the
proof of \eqref{eq:dlim} into 
\begin{align}
 & \lim_{\alpha\rightarrow0}\mathbb{P}_{x}(\tau_{\mathcal{\mathcal{M}}_{\epsilon}}^{\alpha,\,\epsilon}<\tau_{\mathcal{O}_{N,\,\epsilon}}^{\alpha,\,\epsilon})=\mathbb{P}_{x}(\tau_{\mathcal{\mathcal{M}}_{\epsilon}}^{\epsilon}<\tau_{\mathcal{O}_{N,\,\epsilon}}^{\epsilon})\;\;\;\text{and}\label{eq:conv1}\\
 & \lim_{N\rightarrow\infty}\mathbb{P}_{x}(\tau_{\mathcal{M}_{\epsilon}}^{\epsilon}<\tau_{\mathcal{O}_{N,\,\epsilon}}^{\epsilon})=\mathbb{P}_{x}(\tau_{\mathcal{M}_{\epsilon}}^{\epsilon}<\tau_{\mathcal{\mathcal{S}}_{\epsilon}}^{\epsilon})\;.\label{eq:conv2}
\end{align}
We note that the definition of $\mathcal{O}_{N,\,\epsilon}$ \eqref{eq:definition O_N}
and the positive recurrence of $(X^{\epsilon}(t))_{t\geq0}$ established in
Proposition \ref{prop:pos_rec} ensure that $\mathbf{1}\{\tau_{\mathcal{M}_{\epsilon}}^{\epsilon}<\tau_{\mathcal{O}_{N,\,\epsilon}}^{\epsilon}\}$
converges to $\mathbf{1}\{\tau_{\mathcal{M}_{\epsilon}}^{\epsilon}<\tau_{\mathcal{\mathcal{S}}_{\epsilon}}^{\epsilon}\}$
almost surely as $N\rightarrow\infty$, and hence \eqref{eq:conv2}
follows from the dominated convergence theorem. 

We now turn to \eqref{eq:conv1}. We fix $N>0$. Then, by Lemma \ref{lem:gronw},
there exists $C=C(N)$ such that 
\begin{equation}
\left|X^{\alpha,\,\epsilon}(t)-X^{\epsilon}(t)\right|\leq\mathrm{e}^{Ct}\left[\alpha\int_{0}^{t}|\nabla U(q^{\epsilon}(r))|\mathrm{d}r+\sqrt{2\alpha\epsilon}\sup_{r\in[0,\,t]}|\widetilde{B}(r)|\right]\;.\label{eq:grb}
\end{equation}
for all $t\in[0,\,\tau_{\mathcal{O}_{N+1,\,\epsilon}}^{\alpha,\,\epsilon}\land\tau_{\mathcal{O}_{N+1,\,\epsilon}}^{\epsilon}]$. 

First, under the event $\{\tau_{\mathcal{M}_{\epsilon}}^{\epsilon}<\tau_{\mathcal{O}_{N,\,\epsilon}}^{\epsilon}\}$,
let us take $\tau=\frac{1}{2}(\tau_{\mathcal{M}_{\epsilon}}^{\epsilon}+\tau_{\mathcal{O}_{N,\,\epsilon}}^{\epsilon})\in(\tau_{\mathcal{M}_{\epsilon}}^{\epsilon},\,\tau_{\mathcal{O}_{N,\,\epsilon}}^{\epsilon})$.
By \cite[Lemma 3.1]{Hill}, the process \eqref{eq:sde} attains the
boundary $\partial\mathcal{M}_{\epsilon}$ with non-zero velocity.
Therefore, by Lemma \ref{lem:small time asymptotics}-(2), it visits
the interior of $\mathcal{M}_{\epsilon}$ after hitting the boundary
$\partial\mathcal{M}_{\epsilon}$ (notice here that $\tau_{\mathcal{M}_{\epsilon}}^{\epsilon}=\tau_{\partial\mathcal{M}_{\epsilon}}^{\epsilon}$
since we assume that $x\in\mathbb{R}^{2d}\setminus\mathcal{M}_{\epsilon}$
). Combining this observation with the fact that $\tau<\tau_{\mathcal{O}_{N,\,\epsilon}}^{\epsilon}$,
and the continuity of the trajectories of $(X^{\epsilon}(t))_{t\geq0}$, we
can conclude that 
\begin{equation}
\sup_{t\in[0,\,\tau]}\mathrm{d}(X^{\epsilon}(t),\,\mathcal{M}_{\epsilon}^{c})>0\;\;\;\text{and}\;\;\;\inf_{t\in[0,\,\tau]}\mathrm{d}(X^{\epsilon}(t),\,\mathcal{O}_{N,\,\epsilon})>0\;.\label{eq:grb1}
\end{equation}
Hence, by \eqref{eq:grb} and the $1$-Lipschitz continuity of the distance, for $\alpha>0$ small enough, we get 
\[
\sup_{t\in[0,\,\tau]}\mathrm{d}(X^{\alpha,\,\epsilon}(t),\,\mathcal{M}_{\epsilon}^{c})>0\;\;\;\text{and}\;\;\;\inf_{t\in[0,\,\tau]}\mathrm{d}(X^{\alpha,\,\epsilon}(t),\,\mathcal{O}_{N,\,\epsilon})>0\;.
\]
This ensures that $\tau_{\mathcal{M}_{\epsilon}}^{\alpha,\,\epsilon}<\tau<\tau_{\mathcal{O}_{N,\,\epsilon}}^{\alpha,\,\epsilon}$
holds for small enough $\alpha$. Hence, we can conclude that 
\begin{equation}
\mathbf{1}\{\tau_{\mathcal{M}_{\epsilon}}^{\epsilon}<\tau_{\mathcal{O}_{N,\,\epsilon}}^{\epsilon}\}\le\liminf_{\alpha\rightarrow0}\mathbf{1}\{\tau_{\mathcal{M}_{\epsilon}}^{\alpha,\,\epsilon}<\tau_{\mathcal{O}_{N,\,\epsilon}}^{\alpha,\,\epsilon}\}\;.\label{eq:indl1}
\end{equation}

On the other hand, under the event $\{\tau_{\mathcal{O}_{N,\,\epsilon}}^{\epsilon}<\tau_{\mathcal{M}_{\epsilon}}^{\epsilon}\}$,
let 
\[
\widehat{\tau}=\frac{1}{2}\left(\tau_{\mathcal{O}_{N,\,\epsilon}}^{\epsilon}+\tau_{\mathcal{O}_{N+1,\,\epsilon}}^{\epsilon}\land\tau_{\mathcal{M}_{\epsilon}}^{\epsilon}\right)\in\left(\tau_{\mathcal{O}_{N,\,\epsilon}}^{\epsilon},\;\tau_{\mathcal{O}_{N+1,\,\epsilon}}^{\epsilon}\land\tau_{\mathcal{M}_{\epsilon}}^{\epsilon}\right)\;.
\]
Then, by the same logic with the derivation of \eqref{eq:grb1}, we
have 
\[
\sup_{t\in[0,\,\widehat{\tau}]}\mathrm{d}(X^{\epsilon}(t),\,\mathcal{O}_{N,\,\epsilon}^{c})>0\;\;\;\text{and}\;\;\;\inf_{t\in[0,\,\widehat{\tau}]}\mathrm{d}(X^{\epsilon}(t),\,\mathcal{M}_{\epsilon})>0\;.
\]
Then, applying \eqref{eq:grb} as before, for $\alpha>0$ small enough,
we get 
\[
\sup_{t\in[0,\,\widehat{\tau}]}\mathrm{d}(X^{\alpha,\,\epsilon}(t),\,\mathcal{O}_{N,\,\epsilon}^{c})>0\;\;\;\text{and}\;\;\;\inf_{t\in[0,\,\widehat{\tau}]}\mathrm{d}(X^{\alpha,\,\epsilon}(t),\,\mathcal{M}_{\epsilon})>0\;,
\]
and consequently $\tau_{\mathcal{O}_{N,\,\epsilon}}^{\alpha,\,\epsilon}<\widehat{\tau}<\tau_{\mathcal{M}_{\epsilon}}^{\alpha,\,\epsilon}$.
This concludes that 
\begin{equation}
\mathbf{1}\{\tau_{\mathcal{O}_{N,\,\epsilon}}^{\epsilon}<\tau_{\mathcal{M}_{\epsilon}}^{\epsilon}\}\le\liminf_{\alpha\rightarrow0}\mathbf{1}\{\tau_{\mathcal{O}_{N,\,\epsilon}}^{\alpha,\,\epsilon}<\tau_{\mathcal{M}_{\epsilon}}^{\alpha,\,\epsilon}\}\;.\label{eq:indl2}
\end{equation}
Combining \eqref{eq:indl1} and \eqref{eq:indl2} implies that 
\[
\mathbf{1}\{\tau_{\mathcal{M}_{\epsilon}}^{\epsilon}<\tau_{\mathcal{O}_{N,\,\epsilon}}^{\epsilon}\}=\lim_{\alpha\rightarrow0}\mathbf{1}\{\tau_{\mathcal{M}_{\epsilon}}^{\alpha,\,\epsilon}<\tau_{\mathcal{O}_{N,\,\epsilon}}^{\alpha,\,\epsilon}\}
\]
and hence \eqref{eq:conv1} follows from the dominated convergence
theorem. 
\end{proof}
Let us now prove Proposition \ref{prop:average time perturbed Langevin}
based on the lemmas above.

\subsection{Perturbed version of Proposition~\ref{prop:harm}}
In this subsection, we prove Proposition~\ref{prop:cap} and an analogous of Proposition~\ref{prop:harm}
for the perturbed process~\eqref{eq:sde_pur} on the domains $\mathcal{M}_{\epsilon}$
and $\mathcal{O}_{N,\,\epsilon}$, which is stated in Proposition~\ref{prop:average time perturbed Langevin}. Let us start with the following result which implies Proposition~\ref{prop:cap} in particular.
\begin{prop}\label{prop:cv eq meas perturbed}
For all $\epsilon,\,\alpha>0$
and for all $N>0$ large enough so that \eqref{eq:condN} holds, there
exists a non-negative surface measure $\eta_{\alpha,\,N,\,\epsilon}$
on $\partial\mathcal{M}_{\epsilon}$ such that for any non-negative bounded measurable function $g$ in $\R^{2d}$,
\begin{equation}
\int_{\partial\mathcal{M}_{\epsilon}}\mathbb{E}_{x}\left(\int_{0}^{\tau_{\mathcal{O}_{N,\,\epsilon}}^{\alpha,\,\epsilon}}g(X^{\alpha,\,\epsilon}(r))\mathrm{d}r\right)\eta_{\alpha,\,N,\,\epsilon}(\mathrm{d}x)=\int_{\mathbb{R}^{2d}}h_{\alpha,\,N,\,\epsilon}^{*}(x)g(x)\mu_{\epsilon}(x)\mathrm{d}x\;.\label{eq:ipp elliptic 2}
\end{equation}
Furthermore, there exists a non-negative surface measure $\eta_\epsilon$ on $\partial\mathcal{M}_{\epsilon}$ such that
\begin{equation}\label{eq:weak cv measures}
\lim_{N\rightarrow\infty}\lim_{\alpha\rightarrow0}\eta_{\alpha,\,N,\,\epsilon}=\eta_\epsilon\;\;\text{[weakly]}\,,
\end{equation}
and $\eta_\epsilon$ is the unique measure satisfying~\eqref{eq:def eq measure}.
\end{prop}
\begin{proof}
We fix $\epsilon>0$ in the proof. It was shown in~\cite[Equation 3.16]{LMS} that for elliptic processes like~\eqref{eq:sde_pur}, there exists a non-negative surface measure $\eta_{\alpha,\,N,\,\epsilon}$ on $\partial\mathcal{M}_{\epsilon}$ such that for any smooth function
$f\in C_{c}^{\infty}(\overline{\mathcal{S}_{\epsilon}}^{c})$, 
\begin{equation}
\int_{\partial\mathcal{M}_{\epsilon}}f(x)\eta_{\alpha,\,N,\,\epsilon}(\mathrm{d}x)=\int_{\mathbb{R}^{2d}}h_{\alpha,\,N,\,\epsilon}^{*}(x)(-\mathcal{L}_{\alpha,\,\epsilon}f(x))\mu_\epsilon(x)\mathrm{d}x\;.\label{eq:ipp elliptic}
\end{equation}
The identity above can also be written in the form of~\eqref{eq:ipp elliptic 2}, see~\cite[Proposition 3.3]{LMS}. However, the formula stated there is only valid when $g$ is a H\"{o}lderian-continuous function on $\mathbb{R}^{2d}$. Nonetheless, using the monotone convergence theorem, this identity can immediately be extended to any non-negative bounded measurable function $g$ in $\mathbb{R}^{2d}$, hence~\eqref{eq:ipp elliptic 2}. To conclude the proof, it remains therefore to show the weak convergence~\eqref{eq:weak cv measures} as well as~\eqref{eq:def eq measure}.\medskip{}

\noindent \textbf{Step 1:} There exists a constant $K_{\epsilon}>0$
such that $\eta_{\alpha,\,N,\,\epsilon}(\partial\mathcal{M}_{\epsilon})<K_{\epsilon}$
for all $\alpha\in(0,\,1)$ and $N>0$.

\noindent Let us take $f\in C_{c}^{\infty}(\overline{\mathcal{S}_{\epsilon}}^{c})$
independent of $\alpha$, $N$ such that $f\equiv1$ on $\partial\mathcal{M}_{\epsilon}$.
It follows from~\eqref{eq:ipp elliptic} and the trivial bound $0\leq h_{\alpha,\,N,\,\epsilon}^{*}\leq1$
which comes immediately from Lemma~\ref{lem:expr h*} that 
\[
\eta_{\alpha,\,N,\,\epsilon}(\partial\mathcal{M}_{\epsilon})\leq\Vert\mathcal{L}_{\alpha,\,\epsilon}f\Vert_{\infty}\;.
\]
This step is concluded since 
\[
\Vert\mathcal{L}_{\alpha,\,\epsilon}f\Vert_{\infty}\leq\Vert\mathcal{L}_{\epsilon}f\Vert_{\infty}+\alpha\Vert-\nabla U(q)\cdot\nabla_{q}f+\epsilon\Delta_{q}f\Vert_{\infty}\;.
\]  

\noindent \textbf{Step 2:} Let us now prove that for any $f\in C_b(\partial\M_\epsilon)$, the following limit exists: 
\[\lim_{N\rightarrow\infty}\lim_{\alpha\rightarrow0}\int_{\partial\M_\epsilon}f(x)\eta_{\alpha,N,\epsilon}(\mathrm{d}x)\;.\]
Assume that there exists some $f\in C_b(\partial\M_\epsilon)$ such that $\int_{\partial\M_\epsilon}f(x)\eta_{\alpha,N,\epsilon}(\mathrm{d}x)$ does not converge when $\alpha\rightarrow0$ and $N\rightarrow\infty$. Since the integral is bounded independently of $\alpha,N$, it admits at least two accumulation points $l_{1,\epsilon}\neq l_{2,\epsilon}$. By Tietze-Urysohn's extension theorem, the function $f$ can be extended by a compactly supported continuous function on $\overline{\mathcal{S}_{\epsilon}}^{c}$.

Let $\delta\in(0,|l_{1,\epsilon}-l_{2,\epsilon}|/4K_\epsilon)$. Since $f$ is continuous with a compact support in $\overline{\mathcal{S}_{\epsilon}}^{c}$, there exists a function $\phi\in C_c^\infty(\overline{\mathcal{S}_{\epsilon}}^{c})$ such that $||f-\phi||_\infty\leq\delta$. Notice that  
$$c_\epsilon:=\lim_{N\rightarrow\infty}\lim_{\alpha\rightarrow0}\int_{\partial\M_\epsilon}\phi(x)\eta_{\alpha,N,\epsilon}(\mathrm{d}x)$$ 
exists by Lemma~\ref{lem:conv h^*} and~\eqref{eq:ipp elliptic} since $\phi\in C^\infty_c(\overline{\mathcal{S}_{\epsilon}}^{c})$. As a result, 
$$\limsup_{N\rightarrow\infty,\alpha\rightarrow0}\left|\int_{\partial\M_\epsilon}f(x)\eta_{\alpha,N,\epsilon}(\mathrm{d}x)-c_\epsilon\right|\leq\delta K_\epsilon\leq|l_{1,\epsilon}-l_{2,\epsilon}|/4\;.$$ 
Taking appropriate subsequences when $N\rightarrow\infty,\alpha\rightarrow0$ one has that
$$\left|l_{1,\epsilon}-c_\epsilon\right|\leq|l_{1,\epsilon}-l_{2,\epsilon}|/4,\quad\text{and}\quad\left|l_{2,\epsilon}-c_\epsilon\right|\leq|l_{1,\epsilon}-l_{2,\epsilon}|/4,$$
which implies that $|l_{1,\epsilon}-l_{2,\epsilon}|\leq|l_{1,\epsilon}-l_{2,\epsilon}|/2$ contradicting the fact that $l_{1,\epsilon}\neq l_{2,\epsilon}$. Therefore, there exists a unique accumulation point when $\alpha\rightarrow0,N\rightarrow\infty$ for the integral $\int_{\partial\M_\epsilon}f(x)\eta_{\alpha,N,\epsilon}(\mathrm{d}x)$, which therefore converges.

\medskip{}

\noindent \textbf{Step 3:} The family of measures $(\eta_{\alpha,\,N,\,\epsilon})_{\alpha\in(0,\,1),\,N>0}$
converges weakly to a bounded non-negative surface measure $\eta_{\epsilon}$
on $\partial\mathcal{M}_{\epsilon}$ as $\alpha\rightarrow0$ and
$N\rightarrow\infty$.

\noindent By Step 2, we can define a bounded (by Step 1) linear operator
\[
T:f\in C_{b}(\partial\mathcal{M}_{\epsilon})\mapsto\lim_{N\rightarrow\infty}\lim_{\alpha\rightarrow0}\int_{\partial\mathcal{M}_{\epsilon}}f(x)\eta_{\alpha,\,N,\,\epsilon}(\mathrm{d}x)\;.
\]
Then, by the Riesz--Markov--Kakutani representation theorem, there exists
a measure $\eta_{\epsilon}$ on $\partial\mathcal{M}_{\epsilon}$
such that, for all $f\in C_{b}(\partial\mathcal{M}_{\epsilon})$,
\[
T(f)=\int_{\partial\mathcal{M}_{\epsilon}}f(x)\eta_{\epsilon}(\mathrm{d}x)\;.
\]
In particular, the limit~\eqref{eq:weak cv measures} holds.
\medskip 

\noindent\textbf{Step 4}: $\eta_\epsilon$ is the unique measure satisfying~\eqref{eq:def eq measure}. In fact, the uniqueness is obvious and the existence immediately follows by taking the limit of~\eqref{eq:ipp elliptic} when $N\rightarrow\infty,\,\alpha\rightarrow0$.
\end{proof}

\begin{prop}
\label{prop:average time perturbed Langevin}For all $\epsilon,\,\alpha>0$
and for all $N>0$ large enough so that \eqref{eq:condN} holds, there
exists a surface probability measure $\nu_{\alpha,\,N,\,\epsilon}$
on $\partial\mathcal{M}_{\epsilon}$ and a constant $C_{\alpha,\,N,\,\epsilon}>0$
such that, for all $M>0$, 
\begin{equation}
\int_{\partial\mathcal{M}_{\epsilon}}\mathbb{E}_{x}(\tau_{\mathcal{O}_{N,\,\epsilon}}^{\alpha,\,\epsilon}\land M)\nu_{\alpha,\,N,\,\epsilon}(\mathrm{d}x)=\frac{1}{C_{\alpha,\,N,\,\epsilon}}\int_{\mathbb{R}^{2d}}h_{\alpha,\,N,\,\epsilon}^{*}(x)\mathbb{P}_{x}(\tau_{\mathcal{O}_{N,\,\epsilon}}^{\alpha,\,\epsilon}\leq M)\mu_{\epsilon}(x)\mathrm{d}x\;.\label{eq:magic_pur}
\end{equation}
Furthermore, it holds that 
\begin{align*}
 & \lim_{N\rightarrow\infty}\lim_{\alpha\rightarrow0}C_{\alpha,\,N,\,\epsilon}=\mathrm{Cap}_{\epsilon}(\mathcal{M}_{\epsilon},\,\mathcal{S}_{\epsilon})\;\;\;\text{and}\\
 & \lim_{N\rightarrow\infty}\lim_{\alpha\rightarrow0}\nu_{\alpha,\,N,\,\epsilon}=\nu_{\epsilon}\;\;\textrm{[weakly]}
\end{align*}
where $\mathrm{Cap}_\epsilon(\mathcal{M}_{\epsilon},\,\mathcal{S}_{\epsilon})$
is the capacity defined in \eqref{eq:cap}, $\nu_{\epsilon}=\eta_\epsilon/\mathrm{Cap}_\epsilon(\mathcal{M}_{\epsilon},\,\mathcal{S}_{\epsilon})$ where $\eta_\epsilon$ is
the limiting surface measure appearing in~\eqref{eq:weak cv measures}.
\end{prop}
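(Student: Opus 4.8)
The plan is to leverage the fact that, unlike $\mathcal{L}_\epsilon$, the perturbed generator $\mathcal{L}_{\alpha,\epsilon}$ is uniformly elliptic with smooth coefficients, so that all the boundary-value problems below have classical solutions and \eqref{eq:magic_pur} follows from a Green-type identity, in the spirit of the potential theory for irreversible elliptic diffusions developed in~\cite{LMS}. Fix $\alpha,\epsilon,N$ and abbreviate $\mathcal{O}:=\mathcal{O}_{N,\epsilon}$, $\mathcal{D}:=\mathcal{D}_{N,\epsilon}$, $h^{*}:=h_{\alpha,N,\epsilon}^{*}$. For $M>0$ set $g_M(x):=\mathbb{P}_x(\tau_{\mathcal{O}}^{\alpha,\epsilon}\le M)$ and $w_M(x):=\mathbb{E}_x(\tau_{\mathcal{O}}^{\alpha,\epsilon}\wedge M)=\int_0^M\mathbb{P}_x(\tau_{\mathcal{O}}^{\alpha,\epsilon}>t)\mathrm{d}t$. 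First I would record that $u(x,t):=\mathbb{P}_x(\tau_{\mathcal{O}}^{\alpha,\epsilon}>t)$ solves $\partial_t u=\mathcal{L}_{\alpha,\epsilon}u$ on $\mathcal{O}^c$ with $u\equiv0$ on $\partial\mathcal{O}$ and $u(\cdot,0^+)\equiv1$ there, so that integrating in $t$ gives $\mathcal{L}_{\alpha,\epsilon}w_M=u(\cdot,M)-1=-g_M$ on $\mathcal{O}^c$ and $w_M\equiv0$ on $\overline{\mathcal{O}}$, with $w_M\in C^2(\mathcal{O}^c)$ and $C^1$ up to the boundary by parabolic regularity (legitimate since $\partial\mathcal{O}$ is a smooth, hence regular, boundary for the nondegenerate process). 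On the other side, $h^{*}$ solves $\mathcal{L}_{\alpha,\epsilon}^{*}h^{*}=0$ on $\mathcal{D}$, is $C^1$ up to $\partial\mathcal{D}$ by Schauder estimates, and by Lemma~\ref{lem:expr h*} and the invariance of $\mathcal{M}_\epsilon,\mathcal{O}$ under $(q,p)\mapsto(q,-p)$ one has $h^{*}\equiv1$ on $\overline{\mathcal{M}_\epsilon}$ and $h^{*}\equiv0$ on $\overline{\mathcal{O}}$.

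Next I would apply Green's identity for the operators $\mathcal{L}_{\alpha,\epsilon},\mathcal{L}_{\alpha,\epsilon}^{*}$ (mutually adjoint in $\mathrm{L}^2(\mu_\epsilon)$) to the pair $(h^{*},w_M)$ on $\mathcal{D}$. Because $\mathcal{L}_{\alpha,\epsilon}w_M=-g_M$ and $\mathcal{L}_{\alpha,\epsilon}^{*}h^{*}=0$ on $\mathcal{D}$, and because on $\partial\mathcal{O}$ \emph{both} $h^{*}$ and $w_M$ vanish, every boundary contribution over $\partial\mathcal{O}$ disappears, leaving
\[
-\int_{\mathcal{D}}h^{*}g_M\,\mathrm{d}\mu_\epsilon=\epsilon\int_{\partial\mathcal{M}_\epsilon}\langle\mathbb{M}_\alpha\nabla w_M,n\rangle\mu_\epsilon\,\mathrm{d}S-\epsilon\int_{\partial\mathcal{M}_\epsilon}w_M\langle\mathbb{M}_\alpha^\dagger\nabla h^{*},n\rangle\mu_\epsilon\,\mathrm{d}S,
\]
with $n$ the outer unit normal of $\mathcal{D}$ along $\partial\mathcal{M}_\epsilon$. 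The divergence theorem for $w_M$ on $\mathcal{M}_\epsilon$ (where $\mathcal{L}_{\alpha,\epsilon}w_M=-g_M$ as well) gives $\epsilon\int_{\partial\mathcal{M}_\epsilon}\langle\mathbb{M}_\alpha\nabla w_M,n\rangle\mu_\epsilon\,\mathrm{d}S=\int_{\mathcal{M}_\epsilon}g_M\,\mathrm{d}\mu_\epsilon$; substituting and using $h^{*}\equiv1$ on $\mathcal{M}_\epsilon$, $h^{*}\equiv0$ on $\mathcal{O}$ to recombine $\int_{\mathcal{M}_\epsilon}g_M\,\mathrm{d}\mu_\epsilon+\int_{\mathcal{D}}h^{*}g_M\,\mathrm{d}\mu_\epsilon=\int_{\mathbb{R}^{2d}}h^{*}g_M\,\mathrm{d}\mu_\epsilon$ yields
\[
\epsilon\int_{\partial\mathcal{M}_\epsilon}w_M\langle\mathbb{M}_\alpha^\dagger\nabla h^{*},n\rangle\mu_\epsilon\,\mathrm{d}S=\int_{\mathbb{R}^{2d}}h^{*}\,g_M\,\mathrm{d}\mu_\epsilon\;.
\]
I would then define the surface measure $\mathfrak{n}_{\alpha,N,\epsilon}(\mathrm{d}x):=\epsilon\langle\mathbb{M}_\alpha^\dagger\nabla h^{*}(x),n(x)\rangle\mu_\epsilon(x)\,\mathrm{d}S(x)$ on $\partial\mathcal{M}_\epsilon$; it is a positive finite measure since, by the Hopf lemma applied to $h^{*}$ (which attains its maximum value $1$ on $\partial\mathcal{M}_\epsilon$), $\nabla h^{*}=c\,n$ with $c>0$ there, while the symmetric part of $\mathbb{M}_\alpha$ equals $\mathrm{diag}(\alpha\mathbb{I}_d,\gamma\mathbb{I}_d)$, so $\langle\mathbb{M}_\alpha^\dagger n,n\rangle=\alpha|n_q|^2+\gamma|n_p|^2>0$. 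Setting $C_{\alpha,N,\epsilon}:=\mathfrak{n}_{\alpha,N,\epsilon}(\partial\mathcal{M}_\epsilon)>0$ and $\nu_{\alpha,N,\epsilon}:=\mathfrak{n}_{\alpha,N,\epsilon}/C_{\alpha,N,\epsilon}$, the last display is exactly \eqref{eq:magic_pur}; crucially $C_{\alpha,N,\epsilon}$ and $\nu_{\alpha,N,\epsilon}$ do not depend on $M$.

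For the limits, I would first verify, by the analogous integration by parts (on $\mathcal{D}$ and on $\mathcal{M}_\epsilon$) against a test function, that $C_{\alpha,N,\epsilon}=\int_{\mathbb{R}^{2d}}h^{*}(-\mathcal{L}_{\alpha,\epsilon}f)\,\mathrm{d}\mu_\epsilon$ for every $f\in C_c^\infty(\overline{\mathcal{S}_\epsilon}^c)$ with $f\equiv1$ on a neighbourhood of $\overline{\mathcal{M}_\epsilon}$ (this also re-proves $f$-independence), and similarly $\int_{\partial\mathcal{M}_\epsilon}\varphi\,\mathrm{d}\mathfrak{n}_{\alpha,N,\epsilon}=\int_{\mathbb{R}^{2d}}h^{*}(-\mathcal{L}_{\alpha,\epsilon}\widetilde\varphi)\,\mathrm{d}\mu_\epsilon$ for $\varphi\in C(\partial\mathcal{M}_\epsilon)$ and any $C^2$ extension $\widetilde\varphi$ with support in $\mathrm{B}(0,N_0)\cap\overline{\mathcal{S}_\epsilon}^c$. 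Fixing such $f,\widetilde\varphi$, for $N\ge N_0$ the functions $\mathcal{L}_{\alpha,\epsilon}f$, $\mathcal{L}_{\alpha,\epsilon}\widetilde\varphi$ converge uniformly on their fixed compact supports to $\mathcal{L}_\epsilon f$, $\mathcal{L}_\epsilon\widetilde\varphi$ as $\alpha\to0$, while $0\le h^{*}\le1$ and $h^{*}\to h_{\mathcal{M}_\epsilon,\mathcal{S}_\epsilon}^{*}$ pointwise in the regime $\alpha\to0$ then $N\to\infty$ by Lemma~\ref{lem:conv h^*}. Since $\mu_\epsilon$ is finite and the integrands are uniformly bounded with fixed compact support, dominated convergence yields $\lim_{N\to\infty}\lim_{\alpha\to0}C_{\alpha,N,\epsilon}=\int h_{\mathcal{M}_\epsilon,\mathcal{S}_\epsilon}^{*}(-\mathcal{L}_\epsilon f)\,\mathrm{d}\mu_\epsilon=C_\epsilon(\mathcal{M}_\epsilon,\mathcal{S}_\epsilon)$ by \eqref{eq:cap}, and likewise $\lim_{N\to\infty}\lim_{\alpha\to0}\int\varphi\,\mathrm{d}\nu_{\alpha,N,\epsilon}$ exists for every $\varphi\in C(\partial\mathcal{M}_\epsilon)$; since $\partial\mathcal{M}_\epsilon$ is compact and the $\nu_{\alpha,N,\epsilon}$ are probability measures, this identifies a weak limit $\nu_\epsilon$, again a probability measure on $\partial\mathcal{M}_\epsilon$.

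The hard part is not conceptual but a matter of care. I expect the main obstacles to be (i) rigorously justifying, up to the boundary, the Poisson equation for the $M$-truncated quantity $w_M$ and all the Green and divergence identities — this rests on the smoothness and regularity of the spheres $\partial\mathcal{M}_\epsilon$, $\partial\mathcal{S}_\epsilon$ and $\{|x|=N\}$ for the uniformly elliptic operators $\mathcal{L}_{\alpha,\epsilon},\mathcal{L}_{\alpha,\epsilon}^{*}$, together with Schauder and parabolic boundary estimates — and (ii) tracking the normalizing constants so that the very constant $C_{\alpha,N,\epsilon}$ appearing in \eqref{eq:magic_pur} is the one that converges to the capacity analogue of Proposition~\ref{prop:cap}; this is exactly what the identity $C_{\alpha,N,\epsilon}=\int h^{*}(-\mathcal{L}_{\alpha,\epsilon}f)\,\mathrm{d}\mu_\epsilon$ secures. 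Note finally that the truncation level $M$ serves only to keep $w_M\le M$ bounded, so that the $\alpha\to0$, $N\to\infty$ limits can be taken by bounded convergence without any a priori control on the integrability of $\tau_{\mathcal{O}_{N,\epsilon}}^{\alpha,\epsilon}$.
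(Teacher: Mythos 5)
Your proposal is correct but takes a genuinely different route to the surface measure. The paper quotes~\cite{LMS} for the existence of a surface measure $\eta_{\alpha,N,\epsilon}$ on $\partial\mathcal{M}_\epsilon$ and for the associated integration-by-parts identity (their Equation~3.16) and hitting-time identity (their Proposition~3.3), then derives the identity~\eqref{eq:magic_pur} probabilistically through the Markov property and Fubini applied to the function $g(x)=\mathbb{P}_x(\tau_{\mathcal{O}_{N,\epsilon}}^{\alpha,\epsilon}\le M)$, and establishes weak convergence of $\eta_{\alpha,N,\epsilon}$ via a uniform mass bound (Step~1), a density argument in $C_b(\partial\mathcal{M}_\epsilon)$ (Step~2), and the Riesz--Markov--Kakutani theorem (Step~3). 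You instead construct the surface measure $\mathfrak{n}_{\alpha,N,\epsilon}(\mathrm{d}x)=\epsilon\langle\mathbb{M}_\alpha^\dagger\nabla h^{*},n\rangle\mu_\epsilon\,\mathrm{d}S$ explicitly as the conormal derivative of $h^{*}_{\alpha,N,\epsilon}$ on $\partial\mathcal{M}_\epsilon$, prove its positivity by Hopf's lemma, and derive~\eqref{eq:magic_pur} in one stroke from Green's identity applied to the pair $(h^{*},w_M)$ together with the Poisson equation $\mathcal{L}_{\alpha,\epsilon}w_M=-g_M$; this avoids any appeal to~\cite{LMS} but requires classical Schauder and parabolic boundary regularity for the (non-self-adjoint but uniformly elliptic) operators. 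The two yield the same object: your integration-by-parts identity $\int_{\partial\mathcal{M}_\epsilon}f\,\mathrm{d}\mathfrak{n}_{\alpha,N,\epsilon}=\int h^{*}(-\mathcal{L}_{\alpha,\epsilon}f)\,\mathrm{d}\mu_\epsilon$ for $f\in C_c^\infty(\overline{\mathcal{S}_\epsilon}^c)$ is exactly~\cite[Eq.~3.16]{LMS}, and the limit computation is then identical. Your approach buys an explicit and self-contained description of the harmonic measure; the paper's buys brevity and sidesteps the boundary-regularity bookkeeping.

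One small imprecision worth flagging: you assert the identity $\int_{\partial\mathcal{M}_\epsilon}\varphi\,\mathrm{d}\mathfrak{n}_{\alpha,N,\epsilon}=\int h^{*}(-\mathcal{L}_{\alpha,\epsilon}\widetilde\varphi)\,\mathrm{d}\mu_\epsilon$ ``for $\varphi\in C(\partial\mathcal{M}_\epsilon)$ and any $C^2$ extension $\widetilde\varphi$'', but a merely continuous $\varphi$ need not admit a $C^2$ extension with the same boundary trace, so this holds as written only for $\varphi$ that are restrictions of $C^2$ functions. To get weak convergence for arbitrary $\varphi\in C(\partial\mathcal{M}_\epsilon)$ you still need the density step: smooth functions on the sphere $\partial\mathcal{M}_\epsilon$ are uniformly dense in $C(\partial\mathcal{M}_\epsilon)$ and do admit ambient $C_c^\infty$ extensions, and the uniform bound $\sup_{\alpha,N}\mathfrak{n}_{\alpha,N,\epsilon}(\partial\mathcal{M}_\epsilon)<\infty$ (which you obtain from the convergence of $C_{\alpha,N,\epsilon}$) then transfers the existence of the limit to all of $C(\partial\mathcal{M}_\epsilon)$. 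This is precisely what the paper carries out in Step~2, so your argument needs the same filling-in.
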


\begin{proof}
Let us define $C_{\alpha,\,N,\,\epsilon}:=\eta_{\alpha,\,N,\,\epsilon}(\partial\mathcal{M}_{\epsilon})$
and define the probability measure $\nu_{\alpha,\,N,\,\epsilon}$
on $\partial\mathcal{M}_{\epsilon}$ as 
\[
\nu_{\alpha,\,N,\,\epsilon}(\cdot)=\frac{1}{C_{\alpha,\,N,\,\epsilon}}\eta_{\alpha,\,N,\,\epsilon}(\cdot)\;.
\]
The limits of $C_{\alpha,\,N,\,\epsilon}$ and $\nu_{\alpha,\,N,\,\epsilon}$ immediately follow from Proposition~\ref{prop:cv eq meas perturbed}. It only remains to prove the equality~\eqref{eq:magic_pur}. In order to do that, let us fix
$M>0$ and apply the identity~\eqref{eq:ipp elliptic 2} to the function $g(x)=\mathbb{P}_{x}(\tau_{\mathcal{O}_{N,\,\epsilon}}^{\alpha,\,\epsilon}\leq M)$. Then, by the Markov property and the Fubini theorem,
we have 
\begin{align*}
\mathbb{E}_{x}\left[\int_{0}^{\tau_{\mathcal{O}_{N,\,\epsilon}}^{\alpha,\,\epsilon}}g(X^{\alpha,\,\epsilon}(r))\mathrm{d}r\right] & =\int_{0}^{\infty}\mathbb{E}_{x}\left[\mathbf{1}_{\tau_{\mathcal{O}_{N,\,\epsilon}}^{\alpha,\,\epsilon}>r}\mathbb{P}_{X^{\alpha,\,\epsilon}(r)}\left(\tau_{\mathcal{O}_{N,\,\epsilon}}^{\alpha,\,\epsilon}\leq M\right)\right]\mathrm{d}r\\
 & =\int_{0}^{\infty}\mathbb{P}_{x}\left(r<\tau_{\mathcal{O}_{N,\,\epsilon}}^{\alpha,\,\epsilon}\leq r+M\right)\mathrm{d}r=\mathbb{E}_{x}\left[\tau_{\mathcal{O}_{N,\,\epsilon}}^{\alpha,\,\epsilon}\land M\right]\;.
\end{align*}
Reinjecting into \eqref{eq:ipp elliptic 2} and dividing both sides by $C_{\alpha,\,N,\,\epsilon}$ yields~\eqref{eq:magic_pur}.
\end{proof}

\subsection{Proof of Proposition~\ref{prop:harm}}

In this subsection, we prove Proposition~\ref{prop:harm} based on
Proposition~\ref{prop:average time perturbed Langevin}. We first
introduce several lemmas. 
\begin{lem}
\label{lem:unif_c} For all $\epsilon\in(0,\,1)$, $r>0$ and $N>0$
large enough, we have that
\[
\lim_{\alpha\rightarrow0}\sup_{x\in\partial\mathcal{M}_{\epsilon}}\left|\mathbb{P}_{x}(\tau_{\mathcal{O}_{N,\,\epsilon}}^{\alpha,\,\epsilon}>r)-\mathbb{P}_{x}(\tau_{\mathcal{O}_{N,\,\epsilon}}^{\epsilon}>r)\right|=0\;.
\]
\end{lem}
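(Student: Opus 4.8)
The plan is to prove this uniform convergence as a quantitative estimate obtained by combining the pathwise Grönwall bound of Lemma~\ref{lem:gronw} with the small-time non-degeneracy results of Lemma~\ref{lem:small time asymptotics}-(2), which ensure that the limiting process $X^\epsilon$ crosses $\partial\mathcal{O}_{N,\,\epsilon}$ transversally. The subtlety, compared to the pointwise statement of Lemma~\ref{lem:conv h^*}, is that we now need a modulus of continuity that is uniform over the starting points $x\in\partial\mathcal{M}_{\epsilon}$; since $\partial\mathcal{M}_{\epsilon}$ is compact, this is plausible, but it forces us to track carefully how the crossing times depend on $x$. The hard part will be the transversality argument made uniform: for a single $x$ one uses that the process hits $\partial\mathcal{O}_{N,\,\epsilon}$ and then genuinely enters $\mathcal{O}_{N,\,\epsilon}^c$ or its exterior, but an arbitrarily small velocity at the crossing could make the coupled process $X^{\alpha,\,\epsilon}$ lag behind for arbitrarily long, so the non-degeneracy has to be exploited in a way that is stable under a compact family of initial conditions.

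First, I would fix $\epsilon\in(0,1)$, $r>0$, and $N>0$ large enough that $\mathcal{M}_{\epsilon},\mathcal{S}_{\epsilon}\subset\textup{B}(0,N)$; I also set $\mathcal{D}=\mathbb{R}^{2d}\setminus\overline{\mathcal{O}_{N+1,\,\epsilon}^c}$ (more precisely work on $[0,\tau^{\alpha,\,\epsilon}_{\mathcal{O}_{N+1,\,\epsilon}}\wedge\tau^{\epsilon}_{\mathcal{O}_{N+1,\,\epsilon}}]$ as in the proof of Lemma~\ref{lem:conv h^*}) so that Lemma~\ref{lem:gronw} gives a constant $C=C(N)$ with
\[
\left|X^{\alpha,\,\epsilon}(t)-X^{\epsilon}(t)\right|\le \mathrm{e}^{Ct}\Big[\alpha\int_{0}^{t}|\nabla U(q^{\epsilon}(s))|\mathrm{d}s+\sqrt{2\alpha\epsilon}\sup_{s\in[0,t]}|\widetilde B(s)|\Big]
\]
for all $t$ before either process leaves $\textup{B}(0,N+1)$. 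The difference $|\mathbb{P}_{x}(\tau_{\mathcal{O}_{N,\,\epsilon}}^{\alpha,\,\epsilon}>r)-\mathbb{P}_{x}(\tau_{\mathcal{O}_{N,\,\epsilon}}^{\epsilon}>r)|$ is bounded by $\mathbb{P}_{x}$ of the symmetric-difference event $\{\tau^{\alpha,\,\epsilon}_{\mathcal{O}_{N,\,\epsilon}}>r\}\triangle\{\tau^{\epsilon}_{\mathcal{O}_{N,\,\epsilon}}>r\}$, so it suffices to show this probability tends to $0$ as $\alpha\to0$ uniformly in $x\in\partial\mathcal{M}_{\epsilon}$.

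Second, for the main transversality step I would argue as follows. On the event $\{\tau^{\epsilon}_{\mathcal{O}_{N,\,\epsilon}}\neq r\}$ — whose complement has $\mathbb{P}_x$-probability zero for each fixed $x$ by Lemma~\ref{lem:small time asymptotics}-(2) applied at the crossing of the sphere $\{|x|=N\}$, and which one argues is negligible uniformly via a Fubini/occupation-time argument — the process $X^{\epsilon}$ on $[0,r]$ stays at a strictly positive distance from $\partial\mathcal{O}_{N,\,\epsilon}$ except for the (at most countably many) crossing instants, at each of which it transversally passes through by Lemma~\ref{lem:small time asymptotics}-(2). Quantitatively, for each $\delta>0$ I would introduce the event $A_{x,\delta}$ that $X^{\epsilon}$ on $[0,r]$ either never gets within distance $\delta$ of $\partial\mathcal{O}_{N,\,\epsilon}$, or every time it enters the $\delta$-tube around this boundary it spends there less than some controlled time and crosses it with a displacement of size $\gg\delta$. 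Using the strong Markov property together with Lemma~\ref{lem:small time asymptotics}-(2), the complement $\mathbb{P}_x(A_{x,\delta}^c)\to0$ as $\delta\to0$, and a compactness argument over $x\in\partial\mathcal{M}_{\epsilon}$ — together with continuity of $x\mapsto\mathbb{P}_x$ of the relevant open/closed path events — upgrades this to $\sup_{x\in\partial\mathcal{M}_{\epsilon}}\mathbb{P}_x(A_{x,\delta}^c)\to0$. On $A_{x,\delta}$, the Grönwall bound shows that once $\alpha$ is small enough that $\mathrm{e}^{Cr}[\alpha \sup_{\textup{B}(0,N+1)}|\nabla U|\cdot r+\sqrt{2\alpha\epsilon}\sup_{s\le r}|\widetilde B(s)|]<\delta$, the coupled process $X^{\alpha,\,\epsilon}$ on $[0,r]$ crosses $\partial\mathcal{O}_{N,\,\epsilon}$ exactly when $X^{\epsilon}$ does, up to a negligible time shift, so $\mathbf 1\{\tau^{\alpha,\,\epsilon}_{\mathcal{O}_{N,\,\epsilon}}>r\}=\mathbf 1\{\tau^{\epsilon}_{\mathcal{O}_{N,\,\epsilon}}>r\}$.

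Finally, splitting $\mathbb{P}_x(\{\tau^{\alpha,\,\epsilon}_{\mathcal{O}_{N,\,\epsilon}}>r\}\triangle\{\tau^{\epsilon}_{\mathcal{O}_{N,\,\epsilon}}>r\})$ over $A_{x,\delta}$ and $A_{x,\delta}^c$, the first part vanishes for $\alpha$ small (depending only on $\delta$, $N$, $r$, $\epsilon$ through the supremum of $\nabla U$ and the distribution of $\sup_{s\le r}|\widetilde B(s)|$, not on $x$), and the second is $\le\sup_{x}\mathbb{P}_x(A_{x,\delta}^c)$, which is made arbitrarily small by choosing $\delta$ small first. Taking $\limsup_{\alpha\to0}$ then $\delta\to0$ yields
\[
\lim_{\alpha\to0}\sup_{x\in\partial\mathcal{M}_{\epsilon}}\left|\mathbb{P}_{x}(\tau_{\mathcal{O}_{N,\,\epsilon}}^{\alpha,\,\epsilon}>r)-\mathbb{P}_{x}(\tau_{\mathcal{O}_{N,\,\epsilon}}^{\epsilon}>r)\right|=0\;.
\]
I expect the genuinely delicate point to be the uniform-in-$x$ control of $\mathbb{P}_x(A_{x,\delta}^c)$, i.e.\ turning the almost-sure transversal-crossing statement of Lemma~\ref{lem:small time asymptotics}-(2) into a modulus that does not degenerate as $x$ ranges over $\partial\mathcal{M}_{\epsilon}$; this likely needs a separate technical lemma (the one the excerpt defers to Section~\ref{sec5:lem_unifc}) quantifying the local-time/occupation behaviour of $X^{\epsilon}$ near $\partial\mathcal{O}_{N,\,\epsilon}$ uniformly over compact sets of starting points.
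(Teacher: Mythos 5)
Your high-level plan is the right one and matches the paper's strategy in outline: bound the pathwise distance by Gr\"onwall (Lemma~\ref{lem:gronw}), observe that the symmetric difference of the two events can only occur if $X^\epsilon$ grazes $\partial\mathcal{O}_{N,\,\epsilon}$ with small velocity or the two trajectories separate, and use the transversality of the crossing (Lemma~\ref{lem:small time asymptotics}-(2) quantified) uniformly in $x$. However there are two genuine gaps in the hard parts, and they are precisely the places where the paper invests the most machinery.

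First, the uniform-in-$x$ control of the ``clean crossing'' event is not established by your proposal. You suggest a compactness-plus-continuity argument to upgrade the pointwise statement $\mathbb{P}_x(A_{x,\delta}^c)\to 0$ to a uniform one, but your event $A_{x,\delta}$ (entering the $\delta$-tube, leaving it with displacement $\gg\delta$ in a controlled time) is not a closed or monotone path event, so neither Dini nor a simple equicontinuity argument applies without substantial extra work; worse, establishing the requisite continuity in $x$ would itself require essentially the same quantitative estimates you are trying to avoid. The paper instead goes directly for a quantitative, uniform bound: it proves a Gaussian upper bound for the law of the absorbed process $X^\epsilon(t)$ which is uniform over $x\in\partial\mathcal{M}_{\epsilon}$ (Lemma~\ref{lem:control density}, via~\cite{Menozzi}), combines it with Weyl's tube formula (Corollary~\ref{cor:weyl density}) to bound the probability of being near $\partial\mathcal{O}_{N,\epsilon}$ with small velocity, and then runs a time-discretization plus H\"older-of-Brownian-increments argument to turn that into the uniform transversality statement (Lemma~\ref{lem:control inward velocity}). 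None of this is a soft compactness argument.

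Second, your claim that on $A_{x,\delta}$, once the Gr\"onwall distance drops below $\delta$, ``$X^{\alpha,\epsilon}$ crosses exactly when $X^\epsilon$ does up to a negligible time shift'' is not justified. The distance bound puts $X^{\alpha,\epsilon}$ within $\delta$ of $\partial\mathcal{O}_{N,\epsilon}$ at time $\tau^\epsilon_{\mathcal{O}_{N,\epsilon}}$, but that does not directly force $X^{\alpha,\epsilon}$ to enter $\mathcal{O}_{N,\epsilon}$ shortly afterward; you need a separate argument for the \emph{perturbed} process starting near the boundary with nonzero velocity. The paper handles this via the strong Markov property at $\tau^\epsilon_{\mathcal{O}_{N,\epsilon}}$ together with Lemma~\ref{lem:hitting time alpha epsilon}, which proves that $\sup_{x\in\mathcal{C}^{\alpha,\epsilon}_\lambda}\mathbb{P}_x(\tau^{\alpha,\epsilon}_{\mathcal{O}_{N,\epsilon}}>\alpha^\lambda)\to 0$, itself requiring the $\mathrm{L}^2$ martingale control of Lemma~\ref{lem:control martingale}. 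Related to this, the endgame when $\tau^\epsilon_{\mathcal{O}_{N,\epsilon}}$ falls in $[r-\alpha^\lambda,r]$ also needs a separate density estimate (\eqref{eq:control hitting time interval}), which your sketch does not treat. Overall, your proposal correctly isolates where the difficulty lies but replaces the paper's quantitative core with a compactness heuristic that would not, as stated, give you the uniformity.
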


The proof of Lemma~\ref{lem:unif_c} is postponed to Section~\ref{sec5:lem_unifc}. 
\begin{notation}
For $x\in\mathbb{R}^{2d}$, we denote here by $(X^{\epsilon}(t;\,x)=(q^{\epsilon}(t;\,x),\,p^{\epsilon}(t;\,x))_{t\geq0}$
the process satisfying \eqref{eq:sde} and starting from $x$ (i.e.,
$X^{\epsilon}(0;\,x)=x$). Then, we can couple all the processes $\{(X^{\epsilon}(t;\,x))_{t\geq0}:x\in\mathbb{R}^{2d}\}$
together by using (sharing) the same Brownian motion $(B(t))_{t\ge0}$.
We shall always assume that these processes are coupled in this manner.
Finally, for $x\in\mathbb{R}^{2d}$ and a measurable set $\mathcal{C}\subset\mathbb{R}^{2d}$,
we denote by $\tau_{\mathcal{C}}^{\epsilon}(x)$ the hitting time
of $\mathcal{C}$ by the process $(X^{\epsilon}(t;\,x))_{t\geq0}$.
\end{notation}

\begin{lem}
\label{lem:gr2}Let $\mathcal{D}\subset\mathbb{R}^{2d}$ be a measurable set such that $\mathbb{R}^{2d}\setminus\mathcal{D}$
is bounded. Then, there exists a constant $C(\mathcal{D})>0$ such that for all $x,\,y\in\mathbb{R}^{2d}\setminus\mathcal{D}$,
\[
\left|X^{\epsilon}(t;\,x)-X^{\epsilon}(t;\,y)\right|\leq|x-y|\mathrm{e}^{C(\mathcal{D})t}\;\;\;\;\text{for }t\in[0,\,\tau_{\mathcal{D}}^{\epsilon}(x)\land\tau_{\mathcal{D}}^{\epsilon}(y)]
\]
for all $\epsilon>0$. 
\end{lem}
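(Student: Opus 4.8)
The plan is to imitate the proof of Lemma~\ref{lem:gronw} with $\alpha=0$, the key simplification being that since $(X^{\epsilon}(t;\,x))_{t\ge0}$ and $(X^{\epsilon}(t;\,y))_{t\ge0}$ are driven by the \emph{same} Brownian motion $(B(t))_{t\ge0}$ and the noise in~\eqref{eq:sde} is additive, the stochastic integrals cancel upon subtracting the two copies of the SDE. Writing $\widehat q(t):=q^{\epsilon}(t;\,x)-q^{\epsilon}(t;\,y)$ and $\widehat p(t):=p^{\epsilon}(t;\,x)-p^{\epsilon}(t;\,y)$, subtraction of~\eqref{eq:sde} yields
\begin{equation*}
\left\{\begin{aligned}
&\mathrm{d}\widehat q(t)=\widehat p(t)\,\mathrm{d}t\;,\\
&\mathrm{d}\widehat p(t)=-\big[\nabla U(q^{\epsilon}(t;\,x))-\nabla U(q^{\epsilon}(t;\,y))\big]\,\mathrm{d}t-\gamma\,\widehat p(t)\,\mathrm{d}t\;,
\end{aligned}\right.
\end{equation*}
so $(\widehat q,\widehat p)$ has $C^{1}$ (in particular absolutely continuous) trajectories and no martingale part.

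Next I would differentiate the squared distance $t\mapsto|\widehat q(t)|^{2}+|\widehat p(t)|^{2}=|X^{\epsilon}(t;\,x)-X^{\epsilon}(t;\,y)|^{2}$. Along the system above,
\begin{equation*}
\frac{\mathrm{d}}{\mathrm{d}t}\big(|\widehat q(t)|^{2}+|\widehat p(t)|^{2}\big)=2\langle\widehat q(t),\,\widehat p(t)\rangle-2\big\langle\widehat p(t),\,\nabla U(q^{\epsilon}(t;\,x))-\nabla U(q^{\epsilon}(t;\,y))\big\rangle-2\gamma|\widehat p(t)|^{2}\;.
\end{equation*}
For $t\in[0,\,\tau_{\mathcal D}^{\epsilon}(x)\land\tau_{\mathcal D}^{\epsilon}(y)]$ both states remain in the bounded set $\mathbb{R}^{2d}\setminus\mathcal D$, and exactly as in the proof of Lemma~\ref{lem:gronw} there is $L=L(\mathcal D)>0$ with $|\nabla U(q)-\nabla U(q')|\le L|q-q'|$ for all $(q,p),(q',p')\in\mathbb{R}^{2d}\setminus\mathcal D$. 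Applying this together with $2ab\le a^{2}+b^{2}$ to the first two terms and discarding the non-positive term $-2\gamma|\widehat p(t)|^{2}$ gives
\begin{equation*}
\frac{\mathrm{d}}{\mathrm{d}t}\big(|\widehat q(t)|^{2}+|\widehat p(t)|^{2}\big)\le(1+L)\big(|\widehat q(t)|^{2}+|\widehat p(t)|^{2}\big)\;.
\end{equation*}

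Finally I would invoke Gr\"onwall's inequality to obtain $|X^{\epsilon}(t;\,x)-X^{\epsilon}(t;\,y)|^{2}\le|x-y|^{2}\,\mathrm{e}^{(1+L)t}$ on $[0,\,\tau_{\mathcal D}^{\epsilon}(x)\land\tau_{\mathcal D}^{\epsilon}(y)]$, which is the claim with $C(\mathcal D)=(1+L)/2$; the bound is uniform in $\epsilon$ precisely because the additive noise cancels in the difference. I do not anticipate any genuine obstacle here: the only two points deserving attention are the cancellation of the stochastic integrals (which hinges on the shared driving Brownian motion and the additivity of the noise) and the domain-dependent, $\epsilon$-independent Lipschitz bound on $\nabla U$, both of which are already available from the setup and from the proof of Lemma~\ref{lem:gronw}.
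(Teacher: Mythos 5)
Your proof is correct and follows essentially the same route as the paper's: you subtract the two coupled copies of~\eqref{eq:sde}, note the stochastic integrals cancel because the noise is additive and the driving Brownian motion is shared, use the local Lipschitz bound on $\nabla U$ over the bounded set $\mathbb{R}^{2d}\setminus\mathcal{D}$, and close with Gr\"onwall's inequality. The only difference is that you spell out the differentiation of the squared distance, whereas the paper leaves that step implicit.
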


\begin{proof}
Fix $x,\,y\in\mathbb{R}^{2d}\setminus\mathcal{D}$. Writing $\widehat{q}^{\epsilon}(t)=q^{\epsilon}(t;\,x)-q^{\epsilon}(t;\,y)$
and $\widehat{p}^{\epsilon}(t)=p^{\epsilon}(t;\,x)-p^{\epsilon}(t;\,y)$,
we derive from \eqref{eq:sde} that 
\begin{equation}
\left\{ \begin{aligned} & \mathrm{d}\widehat{q}^{\epsilon}(t)=\widehat{p}^{\epsilon}(t)\mathrm{d}t\;,\\
 & \mathrm{d}\widehat{p}^{\epsilon}(t)=-\left[\nabla U(q^{\epsilon}(t;\,x))-\nabla U(q^{\epsilon}(t;\,y))\right]\mathrm{d}t-\gamma\widehat{p}^{\epsilon}(t)\mathrm{d}t\;.
\end{aligned}
\right.\label{eq:gr21}
\end{equation}
Furthermore, given that $U\in C^2(\R^d,\R)$ and that $\mathbb{R}^{2d}\setminus\mathcal{D}$ is a bounded set, there exists $L=L(\mathcal{D})>0$ such that for all $x=(q,p),x'=(q',p')\in\mathbb{R}^{2d}\setminus\mathcal{D}$,
\begin{equation}\label{eq:gr22}
    \left|\nabla U(q)-\nabla U(q')\right|\le L|q-q'|\;.
\end{equation}
By \eqref{eq:gr21} and \eqref{eq:gr22}, and the fact that $X^{\epsilon}(t;\,x),\,X^{\epsilon}(t;\,y)\in\mathbb{R}^{2d}\setminus\mathcal{D}$
for $t\in[0,\,\tau_{\mathcal{D}}^{\epsilon}(x)\land\tau_{\mathcal{D}}^{\epsilon}(y)]$,
the statement of this lemma immediately follows from Gr\"onwall's inequality.
\end{proof}
\begin{lem}
\label{lem:continuity probability} For all $\epsilon\in(0,\,1)$,
$r>0$ and $N>0$ large enough, the functions $F_{1},\,F_{2}:\mathbb{R}^{2d}\rightarrow[0,\,1]$
defined by 
\[
F_{1}(x)=\mathbb{P}_{x}(\tau_{\mathcal{O}_{N,\,\epsilon}}^{\epsilon}>r)\;\;\;\text{and\;\;\;}F_{2}(x)=\mathbb{P}_{x}(\tau_{\mathcal{S}_{\epsilon}}^{\epsilon}>r)
\]
are continuous. 
\end{lem}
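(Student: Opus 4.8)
The plan is to prove continuity of $F_1$ and $F_2$ by a coupling argument based on Lemma~\ref{lem:gr2}, together with the non-degeneracy of the hitting times guaranteed by Lemma~\ref{lem:small time asymptotics} and the fact (from~\cite[Lemma 3.1]{Hill}) that the process~\eqref{eq:sde} hits $\partial\mathcal{S}_\epsilon$ (resp. $\partial\{|x|>N\}$) with non-zero velocity. I will write out the argument for $F_2$; the case of $F_1$ is identical after replacing $\mathcal{S}_\epsilon$ by $\mathcal{O}_{N,\,\epsilon}$, noting that $\mathbb{R}^{2d}\setminus\mathcal{O}_{N,\,\epsilon}$ is bounded so that Lemma~\ref{lem:gr2} applies with $\mathcal{D}=\mathcal{O}_{N,\,\epsilon}$. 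Throughout I work under the coupling in which all processes $(X^{\epsilon}(t;\,x))_{t\ge0}$ share the same Brownian motion $(B(t))_{t\ge0}$.

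First I fix $x\in\mathbb{R}^{2d}$ and a sequence $x_n\to x$, and I want to show $F_2(x_n)\to F_2(x)$. The key point is that, for the coupled trajectories, $\mathbf{1}\{\tau_{\mathcal{S}_\epsilon}^{\epsilon}(x_n)>r\}\to\mathbf{1}\{\tau_{\mathcal{S}_\epsilon}^{\epsilon}(x)>r\}$ almost surely, after which dominated convergence finishes the proof. To establish the almost-sure convergence of indicators, I distinguish according to whether $\tau_{\mathcal{S}_\epsilon}^{\epsilon}(x)<r$, $\tau_{\mathcal{S}_\epsilon}^{\epsilon}(x)>r$, or $\tau_{\mathcal{S}_\epsilon}^{\epsilon}(x)=r$. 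On the event $\{\tau_{\mathcal{S}_\epsilon}^{\epsilon}(x)=r\}$ — which I will argue has probability zero — I discard; otherwise I proceed as follows. If $\tau_{\mathcal{S}_\epsilon}^{\epsilon}(x)>r$, then by continuity of $t\mapsto X^{\epsilon}(t;\,x)$ on the compact interval $[0,\,r]$ we have $\inf_{t\in[0,\,r]}\mathrm{d}(X^{\epsilon}(t;\,x),\,\mathcal{S}_\epsilon)>0$; applying Lemma~\ref{lem:gr2} with $\mathcal{D}=\mathcal{S}_\epsilon$ on $[0,\,r]$ (shrinking the relevant neighborhood so both trajectories stay in $\mathbb{R}^{2d}\setminus\mathcal{S}_\epsilon$) gives, for $n$ large, $\sup_{t\in[0,\,r]}|X^{\epsilon}(t;\,x_n)-X^{\epsilon}(t;\,x)|\le|x_n-x|\mathrm{e}^{C(\mathcal{S}_\epsilon)r}\to0$, hence $\tau_{\mathcal{S}_\epsilon}^{\epsilon}(x_n)>r$ eventually. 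If instead $\tau_{\mathcal{S}_\epsilon}^{\epsilon}(x)<r$, then by~\cite[Lemma 3.1]{Hill} the trajectory $X^{\epsilon}(\cdot\,;\,x)$ reaches $\partial\mathcal{S}_\epsilon$ with non-zero velocity, so Lemma~\ref{lem:small time asymptotics}-(2) shows the trajectory penetrates the interior of $\mathcal{S}_\epsilon$; picking $t_0\in(\tau_{\mathcal{S}_\epsilon}^{\epsilon}(x),\,r)$ with $X^{\epsilon}(t_0;\,x)$ in the open ball $\mathcal{S}_\epsilon$ and applying Lemma~\ref{lem:gr2} on $[0,\,t_0]$ shows $X^{\epsilon}(t_0;\,x_n)\in\mathcal{S}_\epsilon$ for $n$ large, hence $\tau_{\mathcal{S}_\epsilon}^{\epsilon}(x_n)\le t_0<r$ eventually. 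In both cases the indicators converge.

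It remains to handle the boundary case $\mathbb{P}_x(\tau_{\mathcal{S}_\epsilon}^{\epsilon}=r)=0$ and, more subtly, the degenerate situation in which $X^{\epsilon}(\cdot\,;\,x)$ merely grazes $\partial\mathcal{S}_\epsilon$ at some time $<r$ without entering. For the former, one notes that for each fixed $r$ the level set $\{\tau_{\mathcal{S}_\epsilon}^{\epsilon}(x)=r\}$ has probability zero: indeed conditionally on $\tau_{\mathcal{S}_\epsilon}^{\epsilon}(x)=r$ the process is at $\partial\mathcal{S}_\epsilon$ with nonzero velocity by~\cite[Lemma 3.1]{Hill}, so by Lemma~\ref{lem:small time asymptotics}-(2) applied at time $r$ it would already have entered $\mathcal{S}_\epsilon$ strictly before $r$, a contradiction, so this event is empty up to a null set. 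The grazing issue is excluded by the same combination: whenever the trajectory touches $\partial\mathcal{S}_\epsilon$, it does so with nonzero velocity and Lemma~\ref{lem:small time asymptotics}-(2) forces it to immediately dip into the interior, so the first touching time equals the first entrance time and there is no degenerate tangential contact.

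The main obstacle is precisely this last point — ruling out degenerate boundary behavior for a hypoelliptic process, where one cannot appeal to the usual strong-Markov / regular-boundary arguments available in the elliptic case. This is exactly what Lemma~\ref{lem:small time asymptotics}-(2) (whose proof crucially uses $p\ne0$, which in turn is supplied by~\cite[Lemma 3.1]{Hill}) is designed to overcome, so the burden on the present lemma reduces to assembling these ingredients correctly; the Grönwall estimate from Lemma~\ref{lem:gr2} then transfers the pathwise stability to the coupled trajectories in a routine way.
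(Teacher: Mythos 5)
Your overall strategy mirrors the paper's: couple the processes via the shared Brownian motion, use Gr\"onwall stability from Lemma~\ref{lem:gr2}, and rule out degenerate boundary behavior via~\cite[Lemma 3.1]{Hill} together with Lemma~\ref{lem:small time asymptotics}. However, there are two genuine gaps.

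First, your argument that $\mathbb{P}_x(\tau_{\mathcal{S}_\epsilon}^\epsilon = r) = 0$ is flawed. You reason that conditionally on $\tau_{\mathcal{S}_\epsilon}^\epsilon = r$, the process is on $\partial\mathcal{S}_\epsilon$ with nonzero velocity, and then invoke Lemma~\ref{lem:small time asymptotics}-(2) ``at time $r$'' to conclude that the process ``would already have entered $\mathcal{S}_\epsilon$ strictly before $r$.'' But Lemma~\ref{lem:small time asymptotics}-(2) is a forward-time statement: applied at time $r$ via the Markov property, it tells you the process dips into $\mathcal{S}_\epsilon$ somewhere in $(r, r+\delta]$, which is perfectly consistent with $\tau = r$. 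It gives no information about the past interval $(r - \delta, r)$, so there is no contradiction. The paper's actual argument is different and simpler: the event $\{\tau_{\mathcal{S}_\epsilon}^\epsilon = r\}$ is contained in $\{X^\epsilon(r) \in \partial\mathcal{S}_\epsilon\}$, and this has probability zero because $\partial\mathcal{S}_\epsilon$ has zero Lebesgue measure while the underdamped Langevin process admits a transition density (citing \cite{Chaudru2022_preprint}). That density fact is an essential ingredient you cannot bypass with the small-time oscillation lemma.

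Second, when you handle $F_2$ directly by ``applying Lemma~\ref{lem:gr2} with $\mathcal{D} = \mathcal{S}_\epsilon$,'' this is not a legitimate application: Lemma~\ref{lem:gr2} requires $\mathbb{R}^{2d}\setminus\mathcal{D}$ to be \emph{bounded}, since its Gr\"onwall constant comes from a local Lipschitz bound on $\nabla U$ over that set, and $\mathbb{R}^{2d}\setminus\mathcal{S}_\epsilon$ is unbounded. The paper instead proves $F_1$ first using $\mathcal{D} = \mathcal{O}_{N+1,\epsilon}$ (whose complement is bounded) and then treats $F_2$ ``identically''; making the $F_2$ case fully rigorous requires confining the trajectory to a large ball up to time $r$ (e.g.\ using non-explosion / positive recurrence to control $\tau_{\mathrm{B}(0,N)^c}^\epsilon$), which your sketch omits. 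You do correctly note the boundedness requirement when stating that Lemma~\ref{lem:gr2} applies to $F_1$; you need the same care for $F_2$.
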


\begin{proof}
Let us fix $\epsilon\in(0,\,1)$, $r>0$ and $N>0$. We first look
at the function $F_{1}.$ Since $F_{1}\equiv0$ on $\mathcal{O}_{N,\,\epsilon}$,
it suffices to consider the continuity on $\mathbb{R}^{2d}\setminus\mathcal{O}_{N,\,\epsilon}.$
By Lemma \ref{lem:gr2}, there exists a constant $C=C(N)$ such that
for all $x,\,y\in\mathbb{R}^{2d}\setminus\mathcal{O}_{N,\,\epsilon}$,
we have
\begin{equation}
\left|X^{\epsilon}(t;\,x)-X^{\epsilon}(t;\,y)\right|\leq|x-y|\mathrm{e}^{Ct}\;\;\;\text{for }t\in[0,\,\tau_{\mathcal{O}_{N+1,\,\epsilon}}^{\epsilon}(x)\land\,\tau_{\mathcal{O}_{N+1},\,\epsilon}^{\epsilon}(y)]\;.\label{eq:grxy}
\end{equation}
Notice that for all $x\in\mathbb{R}^{2d}\setminus\mathcal{O}_{N,\,\epsilon}$,
$\mathbb{P}_{x}(\tau_{\mathcal{O}_{N,\,\epsilon}}^{\epsilon}=r)=0$
since $\partial\mathcal{O}_{N,\,\epsilon}$ has zero Lebesgue measure
and the process~\eqref{eq:sde} admits a transition density (see~\cite{Chaudru2022_preprint}).
Moreover, the process~\eqref{eq:sde} does not attain the boundary
$\partial\mathcal{O}_{N,\,\epsilon}$ with zero velocity by~\cite[Lemma 3.1]{Hill},
i.e. $\big|p^{\epsilon}(\tau_{\mathcal{O}_{N,\,\epsilon}}^{\epsilon})\big|>0$
almost-surely. Therefore, by Lemma~\ref{lem:small time asymptotics}
it visits the interior of the domain $\mathcal{O}_{N,\,\epsilon}$
in the interval $[\tau_{\mathcal{O}_{N,\,\epsilon}}^{\epsilon},\,\tau_{\mathcal{O}_{N,\,\epsilon}}^{\epsilon}+\delta]$
for any $\delta>0$.  

Hence, by \eqref{eq:grxy}, if $\tau_{\mathcal{O}_{N,\,\epsilon}}^{\epsilon}(y)<r$
then taking $|x-y|$ small enough ensures that $\tau_{\mathcal{O}_{N,\,\epsilon}}^{\epsilon}(x)<r$.
Thus, we have 
\[
\liminf_{x\rightarrow y}\mathbf{1}\{\tau_{\mathcal{O}_{N,\,\epsilon}}^{\epsilon}(x)<r\}\ge\mathbf{1}\{\tau_{\mathcal{O}_{N,\,\epsilon}}^{\epsilon}(y)<r\}
\]
and therefore, as $\mathbb{P}_{x}(\tau_{\mathcal{O}_{N,\,\epsilon}}^{\epsilon}=r)=0$
for all $r>0$ and $x\in\mathbb{R}^{2d}$, we get 
\begin{equation}
\limsup_{x\rightarrow y}\mathbb{P}_{x}(\tau_{\mathcal{O}_{N,\,\epsilon}}^{\epsilon}>r)\le\mathbb{P}_{y}(\tau_{\mathcal{O}_{N,\,\epsilon}}^{\epsilon}>r)\;.\label{eq:ellb}
\end{equation}
Similarly, if $\tau_{\mathcal{O}_{N,\,\epsilon}}^{\epsilon}(y)>r$
then taking $|x-y|$ small enough ensures that $\tau_{\mathcal{O}_{N,\,\epsilon}}^{\epsilon}(x)>r$.
Thus, we have 
\[
\liminf_{x\rightarrow y}\mathbf{1}\{\tau_{\mathcal{O}_{N,\,\epsilon}}^{\epsilon}(x)>r\}\ge\mathbf{1}\{\tau_{\mathcal{O}_{N,\,\epsilon}}^{\epsilon}(y)>r\}
\]
and therefore 
\begin{equation}
\liminf_{x\rightarrow y}\mathbb{P}_{x}(\tau_{\mathcal{O}_{N,\,\epsilon}}^{\epsilon}>r)\ge\mathbb{P}_{y}(\tau_{\mathcal{O}_{N,\,\epsilon}}^{\epsilon}>r)\;.\label{eq:ellb2}
\end{equation}
The continuity of $F_{1}$ follows from \eqref{eq:ellb} and \eqref{eq:ellb2}.
The proof of the continuity of the function $F_{2}$ is identical. 
\end{proof}
\begin{lem}
\label{lem:convergence proba until N}Let $x\in\mathbb{R}^{2d}$ and
$M>0$. Then, for all $\epsilon\in(0,\,1)$ and $N>0$, we have 
\begin{equation}
\lim_{\alpha\rightarrow0}\mathbb{P}_{x}(\tau_{\mathcal{O}_{N,\,\epsilon}}^{\alpha,\,\epsilon}\leq M)=\mathbb{P}_{x}(\tau_{\mathcal{O}_{N,\,\epsilon}}^{\epsilon}\leq M)\;.\label{eq:first limit alpha}
\end{equation}
Moreover, for all $\epsilon\in(0,\,1)$, we have 
\begin{equation}
\lim_{N\rightarrow\infty}\mathbb{P}_{x}(\tau_{\mathcal{O}_{N,\,\epsilon}}^{\epsilon}\leq M)=\mathbb{P}_{x}(\tau_{\mathcal{S}_{\epsilon}}^{\epsilon}\leq M)\;.\label{eq:convergence proba until M}
\end{equation}
\end{lem}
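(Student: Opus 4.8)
The plan is to deduce both limits from an almost-sure convergence of the corresponding indicator functions, followed by the dominated convergence theorem, exactly as for \eqref{eq:conv1}--\eqref{eq:conv2} in Lemma~\ref{lem:conv h^*}.

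For \eqref{eq:convergence proba until M} I would first record the pathwise identity $\tau_{\mathcal{O}_{N,\,\epsilon}}^{\epsilon}=\tau_{\mathcal{S}_{\epsilon}}^{\epsilon}\wedge\tau_{\{|y|>N\}}^{\epsilon}$, which is immediate from $\mathcal{O}_{N,\,\epsilon}=\mathcal{S}_{\epsilon}\cup\{|y|>N\}$. Since the process \eqref{eq:sde} does not explode in finite time by Proposition~\ref{prop:pos_rec}, one has $\sup_{t\in[0,T]}|X^{\epsilon}(t)|<\infty$ almost surely for every $T<\infty$, hence $\tau_{\{|y|>N\}}^{\epsilon}\uparrow\infty$ and therefore $\tau_{\mathcal{O}_{N,\,\epsilon}}^{\epsilon}\uparrow\tau_{\mathcal{S}_{\epsilon}}^{\epsilon}$ as $N\to\infty$. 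Consequently $\mathbf{1}\{\tau_{\mathcal{O}_{N,\,\epsilon}}^{\epsilon}\leq M\}\to\mathbf{1}\{\tau_{\mathcal{S}_{\epsilon}}^{\epsilon}\leq M\}$ pointwise, and dominated convergence yields \eqref{eq:convergence proba until M}.

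The identity \eqref{eq:first limit alpha} is the substantial one. I would prove that $\mathbf{1}\{\tau_{\mathcal{O}_{N,\,\epsilon}}^{\alpha,\,\epsilon}\leq M\}\underset{\alpha\rightarrow0}{\longrightarrow}\mathbf{1}\{\tau_{\mathcal{O}_{N,\,\epsilon}}^{\epsilon}\leq M\}$ almost surely and then apply dominated convergence. Because $\partial\mathcal{O}_{N,\,\epsilon}$ is Lebesgue-null and \eqref{eq:sde} admits a transition density (\cite{Chaudru2022_preprint}), one has $\mathbb{P}_{x}(\tau_{\mathcal{O}_{N,\,\epsilon}}^{\epsilon}=M)=0$, so it suffices to argue separately on $\{\tau_{\mathcal{O}_{N,\,\epsilon}}^{\epsilon}<M\}$ and $\{\tau_{\mathcal{O}_{N,\,\epsilon}}^{\epsilon}>M\}$ (the case $x\in\mathcal{O}_{N,\,\epsilon}$ being immediate). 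On both events I would invoke Lemma~\ref{lem:gronw} with $\mathcal{D}=\{y\in\R^{2d}:|y|>N+1\}$, whose bounded complement contains $\mathcal{S}_{\epsilon}$; the crucial point is that the right-hand side of the resulting bound for $|X^{\alpha,\,\epsilon}(t)-X^{\epsilon}(t)|$ depends only on $X^{\epsilon}$ and $\widetilde{B}$ and tends to $0$ as $\alpha\to0$ on every finite time interval, so no circularity arises. On $\{\tau_{\mathcal{O}_{N,\,\epsilon}}^{\epsilon}<M\}$, writing $\tau_{0}=\tau_{\mathcal{O}_{N,\,\epsilon}}^{\epsilon}$, the process reaches $\partial\mathcal{O}_{N,\,\epsilon}$ at $\tau_{0}$ with nonzero velocity by \cite[Lemma 3.1]{Hill}, hence by Lemma~\ref{lem:small time asymptotics}-(2) enters the open set $\mathcal{O}_{N,\,\epsilon}$ at some $t^{*}\in(\tau_{0},M)$, which by continuity can be chosen so close to $\tau_{0}$ that $\sup_{[0,t^{*}]}|X^{\epsilon}|\leq N+\tfrac12$; a margin argument then forces $\tau_{\mathcal{D}}^{\alpha,\,\epsilon}>t^{*}$ for $\alpha$ small, so the Grönwall estimate is valid on $[0,t^{*}]$ and gives $X^{\alpha,\,\epsilon}(t^{*})\in\mathcal{O}_{N,\,\epsilon}$, i.e. $\tau_{\mathcal{O}_{N,\,\epsilon}}^{\alpha,\,\epsilon}\leq t^{*}<M$, for $\alpha$ small. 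On $\{\tau_{\mathcal{O}_{N,\,\epsilon}}^{\epsilon}>M\}$, the compact set $X^{\epsilon}([0,M])$ lies in $\{|y|\leq N\}$ and is disjoint from $\overline{\mathcal{O}_{N,\,\epsilon}}$ (it avoids the open set $\mathcal{O}_{N,\,\epsilon}$ by definition of $\tau_{\mathcal{O}_{N,\,\epsilon}}^{\epsilon}$, and it cannot touch $\partial\mathcal{O}_{N,\,\epsilon}$, for otherwise \cite[Lemma 3.1]{Hill} and Lemma~\ref{lem:small time asymptotics}-(2) would force $\tau_{\mathcal{O}_{N,\,\epsilon}}^{\epsilon}\leq M$), so $\rho:=\mathrm{d}(X^{\epsilon}([0,M]),\mathcal{O}_{N,\,\epsilon})>0$; the margin argument again gives $\tau_{\mathcal{D}}^{\alpha,\,\epsilon}>M$ for $\alpha$ small, the Grönwall estimate gives $\sup_{[0,M]}|X^{\alpha,\,\epsilon}-X^{\epsilon}|<\rho$ for $\alpha$ small, and hence $\tau_{\mathcal{O}_{N,\,\epsilon}}^{\alpha,\,\epsilon}>M$. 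Combining the two cases yields the desired almost-sure convergence.

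The hard part is \eqref{eq:first limit alpha}: since the Grönwall comparison of Lemma~\ref{lem:gronw} is only valid up to the first exit of one of the two trajectories from a large ball, one must choose $\mathcal{D}$ and the intermediate time $t^{*}$ carefully, and one must exploit that $X^{\epsilon}$ does not graze $\partial\mathcal{O}_{N,\,\epsilon}$ but enters its interior immediately after touching it (\cite[Lemma 3.1]{Hill} together with Lemma~\ref{lem:small time asymptotics}), so that a small perturbation can neither prevent $X^{\alpha,\,\epsilon}$ from hitting $\mathcal{O}_{N,\,\epsilon}$ by time $M$ nor make it hit spuriously. This is the same mechanism already carried out for \eqref{eq:conv1}, so the argument can be patterned on that proof; \eqref{eq:convergence proba until M} is then an easy byproduct of non-explosion.
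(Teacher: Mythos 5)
Your proposal is correct and follows essentially the same route as the paper: the Gr\"onwall comparison of Lemma~\ref{lem:gronw}, the zero-measure-boundary argument ruling out $\tau_{\mathcal{O}_{N,\,\epsilon}}^{\epsilon}=M$, and the ``enters the interior immediately'' mechanism from \cite[Lemma 3.1]{Hill} and Lemma~\ref{lem:small time asymptotics} to control both directions of the perturbation, before finishing by dominated convergence. (The only cosmetic difference is that for \eqref{eq:convergence proba until M} you invoke non-explosion to get $\tau^{\epsilon}_{\mathrm{B}(0,N)^{c}}\uparrow\infty$, which is arguably the cleaner justification; the paper phrases this via Proposition~\ref{prop:pos_rec} but non-explosion is exactly what is used.)
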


\begin{proof}
Let us first prove~\eqref{eq:first limit alpha} for $x\in\mathbb{R}^{2d}$
and $M>0$. The proof is similar to the proof of Lemma~\ref{lem:continuity probability}.
It relies on the application of the Gr\"onwall inequality obtained in Lemma~\ref{lem:gronw} which
provides the existence of a constant $C>0$ independent of $\alpha$
such that for all $t\in[0,\,\tau_{\mathcal{O}_{N,\,\epsilon}}^{\alpha,\,\epsilon}\land~\tau_{\mathcal{O}_{N,\,\epsilon}}^{\epsilon}]$,
\[
\left|X^{\alpha,\,\epsilon}(t)-X^{\epsilon}(t)\right|\leq\left(\alpha\int_{0}^{t}|\nabla U(q^{\epsilon}(r))|\mathrm{d}r+\sqrt{2\alpha\epsilon}\sup_{r\in[0,\,t]}|\widetilde{B}(r)|\right)\mathrm{e}^{Ct}\;.
\]
Then with the same reasoning as in Lemma~\ref{lem:continuity probability},
if $\tau_{\mathcal{O}_{N,\,\epsilon}}^{\epsilon}<M$ then taking $\alpha$
small enough ensures that $\tau_{\mathcal{O}_{N,\,\epsilon}}^{\alpha,\,\epsilon}<M$.
Also, if $\tau_{\mathcal{O}_{N,\,\epsilon}}^{\epsilon}>M$ then taking
$\alpha$ small enough ensures that $\tau_{\mathcal{O}_{N,\,\epsilon}}^{\alpha,\,\epsilon}>M$.
Notice that the events $\{\tau_{\mathcal{O}_{N,\,\epsilon}}^{\alpha,\,\epsilon}=M\}$,
$\{\tau_{\mathcal{O}_{N,\,\epsilon}}^{\epsilon}=M\}$ have zero probability
since the boundary has zero Lebesgue measure and the processes~\eqref{eq:sde},~\eqref{eq:sde_pur}
admit a transition density, hence the proof of~\eqref{eq:first limit alpha}.

Now consider the limit~\eqref{eq:convergence proba until M} of $\mathbb{P}_{x}(\tau_{\mathcal{O}_{N,\,\epsilon}}^{\epsilon}\leq M)$
for $M>0$ when $N\rightarrow\infty$. Let us define the first
exit time $\tau_{\mathrm{B}(0,\,N)^{c}}^{\epsilon}$ for the process~\eqref{eq:sde}
from the ball $\mathrm{B}(0,\,N)$. By definition of $\mathcal{O}_{N,\,\epsilon}$
in~\eqref{eq:definition O_N}, one has that 
\[
\tau_{\mathcal{O}_{N,\,\epsilon}}^{\epsilon}=\tau_{\mathcal{S}_{\epsilon}}^{\epsilon}\land\tau_{\mathrm{B}(0,\,N)^{c}}^{\epsilon}\;.
\]
Furthermore, by the non-explosion of the process~\eqref{eq:sde}
stated in Proposition~\ref{prop:pos_rec}, $\tau_{\mathrm{B}(0,\,N)^{c}}^{\epsilon}\underset{N\rightarrow\infty}{\longrightarrow}\infty$.
As a result, $\mathbb{P}_{x}(\tau_{\mathrm{B}(0,\,N)^{c}}^{\epsilon}\leq M)\underset{N\rightarrow\infty}{\longrightarrow}0$,
which ensures the limit~\eqref{eq:convergence proba until M}. 
\end{proof}
Now we are ready to prove Proposition~\ref{prop:harm}.
\begin{proof}[Proof of Proposition~\ref{prop:harm}]
By Proposition~\ref{prop:average time perturbed Langevin}, one
has for all $N,\,M>0$ large enough, 
\begin{equation}
\int_{\partial\mathcal{M}_{\epsilon}}\mathbb{E}_{x}(\tau_{\mathcal{O}_{N,\,\epsilon}}^{\alpha,\,\epsilon}\land M)\nu_{\alpha,\,N,\,\epsilon}(\mathrm{d}x)=\frac{1}{C_{\alpha,\,N,\,\epsilon}}\int_{\mathbb{R}^{2d}}h_{\alpha,\,N,\,\epsilon}^{*}(x)\mathbb{P}_{x}(\tau_{\mathcal{O}_{N,\,\epsilon}}^{\alpha,\,\epsilon}\leq M)\mu_{\epsilon}(x)\mathrm{d}x\;.\label{eq:average first exit time 2}
\end{equation}
Since 
\[
\int_{\partial\mathcal{M}_{\epsilon}}\mathbb{E}_{x}(\tau_{\mathcal{O}_{N,\,\epsilon}}^{\alpha,\,\epsilon}\land M)\nu_{\alpha,N,\epsilon}(\mathrm{d}x)=\int_{\partial\mathcal{M}_{\epsilon}}\int_{0}^{M}\mathbb{P}_{x}(\tau_{\mathcal{O}_{N,\,\epsilon}}^{\alpha,\,\epsilon}>r)\mathrm{d}r\,\nu_{\alpha,\,N,\,\epsilon}(\mathrm{d}x)\;,
\]
then 
\begin{align*}
 & \left|\int_{\partial\mathcal{M}_{\epsilon}}\mathbb{E}_{x}(\tau_{\mathcal{O}_{N,\,\epsilon}}^{\alpha,\,\epsilon}\land M)\nu_{\alpha,\,N,\,\epsilon}(\mathrm{d}x)-\int_{\partial\mathcal{M}_{\epsilon}}\mathbb{E}_{x}(\tau_{\mathcal{S}_{\epsilon}}^{\epsilon}\land M)\nu_{\epsilon}(\mathrm{d}x)\right|\\
 & \leq\int_{0}^{M}\sup_{x\in\partial\mathcal{M}_{\epsilon}}\left|\mathbb{P}_{x}(\tau_{\mathcal{O}_{N,\,\epsilon}}^{\alpha,\,\epsilon}>r)-\mathbb{P}_{x}(\tau_{\mathcal{O}_{N,\,\epsilon}}^{\epsilon}>r)\right|\mathrm{d}r\\
 & +\int_{0}^{M}\sup_{x\in\partial\mathcal{M}_{\epsilon}}\left|\mathbb{P}_{x}(\tau_{\mathcal{O}_{N,\,\epsilon}}^{\epsilon}>r)-\mathbb{P}_{x}(\tau_{\mathcal{S}_{\epsilon}}^{\epsilon}>r)\right|\mathrm{d}r\\
 & +\left|\int_{\partial\mathcal{M}_{\epsilon}}\int_{0}^{M}\mathbb{P}_{x}(\tau_{\mathcal{S}_{\epsilon}}^{\epsilon}>r)\mathrm{d}r\,\nu_{\alpha,\,N,\,\epsilon}(\mathrm{d}x)-\int_{\partial\mathcal{M}_{\epsilon}}\int_{0}^{M}\mathbb{P}_{x}(\tau_{\mathcal{S}_{\epsilon}}^{\epsilon}>r)\mathrm{d}r\,\nu_{\epsilon}(\mathrm{d}x)\right|\;.
\end{align*}
By Lemma~\ref{lem:unif_c}, 
\begin{align*}
 & \limsup_{\alpha\rightarrow0}\left|\int_{\partial\mathcal{M}_{\epsilon}}\mathbb{E}_{x}(\tau_{\mathcal{O}_{N,\,\epsilon}}^{\alpha,\,\epsilon}\land M)\nu_{\alpha,\,N,\,\epsilon}(\mathrm{d}x)-\int_{\partial\mathcal{M}_{\epsilon}}\mathbb{E}_{x}(\tau_{\mathcal{S}_{\epsilon}}^{\epsilon}\land M)\nu_{\epsilon}(\mathrm{d}x)\right|\\
 & \leq\int_{0}^{M}\sup_{x\in\partial\mathcal{M}_{\epsilon}}\left|\mathbb{P}_{x}(\tau_{\mathcal{O}_{N,\,\epsilon}}^{\epsilon}>r)-\mathbb{P}_{x}(\tau_{\mathcal{S}_{\epsilon}}^{\epsilon}>r)\right|\mathrm{d}r\\
 & +\limsup_{\alpha\rightarrow0}\left|\int_{\partial\mathcal{M}_{\epsilon}}\int_{0}^{M}\mathbb{P}_{x}(\tau_{\mathcal{S}_{\epsilon}}^{\epsilon}>r)\mathrm{d}r\,\nu_{\alpha,\,N,\,\epsilon}(\mathrm{d}x)-\int_{\partial\mathcal{M}_{\epsilon}}\int_{0}^{M}\mathbb{P}_{x}(\tau_{\mathcal{S}_{\epsilon}}^{\epsilon}>r)\mathrm{d}r\,\nu_{\epsilon}(\mathrm{d}x)\right|\;.
\end{align*}
Let us now take the limitsup above when $N\rightarrow\infty$. To
this extent, let us first notice that for all $x\in\partial\mathcal{M}_{\epsilon}$
and any $r>0$, $\mathbb{P}_{x}(\tau_{\mathcal{O}_{N,\,\epsilon}}^{\epsilon}>r)\underset{N\rightarrow\infty}{\longrightarrow}\mathbb{P}_{x}(\tau_{\mathcal{S}_{\epsilon}}^{\epsilon}>r)$
by the positive recurrence of the process~\eqref{eq:sde} in Proposition~\ref{prop:pos_rec}.
Additionally, since the function $x\in\partial\mathcal{M}_{\epsilon}\to\mathbb{P}_{x}(\tau_{\mathcal{O}_{N,\,\epsilon}}^{\epsilon}>r)$
is continuous on the compact set $\partial\mathcal{M}_{\epsilon}$
by Lemma~\ref{lem:continuity probability} and the probability $\mathbb{P}_{x}(\tau_{\mathcal{O}_{N,\,\epsilon}}^{\epsilon}>r)$
is an increasing function of $N\geq1$ then by Dini's theorem the
convergence is uniform in $\partial\mathcal{M}_{\epsilon}$, i.e.
\[
\sup_{x\in\partial\mathcal{M}_{\epsilon}}\left|\mathbb{P}_{x}(\tau_{\mathcal{O}_{N,\,\epsilon}}^{\epsilon}>r)-\mathbb{P}_{x}(\tau_{\mathcal{S}_{\epsilon}}^{\epsilon}>r)\right|\underset{N\rightarrow\infty}{\longrightarrow}0\;.
\]

Additionally, by the weak-convergence of $\nu_{\alpha,\,N,\,\epsilon}$
stated in Proposition~\ref{prop:average time perturbed Langevin}
and the continuity of the second probability in Lemma~\ref{lem:continuity probability}
one has that 
\[
\lim_{N\rightarrow\infty}\lim_{\alpha\rightarrow0}\int_{\partial\mathcal{M}_{\epsilon}}\mathbb{P}_{x}(\tau_{\mathcal{S}_{\epsilon}}^{\epsilon}>r)\,\nu_{\alpha,\,N,\,\epsilon}(\mathrm{d}x)=\int_{\partial\mathcal{M}_{\epsilon}}\mathbb{P}_{x}(\tau_{\mathcal{S}_{\epsilon}}^{\epsilon}>r)\,\nu_{\epsilon}(\mathrm{d}x)\;.
\]
Hence, 
\[
\lim_{N\rightarrow\infty}\lim_{\alpha\rightarrow0}\int_{\partial\mathcal{M}_{\epsilon}}\mathbb{E}_{x}(\tau_{\mathcal{O}_{N,\,\epsilon}}^{\alpha,\,\epsilon}\land M)\nu_{\alpha,\,N,\,\epsilon}(\mathrm{d}x)=\int_{\partial\mathcal{M}_{\epsilon}}\mathbb{E}_{x}(\tau_{\mathcal{S}_{\epsilon}}^{\epsilon}\land M)\nu_{\epsilon}(\mathrm{d}x)\;.
\]
As a result, taking the limit $N\rightarrow\infty$, $\alpha\rightarrow0$
in~\eqref{eq:average first exit time 2} ensures by Proposition~\ref{prop:average time perturbed Langevin},
Lemmas~\ref{lem:conv h^*} and~\ref{lem:convergence proba until N}
that 
\[
\int_{\partial\mathcal{M}_{\epsilon}}\mathbb{E}_{x}(\tau_{\mathcal{S}_{\epsilon}}^{\epsilon}\land M)\nu_{\epsilon}(\mathrm{d}x)=\frac{1}{C_{\epsilon}(\mathcal{M}_{\epsilon},\,\mathcal{S}_{\epsilon})}\int_{\mathbb{R}^{2d}}h_{\mathcal{M}_{\epsilon},\,\mathcal{S}_{\epsilon}}^{*}(x)\mathbb{P}_{x}(\tau_{\mathcal{S}_{\epsilon}}^{\epsilon}\leq M)\mu_{\epsilon}(x)\mathrm{d}x\;.
\]
Moreover, the positive recurrence of the process~\eqref{eq:sde}
in Proposition~\ref{prop:pos_rec} ensures that $\tau_{\mathcal{S}_{\epsilon}}^{\epsilon}<\infty$
almost surely. Therefore, taking the limit above when $M\rightarrow\infty$
by the monotone convergence theorem ensures~\eqref{eq:magicf}. 
\end{proof}

\section{Proof of Lemma~\ref{lem:unif_c}\label{sec5:lem_unifc}}

 For the sake of notation we shall denote here for $t\geq0$
the vectors $X^{\alpha,\,\epsilon}(t)=(q^{\alpha,\,\epsilon}(t),\,p^{\alpha,\,\epsilon}(t))$, $X^{\epsilon}(t)=(q^{\epsilon}(t),\,p^{\epsilon}(t))$
where the process $(X^{\alpha,\,\epsilon}(t))_{t\geq0}$ satisfies~\eqref{eq:sde_pur}
and $(X^{\epsilon}(t))_{t\geq0}$ satisfies~\eqref{eq:sde}. Let
us also fix $r>0$ and denote for $N>0$, the open set $\mathcal{O}_{N,\,\epsilon}$
defined in~\eqref{eq:definition O_N}. 

\begin{notation}\label{not:multiple cste}
From this moment on, different appearances of a constant may stand for different constants unless
otherwise mentioned. For instance, the constant $C(N)$ represents a constant depending on the parameter $N>0$, but different appearances of $C(N)$ may refer to different quantities.
\end{notation}

\subsection{Preliminary results} In this subsection, we state and prove some preliminary results related to the short-time variations of the processes $(X^{\epsilon}(t))_{t\geq0}$, $(X^{\alpha,\,\epsilon}(t))_{t\geq0}$ and to the distance between both processes when $\alpha\rightarrow0$.

\begin{lem}
\label{lem:variations process} For any $\beta\in(0,\,1/2)$, 
\[
\sup_{x\in\partial\mathcal{M}_{\epsilon}}\mathbb{P}_{x}\left(\sup_{|t_{1}-t_{2}|\leq\delta,\,t_{1},\,t_{2}\in\left[0,\,\tau_{\mathcal{O}_{N,\,\epsilon}}^{\epsilon}\land r\right]}|X^{\epsilon}(t_{1})-X^{\epsilon}(t_{2})|>\delta^{\beta}\right)\underset{\delta\rightarrow0}{\longrightarrow}0
\]
and
\[
\sup_{x\in\partial\mathcal{M}_{\epsilon},\,\alpha\in(0,\,1)}\mathbb{P}_{x}\left(\sup_{|t_{1}-t_{2}|\leq\delta,\,t_{1},\,t_{2}\in\left[0,\,\tau_{\mathcal{O}_{N,\,\epsilon}}^{\alpha,\,\epsilon}\land r\right]}|X^{\alpha,\,\epsilon}(t_{1})-X^{\alpha,\,\epsilon}(t_{2})|>\delta^{\beta}\right)\underset{\delta\rightarrow0}{\longrightarrow}0\;.
\]
\end{lem}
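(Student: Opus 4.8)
The statement is an equicontinuity-type estimate: uniformly over starting points on $\partial\mathcal{M}_\epsilon$ and over $\alpha\in(0,1)$, the modulus of continuity of the (stopped) processes is controlled with high probability. The natural strategy is to split the increments into their drift and martingale parts and to bound each separately, using that the processes live in the bounded region $\mathrm{B}(0,N)$ (up to the stopping time $\tau_{\mathcal{O}_{N,\epsilon}}^{\bullet}\wedge r$) so that the drift coefficients are bounded there. Concretely, for $t_1<t_2$ with $t_2-t_1\le\delta$ and both in $[0,\tau_{\mathcal{O}_{N,\epsilon}}^{\epsilon}\wedge r]$, write from \eqref{eq:sde}
\[
q^{\epsilon}(t_2)-q^{\epsilon}(t_1)=\int_{t_1}^{t_2}p^{\epsilon}(s)\,\mathrm{d}s,\qquad
p^{\epsilon}(t_2)-p^{\epsilon}(t_1)=-\int_{t_1}^{t_2}\big(\nabla U(q^{\epsilon}(s))+\gamma p^{\epsilon}(s)\big)\mathrm{d}s+\sqrt{2\gamma\epsilon}\,(B(t_2)-B(t_1)).
\]
On the event $\{\tau_{\mathcal{O}_{N,\epsilon}}^{\epsilon}\wedge r\ge t_2\}$ the trajectory stays in $\overline{\mathrm{B}(0,N)}$, so $|p^{\epsilon}(s)|\le N$ and $|\nabla U(q^{\epsilon}(s))|\le \sup_{|q|\le N}|\nabla U(q)|=:C(N)$; hence the drift contributions to $|X^{\epsilon}(t_2)-X^{\epsilon}(t_1)|$ are at most $C(N)\,\delta$, which is $\le \tfrac12\delta^{\beta}$ once $\delta$ is small (since $\beta<1/2<1$). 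The only genuinely stochastic term is the Brownian increment $\sqrt{2\gamma\epsilon}\,(B(t_2)-B(t_1))$.

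**Handling the Brownian modulus of continuity.** Thus it suffices to show
\[
\mathbb{P}\Big(\sup_{|t_1-t_2|\le\delta,\ t_1,t_2\in[0,r]}|B(t_2)-B(t_1)|>\tfrac{1}{2\sqrt{2\gamma\epsilon}}\,\delta^{\beta}\Big)\underset{\delta\to0}{\longrightarrow}0 .
\]
This is a standard fact: by Lévy's modulus of continuity (or, more elementarily, by a chaining/dyadic argument together with the Gaussian tail bound $\mathbb{P}(|B(t)-B(s)|>u)\le 2\exp(-u^2/(2|t-s|))$ and a union bound over a $\delta$-net of $[0,r]$), the uniform modulus of continuity of Brownian motion on $[0,r]$ is $O(\sqrt{\delta\log(1/\delta)})$ almost surely, which is $o(\delta^{\beta})$ for any $\beta<1/2$. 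A clean self-contained route: cover $[0,r]$ by $\lceil r/\delta\rceil$ intervals of length $\delta$; on each, $\sup_{|t_1-t_2|\le\delta}|B(t_2)-B(t_1)|\le 2\sup_{s\in[\text{interval}]}|B(s)-B(\text{left endpoint})|$, and each supremum exceeds $\tfrac14\sqrt{2\gamma\epsilon}^{-1}\delta^{\beta}$ with probability at most $C\exp(-c\,\delta^{2\beta-1}/\epsilon)$ by the reflection principle; summing over the $O(1/\delta)$ intervals gives a bound $C\delta^{-1}\exp(-c\,\delta^{2\beta-1}/\epsilon)\to0$ as $\delta\to0$ because $2\beta-1<0$. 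Crucially this bound does not depend on the starting point $x$ (the Brownian motion is the same), so the supremum over $x\in\partial\mathcal{M}_\epsilon$ is free.

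**The perturbed process and uniformity in $\alpha$.** For $(X^{\alpha,\epsilon}(t))_{t\ge0}$ the argument is the same, with one extra drift term and one extra Brownian term coming from the $q$-equation in \eqref{eq:sde_pur}: on $[0,\tau_{\mathcal{O}_{N,\epsilon}}^{\alpha,\epsilon}\wedge r]$ one has the additional contributions $\alpha\int_{t_1}^{t_2}|\nabla U(q^{\alpha,\epsilon}(s))|\mathrm{d}s\le \alpha\,C(N)\,\delta\le C(N)\delta$ (using $\alpha<1$) and $\sqrt{2\alpha\epsilon}\,|\widetilde B(t_2)-\widetilde B(t_1)|\le\sqrt{2\epsilon}\,|\widetilde B(t_2)-\widetilde B(t_1)|$ (again using $\alpha<1$). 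So after absorbing $\alpha$-dependence into the $\alpha$-free constant $\sqrt{2\epsilon}$ and $C(N)$, the remaining randomness is controlled by the moduli of continuity of the two fixed Brownian motions $B$ and $\widetilde B$ exactly as above, with bounds independent of both $x$ and $\alpha$. Taking $\delta$ small enough that the deterministic drift part is $\le\tfrac12\delta^{\beta}$ and applying the Brownian estimate to the remaining $\tfrac12\delta^{\beta}$ budget finishes the proof.

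**Main obstacle.** There is no deep difficulty here; the only point requiring a little care is to make absolutely sure every bound is uniform in the starting point $x\in\partial\mathcal{M}_\epsilon$ and, in the second statement, uniform in $\alpha\in(0,1)$ — which works precisely because (i) the coupling shares the same Brownian motions for all $x$ and all $\alpha$, and (ii) on the stopped time interval the state is confined to $\overline{\mathrm{B}(0,N)}$, giving $x$- and $\alpha$-independent bounds $N$ and $C(N)=\sup_{|q|\le N}|\nabla U(q)|$ on the drift. The stopping-time truncation at $\tau_{\mathcal{O}_{N,\epsilon}}^{\bullet}\wedge r$ is exactly what makes this confinement legitimate.
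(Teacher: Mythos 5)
Your proof is correct and follows essentially the same approach as the paper: decompose $X^\epsilon(t_2)-X^\epsilon(t_1)$ into a drift part (bounded by $C(N)\delta$ thanks to the confinement to $\overline{\mathrm B(0,N)}$ before the stopping time) and a Brownian part, control the latter by the modulus of continuity of Brownian motion, and note that all bounds are uniform in $x$ because the Brownian motions are shared and uniform in $\alpha$ because $\alpha<1$. The paper phrases the Brownian estimate qualitatively (pick $\lambda\in(\beta,1/2)$, use that Brownian paths are $\lambda$-H\"older with an a.s.\ finite random constant $\mathcal Z$, and observe $\mathbb P(C(1+\mathcal Z)>\delta^{\beta-\lambda})\to0$), whereas you reprove the modulus-of-continuity bound quantitatively via a union bound and the reflection principle; both routes are fine, though your covering step needs the usual minor fix (pairs $t_1,t_2$ may straddle two adjacent length-$\delta$ intervals, so one should chain through the shared endpoint or use overlapping intervals, picking up a harmless constant factor).
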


\begin{lem}
\label{lem:difference process alpha} For any $\beta\in(0,\,1/2)$,
\[
\sup_{x\in\partial\mathcal{M}_{\epsilon}}\mathbb{P}_{x}\left(\sup_{t\in\left[0,\,\tau_{\mathcal{O}_{N,\,\epsilon}}^{\alpha,\,\epsilon}\land\tau_{\mathcal{O}_{N,\,\epsilon}}^{\epsilon}\land r\right]}|X^{\alpha,\,\epsilon}(t)-X^{\epsilon}(t)|>\alpha^{\beta}\right)\underset{\alpha\rightarrow0}{\longrightarrow}0\;.
\]
\end{lem}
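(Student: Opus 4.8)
The plan is to read off the estimate directly from the Gr\"onwall-type bound of Lemma~\ref{lem:gronw}. Fix $\epsilon\in(0,1)$, $r>0$ and $N>0$ large enough. We apply Lemma~\ref{lem:gronw} with $\mathcal{D}=\mathcal{O}_{N,\,\epsilon}$; since $\mathbb{R}^{2d}\setminus\mathcal{O}_{N,\,\epsilon}\subset\overline{\textup{B}(0,\,N)}$ is bounded, there is a constant $C(N)>0$, not depending on the starting point, such that under the coupling $\mathbb{P}_{x}$ one has
\[
\left|X^{\alpha,\,\epsilon}(t)-X^{\epsilon}(t)\right|\leq\mathrm{e}^{C(N)t}\left[\alpha\int_{0}^{t}|\nabla U(q^{\epsilon}(s))|\mathrm{d}s+\sqrt{2\alpha\epsilon}\sup_{s\in[0,\,t]}|\widetilde{B}(s)|\right]
\]
for every $t\in[0,\,\tau_{\mathcal{O}_{N,\,\epsilon}}^{\alpha,\,\epsilon}\land\tau_{\mathcal{O}_{N,\,\epsilon}}^{\epsilon}]$.

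Next I would make the right-hand side explicit on the interval $[0,\,\tau_{\mathcal{O}_{N,\,\epsilon}}^{\alpha,\,\epsilon}\land\tau_{\mathcal{O}_{N,\,\epsilon}}^{\epsilon}\land r]$. On this interval $X^{\epsilon}$ remains in $\mathbb{R}^{2d}\setminus\mathcal{O}_{N,\,\epsilon}$, so $|q^{\epsilon}(s)|\leq N$ and hence $|\nabla U(q^{\epsilon}(s))|\leq K(N):=\sup_{|q|\leq N}|\nabla U(q)|$, while $\mathrm{e}^{C(N)t}\leq\mathrm{e}^{C(N)r}$. Consequently
\[
\sup_{t\in[0,\,\tau_{\mathcal{O}_{N,\,\epsilon}}^{\alpha,\,\epsilon}\land\tau_{\mathcal{O}_{N,\,\epsilon}}^{\epsilon}\land r]}\left|X^{\alpha,\,\epsilon}(t)-X^{\epsilon}(t)\right|\leq\mathrm{e}^{C(N)r}\left[\alpha r K(N)+\sqrt{2\alpha\epsilon}\sup_{s\in[0,\,r]}|\widetilde{B}(s)|\right]=:R_{\alpha}\;,
\]
and the crucial point is that $R_{\alpha}$, involving only $\widetilde{B}$ and constants, is a random variable whose law does not depend on the starting point $x\in\partial\mathcal{M}_{\epsilon}$. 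Hence $\sup_{x\in\partial\mathcal{M}_{\epsilon}}\mathbb{P}_{x}(\,\cdot>\alpha^{\beta})\le\mathbb{P}(R_{\alpha}>\alpha^{\beta})$.

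To conclude I would estimate $\mathbb{P}(R_{\alpha}>\alpha^{\beta})$. Since $\beta<1$, for all $\alpha$ small enough we have $\mathrm{e}^{C(N)r}\alpha r K(N)\leq\tfrac12\alpha^{\beta}$, so that
\[
\{R_{\alpha}>\alpha^{\beta}\}\subset\left\{\sup_{s\in[0,\,r]}|\widetilde{B}(s)|>\frac{\alpha^{\beta-1/2}}{2\sqrt{2\epsilon}\,\mathrm{e}^{C(N)r}}\right\}\;.
\]
Because $\beta<1/2$, the threshold on the right tends to $+\infty$ as $\alpha\to0$, and $\sup_{s\in[0,\,r]}|\widetilde{B}(s)|$ is an almost-surely finite random variable (with Gaussian tails) independent of $\alpha$, $\epsilon$ and $x$; hence this probability tends to $0$ as $\alpha\to0$, uniformly in $x\in\partial\mathcal{M}_{\epsilon}$, which is the claim. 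I do not expect a serious obstacle here: the lemma is just a quantitative coupling estimate, and the only thing to watch is that the constants $C(N)$, $K(N)$ and the Lipschitz constant of $\nabla U$ are all taken on the fixed compact set $\mathbb{R}^{2d}\setminus\mathcal{O}_{N,\,\epsilon}$ so as to be uniform in the starting point, while the hypothesis $\beta<1/2$ is precisely what makes the stochastic contribution $\sqrt{2\alpha\epsilon}\sup_{[0,r]}|\widetilde{B}|$ negligible relative to $\alpha^{\beta}$.
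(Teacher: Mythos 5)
Your proof is correct and follows essentially the same route as the paper: apply Lemma~\ref{lem:gronw} with $\mathcal{D}=\mathcal{O}_{N,\,\epsilon}$, bound $|\nabla U(q^{\epsilon}(\cdot))|$ by a constant $C'(N)$ on the bounded survival region, and observe that the resulting majorant is an $x$-independent random variable whose probability of exceeding $\alpha^{\beta}$ vanishes because $\beta<1/2$. The only cosmetic difference is that you absorb the deterministic part into $\tfrac12\alpha^{\beta}$ whereas the paper divides through by $\alpha^{\beta}$ and compares to $1$; the content is identical.
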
 
\begin{proof}[Proof of Lemma~\ref{lem:variations process}]
Let us fix $\beta\in(0,1/2)$. Recall that the sample paths of a Brownian motion are, almost-surely, $\lambda$-H\"olderian for any $\lambda\in(0,\,1/2)$. Therefore, taking $\lambda\in(\beta,1/2)$ ensures the existence of a finite random variable $\mathcal{Z}>0$ such that 
\[
\sup_{|t_{1}-t_{2}|\leq\delta,\,t_{1},\,t_{2}\in\left[0,\,r\right]}|B(t_{1})-B(t_{2})|\leq\mathcal{Z}\delta^{\lambda}\;.
\]

Therefore, using It\^o's formula for~\eqref{eq:sde} and the boundedness of $\R^{2d}\setminus\mathcal{O}_{N,\,\epsilon}$, one has the existence of a constant $C=C(N)>0$ independent of $x\in\partial\M_\epsilon$ such that, $\mathbb{P}_x$ almost-surely,
for all $t_{1},\,t_{2}$ satisfying $|t_{1}-t_{2}|\leq\delta$  and
$t_{1},\,t_{2}\in\left[0,\,\tau_{\mathcal{O}_{N,\,\epsilon}}^{\epsilon}\land r\right]$,
\begin{align*}
|X^{\epsilon}(t_{1})-X^{\epsilon}(t_{2})|\leq C\delta+C\mathcal{Z}\delta^{\lambda}\leq C\delta^\lambda(1+\mathcal{Z})
\end{align*}
for $\delta\in[0,1)$. Therefore, 
$$\sup_{x\in\partial\M_\epsilon}\mathbb{P}_{x}\left(\sup_{|t_{1}-t_{2}|\leq\delta,\,t_{1},\,t_{2}\in\left[0,\,\tau_{\mathcal{O}_{N,\,\epsilon}}^{\epsilon}\land r\right]}|X^{\epsilon}(t_{1})-X^{\epsilon}(t_{2})|>\delta^{\beta}\right)\leq\mathbb{P}(C(1+\mathcal{Z})>\delta^{\beta-\lambda})$$
which vanishes when $\delta$ goes to zero since $\lambda>\beta$ and $\mathcal{Z}<\infty$ almost-surely. The proof of the second estimate in Lemma~\ref{lem:variations process} follows similarly by using as well the H\"olderian nature of the sample paths of the Brownian motion $(\widetilde{B}(t))_{t\geq0}$.
\end{proof} 
 
\begin{proof}[Proof of Lemma~\ref{lem:difference process alpha}]
Fix $\beta\in(0,1/2)$. By Lemma~\ref{lem:gronw}, there exists a constant $C=C(N)>0$ independent of $\alpha$ and $x\in\partial\M_\epsilon$ such that, $\mathbb{P}_x$ almost-surely, for all $t\in\left[0,\,\tau_{\mathcal{O}_{N,\,\epsilon}}^{\alpha,\,\epsilon}\land\tau_{\mathcal{O}_{N,\,\epsilon}}^{\epsilon}\land r\right]$,
\begin{align*}
\left|X^{\alpha,\,\epsilon}(t)-X^{\epsilon}(t)\right|&\leq\mathrm{e}^{Ct}\left[\alpha\int_{0}^{t}|\nabla U(q^{\epsilon}(u))|\mathrm{d}u+\sqrt{2\alpha\epsilon}\sup_{u\in[0,\,t]}|\widetilde{B}(u)|\right]\\
&\leq\mathrm{e}^{Ct}\left[C'\alpha t+\sqrt{2\alpha\epsilon}\sup_{u\in[0,\,t]}|\widetilde{B}(u)|\right]
\end{align*}
for some constant $C'=C'(N)$ independent of $\alpha$ and $x$. Therefore,
\begin{align*}
&\sup_{x\in\partial\mathcal{M}_{\epsilon}}\mathbb{P}_{x}\left(\sup_{t\in\left[0,\,\tau_{\mathcal{O}_{N,\,\epsilon}}^{\alpha,\,\epsilon}\land\tau_{\mathcal{O}_{N,\,\epsilon}}^{\epsilon}\land r\right]}|X^{\alpha,\,\epsilon}(t)-X^{\epsilon}(t)|>\alpha^{\beta}\right)\\
&\leq\mathbb{P}\left(\mathrm{e}^{Cr}\left[C'\alpha^{1-\beta}r+\alpha^{1/2-\beta}\sqrt{2\epsilon}\sup_{u\in[0,\,r]}|\widetilde{B}(u)|\right]>1\right)\underset{\alpha\rightarrow0}{\longrightarrow}0 
\end{align*}
since $\beta\in(0,1/2)$.  
\end{proof}

To conclude this subsection, the following lemma provides a $\mathrm{L}^2$ control on martingales appearing in Section~\ref{sec:proof of lemma 4.7}.

\begin{lem}\label{lem:control martingale}
 There exists a constant $C=C(N)$ independent of $\alpha$ such that for $\lambda>0$,
\begin{equation}\label{eq:first control martingale}
    \sup_{x\in\R^{2d}\setminus\mathcal{O}_{N,\,\epsilon}}\mathbb{E}_{x}\left[\sup_{t\in[0,\,\alpha^{\lambda}]}\left|\int_{0}^{t\land\tau_{\mathcal{O}_{N,\,\epsilon}}^{\alpha,\,\epsilon}}\left\langle q^{\alpha,\,\epsilon}(u)-s,\,\mathrm{d}\widetilde{B}(u)\right\rangle\right|^{2}\right]\leq C\alpha^\lambda
\end{equation}
where $s\in\R^d$ is defined in~\eqref{eq:neigh}. Also,
\begin{equation}\label{eq:second control martingale}
\sup_{x\in\R^{2d}\setminus\mathcal{O}_{N,\,\epsilon}}\mathbb{E}_{x}\left[\sup_{t\in[0,\,\alpha^{\lambda}]}\left|\int_{0}^{t\land\tau_{\mathcal{O}_{N,\,\epsilon}}^{\alpha,\,\epsilon}}\left\langle p^{\alpha,\,\epsilon}(u)-p,\,\mathrm{d}B(u)\right\rangle\right|^{2}\right]\leq C\alpha^{2\lambda}\,.
\end{equation}
\end{lem}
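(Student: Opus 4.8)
The plan is to reduce both estimates to Doob's $\mathrm{L}^{2}$-maximal inequality applied to the two stopped stochastic integrals, and then to bound the resulting quadratic variations using the boundedness of $\mathbb{R}^{2d}\setminus\mathcal{O}_{N,\,\epsilon}$. Throughout write $\tau:=\tau_{\mathcal{O}_{N,\,\epsilon}}^{\alpha,\,\epsilon}$, and recall from \eqref{eq:definition O_N} that $\mathbb{R}^{2d}\setminus\mathcal{O}_{N,\,\epsilon}\subset\overline{\mathrm{B}(0,\,N)}$. Consequently, for $u\in[0,\,\tau)$ the process $X^{\alpha,\,\epsilon}(u)=(q^{\alpha,\,\epsilon}(u),\,p^{\alpha,\,\epsilon}(u))$ stays in $\overline{\mathrm{B}(0,\,N)}$, so that there is a constant $C=C(N)$ with $|q^{\alpha,\,\epsilon}(u)-s|\le C$, $|p^{\alpha,\,\epsilon}(u)|\le C$ and $|\nabla U(q^{\alpha,\,\epsilon}(u))|\le C$ for all $u\in[0,\,\tau)$; note also that the starting point $x=(q,\,p)\in\mathbb{R}^{2d}\setminus\mathcal{O}_{N,\,\epsilon}$ satisfies $|p|\le N$. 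All these bounds are independent of $\alpha$.

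For \eqref{eq:first control martingale} I would apply Doob's inequality to the $\mathrm{L}^{2}$-martingale $t\mapsto\int_{0}^{t\land\tau}\langle q^{\alpha,\,\epsilon}(u)-s,\,\mathrm{d}\widetilde{B}(u)\rangle$, whose quadratic variation is $\int_{0}^{t\land\tau}|q^{\alpha,\,\epsilon}(u)-s|^{2}\mathrm{d}u$, and use the above a priori bound to get
\[
\mathbb{E}_{x}\!\left[\sup_{t\in[0,\,\alpha^{\lambda}]}\Big|\int_{0}^{t\land\tau}\!\langle q^{\alpha,\,\epsilon}(u)-s,\,\mathrm{d}\widetilde{B}(u)\rangle\Big|^{2}\right]\le 4\,\mathbb{E}_{x}\!\left[\int_{0}^{\alpha^{\lambda}\land\tau}\!|q^{\alpha,\,\epsilon}(u)-s|^{2}\mathrm{d}u\right]\le 4C^{2}\alpha^{\lambda},
\]
which is the claimed bound since $\alpha^{\lambda}\land\tau\le\alpha^{\lambda}$.

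For \eqref{eq:second control martingale} the same step gives the upper bound $4\,\mathbb{E}_{x}\big[\int_{0}^{\alpha^{\lambda}\land\tau}|p^{\alpha,\,\epsilon}(u)-p|^{2}\mathrm{d}u\big]$, and the point is to show that $p^{\alpha,\,\epsilon}(u)-p$ is itself small before time $\alpha^{\lambda}$ — this is where the extra factor $\alpha^{\lambda}$ comes from, and is the only delicate point. Integrating the second line of \eqref{eq:sde_pur} and using the a priori bounds, for $u\in[0,\,\alpha^{\lambda}\land\tau]$ one has $|p^{\alpha,\,\epsilon}(u)-p|\le C\alpha^{\lambda}+\sqrt{2\gamma\epsilon}\,|B(u\land\tau)|$; squaring, using $(a+b)^{2}\le 2a^{2}+2b^{2}$ and the identity $\mathbb{E}_{x}[|B(u\land\tau)|^{2}]=d\,\mathbb{E}_{x}[u\land\tau]\le du$, and integrating over $u\in[0,\,\alpha^{\lambda}]$ yields
\[
\mathbb{E}_{x}\!\left[\int_{0}^{\alpha^{\lambda}\land\tau}\!|p^{\alpha,\,\epsilon}(u)-p|^{2}\mathrm{d}u\right]\le 2C^{2}\alpha^{3\lambda}+4\gamma\epsilon d\!\int_{0}^{\alpha^{\lambda}}\!u\,\mathrm{d}u\le C\alpha^{2\lambda}
\]
for $\alpha\in(0,\,1)$, with $C=C(N)$ now also allowed to depend on the fixed $\epsilon$ (as per Notation~\ref{not:multiple cste}). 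Combining with the Doob step gives \eqref{eq:second control martingale}. The main obstacle, as noted, is precisely obtaining the quadratic power $\alpha^{2\lambda}$ rather than $\alpha^{\lambda}$: it relies on recognizing that both the drift part and the $\mathrm{L}^{2}$-size of the Brownian part of $p^{\alpha,\,\epsilon}(u)-p$ vanish as $u\to0$, so that the integrand — not merely the integration domain — contributes a factor $o_{\alpha}(1)$, whereas for the $q$-integral the integrand $|q^{\alpha,\,\epsilon}(u)-s|$ need not be small.
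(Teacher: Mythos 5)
Your proof is correct and follows essentially the same route as the paper's: Doob's $\mathrm{L}^{2}$-maximal inequality plus It\^o's isometry reduce both estimates to time integrals of $\mathbb{E}_{x}\big[\mathbf{1}_{\tau>u}|q^{\alpha,\epsilon}(u)-s|^{2}\big]$ and $\mathbb{E}_{x}\big[\mathbf{1}_{\tau>u}|p^{\alpha,\epsilon}(u)-p|^{2}\big]$. The only (cosmetic) difference is in how you control these: you bound the $q$-integrand by a constant directly from the boundedness of $\mathbb{R}^{2d}\setminus\mathcal{O}_{N,\epsilon}$, and for the $p$-integrand you integrate the SDE and use $\mathbb{E}_{x}[|B(u\land\tau)|^{2}]\le du$, whereas the paper applies It\^o's formula to $|q^{\alpha,\epsilon}(\tau\land u)-s|^{2}$ and $|p^{\alpha,\epsilon}(\tau\land u)-p|^{2}$ and observes that the stochastic integral has zero mean. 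Both yield the same bounds with comparable effort, and your closing remark correctly identifies the source of the extra factor $\alpha^{\lambda}$ in the second estimate (the integrand, not just the domain of integration, is $O(u)$ because $p^{\alpha,\epsilon}(0)=p$, while $q^{\alpha,\epsilon}(0)$ need not be near $s$). Minor remark: allowing the constant to depend on $\epsilon$ is harmless here, but in fact unnecessary — the coefficients $2\gamma\epsilon d$ and $\sqrt{2\gamma\epsilon}$ are bounded uniformly for $\epsilon\in(0,1)$, which is consistent with the statement $C=C(N)$.
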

 
\begin{proof}
Recall the notation introduced in~\eqref{eq:xqp} and let $x=(q,p)\in\R^{2d}\setminus\mathcal{O}_{N,\,\epsilon}$. By Doob's martingale inequality and It\^o's isometry, 
\begin{align}
\mathbb{E}_{x}\left[\sup_{t\in[0,\,\alpha^{\lambda}]}\left|\int_{0}^{t\land\tau_{\mathcal{O}_{N,\,\epsilon}}^{\alpha,\,\epsilon}}\left\langle q^{\alpha,\,\epsilon}(u)-s,\,\mathrm{d}\widetilde{B}(u)\right\rangle\right|^{2}\right]\nonumber
&\leq4\int_{0}^{\alpha^{\lambda}}\mathbb{E}_{x}\left[\mathbf{1}_{\tau_{\mathcal{O}_{N,\,\epsilon}}^{\alpha,\,\epsilon}>u}\left|q^{\alpha,\,\epsilon}(u)-s\right|^{2}\right]\mathrm{d}u\nonumber\\
&\leq4\int_{0}^{\alpha^{\lambda}}\mathbb{E}_{x}\big[\big|q^{\alpha,\,\epsilon}(\tau_{\mathcal{O}_{N,\,\epsilon}}^{\alpha,\,\epsilon}\land u)-s\big|^{2}\big]\mathrm{d}u\label{eq:doob ineq}\;.
\end{align} 
Additionally, by It\^o's formula and the boundedness of $\R^{2d}\setminus\mathcal{O}_{N,\,\epsilon}$, there exists a constant $C=C(N)>0$ independent of $x$
and $\alpha\in(0,1)$  such that for all $u\in[0,\,\alpha^{\lambda}]$,
$$\big|q^{\alpha,\,\epsilon}(\tau_{\mathcal{O}_{N,\,\epsilon}}^{\alpha,\,\epsilon}\land u)-s\big|^{2}\leq \big|q-s\big|^2+Cu+2\sqrt{2\alpha\epsilon}\int_{0}^{\tau_{\mathcal{O}_{N,\,\epsilon}}^{\alpha,\,\epsilon}\land u}\left\langle q^{\alpha,\,\epsilon}(\rho)-s,\,\mathrm{d}\widetilde{B}(\rho)\right\rangle\;.$$ 
Given that the last term in the right-hand side of the
inequality above is a martingale,  
\[
\mathbb{E}_{x}\big[\big|q^{\alpha,\,\epsilon}(\tau_{\mathcal{O}_{N,\,\epsilon}}^{\alpha,\,\epsilon}\land u)-s\big|^{2}\big]\leq \big|q-s\big|^2+Cu\;.
\]
Reinjecting into~\eqref{eq:doob ineq} ensures the inequality~\eqref{eq:first control martingale}. Similarly, the inequality~\eqref{eq:second control martingale} relies on the fact that for all $u\in[0,\,\alpha^{\lambda}]$,
$$\big|p^{\alpha,\,\epsilon}(\tau_{\mathcal{O}_{N,\,\epsilon}}^{\alpha,\,\epsilon}\land u)-p\big|^{2}\leq Cu+2\sqrt{2\gamma\epsilon}\int_{0}^{\tau_{\mathcal{O}_{N,\,\epsilon}}^{\alpha,\,\epsilon}\land u}\left\langle p^{\alpha,\,\epsilon}(\rho)-p,\,\mathrm{d}B(\rho)\right\rangle\;.$$ 
Therefore,
\begin{align*}
\mathbb{E}_{x}\left[\sup_{t\in[0,\,\alpha^{\lambda}]}\left|\int_{0}^{t\land\tau_{\mathcal{O}_{N,\,\epsilon}}^{\alpha,\,\epsilon}}\left\langle p^{\alpha,\,\epsilon}(u)-p,\,\mathrm{d}B(u)\right\rangle\right|^{2}\right]
&\leq4\int_{0}^{\alpha^{\lambda}}\mathbb{E}_{x}\big[\big|p^{\alpha,\,\epsilon}(\tau_{\mathcal{O}_{N,\,\epsilon}}^{\alpha,\,\epsilon}\land u)-p\big|^{2}\big]\mathrm{d}u\\
&\leq 4C\int_{0}^{\alpha^{\lambda}}u\,\mathrm{d}u=2C\alpha^{2\lambda}\,,
\end{align*}
which concludes the proof of~\eqref{eq:second control martingale}.
\end{proof}
\subsection{Neighborhood of $\partial\mathcal{O}_{N,\,\epsilon}$} This subsection aims at providing sharp estimates for the law of the processes $(X^{\epsilon}(t))_{t\geq0}$ and $(X^{\alpha,\,\epsilon}(t))_{t\geq0}$ in a neighborhood of the boundary $\partial\mathcal{O}_{N,\,\epsilon}$. These results are used in the next subsection to prove Lemma~\ref{lem:unif_c}. 

\begin{lem}\label{lem:control density}
There exists a constant $C>0$ such that for all $t\in(0,r]$, if $\theta,\epsilon>0$ are small enough, then for all measurable set $A\subset\R^{2d}$,
$$\sup_{x\in\partial\M_\epsilon}\mathbb{P}_{x}(X^{\epsilon}(t)\in A,\,\mathrm{d}(X^{\epsilon}(t),\,\partial\mathcal{O}_{N,\,\epsilon})\leq\theta,\,\tau_{\mathcal{O}_{N,\,\epsilon}}^{\epsilon}> t)\leq C\,|A|\,,$$
where $|A|$ is the Euclidean volume of $A$.
\end{lem}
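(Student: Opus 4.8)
The plan is to discard the event $\{\tau^\epsilon_{\mathcal{O}_{N,\epsilon}}>t\}$ (it only lowers the probability) and to bound the sub‑probability law of $X^\epsilon(t)$ on $\{\mathrm{d}(X^\epsilon(t),\partial\mathcal{O}_{N,\epsilon})\le\theta\}$ by a constant times Lebesgue measure, via the transition density of~\eqref{eq:sde}; the mechanism is that, starting from $\partial\mathcal{M}_\epsilon$, the process must cross a macroscopic gap to reach any $\theta$‑neighbourhood of $\partial\mathcal{O}_{N,\epsilon}$, so that density is exponentially small in $1/\epsilon$. First I would fix the geometry: write $\mathcal{N}_{\theta,\epsilon}:=\{y\in\R^{2d}:\mathrm{d}(y,\partial\mathcal{O}_{N,\epsilon})\le\theta\}$ and use that $\partial\mathcal{O}_{N,\epsilon}=\partial\mathcal{S}_\epsilon\cup\partial\mathrm{B}(0,N)$, with $\partial\mathcal{S}_\epsilon$ at distance $\le\epsilon$ from $(s,0)$. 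Since $m\neq s$ and $|m|,|s|<N$, there is $\rho_0>0$ depending only on $m,s,N$ such that, for $\epsilon,\theta$ small, every $y=(q_y,p_y)\in\mathcal{N}_{\theta,\epsilon}$ satisfies $|q_y-m|\ge\rho_0$ or $|p_y|\ge\rho_0$; as $|q_x-m|\le\epsilon$ and $|p_x|\le\epsilon$ on $\partial\mathcal{M}_\epsilon$, this yields $|x-y|\ge\rho_0/2$ for all such $x,y$.

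Next, on $\{\tau^\epsilon_{\mathcal{O}_{N,\epsilon}}>t\}$ the path $X^\epsilon$ stays in $\mathrm{B}(0,N)$ up to time $t$ (because $\mathcal{O}_{N,\epsilon}\supset\mathrm{B}(0,N)^c$), so one may as well replace $U$ by a potential $\widehat U$ coinciding with $U$ on $\mathrm{B}(0,N+1)$ and having globally bounded $C^2$‑norm — the modified diffusion agrees with~\eqref{eq:sde} up to the exit from $\mathrm{B}(0,N+1)$ and also admits a smooth transition density $\widehat p_t^\epsilon$ by~\cite{Chaudru2022_preprint}. Bounding $\mathbf{1}_{\{\tau^\epsilon_{\mathcal{O}_{N,\epsilon}}>t\}}\le1$ and noting $\{\mathrm{d}(X^\epsilon(t),\partial\mathcal{O}_{N,\epsilon})\le\theta\}=\{X^\epsilon(t)\in\mathcal{N}_{\theta,\epsilon}\}$ then gives
\[
\sup_{x\in\partial\M_\epsilon}\mathbb{P}_{x}\big(X^{\epsilon}(t)\in A,\,\mathrm{d}(X^{\epsilon}(t),\partial\mathcal{O}_{N,\epsilon})\le\theta,\,\tau_{\mathcal{O}_{N,\epsilon}}^{\epsilon}>t\big)\ \le\ |A|\,\sup_{x\in\partial\M_\epsilon,\ y\in\mathcal{N}_{\theta,\epsilon}}\widehat p_t^\epsilon(x,y),
\]
so it suffices to show the last supremum is $\le1$ for $\epsilon,\theta$ small (depending on $t$), which yields the lemma with $C=1$. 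For this I would invoke the Gaussian‑type off‑diagonal upper bound for this kinetic (weak‑H\"ormander) operator coming from parametrix constructions: for $t\in(0,r]$ and $x,y\in\R^{2d}$,
\[
\widehat p_t^\epsilon(x,y)\ \le\ \frac{C_0}{(\epsilon t)^{d/2}(\epsilon t^{3})^{d/2}}\exp\!\Big(-\frac{c_0}{\epsilon}\Big(\frac{|q_y-\Theta_t^q(x)|^{2}}{t^{3}}+\frac{|p_y-\Theta_t^p(x)|^{2}}{t}\Big)\Big),
\]
where $\Theta_t(x)=(\Theta_t^q(x),\Theta_t^p(x))$ is the noiseless ($\epsilon=0$) flow of the modified dynamics issued from $x$, and $C_0,c_0>0$ depend only on $\gamma$, $r$ and $\|\widehat U\|_{C^2}$. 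The explicit $\epsilon$‑scaling of prefactor and exponent is recovered from the standard unit‑diffusion parametrix bound via the linear rescaling $(q,p)\mapsto(q/\sqrt\epsilon,p/\sqrt\epsilon)$, under which the SDE becomes a kinetic equation with unit‑size diffusion and a drift of $\epsilon$‑independent Lipschitz constant $\sup\Vert\mathbb{H}_{\widehat U}\Vert$.

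It remains to see that the right‑hand side is small. Since $\nabla U(m)=0$ and $\mathbb{H}_U^m$ is positive definite ($U$ Morse, $m$ a local minimum), $(m,0)$ is an asymptotically stable rest point of the noiseless flow, and it lies in $\mathrm{B}(0,N)$ where $\widehat U=U$; hence by asymptotic stability together with Gr\"onwall, $\Theta_t(x)\in\mathrm{B}((m,0),C_1\epsilon)\subset\mathrm{B}(0,N+1)$ for all $t\le r$ and $x\in\partial\mathcal{M}_\epsilon$, with $C_1=C_1(r)$. Combining with the geometric step, for $\epsilon,\theta$ small and $y\in\mathcal{N}_{\theta,\epsilon}$ one has $|q_y-\Theta_t^q(x)|\ge\rho_0/2$ or $|p_y-\Theta_t^p(x)|\ge\rho_0/2$, so
\[
\sup_{x\in\partial\M_\epsilon,\ y\in\mathcal{N}_{\theta,\epsilon}}\widehat p_t^\epsilon(x,y)\ \le\ \frac{C_0}{(\epsilon t)^{d/2}(\epsilon t^{3})^{d/2}}\exp\!\Big(-\frac{c_0\rho_0^{2}}{4\epsilon}\min\{t^{-1},t^{-3}\}\Big)\ \underset{\epsilon\rightarrow0}{\longrightarrow}\ 0
\]
for each fixed $t\in(0,r]$, hence is $\le1$ once $\epsilon$ is small enough, depending on $t$.

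The main obstacle is the input used above: one needs the off‑diagonal Gaussian upper bound for the hypoelliptic kernel \emph{with} explicit dependence on the small parameter $\epsilon$, uniformly for $t\in(0,r]$. The Gaussian estimate itself is standard parametrix technology for kinetic Kolmogorov operators; the only genuinely extra bookkeeping is the $\epsilon$‑scaling, handled by the diffusive rescaling $\epsilon^{-1/2}(q,p)$ — the crucial features being that the rescaled drift keeps an $\epsilon$‑independent Lipschitz constant and vanishes at the rescaled well $\epsilon^{-1/2}(m,0)$, so the deterministic flow entering the bound does not escape the relevant neighbourhood and the exponent stays bounded away from $0$. The geometric reduction and the confinement/modification steps are routine.
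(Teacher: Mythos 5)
Your proposal follows the same overall strategy as the paper: reduce to a transition-density bound for a modified process with globally bounded coefficients, then exploit the macroscopic separation between $\partial\mathcal{M}_\epsilon$ and the $\theta$-neighbourhood of $\partial\mathcal{O}_{N,\epsilon}$ to bound the kernel. The genuine difference lies in the form of the Gaussian upper bound: the paper invokes Menozzi's Theorem 2.1, whose exponent involves only the free-transport midpoint $q'-q-(p+p')t/2$ and whose constants may depend on $\epsilon$ (an $\epsilon$-dependent $C$ is all the lemma's downstream uses require), whereas you posit a bound centred at the true nonlinear zero-noise flow $\Theta_t(x)$ with fully explicit $\epsilon$-scaling, obtained via the diffusive rescaling $(q,p)\mapsto(q/\sqrt\epsilon,p/\sqrt\epsilon)$. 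Your conclusion -- that the density vanishes as $\epsilon\to0$ -- is stronger than what is needed, but it rests on the assertion that the parametrix constants are uniform over the $\epsilon$-indexed family of rescaled drifts and depend only on the Lipschitz constant; this is plausible yet not a directly citable result, and it is exactly the bookkeeping the paper avoids. The paper's form is cleaner on this point: with the free-transport midpoint, the triangle inequality gives $|q'-q-(p+p')t/2|\ge|m-s|/2$ directly when $|p|,|p'|$ are $O(\epsilon)$, so there is no appeal to asymptotic stability of $(m,0)$ nor to any $\epsilon$-uniform kernel estimate. Finally, a small ordering slip: the coefficients should be replaced outside $\mathrm{B}(0,N+1)$ \emph{before} dropping the indicator $\mathbf{1}_{\{\tau_{\mathcal{O}_{N,\epsilon}}^\epsilon>t\}}\le1$, since the replacement is justified by Friedman's pathwise uniqueness only on the survival event; doing it in the reverse order, as written, is not literally correct even though the fix is immediate.
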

 
\begin{cor}\label{cor:weyl density}
There exists a constant $C>0$ such that for all $t\in(0,r]$, if $\eta,\,\eta',\,\epsilon>0$ are small enough, then
$$\sup_{x\in\partial\M_\epsilon}\mathbb{P}_{x}\left(\big|q^{\epsilon}(t)-s\big|\in(\epsilon-\eta,\epsilon+\eta),\,\big|p^{\epsilon}(t)\big|\leq\eta',\,\tau_{\mathcal{O}_{N,\,\epsilon}}^{\epsilon}>t\right)\leq C\eta\,\eta'.$$
\end{cor}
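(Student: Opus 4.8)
The plan is to deduce Corollary~\ref{cor:weyl density} directly from Lemma~\ref{lem:control density}, applied to a carefully chosen measurable set $A$ and a suitable $\theta>0$, together with an elementary geometric observation relating the conditions $|q^{\epsilon}(t)-s|\in(\epsilon-\eta,\epsilon+\eta)$ and $|p^{\epsilon}(t)|\leq\eta'$ to the distance to $\partial\mathcal{O}_{N,\,\epsilon}$.

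First I would introduce
\[
A:=\{(q,p)\in\R^{d}\times\R^{d}:\,|q-s|\in(\epsilon-\eta,\epsilon+\eta),\ |p|\leq\eta'\}\,,
\]
so that, with the vector notation $x=(q,p)$, the event in the corollary is exactly $\{X^{\epsilon}(t)\in A\}\cap\{\tau_{\mathcal{O}_{N,\,\epsilon}}^{\epsilon}>t\}$. Since $A$ is a product set, $|A|=\omega_{d}\big((\epsilon+\eta)^{d}-(\epsilon-\eta)^{d}\big)\cdot\omega_{d}(\eta')^{d}$, where $\omega_{d}$ denotes the volume of the unit ball in $\R^{d}$; for $0<\eta<\epsilon<1$ and $0<\eta'<1$ the mean value theorem gives $(\epsilon+\eta)^{d}-(\epsilon-\eta)^{d}\leq 2d\,\eta\,(2\epsilon)^{d-1}\leq 2^{d}d\,\eta$ and $(\eta')^{d}\leq\eta'$, so that $|A|\leq C_{1}\,\eta\,\eta'$ for some $C_{1}=C_{1}(d)>0$.

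Next I would verify that, on the event $\{X^{\epsilon}(t)\in A\}\cap\{\tau_{\mathcal{O}_{N,\,\epsilon}}^{\epsilon}>t\}$, one has $\mathrm{d}(X^{\epsilon}(t),\partial\mathcal{O}_{N,\,\epsilon})\leq\theta$ with $\theta:=2\big(\eta+(\eta')^{2}/\epsilon\big)$. Indeed, since $\tau_{\mathcal{O}_{N,\,\epsilon}}^{\epsilon}>t$, the point $X^{\epsilon}(t)$ lies outside the open ball $\mathcal{S}_{\epsilon}=\mathrm{B}((s,0),\epsilon)$, hence its distance to the sphere $\partial\mathcal{S}_{\epsilon}$ equals $\sqrt{|q^{\epsilon}(t)-s|^{2}+|p^{\epsilon}(t)|^{2}}-\epsilon$, which on $A$ is at most $\sqrt{(\epsilon+\eta)^{2}+(\eta')^{2}}-\epsilon\leq\eta+(\eta')^{2}/(2\epsilon)\leq\theta$. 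Moreover, on $A$ one has $|X^{\epsilon}(t)|\leq|s|+\epsilon+\eta+\eta'$, so for $N$ large (which is a standing assumption) the distance from $X^{\epsilon}(t)$ to the other boundary component $\{|x|=N\}$ is at least $N-|s|-\epsilon-\eta-\eta'>\theta$. Therefore $\mathrm{d}(X^{\epsilon}(t),\partial\mathcal{O}_{N,\,\epsilon})\leq\theta$.

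Finally, taking $\epsilon>0$ small and then $\eta,\eta'>0$ small --- so that in particular $\eta<\epsilon$, $\eta'<1$, and both $\theta$ and $\epsilon$ fall below the smallness thresholds of Lemma~\ref{lem:control density} for the given $t\in(0,r]$ --- the event in the corollary is contained in $\{X^{\epsilon}(t)\in A,\ \mathrm{d}(X^{\epsilon}(t),\partial\mathcal{O}_{N,\,\epsilon})\leq\theta,\ \tau_{\mathcal{O}_{N,\,\epsilon}}^{\epsilon}>t\}$, and Lemma~\ref{lem:control density} bounds its probability, uniformly in $x\in\partial\mathcal{M}_{\epsilon}$, by $C\,|A|\leq C\,C_{1}\,\eta\,\eta'$. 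Since $C$ and $C_{1}$ do not depend on $t$, this is the claimed estimate. There is no serious obstacle here: the substance is entirely carried by Lemma~\ref{lem:control density} (which I take as given), and the only point requiring any care is the elementary geometric step above, in particular the verification that the outer sphere $\{|x|=N\}$ does not contribute to $\mathrm{d}(\cdot,\partial\mathcal{O}_{N,\,\epsilon})$ on the relevant event.
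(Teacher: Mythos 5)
Your proof is correct and follows essentially the same route as the paper's: you identify the product set $A$, show that on the event (together with $\tau_{\mathcal{O}_{N,\,\epsilon}}^{\epsilon}>t$) the distance to $\partial\mathcal{O}_{N,\,\epsilon}$ is controlled by a quantity $\theta$ which can be made as small as needed once $\eta,\eta'$ are small, and then invoke Lemma~\ref{lem:control density}. The only stylistic difference is that you compute $|A|\le C_1\,\eta\,\eta'$ directly via the mean value theorem for the spherical shell, whereas the paper attributes this volume estimate to Weyl's tube formula; for a round sphere the elementary computation is entirely adequate, so this changes nothing substantive.
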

\begin{proof}[Proof of Lemma~\ref{lem:control density}]
Friedman's uniqueness result~\cite[Theorem 5.2.1]{Friedman}
ensures that the trajectories of $(q^{\epsilon}(t),\,p^{\epsilon}(t))_{0\le t\le\tau_{\mathcal{O}_{N,\,\epsilon}}^{\epsilon}}$
do not depend on the values of the coefficients in~\eqref{eq:sde} outside of the bounded domain $\R^{2d}\setminus\mathcal{O}_{N,\,\epsilon}$. Since the probability above involves the law of the vector $(q^{\epsilon}(t),\,p^{\epsilon}(t))$ under the event $t<\tau_{\mathcal{O}_{N,\,\epsilon}}^{\epsilon}$ then there is no loss of generality in considering the law of $(q^{\epsilon}(t),\,p^{\epsilon}(t))$ where the coefficients in~\eqref{eq:sde} are modified in $\mathcal{O}_{N,\,\epsilon}$ so that they are bounded and globally Lipschitz continuous in $\mathbb{R}^{2d}$. With this setting in mind we can then apply the result~\cite[Theorem 2.1]{Menozzi}
ensuring the existence of a Gaussian upper-bound given by 
\begin{equation}\label{eq:expr gaussian upper-bound}
f_{t}(q,\,p,\,q',\,p'):=\frac{c_{1}}{t^{2d}}\mathrm{exp}\big(-c_{2}\left[\frac{|p-p'|^2}{4t}+3\frac{|q'-q-(p+p')t/2|^2}{t^{3}}\right]\big)\;,
\end{equation}
where $c_{1},\,c_{2}>0$ are constants independent of $t$. Let us now prove that for all $(q,p),\,(q',p')\in\R^{2d}$ satisfying
$$(q,p)\in\partial\M_\epsilon,\quad\text{and}\quad \mathrm{d}((q',p'),\,\partial\mathcal{O}_{N,\,\epsilon})\leq\theta$$
where $\theta$ is sufficiently small, then $f_t(q,\,p,\,q',\,p')$ admits an upper-bound independent of $t$ and $(q,p),\,(q',p')$. This would allow to conclude the proof of Lemma~\ref{lem:control density}.

Given that $(q,p)\in\partial\M_\epsilon$, it satisfies in particular $|q-m|\leq\epsilon$ and $|p|\leq\epsilon$. Assume now that $|p'|\geq2\epsilon$ then necessarily $|p-p'|\geq\epsilon$. As a result,
$$f_t(q,\,p,\,q',\,p')\leq \frac{c_{1}}{t^{2d}}\mathrm{e}^{-c_2\epsilon^2/4t}$$
which can be bounded by a constant $C>0$ only depending on $\epsilon$ and $c_1,\,c_2$. Consider now the case $|p'|\leq2\epsilon$. Since $\mathrm{d}((q',p'),\,\partial\mathcal{O}_{N,\,\epsilon})\leq\theta$, then if $\theta$ is small enough either $|q'|\geq N/2$ or $|q'-s|\leq2\epsilon$. Assume for instance that $|q'-s|\leq2\epsilon$, then by the triangle inequality,
\begin{align*}
|q'-q-(p+p')t/2|&\geq|q'-q|-|(p+p')t/2|\\
&\geq |m-s|-|q'-s|-|q-m|-3\epsilon r/2\\
&\geq |m-s|/2
\end{align*}
when $\epsilon$ is small enough since $|q-m|\leq\epsilon$. The reasoning is similar if $|q'|\geq N/2$. As a result, one is able to deduce in both cases an upper-bound of $f_t(q,\,p,\,q',\,p')$ which is independent of $t$ and $(q,p),\,(q',p')$, hence the proof.
\end{proof}
\begin{proof}[Proof of Corollary~\ref{cor:weyl density}]
Let us take $\theta,\,\epsilon>0$ small enough as assumed in Lemma~\ref{lem:control density}. It is easy to see that under the event 
$$\big\{\big|q^{\epsilon}(t)-s\big|\in(\epsilon-\eta,\epsilon+\eta),\,\big|p^{\epsilon}(t)\big|\leq\eta'\big\}\,,$$
if $\eta,\,\eta'$ are small enough then $\mathrm{d}(X^{\epsilon}(t),\,\partial\mathcal{O}_{N,\,\epsilon})\leq\theta$. The proof then follows from Lemma~\ref{lem:control density} and Weyl's tube formula~\cite{WeylTube}. 
\end{proof}
The following lemma is crucial to control the velocity of the process~\eqref{eq:sde} entering the set $\mathcal{O}_{N,\,\epsilon}$, which is needed in the proof of Lemma~\ref{lem:unif_c} in the next subsection. It follows the outline of the proof in~\cite[Proposition 2.7]{LelRamRey}.
\begin{lem}
\label{lem:control inward velocity} For $\epsilon$ small enough
and $N$ large enough, 
\[
\sup_{x\in\partial\mathcal{M}_{\epsilon}}\mathbb{P}_{x}\left(|p^{\epsilon}(\tau_{\mathcal{O}_{N,\,\epsilon}}^{\epsilon})|\leq\delta,\tau_{\mathcal{O}_{N,\,\epsilon}}^{\epsilon}\leq r\right)\underset{\delta\rightarrow0}{\longrightarrow}0\;.
\]
\end{lem}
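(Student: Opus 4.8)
The plan is to discretize time on a grid of mesh $h>0$ and to exploit, near the hitting time $\tau:=\tau^{\epsilon}_{\mathcal{O}_{N,\epsilon}}$, the following elementary but decisive fact: if the velocity $|p^{\epsilon}(\tau)|$ has size $\delta$, then on the preceding interval of length $h$ the velocity stays of size $\delta+h^{\beta}$, so the \emph{position} moves only by $O(h(\delta+h^{\beta}))$ --- quadratically small --- rather than the generic $O(h^{\beta})$. Hence a short time before $\tau$ the process sits in a \emph{thin} slab around $\partial\mathcal{O}_{N,\epsilon}$, and the Gaussian density bound of Lemma~\ref{lem:control density} makes this improbable. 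Concretely, I would fix $\beta\in(0,1/2)$, set $t_{k}=kh$ for $0\le k\le K:=\lceil r/h\rceil$, and introduce
\[
G_{h}:=\Big\{\sup_{|t_{1}-t_{2}|\le h,\;t_{1},t_{2}\in[0,\,\tau^{\epsilon}_{\mathcal{O}_{N,\epsilon}}\land r]}\big|X^{\epsilon}(t_{1})-X^{\epsilon}(t_{2})\big|\le h^{\beta}\Big\},
\]
for which $\sup_{x\in\partial\mathcal{M}_{\epsilon}}\mathbb{P}_{x}(G_{h}^{c})\to0$ as $h\to0$ by Lemma~\ref{lem:variations process}.

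On the event $\{|p^{\epsilon}(\tau)|\le\delta,\;\tau\le r\}\cap G_{h}$ (with $\delta<\epsilon$) one picks $k$ with $t_{k}\le\tau<t_{k+1}$; discarding the $\mathbb{P}_{x}$-null event $\{\tau=t_{k}\}$ (the process admits a transition density) gives $\tau^{\epsilon}_{\mathcal{O}_{N,\epsilon}}>t_{k}$. Since $X^{\epsilon}(\tau)$ lies on $\partial\mathcal{O}_{N,\epsilon}=\partial\mathcal{S}_{\epsilon}\sqcup\{|x|=N\}$, one has either $|q^{\epsilon}(\tau)-s|=\sqrt{\epsilon^{2}-|p^{\epsilon}(\tau)|^{2}}\in[\epsilon-\delta,\epsilon]$ or $|q^{\epsilon}(\tau)|=\sqrt{N^{2}-|p^{\epsilon}(\tau)|^{2}}\in[N-\delta,N]$. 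Moreover $G_{h}$ forces $|p^{\epsilon}(u)|\le|p^{\epsilon}(\tau)|+h^{\beta}\le\delta+h^{\beta}$ for all $u\in[t_{k},\tau]$, whence the key bound $|q^{\epsilon}(t_{k})-q^{\epsilon}(\tau)|\le\int_{t_{k}}^{\tau}|p^{\epsilon}(u)|\,\mathrm{d}u\le h(\delta+h^{\beta})$. Writing $\eta:=\delta+h(\delta+h^{\beta})$ and $\eta':=\delta+h^{\beta}$, one concludes that at time $t_{k}$, and before hitting $\mathcal{O}_{N,\epsilon}$, the process lies in the deterministic set $\{|q-s|\in(\epsilon-\eta,\epsilon+\eta),\,|p|\le\eta'\}$ in the first case, and in $\{|q|\in(N-\eta,N+\eta),\,|p|\le\eta'\}$ in the second; both are contained in a $\theta$-neighbourhood of $\partial\mathcal{O}_{N,\epsilon}$ as soon as $\eta,\eta'$ are small.

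To assemble the bound, fix $\theta>0$ small enough for Lemma~\ref{lem:control density}, take $\epsilon$ small and $N$ large as in Corollary~\ref{cor:weyl density} and Lemma~\ref{lem:control density}, and then take $h$ small so that $\eta,\eta'$ are small. Applying Corollary~\ref{cor:weyl density} to the slab around $\partial\mathcal{S}_{\epsilon}$, and Lemma~\ref{lem:control density} together with the elementary volume estimate $\big|\{|q|\in(N-\eta,N+\eta),\,|p|\le\eta'\}\big|\le C(N)\,\eta\,\eta'$ to the slab around $\{|x|=N\}$, a union bound over $k=0,\dots,K$ gives
\[
\sup_{x\in\partial\mathcal{M}_{\epsilon}}\mathbb{P}_{x}\big(|p^{\epsilon}(\tau^{\epsilon}_{\mathcal{O}_{N,\epsilon}})|\le\delta,\;\tau^{\epsilon}_{\mathcal{O}_{N,\epsilon}}\le r\big)\le\frac{C(N)\,r}{h}\,\eta\,\eta'+\sup_{x\in\partial\mathcal{M}_{\epsilon}}\mathbb{P}_{x}(G_{h}^{c}).
\]
Letting $\delta\to0$ sends $\eta\to h^{1+\beta}$ and $\eta'\to h^{\beta}$, so the first term tends to $C(N)\,r\,h^{2\beta}$; since $2\beta>0$, letting then $h\to0$ kills both terms, which is the claim.

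The only genuine obstacle is the quadratic gain $|q^{\epsilon}(t_{k})-q^{\epsilon}(\tau)|\lesssim h(\delta+h^{\beta})$: it is exactly what makes the per-cell probability $\eta\eta'\sim h^{1+2\beta}$ summable against the $\sim r/h$ cells of the grid, whereas the naive displacement bound $h^{\beta}$ would produce the divergent factor $h^{2\beta-1}$. A secondary point of care is that $X^{\epsilon}(\tau)$ may land on either connected component of $\partial\mathcal{O}_{N,\epsilon}$, which is why one must use Lemma~\ref{lem:control density} in the form valid near all of $\partial\mathcal{O}_{N,\epsilon}$ rather than only near $\partial\mathcal{S}_{\epsilon}$.
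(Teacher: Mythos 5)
Your proposal is correct and follows essentially the same route as the paper's proof: discretize $[0,r]$, control the modulus of continuity of $X^{\epsilon}$ (via Lemma~\ref{lem:variations process} in your case, via the H\"olderian constant of Brownian motion in the paper's), exploit the quadratic-in-mesh gain in the position increment $|q^{\epsilon}(t_{k})-q^{\epsilon}(\tau)|\lesssim h(\delta+h^{\beta})$, and union-bound using Lemma~\ref{lem:control density}/Corollary~\ref{cor:weyl density}. The only cosmetic differences are that you decouple the mesh $h$ from $\delta$ and take a double limit ($\delta\to 0$ then $h\to 0$) where the paper ties $J_{\delta}=\lfloor 1/\delta\rfloor$ to $\delta$ and takes a single limit, and that you spell out the volume estimate for the slab near $\partial\mathrm{B}(0,N)$ which the paper leaves implicit.
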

\begin{proof}

Let $\delta\in(0,1)$. Given the definition of $\mathcal{O}_{N,\,\epsilon}$ in~\eqref{eq:definition O_N}, either $X^{\epsilon}(\tau_{\mathcal{O}_{N,\,\epsilon}}^{\epsilon})\in\partial\mathcal{S}_{\epsilon}$ or $X^{\epsilon}(\tau_{\mathcal{O}_{N,\,\epsilon}}^{\epsilon})\in\partial\mathrm{B}(0,N)$. Therefore, let us start by considering the first case and we prove that for any $\nu>0$, if $\delta$ is small enough then 
\begin{equation}
\sup_{x\in\partial\mathcal{M}_{\epsilon}}\mathbb{P}_{x}\left(X^{\epsilon}(\tau_{\mathcal{O}_{N,\,\epsilon}}^{\epsilon})\in\partial\mathcal{S}_{\epsilon},\,|p^{\epsilon}(\tau_{\mathcal{O}_{N,\,\epsilon}}^{\epsilon})|\leq\delta,\,\tau_{\mathcal{O}_{N,\,\epsilon}}^{\epsilon}\leq r\right)\leq\nu\;.\label{eq:sup proba inward velocity proof}
\end{equation}
Let $\eta\in(0,\,1/2)$ and let
\[
\mathcal{Z}:=\sup_{0\leq t<t'\leq r}\frac{|B(t)-B(t')|}{|t-t'|^{\eta}},
\]
then $\mathcal{Z}<\infty$, almost-surely, since the Brownian motion
$(B(t))_{t\geq0}$ is $\eta$-H\"olderian.

Let $J_\delta=\lfloor1/\delta\rfloor$ and let $r_{k}=r\frac{k}{J_\delta}$ for $0\leq k\leq J_\delta$. We take $M>0$ large enough such that $\mathbb{P}(\mathcal{Z}>M)\leq\nu/2$, then 
\begin{align}
 & \mathbb{P}_{x}\left(X^{\epsilon}(\tau_{\mathcal{O}_{N,\,\epsilon}}^{\epsilon})\in\partial\mathcal{S}_{\epsilon},\,|p^{\epsilon}(\tau_{\mathcal{O}_{N,\,\epsilon}}^{\epsilon})|\leq\delta,\,\tau_{\mathcal{O}_{N,\,\epsilon}}^{\epsilon}\leq r\right)\nonumber \\
 & \leq\mathbb{P}_{x}\left(X^{\epsilon}(\tau_{\mathcal{O}_{N,\,\epsilon}}^{\epsilon})\in\partial\mathcal{S}_{\epsilon},\,|p^{\epsilon}(\tau_{\mathcal{O}_{N,\,\epsilon}}^{\epsilon})|\leq\delta,\,\tau_{\mathcal{O}_{N,\,\epsilon}}^{\epsilon}\leq r,\,\mathcal{Z}\leq M\right)+\mathbb{P}(\mathcal{Z}>M)\nonumber \\
 & \leq\sum_{k=0}^{J_\delta-1}\mathbb{P}_{x}\left(\tau_{\mathcal{O}_{N,\,\epsilon}}^{\epsilon}\in(r_{k},\,r_{k+1}],\,X^{\epsilon}(\tau_{\mathcal{O}_{N,\,\epsilon}}^{\epsilon})\in\partial\mathcal{S}_{\epsilon},\,|p^{\epsilon}(\tau_{\mathcal{O}_{N,\,\epsilon}}^{\epsilon})|\leq\delta,\,\mathcal{Z}\leq M\right)+\frac{\nu}{2}\;.\label{eq:sum interval exit time}
\end{align}

Moreover, by~\eqref{eq:sde}, for all $u\in[r_{k},\,\tau_{\mathcal{O}_{N,\,\epsilon}}^{\epsilon}]$,
\[
p^{\epsilon}(\tau_{\mathcal{O}_{N,\,\epsilon}}^{\epsilon})-p^{\epsilon}(u)=\int_{u}^{\tau_{\mathcal{O}_{N,\,\epsilon}}^{\epsilon}}\left[-\nabla U(q^{\epsilon}(\rho))-\gamma p^{\epsilon}(\rho)\right]\mathrm{d}\rho+\sqrt{2\gamma\epsilon}\left(B(\tau_{\mathcal{O}_{N,\,\epsilon}}^{\epsilon})-B(u)\right)\;.
\]
Under the event $\{\tau_{\mathcal{O}_{N,\,\epsilon}}^{\epsilon}\in(r_{k},\,r_{k+1}],\,\mathcal{Z}\leq M\}$, using the boundedness of $\R^{2d}\setminus\mathcal{O}_{N,\,\epsilon}$ there exists a constant $C=C(N)\geq1$ such that
\begin{equation}\label{eq:triangle ineq velocity boundary}
\big|p^{\epsilon}(\tau_{\mathcal{O}_{N,\,\epsilon}}^{\epsilon})-p^{\epsilon}(u)\big|\leq\frac{C}{J_\delta}+\frac{C}{J_\delta^\eta}\leq \frac{2C}{J_\delta^\eta}\;.
\end{equation}
In particular, under the event $\{\tau_{\mathcal{O}_{N,\,\epsilon}}^{\epsilon}\in(r_{k},\,r_{k+1}],\,|p^{\epsilon}(\tau_{\mathcal{O}_{N,\,\epsilon}}^{\epsilon})|\leq\delta,\,\mathcal{Z}\leq M\}$, one has by the triangle inequality,
\begin{align}
\big|p^{\epsilon}(r_k)\big|&\leq\big|p^{\epsilon}(\tau_{\mathcal{O}_{N,\,\epsilon}}^{\epsilon})-p^{\epsilon}(r_k)\big|+\big|p^{\epsilon}(\tau_{\mathcal{O}_{N,\,\epsilon}}^{\epsilon})\big|\nonumber\\
&\leq\frac{2C}{J_\delta^\eta}+\frac{1}{J_\delta}\leq\frac{3C}{J_\delta^\eta}\label{eq:ineq triangl p}\,,
\end{align}
since $\delta\leq1/J_\delta$, by definition of $J_\delta$. Moreover, under the event $\{\tau_{\mathcal{O}_{N,\,\epsilon}}^{\epsilon}\in(r_{k},\,r_{k+1}],\,|p^{\epsilon}(\tau_{\mathcal{O}_{N,\,\epsilon}}^{\epsilon})|\leq\delta,\,\mathcal{Z}\leq M\}$, the triangle inequality also yields by~\eqref{eq:triangle ineq velocity boundary},
\begin{align}
\big|q^{\epsilon}(\tau_{\mathcal{O}_{N,\,\epsilon}}^{\epsilon})-q^{\epsilon}(r_{k})\big| & =\left|\int_{r_{k}}^{\tau_{\mathcal{O}_{N,\,\epsilon}}^{\epsilon}}p^{\epsilon}(u)\mathrm{d}u\right|\nonumber\\
&\leq \frac{2C}{J_\delta^{1+\eta}}+\int_{r_{k}}^{\tau_{\mathcal{O}_{N,\,\epsilon}}^{\epsilon}}\big|p^{\epsilon}(\tau_{\mathcal{O}_{N,\,\epsilon}}^{\epsilon})\big|\mathrm{d}u\nonumber\\ 
 &\leq\frac{2C}{J_\delta^{1+\eta}}+\frac{1}{J_\delta^{2}}\leq\frac{3C}{J_\delta^{1+\eta}}\label{eq:first triangle ineq}\,.
\end{align}
Furthermore, on the event $\{X^{\epsilon}(\tau_{\mathcal{O}_{N,\,\epsilon}}^{\epsilon})\in\partial\mathcal{S}_{\epsilon}\}$, 
\begin{equation}\label{eq:equation boundary S}
\big|q^{\epsilon}(\tau_{\mathcal{O}_{N,\,\epsilon}}^{\epsilon})-s\big|^{2}+\big|p^{\epsilon}(\tau_{\mathcal{O}_{N,\,\epsilon}}^{\epsilon})\big|^{2}=\epsilon^2\;.
\end{equation}
Therefore, on the event $\{X^{\epsilon}(\tau_{\mathcal{O}_{N,\,\epsilon}}^{\epsilon})\in\partial\mathcal{S}_{\epsilon},\,\big|p^{\epsilon}(\tau_{\mathcal{O}_{N,\,\epsilon}}^{\epsilon})\big|\leq\delta\}$,
\[
\sqrt{\epsilon^2-\frac{1}{J^2_\delta}}\leq\big|q^{\epsilon}(\tau_{\mathcal{O}_{N,\,\epsilon}}^{\epsilon})-s\big|\leq \epsilon\;.
\]
Combining the inequality above with~\eqref{eq:first triangle ineq} one obtains
\[
\sqrt{\epsilon^2-\frac{1}{J^2_\delta}}-\frac{3C}{J_\delta^{1+\eta}}\leq\big|q^{\epsilon}(r_k)-s\big|\leq \epsilon+\frac{3C}{J_\delta^{1+\eta}}\;.
\]
Taking $\delta$ sufficiently small and applying the asymptotic expansion of $\sqrt{1-x}$ when $x\rightarrow0$, one obtains the existence of a constant $C=C(N,\epsilon)>0$ which is independent of $\delta$, such that
\[
\epsilon-\frac{C}{J_\delta^{1+\eta}}\leq\big|q^{\epsilon}(r_k)-s\big|\leq \epsilon+\frac{C}{J_\delta^{1+\eta}}\;.
\]

All in all, combining the inequality above with~\eqref{eq:ineq triangl p} and taking a constant $C>0$ large enough, one obtains for all $0\leq k\leq J_\delta-1$ and $x\in\partial\M_\epsilon$,
\begin{align*}
&\mathbb{P}_{x}\left(\tau_{\mathcal{O}_{N,\,\epsilon}}^{\epsilon}\in(r_{k},\,r_{k+1}],\,X^{\epsilon}(\tau_{\mathcal{O}_{N,\,\epsilon}}^{\epsilon})\in\partial\mathcal{S}_{\epsilon},\,|p^{\epsilon}(\tau_{\mathcal{O}_{N,\,\epsilon}}^{\epsilon})|\leq\delta,\,\mathcal{Z}\leq M\right)\nonumber \\
 & \leq\mathbb{P}_{x}\left(\tau_{\mathcal{O}_{N,\,\epsilon}}^{\epsilon}\in(r_{k},\,r_{k+1}],\,\big|q^{\epsilon}(r_{k})-s\big|\in\big[\epsilon-\frac{C}{J_\delta^{1+\eta}},\,\epsilon+\frac{C}{J_\delta^{1+\eta}}\big],\,\big|p^{\epsilon}(r_{k})\big|\leq\frac{C}{J^{\eta}_\delta}\right)\;.
\end{align*}
In particular, for $k=0$, since the initial condition $x\in\partial\M_\epsilon$, it is easy to see that, taking $\epsilon$ and $\delta$ sufficiently small, ensures that the probability above vanishes. Therefore, it is sufficient to consider the case $1\leq k\leq J_\delta-1$. Moreover, by Corollary~\ref{cor:weyl density}, there exists a constant $C'>0$ independent of $k,\delta$ and $x\in\partial\M_\epsilon$ such that
\begin{align*}
\mathbb{P}_{x}\left(\tau_{\mathcal{O}_{N,\,\epsilon}}^{\epsilon}\in(r_{k},\,r_{k+1}],\,\big|q^{\epsilon}(r_{k})-s\big|\in\big[\epsilon-\frac{C}{J_\delta^{1+\eta}},\,\epsilon+\frac{C}{J_\delta^{1+\eta}}\big],\,\big|p^{\epsilon}(r_{k})\big|\leq\frac{C}{J^{\eta}_\delta}\right) 
 \leq \frac{C'}{J_\delta^{1+2\eta}}\,.
\end{align*}
Reinjecting in~\eqref{eq:sum interval exit time} and summing over $1\leq k\leq J_\delta-1$, it follows that
\begin{align}
 \mathbb{P}_{x}\left(X^{\epsilon}(\tau_{\mathcal{O}_{N,\,\epsilon}}^{\epsilon})\in\partial\mathcal{S}_{\epsilon},\,|p^{\epsilon}(\tau_{\mathcal{O}_{N,\,\epsilon}}^{\epsilon})|\leq\delta,\,\tau_{\mathcal{O}_{N,\,\epsilon}}^{\epsilon}\leq r\right)\nonumber \leq \frac{C'}{J_\delta^{2\eta}}+\frac{\nu}{2}\;.
\end{align}
Taking $\delta$ sufficiently small concludes the proof. It remains however to also show that 
\[
\sup_{x\in\partial\mathcal{M}_{\epsilon}}\mathbb{P}_{x}\left(X^{\epsilon}(\tau_{\mathcal{O}_{N,\,\epsilon}}^{\epsilon})\in\partial\mathrm{B}(0,\,N),\,|p^{\epsilon}(\tau_{\mathcal{O}_{N,\,\epsilon}}^{\epsilon})|\leq\delta,\,\tau_{\mathcal{O}_{N,\,\epsilon}}^{\epsilon}\leq r\right)\underset{\delta\rightarrow0}{\longrightarrow}0\;.
\]
Nonetheless, a similar proof also applies in this case by replacing the equality~\eqref{eq:equation boundary S} by the appropriate formula on $\partial\mathrm{B}(0,\,N)$, hence the proof of this lemma. 
\end{proof}

\subsection{Proof of Lemma~\ref{lem:unif_c}}\label{sec:proof of lemma 4.7}
The lemma below investigates the probability of not entering the set $\mathcal{O}_{N,\,\epsilon}$ in a short time interval when the process~\eqref{eq:sde_pur} is close to its boundary. We then conclude this section with the proof of Lemma~\ref{lem:unif_c}. Throughout this subsection, we will regularly use the notation~\eqref{eq:xqp}.
\begin{lem}\label{lem:hitting time alpha epsilon}
Let $\lambda\in(0,1/2)$ and define
\begin{equation}\label{eq:def set C lambda}
\mathcal{C}_\lambda^{\alpha,\,\epsilon}:=\{x=(q,p)\in\R^{2d}\setminus\mathcal{O}_{N,\,\epsilon}:|p|>\alpha^{\lambda/4}/2,\,\mathrm{d}(x,\,\partial\mathcal{O}_{N,\,\epsilon})\in(0,\,\alpha^{\lambda}]\}\;.
\end{equation}
Then,
$$\sup_{x\in\mathcal{C}_\lambda^{\alpha,\,\epsilon}}\mathbb{P}_{x}\left(\tau_{\mathcal{O}_{N,\,\epsilon}}^{\alpha,\;\epsilon}>\alpha^{\lambda}\right)\underset{\alpha\rightarrow0}{\longrightarrow}0\;.$$
\end{lem}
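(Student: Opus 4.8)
The plan is to show that, uniformly over $x=(q,p)\in\mathcal{C}_\lambda^{\alpha,\,\epsilon}$, the perturbed process enters $\mathcal{O}_{N,\,\epsilon}$ before time $\alpha^{\lambda}$ with probability tending to $1$ as $\alpha\to 0$, by monitoring the squared Euclidean distance of $X^{\alpha,\,\epsilon}(\cdot)$ to an appropriate centre. Since $\partial\mathcal{O}_{N,\,\epsilon}=\partial\mathcal{S}_{\epsilon}\cup\partial\mathrm{B}(0,N)$ and these two pieces of boundary lie at a fixed positive distance from one another, for $\alpha$ small every $x\in\mathcal{C}_\lambda^{\alpha,\,\epsilon}$ is within distance $\alpha^{\lambda}$ of exactly one of them, and we treat the two cases in parallel. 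If $\mathrm{d}(x,\partial\mathcal{S}_{\epsilon})\le\alpha^{\lambda}$, set $\Phi(y):=|y-(s,0)|^{2}$ and $\rho:=\epsilon$, so $\mathcal{S}_{\epsilon}=\{\Phi<\rho^{2}\}$ and we must push $\Phi$ below $\rho^{2}$; if $\mathrm{d}(x,\partial\mathrm{B}(0,N))\le\alpha^{\lambda}$, set $\Phi(y):=|y|^{2}$ and $\rho:=N$, so $\{|y|>N\}=\{\Phi>\rho^{2}\}$ and we must push $\Phi$ above $\rho^{2}$. In both cases $x\notin\mathcal{O}_{N,\,\epsilon}$ while $\mathrm{d}(x,\partial\mathcal{O}_{N,\,\epsilon})\le\alpha^{\lambda}$, which gives $|\Phi(x)-\rho^{2}|\le C(N,\epsilon)\,\alpha^{\lambda}$; write $\Phi(y)=|q'-c|^{2}+|p'|^{2}$ with $y=(q',p')$ and $c\in\{s,0\}$.

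Abbreviate $\tau:=\tau_{\mathcal{O}_{N,\,\epsilon}}^{\alpha,\,\epsilon}$. Applying It\^o's formula to $\Phi(X^{\alpha,\,\epsilon}(t\wedge\tau))$ via~\eqref{eq:sde_pur}, and using that $\mathbb{R}^{2d}\setminus\mathcal{O}_{N,\,\epsilon}\subset\overline{\mathrm{B}(0,N)}$ is bounded (so that, up to $\tau$, the drift integrand and $|\nabla U|$ are bounded by a constant depending only on $N$), one obtains for $t\in[0,\alpha^{\lambda}]$
\[
\Phi(X^{\alpha,\,\epsilon}(t\wedge\tau))-\Phi(x)=D(t)+2\sqrt{2\gamma\epsilon}\,\langle p,B(t\wedge\tau)\rangle+R(t),\qquad |D(t)|\le C(N)\,\alpha^{\lambda},
\]
where the algebraic identity $\int_{0}^{t\wedge\tau}\langle p^{\alpha,\,\epsilon}(u),\mathrm{d}B(u)\rangle=\langle p,B(t\wedge\tau)\rangle+\int_{0}^{t\wedge\tau}\langle p^{\alpha,\,\epsilon}(u)-p,\mathrm{d}B(u)\rangle$ has been used, and the remainder
\[
R(t):=2\sqrt{2\alpha\epsilon}\int_{0}^{t\wedge\tau}\langle q^{\alpha,\,\epsilon}(u)-c,\mathrm{d}\widetilde{B}(u)\rangle+2\sqrt{2\gamma\epsilon}\int_{0}^{t\wedge\tau}\langle p^{\alpha,\,\epsilon}(u)-p,\mathrm{d}B(u)\rangle
\]
satisfies, by Lemma~\ref{lem:control martingale} (plus the elementary bound $\mathbb{E}_{x}[\sup_{t\le\alpha^{\lambda}}|\langle s,\widetilde{B}(t\wedge\tau)\rangle|^{2}]\le 4|s|^{2}\alpha^{\lambda}$ needed to absorb the case $c=0$), the uniform estimate $\sup_{x}\mathbb{E}_{x}[\sup_{t\le\alpha^{\lambda}}|R(t)|^{2}]\le C(N,\epsilon,\gamma)(\alpha^{1+\lambda}+\alpha^{2\lambda})$.

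On $\{\tau>\alpha^{\lambda}\}$ the process stays in $\mathbb{R}^{2d}\setminus\mathcal{O}_{N,\,\epsilon}$ up to time $\alpha^{\lambda}$, hence $\Phi(X^{\alpha,\,\epsilon}(t))\ge\rho^{2}$ (first case) or $\le\rho^{2}$ (second case) for all $t\le\alpha^{\lambda}$. Together with $|\Phi(x)-\rho^{2}|\le C(N,\epsilon)\alpha^{\lambda}$ and $|D(t)|\le C(N)\alpha^{\lambda}$, the It\^o decomposition then forces, on $\{\tau>\alpha^{\lambda}\}$,
\[
2\sqrt{2\gamma\epsilon}\,\sup_{t\le\alpha^{\lambda}}\bigl(\mp\langle p,B(t)\rangle\bigr)\le C'(N,\epsilon)\,\alpha^{\lambda}+\sup_{t\le\alpha^{\lambda}}|R(t)|,
\]
with the sign $-$ in the first case and $+$ in the second. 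Now $t\mapsto\langle p,B(t)\rangle$ equals $|p|$ times a standard one-dimensional Brownian motion $W$, so by Brownian scaling $\sup_{t\le\alpha^{\lambda}}(\mp\langle p,B(t)\rangle)$ has the same law as $|p|\,\alpha^{\lambda/2}\,|W(1)|$, and $|p|>\alpha^{\lambda/4}/2$. Hence for any $\eta>0$ and all sufficiently small $\alpha$, uniformly in $x\in\mathcal{C}_\lambda^{\alpha,\,\epsilon}$,
\[
\mathbb{P}_{x}(\tau>\alpha^{\lambda})\le\mathbb{P}_{x}\Bigl(2\sqrt{2\gamma\epsilon}\,\sup_{t\le\alpha^{\lambda}}(\mp\langle p,B(t)\rangle)\le 2\eta\,\alpha^{3\lambda/4}\Bigr)+\frac{\mathbb{E}_{x}[\sup_{t\le\alpha^{\lambda}}|R(t)|^{2}]}{\eta^{2}\alpha^{3\lambda/2}}\le\mathbb{P}\Bigl(|W(1)|\le\tfrac{2\eta}{\sqrt{2\gamma\epsilon}}\Bigr)+o_{\alpha}(1),
\]
where we used $C'(N,\epsilon)\alpha^{\lambda}\le\eta\alpha^{3\lambda/4}$ for small $\alpha$ (since $3\lambda/4<\lambda$), then $|p|\ge\alpha^{\lambda/4}/2$ with Brownian scaling, and finally that $\alpha^{1+\lambda-3\lambda/2}+\alpha^{2\lambda-3\lambda/2}\to0$ (both exponents positive for $\lambda\in(0,1/2)$). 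Letting $\alpha\to0$ and then $\eta\to0$ gives $\sup_{x\in\mathcal{C}_\lambda^{\alpha,\,\epsilon}}\mathbb{P}_{x}(\tau>\alpha^{\lambda})\to0$.

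The main obstacle is the scale bookkeeping just outlined: one must verify that the $\alpha^{3\lambda/4}$-fluctuation of $\Phi(X^{\alpha,\,\epsilon})$ produced over a window of length $\alpha^{\lambda}$ by the momentum bound $|p|>\alpha^{\lambda/4}/2$ strictly dominates the drift and the initial gap $|\Phi(x)-\rho^{2}|$ (both $O(\alpha^{\lambda})$) as well as the residual martingales of Lemma~\ref{lem:control martingale} (of sizes $\alpha^{(1+\lambda)/2}$ and $\alpha^{\lambda}$). The one structural point worth isolating is the peeling off of the clean Gaussian term $\langle p,B(\cdot)\rangle$, a deterministic rescaling of a standard Brownian motion with explicit law; this sidesteps any need for a Dambis--Dubins--Schwarz time change of the full martingale part (whose quadratic variation is random and degenerates on $\{\tau<\alpha^{\lambda}\}$) and keeps all estimates uniform in $x$.
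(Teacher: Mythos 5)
Your proof is correct and follows essentially the same strategy as the paper's: apply It\^o's formula to the squared distance to the relevant ball centre, peel off the clean term $\langle p,B(t)\rangle$, control the residual martingales via Lemma~\ref{lem:control martingale}, and use $|p|>\alpha^{\lambda/4}/2$ with Brownian scaling on the event $\{\tau>\alpha^{\lambda}\}$. The only difference is cosmetic: you parametrize the martingale threshold as $\eta\alpha^{3\lambda/4}$ and take $\eta\to 0$ at the end, whereas the paper uses the fixed threshold $4\alpha^{4\lambda/5}$ (strictly above $\alpha^{3\lambda/4}$), so the Gaussian probability decays with $\alpha$ directly and no auxiliary limit is needed.
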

\begin{proof}
Let $x=(q,p)\in\mathcal{C}_\lambda^{\alpha,\,\epsilon}$.
Given the definition of $\mathcal{O}_{N,\,\epsilon}$ in~\eqref{eq:definition O_N},
one has that 
\[
\mathrm{d}(x,\,\partial\mathcal{O}_{N,\,\epsilon})=\mathrm{d}(x,\,\partial\mathcal{S}_{\epsilon})\land\mathrm{d}(x,\,\partial\mathrm{B}(0,\,N))\;.
\]
Let us first assume that $\mathrm{d}(x,\,\partial\mathcal{S}_{\epsilon})\leq\alpha^{\lambda}$. Then, there exists a constant $c>0$ independent of $\alpha$ such that for all $\alpha$ small enough,
\[
|x-(s,0)|^{2}\leq(\epsilon+\alpha^\lambda)^2\leq\epsilon^2+c\alpha^\lambda\;.
\]

\noindent As a result, 
\begin{align*}
\mathbb{P}_{x}\left(\tau_{\mathcal{O}_{N,\,\epsilon}}^{\alpha,\,\epsilon}>\alpha^{\lambda}\right) \leq\mathbb{P}_{x}\left(\inf_{t\in[0,\,\alpha^{\lambda}]}|X^{\alpha,\,\epsilon}(t)-(s,0)|^{2}-|x-(s,0)|^{2}\geq-c\alpha^{\lambda}\right)\;.
\end{align*}

Furthermore, by It\^{o}'s formula applied to~\eqref{eq:sde_pur}, almost-surely
for all $t\geq0$, 
\begin{align*}
 & |X^{\alpha,\,\epsilon}(t)-(s,0)|^{2}-|x-(s,0)|^{2}\\ 
 & =2\int_{0}^{t}\left[\left\langle q^{\alpha,\,\epsilon}(u)-s,\,p^{\alpha,\,\epsilon}(u)-\alpha\nabla U(q^{\alpha,\,\epsilon}(u))\right\rangle +\left\langle p^{\alpha,\,\epsilon}(u),\,-\gamma p^{\alpha,\,\epsilon}(u)-\nabla U(q^{\alpha,\,\epsilon}(u))\right\rangle \right]\mathrm{d}u
 \\& \qquad +\,d(\gamma+\alpha)\epsilon t
  +\,2\sqrt{2\alpha\epsilon}\int_{0}^{t}\left\langle q^{\alpha,\,\epsilon}(u)-s,\,\mathrm{d}\widetilde{B}(u)\right\rangle  \\&\qquad +\,2\sqrt{2\gamma\epsilon}\int_{0}^{t}\left\langle p^{\alpha,\,\epsilon}(u)-p,\,\mathrm{d}B(u)\right\rangle +2\sqrt{2\gamma\epsilon}\left\langle p,\,B(t)\right\rangle \;.
\end{align*}
Consider now the event 
\[
\mathcal{G}_{\alpha,\,\epsilon}:=\left\{ 2\sqrt{2\alpha\epsilon}\sup_{t\in[0,\,\alpha^{\lambda}]}\int_{0}^{t}\left\langle q^{\alpha,\,\epsilon}(u)-s,\,\mathrm{d}\widetilde{B}(u)\right\rangle +2\sqrt{2\gamma\epsilon}\sup_{t\in[0,\,\alpha^{\lambda}]}\int_{0}^{t}\left\langle p^{\alpha,\,\epsilon}(u)-p,\,\mathrm{d}B(u)\right\rangle \leq4\alpha^{4\lambda/5}\right\} \;,
\]
then under the event $\{\mathcal{G}_{\alpha,\,\epsilon},\tau_{\mathcal{O}_{N,\,\epsilon}}^{\alpha,\,\epsilon}>\alpha^{\lambda}\}$
there exists a constant $C=C(N)>0$ independent of $\alpha$
such that for all $t\in[0,\,\alpha^{\lambda}]$ and $\alpha$ small enough,
\begin{align*}
 |X^{\alpha,\,\epsilon}(t)-(s,0)|^{2}-|x-(s,0)|^{2}
 &\leq C\alpha^\lambda+4\alpha^{4\lambda/5}+2\sqrt{2\gamma\epsilon}\left\langle p,\,B(t)\right\rangle\\
 & \leq C'\alpha^{4\lambda/5}+2\sqrt{2\gamma\epsilon}\left\langle p,\,B(t)\right\rangle 
\end{align*}
where $C'=C+4>0$. Consequently,  
\begin{align*}
\mathbb{P}_{x}\left(\tau_{\mathcal{O}_{N,\,\epsilon}}^{\alpha,\,\epsilon}>\alpha^{\lambda}\right)\leq\mathbb{P}\left(C'\alpha^{4\lambda/5}+2\sqrt{2\gamma\epsilon}\inf_{t\in[0,\,\alpha^{\lambda}]}\left\langle p,\,B(t)\right\rangle\geq-c\alpha^\lambda\right)+\mathbb{P}_{x}\left(\tau_{\mathcal{O}_{N,\,\epsilon}}^{\alpha,\,\epsilon}>\alpha^{\lambda},\,\mathcal{G}_{\alpha,\,\epsilon}^{c}\right)\;.
\end{align*}
Given that $(\left\langle p,\,B(t)\right\rangle/|p|)_{t\geq0}$ is a one-dimensional Brownian motion, letting $G\sim\mathcal{N}(0,1)$, one has that
\begin{align*}
\mathbb{P}\left(2\sqrt{2\gamma\epsilon}\inf_{t\in[0,\,\alpha^{\lambda}]}\left\langle p,\,B(t)\right\rangle\geq-C'\alpha^{4\lambda/5}-c\alpha^\lambda\right)
 & \leq\mathbb{P}\left(2\sqrt{2\gamma\epsilon}\,\alpha^{\lambda/2}|G|<C'\frac{\alpha^{4\lambda/5}}{|p|}+c\,\frac{\alpha^\lambda}{|p|}\right)\\
 & \leq\mathbb{P}\left(2\sqrt{2\gamma\epsilon}\,|G|<2C'\,\alpha^{\lambda/20}+2c\,\alpha^{\lambda/4}\right)\;,
\end{align*}
since $|p|>\alpha^{\lambda/4}/2$, which therefore converges to $0$
when $\alpha$ goes to zero.

It remains to show that 
\begin{equation}
\sup_{x\in\mathcal{C}_\lambda^{\alpha,\,\epsilon}}\mathbb{P}_{x}\left(\tau_{\mathcal{O}_{N,\,\epsilon}}^{\alpha,\,\epsilon}>\alpha^{\lambda},\,\mathcal{G}_{\alpha,\,\epsilon}^{c}\right)\underset{\alpha\rightarrow0}{\longrightarrow}0\;.\label{eq:convergence proba G^c}
\end{equation}
Notice that 
\begin{align*}
\mathbb{P}_{x}\left(\tau_{\mathcal{O}_{N,\,\epsilon}}^{\alpha,\,\epsilon}>\alpha^{\lambda},\,\mathcal{G}_{\alpha,\,\epsilon}^{c}\right) & \leq\mathbb{P}_{x}\left(\sqrt{2\alpha\epsilon}\sup_{t\in[0,\,\alpha^{\lambda}]}\int_{0}^{t\land\tau_{\mathcal{O}_{N,\,\epsilon}}^{\alpha,\,\epsilon}}\left\langle q^{\alpha,\,\epsilon}(u)-s,\,\mathrm{d}\widetilde{B}(u)\right\rangle >\alpha^{4\lambda/5}\right)\\
 & +\mathbb{P}_{x}\left(\sqrt{2\gamma\epsilon}\sup_{t\in[0,\,\alpha^{\lambda}]}\int_{0}^{t\land\tau_{\mathcal{O}_{N,\,\epsilon}}^{\alpha,\,\epsilon}}\left\langle p^{\alpha,\,\epsilon}(u)-p,\,\mathrm{d}B(u)\right\rangle >\alpha^{4\lambda/5}\right)\;.
\end{align*}

Using the Markov inequality for both probabilities in the right-hand side above, along with Lemma~\ref{lem:control martingale}, one deduces the existence of a constant $C>0$ independent of $\alpha$ such that
\begin{align*}
\mathbb{P}_{x}\left(\tau_{\mathcal{O}_{N,\,\epsilon}}^{\alpha,\,\epsilon}>\alpha^{\lambda},\,\mathcal{G}_{\alpha,\,\epsilon}^{c}\right) \leq C \frac{\alpha^{1+\lambda}}{\alpha^{8\lambda/5}}+ C\frac{\alpha^{2\lambda}}{\alpha^{8\lambda/5}} 
 \leq C\,\alpha^{1-3\lambda/5}+ C\,\alpha^{2\lambda/5}\underset{\alpha\rightarrow0}{\longrightarrow}0\,,
\end{align*}
since $\lambda\in(0,1/2)$. To conclude the proof of this lemma, it remains to investigate the case $\mathrm{d}(x,\,\partial\mathrm{B}(0,N))\leq\alpha^{\lambda}$. However, the proof applies similarly to this case as the argument developed previously mainly depends on the ball-shaped
boundary which is valid for both boundaries $\partial\mathrm{B}(0,\,N)$
and $\partial\mathcal{S}_{\epsilon}$.  
\end{proof}
\begin{proof}[Proof of Lemma~\ref{lem:unif_c}]
One has for any $x\in\partial\mathcal{M}_{\epsilon}$, 
\[
\mathbb{P}_{x}(\tau_{\mathcal{O}_{N,\,\epsilon}}^{\alpha,\,\epsilon}>r)-\mathbb{P}_{x}(\tau_{\mathcal{O}_{N,\,\epsilon}}^{\epsilon}>r)=\mathbb{P}_{x}(\tau_{\mathcal{O}_{N,\,\epsilon}}^{\alpha,\,\epsilon}>r,\,\tau_{\mathcal{O}_{N,\,\epsilon}}^{\epsilon}\leq r)-\mathbb{P}_{x}(\tau_{\mathcal{O}_{N,\,\epsilon}}^{\alpha,\,\epsilon}\leq r,\,\tau_{\mathcal{O}_{N,\,\epsilon}}^{\epsilon}>r)\;.
\]
Therefore, it is enough to show that 
\begin{equation}
\sup_{x\in\partial\mathcal{M}_{\epsilon}}\mathbb{P}_{x}(\tau_{\mathcal{O}_{N,\,\epsilon}}^{\alpha,\,\epsilon}>r,\,\tau_{\mathcal{O}_{N,\,\epsilon}}^{\epsilon}\leq r)\underset{\alpha\rightarrow0}{\longrightarrow}0,\quad\text{and}\quad\sup_{x\in\partial\mathcal{M}_{\epsilon}}\mathbb{P}_{x}(\tau_{\mathcal{O}_{N,\,\epsilon}}^{\alpha,\,\epsilon}\leq r,\,\tau_{\mathcal{O}_{N,\,\epsilon}}^{\epsilon}>r)\underset{\alpha\rightarrow0}{\longrightarrow}0\;.\label{eq:convergence double supremum}
\end{equation}
Let us first show the convergence of the first supremum. Let us take
$\lambda\in(0,\,1/2)$, then 
\begin{equation}
\mathbb{P}_{x}(\tau_{\mathcal{O}_{N,\,\epsilon}}^{\alpha,\,\epsilon}>r,\,\tau_{\mathcal{O}_{N,\,\epsilon}}^{\epsilon}\leq r)\leq\mathbb{P}_{x}(\tau_{\mathcal{O}_{N,\,\epsilon}}^{\epsilon}\in[r-\alpha^{\lambda},\,r])+\mathbb{P}_{x}(\tau_{\mathcal{O}_{N,\,\epsilon}}^{\alpha,\,\epsilon}>r,\,\tau_{\mathcal{O}_{N,\,\epsilon}}^{\epsilon}\leq r-\alpha^{\lambda})\;.\label{eq:decomposition proba exit}
\end{equation}
Let us first show that 
\begin{equation}
\sup_{x\in\partial\mathcal{M}_{\epsilon}}\mathbb{P}_{x}(\tau_{\mathcal{O}_{N,\,\epsilon}}^{\epsilon}\in[r-\alpha^{\lambda},\,r])\underset{\alpha\rightarrow0}{\longrightarrow}0\;.\label{eq:control hitting time interval}
\end{equation}
For any $\beta\in(0,\,\lambda/2)$, 
\begin{align*}
\mathbb{P}_{x}(\tau_{\mathcal{O}_{N,\,\epsilon}}^{\epsilon}\in[r-\alpha^{\lambda},\,r]) & \leq\mathbb{P}_{x}(\mathrm{d}(X_{r-\alpha^{\lambda}}^{\epsilon},\,\partial\mathcal{O}_{N,\,\epsilon})\leq\alpha^{\beta},\,\tau_{\mathcal{O}_{N,\,\epsilon}}^{\epsilon}> r-\alpha^\lambda)\\
 & +\mathbb{P}_{x}\left(\sup_{|t_{1}-t_{2}|\leq\alpha^{\lambda},\,t_{1},\,t_{2}\in\left[0,\,\tau_{\mathcal{O}_{N,\,\epsilon}}^{\epsilon}\land r\right]}|X^{\epsilon}(t_{1})-X^{\epsilon}(t_{2})|>\alpha^{\beta}\right)\;.
\end{align*}
Using Lemma~\ref{lem:control density} along with Weyl's tube formula~\cite{WeylTube} one obtains the existence of constants $C_1,C_2>0$ independent of $\alpha$ such that 
\begin{align*}
\sup_{x\in\partial\M_\epsilon}\mathbb{P}_{x}(\mathrm{d}(X^{\epsilon}({r-\alpha^{\lambda}}),\,\partial\mathcal{O}_{N,\,\epsilon})\leq\alpha^{\beta},\,\tau_{\mathcal{O}_{N,\,\epsilon}}^{\epsilon}> r-\alpha^\lambda)   \leq C_{1}\int_{x:\mathrm{d}(x,\,\partial\mathcal{O}_{N,\,\epsilon})\leq\alpha^{\beta}}\mathrm{d}x \leq C_{1}C_{2}\,\alpha^{\beta}\;.
\end{align*}
Additionally, by Lemma~\ref{lem:variations process}, 
\[
\sup_{x\in\partial\mathcal{M}_{\epsilon}}\mathbb{P}_{x}\left(\sup_{|t_{1}-t_{2}|\leq\alpha^{\lambda},\,t_{1},\,t_{2}\in\left[0,\,\tau_{\mathcal{O}_{N,\,\epsilon}}^{\epsilon}\land r\right]}|X^{\epsilon}(t_{1})-X^{\epsilon}(t_{2})|>\alpha^{\beta}\right)\underset{\alpha\rightarrow0}{\longrightarrow}0\;,
\]
hence the proof of~\eqref{eq:control hitting time interval}.

We consider now the second term in~\eqref{eq:decomposition proba exit}.
One has that 
\begin{align}
 & \mathbb{P}_{x}(\tau_{\mathcal{O}_{N,\,\epsilon}}^{\alpha,\,\epsilon}>r,\,\tau_{\mathcal{O}_{N,\,\epsilon}}^{\epsilon}\leq r-\alpha^{\lambda})\nonumber\\
 & \leq\mathbb{P}_{x}\left(\big|p^{\epsilon}(\tau_{\mathcal{O}_{N,\,\epsilon}}^{\epsilon})\big|\leq\alpha^{\lambda/4},\,\tau_{\mathcal{O}_{N,\,\epsilon}}^{\epsilon}\leq r-\alpha^{\lambda}\right)+\mathbb{P}_{x}\left(\big|p^{\epsilon}(\tau_{\mathcal{O}_{N,\,\epsilon}}^{\epsilon})\big|>\alpha^{\lambda/4},\,\tau_{\mathcal{O}_{N,\,\epsilon}}^{\alpha,\,\epsilon}>r,\,\tau_{\mathcal{O}_{N,\,\epsilon}}^{\epsilon}\leq r-\alpha^{\lambda}\right)\;.\label{eq:decomposition event velocity norm}
\end{align}
It follows from Lemma~\ref{lem:control inward velocity} that for
$\epsilon\in(0,\,1)$ small enough, 
\[
\sup_{x\in\partial\mathcal{M}_{\epsilon}}\mathbb{P}_{x}\left(|p^{\epsilon}(\tau_{\mathcal{O}_{N,\,\epsilon}}^{\epsilon})|\leq\alpha^{\lambda/4},\,\tau_{\mathcal{O}_{N,\,\epsilon}}^{\epsilon}\leq r-\alpha^{\lambda}\right)\underset{\alpha\rightarrow0}{\longrightarrow}0\;.
\]
It remains to consider the second probability in~\eqref{eq:decomposition event velocity norm}.
One has 
\begin{align}
 & \mathbb{P}_{x}\left(|p^{\epsilon}(\tau_{\mathcal{O}_{N,\,\epsilon}}^{\epsilon})|>\alpha^{\lambda/4},\,\tau_{\mathcal{O}_{N,\,\epsilon}}^{\alpha,\,\epsilon}>r,\,\tau_{\mathcal{O}_{N,\,\epsilon}}^{\epsilon}\leq r-\alpha^{\lambda}\right)\nonumber \\ 
 & \leq\mathbb{P}_{x}\left(|X^{\alpha,\,\epsilon}(\tau_{\mathcal{O}_{N,\,\epsilon}}^{\epsilon})-X^{\epsilon}(\tau_{\mathcal{O}_{N,\,\epsilon}}^{\epsilon})|>\alpha^{\lambda},\,\tau_{\mathcal{O}_{N,\,\epsilon}}^{\alpha,\,\epsilon}>r,\,\tau_{\mathcal{O}_{N,\,\epsilon}}^{\epsilon}\leq r-\alpha^{\lambda}\right)\nonumber \\
 & +\mathbb{P}_{x}\left(|p^{\alpha,\,\epsilon}(\tau_{\mathcal{O}_{N,\,\epsilon}}^{\epsilon})|>\alpha^{\lambda/4}/2,\,\mathrm{d}(X^{\alpha,\,\epsilon}(\tau_{\mathcal{O}_{N,\,\epsilon}}^{\epsilon}),\,\partial\mathcal{O}_{N,\,\epsilon})\leq\alpha^{\lambda},\,\tau_{\mathcal{O}_{N,\,\epsilon}}^{\alpha,\,\epsilon}>r,\,\tau_{\mathcal{O}_{N,\,\epsilon}}^{\epsilon}\leq r-\alpha^{\lambda}\right)\;.\label{eq:decomposition proba outward velocity}
\end{align}
Notice that 
\begin{align*}
 & \mathbb{P}_{x}\left(|X^{\alpha,\,\epsilon}(\tau_{\mathcal{O}_{N,\,\epsilon}}^{\epsilon})-X^{\epsilon}(\tau_{\mathcal{O}_{N,\,\epsilon}}^{\epsilon})|>\alpha^{\lambda},\,\tau_{\mathcal{O}_{N,\,\epsilon}}^{\alpha,\,\epsilon}>r,\,\tau_{\mathcal{O}_{N,\,\epsilon}}^{\epsilon}\leq r-\alpha^{\lambda}\right)\\
 & \leq\mathbb{P}_{x}\left(\sup_{t\in\left[0,\,\tau_{\mathcal{O}_{N,\,\epsilon}}^{\alpha,\,\epsilon}\land\tau_{\mathcal{O}_{N,\,\epsilon}}^{\epsilon}\land r\right]}|X^{\alpha,\,\epsilon}(t)-X^{\epsilon}(t)|\geq\alpha^{\lambda}\right)\;.
\end{align*}
Therefore, since $\lambda\in(0,\,1/2)$, it follows from Lemma~\ref{lem:difference process alpha}
that 
\[
\sup_{x\in\partial\mathcal{M}_{\epsilon}}\mathbb{P}_{x}\left(|X^{\alpha,\,\epsilon}(\tau_{\mathcal{O}_{N,\,\epsilon}}^{\epsilon})-X^{\epsilon}(\tau_{\mathcal{O}_{N,\,\epsilon}}^{\epsilon})|>\alpha^{\lambda},\,\tau_{\mathcal{O}_{N,\,\epsilon}}^{\alpha,\,\epsilon}>r,\,\tau_{\mathcal{O}_{N,\,\epsilon}}^{\epsilon}\leq r-\alpha^{\lambda}\right)\underset{\alpha\rightarrow0}{\longrightarrow}0\;,
\] 
Consider now the last term in the right-hand side of~\eqref{eq:decomposition proba outward velocity}.
One has by the strong Markov property at the stopping time $\tau_{\mathcal{O}_{N,\,\epsilon}}^{\epsilon}$,
\begin{align*}
 & \mathbb{P}_{x}\left(|p^{\alpha,\,\epsilon}(\tau_{\mathcal{O}_{N,\,\epsilon}}^{\epsilon})|>\alpha^{\lambda/4}/2,\,\mathrm{d}(X^{\alpha,\,\epsilon}(\tau_{\mathcal{O}_{N,\,\epsilon}}^{\epsilon}),\,\partial\mathcal{O}_{N,\,\epsilon})\leq\alpha^{\lambda},\,\tau_{\mathcal{O}_{N,\,\epsilon}}^{\alpha,\,\epsilon}>r,\,\tau_{\mathcal{O}_{N,\,\epsilon}}^{\epsilon}\leq r-\alpha^{\lambda}\right)\\
 & \leq\mathbb{E}_{x}\left[\mathbf{1}_{\tau_{\mathcal{O}_{N,\,\epsilon}}^{\epsilon}\leq r-\alpha^{\lambda}}\mathbf{1}_{X^{\alpha,\,\epsilon}(\tau_{\mathcal{O}_{N,\,\epsilon}}^{\epsilon})\in\mathcal{C}_\lambda^{\alpha,\,\epsilon}}\mathbb{P}_{X^{\alpha,\,\epsilon}(\tau_{\mathcal{O}_{N,\,\epsilon}}^{\epsilon})}\left(\tau_{\mathcal{O}_{N,\,\epsilon}}^{\alpha,\,\epsilon}>\alpha^{\lambda}\right)\right]
\end{align*}
where $\mathcal{C}_\lambda^{\alpha,\,\epsilon}$ is defined in~\eqref{eq:def set C lambda}. Therefore, the probability above is bounded by 
$$\sup_{x\in\mathcal{C}_\lambda^{\alpha,\,\epsilon}}\mathbb{P}_{x}\left(\tau_{\mathcal{O}_{N,\,\epsilon}}^{\alpha,\;\epsilon}>\alpha^{\lambda}\right)\,,$$
which vanishes when $\alpha\rightarrow0$ by Lemma~\ref{lem:hitting time alpha epsilon}, hence the proof.
\end{proof}

\section{Proof of Proposition~\ref{prop:control v}}

\label{sec:Proof of prop:control w}

Recall the notation introduced in~\eqref{eq:xqp}. For $\delta>0$, let us define the ball 
\[
Q_{\delta}:=\{x\in\mathbb{R}^{2d}:|q|<\delta^{3},\,|p|<\delta\}\;.
\]

Let us take in this section $\epsilon>0$ small enough such that for
all $x\in Q_{1}$, 
\[
(m+\sqrt{\epsilon}q,\,-\sqrt{\epsilon}p)\in\overline{\mathcal{S}_{\epsilon}}^{c}\;.
\]

We define $u_{\epsilon}$ as follows 
\begin{equation}
u_{\epsilon}:x\in Q_{1}\mapsto\mathbb{E}_{(m+\sqrt{\epsilon}q,\,-\sqrt{\epsilon}p)}\left[\tau_{\mathcal{S}_{\epsilon}}^{\epsilon}\right]\;.\label{eq:def u epsilon}
\end{equation}

The proof of Proposition~\ref{prop:control v} is based on the lemmas
below.
\begin{lem}
\label{lem:control v at 0} We have that 
\[
\liminf_{\epsilon\rightarrow0}u_{\epsilon}(0,\,0)>0\;.
\]
\end{lem}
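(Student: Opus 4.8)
The plan is to bound $\tau^{\epsilon}_{\mathcal{S}_{\epsilon}}$ from below by the exit time from a \emph{fixed} ball around $(m,0)$ which is disjoint from $\mathcal{S}_{\epsilon}$ for all small $\epsilon$, and then to show that this exit time exceeds some fixed $t_{0}>0$ with probability bounded away from $0$ as $\epsilon\to0$. Since $(m,0)$ is an equilibrium of the noiseless dynamics, the point is simply that escaping its basin requires a non-negligible amount of time.

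First, note that $u_{\epsilon}(0,0)=\mathbb{E}_{(m,0)}(\tau^{\epsilon}_{\mathcal{S}_{\epsilon}})$ by \eqref{eq:def u epsilon}. Set $\rho:=|m-s|/4>0$ and let $D:=\mathrm{B}(m,\rho)$ (centered at $(m,0)$, cf.\ \eqref{eq:ball2}). For $\epsilon<\rho$ the closed balls $\overline{D}$ and $\overline{\mathcal{S}_{\epsilon}}=\overline{\mathrm{B}(s,\epsilon)}$ are disjoint, so by continuity of the trajectories of \eqref{eq:sde} and the fact that $X^{\epsilon}(0)=(m,0)$ lies in the interior of $D$, the process must leave $D$ before it can hit $\mathcal{S}_{\epsilon}$; hence $\tau^{\epsilon}_{\mathcal{S}_{\epsilon}}\geq\tau^{\epsilon}_{D^{c}}$, where $\tau^{\epsilon}_{D^{c}}:=\inf\{t\geq0:X^{\epsilon}(t)\notin D\}$. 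By the Markov inequality, for any $t_{0}>0$,
\[
u_{\epsilon}(0,0)\ \geq\ \mathbb{E}_{(m,0)}\bigl(\tau^{\epsilon}_{D^{c}}\bigr)\ \geq\ t_{0}\,\mathbb{P}_{(m,0)}\bigl(\tau^{\epsilon}_{D^{c}}>t_{0}\bigr),
\]
so it suffices to exhibit $t_{0}>0$ with $\liminf_{\epsilon\to0}\mathbb{P}_{(m,0)}(\tau^{\epsilon}_{D^{c}}>t_{0})>0$.

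Next comes a pathwise Gr\"onwall estimate in the spirit of Lemma~\ref{lem:gronw}. Since $\nabla U(m)=0$ and $U\in C^{2}(\R^{d},\R)$ so that $\nabla U$ is Lipschitz on the bounded set $D$, writing $\nabla U(q^{\epsilon}(s))=\nabla U(q^{\epsilon}(s))-\nabla U(m)$ and using $\mathrm{d}q^{\epsilon}(t)=p^{\epsilon}(t)\mathrm{d}t$ together with the SDE \eqref{eq:sde}, one obtains a constant $C=C(D,\gamma)>0$ such that, for all $t\in[0,\,t_{0}\wedge\tau^{\epsilon}_{D^{c}}]$ (along which $X^{\epsilon}(s)$ stays in $D$),
\[
|X^{\epsilon}(t)-(m,0)|\ \leq\ C\int_{0}^{t}|X^{\epsilon}(s)-(m,0)|\,\mathrm{d}s\ +\ \sqrt{2\gamma\epsilon}\,\sup_{s\in[0,t_{0}]}|B(s)|,
\]
whence, by Gr\"onwall's inequality,
\[
\sup_{t\in[0,\,t_{0}\wedge\tau^{\epsilon}_{D^{c}}]}|X^{\epsilon}(t)-(m,0)|\ \leq\ \sqrt{2\gamma\epsilon}\,\mathrm{e}^{Ct_{0}}\sup_{s\in[0,t_{0}]}|B(s)|.
\]
Fix now any $t_{0}>0$ and consider the event $A_{\epsilon}:=\bigl\{\sqrt{2\gamma\epsilon}\,\mathrm{e}^{Ct_{0}}\sup_{s\in[0,t_{0}]}|B(s)|<\rho/2\bigr\}$. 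On $A_{\epsilon}$ one necessarily has $\tau^{\epsilon}_{D^{c}}>t_{0}$: otherwise $t_{0}\wedge\tau^{\epsilon}_{D^{c}}=\tau^{\epsilon}_{D^{c}}$ and $|X^{\epsilon}(\tau^{\epsilon}_{D^{c}})-(m,0)|=\rho$ since $X^{\epsilon}(\tau^{\epsilon}_{D^{c}})\in\partial D$, contradicting the last display. Since $\sup_{s\in[0,t_{0}]}|B(s)|<\infty$ almost surely, $\mathbb{P}(A_{\epsilon})\to1$ as $\epsilon\to0$, so $\mathbb{P}_{(m,0)}(\tau^{\epsilon}_{D^{c}}>t_{0})\geq\mathbb{P}(A_{\epsilon})\geq 1/2$ for $\epsilon$ small, and therefore $\liminf_{\epsilon\to0}u_{\epsilon}(0,0)\geq t_{0}/2>0$.

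There is no serious obstacle in this argument; the only point requiring care is that the Gr\"onwall comparison must be run up to the stopping time $t_{0}\wedge\tau^{\epsilon}_{D^{c}}$, so that $X^{\epsilon}$ remains in $D$ where $\nabla U$ is Lipschitz, and the conclusion $\tau^{\epsilon}_{D^{c}}>t_{0}$ must be read off at that stopping time, exactly as in Lemma~\ref{lem:gronw}. Alternatively, one may first invoke Friedman's uniqueness result as in the proof of Lemma~\ref{lem:control density} to replace the coefficients of \eqref{eq:sde} outside $D$ by globally bounded Lipschitz ones without altering the law of $(X^{\epsilon}(t))_{0\le t\le\tau^{\epsilon}_{D^{c}}}$, after which the estimate above holds globally on $[0,t_{0}]$ and the same conclusion follows.
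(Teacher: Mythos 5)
Your proof is correct. The paper instead argues by contradiction: assuming $\liminf_{\epsilon\to0}u_{\epsilon}(0,0)=0$, it extracts a subsequence along which $\tau^{\epsilon_n}_{\mathcal{S}_{\epsilon_n}}\to0$ in $\mathrm{L}^1$ and hence almost surely along a further subsequence, and then derives a contradiction from a pathwise estimate of the form $\theta\le C\,\tau^{\epsilon_n}_{K^c}+\sqrt{2\gamma\epsilon_n}\sup_{r\le\tau^{\epsilon_n}_{K^c}}|B(r)|$ for the exit time from a fixed ball $K$ of radius $\theta$, where $C$ just bounds the drift uniformly on the compact $K$. You instead give a direct argument: $u_{\epsilon}(0,0)\ge t_0\,\mathbb{P}_{(m,0)}(\tau^{\epsilon}_{D^c}>t_0)$ by the Markov inequality, and then show this probability tends to $1$ via a Gr\"onwall bound that exploits $\nabla U(m)=0$ to get a genuinely linear comparison. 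Both hinge on the same underlying fact — the noise is $O(\sqrt{\epsilon})$ so escaping a fixed neighborhood of the equilibrium $(m,0)$ cannot take vanishingly little time — but yours is somewhat tidier, sidestepping the subsequence extraction and the $\mathrm{L}^1\Rightarrow$ a.s. argument, at the cost of invoking that $m$ is a critical point of $U$, which the paper's cruder drift bound does not need.
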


\begin{lem}[H\"older regularity]
\label{lem:holder regularity of v} There exist constants $c_{1},\,c_{2},\,\lambda>0$ and $R'\in(0,1)$
independent of $\epsilon$ such that for all $\epsilon$ small enough,
for all $x,\,x'\in Q_{R'}$, 
\[
|u_{\epsilon}(x)-u_{\epsilon}(x')|\leq(c_{1}+c_{2}u_{\epsilon}(x))|x'-x|^{\lambda}\;.
\]
\end{lem}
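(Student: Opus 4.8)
The plan is to use the diffusive rescaling $\Phi_\epsilon(q,p)=(m+\sqrt{\epsilon}\,q,\,-\sqrt{\epsilon}\,p)$ already encoded in the definition of $u_\epsilon$ to turn the degenerate, $\epsilon$-singular equation for the expected hitting time into a \emph{uniformly} hypoelliptic kinetic Fokker--Planck equation, and then to invoke the corresponding De Giorgi--Nash--Moser regularity and Harnack theory. Set $w_\epsilon(y):=\mathbb{E}_y(\tau_{\mathcal{S}_\epsilon}^\epsilon)$; it is finite by the positive recurrence of Proposition~\ref{prop:pos_rec}, continuous away from $\mathcal{S}_\epsilon$, and, by Dynkin's formula together with hypoelliptic interior regularity, it solves $\mathcal{L}_\epsilon w_\epsilon=-1$ in $\overline{\mathcal{S}_\epsilon}^c$ (in the weak/viscosity sense, which is all that is needed below). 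Since $u_\epsilon=w_\epsilon\circ\Phi_\epsilon$ and $\nabla U(m)=0$, a change of variables and a first-order Taylor expansion of $\nabla U$ at $m$ give, on $Q_1$,
\[
\mathcal{A}_\epsilon u_\epsilon=-1,\qquad \mathcal{A}_\epsilon:=-\langle p,\nabla_q\rangle+\langle \mathbb{H}_U^m q+\rho_\epsilon(q),\,\nabla_p\rangle-\gamma\langle p,\nabla_p\rangle+\gamma\Delta_p,
\]
where $\sup_{Q_1}|\rho_\epsilon|\to0$ as $\epsilon\to0$ (using only $U\in C^2$). The crucial gain is that the second-order term $\gamma\Delta_p$ is now non-degenerate and $\epsilon$-independent, so $\{\mathcal{A}_\epsilon\}$ is a family of kinetic Fokker--Planck operators with locally uniform structure constants, and the cylinders $Q_\delta=\{|q|<\delta^3,\,|p|<\delta\}$ are exactly those adapted to its Galilean scaling.

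First I would quote the interior Hölder estimate of the De Giorgi--Nash--Moser theory for kinetic Fokker--Planck equations (Golse--Imbert--Mouhot--Vasseur; Wang--Zhang; Imbert--Silvestre): applied to the solution $u_\epsilon$ of $\mathcal{A}_\epsilon u_\epsilon=-1$ on $Q_1$ it yields $\lambda\in(0,1)$, $R''\in(0,1)$ and $C>0$, all independent of $\epsilon$ thanks to the uniform structure constants, such that $|u_\epsilon(x)-u_\epsilon(x')|\le C(\|u_\epsilon\|_{L^\infty(Q_{2R''})}+1)\,|x-x'|^\lambda$ for all $x,x'\in Q_{R''}$. If one wishes to avoid quoting this theory, the same estimate can be produced probabilistically: couple two copies of the rescaled process from $x,x'$ through the same Brownian motions, use a Grönwall bound as in Lemma~\ref{lem:gr2} together with the Gaussian density bound of~\cite{Menozzi} and a minorization/Doeblin argument to show the two copies coincide before leaving $Q_{2R''}$ with probability $1-O(|x-x'|^\lambda)$, and conclude by the strong Markov property, the $L^\infty$ factor being the cost of the exceptional event.

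The remaining step is to replace $\|u_\epsilon\|_{L^\infty(Q_{2R''})}$ by an affine expression in $u_\epsilon(x)$, which is what forces the stated form of the bound. Since $u_\epsilon\ge0$, I would first subtract off the inhomogeneity: let $\psi_\epsilon$ solve $\mathcal{A}_\epsilon\psi_\epsilon=-1$ on $Q_{1/2}$ with zero boundary data, so $\|\psi_\epsilon\|_{L^\infty(Q_{1/2})}\le K$ uniformly in $\epsilon$ by the maximum principle; then $v_\epsilon:=u_\epsilon-\psi_\epsilon+K\ge u_\epsilon\ge0$ solves the homogeneous equation $\mathcal{A}_\epsilon v_\epsilon=0$ on $Q_{1/2}$. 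I would then apply the Harnack inequality for non-negative solutions of the kinetic Fokker--Planck equation, chained along the characteristic curves of the drift $(-p,\mathbb{H}_U^m q-\gamma p)$: these solve a damped linear oscillator $\ddot q+\gamma\dot q+\mathbb{H}_U^m q=0$, so from a small ball around any point of $Q_{1/2}$ the flow sweeps into a neighbourhood of any other point of a slightly smaller cylinder in $O(1)$ time, which (adding a dummy time variable if one prefers to use the parabolic version of the Harnack chain) gives $R'\in(0,R'']$ and a uniform $C'$ with $\sup_{Q_{2R''}}v_\epsilon\le C'\inf_{Q_{R'}}v_\epsilon$. Hence, for every $x\in Q_{R'}$, $\|u_\epsilon\|_{L^\infty(Q_{2R''})}\le \sup_{Q_{2R''}}v_\epsilon+K\le C'v_\epsilon(x)+K\le C'u_\epsilon(x)+C''$.

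Combining the two estimates gives $|u_\epsilon(x)-u_\epsilon(x')|\le C(C'u_\epsilon(x)+C''+1)|x-x'|^\lambda=(c_1+c_2u_\epsilon(x))|x-x'|^\lambda$ for all $x,x'\in Q_{R'}$, as claimed. I expect the main obstacle to be the Harnack-chaining step: one must ensure that a genuine two-sided interior Harnack inequality holds for the stationary degenerate operators $\mathcal{A}_\epsilon$ with a constant independent of $\epsilon$, and that the drift characteristics indeed connect all interior points — it is precisely this that produces the asymmetric $(c_1+c_2u_\epsilon(x))$ factor rather than a plain Hölder seminorm bound. Checking the $\epsilon$-uniformity of all constants (automatic once the rescaled operator is written down) and the low-regularity issues ($U\in C^2$ only, handled by working with weak solutions throughout) are the remaining points requiring care.
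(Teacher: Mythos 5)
Your overall strategy---rescale by $\Phi_\epsilon(q,p)=(m+\sqrt{\epsilon}q,-\sqrt{\epsilon}p)$ so that the second-order part becomes the $\epsilon$-independent $\gamma\Delta_p$, then apply interior H\"older and Harnack estimates for kinetic Fokker--Planck operators with $\epsilon$-uniform structure constants---is exactly the route the paper takes (the paper cites~\cite{Har}, Theorems~3 and~4). The paper also arrives at the same conclusion by observing that, since $m$ is a critical point, the rescaled drift $\epsilon^{-1/2}\nabla U(m+\sqrt{\epsilon}q)$ is bounded on $Q_1$ uniformly in $\epsilon$. So in broad outline you have reproduced the proof.

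Where you are glossing over a genuine difficulty is the opening step: you assert that $w_\epsilon=\mathbb{E}_\cdot(\tau^\epsilon_{\mathcal{S}_\epsilon})$ "by Dynkin's formula together with hypoelliptic interior regularity... solves $\mathcal{L}_\epsilon w_\epsilon=-1$ in the weak/viscosity sense, which is all that is needed below." This is precisely the point the paper refuses to take for granted. The regularity results you want to invoke (Golse--Imbert--Mouhot--Vasseur, Wang--Zhang, etc.) apply to weak (distributional, $\mathrm{L}^2_{t,x}H^1_v$-type) solutions; showing that $w_\epsilon$ lands in that class, with the hypoelliptic generator and unbounded $\nabla U$, is not immediate, and "viscosity solution" in particular is not a setting in which the kinetic De Giorgi--Nash--Moser theory is stated. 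The paper's actual proof avoids this by a three-parameter regularization: add $\alpha$-noise in $q$ to make the generator elliptic, mollify $\nabla U$ to $F_n$, truncate time and space at $M,N$, so the approximant $v_{M,\alpha,n,N,\epsilon}$ is a \emph{classical} solution (via Friedman); rescale to get $u_{M,\alpha,n,N,\epsilon}$; pass $\alpha\to0$ to obtain a genuine weak solution of a kinetic Fokker--Planck equation with smooth coefficients; apply the H\"older and Harnack estimates of~\cite{Har} at that level, checking that the constant $A_{n,\epsilon}$ controlling $|\gamma p+\epsilon^{-1/2}F_n(m+\sqrt{\epsilon}q)|$ is uniform once $n\geq 1/\sqrt\epsilon$; and only then pass $n,M,N\to\infty$ using Lemma~\ref{lem:convergence expectation truncated} to recover $u_\epsilon$ with the estimates intact. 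Your sketch needs an analogous justification to be rigorous; as written it inherits exactly the ill-posedness problem that motivates the paper's indirection.

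Two smaller remarks. First, your Harnack step is over-engineered: you subtract a particular solution $\psi_\epsilon$ and run a Harnack chain along the damped-oscillator characteristics to get a two-sided bound, but the Harnack inequality of~\cite{Har} (Theorem~4) already handles the inhomogeneous equation directly and yields $\sup_{Q_R}u\leq C(\inf_{Q_R}u+1)$ with the additive $+1$ built in, which combined with the $L^2$-form H\"older estimate gives $(c_1+c_2 u_\epsilon(x))$ immediately. Second, the probabilistic alternative you outline (coupling plus Doeblin/minorization via the Gaussian bound of~\cite{Menozzi}) is a plausible independent route, but it is only a sketch and is not what the paper does; if pursued it would also have to produce $\epsilon$-uniform constants, which requires redoing the minorization under the $\sqrt\epsilon$-rescaling.
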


These lemmas are proven in the rest of this section. Using these lemmas
let us now prove Proposition~\ref{prop:control v}.
\begin{proof}
Let $\beta>1/2$ and let $\epsilon$ be small enough such that $\epsilon^{\beta-1/2}\leq R'^{3}$
where $R'$ is defined in Lemma~\ref{lem:holder regularity of v}.
Therefore, by Lemma~\ref{lem:holder regularity of v}, for all $|q|,\,|p|\leq\epsilon^{\beta-1/2}$,
\[
|u_{\epsilon}(x)-u_{\epsilon}(0,\,0)|\leq(c_{1}+c_{2}u_{\epsilon}(0,\,0))\epsilon^{\lambda(\beta-1/2)}
\]
for some constants $c_{1},\,c_{2},\,\lambda>0$ independent of $\epsilon$.
By Lemma~\ref{lem:control v at 0}, we deduce that there exists $c_{3}>0$
independent of $\epsilon$ such that for $\epsilon$ small enough,
\[
|u_{\epsilon}(x)-u_{\epsilon}(0,\,0)|\leq c_{3}u_{\epsilon}(0,\,0)\epsilon^{\lambda(\beta-1/2)}
\]

Therefore, for all $|q|,\,|p|\leq\epsilon^{\beta-1/2}$, 
\[
u_{\epsilon}(q,\,p)=u_{\epsilon}(0,\,0)(1+\underset{\epsilon\rightarrow0}{O}(\epsilon^{\delta}))\;,
\]
where $\delta=\lambda(\beta-1/2)>0$. The proof of Proposition~\ref{prop:control v}
then follows from the expression of $u_{\epsilon}$ in~\eqref{eq:def u epsilon}. 
\end{proof}
Let us now prove Lemmas~\ref{lem:control v at 0} and~\ref{lem:holder regularity of v}.
\begin{proof}[Proof of Lemma~\ref{lem:control v at 0}]
Since $u_{\epsilon}\geq0$, let us assume that 
\[
\liminf_{\epsilon\rightarrow0}u_{\epsilon}(0,\,0)=0\;.
\]
Then one can build a sequence $(\epsilon_{n})_{n\geq0}$ such that
$\epsilon_{n}\underset{n\rightarrow\infty}{\longrightarrow}0$ and
$u_{\epsilon_{n}}(0,\,0)\underset{n\rightarrow\infty}{\longrightarrow}0$.
By definition of $u_{\epsilon_{n}}$, this ensures that under $\mathbb{P}_{(m,\,0)}$,
\[
\tau_{\mathcal{S}_{\epsilon_{n}}}^{\epsilon_{n}}\overset{\mathrm{L}^{1}}{\underset{n\rightarrow\infty}{\longrightarrow}}0\;.
\]
Up to taking a subsequence of $(\epsilon_{n})_{n\geq0}$ one can assume
that $\mathbb{P}_{(m,\,0)}$ almost-surely, 
\[
\tau_{\mathcal{S}_{\epsilon_{n}}}^{\epsilon_{n}}\underset{n\rightarrow\infty}{\longrightarrow}0\;.
\]
Let $K$ be a ball of radius $\theta>0$, independent of $n$, centered
around $(m,\,0)$ and let $\tau_{K^{c}}^{\epsilon_{n}}$ be the first
exit time of~\eqref{eq:sde} from $K$ then there exists a constant
$C>0$ independent of $n\geq0$ such that, $\mathbb{P}_{(m,\,0)}$
almost-surely, 
\[
0<\theta=\left|\left(q^{\epsilon_{n}}(\tau_{K^{c}}^{\epsilon_{n}}),\,p^{\epsilon_{n}}(\tau_{K^{c}}^{\epsilon_{n}})\right)-(m,\,0)\right|\leq C\tau_{K^{c}}^{\epsilon_{n}}+\sqrt{2\gamma\epsilon_{n}}\sup_{r\in[0,\,\tau_{K^{c}}^{\epsilon_{n}}]}|B(r)|\;.
\]
Furthermore, up to taking $\theta$ small enough one has by the continuity
of the trajectories of~\eqref{eq:sde} that almost-surely $\tau_{K^{c}}^{\epsilon_{n}}<\tau_{\mathcal{S}_{\epsilon_{n}}}^{\epsilon_{n}}$.
Reinjecting into the inequality above ensures that 
\[
0<\theta\leq C\tau_{\mathcal{S}_{\epsilon_{n}}}^{\epsilon_{n}}+\sqrt{2\gamma\epsilon_{n}}\sup_{r\in[0,\,\tau_{\mathcal{S}_{\epsilon_{n}}}^{\epsilon_{n}}]}|B_{r}|\underset{n\rightarrow\infty}{\longrightarrow}0\;,
\]
since $\tau_{\mathcal{S}_{\epsilon_{n}}}^{\epsilon_{n}}\underset{n\rightarrow\infty}{\longrightarrow}0$,
which is a clear contradiction. Hence the proof. 
\end{proof}
In order to prove Lemma~\ref{lem:holder regularity of v} we shall
first consider the processes~\eqref{eq:sde} and~\eqref{eq:sde_pur}
when the force $-\nabla U$ is smoothened. Namely, for $n\ge1$, for
$q\in\mathbb{R}^{d}$, let us define 
\begin{equation}
F_{n}(q)=-\int_{\mathbb{R}^{d}}\nabla U(q-q')\phi_{n}(q')\,\mathrm{d}q'\;,\label{eq:def F_n sigma_n}
\end{equation}
where $\phi_{n}(q')=n^{d}\phi(nq')$ with $\phi$ a smooth compactly
supported function on $\mathbb{R}^{d}$ integrating to $1$. Then,
$F_{n}\in C^{\infty}(\mathbb{R}^{d},\,\mathbb{R}^d)$ and for all compact
set $K\subset\mathbb{R}^{d}$, 
\begin{equation}
\sup_{q\in K}|F_{n}(q)+\nabla U(q)|\underset{n\rightarrow\infty}{\longrightarrow}0\;,\label{eq:convergence compact F_n}
\end{equation}
since $U\in C^{2}(\mathbb{R}^{d},\,\mathbb{R})$.

Let us also denote by $(X^{\alpha,\,n,\,\epsilon}(t)=(q^{\alpha,\,n,\,\epsilon}(t),\,p^{\alpha,\,n,\,\epsilon}(t)))_{t\geq0}$
the process satisfying~\eqref{eq:sde_pur} where $-\nabla U$ is
replaced by the function $F_{n}$. Similarly, denote by $(X^{n,\,\epsilon}(t)=(q^{n,\,\epsilon}(t),\,p^{n,\,\epsilon}(t)))_{t\geq0}$
the process satisfying~\eqref{eq:sde} where $-\nabla U$ is replaced
by the function $F_{n}$. Additionally, for a given set $\mathcal{C}$,
we denote by $\tau_{\mathcal{C}}^{\alpha,\,n,\,\epsilon}$ (resp.
$\tau_{\mathcal{C}}^{n,\,\epsilon}$) the first hitting time of $\mathcal{C}$
for the process $(X^{\alpha,\,n,\,\epsilon}(t))_{t\geq0}$ (resp.
$(X^{n,\,\epsilon}(t))_{t\geq0}$).

In order to prove Lemma~\ref{lem:holder regularity of v} we shall
require the following lemma which proof is provided below.
\begin{lem}
\label{lem:convergence expectation truncated} For all $x\in\mathbb{R}^{2d},$
\[
\lim_{N,\,M\rightarrow\infty}\lim_{n\rightarrow\infty}\lim_{\alpha\rightarrow0}\mathbb{E}_{x}\left[\tau_{\mathcal{O}_{N,\,\epsilon}}^{\alpha,\,n,\,\epsilon}\land M\right]=\mathbb{E}_{x}\left[\tau_{\mathcal{S}_{\epsilon}}^{\epsilon}\right]\;,
\]
where $\mathcal{O}_{N,\,\epsilon}$ is defined in~\eqref{eq:definition O_N}. 
\end{lem}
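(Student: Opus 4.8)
Here is the plan. Since the four parameters appear in the prescribed order $\alpha\to0$, then $n\to\infty$, then $N,M\to\infty$, the proof is carried out by a successive passage to the limit, at each stage by a coupling argument of the same type as in Lemmas~\ref{lem:gronw}--\ref{lem:conv h^*} and~\ref{lem:convergence proba until N}. The key point at the first two stages will be an \emph{a.s.} convergence of the relevant hitting times; since $t\mapsto t\wedge M$ is continuous and bounded by $M$, the bounded convergence theorem then transfers this to the expectations, and the final passage $N,M\to\infty$ is a monotone convergence argument.

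\medskip\noindent\textbf{Step 1 ($\alpha\to0$, with $n,N,M$ fixed).} I would couple $(X^{n,\epsilon}(t))_{t\ge0}$ and $(X^{\alpha,n,\epsilon}(t))_{t\ge0}$ through the same Brownian motions $B,\widetilde B$. Since $F_n$ is smooth, hence Lipschitz on the bounded set $\mathbb{R}^{2d}\setminus\mathcal{O}_{N,\epsilon}$, the Grönwall estimate of Lemma~\ref{lem:gronw} applies verbatim and gives $\sup_{t\le\tau_{\mathcal{O}_{N+1,\epsilon}}^{\alpha,n,\epsilon}\wedge\tau_{\mathcal{O}_{N+1,\epsilon}}^{n,\epsilon}}|X^{\alpha,n,\epsilon}(t)-X^{n,\epsilon}(t)|\to0$ $\mathbb{P}_x$-a.s. as $\alpha\to0$. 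To turn this into $\tau_{\mathcal{O}_{N,\epsilon}}^{\alpha,n,\epsilon}\to\tau_{\mathcal{O}_{N,\epsilon}}^{n,\epsilon}$ a.s., I argue exactly as for~\eqref{eq:conv1}: the lower bound $\liminf_{\alpha\to0}\tau_{\mathcal{O}_{N,\epsilon}}^{\alpha,n,\epsilon}\ge\tau_{\mathcal{O}_{N,\epsilon}}^{n,\epsilon}$ follows from the continuity of the path $X^{n,\epsilon}$ on any interval $[0,T]$ with $T<\tau_{\mathcal{O}_{N,\epsilon}}^{n,\epsilon}$, while for the upper bound I use that the smoothed process $X^{n,\epsilon}$ reaches $\partial\mathcal{O}_{N,\epsilon}$ with nonzero velocity and then immediately visits the interior of $\mathcal{O}_{N,\epsilon}$ — the analogues of~\cite[Lemma 3.1]{Hill} and Lemma~\ref{lem:small time asymptotics} for the smoothed dynamics, whose proofs only use the SDE structure and $p\neq0$ — so that for small $\alpha$ the coupled process $X^{\alpha,n,\epsilon}$ also enters $\mathcal{O}_{N,\epsilon}$ slightly after $\tau_{\mathcal{O}_{N,\epsilon}}^{n,\epsilon}$. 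Bounded convergence then yields $\lim_{\alpha\to0}\mathbb{E}_x[\tau_{\mathcal{O}_{N,\epsilon}}^{\alpha,n,\epsilon}\wedge M]=\mathbb{E}_x[\tau_{\mathcal{O}_{N,\epsilon}}^{n,\epsilon}\wedge M]$.

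\medskip\noindent\textbf{Step 2 ($n\to\infty$) and Step 3 ($N,M\to\infty$).} For Step 2, by~\eqref{eq:convergence compact F_n} the force $F_n$ converges to $-\nabla U$ uniformly on the bounded set $\mathbb{R}^{2d}\setminus\mathcal{O}_{N,\epsilon}$, so coupling $X^{n,\epsilon}$ with $X^{\epsilon}$ through the same $B$ and running Grönwall as in Lemma~\ref{lem:gr2} gives $\sup_{t\le\tau_{\mathcal{O}_{N+1,\epsilon}}^{n,\epsilon}\wedge\tau_{\mathcal{O}_{N+1,\epsilon}}^{\epsilon}}|X^{n,\epsilon}(t)-X^{\epsilon}(t)|\to0$ a.s.; the same entry-into-the-interior argument — this time for the true process~\eqref{eq:sde}, where~\cite[Lemma 3.1]{Hill} and Lemma~\ref{lem:small time asymptotics} are available directly — gives $\tau_{\mathcal{O}_{N,\epsilon}}^{n,\epsilon}\to\tau_{\mathcal{O}_{N,\epsilon}}^{\epsilon}$ a.s., whence $\lim_{n\to\infty}\mathbb{E}_x[\tau_{\mathcal{O}_{N,\epsilon}}^{n,\epsilon}\wedge M]=\mathbb{E}_x[\tau_{\mathcal{O}_{N,\epsilon}}^{\epsilon}\wedge M]$ by bounded convergence. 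For Step 3, I would use, as in Lemma~\ref{lem:convergence proba until N}, that $\tau_{\mathcal{O}_{N,\epsilon}}^{\epsilon}=\tau_{\mathcal{S}_{\epsilon}}^{\epsilon}\wedge\tau_{\mathrm{B}(0,N)^c}^{\epsilon}$, so that $\tau_{\mathcal{O}_{N,\epsilon}}^{\epsilon}\wedge M=\tau_{\mathcal{S}_{\epsilon}}^{\epsilon}\wedge\tau_{\mathrm{B}(0,N)^c}^{\epsilon}\wedge M$ is non-decreasing in both $N$ and $M$; by the positive recurrence of~\eqref{eq:sde} (Proposition~\ref{prop:pos_rec}) one has $\tau_{\mathrm{B}(0,N)^c}^{\epsilon}\uparrow\infty$ and $\tau_{\mathcal{S}_{\epsilon}}^{\epsilon}<\infty$ a.s., so the pointwise monotone limit is $\tau_{\mathcal{S}_{\epsilon}}^{\epsilon}$, and the monotone convergence theorem gives $\lim_{N,M\to\infty}\mathbb{E}_x[\tau_{\mathcal{O}_{N,\epsilon}}^{\epsilon}\wedge M]=\mathbb{E}_x[\tau_{\mathcal{S}_{\epsilon}}^{\epsilon}]$. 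Chaining the three steps proves the lemma.

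\medskip I expect the main obstacle to be Step~1, specifically justifying that $X^{n,\epsilon}$ penetrates the interior of $\mathcal{O}_{N,\epsilon}$ right after touching its boundary: this is what lets one deduce $\limsup_{\alpha\to0}\tau_{\mathcal{O}_{N,\epsilon}}^{\alpha,n,\epsilon}\le\tau_{\mathcal{O}_{N,\epsilon}}^{n,\epsilon}$ from the uniform-in-time closeness of the coupled paths, and it must be checked that the boundary-velocity estimate of~\cite[Lemma 3.1]{Hill} together with Lemma~\ref{lem:small time asymptotics} survive the replacement of $-\nabla U$ by the mollified force $F_n$ (they do, since both only rely on the non-degeneracy of the $p$-noise and $C^0$-regularity of the drift, both of which $F_n$ shares). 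Everything else — the Grönwall comparisons, the fact that $\mathbb{P}_x(\tau_{\mathcal{O}_{N,\epsilon}}^{\cdot}=M)=0$ is not even needed because $t\mapsto t\wedge M$ is continuous, and the two convergence-theorem applications — is routine.
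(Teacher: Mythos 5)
Your proof is correct and follows essentially the same path as the paper: a Grönwall coupling on the bounded domain (using Lipschitzness of $F_n$ and the uniform convergence~\eqref{eq:convergence compact F_n}) combined with the entry-into-the-interior argument (the analogue of~\cite[Lemma 3.1]{Hill} and Lemma~\ref{lem:small time asymptotics} for the regularized dynamics) to pass to the limit in $\alpha$ and then $n$, followed by monotone convergence in $N,M$. The only cosmetic difference is that you argue via almost-sure convergence of $\tau\wedge M$ and bounded convergence, whereas the paper writes $\mathbb{E}_x[\tau\wedge M]=\int_0^M\mathbb{P}_x(\tau>r)\,\mathrm{d}r$ and deduces the same limit from Lemma~\ref{lem:convergence proba until N} by dominated convergence — the two routes are equivalent.
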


\begin{proof}
For all $\alpha,\,n,\,N,\,M>0$, by the Fubini permutation, 
\[
\mathbb{E}_{x}\left[\tau_{\mathcal{O}_{N,\,\epsilon}}^{\alpha,\,n,\,\epsilon}\land M\right]=\int_{0}^{M}\mathbb{P}_{x}(\tau_{\mathcal{O}_{N,\,\epsilon}}^{\alpha,\,n,\,\epsilon}>r)\mathrm{d}r\;.
\]
Therefore, the proof of~\eqref{eq:first limit alpha} in Lemma~\ref{lem:convergence proba until N}
which remains identical when replacing $-\nabla U$ by $F_{n}$, guarantees
by the dominated convergence theorem that 
\[
\mathbb{E}_{x}\left[\tau_{\mathcal{O}_{N,\,\epsilon}}^{\alpha,\,n,\,\epsilon}\land M\right]\underset{\alpha\rightarrow0}{\longrightarrow}\int_{0}^{M}\mathbb{P}_{x}(\tau_{\mathcal{O}_{N,\,\epsilon}}^{n,\,\epsilon}>r)\mathrm{d}r\;.
\]
Let us now look at the convergence of $\mathbb{P}_{x}(\tau_{\mathcal{O}_{N,\,\epsilon}}^{n,\,\epsilon}>r)$
when $n\rightarrow\infty$. In order to do that let us apply Gr\"onwall's
lemma which provides the existence of a constant $C>0$ depending
on $N$ but independent of $n$ such that, almost surely, for all
$t\in\left[0,\,\tau_{\mathcal{O}_{N,\,\epsilon}}^{n,\,\epsilon}\land\tau_{\mathcal{O}_{N,\,\epsilon}}^{\epsilon}\land M\right]$,
\[
\left|X^{n,\,\epsilon}(t)-X^{\epsilon}(t)\right|\leq M\sup_{(q,p)\in\mathcal{O}_{N,\,\epsilon}^{c}}|F_{n}(q)+\nabla U(q)|\mathrm{e}^{CM}\;,
\]
which goes to zero when $n\rightarrow\infty$ by~\eqref{eq:convergence compact F_n}
since $\mathcal{O}_{N,\,\epsilon}^{c}$ is a compact set of $\mathbb{R}^{2d}$.
Consequently, following the proof of~\eqref{eq:first limit alpha}
in Lemma~\ref{lem:convergence proba until N}, one also obtains the
following convergence 
\[
\mathbb{P}_{x}(\tau_{\mathcal{O}_{N,\,\epsilon}}^{n,\,\epsilon}>r)\underset{n\rightarrow\infty}{\longrightarrow}\mathbb{P}_{x}(\tau_{\mathcal{O}_{N,\,\epsilon}}^{\epsilon}>r)\;.
\]
It remains to consider the limit of $\mathbb{P}_{x}(\tau_{\mathcal{O}_{N,\,\epsilon}}^{\epsilon}>r)$
when $N\rightarrow\infty$, which follows immediately from the application
of the monotone convergence theorem since $\cup_{N\geq1}\mathcal{O}_{N,\,\epsilon}=\mathcal{S}_{\epsilon}$.
Therefore, 
\[
\int_{0}^{M}\mathbb{P}_{x}(\tau_{\mathcal{O}_{N,\,\epsilon}}^{\epsilon}>r)\mathrm{d}r\underset{N,\,M\rightarrow\infty}{\longrightarrow}\int_{0}^{\infty}\mathbb{P}_{x}(\tau_{\mathcal{S}_{\epsilon}}^{\epsilon}>r)\mathrm{d}r=\mathbb{E}_{x}\left[\tau_{\mathcal{S}_{\epsilon}}^{\epsilon}\right]\;,
\]
which concludes the proof. 
\end{proof}
Let us conclude this section with the proof of Lemma~\ref{lem:holder regularity of v}.
\begin{proof}[Proof of Lemma~\ref{lem:holder regularity of v}]
For all $x\in\R^{2d}\setminus\overline{\mathcal{O}_{N,\,\epsilon}}$,
let 
\[
v_{M,\,\alpha,\,n,\,N,\,\epsilon}(x):=\mathbb{E}_{x}\left[\tau_{\mathcal{O}_{N,\,\epsilon}}^{\alpha,\,n,\,\epsilon}\land M\right]\;.
\]
Following the same computation done in \textbf{Step 4} of the proof
of Proposition~\ref{prop:average time perturbed Langevin}, one easily
shows that 
\[
v_{M,\,\alpha,\,n,\,N,\,\epsilon}(x)=\mathbb{E}_{x}\left[\int_{0}^{\tau_{\mathcal{O}_{N,\,\epsilon}}^{\alpha,\,n,\,\epsilon}}\mathbb{P}_{X^{\alpha,\,n,\,\epsilon}(r)}(\tau_{\mathcal{O}_{N,\,\epsilon}}^{\alpha,\,n,\,\epsilon}\leq M)\mathrm{d}r\right]\;.
\]
Therefore, it follows from~\cite[Theorem 6.5.1]{Friedman} that $v_{M,\,\alpha,\,n,\,N,\,\epsilon}$
is a classical solution of the following equation, for all $x\in\R^{2d}\setminus\overline{\mathcal{O}_{N,\,\epsilon}}$,
\begin{align}
 & \langle p,\,\nabla_{q}v_{M,\,\alpha,\,n,\,N,\,\epsilon}(x)\rangle+\langle F_{n}(q)-\gamma p,\,\nabla_{p}v_{M,\,\alpha,\,n,\,N,\,\epsilon}(x)\rangle\nonumber \\
 & +\gamma\epsilon\Delta_{p}v_{M,\,\alpha,\,n,\,N,\,\epsilon}(x)+\alpha\epsilon\Delta_{q}v_{M,\,\alpha,\,n,\,N,\,\epsilon}(x)=-\mathbb{P}_{x}(\tau_{\mathcal{O}_{N,\,\epsilon}}^{\alpha,\,n,\,\epsilon}\leq M)\;.\label{eq:equation v_epsilon n M}
\end{align}
Let us now define for all $x=(q,p)\in Q_{1}$, 
\[
u_{M,\,\alpha,\,n,\,N,\,\epsilon}(x):=v_{M,\,\alpha,\,n,\,N,\,\epsilon}(m+\sqrt{\epsilon}q,\,-\sqrt{\epsilon}p)\;.
\]
It follows from~\eqref{eq:equation v_epsilon n M} that $u_{M,\,\alpha,\,n,\,N,\,\epsilon}$
satisfies the following equation 
\begin{align*}
 & -\langle p,\,\nabla_{q}u_{M,\,\alpha,\,n,\,N,\,\epsilon}(x)\rangle-\frac{1}{\sqrt{\epsilon}}\langle F_{n}(m+\sqrt{\epsilon}q),\,\nabla_{p}u_{M,\,\alpha,\,n,\,N,\,\epsilon}(x)\rangle-\gamma\langle p,\,\nabla_{p}u_{M,\,\alpha,\,n,\,N,\,\epsilon}(x)\rangle\\
 & +\gamma\Delta_{p}u_{M,\,\alpha,\,n,\,N,\,\epsilon}(x)+\alpha\Delta_{q}u_{M,\,\alpha,\,n,\,N,\,\epsilon}(x)=-\mathbb{P}_{(m+\sqrt{\epsilon}q,\,-\sqrt{\epsilon}p)}(\tau_{\mathcal{O}_{N,\,\epsilon}}^{\alpha,\,n,\,\epsilon}\leq M)\;.
\end{align*}

For all $x\in Q_{1}$, let us define its limit 
\[
u_{M,\,n,\,N,\,\epsilon}(x):=\lim_{\alpha\rightarrow0}u_{M,\,\alpha,\,n,\,N,\,\epsilon}(x)\;,
\]
which exists by Lemma~\ref{lem:convergence expectation truncated}.
Then, taking the limit of~\eqref{eq:equation v_epsilon n M} when
$\alpha\rightarrow0$ and using~\eqref{eq:first limit alpha} in
Lemma~\ref{lem:convergence proba until N}, one has that $u_{M,\,n,\,N,\,\epsilon}$
is a weak solution in $Q_{1}$, in the sense of distributions, of
\begin{align*}
 & -\langle p,\,\nabla_{q}u_{M,\,n,\,N,\,\epsilon}(x)\rangle-\frac{1}{\sqrt{\epsilon}}\langle F_{n}(m+\sqrt{\epsilon}q),\,\nabla_{p}u_{M,\,n,\,N,\,\epsilon}(x)\rangle-\gamma\langle p,\,\nabla_{p}u_{M,\,n,\,N,\,\epsilon}(x)\rangle\\
 & +\gamma\Delta_{p}u_{M,\,n,\,N,\,\epsilon}(x)=-\mathbb{P}_{(m+\sqrt{\epsilon}q,\,-\sqrt{\epsilon}p)}(\tau_{\mathcal{O}_{N,\,\epsilon}}^{n,\,\epsilon}\leq M)\;.
\end{align*}

Let 
\begin{equation}
A_{n,\,\epsilon}\geq\sup_{x\in Q_{1}}\left|\gamma p+\frac{1}{\sqrt{\epsilon}}F_{n}(m+\sqrt{\epsilon}q)\right|\;.\label{eq:def A_n epsilon}
\end{equation}

Applying the Harnack inequality in~\cite[Theorem 4]{Har} to the
function $u_{M,\,n,\,N,\,\epsilon}$ in $Q_{1}$, one has the existence
of constants $R\in(0,\,1)$, $C>0$ only depending on $A_{n,\,\epsilon}$
and $\gamma$ such that 
\begin{equation}
\sup_{x\in Q_{R}}u_{M,\,n,\,N,\,\epsilon}(x)\leq C\left(\inf_{x\in Q_{R}}u_{M,\,n,\,N,\,\epsilon}(x)+1\right)\;.\label{eq:harnack}
\end{equation}

Additionally, it follows from~\cite[Theorem 3]{Har} that there exist
constants $\lambda>0$ and $C'>0$ only depending on the dimension
$d$ and on $\gamma,\,A_{n,\,\epsilon}$ and $R$ such that for all
$x,\,x'\in Q_{R/2}$, 
\begin{equation}
|u_{M,\,n,\,N,\,\epsilon}(x')-u_{M,\,n,\,N,\,\epsilon}(x)|\leq C'\left(1+\sqrt{\int_{Q_{R}}|u_{M,\,n,\,N,\,\epsilon}(x)|^{2}\mathrm{d}x}\right)|x'-x|^{\lambda}\;.\label{eq:holder}
\end{equation}

We would like to take the limit when $n,\,M,\,N\rightarrow\infty$
in~\eqref{eq:harnack} and~\eqref{eq:holder} using Lemma~\ref{lem:convergence expectation truncated}
and consider the behaviour when $\epsilon\rightarrow0$. In this regard
we want to show that the constants $R,\,C,\,\lambda,\,C'$ are independent
of $n,\,\epsilon$. This amounts to proving that $A_{n,\,\epsilon}$
can be bounded from above by a constant independent of $n,\,\epsilon$.

In order to do that, notice that since $m$ is a critical point of
$U$, one has for all $x=(q,\,p)\in Q_{1}$ that 
\begin{align*}
\frac{1}{\sqrt{\epsilon}}\left|F_{n}(m+\sqrt{\epsilon}q)\right| & =\frac{1}{\sqrt{\epsilon}}\left|F_{n}(m+\sqrt{\epsilon}q)+\nabla U(m)\right|\\
 & \leq\frac{1}{\sqrt{\epsilon}}\int_{\mathbb{R}^{d}}\left|\nabla U(m+\sqrt{\epsilon}q+q')-\nabla U(m)\right|n^{d}\phi(nq')\,\mathrm{d}q'\\
 & =\frac{1}{\sqrt{\epsilon}}\int_{\mathbb{R}^{d}}\left|\nabla U(m+\sqrt{\epsilon}q+q'/n)-\nabla U(m)\right|\phi(q')\,\mathrm{d}q'\;.
\end{align*}
Besides, since $\phi$ is compactly supported in $\mathbb{R}^{d}$
and $U$ is $C^{2}$ in $\mathbb{R}^{d}$, there exists a constant
$c>0$ independent of $\epsilon,n$ such that for all $|q|\leq1$,
\begin{align*}
\frac{1}{\sqrt{\epsilon}}\left|F_{n}(m+\sqrt{\epsilon}q)\right| & \leq c+\frac{c}{n\sqrt{\epsilon}}\int_{\mathbb{R}^{d}}\left|q'\right|\phi(q')\,\mathrm{d}q'\;.
\end{align*}
Therefore, by taking $n$ large enough, namely $n\geq1/\sqrt{\epsilon}$,
one can has that $A_{n,\,\epsilon}$ defined in~\eqref{eq:def A_n epsilon}
can be taken independently of $n$ and $\epsilon$ for $n\geq1/\sqrt{\epsilon}$
which allows us by Lemma~\ref{lem:convergence expectation truncated}
to take the limit when $n\rightarrow\infty$ and $N,\,M\rightarrow\infty$
in~\eqref{eq:harnack} and~\eqref{eq:holder}. Consequently, by
definition of $u_{\epsilon}$ in~\eqref{eq:def u epsilon}, one obtains
\begin{equation}
\sup_{x\in Q_{R}}u_{\epsilon}(x)\leq C\left(\inf_{x\in Q_{R}}u_{\epsilon}(x)+1\right)\;.\label{eq:harnack v}
\end{equation}
Moreover, for all $x,\,x'\in Q_{R/2}$, 
\begin{equation}
|u_{\epsilon}(x')-u_{\epsilon}(x)|\leq C'\left(1+\sqrt{\int_{Q_{R}}|u_{\epsilon}(x)|^{2}\mathrm{d}x}\right)|x'-x|^{\alpha}\;.\label{eq:Holder inequality}
\end{equation}
Furthermore, we deduce from~\eqref{eq:harnack v} that for all $x\in Q_{R}$,
\begin{align*}
\sqrt{\int_{Q_{R}}|u_{\epsilon}(x')|^{2}\mathrm{d}x'} & \leq\sqrt{2C^{2}u_{\epsilon}(x)^{2}|Q_{R}|+2C^{2}|Q_{R}|}\\
 & \leq u_{\epsilon}(x)\sqrt{2C^{2}|Q_{R}|}+\sqrt{2C^{2}|Q_{R}|}\;.
\end{align*}
Therefore, for all $x,\,x'\in Q_{R/2}$, 
\begin{align*}
|u_{\epsilon}(x')-u_{\epsilon}(x)| & \leq C'\left(1+\sqrt{2C^{2}|Q_{R}|}+u_{\epsilon}(x)\sqrt{2C^{2}|Q_{R}|}\right)|x'-x|^{\alpha}\;,
\end{align*}
which concludes the proof with $R'=R/2$. 
\end{proof}

\section{Proof of Proposition~\ref{prop:main1}}

\label{sec:Estimate of numerator}

The proof of Proposition~\ref{prop:main1} relies on the important
result below which provides sharp estimates on the function $h_{\mathcal{M}_{\epsilon},\,\mathcal{S}_{\epsilon}}$. Consider
the set 
\begin{equation}
\mathcal{W}=\{x\in\mathbb{R}^{2d}:V(x)<V(\sigma,0)\}\;.\label{eq:def W}
\end{equation}
We denote by $\mathcal{W}_{m}$ (resp. $\mathcal{W}_{s}$) the connected
component of $\mathcal{W}$ containing the local minimum $(m,\,0)$
(resp. $(s,\,0)$) of $V$.
\begin{prop} 
\label{prop:ineq h_AB} There exist constants $C,\,\beta>0$ independent
of $\epsilon$ such that for all $\epsilon$ small enough, 
\begin{itemize}
\item for all $x\in\mathcal{W}_{m}$, 
\[
h_{\mathcal{M}_{\epsilon},\,\mathcal{S}_{\epsilon}}(x)\geq1-\frac{C}{\epsilon^{\beta}}\mathrm{exp}\left(\frac{V(x)-V(\sigma,\,0)}{\epsilon}\right)\;.
\]
\item for all $x\in\mathcal{W}_{s}$, 
\[
h_{\mathcal{M}_{\epsilon},\,\mathcal{S}_{\epsilon}}(x)\leq\frac{C}{\epsilon^{\beta}}\mathrm{exp}\left(\frac{V(x)-V(\sigma,\,0)}{\epsilon}\right)\;.
\]
\end{itemize}
\end{prop}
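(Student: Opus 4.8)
\medskip
\noindent\textbf{Proof idea.}
The plan is first to reduce the second estimate to the first: since $h_{\mathcal{M}_{\epsilon},\mathcal{S}_{\epsilon}}=1-h_{\mathcal{S}_{\epsilon},\mathcal{M}_{\epsilon}}$ and Assumption~\ref{ass:U} is symmetric in $m$ and $s$, the first bullet applied with the roles of $m$ and $s$ interchanged gives, for $x\in\mathcal{W}_{s}$, the bound $h_{\mathcal{S}_{\epsilon},\mathcal{M}_{\epsilon}}(x)\ge 1-C\epsilon^{-\beta}e^{(V(x)-V(\sigma,0))/\epsilon}$, which is exactly the claimed upper bound for $h_{\mathcal{M}_{\epsilon},\mathcal{S}_{\epsilon}}(x)$. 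So I concentrate on the first bullet. Write $c:=V(\sigma,0)$. As $\mathcal{W}_{m}$ is a connected component of the open, bounded set $\{V<c\}$, its boundary satisfies $\partial\mathcal{W}_{m}\subset\{V=c\}$, and any continuous path from $\mathcal{W}_{m}$ to $\mathcal{S}_{\epsilon}\subset\mathcal{W}_{s}$ must leave $\mathcal{W}_{m}$; hence, for $x\in\mathcal{W}_{m}$, on the event $\{\tau_{\mathcal{S}_{\epsilon}}^{\epsilon}<\tau_{\mathcal{M}_{\epsilon}}^{\epsilon}\}$ the process hits $\partial\mathcal{W}_{m}$ before $\mathcal{M}_{\epsilon}$, so that
\[
1-h_{\mathcal{M}_{\epsilon},\mathcal{S}_{\epsilon}}(x)=\mathbb{P}_{x}\big(\tau_{\mathcal{S}_{\epsilon}}^{\epsilon}<\tau_{\mathcal{M}_{\epsilon}}^{\epsilon}\big)\leq \mathbb{P}_{x}\big(\tau_{\partial\mathcal{W}_{m}}^{\epsilon}<\tau_{\mathcal{M}_{\epsilon}}^{\epsilon}\big)\;,
\]
and it is enough to bound the right-hand side by $C\epsilon^{-\beta}e^{(V(x)-c)/\epsilon}$.

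The main device is a ground-state (Doob-type) transform. Using $\nabla_{q}V=\nabla U$, $\nabla_{p}V=p$ and $\Delta_{p}V=d$ one computes $\mathcal{L}_{\epsilon}e^{(V-c)/\epsilon}=\gamma d\,e^{(V-c)/\epsilon}$, and, since the second-order part of $\mathcal{L}_{\epsilon}$ acts only in $p$, for any smooth $g$,
\[
\mathcal{L}_{\epsilon}\big(e^{(V-c)/\epsilon}g\big)=e^{(V-c)/\epsilon}\big(\mathcal{B}g+\gamma d\,g\big),\qquad \mathcal{B}:=\langle p,\nabla_{q}\rangle-\langle\nabla U,\nabla_{p}\rangle+\gamma\langle p,\nabla_{p}\rangle+\gamma\epsilon\Delta_{p}.
\]
Here $\mathcal{B}$ generates the \emph{anti-damped} Langevin diffusion $\mathrm{d}q=p\,\mathrm{d}t$, $\mathrm{d}p=(-\nabla U(q)+\gamma p)\,\mathrm{d}t+\sqrt{2\gamma\epsilon}\,\mathrm{d}B$, which I denote $(Y^{\epsilon}(t))_{t\ge0}$, with expectation $\widetilde{\mathbb{E}}_{x}$, hitting times $\widetilde{\tau}^{\epsilon}_{\mathcal{C}}$ and $\sigma:=\widetilde{\tau}^{\epsilon}_{\partial\mathcal{W}_{m}}\wedge\widetilde{\tau}^{\epsilon}_{\mathcal{M}_{\epsilon}}$. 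By Feynman--Kac with the positive potential $\gamma d$, the function $g_{\epsilon}(x):=\widetilde{\mathbb{E}}_{x}\big[e^{\gamma d\,\sigma}\mathbf{1}_{\widetilde{\tau}^{\epsilon}_{\partial\mathcal{W}_{m}}<\widetilde{\tau}^{\epsilon}_{\mathcal{M}_{\epsilon}}}\big]$ solves $\mathcal{B}g_{\epsilon}+\gamma d\,g_{\epsilon}=0$ on $\mathcal{W}_{m}\setminus\overline{\mathcal{M}_{\epsilon}}$ with $g_{\epsilon}=1$ on $\partial\mathcal{W}_{m}$ and $g_{\epsilon}=0$ on $\partial\mathcal{M}_{\epsilon}$, so that $e^{(V-c)/\epsilon}g_{\epsilon}$ and $\mathbb{P}_{\cdot}(\tau_{\partial\mathcal{W}_{m}}^{\epsilon}<\tau_{\mathcal{M}_{\epsilon}}^{\epsilon})$ solve the same Dirichlet problem for $\mathcal{L}_{\epsilon}$. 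Because $\mathcal{L}_{\epsilon}$ is only hypoelliptic and the boundary regularity of $g_{\epsilon}$ is unknown, I would not equate these two functions directly, but rather carry out the same transform for the elliptic perturbation $\mathcal{L}_{\alpha,\epsilon}$ of Section~\ref{sec4:magicf} (where the analogous computation produces an elliptic Dirichlet problem of the same structure, with generator $\mathcal{B}+\alpha(\langle\nabla U,\nabla_{q}\rangle+\epsilon\Delta_{q})$ and potential $\gamma d+\alpha\Delta U$) and then pass to the limit $\alpha\to0$ exactly as in Lemmas~\ref{lem:conv h^*} and~\ref{lem:convergence proba until N}, obtaining
\[
\mathbb{P}_{x}\big(\tau_{\partial\mathcal{W}_{m}}^{\epsilon}<\tau_{\mathcal{M}_{\epsilon}}^{\epsilon}\big)=e^{(V(x)-c)/\epsilon}\,g_{\epsilon}(x)\;.
\]
Everything then reduces to showing $\sup_{x\in\mathcal{W}_{m}}g_{\epsilon}(x)\le C\epsilon^{-\beta}$ for constants $C,\beta>0$ independent of $\epsilon$.

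To bound $g_{\epsilon}$ I would use that the anti-damped process escapes the well quickly: along its drift the Hamiltonian increases, $\mathcal{B}V=\gamma|p|^{2}+\gamma\epsilon d>0$, so $Y^{\epsilon}$ is pushed towards $\partial\mathcal{W}_{m}$. Fix a small $\rho>0$. On $\mathcal{W}_{m}\setminus\mathrm{B}((m,0),\rho)$ the reversed drift has a definite outward component, and a Lyapunov/comparison argument — combined with the Gaussian density bounds à la~\cite{Menozzi,Chaudru2022_preprint} already used to control the process near $\partial\mathcal{W}_{m}$ and with a non-degeneracy of the boundary velocity in the spirit of Lemma~\ref{lem:control inward velocity} — gives $\widetilde{\mathbb{E}}_{x}[e^{\gamma d\,\widetilde{\tau}^{\epsilon}_{\partial\mathcal{W}_{m}}}]\le C_{\rho}$ uniformly in $\epsilon$ and in such $x$. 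Inside $\mathrm{B}((m,0),\rho)$ the process $Y^{\epsilon}$ is, to leading order, the linear anti-damped Langevin process near $m$, whose linearization has all eigenvalues with real part at least $\mu_{m}>0$ (depending only on $\gamma$ and $\mathbb{H}_{U}^{m}$); hence, started from $x$ with $|x-(m,0)|\ge\epsilon$, it leaves $\mathrm{B}((m,0),\rho)$ in a time $\sigma_{1}$ with $\widetilde{\mathbb{E}}_{x}[e^{\gamma d\,\sigma_{1}}]\le C\epsilon^{-\gamma d/\mu_{m}}$, the polynomial loss coming from the logarithmic transit time $\sim\mu_{m}^{-1}\log(1/\epsilon)$ out of the $\epsilon$-scale neighbourhood of $(m,0)$. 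Combining the two regimes by the strong Markov property at the exit time of $\mathrm{B}((m,0),\rho)$ yields $g_{\epsilon}(x)\le C\epsilon^{-\beta}$ with $\beta$ depending only on $\gamma$, $d$ and $\mathbb{H}_{U}^{m}$ (enlarged, for the second bullet, to also cover the analogous constant at $s$), which finishes the proof.

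I expect the last step to be the main obstacle: obtaining $\widetilde{\mathbb{E}}_{x}[e^{\gamma d\,\sigma}]\le C\epsilon^{-\beta}$ \emph{uniformly in $x\in\mathcal{W}_{m}$}, with the correct \emph{polynomial} (rather than $e^{o(1)/\epsilon}$) rate near $(m,0)$, and near the saddle $(\sigma,0)$ of the anti-damped dynamics where passage is slow (and governed by the rate $\mu^{\sigma}$). This is precisely where a fine trajectory analysis of the degenerate process — control of the entry velocity at the boundary, of the time spent near critical points, and Gr\"onwall comparisons as in~\cite{LelRamRey,cutoff} — is needed. A secondary but also delicate difficulty is that, $\mathcal{L}_{\epsilon}$ and $\mathcal{B}$ being merely hypoelliptic and the boundary regularity of $g_{\epsilon}$ being unavailable, the identity $\mathbb{P}_{x}(\tau_{\partial\mathcal{W}_{m}}^{\epsilon}<\tau_{\mathcal{M}_{\epsilon}}^{\epsilon})=e^{(V(x)-c)/\epsilon}g_{\epsilon}(x)$ must be established through the elliptic regularization $X^{\alpha,\epsilon}$ and a careful $\alpha\to0$ limit rather than by a direct It\^o or maximum-principle argument.
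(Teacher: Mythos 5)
Your reduction of the second bullet to the first by the $m\leftrightarrow s$ symmetry is valid, and so is the reduction to bounding $\mathbb{P}_{x}(\tau^{\epsilon}_{\partial\mathcal{W}_{m}}<\tau^{\epsilon}_{\mathcal{M}_{\epsilon}})$. Your ground-state transform $\mathcal{L}_{\epsilon}\bigl(e^{(V-c)/\epsilon}g\bigr)=e^{(V-c)/\epsilon}(\mathcal{B}g+\gamma d\,g)$ is computed correctly and is a legitimate PDE-side reformulation of what the paper does probabilistically: the paper's Proposition~\ref{lem:first ineq h_AB} arrives at the same quantity, $\mathbb{E}_{(q,-p)}\bigl[e^{d\gamma\widetilde{\zeta}^{\epsilon}}\bigr]$, by applying It\^o to $V$ along the trajectory and then performing a Girsanov change of measure that flips the sign of the friction. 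The Girsanov route has the advantage of never touching boundary regularity of a Dirichlet solution (only stopping times and the optional stopping theorem are used, and there is no need to invoke uniqueness of the Dirichlet problem for the hypoelliptic $\mathcal{L}_{\epsilon}$); your elliptic-regularization plan to circumvent this is plausible but is a separate technical layer the paper avoids. In any case, the two routes agree up to this point and reduce the proposition to the same bound, the paper's Proposition~\ref{prop:bound exponential exit time}: $\mathbb{E}_{x}[\exp(d\gamma\widetilde{\zeta}^{\epsilon})]\le C\epsilon^{-\beta}$ uniformly on $\mathcal{W}$.

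It is precisely here that the proposal stops short. Your plan for this step is a direct Lyapunov/comparison analysis of the anti-damped process, split into $\mathrm{B}((m,0),\rho)$ and its complement. There are two concrete gaps. First, the claim that $\widetilde{\mathbb{E}}_{x}[e^{\gamma d\,\widetilde{\tau}^{\epsilon}_{\partial\mathcal{W}_{m}}}]\le C_{\rho}$ uniformly in $\epsilon$ and in $x\in\mathcal{W}_{m}\setminus\mathrm{B}((m,0),\rho)$ is not justified and, as stated, should not be expected to hold: $(\sigma,0)$ is a saddle of the anti-damped flow, so starting points near its stable manifold inside $\mathcal{W}_{m}$ lead to exit times of order $\log(1/\epsilon)$, not $O(1)$, and the exponential moment $e^{\gamma d\,\widetilde{\tau}}$ then already carries a polynomial-in-$1/\epsilon$ loss in that region, not just near $(m,0)$. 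Second, the computation near $(m,0)$ needs not a single rate $\mu_{m}$ but a genuine quantitative coupling of $Y^{\epsilon}$ with its linearization including the noise term, with the loss controlled by the \emph{slowest} unstable rate — a nontrivial estimate for which you provide no argument. You are honest that this is the main obstacle, but you do not have a mechanism to overcome it.

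The paper resolves this differently, and the difference is the substantive one. Rather than analyzing the anti-damped process directly, Sections~\ref{sec:density estimate} and~\ref{sec:proof op Prop bound exponential exit time} establish a kernel duality $\mathrm{p}^{\epsilon,\mathcal{D}_{\epsilon}}_{t}(x,y)=e^{d\gamma t}\,\widetilde{\mathrm{p}}^{\epsilon,\mathcal{D}_{\epsilon}}_{t}(y,x)$ (Proposition~\ref{prop:property density}) and convert the required exponential moment, via $\mathbb{E}_{x}[e^{d\gamma\widetilde{\zeta}^{\epsilon}}]=1+d\gamma\int_{0}^{\infty}e^{d\gamma r}\mathbb{P}_{x}(\widetilde{\zeta}^{\epsilon}>r)\,\mathrm{d}r$ and Chapman--Kolmogorov, into a bound on the tail $\mathbb{P}_{x'}(\tau^{\epsilon}_{\partial\mathcal{D}_{\epsilon}}>r)$ of the \emph{forward} (damped) process (Proposition~\ref{prop:exit time probability estimate}). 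The crucial payoff is that the $e^{d\gamma r}$ in the Feynman--Kac moment cancels exactly against the $e^{-d\gamma r}$ produced by the duality, leaving an integral that converges once the forward process is shown to exit $\mathcal{D}_{\epsilon}$ with stretched-exponential tails. Estimating the forward process is far more tractable because it is \emph{attracted} to $(m,0)$, so the trajectory analysis (Lemmas~\ref{lem:exit ball sigma}--\ref{lem:enter ball A_epsilon} plus the Lyapunov machinery of Section~\ref{sec:Lyapunov H}) can be carried out. This duality device, and the cancellation it buys, is the key idea missing from your sketch; without it, the direct analysis of the anti-damped exponential moment runs into exactly the difficulties you flag.
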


The proof of Proposition~\ref{prop:ineq h_AB} is based on the two propositions stated below. Define the process $(\widetilde{q}^{\epsilon}(t),\,\widetilde{p}^{\epsilon}(t))_{t\geq0}$
\begin{equation}
\left\{ \begin{aligned} & \mathrm{d}\widetilde{q}^{\epsilon}(t)=-\widetilde{p}^{\epsilon}(t)\mathrm{d}t,\\
 & \mathrm{d}\widetilde{p}^{\epsilon}(t)=\nabla U(\widetilde{q}^{\epsilon}(t))\mathrm{d}t+\gamma\widetilde{p}^{\epsilon}(t)\mathrm{d}t+\sqrt{2\gamma\epsilon}\mathrm{d}B(t)\;.
\end{aligned}
\right.\label{eq:adjoint langevin}
\end{equation}
Let   
\begin{equation}
\zeta^{\epsilon}:=\tau_{\partial\mathcal{W}}^{\epsilon}\land\tau_{\mathcal{M}_{\epsilon}}^{\epsilon}\land\tau_{\mathcal{S}_{\epsilon}}^{\epsilon},\quad\widetilde{\zeta}^{\epsilon}:=\widetilde{\tau}_{\partial\mathcal{W}}^{\epsilon}\land\widetilde{\tau}_{\mathcal{M_{\epsilon}}}^{\epsilon}\land\widetilde{\tau}_{\mathcal{S_{\epsilon}}}^{\epsilon}\;,\label{eq:def tau_epsilon}
\end{equation}
where $\widetilde{\tau}_{\mathcal{C}}^{\epsilon}$ (resp. $\tau_{\mathcal{C}}^{\epsilon}$)
is the first hitting time of the set $\mathcal{C}$ for the process~\eqref{eq:adjoint langevin}
(resp.~\eqref{eq:sde}).
\begin{prop}
\label{lem:first ineq h_AB} For all $x=(q,p)\in\mathcal{W}_{m}$, 
\[
h_{\mathcal{M}_{\epsilon},\,\mathcal{S}_{\epsilon}}(x)\geq1-\mathrm{exp}\left(\frac{V(x)-V(\sigma,\,0)}{\epsilon}\right)\mathbb{E}_{(q,\,-p)}\left[\mathrm{exp}(d\gamma\widetilde{\zeta}^{\epsilon})\right]\;.
\]
For all $x\in\mathcal{W}_{s}$, 
\[
h_{\mathcal{M}_{\epsilon},\,\mathcal{S}_{\epsilon}}(x)\leq\mathrm{exp}\left(\frac{V(x)-V(\sigma,\,0)}{\epsilon}\right)\mathbb{E}_{(q,\,-p)}\left[\mathrm{exp}(d\gamma\widetilde{\zeta}^{\epsilon})\right]\;.
\]
\end{prop}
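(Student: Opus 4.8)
The plan is to exploit two algebraic identities and then run an optional-stopping argument for the adjoint process~\eqref{eq:adjoint langevin}. Write $S\colon(q,p)\mapsto(q,-p)$ for the momentum reversal and $\widetilde{\mathcal{L}}$ for the generator of~\eqref{eq:adjoint langevin}. A direct computation from~\eqref{eq:gen} and~\eqref{eq:adj_gen2} gives the well-known relation $S\mathcal{L}_\epsilon S=\mathcal{L}_\epsilon^{*}$, and, using $\nabla_pV(q,p)=p$ and $\Delta_pV=d$, one checks that $\widetilde{\mathcal{L}}\big(e^{-V/\epsilon}\psi\big)=e^{-V/\epsilon}\big(\mathcal{L}_\epsilon^{*}\psi-d\gamma\,\psi\big)$ for every $\psi\in C^{2}(\mathbb{R}^{2d})$. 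Combining the two: if $g$ is $C^{2}$ on an $S$-invariant open set and $\mathcal{L}_\epsilon g=0$ there, then $w:=e^{-V/\epsilon}\,(g\circ S)$ satisfies $\widetilde{\mathcal{L}}w=-d\gamma\,w$ on that set. Since, by hypoellipticity of $\mathcal{L}_\epsilon$, the equilibrium potential $h:=h_{\mathcal{M}_\epsilon,\mathcal{S}_\epsilon}$ is smooth and $\mathcal{L}_\epsilon$-harmonic on $\mathcal{D}:=(\overline{\mathcal{M}_\epsilon}\cup\overline{\mathcal{S}_\epsilon})^{c}$, I would apply this with $g=1-h$ for the lower bound on $\mathcal{W}_m$ and with $g=h$ for the upper bound on $\mathcal{W}_s$; in both cases $w\in C^{2}(\mathcal{D})$, $0\le w\le1$ (because $V\ge0$), and $\widetilde{\mathcal{L}}w=-d\gamma\,w$ on $\mathcal{D}$ (note $\mathcal{D}$, $\mathcal{M}_\epsilon$, $\mathcal{S}_\epsilon$, $\mathcal{W}_m$, $\mathcal{W}_s$ are all $S$-invariant since $V\circ S=V$ and the relevant critical points are fixed by $S$).

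For the first inequality I may assume $x=(q,p)\in\mathcal{W}_m\setminus\overline{\mathcal{M}_\epsilon}$ and $\mathbb{E}_{(q,-p)}\big[e^{d\gamma\widetilde{\zeta}^\epsilon}\big]<\infty$, the bound being trivial otherwise. Start $(\widetilde{X}^\epsilon(t))_{t\ge0}$ from $Sx=(q,-p)\in\mathcal{W}_m\setminus\overline{\mathcal{M}_\epsilon}$ and set $\widetilde{\sigma}:=\widetilde{\tau}_{\overline{\mathcal{M}_\epsilon}}^\epsilon\wedge\widetilde{\tau}_{\overline{\mathcal{S}_\epsilon}}^\epsilon\wedge\widetilde{\tau}_{\partial\mathcal{W}}^\epsilon$. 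Since $\overline{\mathcal{S}_\epsilon}\subset\mathcal{W}_s$ lies in a different connected component of $\mathcal{W}$ than $Sx$ and at positive distance from $\partial\mathcal{W}$, the trajectory must meet $\partial\mathcal{W}$ before it can reach $\overline{\mathcal{S}_\epsilon}$; hence $\widetilde{\sigma}\le\widetilde{\zeta}^\epsilon$, the path stays in $\mathcal{D}$ on $[0,\widetilde{\sigma})$, and $\widetilde{X}^\epsilon(\widetilde{\sigma})\in\partial\mathcal{M}_\epsilon\cup\partial\mathcal{W}$. It\^o's formula together with $\widetilde{\mathcal{L}}w=-d\gamma\,w$ on $\mathcal{D}$ shows that $\big(e^{d\gamma(t\wedge\widetilde{\sigma})}w(\widetilde{X}^\epsilon(t\wedge\widetilde{\sigma}))\big)_{t\ge0}$ is a non-negative local martingale bounded by $e^{d\gamma\widetilde{\sigma}}\le e^{d\gamma\widetilde{\zeta}^\epsilon}\in L^{1}$; finiteness of this expectation also forces $\widetilde{\sigma}<\infty$ a.s. Thus the stopped process is a uniformly integrable martingale and optional stopping gives
\[
e^{-V(x)/\epsilon}\big(1-h(x)\big)=w(Sx)=\mathbb{E}_{(q,-p)}\big[e^{d\gamma\widetilde{\sigma}}\,w(\widetilde{X}^\epsilon(\widetilde{\sigma}))\big].
\]
On $\partial\mathcal{M}_\epsilon$ we have $h\circ S\equiv1$, so $w\equiv0$ there, while on $\partial\mathcal{W}$ we have $V\equiv V(\sigma,0)$ and $0\le g\circ S\le1$, so $w\le e^{-V(\sigma,0)/\epsilon}$; hence the last expectation is at most $e^{-V(\sigma,0)/\epsilon}\mathbb{E}_{(q,-p)}[e^{d\gamma\widetilde{\sigma}}]\le e^{-V(\sigma,0)/\epsilon}\mathbb{E}_{(q,-p)}[e^{d\gamma\widetilde{\zeta}^\epsilon}]$, and multiplying by $e^{V(x)/\epsilon}$ gives the first inequality. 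The second inequality follows by the identical argument starting in $\mathcal{W}_s\setminus\overline{\mathcal{S}_\epsilon}$ with $g=h$: now $\widetilde{\sigma}=\widetilde{\tau}_{\overline{\mathcal{S}_\epsilon}}^\epsilon\wedge\widetilde{\tau}_{\partial\mathcal{W}}^\epsilon\le\widetilde{\zeta}^\epsilon$, the function $w=e^{-V/\epsilon}(h\circ S)$ vanishes on $\partial\mathcal{S}_\epsilon$ and is $\le e^{-V(\sigma,0)/\epsilon}$ on $\partial\mathcal{W}$, and optional stopping gives $e^{-V(x)/\epsilon}h(x)=w(Sx)\le e^{-V(\sigma,0)/\epsilon}\mathbb{E}_{(q,-p)}[e^{d\gamma\widetilde{\zeta}^\epsilon}]$.

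The algebra consists of two one-line verifications, so the real work is elsewhere. The first delicate point is the regularity of $h$ on $\mathcal{D}$: It\^o's formula is applied to $w=e^{-V/\epsilon}(g\circ S)$, which requires $h\in C^{2}(\mathcal{D})$, and this interior regularity must be read off from the hypoellipticity of $\mathcal{L}_\epsilon$ rather than from classical elliptic theory. The second point is the optional-stopping step: one must check that $\widetilde{X}^\epsilon$ remains in $\mathcal{D}$ up to $\widetilde{\sigma}$, that $\widetilde{\sigma}\le\widetilde{\zeta}^\epsilon$ (which is what lets one replace $\widetilde{\sigma}$ by $\widetilde{\zeta}^\epsilon$ in the final bound), and that the non-negative local martingale is genuinely uniformly integrable — all routine once the domination by $e^{d\gamma\widetilde{\zeta}^\epsilon}$ is in place. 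Finally, the geometric fact that the adjoint process started in one well of $\mathcal{W}$ cannot reach the other well before hitting $\partial\mathcal{W}$ is what guarantees $\widetilde{X}^\epsilon(\widetilde{\sigma})\in\partial\mathcal{M}_\epsilon\cup\partial\mathcal{W}$ (resp.\ $\partial\mathcal{S}_\epsilon\cup\partial\mathcal{W}$), which is essential for the boundary bound $w(\widetilde{X}^\epsilon(\widetilde{\sigma}))\le e^{-V(\sigma,0)/\epsilon}$.
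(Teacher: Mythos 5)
Your proposal takes a genuinely different route from the paper. You build a martingale by applying It\^o's formula to $w=e^{-V/\epsilon}(g\circ S)$, using the two algebraic identities $S\mathcal{L}_{\epsilon}S=\mathcal{L}_{\epsilon}^{*}$ and $\widetilde{\mathcal{L}}(e^{-V/\epsilon}\psi)=e^{-V/\epsilon}(\mathcal{L}_{\epsilon}^{*}\psi-d\gamma\psi)$ (both of which I have checked and are correct). The paper instead applies It\^o only to $V(X^{\epsilon}(t))$, exponentiates the resulting identity, and then uses a Girsanov change of measure together with momentum reversal to convert the exponential functional of the Langevin process into $\mathbb{E}_{(q,-p)}[e^{d\gamma\widetilde{\zeta}^{\epsilon}}]$. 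The crucial virtue of the paper's route is that the It\^o and Girsanov steps involve only the explicit function $V\in C^{2}$; the equilibrium potential $h$ enters solely through the trivial pointwise inequality $1-h(x)\le\mathbb{P}_{x}(\tau_{\partial\mathcal{W}}^{\epsilon}\le\tau_{\mathcal{M}_{\epsilon}}^{\epsilon})$, so no regularity of $h$ whatsoever is used.

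Your argument, by contrast, applies It\^o to $w$, and this is where the gap sits. You flag interior smoothness of $h$ and attribute it to hypoellipticity; that is plausible, but it requires first establishing that $h$ is a distributional solution of $\mathcal{L}_{\epsilon}h=0$ in $\mathcal{D}$ before H\"ormander's theorem can be invoked, a step the paper never performs and which you do not supply. More seriously, your optional-stopping identity
\begin{equation*}
e^{-V(x)/\epsilon}\bigl(1-h(x)\bigr)=\mathbb{E}_{(q,-p)}\Bigl[e^{d\gamma\widetilde{\sigma}}\,w\bigl(\widetilde{X}^{\epsilon}(\widetilde{\sigma})\bigr)\Bigr]
\end{equation*}
requires the a.s.\ convergence $w(\widetilde{X}^{\epsilon}(t\wedge\widetilde{\sigma}))\to w(\widetilde{X}^{\epsilon}(\widetilde{\sigma}))$ as $t\to\infty$, i.e.\ continuity of $w$ along trajectories that hit $\partial\mathcal{M}_{\epsilon}\cup\partial\mathcal{W}$. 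On $\partial\mathcal{W}$ this is harmless because $w\le e^{-V/\epsilon}$ and $V$ is continuous. But on $\partial\mathcal{M}_{\epsilon}$ you need $h(y)\to1$ as $y\to\partial\mathcal{M}_{\epsilon}$ from the outside; without this, you cannot replace $w(\widetilde{X}^{\epsilon}(\widetilde{\sigma}))$ by $0$ and the resulting bound blows up, since $e^{-V/\epsilon}\gg e^{-V(\sigma,0)/\epsilon}$ near $(m,0)$. Boundary continuity of $h$ for the degenerate kinetic operator is exactly the kind of statement the paper goes out of its way not to use: the entire regularization $\mathcal{L}_{\alpha,\epsilon}\to\mathcal{L}_{\epsilon}$ machinery of Sections~\ref{sec4:magicf}--\ref{sec5:lem_unifc} is built to circumvent it. You neither establish this boundary continuity nor point out that it is needed, and it is the real missing piece, not the interior smoothness you flagged.
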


\begin{prop}
\label{prop:bound exponential exit time} There exist $C,\,\beta>0$
independent of $\epsilon$ such that for all $x\in\mathcal{W}$, 
\[
\mathbb{E}_{x}\left[\mathrm{exp}(d\gamma\widetilde{\zeta}^{\epsilon})\right]\leq\frac{C}{\epsilon^{\beta}}\;.
\]
\end{prop}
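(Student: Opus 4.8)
The plan is to show that the auxiliary (``anti-damped'') process~\eqref{eq:adjoint langevin} leaves the region $\mathcal{W}\setminus(\overline{\mathcal{M}_\epsilon}\cup\overline{\mathcal{S}_\epsilon})$ after a time that is \emph{typically of order $\log(1/\epsilon)$}, with exponential tails, so that $\mathbb{E}_x[\mathrm{e}^{d\gamma\widetilde\zeta^\epsilon}]$ is of polynomial order $\epsilon^{-\beta}$. Two elementary identities will drive the argument. First, the generator $\widetilde{\mathcal L}_\epsilon$ of~\eqref{eq:adjoint langevin} satisfies $\widetilde{\mathcal L}_\epsilon=\mathcal L_\epsilon^{\dagger}-d\gamma$, with $\mathcal L_\epsilon^\dagger$ as in~\eqref{eq:adj_gen1}; equivalently $\widetilde{\mathcal L}_\epsilon\,\mathrm{e}^{-V/\epsilon}+d\gamma\,\mathrm{e}^{-V/\epsilon}=0$, so $\big(\mathrm{e}^{d\gamma(t\wedge\widetilde\zeta^\epsilon)}\mathrm{e}^{-V(\widetilde q^\epsilon,\widetilde p^\epsilon)(t\wedge\widetilde\zeta^\epsilon)/\epsilon}\big)_{t\ge0}$ is a nonnegative local martingale. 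Since $V\le V(\sigma,0)$ on $\overline{\mathcal W}$, optional stopping already gives the crude bound $\mathbb{E}_x[\mathrm{e}^{d\gamma\widetilde\zeta^\epsilon}]\le\mathrm{e}^{(V(\sigma,0)-V(x))/\epsilon}$, which yields $\widetilde\zeta^\epsilon<\infty$ a.s.\ but is exponentially off; it will mainly be used to control the ``thermal-scale'' part of the excursion. Second, by It\^o's formula applied to~\eqref{eq:adjoint langevin},
\[
\mathrm{d}V(\widetilde q^\epsilon(t),\widetilde p^\epsilon(t))=\big(\gamma|\widetilde p^\epsilon(t)|^2+d\gamma\epsilon\big)\mathrm{d}t+\sqrt{2\gamma\epsilon}\,\langle\widetilde p^\epsilon(t),\mathrm{d}B(t)\rangle,
\]
so the energy is a submartingale with nonnegative drift: the process climbs.

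I would then split the region according to a fixed small threshold $\delta_0>0$ into a \emph{macroscopic} part and the \emph{microscopic} neighbourhoods $\{V<U(m)+\delta_0\}\cap\mathcal W_m$ and $\{V<U(s)+\delta_0\}\cap\mathcal W_s$ of the two minima. On the macroscopic part the only critical points of $V$ inside $\mathcal W$ are $(m,0)$ and $(s,0)$, so $\nabla U$ stays away from $0$ and the time-averaged drift $\gamma|\widetilde p^\epsilon|^2+d\gamma\epsilon$ is bounded below uniformly; using short-time controls on the velocity of the degenerate process and on its transition density — adapted from~\cite{LelRamRey,Hill,cutoff} and the estimates of Section~\ref{sec:density estimate} — I would show that over any time interval of fixed length $s_0$, starting anywhere in the macroscopic part, the process either leaves $\mathcal W\setminus(\overline{\mathcal M_\epsilon}\cup\overline{\mathcal S_\epsilon})$ or raises $V$ by a fixed amount, with probability $\ge1-\rho(s_0)$, where $\rho(s_0)$ can be made arbitrarily small by a concentration estimate for the submartingale above. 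Iterating with the strong Markov property dominates the number of such intervals by a geometric-type variable, and choosing $s_0$ so that $\rho(s_0)\,\mathrm{e}^{d\gamma s_0}<1$ produces a \emph{bounded} exponential moment $\mathbb{E}_x[\mathrm{e}^{d\gamma\widetilde\zeta^\epsilon}]\le C$ uniformly on the macroscopic part.

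The microscopic phase is the main obstacle and the source of the factor $\epsilon^{-\beta}$. Near $(m,0)$ (the case of $(s,0)$ being identical), I would linearise~\eqref{eq:adjoint langevin}: up to nonlinear corrections controlled for $\delta_0$ small, $(\widetilde q^\epsilon-m,\widetilde p^\epsilon)$ is an Ornstein--Uhlenbeck process with drift matrix $\left(\begin{smallmatrix}0&-\mathbb{I}_d\\\mathbb{H}_U^m&\gamma\mathbb{I}_d\end{smallmatrix}\right)$, all of whose eigenvalues have positive real part, so $(m,0)$ is a \emph{source} for~\eqref{eq:adjoint langevin}; the Kalman rank condition holds, so this linear process has an explicit Gaussian law. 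Starting from energy $\asymp\epsilon^2$ — as on $\partial\mathcal M_\epsilon$, and at least $\gtrsim\epsilon^2$ throughout the microscopic region minus the ball — the first time $T_{\delta_0}$ at which the energy reaches $\delta_0$ is $\tfrac{1}{2\mu_{\min}}\log(1/\epsilon)+O(1)$ with Gaussian-type tails, where $2\mu_{\min}>0$ is the smallest real part of an eigenvalue. Combining $\widetilde\zeta^\epsilon\le T_{\delta_0}+(\text{time in the macroscopic part afterwards})$ with the strong Markov property and the macroscopic bound gives $\mathbb{E}_x[\mathrm{e}^{d\gamma\widetilde\zeta^\epsilon}]\le C\epsilon^{-\beta}$ with $\beta$ of order $d\gamma/\mu_{\min}$, uniformly in $x\in\mathcal W$.

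The hard part will be this microscopic estimate. Because the noise acts only on $\widetilde p^\epsilon$, the instantaneous energy drift $\gamma|\widetilde p^\epsilon|^2+d\gamma\epsilon$ can collapse to $d\gamma\epsilon$ at turning points, so the short-time behaviour of $V(\widetilde X^\epsilon)$ near a minimum is subtle and must be handled through the hypoelliptic density/velocity estimates of~\cite{LelRamRey,Hill,cutoff}. Moreover, bounding the exponential moment $\mathbb{E}[\mathrm{e}^{d\gamma T_{\delta_0}}]$ is delicate: it is sensitive to the small-probability event that the initial condition and the Gaussian fluctuations project atypically small onto the fast unstable modes, which is controlled by observing that even the slowest unstable mode grows exponentially, so $T_{\delta_0}$ exceeds $\tfrac{1}{2\mu_{\min}}\log(1/\epsilon)$ only by a quantity with Gaussian tails. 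Getting an $\epsilon$-\emph{uniform}, sharp control of this logarithmically large escape time and of its exponential moment — and patching it cleanly with the macroscopic phase — is where the bulk of the work lies.
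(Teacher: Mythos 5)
Your route is genuinely different from the paper's. The paper does not analyse the time-reversed process $(\widetilde q^\epsilon,\widetilde p^\epsilon)$ pathwise at all: it proves the transition-density duality $\mathrm{p}_t^{\epsilon,\mathcal{D}_\epsilon}(x,y)=\mathrm{e}^{d\gamma t}\widetilde{\mathrm{p}}_t^{\epsilon,\mathcal{D}_\epsilon}(y,x)$ (Proposition~\ref{prop:property density}), which \emph{exactly cancels} the factor $\mathrm{e}^{d\gamma r}$ in $\mathbb{E}_x[\mathrm{e}^{d\gamma\widetilde\zeta^\epsilon}]=1+d\gamma\int_0^\infty\mathrm{e}^{d\gamma r}\mathbb{P}_x(\widetilde\zeta^\epsilon>r)\,\mathrm{d}r$ and transfers the problem to a crude exponential tail bound $\mathbb{P}_{x'}(\zeta^\epsilon>t)\leq C\mathrm{e}^{-\alpha t\epsilon^\beta}$ for the \emph{forward, damped} process (Proposition~\ref{prop:exit time probability estimate}), combined with a short-time Gaussian density bound. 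Your preliminary observations are correct — $\widetilde{\mathcal L}_\epsilon=\mathcal L_\epsilon^\dagger-d\gamma$, the local-martingale bound $\mathbb{E}_x[\mathrm{e}^{d\gamma\widetilde\zeta^\epsilon}]\leq\mathrm{e}^{(V(\sigma,0)-V(x))/\epsilon}$ follows from optional stopping, and the macroscopic step is plausible because the zero-noise time-reversal exits $\mathcal W$ in bounded time from any compact set avoiding the minima, so Gr\"onwall gives failure probability $\mathrm{e}^{-c/\epsilon}\ll\mathrm{e}^{-d\gamma s_0}$.

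The microscopic step, however, has a genuine gap, and it is precisely the place where the paper's duality is doing the real work. For the linearised process around the source $(m,0)$ with drift matrix $A=\bigl(\begin{smallmatrix}0&-\mathbb{I}_d\\\mathbb{H}_U^m&\gamma\mathbb{I}_d\end{smallmatrix}\bigr)$, one has $\mathrm{tr}(A)=d\gamma$, so writing $Z(t)=\mathrm{e}^{At}Y(t)$ with $Y(t)\to Y(\infty)\sim\mathcal N((\cdot),O(\epsilon))$, the survival probability in a $\delta_0$-ball is asymptotically $\mathbb{P}(T_{\delta_0}>t)\asymp\prod_k(\delta_0\mathrm{e}^{-\mu_kt}/\sqrt\epsilon)^{m_k}\sim C\epsilon^{-d}\mathrm{e}^{-(\sum_km_k\mu_k)t}=C\epsilon^{-d}\mathrm{e}^{-d\gamma t}$: the tail decays \emph{exactly} at rate $d\gamma$, not Gaussian. (The identity $\sum_km_k\mu_k=d\gamma$ is nothing but the infinitesimal form of the duality $\mathrm{p}_t=\mathrm{e}^{d\gamma t}\widetilde{\mathrm{p}}_t$.) So $\mathbb{E}[\mathrm{e}^{d\gamma T_{\delta_0}}]$ is a priori borderline divergent; its finiteness and polynomial bound require the tail rate to exceed $d\gamma$ by a margin $\widetilde\lambda_0^\epsilon-d\gamma=\lambda_0^\epsilon\gtrsim\epsilon^\beta$, where $\widetilde\lambda_0^\epsilon,\lambda_0^\epsilon$ are the principal Dirichlet eigenvalues of the reversed and forward absorbed generators. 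That inequality $\widetilde\lambda_0^\epsilon=\lambda_0^\epsilon+d\gamma$ is again the duality, and quantifying $\lambda_0^\epsilon\gtrsim\epsilon^\beta$ is, in effect, the content of Proposition~\ref{prop:exit time probability estimate} on the forward process. So either you reprove the density duality, or you need a direct spectral lower bound on the anti-damped absorbed generator — the proposal as written supplies neither, and the ``Gaussian tails'' claim for $T_{\delta_0}$ overstates what the linearisation yields.
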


It is straightforward to deduce the proof of Proposition~\ref{prop:ineq h_AB}
from the two propositions above. Therefore, the main goal of the next sections is to prove Propositions~\ref{lem:first ineq h_AB} and~\ref{prop:bound exponential exit time}. The proof
of Proposition~\ref{lem:first ineq h_AB} is given below whereas
the proof of Proposition~\ref{prop:bound exponential exit time}
is much more involved and relies on several intermediate results detailed in the next sections.
Therefore, in this section we shall only prove Proposition~\ref{lem:first ineq h_AB} and conclude the proof of Proposition~\ref{prop:main1}
based on Proposition~\ref{prop:ineq h_AB}.
\begin{proof}[Proof of Proposition~\ref{lem:first ineq h_AB}]

We only provide the proof in the case when $x\in\mathcal{W}_{m}$
as the proof when $x\in\mathcal{W}_{s}$ is exactly similar. Let us
fix $x=(q,\,p)\in\mathcal{W}_{m}$ throughout this proof. It follows
from the continuity of the trajectories of~\eqref{eq:sde} that the
process $(q^{\epsilon}(t),\,p^{\epsilon}(t))_{t\geq0}$ starting
from $x\in\mathcal{W}_{m}$ needs to cross the boundary $\partial\mathcal{W}$
in order to reach $\mathcal{S}_{\epsilon}$. As a result, 
\begin{equation}
1-h_{\mathcal{M}_{\epsilon},\,\mathcal{S}_{\epsilon}}(x)\leq\mathbb{P}_{x}(\tau_{\partial\mathcal{W}}^{\epsilon}\leq\tau_{\mathcal{M}_{\epsilon}}^{\epsilon})\;.\label{eq:h_AB and first exit}
\end{equation}

The stopping time $\zeta^{\epsilon}$ defined in~\eqref{eq:def tau_epsilon}
then satisfies $\zeta^{\epsilon}=\tau_{\partial\mathcal{W}}^{\epsilon}\land\tau_{\mathcal{M}_{\epsilon}}^{\epsilon}$ under $\mathbb{P}_x$.
Moreover, applying It\^o's formula to the process $(V(q^{\epsilon}(t),\,p^{\epsilon}(t)))_{t\geq0}$
at the time $\zeta^{\epsilon}\land t$ ensures that, $\mathbb{P}_x$ almost-surely, for
all $t\geq0$, 
\[
V(q^{\epsilon}(\zeta^{\epsilon}\land t),\,p^{\epsilon}(\zeta^{\epsilon}\land t))-V(x)=-\gamma\int_{0}^{\zeta^{\epsilon}\land t}|p^{\epsilon}(r)|^{2}\mathrm{d}r+\sqrt{2\gamma\epsilon}\int_{0}^{\zeta^{\epsilon}\land t}\langle p^{\epsilon}(r),\,\mathrm{d}B(r)\rangle+d\gamma\epsilon\left(\zeta^{\epsilon}\land t\right)\;.
\]
As a result, 
\begin{align*}
 & \mathbb{E}_{x}\left[\mathrm{exp}\left(\left(V(q^{\epsilon}(\zeta^{\epsilon}\land t),\,p^{\epsilon}(\zeta^{\epsilon}\land t))-V(x)\right)/\epsilon\right)\right]\\
 & =\mathbb{E}_{x}\left[\mathrm{exp}\left(-\frac{\gamma}{\epsilon}\int_{0}^{\zeta^{\epsilon}\land t}|p^{\epsilon}(r)|^{2}\mathrm{d}r+\sqrt{2\frac{\gamma}{\epsilon}}\int_{0}^{\zeta^{\epsilon}\land t}\langle p^{\epsilon}(r),\,\mathrm{d}B(r)\rangle\right)\mathrm{e}^{d\gamma(\zeta^{\epsilon}\land t)}\right]\;.
\end{align*}
It follows from the Girsanov argument developed in the proof of~\cite[Lemma 3.8]{LelRamRey}
that 
\[
\mathcal{Z}:t\geq0\mapsto\mathrm{exp}\left(-\frac{\gamma}{\epsilon}\int_{0}^{\zeta^{\epsilon}\land t}|p^{\epsilon}(r)|^{2}\mathrm{d}r+\sqrt{2\frac{\gamma}{\epsilon}}\int_{0}^{\zeta^{\epsilon}\land t}\langle p^{\epsilon}(r),\,\mathrm{d}B(r)\rangle\right)
\]
is a martingale and under the probability measure $\mathrm{d}\mathbb{Q}_{t}=\mathcal{Z}_{t}\mathrm{d}\mathbb{P}$
the process 
\[
\left(B(r)-\sqrt{2\frac{\gamma}{\epsilon}}\int_0^r p^{\epsilon}(u)\mathrm{d}u\right)_{r\geq0}
\]
is a Brownian motion under the filtration $\sigma(B(r),\,r\in[0,\,t])$.
Therefore, letting the process 
\[
\left\{ \begin{aligned} & \mathrm{d}\widehat{q}^{\epsilon}(t)=\widehat{p}^{\epsilon}(t)\mathrm{d}t\;,\\
 & \mathrm{d}\widehat{p}^{\epsilon}(t)=-\nabla U(\widehat{q}^{\epsilon}(t))\mathrm{d}t+\gamma\widehat{p}^{\epsilon}(t)\mathrm{d}t+\sqrt{2\gamma\epsilon}\mathrm{d}B(t)\;,
\end{aligned}
\right.
\]
and let $\widehat{\zeta}^{\epsilon}$ be its first hitting time of
the set $\partial\mathcal{W}\cup\partial\mathcal{M}_{\epsilon}$,
one deduces that for all $t\geq0$, 
\[
\mathbb{E}_{x}\left[\mathrm{exp}\left(\ \frac{V(q^{\epsilon}(\zeta^{\epsilon}\land t),\,p^{\epsilon}(\zeta^{\epsilon}\land t))-V(x)}{\epsilon}
\right)\right]=\mathbb{E}_{x}\left[\mathrm{e}^{d\gamma(\widehat{\zeta}^{\epsilon}\land t)}\right]\;.
\]

Additionally, computations done in~\cite[Section 6.1]{LelRamRey}
ensure that the process $(\widehat{q}^{\epsilon}(t),\,-\widehat{p}^{\epsilon}(t))_{t\geq0}$
and the process $(\widetilde{q}^{\epsilon}(t),\,\widetilde{p}^{\epsilon}(t))_{t\geq0}$
defined in~\eqref{eq:adjoint langevin} share the same law, thus
\[
\mathbb{E}_{x}\left[\mathrm{e}^{d\gamma(\widehat{\zeta}^{\epsilon}\land t)}\right]=\mathbb{E}_{(q,-p)}\left[\mathrm{e}^{d\gamma(\widetilde{\zeta}^{\epsilon}\land t)}\right]\;,
\]
where $\widetilde{\zeta}^{\epsilon}$ is defined in~\eqref{eq:def tau_epsilon}.
All in all this ensures that 
\[
\mathbb{E}_{x}\left[\mathrm{exp}\left(\frac{V(q^{\epsilon}(\zeta^{\epsilon}\land t),\,p^{\epsilon}(\zeta^{\epsilon}\land t))-V(x)}{\epsilon}\right)\right]=\mathbb{E}_{(q,\,-p)}\left[\mathrm{e}^{d\gamma(\widetilde{\zeta}^{\epsilon}\land t)}\right]\;.
\]
As a result, taking $t\rightarrow\infty$ by the dominated convergence
theorem in the left expectation and the monotone convergence theorem
in the right expectation yields 
\[
\mathbb{E}_{x}\left[\mathrm{exp}\left(\frac{V(q^{\epsilon}(\zeta^{\epsilon}),\,p^{\epsilon}(\zeta^{\epsilon}))-V(x)}{\epsilon}\right)\right]=\mathbb{E}_{(q,\,-p)}\left[\mathrm{e}^{d\gamma\widetilde{\zeta}^{\epsilon}}\right]\;.
\]
Moreover, since 
\[
\mathbb{E}_{x}\left[\mathrm{exp}\left(\frac{V(q^{\epsilon}(\zeta^{\epsilon}),\,p^{\epsilon}(\zeta^{\epsilon}))-V(x)}{\epsilon}\right)\right]\geq\mathrm{exp}\left( \frac{V(\sigma,\,0)-V(x)}{\epsilon}\right)\mathbb{P}_{x}\left(\tau_{\partial\mathcal{W}}^{\epsilon}<\tau_{\mathcal{M}_{\epsilon}}^{\epsilon}\right).
\]
One deduces that 
\[
\mathbb{P}_{x}\left(\tau_{\partial\mathcal{W}}^{\epsilon}<\tau_{\mathcal{M}_{\epsilon}}^{\epsilon}\right)\leq\mathrm{exp}\left(\frac{V(x)-V(\sigma,\,0)}{\epsilon}\right)\mathbb{E}_{(q,\,-p)}\left[\mathrm{e}^{d\gamma\tilde{\zeta}^{\epsilon}}\right]\;.
\]
Inequality~\eqref{eq:h_AB and first exit} then allows us to conclude
the proof of this lemma. 
\end{proof}
Assuming Proposition~\ref{prop:ineq h_AB} holds, let us now conclude this section
with the proof of Proposition~\ref{prop:main1}.
\begin{proof}[Proof of Proposition~\ref{prop:main1}]

Let us take a small constant $\delta>0$ such that 
\begin{equation}
V(\sigma,\,0)-\delta>\max(V(m,\,0),V(s,\,0))\;.\label{eq:set energy threshold}
\end{equation}
Then, let 
\[
\mathcal{A}:=\{x\in\mathbb{R}^{2d}:V(x)\leq V(\sigma,\,0)-\delta\}\;.
\]

By~\eqref{eq:def h^*}, 
\[
\int_{\mathbb{R}^{2d}}h_{\mathcal{M}_{\epsilon},\,\mathcal{S}_{\epsilon}}^{*}(x)\mu_{\epsilon}(\mathrm{d}x)=\int_{\mathbb{R}^{2d}}h_{\mathcal{M}_{\epsilon},\,\mathcal{S}_{\epsilon}}(x)\mu_{\epsilon}(\mathrm{d}x)\;.
\]
Decompose the right hand side into 
\[
\int_{\mathcal{A}^{c}}h_{\mathcal{M}_{\epsilon},\,\mathcal{S}_{\epsilon}}(x)\mu_{\epsilon}(\mathrm{d}x)+\int_{\mathcal{A}\cap\mathcal{W}_{m}}h_{\mathcal{M}_{\epsilon},\,\mathcal{S}_{\epsilon}}(x)\mu_{\epsilon}(\mathrm{d}x)+\int_{\mathcal{A}\cap\mathcal{W}_{s}}h_{\mathcal{M}_{\epsilon},\,\mathcal{S}_{\epsilon}}(x)\mu_{\epsilon}(\mathrm{d}x)\;.
\]
Also by Proposition~\ref{prop:tight}, there exists a constant $C>0$
independent of $\epsilon$ such that 
\[
\int_{\mathcal{A}^{c}}h_{\mathcal{M}_{\epsilon},\,\mathcal{S}_{\epsilon}}(x)\mu_{\epsilon}(\mathrm{d}x)\leq\frac{C}{Z_{\epsilon}}\mathrm{e}^{-(V(\sigma,\,0)-\delta)/\epsilon}=o_{\epsilon}(1)\frac{1}{Z_{\epsilon}}(2\pi\epsilon)^{d}\frac{1}{\sqrt{\det{\mathbb{H}_{U}^{m}}}}e^{-V(m,\,0)/\epsilon}\;,
\]
according to~\eqref{eq:set energy threshold}. Moreover, the first
bound in Proposition~\ref{prop:ineq h_AB} ensures that 
\[
\int_{\mathcal{A}\cap\mathcal{W}_{m}}h_{\mathcal{M}_{\epsilon},\,\mathcal{S}_{\epsilon}}(x)\mu_{\epsilon}(\mathrm{d}x)=[1+o_{\epsilon}(1)]\mu_{\epsilon}(\mathcal{A}\cap\mathcal{W}_{m})=[1+o_{\epsilon}(1)]\frac{1}{Z_{\epsilon}}(2\pi\epsilon)^{d}\frac{1}{\sqrt{\det{\mathbb{H}_{U}^{m}}}}e^{-V(m,\,0)/\epsilon}\;,
\]
by the Laplace asymptotics on the function $\mathrm{e}^{-V(x)/\epsilon}$
in $\mathcal{A}\cap\mathcal{W}_{m}$.

Finally, the second bound in Proposition~\ref{prop:ineq h_AB} ensures
the existence of constants $C,\,\beta>0$ independent of $\epsilon$
such that 
\begin{align*}
\int_{\mathcal{A}\cap\mathcal{W}_{s}}h_{\mathcal{M}_{\epsilon},\,\mathcal{S}_{\epsilon}}(x)\mu_{\epsilon}(\mathrm{d}x) & \leq\frac{C}{\epsilon^{\beta}}\int_{\mathcal{A}\cap\mathcal{W}_{s}}\frac{1}{Z_{\epsilon}}\mathrm{e}^{(V(x)-V(\sigma,\,0))/\epsilon}\mathrm{e}^{-V(x)/\epsilon}\mathrm{d}x\\
 & \leq\frac{C|\mathcal{A}|}{\epsilon^{\beta}}\frac{1}{Z_{\epsilon}}\mathrm{e}^{-V(\sigma,\,0)/\epsilon}=o_{\epsilon}(1)\frac{1}{Z_{\epsilon}}(2\pi\epsilon)^{d}\frac{1}{\sqrt{\det{\mathbb{H}_{U}^{m}}}}e^{-V(m,\,0)/\epsilon}\;.
\end{align*}

All in all, this guarantees that

\[
\int_{\mathbb{R}^{2d}}h_{\mathcal{M}_{\epsilon},\,\mathcal{S}_{\epsilon}}^{*}(x)\mu_{\epsilon}(\mathrm{d}x)=[1+o_{\epsilon}(1)]\frac{1}{Z_{\epsilon}}(2\pi\epsilon)^{d}\frac{1}{\sqrt{\det{\mathbb{H}_{U}^{m}}}}e^{-V(m,\,0)/\epsilon}\;,
\]
which concludes the proof since $V(m,\,0)=U(m)$. 
\end{proof}

\section{Transition density of the process absorbed at a boundary}

\label{sec:density estimate}

Let us now delve into intermediate results aimed at proving Proposition~\ref{prop:bound exponential exit time} which proof is carried out in Section~\ref{sec:proof op Prop bound exponential exit time}. In this section, we provide an important identity involving the transition densities of the processes~\eqref{eq:sde} and~\eqref{eq:adjoint langevin} absorbed when exiting the following bounded, $C^{2}$, open set 
\begin{equation}
\mathcal{D}_{\epsilon}:=\mathcal{W}\cap\overline{\mathcal{M}_{\epsilon}}^{c}\cap\overline{\mathcal{S}_{\epsilon}}^{c}\;,\label{eq:def domain D}
\end{equation}
where $\mathcal{W}$ is defined in~\eqref{eq:def W}. The proofs mainly follow the scheme developed in~\cite[Sections 5 and 6]{LelRamRey}. 

\begin{prop}
\label{prop:property density} The process~\eqref{eq:sde}
(resp.~\eqref{eq:adjoint langevin}) starting from $x\in\mathcal{D}_{\epsilon}$
and absorbed at the boundary $\partial\mathcal{D}_{\epsilon}$ admits
a transition density $\mathrm{p}_{t}^{\epsilon,\,\mathcal{D}_{\epsilon}}(x,\,y)$
(resp. $\widetilde{\mathrm{p}}_{t}^{\epsilon,\,\mathcal{D_{\epsilon}}}(x,\,y)$),
i.e. for all measurable set $A\subset\mathcal{D}_{\epsilon}$, for
all $x\in\mathcal{D}_{\epsilon}$ and $t>0$,
\begin{align}
\mathbb{P}_{x}((q^{\epsilon}(t),\,p^{\epsilon}(t))\in A,\,\tau_{\partial\mathcal{D}_{\epsilon}}^{\epsilon}>t)&=\int_{A}\mathrm{p}_{t}^{\epsilon,\,\mathcal{D}_{\epsilon}}(x,\,y)\mathrm{d}y\,,\label{eq:killed density definition 1}\\
\mathbb{P}_{x}((\widetilde{q}^{\epsilon}(t),\,\widetilde{p}^{\epsilon}(t))\in A,\,\widetilde{\tau}_{\partial\mathcal{D_{\epsilon}}}^{\epsilon}>t)&=\int_{A}\widetilde{\mathrm{p}}_{t}^{\epsilon,\,\mathcal{D_{\epsilon}}}(x,\,y)\mathrm{d}y\;,\label{eq:killed density definition 2}
\end{align} 
where $\tau_{\partial\mathcal{D}_{\epsilon}}^{\epsilon}$ (resp. $\widetilde{\tau}_{\partial\mathcal{D_{\epsilon}}}^{\epsilon}$)
is the first hitting time of $\partial\mathcal{D}_{\epsilon}$ for the process
$(q^{\epsilon}(t),\,p^{\epsilon}(t))_{t\geq0}$ (resp. $(\widetilde{q}^{\epsilon}(t),\,\widetilde{p}^{\epsilon}(t))_{t\geq0}$). Furthermore, for all $t>0$, $x,\,y\in\mathcal{D}_{\epsilon}$, it holds that 
\begin{equation}
\mathrm{p}_{t}^{\epsilon,\,\mathcal{D}_{\epsilon}}(x,\,y)=\mathrm{e}^{d\gamma t}\widetilde{\mathrm{p}}_{t}^{\epsilon,\,\mathcal{D_{\epsilon}}}(y,\,x)\;.\label{eq:property density}
\end{equation}
\end{prop}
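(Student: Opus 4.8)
The plan is to first construct the two killed transition densities by transplanting the scheme of \cite[Sections 5 and 6]{LelRamRey}, and then to extract the identity~\eqref{eq:property density} from the elementary fact that, up to an additive constant, the generator $\widetilde{\mathcal{L}}_{\epsilon}:=-\langle p,\nabla_{q}\rangle+\langle\nabla U(q),\nabla_{p}\rangle+\gamma\langle p,\nabla_{p}\rangle+\gamma\epsilon\Delta_{p}$ of~\eqref{eq:adjoint langevin} is the $\mathrm{L}^{2}(\mathrm{d}q\mathrm{d}p)$-adjoint of the generator $\mathcal{L}_{\epsilon}$ of~\eqref{eq:sde}.

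\textbf{Existence and regularity of the densities.} Both $\mathcal{L}_{\epsilon}$ and $\widetilde{\mathcal{L}}_{\epsilon}$ satisfy H\"ormander's bracket condition — the fields $\partial_{p_{1}},\dots,\partial_{p_{d}}$ together with their brackets against the transport drift span $\R^{2d}$ everywhere — hence are hypoelliptic. Since $\mathcal{W}=\{V<V(\sigma,0)\}$ has a $C^{2}$ boundary away from the isolated critical point $(\sigma,0)\notin\overline{\mathcal{D}_{\epsilon}}$, and $\partial\mathcal{M}_{\epsilon},\partial\mathcal{S}_{\epsilon}$ are spheres, the set $\mathcal{D}_{\epsilon}$ of~\eqref{eq:def domain D} is bounded and $C^{2}$. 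One can then follow \cite[Sections 5 and 6]{LelRamRey}, where the Langevin process absorbed at the boundary of a bounded $C^{2}$ domain is studied: starting from the smooth transition density of the free process~\cite{Chaudru2022_preprint} and decomposing at the first boundary hitting, one obtains for each $x\in\mathcal{D}_{\epsilon}$ and $t>0$ a density $\mathrm{p}_{t}^{\epsilon,\mathcal{D}_{\epsilon}}(x,\cdot)$ satisfying~\eqref{eq:killed density definition 1}, smooth in $(t,x,y)$ on $(0,\infty)\times\mathcal{D}_{\epsilon}\times\mathcal{D}_{\epsilon}$, such that for fixed $x$ the map $(t,y)\mapsto\mathrm{p}_{t}^{\epsilon,\mathcal{D}_{\epsilon}}(x,y)$ solves the forward Kolmogorov equation $\partial_{t}w=\mathcal{L}_{\epsilon}^{\dagger}w$ on $\mathcal{D}_{\epsilon}$ (the operator acting on $y$) with vanishing Dirichlet data on $\partial\mathcal{D}_{\epsilon}$ and $w(0^{+},\cdot)=\delta_{x}$. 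Here \cite[Lemma 3.1]{Hill} guarantees non-degenerate exit through $\partial\mathcal{M}_{\epsilon}\cup\partial\mathcal{S}_{\epsilon}$, while the partly characteristic portion $\partial\mathcal{W}$ of the boundary is handled exactly as in \cite{LelRamRey}. Repeating the construction for~\eqref{eq:adjoint langevin}, whose generator has the same hypoelliptic structure, yields the density $\widetilde{\mathrm{p}}_{t}^{\epsilon,\mathcal{D}_{\epsilon}}$ in~\eqref{eq:killed density definition 2}; moreover, for fixed $x$ the map $(t,y)\mapsto\widetilde{\mathrm{p}}_{t}^{\epsilon,\mathcal{D}_{\epsilon}}(y,x)$ solves the backward Kolmogorov equation $\partial_{t}w=\widetilde{\mathcal{L}}_{\epsilon}w$ on $\mathcal{D}_{\epsilon}$ (acting on the starting variable $y$) with vanishing Dirichlet data on $\partial\mathcal{D}_{\epsilon}$ and $w(0^{+},\cdot)=\delta_{x}$.

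\textbf{The adjoint identity.} Expanding the divergence in~\eqref{eq:adj_gen1}, one has $\gamma\,\mathrm{div}_{p}(p\,\cdot)=\gamma\langle p,\nabla_{p}\cdot\rangle+\gamma d$ as an operator, so comparing $\mathcal{L}_{\epsilon}^{\dagger}$ with $\widetilde{\mathcal{L}}_{\epsilon}$ gives
\begin{equation}\label{eq:gen adj id}
\widetilde{\mathcal{L}}_{\epsilon}=\mathcal{L}_{\epsilon}^{\dagger}-\gamma d\;.
\end{equation}
Fix $x\in\mathcal{D}_{\epsilon}$ and set, for $(t,y)\in(0,\infty)\times\mathcal{D}_{\epsilon}$, $u(t,y):=\mathrm{p}_{t}^{\epsilon,\mathcal{D}_{\epsilon}}(x,y)$ and $v(t,y):=\mathrm{e}^{d\gamma t}\,\widetilde{\mathrm{p}}_{t}^{\epsilon,\mathcal{D}_{\epsilon}}(y,x)$. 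By the previous step $u$ solves $\partial_{t}u=\mathcal{L}_{\epsilon}^{\dagger}u$ in $\mathcal{D}_{\epsilon}$ with $u=0$ on $\partial\mathcal{D}_{\epsilon}$ and $u(0^{+},\cdot)=\delta_{x}$, while $y\mapsto\widetilde{\mathrm{p}}_{t}^{\epsilon,\mathcal{D}_{\epsilon}}(y,x)$ solves $\partial_{t}w=\widetilde{\mathcal{L}}_{\epsilon}w$ with the same boundary and initial conditions; hence, using~\eqref{eq:gen adj id},
\[
\partial_{t}v=d\gamma\,v+\mathrm{e}^{d\gamma t}\widetilde{\mathcal{L}}_{\epsilon}\,\widetilde{\mathrm{p}}_{t}^{\epsilon,\mathcal{D}_{\epsilon}}(\cdot,x)=\mathrm{e}^{d\gamma t}\big(d\gamma+\widetilde{\mathcal{L}}_{\epsilon}\big)\widetilde{\mathrm{p}}_{t}^{\epsilon,\mathcal{D}_{\epsilon}}(\cdot,x)=\mathcal{L}_{\epsilon}^{\dagger}v\;,
\]
again with $v=0$ on $\partial\mathcal{D}_{\epsilon}$ and $v(0^{+},\cdot)=\delta_{x}$. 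Thus $u$ and $v$ are fundamental solutions of the same degenerate Cauchy--Dirichlet problem on $\mathcal{D}_{\epsilon}$; its uniqueness — part of the well-posedness theory underlying \cite[Sections 5 and 6]{LelRamRey} — forces $u\equiv v$, which is precisely~\eqref{eq:property density}. (Equivalently, in $\mathrm{L}^{2}(\mathrm{d}q\mathrm{d}p)$ the killed semigroup of~\eqref{eq:sde} on $\mathcal{D}_{\epsilon}$ has adjoint generated by $\mathcal{L}_{\epsilon}^{\dagger}$ with Dirichlet data, which by~\eqref{eq:gen adj id} equals $\mathrm{e}^{d\gamma t}$ times the killed semigroup of~\eqref{eq:adjoint langevin}; reading off kernels and swapping arguments gives~\eqref{eq:property density}.)

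\textbf{Main difficulty.} The computational core — the operator identity~\eqref{eq:gen adj id} and the one-line check that $v$ solves the same equation as $u$ — is immediate. The genuine work lies entirely in the first step: constructing the killed densities, proving their interior smoothness, and establishing well-posedness of the associated Cauchy--Dirichlet problem on a domain whose boundary is \emph{characteristic} along $\partial\mathcal{W}$, where the transport drift $(p,-\nabla U(q)-\gamma p)$ satisfies $\langle(p,-\nabla U(q)-\gamma p),\nabla V\rangle=-\gamma|p|^{2}\le0$ and so points inward, making the diffusion in $p$ the only exit mechanism there. This forces us to reuse, essentially line by line, the analysis of \cite[Sections 5 and 6]{LelRamRey}; the only verification needed is that~\eqref{eq:adjoint langevin} falls within that framework, which it does since it differs from~\eqref{eq:sde} only by the sign of the friction and by reflection of the velocity.
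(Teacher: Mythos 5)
Your strategy is to work \emph{directly} with the hypoelliptic operators: you claim that the killed density $(t,y)\mapsto\mathrm{p}_{t}^{\epsilon,\mathcal{D}_{\epsilon}}(x,y)$ is smooth up to the boundary, vanishes on all of $\partial\mathcal{D}_{\epsilon}$, solves the forward Kolmogorov equation there, and — crucially — that this degenerate Cauchy--Dirichlet problem on $\mathcal{D}_{\epsilon}$ admits a \emph{unique} fundamental solution, so that $u\equiv v$. None of those facts is available. Hypoellipticity (H\"ormander) only gives interior regularity; it says nothing about continuity up to $\partial\mathcal{D}_{\epsilon}$, nor about which boundary conditions are admissible. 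For a kinetic operator the boundary is partly characteristic: near $\partial\mathcal{W}$ the transport field points inward, so one cannot impose Dirichlet data on all of $\partial\mathcal{D}_{\epsilon}$ and expect classical well-posedness — this is precisely the pathology the paper keeps circling. The uniqueness statement you invoke ``as part of the well-posedness theory underlying~\cite{LelRamRey}'' does not exist in that form: those sections also proceed by regularization, not by establishing a direct uniqueness theorem for the hypoelliptic problem. Likewise, the classical uniqueness result the paper actually uses (\cite{FriedmanParabolic}) requires a uniformly parabolic (elliptic-in-space) operator with smooth coefficients, neither of which holds for $\mathcal{L}_{\epsilon}$.

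The paper's route, which you need, is different: it proves~\eqref{eq:density equality alpha,n} first for the \emph{doubly regularized} process — mollifying the drift into $F_{n}$ and adding $\alpha\epsilon\Delta_{q}$ — where the operator $\mathcal{L}_{\alpha,n,\epsilon}$ is elliptic with smooth coefficients and the Friedman well-posedness/uniqueness theory applies; the point of Lemma~\ref{lem:small time convergence integral} is to verify the $t\to0$ initial condition for the candidate $v$ in that elliptic setting. The hypoelliptic densities are then defined only measure-theoretically (Radon--Nikodym from the transition density of~\cite{Chaudru2022_preprint}), and the identity~\eqref{eq:property density} is obtained by integrating against test functions $f,g\in C_{c}^{\infty}(\mathcal{D}_{\epsilon})$, passing $\alpha\to0,n\to\infty$ via Gr\"onwall/almost-sure convergence of indicators (\eqref{eq:convergence expectation}--\eqref{eq:convergence indicatrices}), and invoking local integrability of the kernels. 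Your operator identity $\widetilde{\mathcal{L}}_{\epsilon}=\mathcal{L}_{\epsilon}^{\dagger}-\gamma d$ and the one-line check that $v=\mathrm{e}^{d\gamma t}\widetilde{\mathrm{p}}_{t}(\cdot,x)$ solves the same equation as $u$ are correct and match the paper's Step~1, but they live at the elliptic level of the argument, not the hypoelliptic one. Without the regularization–passage-to-limit scaffolding, the step ``uniqueness forces $u\equiv v$'' is unjustified, and that is where the proof fails.
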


The proof of Proposition~\ref{prop:property density} is first obtained when the coefficients in~\eqref{eq:sde}
and~\eqref{eq:adjoint langevin} are regularized. Recall then the sequence $(F_{n})_{n\geq1}$ in $C^{\infty}(\mathbb{R}^{d})$
defined in~\eqref{eq:def F_n sigma_n} and denote this time by $(q^{\alpha,\,n,\,\epsilon}(t),\,p^{\alpha,\,n,\,\epsilon}(t))_{t\geq0}$
the process satisfying 
\[
\left\{ \begin{aligned} & \mathrm{d}q^{\alpha,\,n,\,\epsilon}(t)=p^{\alpha,\,n,\,\epsilon}(t)\mathrm{d}t+\sqrt{2\alpha\epsilon}\mathrm{d}\widetilde{B}(t)\;,\\
 & \mathrm{d}p^{\alpha,\,n,\,\epsilon}(t)=F_{n}(q^{\alpha,\,n,\,\epsilon}(t))\mathrm{d}t-\gamma p^{\alpha,\,n,\,\epsilon}(t)\mathrm{d}t+\sqrt{2\gamma\epsilon}\mathrm{d}B(t)\;.
\end{aligned}
\right.
\]
and by $(\widetilde{q}^{\alpha,\,n,\,\epsilon}(t),\,\widetilde{p}^{\alpha,\,n,\,\epsilon}(t))_{t\geq0}$
the process satisfying 
\begin{equation}
\left\{ \begin{aligned} & \mathrm{d}\widetilde{q}^{\alpha,\,n,\,\epsilon}(t)=-\widetilde{p}^{\alpha,\,n,\,\epsilon}(t)\mathrm{d}t+\sqrt{2\alpha\epsilon}\mathrm{d}\widetilde{B}(t)\;,\\
 & \mathrm{d}\widetilde{p}^{\alpha,\,n,\,\epsilon}(t)=-F_{n}(\widetilde{q}^{\alpha,\,n,\,\epsilon}(t))\mathrm{d}t+\gamma\widetilde{p}^{\alpha,\,n,\,\epsilon}(t)\mathrm{d}t+\sqrt{2\gamma\epsilon}\mathrm{d}B(t)\;.
\end{aligned}
\right.\label{eq:adjoint langevin regularized}
\end{equation}

Since $F_{n}$ is a smooth function and the set $\mathcal{D}_{\epsilon}$
is open, bounded and $C^{2}$, the existence of the transition densities
for the processes $(q^{\alpha,\,n,\,\epsilon}(t),\,p^{\alpha,\,n,\,\epsilon}(t))_{t\geq0}$
and $(\widetilde{q}^{\alpha,\,n,\,\epsilon}(t),\,\widetilde{p}^{\alpha,\,n,\,\epsilon}(t))_{t\geq0}$
absorbed at the boundary $\partial\mathcal{D}_{\epsilon}$ follows
from~\cite[Chapter 14, Section 4]{Friedman2}.
We shall denote them by $\mathrm{p}_{t}^{\alpha,\,n,\,\epsilon,\,\mathcal{D}_{\epsilon}}(x,\,y)$
and $\widetilde{\mathrm{p}}_{t}^{\alpha,\,n,\,\epsilon,\,\mathcal{D}_{\epsilon}}(x,\,y)$.
Additionally, these transition densities vanish continuously on $\mathbb{R}_{+}^{*}\times\partial(\mathcal{D}_{\epsilon}\times\mathcal{D}_{\epsilon})$.

Moreover, they satisfy the Kolmogorov backward and forward equation. Namely, if we define the differential operator 
\[
\mathcal{L}_{\alpha,\,n,\,\epsilon}=\langle p,\,\nabla_{q}\rangle+\langle F_{n}(q),\,\nabla_{p}\rangle-\gamma\langle p,\,\nabla_{p}\rangle+\alpha\epsilon\Delta_{q}+\gamma\epsilon\Delta_{p}\;,
\]
with adjoint generator in the flat canonical space 
\begin{equation}
(\mathcal{L}_{\alpha,\,n,\,\epsilon})^{\dagger}=-\langle p,\,\nabla_{q}\rangle-\langle F_{n}(q),\,\nabla_{p}\rangle+\gamma\mathrm{div}(p\cdot)+\alpha\epsilon\Delta_{q}+\gamma\epsilon\Delta_{p}\;,\label{eq:adjoint generator}
\end{equation}
then, we have 
\begin{equation}
\begin{aligned}
& \partial_{t}\mathrm{p}_{t}^{\alpha,\,n,\,\epsilon,\,\mathcal{D}_{\epsilon}}(x,\,y)=\mathcal{L}_{\alpha,\,n,\,\epsilon;\,x}\mathrm{p}_{t}^{\alpha,\,n,\,\epsilon,\,\mathcal{D}_{\epsilon}}(x,\;y)\text{ and}\\
& \partial_{t}(\mathrm{e}^{d\gamma t}\widetilde{\mathrm{p}}_{t}^{\alpha,\,n,\,\epsilon,\,\mathcal{D}_{\epsilon}}(x,\,y))=\mathrm{e}^{d\gamma t}\mathcal{L}_{\alpha,\,n,\,\epsilon;\,y}\widetilde{\mathrm{p}}_{t}^{\alpha,\,n,\,\epsilon,\,\mathcal{D}_{\epsilon}}(x,\,y)\;,
\end{aligned}
\label{eq:kolmogorov equations}
\end{equation} 
where $\mathcal{L}_{\alpha,\,n,\,\epsilon;\,z}$ corresponds to the
differentiation with respect to the variable $z$.

Before proving Proposition~\ref{prop:property density} we show the following lemma obtained in~\cite[Lemma 5.6]{LelRamRey} for cylindrical domains. The proof is adapted here for the non-cylindrical survival set $\mathcal{D}_\epsilon$. 
\begin{lem}
\label{lem:small time convergence integral}

Let $g\in C_{c}^{\infty}(\mathcal{D}_{\epsilon})$.
Then, for all $x\in\mathcal{D}_{\epsilon}$, 
\begin{equation}\label{eq:zero time convergence density}
    \int_{\mathcal{D}_{\epsilon}}\widetilde{\mathrm{p}}_{t}^{\alpha,\,n,\,\epsilon,\,\mathcal{D}_{\epsilon}}(y,\,x)g(y)\mathrm{d}y\underset{t\rightarrow0}{\longrightarrow}g(x)\;.
\end{equation}
\end{lem}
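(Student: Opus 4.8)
The plan is to compare the killed density $\widetilde{\mathrm{p}}_t^{\alpha,\,n,\,\epsilon,\,\mathcal{D}_{\epsilon}}$ with the transition density of~\eqref{eq:adjoint langevin regularized} on the whole space, and to show that the discrepancy produced by the killing at $\partial\mathcal{D}_{\epsilon}$ is superpolynomially small as $t\to0$, exploiting that $x$ lies strictly inside $\mathcal{D}_{\epsilon}$. As a preliminary, exactly as in the proof of Lemma~\ref{lem:control density}, one may modify the coefficients of~\eqref{eq:adjoint langevin regularized} outside a large ball containing $\overline{\mathcal{D}_{\epsilon}}$ to be bounded and globally Lipschitz; by Friedman's uniqueness result~\cite[Theorem~5.2.1]{Friedman} this affects neither $\widetilde{\mathrm{p}}_t^{\alpha,\,n,\,\epsilon,\,\mathcal{D}_{\epsilon}}$ nor the law of the process before it leaves $\mathcal{D}_{\epsilon}$, while the modified process now carries on $\mathbb{R}^{2d}$ a transition density $\widetilde{\mathrm{q}}_t^{\alpha,\,n,\,\epsilon}(y,\,z)$ obeying the Gaussian-type upper bound of~\cite[Theorem~2.1]{Menozzi} recalled in~\eqref{eq:expr gaussian upper-bound}. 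Writing $\widetilde{\sigma}$ for the first exit time of~\eqref{eq:adjoint langevin regularized} from $\mathcal{D}_{\epsilon}$, the strong Markov property at $\widetilde{\sigma}$ gives, for $y,\,x\in\mathcal{D}_{\epsilon}$ and $t>0$,
\[
\widetilde{\mathrm{q}}_t^{\alpha,\,n,\,\epsilon}(y,\,x)=\widetilde{\mathrm{p}}_t^{\alpha,\,n,\,\epsilon,\,\mathcal{D}_{\epsilon}}(y,\,x)+r_t(y,\,x),\qquad r_t(y,\,x):=\mathbb{E}_y\!\left[\mathbf{1}_{\widetilde{\sigma}\le t}\,\widetilde{\mathrm{q}}_{t-\widetilde{\sigma}}^{\alpha,\,n,\,\epsilon}\big(\widetilde{X}^{\alpha,\,n,\,\epsilon}(\widetilde{\sigma}),\,x\big)\right]\ge0,
\]
so it suffices to prove that $\int_{\mathcal{D}_{\epsilon}}\widetilde{\mathrm{q}}_t^{\alpha,\,n,\,\epsilon}(y,\,x)g(y)\,\mathrm{d}y\to g(x)$ and that $\int_{\mathcal{D}_{\epsilon}}r_t(y,\,x)g(y)\,\mathrm{d}y\to0$ as $t\to0$.

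For the first limit I would invoke the whole-space counterpart of the duality~\eqref{eq:property density}, namely $\mathrm{p}_t^{\alpha,\,n,\,\epsilon}(x,\,y)=\mathrm{e}^{d\gamma t}\,\widetilde{\mathrm{q}}_t^{\alpha,\,n,\,\epsilon}(y,\,x)$, with $\mathrm{p}_t^{\alpha,\,n,\,\epsilon}$ the transition density of the corresponding regularization of~\eqref{eq:sde}. Unlike~\eqref{eq:property density} this whole-space identity is routine: it merely records the algebraic relation $(\mathcal{L}_{\alpha,\,n,\,\epsilon})^{\dagger}=\widetilde{\mathcal{L}}_{\alpha,\,n,\,\epsilon}+d\gamma$ (visible from~\eqref{eq:adjoint generator}) between $\mathcal{L}_{\alpha,\,n,\,\epsilon}$ and the generator of~\eqref{eq:adjoint langevin regularized}, together with uniqueness of the fundamental solution of the hypoelliptic smooth-coefficient Kolmogorov equation. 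Granting it, $\int_{\mathbb{R}^{2d}}\widetilde{\mathrm{q}}_t^{\alpha,\,n,\,\epsilon}(y,\,x)g(y)\,\mathrm{d}y=\mathrm{e}^{-d\gamma t}\,\mathbb{E}_x\big[g(X^{\alpha,\,n,\,\epsilon}(t))\big]\to g(x)$ by Feller continuity of the (modified) process and continuity and boundedness of $g$; and the difference between this and the integral over $\mathcal{D}_{\epsilon}$ is at most $\|g\|_{\infty}\int_{\mathcal{D}_{\epsilon}^{c}}\widetilde{\mathrm{q}}_t^{\alpha,\,n,\,\epsilon}(y,\,x)\,\mathrm{d}y$, which vanishes because $g$ is supported away from $\partial\mathcal{D}_{\epsilon}$ and $\widetilde{\mathrm{q}}_t^{\alpha,\,n,\,\epsilon}(\cdot,\,x)$ concentrates near $x$ as $t\to0$ by the Gaussian bound.

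The substantive point, and the main obstacle, is the vanishing of $\int_{\mathcal{D}_{\epsilon}}r_t(y,\,x)g(y)\,\mathrm{d}y$, where the non-cylindrical geometry of $\mathcal{D}_{\epsilon}$ (whose boundary contains the energy surface $\partial\mathcal{W}$, so the $q$- and $p$-directions cannot be decoupled as for the cylindrical domains of~\cite[Lemma~5.6]{LelRamRey}) genuinely changes the argument. Put $2\delta_0:=\mathrm{d}(x,\,\partial\mathcal{D}_{\epsilon})>0$, which is positive since $x\in\mathcal{D}_{\epsilon}$ and $\mathcal{D}_{\epsilon}$ is open. I would first establish a uniform bound: there exist $c>0$ and $s_0\in(0,1)$, depending only on $x$ and $\mathcal{D}_{\epsilon}$, such that $\widetilde{\mathrm{q}}_s^{\alpha,\,n,\,\epsilon}(z,\,x)\le c_1 s^{-2d}\mathrm{e}^{-c\delta_0/s}$ for every $z\in\partial\mathcal{D}_{\epsilon}$ and $s\in(0,\,s_0]$. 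Indeed $|z-x|\ge2\delta_0$ forces $|z_p-x_p|\ge\delta_0$ — in which case the first term in the exponent of~\eqref{eq:expr gaussian upper-bound} is $\ge c\delta_0/s$ — or $|z_q-x_q|\ge\delta_0$, in which case, using the boundedness of $\partial\mathcal{D}_{\epsilon}$ to control $|z_p|$, one has $|x_q-z_q-(x_p+z_p)s/2|\ge\delta_0/2$ for $s$ small, so the second term is $\ge c\delta_0/s^{3}\ge c\delta_0/s$. Since $s\mapsto c_1 s^{-2d}\mathrm{e}^{-c\delta_0/s}$ is increasing near $0$, on $\{\widetilde{\sigma}\le t\}$ we obtain, for $t$ small, $\widetilde{\mathrm{q}}_{t-\widetilde{\sigma}}^{\alpha,\,n,\,\epsilon}(\widetilde{X}^{\alpha,\,n,\,\epsilon}(\widetilde{\sigma}),\,x)\le c_1 t^{-2d}\mathrm{e}^{-c\delta_0/t}$ (recall $\widetilde{X}^{\alpha,\,n,\,\epsilon}(\widetilde{\sigma})\in\partial\mathcal{D}_{\epsilon}$ and $\mathbb{P}_y(\widetilde{\sigma}=t)=0$), whence $r_t(y,\,x)\le c_1 t^{-2d}\mathrm{e}^{-c\delta_0/t}$ uniformly in $y\in\mathcal{D}_{\epsilon}$ and
\[
\int_{\mathcal{D}_{\epsilon}}r_t(y,\,x)g(y)\,\mathrm{d}y\le\|g\|_{\infty}\,|\mathcal{D}_{\epsilon}|\,c_1 t^{-2d}\mathrm{e}^{-c\delta_0/t}\underset{t\rightarrow0}{\longrightarrow}0 .
\]
Combining the two limits yields~\eqref{eq:zero time convergence density}. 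The delicate part is precisely this last estimate: securing the lower bound of order $\delta_0/s$ on the anisotropic (kinetic) distance from $\partial\mathcal{D}_{\epsilon}$ to the fixed interior point $x$, valid simultaneously in all directions, which is what replaces the straightforward one-variable reasoning available in the cylindrical setting of~\cite{LelRamRey}.
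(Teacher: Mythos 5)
Your proof is correct and takes a genuinely different route from the paper. The paper uses a Duhamel-type interpolation: it compares $\widetilde{\mathrm{p}}_t^{\alpha,n,\epsilon,\mathcal{D}_\epsilon}$ with the pure Gaussian kernel $\widehat{\mathrm{p}}_t^{\alpha,\epsilon}$ by introducing the bridging function $h_t(u)=\mathrm{e}^{d\gamma u}\int_{\mathcal{D}_\epsilon}\widetilde{\mathrm{p}}_u^{\alpha,n,\epsilon,\mathcal{D}_\epsilon}(x,z)\widehat{\mathrm{p}}_{t-u}^{\alpha,\epsilon}(y,z)\varphi(z)\mathrm{d}z$, computes $\int_0^t h_t'(u)\,\mathrm{d}u$ via the Kolmogorov equations and integration by parts, and bounds the result by $C\sqrt{t}$ times a Gaussian using Aronson-type upper bounds; the lemma then reduces to the classical small-time behavior of $\widehat{\mathrm{p}}_t$. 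You instead split the whole-space density $\widetilde{\mathrm{q}}_t^{\alpha,n,\epsilon}(y,x)=\widetilde{\mathrm{p}}_t^{\alpha,n,\epsilon,\mathcal{D}_\epsilon}(y,x)+r_t(y,x)$ via the strong Markov property at the exit time, handle the main term by the elementary whole-space duality $\mathrm{p}_t^{\alpha,n,\epsilon}(x,y)=\mathrm{e}^{d\gamma t}\widetilde{\mathrm{q}}_t^{\alpha,n,\epsilon}(y,x)$ (which reduces the convergence to Feller continuity of the forward semigroup), and kill the boundary contribution by the Gaussian bound and the fact that $\mathrm{d}(x,\partial\mathcal{D}_\epsilon)>0$. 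Your decomposition is conceptually cleaner and even yields a superexponential bound on the boundary term, whereas the paper's argument is more computational but remains self-contained inside the killed density framework (it never needs the whole-space duality, the strong Markov property, or coefficient modification, although the latter is cosmetic). Two small remarks: (i) since $g$ has compact support in $\mathcal{D}_\epsilon$, the step bounding $\int_{\mathcal{D}_\epsilon^c}\widetilde{\mathrm{q}}_t^{\alpha,n,\epsilon}(y,x)g(y)\,\mathrm{d}y$ is vacuous — that integral is identically zero; (ii) for the regularized process~\eqref{eq:adjoint langevin regularized} with $\alpha>0$ the diffusion is uniformly elliptic, so the classical Aronson bound (which the paper itself invokes for these densities) is the natural reference rather than the anisotropic kinetic bound of~\cite{Menozzi} from~\eqref{eq:expr gaussian upper-bound} — either gives the required superpolynomial decay at fixed spatial separation, but citing Aronson avoids the case split between the $q$- and $p$-directions entirely.
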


\begin{proof}[Proof of Lemma~\ref{lem:small time convergence integral}]
For $t>0$, let us define the following covariance matrix 
\[
\Sigma_{t}:=\begin{pmatrix}2\alpha\epsilon t\,I_{d} & 0\\
0 & 2\gamma\epsilon t\,I_{d}
\end{pmatrix}\;.
\]
Define the following transition density for $t>0$ and $x,\,y\in\mathbb{R}^{2d}$,
\[
\widehat{\mathrm{p}}_{t}^{\alpha,\,\epsilon}(x,\,y)=\frac{1}{(4\pi t\sqrt{\gamma\alpha\epsilon^{2}})^{d}}\mathrm{e}^{-\frac{1}{2}\langle x-y,\,\Sigma_{t}^{-1}(x-y)\rangle}\;.
\]
If we replace the transition density in~\eqref{eq:zero time convergence density} by $\widehat{\mathrm{p}}_{t}^{\alpha,\,\epsilon}(x,\,y)$, the convergence follows by an appropriate change of variable. Therefore, we consider here the difference $\widetilde{\mathrm{p}}_{t}^{\alpha,\,n,\,\epsilon,\,\mathcal{D}_{\epsilon}}(x,\,y)-\widehat{\mathrm{p}}_{t}^{\alpha,\,\epsilon}(x,\,y)$ and provide a mild formulation of its expression which is then used to prove the convergence.

Let us fix $x,\,y\in\mathcal{D}_{\epsilon}$ and let $\varphi\in C_{c}^{\infty}(\mathcal{D}_{\epsilon})$ with values in $[0,1]$ such that $\varphi(x)=\varphi(y)=1$. Following the idea developed in~\cite[Lemma 5.6]{LelRamRey}, we define the following
function for $t>0$, 
\begin{equation}\label{eq:def h_t(u)}
h_{t}:u\in(0,\,t)\mapsto\mathrm{e}^{d\gamma u}\int_{\mathcal{D}_{\epsilon}}\widetilde{\mathrm{p}}_{u}^{\alpha,\,n,\,\epsilon,\,\mathcal{D}_{\epsilon}}(x,\,z)\widehat{\mathrm{p}}_{t-u}^{\alpha,\,\epsilon}(y,\,z)\varphi(z)\mathrm{d}z\;.
\end{equation}
\textbf{Step 1:} Let us then show that for $t>0$,
\begin{equation}\label{eq:difference density expr}
\mathrm{e}^{d\gamma t}\widetilde{\mathrm{p}}_{t}^{\alpha,\,n,\,\epsilon,\,\mathcal{D}_{\epsilon}}(x,\,y)-\widehat{\mathrm{p}}_{t}^{\alpha,\,\epsilon}(x,\,y)=\int_{0}^{t}\frac{\mathrm{d}h_{t}(u)}{\mathrm{d}u}\mathrm{d}u\;.    
\end{equation}
It is sufficient to prove that
\begin{equation}\label{eq:double limit h_t}
    \lim_{u\rightarrow0}h_t(u)=\widehat{\mathrm{p}}_{t}^{\alpha,\,\epsilon}(x,\,y),\quad\lim_{u\rightarrow t}h_t(u)=\mathrm{e}^{d\gamma t}\widetilde{\mathrm{p}}_{t}^{\alpha,\,n,\,\epsilon,\,\mathcal{D}_{\epsilon}}(x,\,y)\;.
\end{equation}

Writing 
\[
h_{t}(u)=\mathrm{e}^{d\gamma u}\,\mathbb{E}_{x}\left[\widehat{\mathrm{p}}_{t-u}^{\alpha,\,\epsilon}(y,\,(\widetilde{q}^{\alpha,\,n,\,\epsilon}(u),\,\widetilde{p}^{\alpha,\,n,\,\epsilon}(u)))\varphi(\widetilde{q}^{\alpha,\,n,\,\epsilon}(u),\,\widetilde{p}^{\alpha,\,n,\,\epsilon}(u))\mathbf{1}_{\widetilde{\tau}_{\partial\mathcal{D_{\epsilon}}}^{\alpha,\,n,\,\epsilon}>u}\right]\;,
\]
where $\widetilde{\tau}_{\partial\mathcal{D_{\epsilon}}}^{\alpha,\,n,\,\epsilon}$
is the first hitting time of $\partial\mathcal{D}_{\epsilon}$ for
the process~\eqref{eq:adjoint langevin regularized}, the first limit then follows from the continuity of the trajectories of~\eqref{eq:adjoint langevin regularized} and the transition density
$\widehat{\mathrm{p}}^{\alpha,\,\epsilon}$ and from the condition $\varphi(x)=1$. Regarding the second limit, one can write
\[
\begin{aligned}
h_t(u)
&= \mathrm{e}^{d\gamma u}\,
\mathbb{E}\Bigl[
  \widetilde{\mathrm{p}}_{u}^{\alpha,\,n,\,\epsilon,\,\mathcal{D}_{\epsilon}}
  \Bigl(
    x,\,
    y + \bigl(
      \sqrt{2\alpha\epsilon}\,B(t-u),
      \sqrt{2\gamma\epsilon}\,\widetilde{B}(t-u)
    \bigr)
  \Bigr)
\\
&\qquad \qquad\quad\times
  \varphi\Bigl(
    y + \bigl(
      \sqrt{2\alpha\epsilon}\,B(t-u),
      \sqrt{2\gamma\epsilon}\,\widetilde{B}(t-u)
    \bigr)
  \Bigr)
\Bigr] \; .
\end{aligned}
\]
where $(B(t))_{t\geq0}$, $(\widetilde{B}(t))_{t\geq0}$ are independent Brownian motions in $\R^d$. The proof of~\eqref{eq:difference density expr} then follows from the dominated convergence theorem, when $u\rightarrow t$, using the Aronson Gaussian upper-bound~\cite{aronson1967bounds} satisfied by the transition density $\widetilde{\mathrm{p}}_{u}^{\alpha,\,n,\,\epsilon,\,\mathcal{D}_{\epsilon}}$. 
 
\textbf{Step 2:} Let $g\in C_{c}^{\infty}(\mathcal{D}_{\epsilon})$. Let us show the existence of a constant $C>0$ such that for $t>0$,
\begin{equation}\label{eq:diff density test function}
\left|\int_{\mathcal{D}_{\epsilon}}g(x)\widetilde{\mathrm{p}}_{t}^{\alpha,\,n,\,\epsilon,\,\mathcal{D}_{\epsilon}}(x,\,y)\mathrm{d}x-\int_{\mathcal{D}_{\epsilon}}g(x)\widehat{\mathrm{p}}_{t}^{\alpha,\,\epsilon}(y,\,x)\mathrm{d}x\right|\leq C\sqrt{t}\;.
\end{equation}
Then, since
\[
\int_{\mathcal{D}_{\epsilon}}g(x)\widehat{\mathrm{p}}_{t}^{\alpha,\,\epsilon}(y,\,x)\mathrm{d}x=\mathbb{E}\left[g(y+(\sqrt{2\alpha\epsilon}\,B(t),\sqrt{2\gamma\epsilon}\,\widetilde{B}(t)))\right]\underset{t\rightarrow0}{\longrightarrow}g(y)\;,
\] 
the proof of~\eqref{eq:zero time convergence density} follows from the inequality~\eqref{eq:diff density test function}.

Define now the following differential operator for $x=(q,p)\in\mathbb{R}^{2d}$,
\[
\mathcal{L}_{\alpha,\,\epsilon;\,x}=\alpha\epsilon\Delta_{q}+\gamma\epsilon\Delta_{p}\;.
\]
Notice that for all $t>0$, $x,\,y\in\mathbb{R}^{2d}$,
\begin{equation}
\partial_{t}\widehat{\mathrm{p}}_{t}^{\alpha,\,\epsilon}(x,\,y)=\mathcal{L}_{\alpha,\,\epsilon;\,x}\widehat{\mathrm{p}}_{t}^{\alpha,\,\epsilon}(x,\,y)=\mathcal{L}_{\alpha,\,\epsilon;\,y}\widehat{\mathrm{p}}_{t}^{\alpha,\,\epsilon}(x,\,y)\;.\label{eq:kolmogorov eq brownian}
\end{equation}

Consequently, by~\eqref{eq:kolmogorov equations},~\eqref{eq:difference density expr} 
and~\eqref{eq:kolmogorov eq brownian}, 
\begin{align*}
  \mathrm{e}^{d\gamma t}\widetilde{\mathrm{p}}_{t}^{\alpha,\,n,\,\epsilon,\,\mathcal{D}_{\epsilon}}(x,\,y)-\widehat{\mathrm{p}}_{t}^{\alpha,\,\epsilon}(x,\,y)
 & =\int_{0}^{t}\frac{\mathrm{d}h_{t}(u)}{\mathrm{d}u}\mathrm{d}u\\
 & =\int_{0}^{t}\mathrm{e}^{d\gamma u}\int_{\mathcal{D}_{\epsilon}}[\mathcal{L}_{\alpha,\,n,\,\epsilon;\,z}(\widetilde{\mathrm{p}}_{u}^{\alpha,\,n,\,\epsilon,\,\mathcal{D}_{\epsilon}}(x,\,z))\widehat{\mathrm{p}}_{t-u}^{\alpha,\,\epsilon}(y,\,z)\varphi(z)
 \\&\qquad \qquad \qquad \, 
 -\widetilde{\mathrm{p}}_{u}^{\alpha,\,n,\,\epsilon,\,\mathcal{D}_{\epsilon}}(x,\,z)\mathcal{L}_{\alpha,\,\epsilon;\,z}(\widehat{\mathrm{p}}_{t-u}^{\alpha,\,\epsilon}(y,\,z))\varphi(z)]\mathrm{d}z\,\mathrm{d}u\;.
\end{align*}
By integration by parts and using the adjoint generator~\eqref{eq:adjoint generator},
\begin{align*}
 & \mathrm{e}^{d\gamma t}\widetilde{\mathrm{p}}_{t}^{\alpha,\,n,\,\epsilon,\,\mathcal{D}_{\epsilon}}(x,\,y)-\widehat{\mathrm{p}}_{t}^{\alpha,\,\epsilon}(x,\,y)\\
 & =\int_{0}^{t}\mathrm{e}^{d\gamma u}\int_{\mathcal{D}_{\epsilon}}[\widetilde{\mathrm{p}}_{u}^{\alpha,\,n,\,\epsilon,\,\mathcal{D}_{\epsilon}}(x,\,z)\left((\mathcal{L}_{\alpha,\,n,\,\epsilon;\,z})^{\dagger}(\widehat{\mathrm{p}}_{t-u}^{\alpha,\,\epsilon}(y,\,z))-\mathcal{L}_{\alpha,\,\epsilon;\,z}(\widehat{\mathrm{p}}_{t-u}^{\alpha,\,\epsilon}(y,\,z)\right)]\varphi(z)\mathrm{d}z\,\mathrm{d}u\\
 & +\int_{0}^{t}\mathrm{e}^{d\gamma u}\int_{\mathcal{D}_{\epsilon}}\widetilde{\mathrm{p}}_{u}^{\alpha,\,n,\,\epsilon,\,\mathcal{D}_{\epsilon}}(x,\,z)[\langle\Sigma_{1}\nabla_{z}\widehat{\mathrm{p}}_{t-u}^{\alpha,\,\epsilon}(y,\,z),\,\nabla\varphi(z)\rangle+\widehat{\mathrm{p}}_{t-u}^{\alpha,\,\epsilon}(y,\,z)[(\mathcal{L}_{\alpha,\,n,\,\epsilon;\,z})^{\dagger}-d\gamma]\varphi(z)]\mathrm{d}z\,\mathrm{d}u\;.
\end{align*}
Furthermore, taking the gradient of $\widehat{\mathrm{p}}_{t-u}^{\alpha,\,\epsilon}(y,\,\cdot)$ ensures the existence of constants $C,\,\beta>0$
such that for all $u\in(0,\,t)$, $y,\,z\in\mathbb{R}^{2d}$, 
\[
\widehat{\mathrm{p}}_{t-u}^{\alpha,\,\epsilon}(y,\,z)+|\nabla_{z}\widehat{\mathrm{p}}_{t-u}^{\alpha,\,\epsilon}(y,\,z)|\leq\frac{C}{\sqrt{t-u}}\frac{\mathrm{e}^{-\frac{|y-z|^{2}}{2\beta (t-u)}}}{(2\pi\beta (t-u))^{d}}\;.
\]
By~\cite{aronson1967bounds}, up to increasing the constants
$C,\,\beta>0$, the density $\widetilde{\mathrm{p}}_{u}^{\alpha,\,n,\,\epsilon,\,\mathcal{D}_{\epsilon}}$ also
admits a similar Gaussian upper-bound. Hence, we get 
\[
\widetilde{\mathrm{p}}_{u}^{\alpha,\,n,\,\epsilon,\,\mathcal{D}_{\epsilon}}(x,\,z)\leq C\frac{\mathrm{e}^{-\frac{|x-z|^{2}}{2\beta u}}}{(2\pi\beta u)^{d}}\;.
\]
Therefore, since $(\mathcal{L}_{\alpha,\,n,\,\epsilon;\,z})^{\dagger}-\mathcal{L}_{\alpha,\,\epsilon;\,z}$
is only composed of first order derivatives, the above Gaussian upper-bounds and the boundedness of the set $\mathcal{D}_{\epsilon}$ provide a constant $C'>0$ which depends on $\Vert\nabla\varphi\Vert_\infty\lor\Vert\Delta\varphi\Vert_\infty$ such that 
\begin{align*}
 \mathrm{e}^{d\gamma t}\widetilde{\mathrm{p}}_{t}^{\alpha,\,n,\,\epsilon,\,\mathcal{D}_{\epsilon}}(x,\,y)-\widehat{\mathrm{p}}_{t}^{\alpha,\,\epsilon}(x,\,y)
 & \leq C'\int_{0}^{t}\frac{\mathrm{e}^{d\gamma r}}{\sqrt{t-r}}\int_{\mathcal{D}_{\epsilon}}\frac{\mathrm{e}^{-\frac{|z-x|^{2}}{2\beta r}}}{(2\pi\beta r)^{d}}\frac{\mathrm{e}^{-\frac{|y-z|^{2}}{2\beta(t-r)}}}{(2\pi\beta(t-r))^{d}}\mathrm{d}z\,\mathrm{d}r\\
 & \leq \;C'\frac{\mathrm{e}^{-\frac{|y-x|^{2}}{2\beta t}}}{(2\pi\beta t)^{d}}\int_{0}^{t}\frac{\mathrm{e}^{d\gamma r}}{\sqrt{t-r}}\mathrm{d}r\;
   \leq\;2C'\mathrm{e}^{d\gamma t}\sqrt{t}\,\frac{\mathrm{e}^{-\frac{|y-x|^{2}}{2\beta t}}}{(2\pi\beta t)^{d}}\;,
\end{align*}
by the Chapman-Kolmogorov relation.  Although the proof of the inequality above is done for fixed points $x,\,y\in\mathcal{D}_{\epsilon}$ and the definition of $\varphi$ depends on $x$ and $y$. One can always do the exact same reasoning for any points $x,\,y$ in a compact set $K\subset\mathcal{D}_{\epsilon}$ which contains the support of the test function $g$ in~\eqref{eq:zero time convergence density}. Then, the same reasoning works for any points $x,\,y\in K$ provided we take in~\eqref{eq:def h_t(u)} a test function $\varphi\in C_c^\infty(\mathcal{D}_\epsilon)$ equal to $1$ on $K$. Thus, integrating the inequality above against the test function $g$ concludes the proof of~\eqref{eq:zero time convergence density}.
\end{proof}
Following the scheme of proof developed in~\cite[Theorem 6.2]{LelRamRey}, we conclude this section with the proof of Proposition~\ref{prop:property density}.
\begin{proof}[Proof of Proposition~\ref{prop:property density}] \textbf{Step 1:} Let us first prove the equality for the regularized processes: for all $t>0$, $x,\,y\in\mathcal{D}_{\epsilon}$, 
\begin{equation}
\mathrm{p}_{t}^{\alpha,\,n,\,\epsilon,\,\mathcal{D}_{\epsilon}}(x,\,y)=\mathrm{e}^{d\gamma t}\widetilde{\mathrm{p}}_{t}^{\alpha,\,n,\,\epsilon,\,\mathcal{D}_{\epsilon}}(y,\,x)\;.\label{eq:density equality alpha,n}
\end{equation}
Let $g\in C_{c}^{\infty}(\mathcal{D}_{\epsilon})$ and define for $t>0$ and $x\in\mathcal{D}_{\epsilon}$,
\[
u(t,\,x)=\int_{\mathcal{D}_{\epsilon}}\mathrm{p}_{t}^{\alpha,\,n,\,\epsilon,\,\mathcal{D}_{\epsilon}}(x,\,y)g(y)\mathrm{d}y,\qquad v(t,\,x)=\mathrm{e}^{d\gamma t}\int_{\mathcal{D}_{\epsilon}}\widetilde{\mathrm{p}}_{t}^{\alpha,\,n,\,\epsilon,\,\mathcal{D}_{\epsilon}}(y,\,x)g(y)\mathrm{d}y\;.
\]
It is well known (see~\cite[Chapter 2, Theorem 3.7]{FriedmanParabolic})
that $u$ is the unique solution in $C^{\infty}(\mathbb{R}_{+}^{*}\times\mathcal{D}_{\epsilon})\cap C_{b}(\mathbb{R_{+}}\times\overline{\mathcal{D}_{\epsilon}})$
of 
\begin{equation}
\left\{ \begin{aligned}\partial_{t}u(t,\,x) & =\mathcal{L}_{\alpha,\;n,\;\epsilon}u(t,\;x) &  & t>0,\quad x\in\mathcal{D}_{\epsilon},\\
u(0,\,x) & =g(x) &  & x\in\mathcal{D}_{\epsilon},\\
u(t,\,x) & =0 &  & t>0,\quad x\in\partial\mathcal{D}_{\epsilon}\;,
\end{aligned}
\right.\label{eq:initial boundary value problem u}
\end{equation}
Therefore, the equality~\eqref{eq:density equality alpha,n} follows immediately if $v$ is also a solution in $C^{\infty}(\mathbb{R}_{+}^{*}\times\mathcal{D}_{\epsilon})\cap C_{b}(\mathbb{R_{+}}\times\overline{\mathcal{D}_{\epsilon}})$ of~\eqref{eq:initial boundary value problem u}.

The equality $\partial_{t}v(t,\,x)=\mathcal{L}_{\alpha,\,n,\,\epsilon}v(t,\,x)$ follows from~\eqref{eq:kolmogorov equations}. Additionally, the continuity of $\widetilde{\mathrm{p}}_{t}^{\alpha,\,n,\,\epsilon,\,\mathcal{D}_{\epsilon}}(y,\,x)$
on $\mathbb{R}_{+}^{*}\times\overline{\mathcal{D}_{\epsilon}}\times\overline{\mathcal{D}_{\epsilon}}$
ensures that $v$ is continuous on $\mathbb{R}_{+}^{*}\times\overline{\mathcal{D}_{\epsilon}}$
and vanishes at $\mathbb{R}_{+}^{*}\times\partial\mathcal{D}_{\epsilon}$. Finally, the continuity at $t=0$ follows from Lemma~\ref{lem:small time convergence integral}. Hence $v=u$ and the equality~\eqref{eq:density equality alpha,n} follows.
\smallskip

\noindent \textbf{Step 2:} In order to deduce~\eqref{eq:property density}, we would like to take the limits $\alpha\rightarrow0$, $n\rightarrow\infty$ in~\eqref{eq:density equality alpha,n}. In order to do that, let us first prove that for all non-negative $f,g\in C_{c}^{\infty}(\mathcal{D}_{\epsilon})$ one has that for all $t>0$, for all $x=(q,p)\in\mathcal{D}_{\epsilon}$, $\mathbb{P}_x$ almost-surely,
\begin{equation}
g(q^{\alpha,\,n,\,\epsilon}(t),\,p^{\alpha,\,n,\,\epsilon}(t))\mathbf{1}_{\tau_{\partial\mathcal{D}_{\epsilon}}^{\alpha,\,n,\,\epsilon}>t}\underset{\alpha\rightarrow0,\,n\rightarrow\infty}{\longrightarrow}g(q^{\epsilon}(t),\,p^{\epsilon}(t))\mathbf{1}_{\tau_{\partial\mathcal{D}_{\epsilon}}^{\epsilon}>t}\;,\label{eq:convergence expectation}
\end{equation}
and 
\begin{equation}\label{eq:convergence expectation 2}
f(\widetilde{q}^{\alpha,\,n,\,\epsilon}(t),\;\widetilde{p}^{\alpha,\,n,\,\epsilon}(t))\mathbf{1}_{\widetilde{\tau}_{\partial\mathcal{D_{\epsilon}}}^{\alpha,\,n,\,\epsilon}>t}\underset{\alpha\rightarrow0,\,n\rightarrow\infty}{\longrightarrow}f(\widetilde{q}^{\epsilon}(t),\,\widetilde{p}^{\epsilon}(t))\mathbf{1}_{\widetilde{\tau}_{\partial\mathcal{D_{\epsilon}}}^{\epsilon}>t}\;.
\end{equation}
Let us first control the distance between $(q^{\alpha,\,n,\,\epsilon}(t),\,p^{\alpha,\,n,\,\epsilon}(t))_{t\geq0}$
and $(q^{\epsilon}(t),\,p^{\epsilon}(t))_{t\geq0}$ using Gr\"onwall's inequality. Let $\mathcal{D}'$ be an open bounded set of $\mathbb{R}^{2d}$ such
that $\overline{\mathcal{D}_{\epsilon}}\subset\mathcal{D}'$. Let
$\tau$ be a positive and finite random variable. On the event $\{\tau_{\partial\mathcal{D}'}^{\alpha,\,n,\,\epsilon}\land\tau_{\partial\mathcal{D}'}^{\epsilon}>\tau\}$,
there exists a constant $C>0$ independent of $\tau,\,\alpha,\,n$
such that 
\begin{align}
&\sup_{t\in[0,\,\tau]}|(q^{\alpha,\,n,\,\epsilon}(t),\,p^{\alpha,\,n,\,\epsilon}(t))-(q^{\epsilon}(t),\,p^{\epsilon}(t))|\nonumber\\
&\leq\left(\tau\sup_{q:(q,\,p)\in\mathcal{D}'}|F_{n}(q)+\nabla U(q)|+\sqrt{2\alpha\epsilon}\sup_{s\in[0,\,\tau]}|B(s)|\right)\mathrm{e}^{C\tau}\;.\label{eq:Gronwall ineq regularized Langevin}
\end{align}

Notice that by~\eqref{eq:convergence compact F_n}, we have 
\[
\sup_{q:(q,\,p)\in\mathcal{D}'}|F_{n}(q)+\nabla U(q)|\underset{n\rightarrow\infty}{\longrightarrow}0\;.
\]
Using Gr\"onwall's inequality~\eqref{eq:Gronwall ineq regularized Langevin}
let us now prove that, $\mathbb{P}_x$ almost-surely, for all $t>0$,
\begin{equation}
\mathbf{1}_{\tau_{\partial\mathcal{D}_{\epsilon}}^{\alpha,\,n,\,\epsilon}>t}\underset{\alpha\rightarrow0,n\rightarrow\infty}{\longrightarrow}~\mathbf{1}_{\tau_{\partial\mathcal{D}_{\epsilon}}^{\epsilon}>t}\;.\label{eq:convergence indicatrices}
\end{equation}
Notice that 
\begin{equation}
\left|\mathbf{1}_{\tau_{\partial\mathcal{D}_{\epsilon}}^{\alpha,\,n,\,\epsilon}>t}-\mathbf{1}_{\tau_{\partial\mathcal{D}_{\epsilon}}^{\epsilon}>t}\right|\leq\mathbf{1}_{\tau_{\partial\mathcal{D}_{\epsilon}}^{\alpha,\,n,\,\epsilon}>t,\,\tau_{\partial\mathcal{D}_{\epsilon}}^{\epsilon}<t}+\mathbf{1}_{\tau_{\partial\mathcal{D}_{\epsilon}}^{\alpha,\,n,\,\epsilon}<t,\,\tau_{\partial\mathcal{D}_{\epsilon}}^{\epsilon}>t}\;,\label{eq:upperbound difference of indicatrices}
\end{equation}
since the events $\{\tau_{\partial\mathcal{D}_{\epsilon}}^{\epsilon}=t\}$
and $\{\tau_{\partial\mathcal{D}_{\epsilon}}^{\alpha,\,n,\,\epsilon}=t\}$
are of probability zero since the boundary $\partial\mathcal{D}_{\epsilon}$
has zero Lebesgue measure.

By~\cite[Proposition 2.8]{LelRamRey}, the process $(q^{\epsilon}(t),\,p^{\epsilon}(t))_{t\geq0}$ starting from $x\in\mathcal{D}_\epsilon$ almost-surely never reaches the boundary $\partial\mathcal{D}_{\epsilon}=\partial\mathcal{W}\cup\partial\mathcal{M}_{\epsilon}\cup\partial\mathcal{S}_{\epsilon}$
with zero velocity. Therefore, by Lemma~\ref{lem:small time asymptotics}, the process exits
$\mathcal{D}_{\epsilon}$ almost-surely immediately after hitting $\partial\mathcal{D}_{\epsilon}$ and on the event $\{\tau_{\partial\mathcal{D}_{\epsilon}}^{\epsilon}<t\}$, by the continuity of the trajectory of $(q^{\epsilon}(t),\,p^{\epsilon}(t))_{t\geq0}$, there exists $\delta>0$ satisfying $\tau_{\partial\mathcal{D}_{\epsilon}}^{\epsilon}+\delta<\tau_{\partial\mathcal{D}'}^{\epsilon}\land t$ such that
$(q^{\epsilon}(\tau_{\partial\mathcal{D}_{\epsilon}+\delta}^{\epsilon}),\,p^{\epsilon}(\tau_{\partial\mathcal{D}_{\epsilon}}^{\epsilon}+\delta))\notin\overline{\mathcal{D}_{\epsilon}}$. Taking
$\alpha$ small enough and $n\geq1$ large enough ensures by~\eqref{eq:Gronwall ineq regularized Langevin} that $(q^{\alpha,\,n,\,\epsilon}(\tau_{\partial\mathcal{D}_{\epsilon}}^{\epsilon}+\delta),\,p^{\alpha,\,n,\,\epsilon}(\tau_{\partial\mathcal{D}_{\epsilon}}^{\epsilon}+\delta))\notin\overline{\mathcal{D}_{\epsilon}}$,
which ensures that $\tau_{\partial\mathcal{D}_{\epsilon}}^{\alpha,\,n,\,\epsilon}\leq\tau_{\partial\mathcal{D}_{\epsilon}}^{\epsilon}+\delta<t$. Regarding the second term in the right-hand side
of~\eqref{eq:upperbound difference of indicatrices}, on the event $\{\tau_{\partial\mathcal{D}_{\epsilon}}^{\epsilon}>t\}$, the process $(q^{\epsilon}(r),\,p^{\epsilon}(r))_{r\in[0,\,t]}$ stays at a positive distance from $\partial\mathcal{D}_{\epsilon}$. Therefore, taking
$\alpha$ small and $n$ large enough yields by~\eqref{eq:Gronwall ineq regularized Langevin}
that $\tau_{\partial\mathcal{D}_{\epsilon}}^{\alpha,\,n,\,\epsilon}>t$ as well. The convergence~\eqref{eq:convergence indicatrices} then follows from the same argument as in Lemma~\ref{lem:conv h^*}. Combining with Gr\"onwall's inequality~\eqref{eq:Gronwall ineq regularized Langevin}, one obtains~\eqref{eq:convergence expectation}. The proof of~\eqref{eq:convergence expectation 2} is exactly similar.
\medskip

\noindent \textbf{Step 3:} Let us now conclude the proof of Proposition~\ref{prop:property density}. By~\eqref{eq:density equality alpha,n}, one has for any non-negative $f,g\in C^\infty_c(\mathcal{D}_\epsilon)$,
\begin{equation}
\iint_{\mathcal{D}_{\epsilon}\times\mathcal{D}_{\epsilon}}f(x)\mathrm{p}_{t}^{\alpha,\,n,\,\epsilon,\,\mathcal{D}_{\epsilon}}(x,\,y)g(y)\mathrm{d}x\mathrm{d}y=\mathrm{e}^{d\gamma t}\iint_{\mathcal{D}_{\epsilon}\times\mathcal{D}_{\epsilon}}f(x)\widetilde{\mathrm{p}}_{t}^{\alpha,\,n,\,\epsilon,\,\mathcal{D}_{\epsilon}}(y,\,x)g(y)\mathrm{d}x\mathrm{d}y\;.\label{eq:double integrale}
\end{equation}
Let us take the limit of the equality above when $\alpha\rightarrow0$ and $n\rightarrow\infty$. Notice first that the existence of the transition densities $\mathrm{p}_{t}^{\epsilon,\,\mathcal{D}_{\epsilon}}(x,\,y)$
and $\widetilde{\mathrm{p}}_{t}^{\epsilon,\,\mathcal{D_{\epsilon}}}(x,\,y)$
of the absorbed processes satisfying~\eqref{eq:killed density definition 1} and~\eqref{eq:killed density definition 2} is an immediate consequence of Radon-Nikodym's theorem. Moreover, by the Fubini permutation and the convergence~\eqref{eq:convergence expectation},
\begin{align*}
\iint_{\mathcal{D}_{\epsilon}\times\mathcal{D}_{\epsilon}}f(x)\mathrm{p}_{t}^{\alpha,\,n,\,\epsilon,\,\mathcal{D}_{\epsilon}}(x,\,y)g(y)\mathrm{d}x\mathrm{d}y & =\int_{\mathcal{D}_{\epsilon}}f(x)\mathbb{E}_{x}\left[g(q^{\alpha,\,n,\,\epsilon}(t),\,p^{\alpha,\,n,\,\epsilon}(t))\mathbf{1}_{\tau_{\partial\mathcal{D}_{\epsilon}}^{\alpha,\,n,\,\epsilon}>t}\right]\mathrm{d}x\\
 & \underset{\alpha\rightarrow0,\,n\rightarrow\infty}{\longrightarrow}\int_{\mathcal{D}_{\epsilon}}f(x)\mathbb{E}_{x}\left[g(q^{\epsilon}(t),\,p^{\epsilon}(t))\mathbf{1}_{\tau_{\partial\mathcal{D}_{\epsilon}}^{\epsilon}>t}\right]\mathrm{d}x\\
 & =\iint_{\mathcal{D}_{\epsilon}\times\mathcal{D}_{\epsilon}}f(x)\mathrm{p}_{t}^{\epsilon,\,\mathcal{D}_{\epsilon}}(x,\,y)g(y)\mathrm{d}x\mathrm{d}y\;,
\end{align*}
by definition of $\mathrm{p}_{t}^{\epsilon,\,\mathcal{D}_{\epsilon}}(x,\,y)$. Furthermore, by~\eqref{eq:convergence expectation 2},

\begin{align*}
\iint_{\mathcal{D}_{\epsilon}\times\mathcal{D}_{\epsilon}}f(x)\widetilde{\mathrm{p}}_{t}^{\alpha,\,n,\,\epsilon,\,\mathcal{D}_{\epsilon}}(y,\,x)g(y)\mathrm{d}x\mathrm{d}y & =\int_{\mathcal{D}_{\epsilon}}g(y)\mathbb{E}_{y}\left[f(\widetilde{q}^{\alpha,\,n,\,\epsilon}(t),\,\widetilde{p}^{\alpha,\,n,\,\epsilon}(t))\mathbf{1}_{\widetilde{\tau}_{\partial\mathcal{D_{\epsilon}}}^{\alpha,\,n,\,\epsilon}>t}\right]\mathrm{d}y\\
 & \underset{\alpha\rightarrow0,\,n\rightarrow\infty}{\longrightarrow}\int_{\mathcal{D}_{\epsilon}}g(y)\mathbb{E}_{y}\left[f(\widetilde{q}^{\epsilon}(t),\,\widetilde{p}^{\epsilon}(t))\mathbf{1}_{\widetilde{\tau}_{\partial\mathcal{D_{\epsilon}}}^{\epsilon}>t}\right]\mathrm{d}y\\
 & =\iint_{\mathcal{D}_{\epsilon}\times\mathcal{D}_{\epsilon}}f(x)\widetilde{\mathrm{p}}_{t}^{\epsilon,\,\mathcal{D_{\epsilon}}}(y,\,x)g(y)\mathrm{d}x\mathrm{d}y\;.
\end{align*}
As a result, taking the limits $\alpha\rightarrow0$, $n\rightarrow\infty$
in~\eqref{eq:double integrale} yields 
\begin{equation}\label{eq:equality density adjoint}
\iint_{\mathcal{D}_{\epsilon}\times\mathcal{D}_{\epsilon}}f(x)\mathrm{p}_{t}^{\epsilon,\,\mathcal{D}_{\epsilon}}(x,\,y)g(y)\mathrm{d}x\mathrm{d}y=\mathrm{e}^{d\gamma t}\iint_{\mathcal{D}_{\epsilon}\times\mathcal{D}_{\epsilon}}f(x)\widetilde{\mathrm{p}}_{t}^{\epsilon,\,\mathcal{D_{\epsilon}}}(y,\,x)g(y)\mathrm{d}x\mathrm{d}y\;.
\end{equation}
The equality~\eqref{eq:property density} then follows from the local integrability of $\widetilde{\mathrm{p}}_{t}^{\epsilon,\,\mathcal{D_{\epsilon}}}$
and $\mathrm{p}_{t}^{\epsilon,\,\mathcal{D}_{\epsilon}}$ in $\mathcal{D}_{\epsilon}\times\mathcal{D}_{\epsilon}$.
\end{proof}

\section{Lyapunov function and zero-noise Langevin process}

\label{sec:Lyapunov H} 
Let $z\in\{m\}\cup\{s\}$ be a local minimum of $U$. This section is dedicated to controlling, when $\epsilon\rightarrow0$, the first exit time probability from a neighbourhood of $z$, and the distance between the underdamped Langevin process~\eqref{eq:sde} and its zero-noise ($\epsilon=0$) counterpart. These results are essential for the proof of Proposition~\ref{prop:bound exponential exit time} carried out in the next section.

\subsection{First exit time probability} The proofs rely on the Lyapunov function $H$ defined in~\cite[Definition 4.1]{cutoff} and translated in the $q$-coordinate as follows
\begin{equation}
H(x):=\frac{1}{2}|p|^{2}+\frac{\gamma-\lambda}{2}\left\langle q-z,\,p\right\rangle +\frac{(\gamma-\lambda)^{2}}{4}|q-z|^{2}+U(q)-U(z)\;,\label{eq:def H}
\end{equation}
where $\lambda>0$ is a constant independent of $\epsilon$ satisfying~\eqref{eq:lambda}. 
\begin{lem}
\label{lem:lyapunov function} There exist $\rho,\,\alpha>0$ independent of $\epsilon$ such that for
all $x\in\mathrm{B}(z,\,\rho)$, 
\begin{equation}
\mathcal{L}_{\epsilon}H(x)\leq-\lambda H(x)+d\gamma\epsilon\;,\label{eq:lyapunov function H}
\end{equation}
and 
\[
\mathcal{L}_{\epsilon}\bigg(\mathrm{exp}\bigg(\frac{\alpha H(x)}{\epsilon}\bigg)\bigg)\leq\alpha d\gamma\,\mathrm{exp}\bigg(\frac{\alpha H(x)}{\epsilon}\bigg)\;.
\]
\end{lem}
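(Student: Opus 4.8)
\textbf{Proof plan for Lemma~\ref{lem:lyapunov function}.}

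The plan is to reuse the exact computation from Lemma~\ref{lem:lyap}, but now with the $z$-centred version of $H$ in~\eqref{eq:def H} and exploiting that near a \emph{nondegenerate local minimum} $z$ the potential behaves like a positive-definite quadratic form. First I would recall that the identity established in the proof of Lemma~\ref{lem:lyap} (see~\cite[Lemma 4.3]{cutoff}) gives, after translating $q\mapsto q-z$ and subtracting the constant $U(z)$,
\begin{equation}
\mathcal{L}_{\epsilon}H(x)+\lambda H(x)=-\frac{\gamma}{2}|p|^{2}-\frac{\gamma-\lambda}{2}\left(\langle\nabla U(q),\,q-z\rangle-\frac{2\lambda}{\gamma-\lambda}(U(q)-U(z))-\lambda\frac{\gamma-\lambda}{2}|q-z|^{2}\right)+d\gamma\epsilon\;.
\end{equation}
So to get~\eqref{eq:lyapunov function H} it suffices to show that the bracketed term is non-negative for $x\in\mathrm{B}(z,\rho)$ with $\rho$ small. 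Since $U$ is a Morse function, $\nabla U(z)=0$ and $\mathbb{H}_{U}^{z}$ is positive definite; Taylor expansion gives $\nabla U(q)=\mathbb{H}_{U}^{z}(q-z)+o(|q-z|)$ and $U(q)-U(z)=\tfrac12\langle\mathbb{H}_{U}^{z}(q-z),q-z\rangle+o(|q-z|^{2})$. Hence $\langle\nabla U(q),q-z\rangle=\langle\mathbb{H}_{U}^{z}(q-z),q-z\rangle+o(|q-z|^{2})$, which is $2(U(q)-U(z))+o(|q-z|^2)$. Plugging these in, the bracket equals $\bigl(2-\tfrac{2\lambda}{\gamma-\lambda}\bigr)(U(q)-U(z))-\lambda\tfrac{\gamma-\lambda}{2}|q-z|^2+o(|q-z|^2)$; since $U(q)-U(z)\geq c_z|q-z|^2$ near $z$ for some $c_z>0$ (again by positive-definiteness of the Hessian) and the constants in~\eqref{eq:lambda} make $\tfrac{2\lambda}{\gamma-\lambda}<c<2$ after shrinking $\lambda$ if necessary, the leading quadratic coefficient is strictly positive, so the whole bracket is non-negative for $|q-z|\leq\rho$ with $\rho$ small enough (independent of $\epsilon$ since none of these constants depend on $\epsilon$). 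This proves the first inequality.

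For the second inequality I would compute directly: writing $G(x)=\exp(\alpha H(x)/\epsilon)$, the chain rule for the generator~\eqref{eq:gen} gives
\begin{equation}
\mathcal{L}_{\epsilon}G(x)=\frac{\alpha}{\epsilon}G(x)\left[\mathcal{L}_{\epsilon}H(x)+\frac{\alpha}{\epsilon}\cdot\gamma\epsilon\,|\nabla_{p}H(x)|^{2}\right]=\frac{\alpha}{\epsilon}G(x)\left[\mathcal{L}_{\epsilon}H(x)+\alpha\gamma|\nabla_{p}H(x)|^{2}\right]\;,
\end{equation}
using that the only second-order part of $\mathcal{L}_{\epsilon}$ is $\gamma\epsilon\Delta_p$, so the Carré-du-champ term is $\gamma\epsilon|\nabla_p H|^2$. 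By~\eqref{eq:lyapunov function H} the bracket is bounded by $-\lambda H(x)+d\gamma\epsilon+\alpha\gamma|\nabla_p H(x)|^2$. Now $\nabla_p H(x)=p+\tfrac{\gamma-\lambda}{2}(q-z)$, so $|\nabla_p H(x)|^2\leq 2|p|^2+\tfrac{(\gamma-\lambda)^2}{2}|q-z|^2\leq C_1 H(x)$ on $\mathrm{B}(z,\rho)$: indeed $H$ is comparable to $|p|^2+|q-z|^2$ near $z$ because the quadratic form $\tfrac12|p|^2+\tfrac{\gamma-\lambda}{2}\langle q-z,p\rangle+\tfrac{(\gamma-\lambda)^2}{4}|q-z|^2$ is positive semidefinite (it is $\tfrac12|p+\tfrac{\gamma-\lambda}{2}(q-z)|^2$) and $U(q)-U(z)\asymp|q-z|^2$. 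Therefore the bracket is $\leq(-\lambda+\alpha\gamma C_1)H(x)+d\gamma\epsilon$; choosing $\alpha$ small enough that $\alpha\gamma C_1\leq\lambda$ makes the $H$-term non-positive, leaving $\mathcal{L}_{\epsilon}G(x)\leq\tfrac{\alpha}{\epsilon}G(x)\cdot d\gamma\epsilon=\alpha d\gamma\,G(x)$, as claimed. One may need to shrink $\rho$ once more so that both $C_1$ and the comparability constants are uniform, but all choices are $\epsilon$-independent.

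The only mildly delicate point — the \emph{main obstacle} — is bookkeeping the order of quantifiers: I must fix $\lambda$ (satisfying~\eqref{eq:lambda} and, if needed, a further smallness so that $\tfrac{2\lambda}{\gamma-\lambda}<c$ strictly), then choose $\rho$ small so the Morse/Taylor estimates kick in on $\mathrm{B}(z,\rho)$ and $H$ is comparable to $|p|^2+|q-z|^2$ there, and only then choose $\alpha$ small relative to the resulting constant $C_1$. Since $\rho,\lambda,\alpha,C_1$ and the Morse constants depend only on $U$ and $\gamma$, none of them depends on $\epsilon$, which is exactly what the statement requires. Everything else is a routine Taylor expansion plus the explicit generator computation already carried out for Lemma~\ref{lem:lyap}.
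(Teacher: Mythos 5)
Your proof is correct and follows essentially the same approach as the paper: the Morse property at $z$ gives $\langle\nabla U(q),\,q-z\rangle\geq c\bigl(|q-z|^2+U(q)-U(z)\bigr)$ near $z$, which plugged into the translated identity from \cite[Lemma 4.3]{cutoff} yields~\eqref{eq:lyapunov function H}; the second inequality then follows from the chain-rule computation of $\mathcal{L}_\epsilon e^{\alpha H/\epsilon}$, the bound $|\nabla_p H|^2\leq C_1 H$, and a choice of $\alpha$ small enough to absorb the gradient-square term. One small algebraic slip: the quadratic part of $H$, namely $\tfrac12|p|^2+\tfrac{\gamma-\lambda}{2}\langle q-z,\,p\rangle+\tfrac{(\gamma-\lambda)^2}{4}|q-z|^2$, is \emph{not} equal to $\tfrac12\bigl|p+\tfrac{\gamma-\lambda}{2}(q-z)\bigr|^2$ but to $\tfrac12\bigl|p+\tfrac{\gamma-\lambda}{2}(q-z)\bigr|^2+\tfrac{(\gamma-\lambda)^2}{8}|q-z|^2$; this is harmless, and the clean route to $C_1=2$ is simply $H(x)\geq\tfrac12\bigl|p+\tfrac{\gamma-\lambda}{2}(q-z)\bigr|^2=\tfrac12|\nabla_p H(x)|^2$, since the remaining summands of $H$ are non-negative near $z$. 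Your explicit attention to the order of quantifiers (first $\lambda$, then $\rho$, then $\alpha$) is a genuine improvement on the paper's exposition, which glosses over the point that the local constant $c$ may differ from the one in~\eqref{eq:lambda}.
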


\begin{proof}
Since $U$ is a Morse function, its Hessian matrix at the local minimum
$z$ admits only positive eigenvalues. Therefore, the
asymptotic expansion of $U$ in a neighbourhood of $z$ ensures the existence of constants $\rho,\,c>0$ such that for all $q\in\mathrm{B}(z,\,\rho)$,
\[
\langle\nabla U(q),\,q-z\rangle\geq c(|q-z|^{2}+U(q)-U(z))\;.
\]
Following the computation in~\cite[Lemma 4.3]{cutoff}, one deduces that for $x\in\mathrm{B}(z,\,\rho)$, 
\begin{align}
\mathcal{L}_{\epsilon}H(x) & \leq-\lambda H(x)+\gamma\epsilon\,\Delta_{p}H(x)\nonumber \\
 & \leq-\lambda H(x)+d\gamma\epsilon\;.\label{eq:ineq LH}
\end{align}
Consequently, for $\alpha>0$ and $x\in\mathrm{B}(z,\,\rho)$, 
\[
\mathcal{L}_{\epsilon}\bigg(\mathrm{exp}\bigg(\frac{\alpha H(x)}{\epsilon}\bigg)\bigg)=\mathrm{exp}\bigg(\frac{\alpha H(x)}{\epsilon}\bigg)\left[\frac{\alpha}{\epsilon}\mathcal{L}_{\epsilon}H(x)+\gamma\epsilon\frac{\alpha^{2}}{\epsilon^{2}}\left|p+\frac{\gamma-\lambda}{2}(q-z)\right|^{2}\right]\;.
\]

\noindent Moreover, as in~\cite[Lemma 4.2]{cutoff}, 
there exists a constant $C>0$ such that for all $x\in\mathbb{R}^{2d}$,
\[
\left|p+\frac{\gamma-\lambda}{2}(q-z)\right|^{2}\leq CH(x)\;.
\]
As a result, by~\eqref{eq:ineq LH}, 
\[
\mathcal{L}_{\epsilon}\bigg(\mathrm{exp}\bigg(\frac{\alpha H(x)}{\epsilon}\bigg)\bigg)\leq\mathrm{exp}\bigg(\frac{\alpha H(x)}{\epsilon}\bigg)\left[-\frac{\alpha}{\epsilon}\lambda H(x)+\alpha d\gamma+C\gamma\frac{\alpha^{2}}{\epsilon}H(x)\right]\;.
\]
Taking $\alpha\in(0,\lambda/C\gamma)$ independent of $\epsilon$ ensures that for $x\in\mathrm{B}(z,\,\rho)$, 
\[
\mathcal{L}_{\epsilon}\bigg(\mathrm{exp}\bigg(\frac{\alpha H(x)}{\epsilon}\bigg)\bigg)\leq\alpha d\gamma\,\mathrm{exp}\bigg(\frac{\alpha H(x)}{\epsilon}\bigg)\;,
\]
thus concluding the proof. 
\end{proof}
The Lyapunov control established in Lemma~\ref{lem:lyapunov function} allows to provide an upper-bound of the probability of exiting, before a time $t\geq0$, a neighbourhood of $z$ defined as follows
\begin{equation}
\mathcal{H}_{b}:=\{x\in\mathcal{W}_{z}:H(x)<b\}\;.\label{eq:def level set H}
\end{equation}
where $\mathcal{W}_z$ is defined in Section~\ref{sec:Estimate of numerator}.

\begin{prop}
\label{prop:control moments H} There exist constants $\delta,\,\alpha,\,\beta,\,C>0$
independent of $\epsilon$ such that for all $0<a\leq b\leq\delta$,
for all $x\in\mathcal{H}_{a}$ and $t\geq0$, 
\begin{equation}
\mathbb{P}_{x}(\tau_{\partial\mathcal{H}_{b}}^{\epsilon}\leq t)\leq\frac{C(1+t)}{\epsilon^{\beta}}\mathrm{exp}\bigg(-\frac{\alpha(b-a)}{\epsilon}\bigg)\;.\label{eq:proba exiting neighborhood z}
\end{equation}
\end{prop}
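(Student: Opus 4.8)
The plan is to run an optional-stopping argument for the exponential Lyapunov function $\phi_\epsilon(x):=\exp(\alpha H(x)/\epsilon)$ produced in Lemma~\ref{lem:lyapunov function}, where $H$ is the function~\eqref{eq:def H} and $\alpha>0$ the constant there, which we keep the freedom to shrink. Two structural facts are used. First, by Lemma~\ref{lem:lyapunov function}, $\mathcal{L}_\epsilon\phi_\epsilon\le\alpha d\gamma\,\phi_\epsilon$ on $\mathrm{B}(z,\rho)$; since $U$ is Morse, $H$ is comparable near $(z,0)$ to a positive-definite quadratic form, so its sublevel sets $\mathcal{H}_b$ shrink to $\{(z,0)\}$ and we may fix $\delta>0$ with $\mathcal{H}_\delta\subset\mathrm{B}(z,\rho)$. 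Writing $\tau_b:=\tau^\epsilon_{\partial\mathcal{H}_b}$, the process $s\mapsto e^{-\alpha d\gamma(s\wedge\tau_b)}\phi_\epsilon(X^\epsilon(s\wedge\tau_b))$ is then a non-negative $\mathbb{P}_x$-supermartingale, and optional stopping together with path-continuity ($X^\epsilon(\tau_b)\in\{H=b\}$) gives, for $x\in\mathcal{H}_a$ and every $T\ge0$,
\begin{equation}\label{eq:plan-short-time}
\mathbb{P}_x(\tau_b\le T)\ \le\ e^{\alpha d\gamma T}\,\mathbb{E}_x\!\big[e^{-\alpha d\gamma\tau_b}\mathbf{1}_{\tau_b\le T}\big]\ \le\ e^{\alpha d\gamma T}\,e^{\alpha(H(x)-b)/\epsilon}\ \le\ e^{\alpha d\gamma T}\,e^{-\alpha(b-a)/\epsilon}.
\end{equation}
Second, inspecting the proof of Lemma~\ref{lem:lyapunov function}, the inequality improves to $\mathcal{L}_\epsilon\phi_\epsilon\le0$ on the shell $\{c_1\epsilon\le H<\delta\}$ for some constant $c_1$ independent of $\epsilon$; on that shell $\phi_\epsilon(X^\epsilon(\cdot))$ is a genuine (undiscounted) supermartingale, which is what makes the gambler's-ruin estimates sharp.

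Two trivial reductions dispose of the boundary cases: if $b\le c_1'\epsilon$ for a fixed large $c_1'$ then $e^{-\alpha(b-a)/\epsilon}$ is bounded below and the target bound holds with $C$ large; and if $a<c_1'\epsilon\le b$ one replaces $a$ by $c_1'\epsilon$ (the starting point still lies in $\mathcal{H}_{c_1'\epsilon}$), at the price of a fixed multiplicative constant. So we may assume $c_1'\epsilon\le a\le b\le\delta$, with $c_1'$ chosen $\ge 4c_1$ so that $\mathcal{L}_\epsilon H\le-\lambda a/4+d\gamma\epsilon<0$ on $\{H\ge a/4\}$. Moreover for $t\le1$ the claim is already~\eqref{eq:plan-short-time} with $T=t$, so it remains to handle $t>1$ and replace the prefactor $e^{\alpha d\gamma t}$ by one linear in $t$.

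For $t>1$ I use the metastable "repeated attempts" picture. Set $\theta_0:=\inf\{s\ge0:H(X^\epsilon(s))\le a/4\}$ and, recursively for $j\ge1$, $u_j:=\inf\{s\ge\theta_{j-1}:H(X^\epsilon(s))=a/2\}$ and $\theta_j:=\inf\{s\ge u_j:H(X^\epsilon(s))\le a/4\ \text{or}\ H(X^\epsilon(s))\ge b\}$, calling attempt $j$ \emph{successful} when $H(X^\epsilon(\theta_j))\ge b$. On the shell $\{a/4\le H\le b\}\subset\{c_1\epsilon\le H<\delta\}$ the $\theta_j$ are a.s.\ finite and the genuine supermartingale property plus optional stopping and path-continuity yield the gambler's-ruin bound
\begin{equation}\label{eq:plan-gambler}
\mathbb{P}_x\big(\text{attempt $j$ successful}\,\big|\,\mathcal{F}_{u_j}\big)\ \le\ e^{\alpha(a/2-b)/\epsilon}\ \le\ e^{-\alpha(b-a)/\epsilon}\ =:\ p_\star\qquad\text{on }\{u_j<\infty\},
\end{equation}
and likewise $\mathbb{P}_x(\tau_b<\theta_0)\le p_\star$ for the initial stretch; a union bound over the (at most $n$) attempts completed before $\theta_n$ gives $\mathbb{P}_x(\tau_b\le\theta_n)\le(n+1)p_\star$. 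Conversely, since the martingale part of $H(X^\epsilon)$ has quadratic-variation density $\le C\epsilon$ on $\mathcal{H}_\delta$ (using $|\nabla_p H|^2\le CH$) while $|\mathcal{L}_\epsilon H|\le C$ there, a Doob $\mathrm{L}^2$-estimate shows that any displacement of $H$ by $a/4$ takes time at least $c\,\epsilon$ with conditional probability at least $7/8$; as each attempt contains such a displacement, a Hoeffding bound gives $\mathbb{P}_x(\theta_n<t)\le e^{-n/4}$ once $n\ge C' t/\epsilon$. Taking $n:=\lceil C't/\epsilon\rceil$, $\mathbb{P}_x(\tau_b\le t)\le\mathbb{P}_x(\tau_b\le\theta_n)+\mathbb{P}_x(\theta_n<t)\le(n+1)p_\star+e^{-n/4}$; the first term is $\le\frac{C''(1+t)}{\epsilon}e^{-\alpha(b-a)/\epsilon}$, and — shrinking $\alpha$ so that $\alpha\le 1/(2C')$, which is possible because $C'$ depends only on $\lambda$ and the coefficients, not on $\alpha$ — the second is $e^{-n/4}\le e^{-\alpha(b-a)/\epsilon}$ for $t>1$, hence also absorbed. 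This gives the proposition with $\beta=1$.

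The main obstacle is exactly the passage to arbitrary (indeed exponentially large in $1/\epsilon$) times: the discounted supermartingale only controls $\mathbb{P}_x(\tau_b\le T)$ up to the factor $e^{\alpha d\gamma T}$, useless for large $T$, and the region $\{H\lesssim\epsilon\}$ near $(z,0)$, where the drift of $H$ is not dissipative, blocks $\phi_\epsilon(X^\epsilon(\cdot))$ from being a global supermartingale. The remedy — isolating the shell $\{c_1\epsilon\le H<\delta\}$ on which the undiscounted supermartingale property does hold (which forces the reduction to $a\gtrsim\epsilon$), and pairing the gambler's-ruin bound~\eqref{eq:plan-gambler} with the probabilistic lower bound $\gtrsim\epsilon$ on each escape attempt's duration to cap the number of attempts by time $t$ — is the technical heart; the Doob/Hoeffding concentration step and the careful ordering of choices (first $\lambda$ and $H$, then $\alpha$, then $\delta$) account for most of the bookkeeping.
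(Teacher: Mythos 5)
Your argument takes a genuinely different route from the paper's, and it is viable in spirit, but two bookkeeping details are off. The paper obtains a \emph{uniform-in-time} bound on $\mathbb{E}_x\big[\exp(\alpha H(X^\epsilon(t))/\epsilon)\mathbf{1}_{\tau^\epsilon_{\partial\mathcal{H}_b}>t}\big]$ by splitting at the deterministic time $T_\epsilon(x)=\lambda^{-1}\log\!\big(\lambda H(x)/(d\gamma\epsilon)\big)$: before $T_\epsilon(x)$ the discounted supermartingale bound $e^{\alpha d\gamma t}e^{\alpha H(x)/\epsilon}$ is only polynomially large in $1/\epsilon$, while after $T_\epsilon(x)$ the linear Lyapunov bound $\mathcal{L}_\epsilon H\le -\lambda H+d\gamma\epsilon$ forces all moments of $H(X^\epsilon(t))$ to be $O(1)$ (as in \cite[Proposition 4.9]{cutoff}), so the exponential moment is bounded by a constant. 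Feeding that uniform estimate into the undiscounted It\^o inequality for $\exp(\alpha H/\epsilon)$ at the stopped time $t\wedge\tau^\epsilon_{\partial\mathcal{H}_b}$ gives the linear-in-$t$ prefactor directly, with no excursion accounting and no auxiliary restriction $a\gtrsim\epsilon$. Your route instead runs a gambler's-ruin/renewal analysis: the undiscounted supermartingale property of $\exp(\alpha H/\epsilon)$ on the shell $\{c_1\epsilon\le H<\delta\}$ bounds each attempt's escape probability by $e^{-\alpha(b-a)/\epsilon}$, and a Doob $\mathrm{L}^2$/Hoeffding argument lower-bounds each excursion's duration by $c\epsilon$, so at most $O(t/\epsilon)$ attempts occur by time $t$. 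Both strategies work; the paper's is shorter because the decay of the $H$-moments is outsourced to \cite{cutoff}, whereas yours is more self-contained and makes the underlying probabilistic picture of repeated rare attempts explicit.

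On the bookkeeping: the condition you impose to absorb the Hoeffding tail, $\alpha\le 1/(2C')$, is not the one that is needed. With $n=\lceil C't/\epsilon\rceil$ and $t>1$ you need $c''n\ge\alpha(b-a)/\epsilon$ for the concentration constant $c''$, which, since $b-a$ can be as large as $\delta$, requires $\alpha\le c''C'/\delta$; the constraint involves $\delta$. In particular the announced ordering ``first $\lambda$ and $H$, then $\alpha$, then $\delta$'' cannot be carried out: $\delta$ (determined by $\rho$ in Lemma~\ref{lem:lyapunov function}) and the shell constants $C'$ and $c''$ it induces must be fixed before you shrink $\alpha$. Neither point undermines the approach, but as written the constants do not close.
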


\begin{proof}[Proof of Proposition~\ref{prop:control moments H}]
Let $\rho,\,\alpha>0$ be as defined in Lemma~\ref{lem:lyapunov function}.
Since $H$ is continuous, positive and vanishes only at $(z,\,0)$
by~\cite[Lemma 4.2]{cutoff}, there exists $\delta>0$ small enough
such that for all $b\in(0,\,\delta)$, 
\[
\mathcal{H}_{b}\subset\mathrm{B}(z,\,\rho)\;.
\]

\textbf{Step 1:} Let $a\in(0,\,b]$. We first prove the existence
of constants $\beta,\,C>0$ independent of $\epsilon$ such that for
all $t\geq0$, for all $x\in\mathcal{H}_{a}$, 
\begin{equation}
\mathbb{E}_{x}\bigg[\mathrm{exp}\bigg(\frac{\alpha H(q^{\epsilon}(t),\,p^{\epsilon}(t))}{\epsilon}\bigg)\mathbf{1}_{\tau_{\partial\mathcal{H}_{b}}^{\epsilon}>t}\bigg]\leq C\left[1+\left(\frac{H(x)}{\epsilon}\right)^{\beta}\mathrm{exp}\bigg(\frac{\alpha H(x)}{\epsilon}\bigg)\right]\;.\label{eq:first ineq expec H}
\end{equation}

Let $x\in\mathcal{H}_{a}$. Apply It\^o's formula to the process
\[
t\geq0\mapsto\mathrm{exp}\bigg(\frac{\alpha}{\epsilon}H\big(q^{\epsilon}(t\land\tau_{\partial\mathcal{H}_{b}}^{\epsilon}),\,p^{\epsilon}(t\land\tau_{\partial\mathcal{H}_{b}}^{\epsilon})\big)\bigg)\mathrm{e}^{-\alpha d\gamma\big(t\land\tau_{\partial\mathcal{H}_{b}}^{\epsilon}\big)}\;.
\]
From Lemma~\ref{lem:lyapunov function}, we deduce that for all $t\geq0$, 
\[
\mathbb{E}_{x}\left[\mathrm{exp}\bigg(\frac{\alpha}{\epsilon}H\big(q^{\epsilon}(t\land\tau_{\partial\mathcal{H}_{b}}^{\epsilon}),\,p^{\epsilon}(t\land\tau_{\partial\mathcal{H}_{b}}^{\epsilon})\big)\bigg)\mathrm{e}^{-\alpha d\gamma\big(t\land\tau_{\partial\mathcal{H}_{b}}^{\epsilon}\big)}\right]\leq\mathrm{exp}\bigg(\frac{\alpha H(x)}{\epsilon}\bigg)\;,
\]
ensuring that 
\begin{equation}
\mathbb{E}_{x}\left[\mathrm{exp}\bigg(\frac{\alpha H(q^{\epsilon}(t),\,p^{\epsilon}(t))}{\epsilon}\bigg)\mathbf{1}_{\tau_{\partial\mathcal{H}_{b}}^{\epsilon}>t}\right]\leq\mathrm{e}^{\alpha d\gamma t}\mathrm{exp}\bigg(\frac{\alpha H(x)}{\epsilon}\bigg)\;.\label{eq:control exp moment H 1}
\end{equation}
Now let 
\[
T_{\epsilon}(x):=\frac{1}{\lambda}\log\left(\frac{\lambda H(x)}{d\gamma\epsilon}\right)\;.
\]
By~\eqref{eq:control exp moment H 1} there exist constants
$C,\,\beta>0$ independent of $\epsilon$ such that for all $t\in[0,\,T_{\epsilon}(x)]$,
\begin{equation}
\mathbb{E}_{x}\left[\mathrm{exp}\bigg(\frac{\alpha H(q^{\epsilon}(t),\,p^{\epsilon}(t))}{\epsilon}\bigg)\mathbf{1}_{\tau_{\partial\mathcal{H}_{b}}^{\epsilon}>t}\right]\leq C\left(\frac{H(x)}{\epsilon}\right)^{\beta}\mathrm{exp}\bigg(\frac{\alpha H(x)}{\epsilon}\bigg)\;.\label{eq:control exp moment H 2}
\end{equation}
Assume now that $t\geq T_{\epsilon}(x)$, then by definition of $T_{\epsilon}(x)$
one has 
\[
H(x)\mathrm{e}^{-\lambda t}\leq\frac{d\gamma\epsilon}{\lambda}\;.
\]
Therefore, following the proof of~\cite[Proposition 4.9]{cutoff}
and using~\eqref{eq:lyapunov function H},
one deduces that for all $n\geq0$ and $t\geq T_{\epsilon}(x)$, 
\begin{align*}
\mathbb{E}_{x}\left[\left(\frac{\alpha}{\epsilon}\right)^{n}H(q^{\epsilon}(t),\,p^{\epsilon}(t))^{n}\mathbf{1}_{\tau_{\partial\mathcal{H}_{b}}^{\epsilon}>t}\right] & \leq\alpha^{n}\left(\frac{H(x)\mathrm{e}^{-\lambda t}}{\epsilon}+\frac{d\gamma}{\lambda}\right)^{n}\leq\left(2\frac{\alpha d\gamma}{\lambda}\right)^{n}\;.
\end{align*}

\noindent Consequently, for all $t\geq T_{\epsilon}(x)$, 
\[
\mathbb{E}_{x}\left[\mathrm{exp}\bigg(\frac{\alpha H(q^{\epsilon}(t),\,p^{\epsilon}(t))}{\epsilon}\bigg)\mathbf{1}_{\tau_{\partial\mathcal{H}_{b}}^{\epsilon}>t}\right]\leq\mathrm{e}^{2\alpha d\gamma/\lambda}\;.
\]
Combining with~\eqref{eq:control exp moment H 2} concludes the proof
of~\eqref{eq:first ineq expec H}.

\textbf{Step 2:} Using the estimate~\eqref{eq:first ineq expec H}
let us now prove the inequality~\eqref{eq:proba exiting neighborhood z}. For this purpose, we apply It\^o's formula to the process 
\[
t\geq0\mapsto\mathrm{exp}\bigg(\frac{\alpha}{\epsilon}H\big(q^{\epsilon}(t\land\tau_{\partial\mathcal{H}_{b}}^{\epsilon}),\,p^{\epsilon}(t\land\tau_{\partial\mathcal{H}_{b}}^{\epsilon})\big)\bigg)\;.
\]
By Lemma~\ref{lem:lyapunov function},
\begin{align*}
 & \mathbb{E}_{x}\left[\mathrm{exp}\bigg(\frac{\alpha}{\epsilon}H\big(q^{\epsilon}(t\land\tau_{\partial\mathcal{H}_{b}}^{\epsilon}),\,p^{\epsilon}(t\land\tau_{\partial\mathcal{H}_{b}}^{\epsilon})\big)\bigg)\right]\\
 & \leq\mathrm{exp}\bigg(\frac{\alpha H(x)}{\epsilon}\bigg)+\alpha d\gamma\int_{0}^{t}\mathbb{E}_{x}\left[\mathrm{exp}\bigg(\frac{\alpha}{\epsilon}H\big(q^{\epsilon}(r),\,p^{\epsilon}(r)\big)\bigg)\mathbf{1}_{\tau_{\partial\mathcal{H}_{b}}^{\epsilon}>r}\right]\mathrm{d}r\\
 & \leq\mathrm{exp}\bigg(\frac{\alpha H(x)}{\epsilon}\bigg)+Cd\alpha\gamma t\left[1+\left(\frac{H(x)}{\epsilon}\right)^{\beta}\mathrm{exp}\bigg(\frac{\alpha H(x)}{\epsilon}\bigg)\right]\\
 & \leq\mathrm{exp}\bigg(\frac{\alpha a}{\epsilon}\bigg)+Cd\alpha\gamma t\left[1+\left(\frac{a}{\epsilon}\right)^{\beta}\mathrm{exp}\bigg(\frac{\alpha a}{\epsilon}\bigg)\right]
\end{align*}
since $x\in\mathcal{H}_{a}$. Moreover, since
\[
\mathbb{E}_{x}\left[\mathrm{exp}\bigg(\frac{\alpha}{\epsilon}H\big(q^{\epsilon}(t\land\tau_{\partial\mathcal{H}_{b}}^{\epsilon}),\,p^{\epsilon}(t\land\tau_{\partial\mathcal{H}_{b}}^{\epsilon})\big)\bigg)\right]\geq\mathrm{exp}\bigg(\frac{\alpha b}{\epsilon}\bigg)\mathbb{P}_{x}(\tau_{\partial\mathcal{H}_{b}}^{\epsilon}\leq t)\;,
\]
one deduces that 
\[
\mathbb{P}_{x}(\tau_{\partial\mathcal{H}_{b}}^{\epsilon}\leq t)\leq\mathrm{exp}\bigg(-\frac{\alpha(b-a)}{\epsilon}\bigg)+Cd\alpha\gamma t\left[\mathrm{exp}\bigg(-\frac{\alpha b}{\epsilon}\bigg)+\left(\frac{a}{\epsilon}\right)^{\beta}\mathrm{exp}\bigg(-\frac{\alpha(b-a)}{\epsilon}\bigg)\right]\;,
\]
which concludes the proof of~\eqref{eq:proba exiting neighborhood z}. 
\end{proof}
\subsection{Distance to the zero-noise underdamped Langevin process}
Let us now examine the distance, when $\epsilon\rightarrow0$, between the underdamped Langevin process~\eqref{eq:sde} and its zero-noise counterpart represented by
\begin{equation}
\left\{ \begin{aligned} & \mathrm{d}q_{t}=p_{t}\mathrm{d}t,\\
 & \mathrm{d}p_{t}=-\nabla U(q_{t})\mathrm{d}t-\gamma p_{t}\mathrm{d}t\;.
\end{aligned}
\right.\label{eq:deterministic Langevin}
\end{equation}
\medskip{}
 
 \noindent Recall below the notations $\mathcal{W}_m$ and $\mathcal{W}_s$ of Section~\ref{sec:Estimate of numerator}.
\begin{lem}
\label{lem:convergence deterministic Langevin} Let $K$ be a compact subset of $\overline{\mathcal{W}_z}$ such that $(\sigma,0)\notin K$. Then, for any $(q(0),\;p(0))\in K$,
\begin{equation}
\lim_{t\rightarrow\infty}(q(t),\,p(t))=(z,\,0)\;.\label{eq:long time limit deterministic}
\end{equation}
Furthermore, with $\rho$ defined as in Lemma~\ref{lem:lyapunov function},
there exist also constants $C,\,\lambda>0$ such that for all $(q(0),\,p(0))$
in $\mathrm{B}(z,\,\rho)$, 
\begin{equation}
|(q(t),\,p(t))-(z,\,0)|\leq C\mathrm{e}^{-\lambda t}.\label{eq:exponential convergence deterministic neighborhood}
\end{equation}
\end{lem}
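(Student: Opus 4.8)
The plan is to treat the two assertions separately, relying on the Hamiltonian $V(q,p)=U(q)+|p|^2/2$ from~\eqref{eq:ham} as a Lyapunov function for the zero-noise flow~\eqref{eq:deterministic Langevin}, and on the local Lyapunov function $H$ from~\eqref{eq:def H} near $z$. First, I would observe that along~\eqref{eq:deterministic Langevin} one has $\frac{\mathrm{d}}{\mathrm{d}t}V(q(t),p(t)) = \langle \nabla U(q_t),p_t\rangle + \langle p_t, -\nabla U(q_t)-\gamma p_t\rangle = -\gamma|p_t|^2 \le 0$, so $V$ is non-increasing. Hence for $(q(0),p(0))\in K\subset\overline{\mathcal W_z}$ the whole orbit stays in the sublevel set $\{V\le V(q(0),p(0))\}$; since $(\sigma,0)\notin K$ and $K$ is compact we have $\max_K V < V(\sigma,0)$ (recall $\mathcal W_z$ is the connected component of $\{V<V(\sigma,0)\}$ containing $(z,0)$, and the flow cannot jump to another component because $V$ only decreases and trajectories are continuous), so the orbit remains in a compact subset of $\mathcal W_z$ on which $U$ is Morse with unique critical point $z$. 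By LaSalle's invariance principle, the $\omega$-limit set is contained in the largest invariant subset of $\{-\gamma|p|^2=0\}=\{p=0\}$; on $\{p=0\}$ invariance of~\eqref{eq:deterministic Langevin} forces $\dot p = -\nabla U(q)=0$, i.e. $q$ is a critical point of $U$, and the only one in $\overline{\mathcal W_z}$ is $z$. This gives~\eqref{eq:long time limit deterministic}.

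For the quantitative statement~\eqref{eq:exponential convergence deterministic neighborhood}, I would use the local Lyapunov function $H$ of~\eqref{eq:def H}. Computing $\frac{\mathrm{d}}{\mathrm{d}t}H(q(t),p(t))$ along~\eqref{eq:deterministic Langevin} amounts to $\mathcal L_\epsilon H$ with the diffusion term removed, i.e. the bound~\eqref{eq:ineq LH} shows (taking $\epsilon=0$ there) that $\frac{\mathrm{d}}{\mathrm{d}t}H(q(t),p(t)) \le -\lambda H(q(t),p(t))$ for $(q(t),p(t))\in \mathrm B(z,\rho)$, with $\rho,\lambda$ as in Lemma~\ref{lem:lyapunov function}. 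Since $H$ is continuous, positive, and vanishes only at $(z,0)$ (by~\cite[Lemma 4.2]{cutoff}), shrinking $\rho$ if necessary I can ensure that some sublevel set $\mathcal H_{b_0}$ is contained in $\mathrm B(z,\rho)$ and conversely that $\mathrm B(z,\rho)\subset \mathcal H_{b_1}$ for some $b_1$. Starting from $(q_0,p_0)\in\mathrm B(z,\rho)$, as long as the trajectory stays in $\mathrm B(z,\rho)$ we get $H(q(t),p(t))\le H(q_0,p_0)\mathrm e^{-\lambda t}$ by Grönwall; but this decay, combined with the two-sided equivalence $c_1|(q,p)-(z,0)|^2 \le H(q,p) \le c_2|(q,p)-(z,0)|^2$ near $z$ (which follows from the quadratic nature of $H$ and the Morse property of $U$ at $z$), shows the trajectory never leaves $\mathrm B(z,\rho)$ if it started close enough, and then $|(q(t),p(t))-(z,0)| \le \sqrt{H(q(t),p(t))/c_1} \le \sqrt{H(q_0,p_0)/c_1}\,\mathrm e^{-\lambda t/2}$, which is the claimed bound after renaming $\lambda$ and absorbing constants into $C$. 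One has to be slightly careful that~\eqref{eq:exponential convergence deterministic neighborhood} is asserted for \emph{all} $(q_0,p_0)\in\mathrm B(z,\rho)$, not just a small ball; to handle this I would first invoke~\eqref{eq:long time limit deterministic} to know every such orbit eventually enters the small ball where the Grönwall argument applies, and use uniform continuity / compactness of $\overline{\mathrm B(z,\rho)}$ to get a uniform entry time, then patch together the two phases.

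The main obstacle is the matching of the local Lyapunov estimate with the global convergence so as to get the \emph{uniform} exponential rate over all of $\mathrm B(z,\rho)$: the differential inequality $\dot H \le -\lambda H$ is only valid inside $\mathrm B(z,\rho)$, so I must rule out excursions to $\partial\mathrm B(z,\rho)$ and provide a uniform time after which all orbits from $\mathrm B(z,\rho)$ sit inside the inner sublevel set $\mathcal H_{b_0}$. This is where a careful choice of nested sublevel sets of $H$ (using continuity of $H$ and the strict inequality $\mathcal H_{b_0}\subset\mathrm B(z,\rho)\subset\mathcal H_{b_1}$) together with the fact that $H$ is strictly decreasing along non-constant orbits inside $\mathrm B(z,\rho)$ does the work; once that bookkeeping is set up, the rest is routine Grönwall and the quadratic equivalence of $H$ with $|(q,p)-(z,0)|^2$.
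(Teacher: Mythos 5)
Your part (1) is a genuinely different route from the paper's: you invoke LaSalle's invariance principle, whereas the paper proves convergence directly by a subsequence/contradiction argument showing that the energy $U(q(t))+\tfrac12|p(t)|^2$ must converge to $U(z)$. LaSalle is cleaner once the set-up is correct, and both arguments deliver the same conclusion.

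There is, however, a genuine gap at the step \emph{``since $(\sigma,0)\notin K$ and $K$ is compact we have $\max_K V < V(\sigma,0)$.''} This is false in general: $K$ may contain points of $\partial\mathcal{W}_z\setminus\{(\sigma,0)\}$, where $V=V(\sigma,0)$. Indeed the paper later applies this lemma with $K=Q_\rho=\overline{\mathcal{W}_m}\setminus\mathrm{B}(\sigma,\rho)$, a set that does touch $\partial\mathcal{W}_m$. Without the strict inequality, your subsequent claim that the orbit stays in a compact subset of $\mathcal{W}_z$ on which $z$ is the unique critical point does not go through, and LaSalle a priori only restricts the $\omega$-limit set to the \emph{two} fixed points $(z,0)$ and $(\sigma,0)$, both of which lie in $\overline{\mathcal{W}_z}$. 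You must rule out $(\sigma,0)$ explicitly. One clean fix: since the orbit is not the constant orbit $(\sigma,0)$, $V$ strictly decreases on any initial interval, so for every $t_0>0$ one has $V(q(t_0),p(t_0))<V(\sigma,0)$; the forward orbit from time $t_0$ is then trapped in the compact set $\{V\le V(q(t_0),p(t_0))\}\cap\overline{\mathcal{W}_z}\subset\mathcal{W}_z$, which contains $(z,0)$ but not $(\sigma,0)$, and LaSalle applied from $t_0$ onward gives the conclusion. Equivalently: if the $\omega$-limit set were $\{(\sigma,0)\}$ then monotonicity of $V$ would force $V\equiv V(\sigma,0)$ along the orbit, hence $p\equiv 0$, hence the orbit would be the fixed point $(\sigma,0)$, contradicting $(\sigma,0)\notin K$.

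For part (2), the paper merely defers to the proof of~\cite[Theorem 2.3]{cutoff}. Your Gr\"onwall argument via the sublevel sets of $H$ is in the right spirit, and you correctly identify the key bookkeeping issue: $\mathrm{B}(z,\rho)$ is not positively invariant, so one must first pass to an invariant sublevel set $\mathcal{H}_{b_0}\subset\mathrm{B}(z,\rho)$ (or use part (1) to obtain a uniform entry time from the compact set $\overline{\mathrm{B}(z,\rho)}$ into such a sublevel set) before Gr\"onwall can be applied.
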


\begin{proof}
We first prove the limit~\eqref{eq:long time limit deterministic}. Notice that for all $t\geq0$, 
\begin{equation}
\frac{\mathrm{d}}{\mathrm{d}t}\left(U(q(t))+\frac{1}{2}|p(t)|^{2}\right)=-\gamma|p(t)|^{2}\leq0\;.\label{eq:time derivative energy equality}
\end{equation}
In particular, it entails that $q(t)$ and $p(t)$ are bounded independently of the time $t\geq0$. And so are the time-derivatives below
\begin{equation}
\frac{\mathrm{d}}{\mathrm{d}t}|p(t)|^{2}=2\langle p(t),\,-\gamma p(t)-\nabla U(q(t))\rangle,\qquad\frac{\mathrm{d}}{\mathrm{d}t}U(q(t))=\langle p(t),\,\nabla U(q(t))\rangle\;.\label{eq:time derivatives}
\end{equation}
Moreover, by~\eqref{eq:time derivative energy equality}, $U(q(t))+|p(t)|^{2}/2$ is non-increasing in time and bounded
from below by $U(z)$. Therefore, it converges toward a limit $l\geq U(z)$. Assume that $l>U(z)$, then there exists a subsequence
of positive times $(t_{n})_{n\geq0}$ such that $t_{n}\underset{n\rightarrow\infty}{\longrightarrow}\infty$
and either 
\[
\lim_{n\rightarrow\infty}|p(t_{n})|^{2}>0\quad\text{or}\quad\lim_{n\rightarrow\infty}U(q(t_{n}))>U(z)\;.
\]

Assume now that $c_{1}:=\lim_{n\rightarrow\infty}|p(t_{n})|^{2}>0$.
Since the first time-derivative in~\eqref{eq:time derivatives} is
bounded then there exists a constant $\alpha>0$ independent of $n$
such that for all $n$ sufficiently large and $t\in[t_{n}-\alpha,\,t_{n}]$,
\begin{equation}
|p(t)|^{2}\geq\frac{c_{1}}{2}\;.\label{eq:lower bound velocity}
\end{equation}
Consequently, by~\eqref{eq:time derivative energy equality}, for
$n$ sufficiently large, 
\begin{align*}
U(q(t_{n}))+\frac{1}{2}|p(t_{n})|^{2} & =U(q(t_{n}-\alpha))+\frac{1}{2}|p(t_{n}-\alpha)|^{2}-\gamma\int_{t_{n}-\alpha}^{t_{n}}|p(r)|^{2}\mathrm{d}r\\
 & \leq U(q(t_{n}-\alpha))+\frac{1}{2}|p(t_{n}-\alpha)|^{2}-\frac{\gamma\alpha c_{1}}{2}\;.
\end{align*}
Taking the limit $n\rightarrow\infty$ above ensures that 
\[
l\leq l-\frac{\gamma\alpha c_1}{2}<l\;,
\]
which is a contradiction, hence $\lim_{t\rightarrow\infty}|p(t)|^{2}=0$.
It remains therefore to analyze the case $\lim_{n\rightarrow\infty}U(q(t_{n}))>U(z)$.
Let us show that $\lim_{n\rightarrow\infty}\nabla U(q(t_{n}))=0$
which shall ensure that necessarily $\lim_{n\rightarrow\infty}q(t_{n})=z$,
hence the proof of~\eqref{eq:long time limit deterministic}.

Assume that there exists a subsequence $(t_{n})_{n\geq0}$ such that
$\lim_{n\rightarrow\infty}\nabla U(q(t_{n}))=w$ where $w\neq~0$.
Then, using the boundedness of the time-derivative of $\nabla U(q(t))$,
since $U$ is $C^{2}$, along with the convergence of $p(t_{n})$
to zero, there exists $\alpha>0$ such that for all $n$ large enough
and $r\in[t_{n}-\alpha,\,t_{n}]$, 
\begin{equation}
\left|\nabla U(q(r))-w\right|+\big|p(r)\big|\leq\frac{\alpha|w|}{2(1+\alpha+\gamma\alpha)}\;.\label{eq:upperbound gradient U and velocity}
\end{equation}
It follows from~\eqref{eq:sde} that for all $n\geq0$, 
\[
p(t_{n})+\alpha w=p(t_{n}-\alpha)-\gamma\int_{t_{n}-\alpha}^{t_{n}}p(r)\mathrm{d}r-\int_{t_{n}-\alpha}^{t_{n}}\left(\nabla U(q(r))-w\right)\mathrm{d}r\;.
\]
Therefore, by~\eqref{eq:upperbound gradient U and velocity}, 
\[
\left|p(t_{n})+\alpha w\right|\leq\frac{\alpha|w|}{2(1+\alpha+\gamma\alpha)}(1+\alpha\gamma+\alpha)=\frac{\alpha|w|}{2}\;.
\]
Then, taking $n\rightarrow\infty$ ensures that 
\[
|\alpha w|\leq\frac{\alpha|w|}{2}\;,
\]
which is a contradiction with the fact that $w\neq0$, hence the proof
of~\eqref{eq:long time limit deterministic}.

Regarding the exponential control in~\eqref{eq:exponential convergence deterministic neighborhood},
its proof is identical to the proof of~\cite[Theorem 2.3]{cutoff}
and follows from the Lyapunov function in~\eqref{eq:lyapunov function H}. 
\end{proof}
We conclude this section with a control of the distance between the trajectories of~\eqref{eq:sde} and~\eqref{eq:deterministic Langevin} in times of
order $1/\epsilon^\lambda$ when $\epsilon\rightarrow0$. The estimate presented below was obtained in~\cite[Section 5]{cutoff} and involves the
Gaussian process $(Y(t))_{t\geq0}$ in $\mathbb{R}^{2d}$ satisfying
\begin{equation}
\textup{d}Y(t)=\mathbb{A}(q(t))Y(t)\textup{d}t+\mathbb{J}\,\textup{d}\widehat{B}(t),\quad Y(0)=0\;,\label{eq:def Y_t}
\end{equation}
where $(\widehat{B}(t))_{t\geq0}$ is a $\mathbb{R}^{2d}-$Brownian motion and (see~\cite[Equations 5.1, 5.3]{cutoff}) 
\[
\mathbb{A}(q):=\begin{pmatrix}\mathbb{O}_{d} & \mathbb{I}_{d}\\
-\mathbb{H}_{U}^{q} & -\gamma\mathbb{I}_{d}
\end{pmatrix},\qquad\mathbb{J}:=\begin{pmatrix}\mathbb{O}_{d} & \mathbb{O}_{d}\\
\mathbb{O}_{d} & \mathbb{I}_{d}
\end{pmatrix}\;.
\]
Therefore, $Y(t)\sim\mathcal{N}_{d}(0,\,\Sigma_{t}(x))$
where the covariance matrix $\Sigma_{t}(x)$ is defined in~\cite[Equation 6.2]{cutoff} and depends on the initial
condition $x$ of~\eqref{eq:deterministic Langevin}
through the dependency of $\mathbb{A}(q(t))$ in~\eqref{eq:def Y_t}.
The following lemma is an immediate
consequence of~\cite[Proposition 5.3 and Lemma 6.2]{cutoff}.
\begin{lem}
\label{lem:convergence X_T epsilon} Let $\rho,\,b>0$ be defined as in Lemma~\ref{lem:lyapunov function} and Proposition~\ref{prop:control moments H}. There exist constants $T>0$,
$\lambda\in(0,\,1/4)$ and $\beta>1$ independent of $\epsilon$ such
that for all $n\geq1$, there exists a constant $C_{n}>0$ such that
for all $t\in[T,\,1/\epsilon^{\lambda}]$, for all $x\in\mathrm{B}(z,\,\rho)$,
\begin{align*}
&\mathbb{E}_{x}\bigg[\bigg|\left(q^{\epsilon}\big(t\log\frac1\epsilon\big) ,\,p^{\epsilon}\big(t\log\frac1\epsilon\big)\right)-\left(q\big(t\log\frac1\epsilon\big),\,p\big(t\log\frac1\epsilon\big)\right)-\sqrt{2\epsilon\gamma}Y\big(t\log\frac1\epsilon\big)\bigg|^{n}\mathbf{1}_{\tau_{\partial\mathcal{H}_{b}}^{\epsilon}>t\log\frac1\epsilon}\bigg]\\
&\leq C_{n}\epsilon^{n\beta}\;,
\end{align*}
when $\epsilon>0$ is small enough. Furthermore, there exist constants
$C',\,\alpha>0$ and a symmetric positive definite matrix $\Sigma\in\R^{2d\times2d}$
such that 
\begin{equation}
\sup_{x\in\mathrm{B}(z,\,\rho)}\Vert\Sigma_{t}(x)-\Sigma\Vert_{F}\leq C'\mathrm{e}^{-\alpha t}\;.\label{eq:convergence covariance matrix}
\end{equation}
\end{lem}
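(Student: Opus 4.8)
The plan is to obtain Lemma~\ref{lem:convergence X_T epsilon} as a specialization of the perturbative expansion of the underdamped Langevin trajectory around its zero-noise flow established in \cite[Proposition 5.3 and Lemma 6.2]{cutoff}, translated into the $z$-centered coordinates of this section. Concretely, I would couple the process $(X^{\epsilon}(t))_{t\ge0}$ of~\eqref{eq:sde}, the deterministic flow $(q(t),p(t))_{t\ge0}$ of~\eqref{eq:deterministic Langevin} started from the same point $x$, and the Gaussian process $(Y(t))_{t\ge0}$ of~\eqref{eq:def Y_t}, by letting the momentum components of the driving Brownian motions coincide, and set
\[
R^{\epsilon}(t):=X^{\epsilon}(t)-(q(t),p(t))-\sqrt{2\gamma\epsilon}\,Y(t)\;.
\]
The task then reduces to showing that $R^{\epsilon}$ is, in every moment, of order $\epsilon^{\beta}$ for some $\beta>1$, uniformly in $x\in\mathrm{B}(z,\rho)$ and over the lengthening window $[T,1/\epsilon^{\lambda}]\cdot\log(1/\epsilon)$, on the confinement event $\{\tau_{\partial\mathcal{H}_b}^{\epsilon}>t\log(1/\epsilon)\}$.

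Subtracting the three dynamics, the Brownian terms cancel by the coupling, and on $\{\tau_{\partial\mathcal{H}_b}^{\epsilon}>s\}$ — where, by the Lyapunov inequality of Lemma~\ref{lem:lyapunov function}, both $X^{\epsilon}$ and the deterministic flow started from $x$ stay in $\mathrm{B}(z,\rho)$ — a Taylor expansion of $\nabla U$ around $q(r)$ shows that $R^{\epsilon}$ solves the pathwise, noise-free linear ODE
\[
\dot R^{\epsilon}(r)=\mathbb{A}(q(r))\,R^{\epsilon}(r)+\big(0,\,-\mathcal{R}(r)\big),\qquad \mathcal{R}(r):=\nabla U(q^{\epsilon}(r))-\nabla U(q(r))-\mathbb{H}_U^{q(r)}\big(q^{\epsilon}(r)-q(r)\big)\;,
\]
with $|\mathcal{R}(r)|\le C\,|q^{\epsilon}(r)-q(r)|^{2}\le C'\big(\epsilon\,|Y(r)|^{2}+|R^{\epsilon}(r)|^{2}\big)$ on the confinement event. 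Writing $\Phi(s,r)$ for the fundamental solution of $\dot\Phi=\mathbb{A}(q(\cdot))\Phi$, variation of constants gives $R^{\epsilon}(s)=\int_{0}^{s}\Phi(s,r)\,(0,-\mathcal{R}(r))\,\mathrm{d}r$. The key point is that $q(r)\to z$ exponentially fast (Lemma~\ref{lem:convergence deterministic Langevin}) and $\mathbb{A}(z)=\left(\begin{smallmatrix}\mathbb{O}_d&\mathbb{I}_d\\-\mathbb{H}_U^z&-\gamma\mathbb{I}_d\end{smallmatrix}\right)$ is Hurwitz — its characteristic polynomial factors into the scalar equations $\mu^{2}+\gamma\mu+\nu=0$ over the positive eigenvalues $\nu$ of $\mathbb{H}_U^z$ — so $\Vert\Phi(s,r)\Vert\le Ce^{-\kappa(s-r)}$ with $C,\kappa>0$ independent of $x\in\mathrm{B}(z,\rho)$. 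This contractivity keeps the variation-of-constants integral uniformly controlled over the long horizon.

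Taking $\mathrm{L}^{n}$-norms in the formula for $R^{\epsilon}$, inserting the bound on $|\mathcal{R}|$, using the uniform Gaussian moment estimates $\mathbb{E}_{x}[|Y(r)|^{k}]\le C_{k}$ (which follow from $\sup_{r,x}\Vert\Sigma_{r}(x)\Vert<\infty$, itself a consequence of the bound on $\Vert\Phi\Vert$ via $\Sigma_{r}(x)=\int_{0}^{r}\Phi(r,u)\mathbb{J}\mathbb{J}^{\dagger}\Phi(r,u)^{\dagger}\mathrm{d}u$), and closing the resulting inequality for $\sup_{r\le s}\mathbb{E}_{x}[|R^{\epsilon}(r)|^{n}\mathbf{1}_{\tau_{\partial\mathcal{H}_b}^{\epsilon}>r}]$ by a Gr\"onwall/bootstrap argument, one obtains $\mathbb{E}_{x}[|R^{\epsilon}(s)|^{n}\mathbf{1}_{\tau_{\partial\mathcal{H}_b}^{\epsilon}>s}]\le C_{n}\epsilon^{n\beta}$ on $[T,1/\epsilon^{\lambda}]\log(1/\epsilon)$; the admissible $\beta>1$ and the restriction $\lambda\in(0,1/4)$ come from balancing the $\epsilon$-size of the source against the horizon length, exactly as in \cite[Proposition 5.3]{cutoff}. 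For the covariance statement, $\Sigma_{t}(x)$ solves the matrix Lyapunov ODE $\dot\Sigma_{t}=\mathbb{A}(q(t))\Sigma_{t}+\Sigma_{t}\mathbb{A}(q(t))^{\dagger}+\mathbb{J}\mathbb{J}^{\dagger}$, $\Sigma_{0}=0$; since $\mathbb{A}(q(t))\to\mathbb{A}(z)$ exponentially and uniformly in $x\in\mathrm{B}(z,\rho)$ by~\eqref{eq:exponential convergence deterministic neighborhood} and $\mathbb{A}(z)$ is Hurwitz, a standard perturbation argument yields exponential convergence, at an $x$-independent rate, of $\Sigma_{t}(x)$ to the unique solution $\Sigma$ of $\mathbb{A}(z)\Sigma+\Sigma\mathbb{A}(z)^{\dagger}+\mathbb{J}\mathbb{J}^{\dagger}=0$, which is symmetric positive definite because the pair $(\mathbb{A}(z),\mathbb{J})$ is controllable — the off-diagonal block $\mathbb{I}_d$ of $\mathbb{A}(z)$ transmits the rank-$d$ noise to the position coordinates. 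This is \cite[Lemma 6.2]{cutoff}.

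The main obstacle is precisely the uniformity over the window $[T,1/\epsilon^{\lambda}]\cdot\log(1/\epsilon)$, whose length diverges as $\epsilon\to0$: a naive Gr\"onwall applied to the remainder equation contributes a factor $e^{Ct\log(1/\epsilon)}$ that overwhelms any power of $\epsilon$, and the whole argument hinges on replacing it by the contraction $\Vert\Phi(s,r)\Vert\le Ce^{-\kappa(s-r)}$. That contraction is available only once the deterministic flow has entered the basin of the exponentially stable point $(z,0)$, which is why the statement is restricted to $x\in\mathrm{B}(z,\rho)$ and to the confinement event $\{\tau_{\partial\mathcal{H}_b}^{\epsilon}>t\log(1/\epsilon)\}$ and why $\lambda<1/4$ is imposed; everything else — the Taylor control of $\nabla U$ on the compact confinement region, the Gaussian moment bounds, the Lyapunov-equation perturbation — is routine once this exponential stability is in hand.
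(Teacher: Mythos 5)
The paper does not give a standalone proof of this lemma: it states that the result is an immediate consequence of~\cite[Proposition~5.3 and Lemma~6.2]{cutoff}, and leaves it at that. Your proposal correctly reconstructs the argument underlying those cited results — the decomposition of the trajectory into the deterministic flow, the Gaussian first-order term $Y$, and a remainder $R^{\epsilon}$ satisfying a pathwise linear ODE whose fundamental solution contracts exponentially because $\mathbb{A}(z)$ is Hurwitz near the local minimum, a Gr\"onwall/bootstrap closure over the $\log(1/\epsilon)$-scaled window, and the Lyapunov-equation argument for convergence of $\Sigma_t(x)$ with controllability of $(\mathbb{A}(z),\mathbb{J})$ giving positive definiteness of the limit — so you are following essentially the same approach as the paper, simply spelling out the citation rather than invoking it.
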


\section{Proof of Proposition~\ref{prop:bound exponential exit time}.}

\label{sec:proof op Prop bound exponential exit time}

In this section, we first analyze the trajectory of the process~\eqref{eq:sde} starting from a point in $\mathcal{W}_m$ and provide some preliminary results regarding the first exit time of a neighbourhood of the saddle point $(\sigma,0)$ and the hitting time of $\mathcal{M}_\epsilon$. These results are then used in a second subsection to prove Proposition~\ref{prop:bound exponential exit time}. The proofs rely in particular on results obtained in Sections~\ref{sec:density estimate} and~\ref{sec:Lyapunov H}.

\begin{rem}\label{rem:extension to Ws}
The statements shown in this section remain true if $\mathcal{W}_{m}$
is replaced by $\mathcal{W}_{s}$ as the proofs can be transposed identically to this case.
\end{rem}

\subsection{Preliminary results}
This first lemma allows us to control the probability of remaining in a small neighbourhood of $(\sigma,0)$. We define for $\rho>0$,
\begin{equation}
Q_{\rho}:=\overline{\mathcal{W}_{m}}\setminus\mathrm{B}(\sigma,\,\rho)\;.\label{eq:def Q}
\end{equation}
\begin{lem}
\label{lem:exit ball sigma} 
There exist constants $C,\,\rho>0$ independent of $\epsilon$
such that for all $t>0$, 
\begin{equation}
\sup_{x\in\mathcal{W}_{m}}\mathbb{P}_{x}\big(\tau_{Q_{\rho}}^{\epsilon}>t\big)\leq\frac{C}{\sqrt{\epsilon t}}\;.\label{eq:sup proba exit Q}
\end{equation}
\end{lem}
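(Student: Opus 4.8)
\textbf{Proof proposal for Lemma~\ref{lem:exit ball sigma}.}
The plan is to reduce the claim to a statement about a one-dimensional Brownian-type functional by looking at the evolution of $V$ along the trajectory. Recall that the process must leave $\mathcal{W}_m$ through the saddle point region: since $\mathcal{W}_m$ is the connected component of $\{V<V(\sigma,0)\}$ containing $(m,0)$, its boundary touches $\{V=V(\sigma,0)\}$ only near $(\sigma,0)$. Consequently the event $\{\tau_{Q_\rho}^\epsilon>t\}$ forces the process to stay inside $\overline{\mathcal{W}_m}\cap \mathrm{B}(\sigma,\rho)$, a bounded set, for all of $[0,t]$ (once it is not in $Q_\rho$ it is either in the open complement of $\mathrm{B}(\sigma,\rho)$ inside $\mathcal W_m$, which is $Q_\rho$ itself, or else in $\mathrm{B}(\sigma,\rho)$). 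Actually more carefully: $\tau_{Q_\rho}^\epsilon>t$ means $X^\epsilon(s)\notin Q_\rho$ for $s\le t$, i.e. $X^\epsilon(s)\in \mathrm B(\sigma,\rho)\cup (\R^{2d}\setminus\overline{\mathcal W_m})$; but starting in $\mathcal W_m$ and by continuity the process cannot jump out of $\overline{\mathcal W_m}$ without first being on $\partial\mathcal W_m\subset Q_\rho^c$... so one must be a little careful and instead work directly with the It\^o expansion of $V$.

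First I would apply It\^o's formula to $V(X^\epsilon(t))=U(q^\epsilon(t))+\tfrac12|p^\epsilon(t)|^2$, exactly as in Lemma~\ref{lem:small time asymptotics}(1), obtaining
\[
V(X^\epsilon(t\wedge\tau))-V(x)=-\gamma\int_0^{t\wedge\tau}|p^\epsilon(s)|^2\,\mathrm ds+\sqrt{2\gamma\epsilon}\int_0^{t\wedge\tau}\langle p^\epsilon(s),\mathrm dB(s)\rangle+d\gamma\epsilon (t\wedge\tau),
\]
where $\tau=\tau_{Q_\rho}^\epsilon$. On the event $\{\tau>t\}$ the trajectory is confined to the bounded set $\overline{\mathcal W_m}\cap \overline{\mathrm B(\sigma,\rho)}$, so $V(X^\epsilon(s))\le V(\sigma,0)$ there and the left side is bounded below by $V(m,0)-V(\sigma,0)$, a negative constant. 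The key is that near the saddle the drift term $-\gamma\int|p|^2$ is nonpositive, while the martingale term $M_t:=\sqrt{2\gamma\epsilon}\int_0^{t\wedge\tau}\langle p^\epsilon,\mathrm dB\rangle$ has quadratic variation $2\gamma\epsilon\int_0^{t\wedge\tau}|p^\epsilon|^2\,\mathrm ds$; combining the two one gets that $V(X^\epsilon(t\wedge\tau))-V(x)-d\gamma\epsilon(t\wedge\tau)$ equals a time-changed Brownian motion minus $\tfrac1{2\epsilon}$ times its own quadratic variation — in other words the exponential-martingale / Girsanov structure used in the proof of Proposition~\ref{lem:first ineq h_AB}. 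The cleanest route is: the process $V(X^\epsilon(t\wedge\tau))-V(x)$ stays in the bounded interval $[V(m,0)-V(\sigma,0),\,0]$ on $\{\tau>t\}$, hence the drift $\gamma\int_0^{t}|p^\epsilon(s)|^2\mathrm ds$ on that event is bounded by $d\gamma\epsilon t + \sup|M_s| + \mathrm{const}$, which controls the running quadratic variation of $M$ by $\lesssim \epsilon(1+t)+\epsilon\sup_{s\le t}|M_s|$.

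The heart of the argument is then a reflection/Dambis–Dubins–Schwarz estimate. Writing $M_t=\sqrt{2\gamma\epsilon}\,W(\langle M\rangle_t/(2\gamma\epsilon))$ for a standard Brownian motion $W$, and denoting $A:=\langle M\rangle_{t\wedge\tau}$, we have from the previous paragraph that on $\{\tau>t\}$, $A\le C\epsilon(1+t)+C\epsilon\sqrt A\cdot(\text{something})$, so either $A\le C'\epsilon(1+t)$ or the Brownian term is large. More to the point, the event $\{\tau>t\}$ requires the drift–plus–noise combination not to push $V$ above $V(\sigma,0)$ while also not letting the process escape to $Q_\rho$ on the far side; but near the saddle $(\sigma,0)$ the potential $U$ has a strictly negative eigenvalue $-\lambda^\sigma$ in the unstable direction, so there is a coordinate $y=\langle e,q-\sigma\rangle$ along which the zero-noise flow is linearly repelling with rate $\mu^\sigma>0$ (this is exactly the constant in Theorem~\ref{thm:ek}). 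Projecting the SDE onto the corresponding unstable eigendirection of the linearized Hamiltonian flow and using Lemma~\ref{lem:convergence deterministic Langevin}-type linearization, the relevant scalar process $Z_t$ satisfies $\mathrm dZ_t=\mu^\sigma Z_t\,\mathrm dt+(\text{bounded drift})\mathrm dt+\sqrt{2\gamma\epsilon}\,\mathrm d\beta_t$ while confined to $|Z_t|\lesssim\rho$; a standard computation (variation of constants, then Doob's $L^2$ inequality for the stochastic convolution $\int_0^t e^{\mu^\sigma(t-s)}\mathrm d\beta_s$, whose variance is $\sim e^{2\mu^\sigma t}/(2\mu^\sigma)$) shows that confinement of $|Z_t|$ to a fixed window for time $t$ has probability $\lesssim (\epsilon e^{2\mu^\sigma t})^{-1/2}$ when $\epsilon e^{2\mu^\sigma t}\ge 1$ and is $O(1)$ otherwise. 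One then checks directly that for small $t$ the bound $C/\sqrt{\epsilon t}$ is $\ge 1$ anyway, so the estimate is trivially true; and for large $t$ the exponential gain from the unstable direction dominates $1/\sqrt{t}$, giving the claimed $C/\sqrt{\epsilon t}$ uniformly in $x\in\mathcal W_m$ (uniformity holds because all constants — the Lipschitz bounds on $\R^{2d}\setminus Q_\rho$, the linearization constants at $(\sigma,0)$, the bound $V(\sigma,0)-V(m,0)$ — depend only on $U$, $\gamma$ and $\rho$, not on the starting point).

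\textbf{Main obstacle.} The delicate point is making the "unstable-direction confinement $\Rightarrow$ small probability" step rigorous \emph{uniformly} over starting points $x\in\mathcal W_m$ and over all $t>0$ simultaneously, since the linearization of the Hamiltonian flow is only valid in $\mathrm B(\sigma,\rho)$, while the process may wander through the rest of $\mathcal W_m$ before approaching the saddle. The right way to handle this is a renewal/Markov-property decomposition: either the process hits $Q_\rho$ quickly (handled by the short-time estimate, where $C/\sqrt{\epsilon t}\ge1$ makes the claim vacuous), or it enters $\mathrm B(\sigma,\rho)$, and then one applies the scalar unstable-coordinate estimate from the entrance time onward using the strong Markov property, together with the fact that from any point of $\overline{\mathcal W_m}\setminus\mathrm B(\sigma,\rho)$ the zero-noise flow reaches $Q_\rho$ in bounded time (by Lemma~\ref{lem:convergence deterministic Langevin}, since trajectories converge to $(m,0)\in Q_\rho$), so by Lemma~\ref{lem:gr2} and a support argument the noisy process does too with probability bounded below. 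I expect the bookkeeping of these two regimes — and in particular verifying that the constant $C$ can be chosen uniformly — to be the bulk of the work, while the core Brownian estimate in the unstable direction is standard.
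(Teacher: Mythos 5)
Your instinct to exploit the unstable eigendirection of $\mathbb{H}_{U}^{\sigma}$ is sound, but the execution is both over-engineered and incomplete, and it misses a much shorter argument. Two observations collapse the proof. First, the supremum over $x\in\mathcal{W}_{m}$ reduces trivially to $x\in\mathcal{W}_{m}\cap\mathrm{B}(\sigma,\rho)$, because the probability is zero when $x\in Q_{\rho}$; there is therefore no need for the strong-Markov/renewal gluing you flag as ``the bulk of the work'', nor for any transport estimate bringing trajectories from the bulk of $\mathcal{W}_{m}$ into $\mathrm{B}(\sigma,\rho)$. Second, rather than linearizing the full $2d$-dimensional flow (or taking the It\^{o}-on-$V$/Girsanov detour, which you open and then abandon), the paper works with the scalar observable $\langle q^{\epsilon}(t)+p^{\epsilon}(t)/\gamma,\,e_{1}\rangle$, where $e_{1}$ is the negative eigenvector of $\mathbb{H}_{U}^{\sigma}$, sign chosen so that $q_{1}-\sigma_{1}>0$ on $\mathcal{W}_{m}\cap\mathrm{B}(\sigma,\rho)$. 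The point of the combination $q+p/\gamma$ is that it satisfies the driftless-in-$p$ SDE $\mathrm{d}(q^{\epsilon}+p^{\epsilon}/\gamma)=-\tfrac{1}{\gamma}\nabla U(q^{\epsilon})\,\mathrm{d}t+\sqrt{2\epsilon/\gamma}\,\mathrm{d}B$, and a second-order expansion of $U$ at $\sigma$ gives $\langle\nabla U(q),e_{1}\rangle\le0$ on $\mathcal{W}_{m}\cap\mathrm{B}(\sigma,\rho)$, so the drift in the $e_{1}$-direction has a definite sign. On $\{\tau_{Q_{\rho}}^{\epsilon}>t\}$ the trajectory stays in the bounded set $\mathrm{B}(\sigma,\rho)$, hence $\sqrt{2\epsilon/\gamma}\,\sup_{s\le t}\langle B(s),e_{1}\rangle$ is bounded by a constant depending only on $\rho$, and the reflection principle plus a Gaussian density bound give $C/\sqrt{\epsilon t}$ at once.

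The concrete gap in your proposal is the asserted estimate ``confinement of $|Z_t|$ to a fixed window for time $t$ has probability $\lesssim(\epsilon e^{2\mu^{\sigma}t})^{-1/2}$''. The variance computation for the stochastic convolution $\int_{0}^{t}e^{\mu^{\sigma}(t-s)}\mathrm{d}\beta_{s}$ controls the marginal at time $t$, not pathwise confinement on $[0,t]$; the bounded drift you insert to absorb the linearization error is $\epsilon$-independent and competes with the noise at the relevant scale; and the initial datum may sit exactly on the stable manifold of the saddle, where $e^{\mu^{\sigma}t}Z_{0}\equiv0$, so the deterministic repulsion you are invoking is absent in precisely the worst case. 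Making this rigorous would require substantially more machinery than the lemma calls for. The lemma only needs the polynomial bound $C/\sqrt{\epsilon t}$ (it is invoked in Proposition~\ref{prop:exit time probability estimate} with $t\sim\epsilon^{-m}$), and the paper obtains it with nothing beyond the one-sided drift sign of $\langle\nabla U,e_{1}\rangle$, the boundedness of $\mathrm{B}(\sigma,\rho)$, and a single reflection-principle step.
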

\begin{proof}[Proof of Lemma~\ref{lem:exit ball sigma}]
It is sufficient to prove the inequality~\eqref{eq:sup proba exit Q} with a supremum over $x\in\mathcal{W}_m\cap\mathrm{B}(\sigma,\,\rho)$ since the probability vanishes on $Q_{\rho}$. Considering that $U$ is a Morse function, its Hessian matrix at the saddle point
$\sigma$ admits one negative eigenvalue $-\lambda_{1}$ and $d-1$
positive eigenvalues $\lambda_{2},\dots,\lambda_{d}$. We denote by $(e_{1},\dots,e_{d})$ the orthonormal basis associated with the eigenvalues $(\lambda_1,\dots,\lambda_d)$ and by $x_{i}=\langle x,\,e_{i}\rangle$ the
orthogonal projection of the vector $x$ on $e_{i}$. 

Computing the asymptotic expansion of $U(q)$ when $q$ is close to $\sigma$ one gets
\[
U(q)=U(\sigma)-\frac{\lambda_{1}}{2}(q_{1}-\sigma_{1})^{2}+\sum_{i=2}^{d}\frac{\lambda_{i}}{2}(q_{i}-\sigma_{i})^{2}+O(|q-\sigma|^{3})\;.
\]
As a result, if $U(q)<U(\sigma)$ and $|q-\sigma|\leq\delta$ for a sufficiently small $\delta$ then there exists a constant
$c_{1}>0$ such that 
\[
(q_{1}-\sigma_{1})^{2}\geq c_{1}\sum_{i=2}^{d}(q_{i}-\sigma_{i})^{2}\;,
\]
which ensures that $O(|q-\sigma|^{2})=O(|q_{1}-\sigma_{1}|^{2})$. Also,
up to changing the sign of $e_{1}$, one can assume that for
$(q,\;p)\in\mathcal{W}_m$ and $q\in\mathrm{B}(\sigma,\,\rho)$, then $q_{1}-\sigma_{1}>0$. Therefore, if we compute the asymptotics of $\langle\nabla U(q),\,e_{1}\rangle$
for $(q,\;p)\in\mathcal{W}_m$ and $q\in\mathrm{B}(\sigma,\,\rho)$, 
\begin{align*}
\langle\nabla U(q),\,e_{1}\rangle & =\langle\mathbb{H}_{U}^{\sigma}(q-\sigma),\,e_{1}\rangle+O(|q-\sigma|^{2})\\
 & =-\lambda_{1}(q_{1}-\sigma_{1})+O((q_{1}-\sigma_{1})^{2})\\
 & \leq0\;.
\end{align*}
Furthermore, by~\eqref{eq:sde},
it follows that 
\[
\mathrm{d}\left(q^\epsilon(t)+\frac{p^\epsilon(t)}{\gamma}\right)=-\frac{1}{\gamma}\nabla U(q^\epsilon(t))\mathrm{d}t+\sqrt{\frac{2\epsilon}{\gamma}}\mathrm{d}B(t)\;.
\]
As a result, for all $t\geq0$, 
\[
q^\epsilon(t)+\frac{p^\epsilon(t)}{\gamma}=q+\frac{p}{\gamma}-\frac{1}{\gamma}\int_{0}^{t}\nabla U(q^\epsilon(r))\mathrm{d}r+\sqrt{\frac{2\epsilon}{\gamma}}B(t)\;.
\]
Therefore, for $(q,p)\in\mathcal{W}_m\cap\mathrm{B}(\sigma,\,\rho)$, on the event $\{\tau_{Q_{\rho}}^{\epsilon}>t\}$, 
\[
\big\langle q^\epsilon(t)+\frac{p^\epsilon(t)}{\gamma},\,e_{1}\big\rangle\geq\big\langle q+\frac{p}{\gamma},\,e_{1}\big\rangle+\sqrt{\frac{2\epsilon}{\gamma}}\langle B(t),\,e_{1}\rangle\;.
\]
Since $\mathrm{B}(\sigma,\,\rho)$ is bounded, there exists a constant
$C>0$ independent of $\epsilon$ such that for $t>0$,
\begin{align*}
\mathbb{P}_{x}(\tau_{Q_{\rho}}^{\epsilon}>t) & \leq\mathbb{P}(\sqrt{\epsilon}|\langle B(t),\,e_{1}\rangle|\leq C)\\
 & \leq\mathbb{P}(|\langle B(1),\,e_{1}\rangle|\leq C/\sqrt{\epsilon t})  \leq\frac{2C}{\sqrt{2\pi\epsilon t}}\;,
\end{align*}
which concludes the proof. 
\end{proof}
Moreover, we can see that the time before entering a neighbourhood of $(m,0)$, for the zero-noise process~\eqref{eq:deterministic Langevin}, is a bounded function of the initial condition in $Q_{\rho}$.
\begin{lem}
\label{lem:hitting time deterministic} Let $\kappa>0$ and define  
\begin{equation}
\mathcal{S}:x\in\R^{2d}\mapsto\inf\{t\geq0:U(q(t))+|p(t)|^{2}/2\leq U(m)+\kappa,\quad(q_{0},\,p_{0})=x\}\;.\label{eq:def deterministic hitting time}
\end{equation}
Then, for $\rho>0$ defined as in Lemma~\ref{lem:exit ball sigma},
\begin{equation}
\sup_{x\in Q_{\rho}}\mathcal{S}(x)<\infty\;.\label{eq:control T_x in epsilon}
\end{equation}
\end{lem}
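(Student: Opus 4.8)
The plan is to combine the long-time convergence of the zero-noise flow established in Lemma~\ref{lem:convergence deterministic Langevin} with a compactness argument over $Q_\rho$. First I would note that $\mathcal{W}=\{V<V(\sigma,0)\}$ is bounded, since by Assumption~\ref{ass:nabla U} the function $V(q,p)=U(q)+|p|^2/2$ is coercive, so $\overline{\mathcal{W}_m}$ is compact; removing the open ball $\mathrm{B}(\sigma,\rho)$ leaves $Q_\rho=\overline{\mathcal{W}_m}\setminus\mathrm{B}(\sigma,\rho)$ closed, hence compact, and $(\sigma,0)\in\mathrm{B}(\sigma,\rho)$ so $(\sigma,0)\notin Q_\rho$. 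Therefore Lemma~\ref{lem:convergence deterministic Langevin} applies with $K=Q_\rho$: for every $x\in Q_\rho$, the solution $(q(t),p(t))$ of~\eqref{eq:deterministic Langevin} with $(q(0),p(0))=x$ satisfies $(q(t),p(t))\to(m,0)$, hence $U(q(t))+|p(t)|^2/2\to U(m)$. Since $\kappa>0$ and $U(m)<U(m)+\kappa$, there is a finite time $t_x$ with $U(q(t_x))+|p(t_x)|^2/2<U(m)+\kappa$, which already gives $\mathcal{S}(x)\le t_x<\infty$ for each fixed $x$.

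Next I would upgrade this pointwise finiteness to a uniform bound. The drift in~\eqref{eq:deterministic Langevin} is $C^1$ because $U\in C^2$, and by the energy identity~\eqref{eq:time derivative energy equality} any solution starting in $\overline{\mathcal{W}_m}$ remains in the compact sublevel set $\{V\le V(\sigma,0)\}$ for all $t\ge0$; in particular no finite-time blow-up occurs and the flow $\Phi_t$ is globally defined with $(t,x)\mapsto\Phi_t(x)$ continuous. Consequently $x\mapsto V(\Phi_{t_x}(x))=U(q(t_x))+|p(t_x)|^2/2$ is continuous near $x$, so the strict inequality $V(\Phi_{t_x}(x))<U(m)+\kappa$ persists on an open neighbourhood $\mathcal{N}_x$ of $x$; for every $y\in\mathcal{N}_x$ one then has $\mathcal{S}(y)\le t_x$. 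The family $\{\mathcal{N}_x\}_{x\in Q_\rho}$ is an open cover of the compact set $Q_\rho$, so it admits a finite subcover $\mathcal{N}_{x_1},\dots,\mathcal{N}_{x_k}$; setting $T:=\max_{1\le i\le k}t_{x_i}$ yields $\mathcal{S}(y)\le T$ for all $y\in Q_\rho$, which is precisely~\eqref{eq:control T_x in epsilon}.

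The only delicate point is the continuous dependence of the trajectories on the initial condition over the $x$-dependent horizon $t_x$; this is taken care of by the a priori confinement of all trajectories to the compact sublevel set $\{V\le V(\sigma,0)\}$ provided by~\eqref{eq:time derivative energy equality}, which rules out escape to infinity and places us in the scope of the classical continuous-dependence theorem for ODEs. Everything else in the argument is routine.
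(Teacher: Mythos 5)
Your proof is correct, and it takes a genuinely cleaner route than the paper's.

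The paper's argument establishes the \emph{continuity} of $\mathcal{S}$ on $Q_\rho$ and then concludes by compactness. Proving continuity turns out to be delicate: because $\mathcal{S}(x)$ is defined by a non-strict inequality and the energy $V(\Phi_t(x))$ is only known to be non-increasing, one must first rule out that the energy stalls at exactly the level $U(m)+\kappa$ for a positive time. The paper handles this by showing~\eqref{eq:decrease energy at S_x}: that for any $\theta>0$ the energy at time $\mathcal{S}(x)+\theta$ is strictly below the threshold, via an argument that forces $\nabla U(q(\mathcal{S}(x);x))=0$ and hence $q(\mathcal{S}(x);x)=m$ if the energy were to stall. Only after this can the Grönwall comparison be used to deduce $\mathcal{S}(x)\to\mathcal{S}(y)$ as $x\to y$.

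You bypass all of this. Instead of proving continuity of $\mathcal{S}$, you observe that since $V(\Phi_t(x))\to U(m)$, which is \emph{strictly} below $U(m)+\kappa$, there is for each $x$ a time $t_x$ at which the energy is strictly below the threshold, with a positive margin. Then continuous dependence on initial data (which needs only the confinement to the compact sublevel set $\{V\le V(\sigma,0)\}$, ensured by~\eqref{eq:time derivative energy equality}) propagates the strict inequality to a neighbourhood $\mathcal{N}_x$, giving the local bound $\mathcal{S}\le t_x$ on $\mathcal{N}_x$. An open cover of the compact $Q_\rho$ by such neighbourhoods reduces to a finite subcover, whose $\max t_{x_i}$ is the required uniform bound. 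This is local boundedness plus compactness rather than continuity plus compactness; it is strictly less information about $\mathcal{S}$, but it is all that is needed, and it avoids the stalling lemma entirely. Your version is preferable in this respect. The only thing to watch — and you handled it correctly — is that $t_x$ must be chosen so the inequality is strict at time $t_x$, not merely at $\mathcal{S}(x)$; otherwise the margin argument would fail.
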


\begin{proof}[Proof of Lemma~\ref{lem:hitting time deterministic}]

It follows from Lemma~\ref{lem:convergence deterministic Langevin}
that for all $x\in Q_{\rho}$,
$\mathcal{S}(x)<\infty$. Therefore, if $\mathcal{S}$ is also continuous on the set $Q_{\rho}$ then the statement follows from the compactness of $Q_{\rho}$.

For $x\in Q_{\rho}$, we now denote by $(q(t;\,x),\,p(t;\,x))_{t\geq0}$ the trajectory
of~\eqref{eq:deterministic Langevin} satisfying $(q(0),\,p(0))=x$.
Notice that, by the continuity of the trajectories of~\eqref{eq:deterministic Langevin},
\begin{equation}
U(q(\mathcal{S}(x);\,x))+\frac{1}{2}|p(\mathcal{S}(x);\,x)|^{2}\leq U(m)+\kappa\;.\label{eq:def S_x}
\end{equation}
Moreover, for any $\theta>0$,
\begin{equation}
U(q(\mathcal{S}(x)+\theta;\,x))+\frac{1}{2}|p(\mathcal{S}(x)+\theta;\,x)|^{2}<U(m)+\kappa\;.\label{eq:decrease energy at S_x}
\end{equation}

\noindent Indeed, if this is not the case then for all $t\in[\mathcal{S}(x),\,\mathcal{S}(x)+\theta)$,
\begin{equation}\label{eq:time derivative hamiltonian}
\frac{\mathrm{d}}{\mathrm{d}t}\left(U(q(t;\,x))+\frac{1}{2}|p(t;\,x)|^{2}\right)=-\gamma|p(t;\,x)|^{2}=0\;.
\end{equation}
Therefore, by~\eqref{eq:deterministic Langevin}, for
all $t\in(\mathcal{S}(x),\,\mathcal{S}(x)+\theta)$, 
\[
p(t;\,x)=p(\mathcal{S}(x);\,x)-\gamma\int_{\mathcal{S}(x)}^{t}p(r;\,x)\mathrm{d}r-\int_{\mathcal{S}(x)}^{t}\nabla U(q(r;\,x))\mathrm{d}r\;.
\]
As a result,
\[
\frac{1}{t-\mathcal{S}(x)}\int_{\mathcal{S}(x)}^{t}\nabla U(q(r;\,x))\mathrm{d}r=0\;.
\]
Taking the limit when $t\rightarrow\mathcal{S}(x)$ ensures that
$\nabla U(q(\mathcal{S}(x);\,x))=0$ which implies that $q(\mathcal{S}(x);\,x)=m$. Thus, since $p(\mathcal{S}(x);\,x)=0$,
$$U(q(\mathcal{S}(x);\,x))+\frac{1}{2}|p(\mathcal{S}(x);\,x)|^{2}=U(m)\;.$$
Therefore, we deduce from~\eqref{eq:time derivative hamiltonian} that~\eqref{eq:decrease energy at S_x} is satisfied, hence a contradiction. As a result,~\eqref{eq:decrease energy at S_x} is satisfied for any $\theta>0$.

Notice that for $x\in Q_{\rho}$,
the trajectory $(q(t;\,x),\,p(t;\,x))_{t\geq0}$ is bounded in time since the energy $U(q(t;\,x))+|p(t;\,x)|^{2}/2$
is a non-increasing function of time. Therefore, if we take $x,\;y\in Q_{\rho}$, since $U$ is $C^{2}$ on $\R^{d}$, by Gr\"onwall's inequality there exists $C>0$ such that for all $t\geq0$, 
\[
\sup_{r\in[0,t]}|(q(r;\,x),\,p(r;\,x))-(q(r;\,y),\,p(r;\,y))|\leq\mathrm{e}^{Ct}|x-y|\;.
\]
Therefore, if $\mathcal{S}(y)>0$, then for any $\alpha\in(0,\mathcal{S}(y))$, if $|x-y|$ is small enough then 
\[
\inf_{t\in[0,\,\mathcal{S}(y)-\alpha]}U(q(t;\,x))+\frac{1}{2}|p(t;\,x)|^{2}>U(m)+\kappa,\quad U(q(\mathcal{S}(y)+\alpha;\,x))+\frac{1}{2}|p(\mathcal{S}(y)+\alpha;\,x)|^{2}<U(m)+\kappa\;,
\]
which ensures that $\mathcal{S}(y)-\alpha<\mathcal{S}(x)<\mathcal{S}(y)+\alpha$. Similarly, if $\mathcal{S}(y)=0$, then $0\leq\mathcal{S}(x)<\alpha$. Hence the convergence $\mathcal{S}(x)\underset{x\rightarrow y}{\longrightarrow}\mathcal{S}(y)$
which concludes the proof. 
\end{proof}

Using the result of Lemma~\ref{lem:hitting time deterministic}, we are now able to control the probability that the process~\eqref{eq:sde} starting from $x\in Q_\rho$ remains outside of a neighbourhood of $(m,0)$.
\begin{lem}
\label{lem:hitting ball minimum} For any $\delta>0$ independent
of $\epsilon$, there exists $\theta=\theta(\delta)>0$ such that for all $N\geq1$, there exists a constant $C=C(N,\,\delta)>0$ such that 
\[
\sup_{x\in Q_\rho}\mathbb{P}_{x}\big(\tau_{\mathrm{B}(m,\,\delta)}^{\epsilon}\land\tau_{\partial\mathcal{W}_{m}}^{\epsilon}>\theta\big)\leq C\epsilon^{N/2}\;,
\]
where $\rho$ is the radius defined in Lemma~\ref{lem:exit ball sigma}. 
\end{lem}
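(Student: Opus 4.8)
The strategy is to compare the noisy process~\eqref{eq:sde} with its zero-noise counterpart~\eqref{eq:deterministic Langevin} on a time horizon long enough that the deterministic flow has entered the sublevel set $\{U(q)+|p|^2/2\le U(m)+\kappa\}$, and then to translate this into the statement about $\tau_{\mathrm{B}(m,\,\delta)}^{\epsilon}$. First I would fix $\delta>0$ and choose $\kappa>0$ small enough that the connected component of $\{x\in\mathcal{W}_m : U(q)+|p|^2/2 < U(m)+\kappa\}$ containing $(m,0)$ is contained in $\mathrm{B}(m,\,\delta)$; this is possible because $(m,0)$ is the unique minimum of the Hamiltonian $V$ in that component and $V$ is a Morse function. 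Then, by Lemma~\ref{lem:hitting time deterministic}, the deterministic hitting time $\mathcal{S}(x)$ of $\{U(q)+|p|^2/2\le U(m)+\kappa\}$ is uniformly bounded on $Q_\rho$, say by $T_0:=\sup_{x\in Q_\rho}\mathcal{S}(x)<\infty$. Set $\theta:=T_0+1$.

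The core estimate is a Gr\"onwall comparison: on the event that neither process has exited a fixed bounded enlargement $\mathcal{D}'$ of $\overline{\mathcal{W}_m}$ before time $\theta$, the difference $|X^\epsilon(t)-(q(t),p(t))|$ on $[0,\theta]$ is bounded by $C_{\mathcal{D}'}\sqrt{2\gamma\epsilon}\sup_{s\in[0,\theta]}|B(s)|\,\mathrm{e}^{C_{\mathcal{D}'}\theta}$, by exactly the same argument as in Lemma~\ref{lem:gronw} (the drift terms agree, only the Brownian term contributes). Hence, for any fixed $\eta>0$, the probability that $\sup_{t\in[0,\theta]}|X^\epsilon(t)-(q(t),p(t))|>\eta$ while both processes stay in $\mathcal{D}'$ is at most $\mathbb{P}\big(\sqrt{2\gamma\epsilon}\sup_{s\in[0,\theta]}|B(s)|>\eta\,\mathrm{e}^{-C_{\mathcal{D}'}\theta}\big)$, which by the reflection principle and a Gaussian tail bound is $O(\epsilon^{N})$ for every $N$ (since $\sup_{[0,\theta]}|B|$ has Gaussian tails). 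Now, on the event $\{\tau_{\mathrm{B}(m,\,\delta)}^{\epsilon}\land\tau_{\partial\mathcal{W}_m}^{\epsilon}>\theta\}$, the process $X^\epsilon$ has stayed in $\overline{\mathcal{W}_m}\setminus\mathrm{B}(m,\,\delta)\subset\mathcal{D}'$ up to time $\theta$, so in particular $\tau_{\partial\mathcal{D}'}^\epsilon>\theta$; on the other hand the zero-noise trajectory starting from the same $x\in Q_\rho$ satisfies $U(q(t_0))+|p(t_0)|^2/2\le U(m)+\kappa$ for some $t_0\le T_0<\theta$, hence $(q(t_0),p(t_0))$ lies well inside $\mathrm{B}(m,\,\delta)$ — say at distance $\ge\eta_0$ from its boundary, uniformly in $x\in Q_\rho$ by continuity and compactness. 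Taking $\eta:=\eta_0$ above forces $X^\epsilon(t_0)\in\mathrm{B}(m,\,\delta)$ on the good event, contradicting $\tau_{\mathrm{B}(m,\,\delta)}^{\epsilon}>\theta$. Therefore $\{\tau_{\mathrm{B}(m,\,\delta)}^{\epsilon}\land\tau_{\partial\mathcal{W}_m}^{\epsilon}>\theta\}$ is contained in the bad event, whose probability is $O(\epsilon^{N/2})$ (in fact $O(\epsilon^N)$), uniformly over $x\in Q_\rho$.

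A small technical wrinkle — and the step I expect to need the most care — is the uniform control that on the good event the deterministic trajectory actually penetrates $\mathrm{B}(m,\delta)$ to a depth $\eta_0$ \emph{independent of $x$}: the sublevel set $\{U(q)+|p|^2/2\le U(m)+\kappa\}$ need not be a subset of $\mathrm{B}(m,\delta)$ globally (it could have another component near $s$), so one must argue that starting from $Q_\rho\subset\overline{\mathcal{W}_m}$ and flowing by~\eqref{eq:deterministic Langevin}, the trajectory stays in $\overline{\mathcal{W}_m}$ (the energy is non-increasing and $\partial\mathcal{W}$ is a level set, using Lemma~\ref{lem:convergence deterministic Langevin} that the flow converges to $(m,0)$) and hence lands in the component of the sublevel set around $(m,0)$. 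Combining this with the compactness of $Q_\rho$ and continuity of the flow map $(t,x)\mapsto(q(t;x),p(t;x))$ yields the uniform depth $\eta_0>0$. The rest is routine: assemble the Gr\"onwall bound, the Gaussian tail, and the inclusion of events, and quantify over $N$ to get the claimed $C\epsilon^{N/2}$ (the exponent $N/2$ rather than $N$ simply gives slack, matching how the bound is used later in Section~\ref{sec:proof op Prop bound exponential exit time}).
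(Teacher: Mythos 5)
Your proposal is correct and follows essentially the same route as the paper: fix $\kappa$ small so that the relevant sublevel set of the Hamiltonian sits inside $\mathrm{B}(m,\delta/2)$, use Lemma~\ref{lem:hitting time deterministic} to get a uniform time $\theta$ by which the zero-noise flow from $Q_\rho$ reaches that sublevel set, run a Gr\"onwall comparison between $X^\epsilon$ and the deterministic trajectory on $[0,\theta]$ restricted to the event $\{\tau^\epsilon_{\partial\mathcal{W}_m}>\theta\}$, and close with a tail bound on $\sqrt{\epsilon}\sup_{[0,\theta]}|B|$. The only (harmless) difference is that you invoke the Gaussian tail of $\sup|B|$ directly, giving a bound $o(\epsilon^N)$, whereas the paper applies the Markov inequality to the $N$-th moment, which yields exactly $O(\epsilon^{N/2})$; your care about the trajectory remaining in $\overline{\mathcal{W}_m}$ and landing in the correct connected component of the sublevel set is a small point the paper leaves implicit.
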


\begin{proof}[Proof of Lemma~\ref{lem:hitting ball minimum}]

Fix $\delta>0$ independent of $\epsilon$ and
let us take $\kappa=\kappa(\delta)>0$ small enough such that for all $x\in Q_\rho$,
\[
U(q)+\frac{1}{2}|p|^{2}\leq U(m)+\kappa\,\,\Longrightarrow\,\, x=(q,\,p)\in\mathrm{B}(m,\,\delta/2)\;.
\]
Then, by Lemma~\ref{lem:hitting time deterministic}, there exists $T=T(\delta)>0$
such that for all initial conditions $(q(0),\;p(0))\in Q_\rho$,
the process~\eqref{eq:deterministic Langevin} satisfies
\[
(q(T),\,p(T))\in\mathrm{B}(m,\,\delta/2)\;.
\]

\noindent Therefore, by the Markov inequality, for all $N\geq1$, 
\begin{align}
\mathbb{P}_{x}\big(\tau_{\mathrm{B}(m,\,\delta)}^{\epsilon}\land\tau_{\partial\mathcal{W}_{m}}^{\epsilon}>T\big) & \leq\mathbb{P}_{x}\big(\sup_{t\in[0,\,T]}\big|(q^{\epsilon}(t),\,p^{\epsilon}(t))-(q(t),\,p(t))\big|>\delta/2,\,\tau_{\partial\mathcal{W}_{m}}^{\epsilon}>T\big)\nonumber\\
 & \leq\frac{1}{(\delta/2)^{N}}\mathbb{E}_{x}\bigg[\sup_{t\in[0,\,T]}\big|(q^{\epsilon}(t),\,p^{\epsilon}(t))-(q(t),\,p(t))\big|^{N}\mathbf{1}_{\tau_{\partial\mathcal{W}_{m}}^{\epsilon}>T}\bigg]\label{eq:gronwall event no exit from W}\;.
\end{align} 
A standard application of the Gr\"onwall inequality on the event $\{\tau_{\partial\mathcal{W}_{m}}^{\epsilon}>T\}$ yields the existence
of a constant $C>0$ independent of $\epsilon$ such that 
\[
\sup_{t\in[0,\,T]}\big|(q^{\epsilon}(t),\,p^{\epsilon}(t))-(q(t),\,p(t))\big|\leq\sqrt{2\gamma\epsilon}\,\mathrm{e}^{CT}\sup_{t\in[0,\,T]}|B(t)|\;.
\]
\noindent Reinjecting into~\eqref{eq:gronwall event no exit from W} concludes the proof. 
\end{proof}

We conclude this subsection with the following lemma controlling the probability that the process~\eqref{eq:sde} does not enter $\mathcal{M}_\epsilon$ in times of order $\log(1/\epsilon)$, when starting from a neighbourhood of $(m,0)$. 

\begin{lem}
\label{lem:enter ball A_epsilon} There exist constants $c,\,\rho',\,\theta'>0$
independent of $\epsilon$ such that for $\epsilon$ small enough,
\[
\sup_{x\in\mathrm{B}(m,\,\rho')}\mathbb{P}_{x}(\tau_{\mathcal{M}_{\epsilon}}^{\epsilon}>\theta'\log(1/\epsilon))\leq1-c\epsilon^{d}\;.
\]
\end{lem}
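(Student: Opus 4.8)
The plan is to prove the equivalent lower bound
\[
\mathbb{P}_{x}\big(\tau_{\mathcal{M}_{\epsilon}}^{\epsilon}\leq\theta'\log(1/\epsilon)\big)\geq c\,\epsilon^{d}
\qquad\text{uniformly in }x\in\mathrm{B}(m,\rho'),
\]
by exhibiting one large deterministic time $T_{\epsilon}:=\theta'\log(1/\epsilon)$ at which $X^{\epsilon}(T_{\epsilon})$ already lies in $\mathcal{M}_{\epsilon}$ with probability at least $c\,\epsilon^{d}$; since $\{X^{\epsilon}(T_{\epsilon})\in\mathcal{M}_{\epsilon}\}\subset\{\tau_{\mathcal{M}_{\epsilon}}^{\epsilon}\leq T_{\epsilon}\}$ this is enough. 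First I would fix $z=m$, constants $0<a<b\leq\delta$ and $\rho>0$ as in Lemma~\ref{lem:lyapunov function} and Proposition~\ref{prop:control moments H}, and choose $\rho'\in(0,\rho]$ small enough that $\mathrm{B}(m,\rho')\subset\mathcal{H}_{a}$ (possible since $H$ is continuous and $H(m,0)=0$). Then I would choose $\theta'\geq T$, with $T$ the constant from Lemma~\ref{lem:convergence X_T epsilon}, large enough that $C\mathrm{e}^{-\mu T_{\epsilon}}=C\epsilon^{\mu\theta'}\leq\epsilon/3$ for all small $\epsilon$, where $C,\mu>0$ are the constants in~\eqref{eq:exponential convergence deterministic neighborhood}; this guarantees $|(q(T_{\epsilon}),p(T_{\epsilon}))-(m,0)|\leq\epsilon/3$ for every zero-noise trajectory started in $\mathrm{B}(m,\rho')$.

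Next, write the linearization identity $X^{\epsilon}(T_{\epsilon})=(q(T_{\epsilon}),p(T_{\epsilon}))+\sqrt{2\epsilon\gamma}\,Y(T_{\epsilon})+R_{\epsilon}$, which \emph{defines} $R_{\epsilon}$, and set $A_{\epsilon}:=\{\tau_{\partial\mathcal{H}_{b}}^{\epsilon}>T_{\epsilon}\}$. On $A_{\epsilon}$, Lemma~\ref{lem:convergence X_T epsilon} (applied with $t=\theta'\in[T,1/\epsilon^{\lambda}]$ for small $\epsilon$) gives $\mathbb{E}_{x}[|R_{\epsilon}|^{n}\mathbf{1}_{A_{\epsilon}}]\leq C_{n}\epsilon^{n\beta}$ with $\beta>1$, hence $\mathbb{P}_{x}(|R_{\epsilon}|>\epsilon/3,A_{\epsilon})\leq 3^{n}C_{n}\epsilon^{n(\beta-1)}$, which is $o(\epsilon^{d})$ once $n>d/(\beta-1)$. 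Moreover, since $x\in\mathcal{H}_{a}$, Proposition~\ref{prop:control moments H} yields $\mathbb{P}_{x}(A_{\epsilon}^{c})=\mathbb{P}_{x}(\tau_{\partial\mathcal{H}_{b}}^{\epsilon}\leq T_{\epsilon})\leq C(1+T_{\epsilon})\epsilon^{-\beta}\mathrm{e}^{-\alpha(b-a)/\epsilon}$, which is exponentially small in $1/\epsilon$ and in particular $o(\epsilon^{d})$. Combining the deterministic bound with these two estimates,
\[
\mathbb{P}_{x}\big(X^{\epsilon}(T_{\epsilon})\in\mathcal{M}_{\epsilon}\big)\geq\mathbb{P}_{x}\Big(\big|Y(T_{\epsilon})\big|\leq\tfrac{\sqrt\epsilon}{3\sqrt{2\gamma}}\Big)-3^{n}C_{n}\epsilon^{n(\beta-1)}-C(1+T_{\epsilon})\epsilon^{-\beta}\mathrm{e}^{-\alpha(b-a)/\epsilon}\,,
\]
because $\{|\sqrt{2\epsilon\gamma}\,Y(T_{\epsilon})|\leq\epsilon/3\}\cap\{|R_{\epsilon}|\leq\epsilon/3\}\subset\{|X^{\epsilon}(T_{\epsilon})-(m,0)|<\epsilon\}$.

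The heart of the argument is the small-ball lower bound for the Gaussian term. Since $Y(T_{\epsilon})\sim\mathcal{N}(0,\Sigma_{T_{\epsilon}}(x))$ and $\Vert\Sigma_{T_{\epsilon}}(x)-\Sigma\Vert_{F}\leq C'\mathrm{e}^{-\alpha T_{\epsilon}}\to0$ uniformly in $x\in\mathrm{B}(m,\rho)$ by~\eqref{eq:convergence covariance matrix} with $\Sigma$ symmetric positive definite, for $\epsilon$ small the matrices $\Sigma_{T_{\epsilon}}(x)$ have eigenvalues in a fixed compact subset of $(0,\infty)$, uniformly in $x$; hence the density of $Y(T_{\epsilon})$ is bounded below by a positive constant on a fixed ball around the origin, and, the volume of a ball of radius $\sqrt\epsilon/(3\sqrt{2\gamma})$ in $\mathbb{R}^{2d}$ being of order $\epsilon^{d}$, one gets $\mathbb{P}_{x}(|Y(T_{\epsilon})|\leq\sqrt\epsilon/(3\sqrt{2\gamma}))\geq c_{0}\epsilon^{d}$ with $c_{0}>0$ independent of $\epsilon$ and of $x\in\mathrm{B}(m,\rho)$. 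Plugging this into the previous display yields $\mathbb{P}_{x}(X^{\epsilon}(T_{\epsilon})\in\mathcal{M}_{\epsilon})\geq c_{0}\epsilon^{d}-o(\epsilon^{d})\geq\tfrac{c_{0}}{2}\epsilon^{d}$ for $\epsilon$ small, uniformly in $x\in\mathrm{B}(m,\rho')$, giving the claim with $c=c_{0}/2$, $\theta'$ and $\rho'$ as chosen.

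The only genuinely delicate point is the matching of exponents, and it is really bookkeeping rather than a difficulty: the admissible window for $Y(T_{\epsilon})$ must have radius of order $\sqrt\epsilon$ (so that $\sqrt{2\epsilon\gamma}\,|Y(T_{\epsilon})|\lesssim\epsilon$), which produces exactly the $\epsilon^{d}$ Gaussian small-ball probability in dimension $2d$, while the deterministic drift ($\epsilon^{\mu\theta'}$, pushed below $\epsilon^{d}$ by enlarging $\theta'$), the linearization error ($\epsilon^{n(\beta-1)}$, pushed below $\epsilon^{d}$ by enlarging $n$ since $\beta>1$), and the exit probability from $\mathcal{H}_{b}$ (exponentially small) must all be negligible against it. One must also be careful to fix the order of choices so that every constant stays independent of $\epsilon$ and so that the nested inclusions $\mathrm{B}(m,\rho')\subset\mathcal{H}_{a}\subset\mathcal{H}_{b}\subset\mathrm{B}(m,\rho)$ hold, which is precisely what makes both Lemma~\ref{lem:convergence X_T epsilon} and Proposition~\ref{prop:control moments H} applicable at the stated initial points.
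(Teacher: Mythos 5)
Your proof follows the same strategy as the paper's: lower-bound $\mathbb{P}_x(X^\epsilon(T_\epsilon)\in\mathcal{M}_\epsilon)$ via the triple decomposition into a deterministic drift term controlled by Lemma~\ref{lem:convergence deterministic Langevin}, a Gaussian fluctuation controlled by a small-ball estimate for $Y(T_\epsilon)\sim\mathcal{N}(0,\Sigma_{T_\epsilon}(x))$ using Lemma~\ref{lem:convergence X_T epsilon}, and a remainder controlled by the moment bound in the same lemma together with the exit-probability bound of Proposition~\ref{prop:control moments H}. This is essentially the paper's argument; the only cosmetic difference is that you obtain the $\epsilon^d$ small-ball probability from a uniform lower bound on the Gaussian density (using the positive-definiteness of $\Sigma$ and the uniform convergence~\eqref{eq:convergence covariance matrix}) rather than via the explicit chi-distribution computation the paper carries out, and your $\epsilon/3$ split should be tightened to $\epsilon/4$-type thresholds (or one strict inequality) so the triangle inequality yields the \emph{open} ball $\mathcal{M}_\epsilon$.
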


\begin{proof}[Proof of Lemma~\ref{lem:enter ball A_epsilon}]

Let us fix $\delta>0$ small enough satisfying Proposition~\ref{prop:control moments H}
and such that $\mathcal{H}_{\delta}\subset\mathrm{B}(m,\,\rho)$
where $\rho$ is defined in Lemma~\ref{lem:lyapunov function}. Let
us take $T>0$ large enough independent of $\epsilon$ such that by~\eqref{eq:exponential convergence deterministic neighborhood}
in Lemma~\ref{lem:convergence deterministic Langevin}, for any $(q(0),\,p(0))=x\in\mathrm{B}(m,\,\rho)$,
\begin{equation}
\big|(q(T\log(1/\epsilon)),\,p(T\log(1/\epsilon)))-(m,0)\big|<\frac{\epsilon}{4}\;.\label{eq:asymptotics X_t}
\end{equation}

\noindent For $t\geq0$, let $Y(t)$ be the Gaussian vector satisfying~\eqref{eq:def Y_t} and define also
$$Z^\epsilon(t):=(q^\epsilon(t),p^\epsilon(t))-(q(t),p(t))-\sqrt{2\epsilon\gamma}\,Y(t)\,.$$

\noindent Consider now $x\in\mathcal{H}_{\delta/2}$ and let us look at the following probability 
\begin{align}
\mathbb{P}_{x}\bigg(\tau_{\mathcal{M}_{\epsilon}}^{\epsilon}>T\log(1/\epsilon)\bigg) & =1-\mathbb{P}_{x}\bigg(\tau_{\mathcal{M}_{\epsilon}}^{\epsilon}\leq T\log(1/\epsilon)\bigg)\nonumber \\
 & \leq1-\mathbb{P}_{x}\bigg(\tau_{\mathcal{M}_{\epsilon}}^{\epsilon}\leq T\log(1/\epsilon),\,\tau_{\partial\mathcal{H}_{\delta}}^{\epsilon}>T\log(1/\epsilon)\bigg)\;.\label{eq:upperbound hitting time A_eps}
\end{align}

By Lemma~\ref{lem:convergence X_T epsilon}, up to increasing $T$,
we have the existence of $\beta>1$ independent of $\epsilon$ such
that for all $m\geq1$ there exists $C_{m}>0$ such that for all $x\in\mathcal{H}_{\delta/2}$,
\begin{equation}\label{eq:control asymptotics epsilon Langevin}
\mathbb{E}_{x}\big[\big|Z^\epsilon(T\log(1/\epsilon))\big|^{m}\mathbf{1}_{\tau_{\partial\mathcal{H}_{\delta}}^{\epsilon}>T\log(1/\epsilon)}\big]\leq C_{m}\epsilon^{m\beta}\;.
\end{equation} 
Let us fix $m\geq1$ large enough such that $m(\beta-1)>d$. One has that
\begin{align*}
 & \mathbb{P}_{x}\bigg(\tau_{\mathcal{M}_{\epsilon}}^{\epsilon}\leq T\log(1/\epsilon),\,\tau_{\partial\mathcal{H}_{\delta}}^{\epsilon}>T\log(1/\epsilon)\bigg)\\
 & \geq\mathbb{P}_{x}\bigg((q^{\epsilon}(T\log(1/\epsilon)),\,p^{\epsilon}(T\log(1/\epsilon)))\in\mathcal{M}_{\epsilon},\,\tau_{\partial\mathcal{H}_{\delta}}^{\epsilon}>T\log(1/\epsilon)\bigg)\\
 & \geq\mathbb{P}_{x}\bigg(\sqrt{2\gamma\epsilon}\big|Y(T\log(1/\epsilon))\big|\leq\frac{\epsilon}{4},\,\big|Z^\epsilon(T\log(1/\epsilon))\big|\leq\frac{\epsilon}{2},\,\tau_{\partial\mathcal{H}_{\delta}}^{\epsilon}>T\log(1/\epsilon)\bigg)\;.
\end{align*}
using the triangle inequality with~\eqref{eq:asymptotics X_t}. Consequently, 
\begin{align}
 & \mathbb{P}_{x}\bigg(\tau_{\mathcal{M}_{\epsilon}}^{\epsilon}\leq T\log(1/\epsilon),\,\tau_{\partial\mathcal{H}_{\delta}}^{\epsilon}>T\log(1/\epsilon)\bigg)\nonumber \\
 & \geq\mathbb{P}_{x}\bigg(\sqrt{2\gamma\epsilon}\big|Y(T\log(1/\epsilon))\big|\leq\frac{\epsilon}{4}\bigg)-\mathbb{P}_{x}\bigg(\tau_{\partial\mathcal{H}_{\delta}}^{\epsilon}\leq T\log(1/\epsilon)\bigg)\\
 &\quad-\mathbb{P}_{x}\bigg(\big|Z^\epsilon(T\log(1/\epsilon))\big|>\frac{\epsilon}{2},\,\tau_{\partial\mathcal{H}_{\delta}}^{\epsilon}>T\log(1/\epsilon)\bigg)\nonumber \\ 
 & \geq\mathbb{P}_{x}\bigg(\sqrt{2\gamma\epsilon}\big|Y(T\log(1/\epsilon))\big|\leq\frac{\epsilon}{4}\bigg)-C\frac{(1+T\log(1/\epsilon))}{\epsilon^{\beta}}\mathrm{exp}\bigg(-\frac{\alpha\delta}{2\epsilon}\bigg)-2^{m}C_{m}\epsilon^{m(\beta-1)}\label{eq:lower bound proba hitting A_eps}
\end{align}
by Proposition~\ref{prop:control moments H} and by applying the Markov inequality along with the inequality~\eqref{eq:control asymptotics epsilon Langevin}.
Notice that since we took $m$ such that $m(\beta-1)>d$, the proof of the lemma follows if there exists a constant $c>0$
independent of $x$ and $\epsilon$ such that for $\epsilon$ small enough, 
\begin{equation}
\mathbb{P}_{x}\bigg(\big|Y(T\log(1/\epsilon))\big|\leq\frac{\sqrt{\epsilon}}{4\sqrt{2\gamma}}\bigg)\geq c\epsilon^{d}\;.\label{eq:lower bound Y_T}
\end{equation}
For this reason, let us take a symmetric matrix $\mathbb{B}_{T\log(1/\epsilon)}(x)$
such that $\mathbb{B}_{T\log(1/\epsilon)}^{2}(x)=\Sigma_{T\log(1/\epsilon)}(x)$
where $\Sigma_{t}(x)$ is the covariance matrix of $Y(t)$.
Then, taking a Gaussian vector $Z\sim\mathcal{N}_{2d}(0,\,\mathbb{I}_{2d})$,
it is easy to see that $Y(T\log(1/\epsilon))$ and $\mathbb{B}_{T\log(1/\epsilon)}(x)Z$
share the same law. Additionally, 
\[
\big|\mathbb{B}_{T\log(1/\epsilon)}(x)Z\big|\leq\sqrt{\big\Vert\Sigma_{T\log(1/\epsilon)}(x)\big\Vert_{\textup{F}}}|Z|\;,
\]
since $\mathbb{B}_{T\log(1/\epsilon)}^{2}(x)=\Sigma_{T\log(1/\epsilon)}(x)$.
Therefore, by~\eqref{eq:convergence covariance matrix},
there exists a constant $C>0$ independent of $x\in\mathcal{H}_{\delta/2}$
such that 
\[
|\mathbb{B}_{T\log(1/\epsilon)}(x)Z|\leq C|Z|\;.
\]
Consequently, 
\begin{align*}
\mathbb{P}_{x}\bigg(\big|Y(T\log(1/\epsilon))\big|\leq\frac{\sqrt{\epsilon}}{4\sqrt{2\gamma}}\bigg)&=\mathbb{P}\bigg(\big|\mathbb{B}_{T\log(1/\epsilon)}(x)Z\big|\leq\frac{\sqrt{\epsilon}}{4\sqrt{2\gamma}}\bigg)\\
&\geq\mathbb{P}\bigg(|Z|\leq\frac{\sqrt{\epsilon}}{4C\sqrt{2\gamma}}\bigg)\;.
\end{align*}
Let us define the constant $\beta=4C\sqrt{2\gamma}$. Since $|Z|\sim\chi(2d)$,
\begin{align*}
\mathbb{P}\bigg(|Z|\leq\frac{\sqrt{\epsilon}}{\beta}\bigg) & =\int_{0}^{\sqrt{\epsilon}/\beta}\frac{1}{2^{d-1}\Gamma(d)}x^{2d-1}\mathrm{e}^{-x^{2}/2}\mathrm{d}x\\
 & =\frac{1}{2^{d-1}\Gamma(d)\beta^{2d}}\frac{\epsilon^{d}}{2d}\mathrm{e}^{-\epsilon/2\beta^{2}}+\frac{1}{2^{d-1}\Gamma(d)}\int_{0}^{\sqrt{\epsilon}/\beta}\frac{x^{2d+1}}{2d}\mathrm{e}^{-x^{2}/2}\mathrm{d}x\\
 & \geq c\epsilon^{d}\;,
\end{align*}
for $\epsilon$ small enough, hence~\eqref{eq:lower bound Y_T}. Then, taking $\rho'>0$ small enough such that $\mathrm{B}(m,\rho')\subset\mathcal{H}_{\delta/2}$ concludes the proof of Lemma~\ref{lem:enter ball A_epsilon}.
\end{proof}

\subsection{Proof of Proposition~\ref{prop:bound exponential exit time}}

Thanks to the lemmas obtained in the previous subsection, we are able to provide a sharp control on the
tails of the exit time $\zeta^{\epsilon}$ defined in~\eqref{eq:def tau_epsilon}, which is then used to prove Proposition~\ref{prop:bound exponential exit time}.
\begin{prop}
\label{prop:exit time probability estimate} There exist constants
$C,\,\alpha,\,\beta>0$ independent of $\epsilon$, for $\epsilon$ small enough, such that for all $t\geq0$, for all $x\in\mathcal{W}_{m}\setminus\overline{\mathcal{M}_\epsilon}$, 
\begin{equation}\label{eq:control tail zeta}
\mathbb{P}_{x}\big(\zeta^{\epsilon}>t\big)\leq C\,\mathrm{exp}\big(-\alpha t\epsilon^{\beta}\big)\;,
\end{equation}
where $\zeta^{\epsilon}$ is defined in~\eqref{eq:def tau_epsilon}. 
\end{prop}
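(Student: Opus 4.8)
The plan is to deduce the exponential tail from a single uniform ``one-window'' estimate together with the Markov property. Concretely, I will produce a horizon $T_\epsilon>0$ (allowed to grow polynomially in $1/\epsilon$) and a constant $c>0$ independent of $\epsilon$ such that
$$\sup_{x\in\mathcal{W}_m\setminus\overline{\mathcal{M}_\epsilon}}\mathbb{P}_x(\zeta^\epsilon>T_\epsilon)\le 1-c\epsilon^d$$
for all $\epsilon$ small enough. On the event $\{\zeta^\epsilon>kT_\epsilon\}$ the process has not crossed $\partial\mathcal{W}$, hence still lies in the component $\mathcal{W}_m$, has not entered $\mathcal{M}_\epsilon$, and (almost surely) does not sit on $\partial\mathcal{M}_\epsilon$, since that set is Lebesgue-null and the process admits a transition density; thus the configuration at time $kT_\epsilon$ again belongs to $\mathcal{W}_m\setminus\overline{\mathcal{M}_\epsilon}$, and applying the Markov property at the deterministic times $T_\epsilon,2T_\epsilon,\dots$ gives $\mathbb{P}_x(\zeta^\epsilon>nT_\epsilon)\le(1-c\epsilon^d)^n$. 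Writing $n=\lfloor t/T_\epsilon\rfloor\ge t/T_\epsilon-1$ and using that $T_\epsilon$ is bounded by a fixed power of $1/\epsilon$, one obtains $\mathbb{P}_x(\zeta^\epsilon>t)\le C\exp(-\alpha\epsilon^\beta t)$, which is the claim. (For $x\in\mathcal{W}_m$ one has in fact $\zeta^\epsilon=\tau_{\partial\mathcal{W}}^\epsilon\wedge\tau_{\mathcal{M}_\epsilon}^\epsilon$, since reaching $\mathcal{S}_\epsilon\subset\mathcal{W}_s$ forces a prior crossing of $\partial\mathcal{W}$.)

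To obtain the one-window estimate I split $[0,T_\epsilon]$ into three consecutive stages, glued by the strong Markov property. \textbf{Stage 1:} starting from $x\in\mathcal{W}_m\setminus\overline{\mathcal{M}_\epsilon}$, Lemma~\ref{lem:exit ball sigma} gives $\mathbb{P}_x(\tau_{Q_\rho}^\epsilon>t_1)\le C/\sqrt{\epsilon t_1}$, so with the complementary probability the process reaches $Q_\rho=\overline{\mathcal{W}_m}\setminus\mathrm{B}(\sigma,\rho)$ by time $t_1$, and on the survival event $\{\zeta^\epsilon>\tau_{Q_\rho}^\epsilon\}$ its position at $\tau_{Q_\rho}^\epsilon$ lies in $Q_\rho\cap\mathcal{W}_m$. \textbf{Stage 2:} from any $y\in Q_\rho$, Lemma~\ref{lem:hitting ball minimum}, applied with $\delta=\rho'$ (the radius of Lemma~\ref{lem:enter ball A_epsilon}) and with $N$ chosen so that $N/2>d$, yields $\mathbb{P}_y(\tau_{\mathrm{B}(m,\rho')}^\epsilon\wedge\tau_{\partial\mathcal{W}_m}^\epsilon>\theta)\le C\epsilon^{d+1}$ for a fixed $\theta=\theta(\rho')$; hence, up to a failure probability $\le C\epsilon^{d+1}$, within time $\theta$ the process either exits $\mathcal{W}_m$ (so $\zeta^\epsilon$ has already occurred) or reaches $\mathrm{B}(m,\rho')$ while still inside $\mathcal{W}_m$ and, on the relevant event $\{\zeta^\epsilon>\theta\}$, not yet inside $\mathcal{M}_\epsilon$. \textbf{Stage 3:} from any $z\in\mathrm{B}(m,\rho')$, Lemma~\ref{lem:enter ball A_epsilon} gives $\mathbb{P}_z(\tau_{\mathcal{M}_\epsilon}^\epsilon>\theta'\log(1/\epsilon))\le 1-c\epsilon^d$, and $\zeta^\epsilon\le\tau_{\mathcal{M}_\epsilon}^\epsilon$.

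Chaining Stages 2 and 3 by the strong Markov property at $\tau_{\mathrm{B}(m,\rho')}^\epsilon$ gives $\sup_{y\in Q_\rho\cap\mathcal{W}_m}\mathbb{P}_y(\zeta^\epsilon>\theta+\theta'\log(1/\epsilon))\le C\epsilon^{d+1}+(1-c\epsilon^d)\le 1-\tfrac{c}{2}\epsilon^d$ for $\epsilon$ small. Chaining Stage 1 by the strong Markov property at $\tau_{Q_\rho}^\epsilon$ and setting $T_\epsilon:=t_1+\theta+\theta'\log(1/\epsilon)$, one then finds $\mathbb{P}_x(\zeta^\epsilon>T_\epsilon)\le C/\sqrt{\epsilon t_1}+(1-\tfrac{c}{2}\epsilon^d)$. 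It remains to choose $t_1$ so large that $C/\sqrt{\epsilon t_1}\le\tfrac{c}{4}\epsilon^d$, i.e.\ $t_1$ of order $\epsilon^{-(2d+1)}$; then $\mathbb{P}_x(\zeta^\epsilon>T_\epsilon)\le 1-\tfrac{c}{4}\epsilon^d$ with $T_\epsilon$ bounded by a constant times $\epsilon^{-(2d+1)}$, and the reduction of the first paragraph closes the proof, e.g.\ with $\beta=3d+1$.

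The only genuine difficulty is the bookkeeping at the gluing points: at each restart one must check that the current configuration lies in the domain where the next lemma's uniform bound applies, and that on the survival events the ``bad'' alternatives (exiting $\mathcal{W}$, or entering $\mathcal{M}_\epsilon$ prematurely) already realize $\zeta^\epsilon\le T_\epsilon$ and therefore only help. All the analytic content is contained in the preliminary lemmas: the $1/\sqrt{\epsilon t}$ escape rate near the saddle (Lemma~\ref{lem:exit ball sigma}), the polynomially small miss probability towards $m$ (Lemma~\ref{lem:hitting ball minimum}), and the $\epsilon^d$ small-ball lower bound for hitting $\mathcal{M}_\epsilon$ (Lemma~\ref{lem:enter ball A_epsilon}, via the Gaussian approximation of Lemma~\ref{lem:convergence X_T epsilon}); it is precisely this $\epsilon^d$ factor together with the polynomial length of the window that forces the exponential rate to be only polynomially small in $\epsilon$, as in the statement. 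By Remark~\ref{rem:extension to Ws} the argument applies verbatim with $\mathcal{W}_m$ replaced by $\mathcal{W}_s$.
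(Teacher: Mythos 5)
Your proof is correct and follows essentially the same route as the paper's: a three-stage decomposition of a polynomial-length window (escape the saddle neighbourhood via Lemma~\ref{lem:exit ball sigma}, descend to $\mathrm{B}(m,\rho')$ via Lemma~\ref{lem:hitting ball minimum}, then hit $\mathcal{M}_\epsilon$ with probability at least $c\epsilon^d$ via Lemma~\ref{lem:enter ball A_epsilon}), glued by the strong Markov property, and then iterated over deterministic windows to produce the exponential tail. The only cosmetic difference is that the paper allots the fractions $T_\epsilon/4,\,T_\epsilon/4,\,T_\epsilon/2$ of a single horizon $T_\epsilon=\epsilon^{-m}$ ($m>2d+1$) to the three stages while you use the minimal horizons of the individual lemmas and sum them, arriving at the same final exponent $\beta=3d+1$; your brief discussion of the restart configurations (that the state at a deterministic time $kT_\epsilon$ on $\{\zeta^\epsilon>kT_\epsilon\}$ lies a.s.\ in $\mathcal{W}_m\setminus\overline{\mathcal{M}_\epsilon}$, and that the ``bad'' alternatives realize $\zeta^\epsilon$ early and so only help) makes explicit the bookkeeping that the paper leaves implicit.
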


\begin{proof}[Proof of Proposition~\ref{prop:exit time probability estimate}]
Define the open set $\mathcal{D}_\epsilon:=\mathcal{W}_{m}\setminus\overline{\mathcal{M}_\epsilon}$. Starting from $x\in\mathcal{D}_{\epsilon}$, one has, $\mathbb{P}_x$ almost-surely, the equality $\zeta^{\epsilon}=\tau_{\partial\mathcal{D}_{\epsilon}}^{\epsilon}$. 

\textbf{Step 1:} Let us now take a constant $m>2d+1$ and let $T_{\epsilon}:=1/\epsilon^{m}$. We first prove the existence of a constant $c>0$ independent of $\epsilon$ such that for $\epsilon>0$ small enough,
\begin{equation}
\sup_{x\in\mathcal{D}_{\epsilon}}\mathbb{P}_{x}\big(\tau_{\partial\mathcal{D}_{\epsilon}}^{\epsilon}>T_{\epsilon}\big)\leq1-c\epsilon^{d}.\label{eq:sup proba M_1}
\end{equation}
Let $Q_{\rho}$ be the compact set defined in~\eqref{eq:def Q} where $\rho>0$
is given by Lemma~\ref{lem:exit ball sigma}. We first notice that for any $x\in\mathcal{D}_{\epsilon}$, 
\[
\mathbb{P}_{x}\big(\tau_{\partial\mathcal{D}_{\epsilon}}^{\epsilon}>T_{\epsilon}\big)\leq\mathbb{P}_{x}\big(\tau_{\partial\mathcal{D}_{\epsilon}}^{\epsilon}>T_{\epsilon},\,\tau_{Q_{\rho}}^{\epsilon}<T_{\epsilon}/4\big)+\mathbb{P}_{x}\big(\tau_{Q_{\rho}}^{\epsilon}\geq T_{\epsilon}/4\big)\;,
\]
By Lemma~\ref{lem:exit ball sigma},
there exists a constant $C>0$ independent of $\epsilon$ and $x\in\mathcal{D}_{\epsilon}$
such that 
\[
\mathbb{P}_{x}\big(\tau_{Q_{\rho}}^{\epsilon}\geq T_{\epsilon}/4\big)\leq C\epsilon^{(m-1)/2}\;.
\]
Furthermore, applying the strong Markov property at the stopping time
$\tau_{Q_{\rho}}^{\epsilon}$, 
\begin{equation}
\mathbb{P}_{x}\big(\tau_{\partial\mathcal{D}_{\epsilon}}^{\epsilon}>T_{\epsilon},\,\tau_{Q_{\rho}}^{\epsilon}<T_{\epsilon}/4\big)\leq\mathbb{E}_{x}\bigg[\mathbf{1}_{\tau_{\partial\mathcal{D}_{\epsilon}}^{\epsilon}>\tau_{Q_{\rho}}^{\epsilon}}\mathbb{P}_{\big(q^{\epsilon}(\tau_{Q_{\rho}}^{\epsilon}),\,p^{\epsilon}(\tau_{Q_{\rho}}^{\epsilon})\big)}\bigg(\tau_{\partial\mathcal{D}_{\epsilon}}^{\epsilon}>3T_{\epsilon}/4\bigg)\bigg]\;.\label{eq:probability first hitting time from M}
\end{equation}
Taking $\rho'>0$ as defined in Lemma~\ref{lem:enter ball A_epsilon}, we deduce from Lemma~\ref{lem:hitting ball minimum} the existence of
a constant $C_{m}>0$ such that for all $y\in Q_{\rho}$,
\begin{align*}
\mathbb{P}_{y}\bigg(\tau_{\partial\mathcal{D}_{\epsilon}}^{\epsilon}>3T_{\epsilon}/4\bigg) & \leq\mathbb{P}_{y}\bigg(\tau_{\partial\mathcal{D}_{\epsilon}}^{\epsilon}>3T_{\epsilon}/4,\,\tau_{\mathrm{B}(m,\,\rho')}^{\epsilon}<T_{\epsilon}/4\bigg)+\mathbb{P}_{y}\bigg(\tau_{\partial\mathcal{W}_{m}}^{\epsilon}\land\tau_{\mathrm{B}(m,\,\rho')}^{\epsilon}\geq T_{\epsilon}/4\bigg)\\
 & \leq\mathbb{P}_{y}\bigg(\tau_{\partial\mathcal{D}_{\epsilon}}^{\epsilon}>3T_{\epsilon}/4,\,\tau_{\mathrm{B}(m,\;\rho')}^{\epsilon}<T_{\epsilon}/4\bigg)+C_{m}\epsilon^{(m-1)/2}\;.
\end{align*}

Moreover, applying the strong Markov property at the hitting time
$\tau_{\mathrm{B}(m,\,\rho')}^{\epsilon}$, 
\begin{align*}
\mathbb{P}_{y}\bigg(\tau_{\partial\mathcal{D}_{\epsilon}}^{\epsilon}>3T_{\epsilon}/4,\,\tau_{\mathrm{B}(m,\,\rho')}^{\epsilon}<T_{\epsilon}/4\bigg) & \leq\mathbb{E}_{x}\bigg[\mathbf{1}_{\tau_{\partial\mathcal{D}_{\epsilon}}^{\epsilon}>\tau_{\mathrm{B}(m,\,\rho')}^{\epsilon}}\mathbb{P}_{\big(q^{\epsilon}(\tau_{\mathrm{B}(m,\,\rho')}^{\epsilon}),\,p^{\epsilon}(\tau_{\mathrm{B}(m,\,\rho')}^{\epsilon})\big)}\bigg(\tau_{\partial\mathcal{D}_{\epsilon}}^{\epsilon}>T_{\epsilon}/2\bigg)\bigg]\\
 & \leq\sup_{x\in\mathrm{B}(m,\,\rho')}\mathbb{P}_{x}\bigg(\tau_{\mathcal{M}_{\epsilon}}^{\epsilon}>T_{\epsilon}/2\bigg)\;.
\end{align*}
Therefore, by Lemma~\ref{lem:enter ball A_epsilon} and the definition of $T_{\epsilon}$, 
there exists a constant $c\in(0,\,1)$ independent of $\epsilon$
such that for $\epsilon$ small enough, 
\[
\mathbb{P}_{y}\bigg(\tau_{\partial\mathcal{D}_{\epsilon}}^{\epsilon}>3T_{\epsilon}/4,\,\tau_{\mathrm{B}(m,\,\rho')}^{\epsilon}<T_{\epsilon}/4\bigg)\leq1-c\epsilon^{d}\;.
\]
Combining all these inequalities ensures the existence of a constant
$C'>0$ independent of $\epsilon$ such that 
\[
\sup_{x\in\mathcal{D}_{\epsilon}}\mathbb{P}_{x}\big(\tau_{\partial\mathcal{D}_{\epsilon}}^{\epsilon}>T_{\epsilon}\big)\leq1-c\epsilon^{d}+C'\epsilon^{(m-1)/2}\;,
\]
which concludes the proof of~\eqref{eq:sup proba M_1}, for $\epsilon$
small enough, since $(m-1)/2>d$.

\textbf{Step 2:} Let us now deduce~\eqref{eq:control tail zeta} using~\eqref{eq:sup proba M_1}. For any $t\geq0$ and $x\in\mathcal{D}_{\epsilon}$, by the Markov property, 
\begin{align*}
\mathbb{P}_{x}\big(\tau_{\partial\mathcal{D}_{\epsilon}}^{\epsilon}>t\big) & \leq\mathbb{P}_{x}\bigg(\tau_{\partial\mathcal{D}_{\epsilon}}^{\epsilon}>\left\lfloor \frac{t}{T_{\epsilon}}\right\rfloor T_{\epsilon}\bigg)\\
 & \leq(1-c\epsilon^{d})^{\lfloor t/T_{\epsilon}\rfloor}\\
 &=\mathrm{exp}\bigg(\left\lfloor \frac{t}{T_{\epsilon}}\right\rfloor \log\left(1-c\epsilon^{d}\right)\bigg)\\
 & \leq\mathrm{exp}\bigg(-ct\epsilon^{m+d}+c\epsilon^{d}\bigg)\;,
\end{align*}
using the standard inequalities $\log(1-x)<-x$ for $x\in[0,\,1)$
and $\lfloor x\rfloor\geq x-1$ for $x\geq0$, hence~\eqref{eq:control tail zeta}. 
\end{proof}

Finally, we conclude this section with the proof of Proposition~\ref{prop:bound exponential exit time}.
\begin{proof}[Proof of Proposition~\ref{prop:bound exponential exit time}]
 We provide here the proof for the case $x\in\mathcal{W}_{m}$ but as it was emphasized on Remark~\ref{rem:extension to Ws} all the results in this section can be extended to the case $x\in\mathcal{W}_{s}$ using an identical proof.
 
 Let $\mathcal{D}_\epsilon:=\mathcal{W}_{m}\setminus\overline{\mathcal{M}_\epsilon}$ and recall the stopping times defined in~\eqref{eq:def tau_epsilon}. For $x\in\mathcal{D}_{\epsilon}$, one has that, $\mathbb{P}_x$ almost-surely, 
\[
\zeta^{\epsilon}=\tau_{\partial\mathcal{D}_{\epsilon}}^{\epsilon},\quad\widetilde{\zeta}^{\epsilon}=\tilde{\tau}_{\partial\mathcal{D}_{\epsilon}}^{\epsilon}\;,
\]
Consequently, for all $x\in\mathcal{D}_{\epsilon}$,
\begin{align*}
\mathbb{E}_{x}\left[\mathrm{exp}\big(d\gamma\widetilde{\zeta}^{\epsilon}\big)\right] & =\mathbb{E}_{x}\left[\mathrm{exp}\big(d\gamma\widetilde{\tau}_{\partial\mathcal{D}_{\epsilon}}^{\epsilon}\big)\right]\\
 & =1+d\gamma\mathbb{E}_{x}\left[\int_{0}^{\widetilde{\tau}_{\partial\mathcal{D}_{\epsilon}}^{\epsilon}}\mathrm{e}^{d\gamma r}\right]\mathrm{d}r\\
 & =1+d\gamma\int_{0}^{\infty}\mathrm{e}^{d\gamma r}\mathbb{P}_{x}(\widetilde{\tau}_{\partial\mathcal{D}_{\epsilon}}^{\epsilon}>r)\mathrm{d}r\\
 & \leq1+d\gamma\epsilon\mathrm{e}^{d\gamma\epsilon}+d\gamma\int_{\epsilon}^{\infty}\mathrm{e}^{d\gamma r}\mathbb{P}_{x}(\widetilde{\tau}_{\partial\mathcal{D}_{\epsilon}}^{\epsilon}>r)\mathrm{d}r\\
 & \leq1+d\gamma\epsilon\mathrm{e}^{d\gamma\epsilon}+d\gamma\int_{\epsilon}^{\infty}\int_{\mathcal{D}_{\epsilon}}\mathrm{e}^{d\gamma r}\widetilde{\mathrm{p}}_{r}^{\epsilon,\,\mathcal{D}_{\epsilon}}(x,\;x')\mathrm{d}x'\mathrm{d}r\;,
\end{align*}
by Proposition~\ref{prop:property density}. Additionally, by~\eqref{eq:property density}, 
\begin{align}
\mathbb{E}_{x}\left[\mathrm{exp}\big(d\gamma\widetilde{\zeta}^{\epsilon}\big)\right]\leq1+d\gamma\epsilon\mathrm{e}^{d\gamma\epsilon}+d\gamma\int_{\epsilon}^{\infty}\int_{\mathcal{D}_{\epsilon}}\mathrm{p}_{r}^{\epsilon,\,\mathcal{D}_{\epsilon}}(x',\,x)\mathrm{d}x'\mathrm{d}r\;.\label{eq:control exponential moment dgamma}
\end{align}
Furthermore, by Chapman-Kolomogorov's relation, for all $x,\,x'\in\mathcal{D}_{\epsilon}$
and for all $r>\epsilon$, 
\begin{equation}
\mathrm{p}_{r}^{\epsilon,\,\mathcal{D}_{\epsilon}}(x',\,x)=\mathbb{E}_{x'}\left[\mathbf{1}_{\tau_{\partial\mathcal{D}_{\epsilon}}^{\epsilon}>r-\epsilon}\mathrm{p}_{\epsilon}^{\epsilon,\,\mathcal{D}_{\epsilon}}((q^{\epsilon}(r-\epsilon),\,p^{\epsilon}(r-\epsilon)),\,x)\right]\;.\label{eq:chapman kolmogorov killed density M_epsilon}
\end{equation}
Besides, using the Gaussian upper-bound shown in~\cite[Theorem 2.19]{LelRamRey}
one has the existence of a constant $C_{1}>0$ independent of $\epsilon$
such that for all $x'',\,x\in\mathcal{D}_{\epsilon}$, 
\[
\mathrm{p}_{\epsilon}^{\epsilon,\,\mathcal{D}_{\epsilon}}(x'',\,x)\leq\frac{C_{1}}{\sqrt{(2\pi)^{2d}\left(\frac{\epsilon^{6}}{12}\phi(\gamma\epsilon)\right)^{d}}}\;,
\]
where $\phi$ is a positive continuous function defined in~\cite[Equation 74]{LelRamRey}
and satisfying $\phi(0)=1$. Therefore, there exists a constant $C_{2}>0$
independent of $\epsilon$ such that for all $\epsilon$ small enough and for all $x'',\,x\in\mathcal{D}_\epsilon$,
\[
\mathrm{p}_{\epsilon}^{\epsilon,\,\mathcal{D}_{\epsilon}}(x'',\,x)\leq\frac{C_{2}}{\epsilon^{3d}}\;.
\]

\noindent Reinjecting into~\eqref{eq:chapman kolmogorov killed density M_epsilon}
ensures that for all $x,\,x'\in\mathcal{D}_{\epsilon}$ and for all
$r>\epsilon$, 
\[
\mathrm{p}_{r}^{\epsilon,\,\mathcal{D}_{\epsilon}}(x',\,x)\leq\frac{C_{2}}{\epsilon^{3d}}\mathbb{P}_{x'}(\tau_{\partial\mathcal{D}_{\epsilon}}^{\epsilon}>r-\epsilon)\;.
\]
Moreover, by Proposition~\ref{prop:exit time probability estimate},
there exist constants $C_{3},\,\alpha,\,\beta>0$ independent of $\epsilon$
such that for all $x,\,x'\in\mathcal{D}_{\epsilon}$, for all $r>\epsilon$,
\[
\mathrm{p}_{r}^{\epsilon,\,\mathcal{D}_{\epsilon}}(x',\,x)\leq\frac{C_{3}}{\epsilon^{3d}}\mathrm{exp}\big(-\alpha(r-\epsilon)\epsilon^{\beta}\big)\;.
\]
Reinjecting
into~\eqref{eq:control exponential moment dgamma} ensures the existence of a constant $C_4>0$ such that 
\begin{align*}
\mathbb{E}_{x}\left[\mathrm{exp}\big(d\gamma\widetilde{\zeta}^{\epsilon}\big)\right] & \leq1+d\gamma\epsilon\mathrm{e}^{d\gamma\epsilon}+\frac{C_{4}}{\epsilon^{3d}}\int_{\epsilon}^{\infty}\mathrm{exp}\big(-\alpha(r-\epsilon)\epsilon^{\beta}\big)\mathrm{d}r\\
 &=1+d\gamma\epsilon\mathrm{e}^{d\gamma\epsilon}+\frac{C_{4}}{\alpha\epsilon^{3d+\beta}}\;,
\end{align*}
which concludes the proof of Proposition~\ref{prop:bound exponential exit time}. 
\end{proof}

\section{Proof of Proposition~\ref{prop:main2}}

\label{sec:Esitmate of dominator}

 This section is devoted to the proof of Proposition~\ref{prop:main2}. The quantity $\mathrm{Cap}_{\epsilon}(\mathcal{M}_{\epsilon},\,\mathcal{S}_{\epsilon})$ therein corresponds to the capacity between the sets $\mathcal{M}_{\epsilon}$ and $\mathcal{S}_{\epsilon}$. The computation of its asymptotic when $\epsilon\rightarrow0$ was done in the elliptic and reversible setting relying on the Dirichlet principle (see~\cite{BEGK,Bovier}) which is hardly tractable in non-reversible settings, see~\cite{LMS}.

In a recent work~\cite{JS}, the authors devised a new approach for the elliptic and non-reversible setting which avoids the use of the Dirichlet principle. It relies on a weak definition of the capacity similar to the one provided in Proposition~\eqref{prop:cap}. By using appropriate test functions, the authors are then able to obtain the asymptotic of the capacity when $\epsilon\rightarrow0$. However, their approach does not work in the current non-elliptic setting given that the computations therein rely on boundary regularity assumptions for the equilibrium potential function and also prior capacity bounds that are not available to us in the current non-elliptic setting. In this section, we adapt the reasoning in~\cite{JS} in a way that cleverly avoids these difficulties.

For simplicity in this section we will assume $\sigma=0$ and we shall
denote by $\mathbb{H}_{U}$ (resp. $\mathbb{H}_{V}$) the Hessian
matrix of $U$ (resp. $V$) at $\sigma$ (resp. at $(\sigma,\,0)\in\mathbb{R}^{2d}$).
In particular, by definition of $V$~\eqref{eq:ham}, one has that 
\begin{equation}
\mathbb{H}_{V}=\begin{pmatrix}\mathbb{H}_{U} & \mathbb{O}_{d}\\
\mathbb{O}_{d} & \mathbb{I}_{d}
\end{pmatrix}\in\mathbb{R}^{2d\times2d}\;.\label{eq:expression H_V}
\end{equation}

For $x=(q,\,p)\in\mathbb{R}^{2d}$, we shall use here the following
notation: 
\begin{equation}
(x_{1},\ldots,x_{d},\,x_{d+1},\;\ldots,x_{2d})=(q_{1},\ldots,q_{d},\,p_{1},\;\ldots,p_{d})\label{eq:notation x (q,p)}
\end{equation}

For $q,\,q'\in\mathbb{R}^{d}$, we shall also denote by 
\[
C^{q,\,q'}:=\left\{ \phi\in C([0,\,1],\mathbb{R}^{2d}):\phi(0)=(q,\,0)\text{ and }\phi(1)=(q',\,0)\right\} \;.
\]

\begin{rem}\label{rem:path across saddle point}
By Assumption~\ref{ass:U}, one has that
\[
\inf_{\phi\in C^{m,\,s}}\sup_{t\in[0,\,1]}V(\phi(t))=U(\sigma)\;.
\]
Besides, for any $\phi\in C^{m,\,s}$, 
\[
\sup_{t\in[0,\,1]}V(\phi(t))=U(\sigma)\Longrightarrow\quad\exists t_{0}\in[0,\,1]\text{ such that }\phi(t_{0})=(\sigma,\,0)\;.
\]
\end{rem}

\subsection{Spectrum at the saddle point}

Since $U$ is a Morse function on $\R^d$, $\mathbb{H}_{U}$ admits one negative
eigenvalue $-\lambda_{1}<0$ and positive eigenvalues $\lambda_{2},\ldots,\lambda_{d}>0$.
Therefore, by~\eqref{eq:expression H_V}, $\mathbb{H}_{V}$ admits
the following $2d$ eigenvalues 
\[
-\lambda_{1},\,\lambda_{2},\ldots,\;\lambda_{d},\,\lambda_{d+1},\;\ldots,\lambda_{2d}\;,
\]
where for all $i\in\llbracket d+1,\,2d\rrbracket$, $\lambda_{i}=1$.
We denote by $(e_{1},\ldots,e_{2d})$ an orthonormal diagonalisation
basis of $\mathbb{H}_{V}$.
\begin{lem}
\label{lem:eigenvalue mu} Let 
\begin{equation}
\mathbb{M}=\begin{pmatrix}\mathbb{O}_{d} & \mathbb{I}_{d}\\
-\mathbb{I}_{d} & \gamma\mathbb{I}_{d}
\end{pmatrix}\in\mathbb{R}^{2d\times2d}\;.\label{eq:def matrix M}
\end{equation}
Then, the matrix $\mathbb{H}_{V}\mathbb{M}$ admits a unique negative
eigenvalue $-\mu$. Furthermore, 
\begin{equation}
\mu=\frac{-\gamma+\sqrt{\gamma^{2}+4\lambda_{1}}}{2}\;.\label{eq:expression mu}
\end{equation}
Besides, the eigenvalue $\mu$ is associated to a unique eigenvector
$v=(v_{q},\,v_{p})\in\mathbb{R}^{d}\times\mathbb{R}^{d}$ satisfying
$|v_{p}|=1$ and $v_{q}=(\mu+\gamma)v_{p}$. 
\end{lem}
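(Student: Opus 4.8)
The plan is to reduce the statement about $\mathbb{H}_V\mathbb{M}$ to a block computation that exploits the explicit form \eqref{eq:expression H_V} of $\mathbb{H}_V$ and \eqref{eq:def matrix M} of $\mathbb{M}$. First I would write out the product explicitly: since
\[
\mathbb{H}_V\mathbb{M}=\begin{pmatrix}\mathbb{H}_U & \mathbb{O}_d\\ \mathbb{O}_d & \mathbb{I}_d\end{pmatrix}\begin{pmatrix}\mathbb{O}_d & \mathbb{I}_d\\ -\mathbb{I}_d & \gamma\mathbb{I}_d\end{pmatrix}=\begin{pmatrix}\mathbb{O}_d & \mathbb{H}_U\\ -\mathbb{I}_d & \gamma\mathbb{I}_d\end{pmatrix},
\]
the eigenvalue equation $\mathbb{H}_V\mathbb{M}\,(v_q,v_p)^{\dagger}=\mu'(v_q,v_p)^{\dagger}$ becomes the pair $\mathbb{H}_U v_p=\mu' v_q$ and $-v_q+\gamma v_p=\mu' v_p$. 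From the second equation $v_q=(\gamma-\mu')v_p$, and substituting into the first gives $\mathbb{H}_U v_p=\mu'(\gamma-\mu')v_p$, i.e. $v_p$ is an eigenvector of $\mathbb{H}_U$ with eigenvalue $\mu'(\gamma-\mu')$ (one should check $v_p\neq 0$, which follows since $v_p=0$ forces $v_q=0$).

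Next I would run this correspondence in reverse to count eigenvalues. For each eigenvalue $\lambda$ of $\mathbb{H}_U$ with eigenvector $w$, the scalars $\mu'$ solving $\mu'(\gamma-\mu')=\lambda$, i.e. $\mu'^2-\gamma\mu'+\lambda=0$, namely $\mu'=\tfrac{\gamma\pm\sqrt{\gamma^2-4\lambda}}{2}$, give eigenvalues of $\mathbb{H}_V\mathbb{M}$ with eigenvector $((\gamma-\mu')w,w)^{\dagger}$. Counting multiplicities this accounts for all $2d$ eigenvalues of $\mathbb{H}_V\mathbb{M}$. Now I split on the sign of $\lambda$. If $\lambda>0$ (the eigenvalues $\lambda_2,\dots,\lambda_d$, each appearing also as $\lambda_{d+1}=\dots=\lambda_{2d}=1$ is not quite right — here I should be careful: $\mathbb{H}_U$ has eigenvalues $-\lambda_1,\lambda_2,\dots,\lambda_d$ only, a $d\times d$ matrix, so the quadratic produces exactly $2d$ roots). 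For $\lambda=\lambda_i>0$, the product of the two roots of the quadratic is $\lambda>0$ and their sum is $\gamma>0$, so both roots have positive real part (and if real, both are positive); hence these contribute no negative eigenvalue. For $\lambda=-\lambda_1<0$, the product of the two roots is $-\lambda_1<0$, so the two roots are real with opposite signs: exactly one is negative, namely $\mu'=\tfrac{\gamma-\sqrt{\gamma^2+4\lambda_1}}{2}<0$. Therefore $\mathbb{H}_V\mathbb{M}$ has a unique negative eigenvalue $-\mu$ with $\mu=\tfrac{-\gamma+\sqrt{\gamma^2+4\lambda_1}}{2}>0$, which is \eqref{eq:expression mu}. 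Its eigenspace is one-dimensional because the eigenspace of $\mathbb{H}_U$ for $-\lambda_1$ is one-dimensional (Morse) and the map $w\mapsto((\gamma-\mu')w,w)^{\dagger}$ is injective; normalizing so that $|v_p|=1$ gives the stated eigenvector with $v_q=(\gamma-(-\mu))v_p=(\gamma+\mu)v_p$.

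The only genuinely delicate point is the sign analysis when $\gamma^2-4\lambda<0$, where the roots are complex; there one argues via $\mathrm{Re}(\mu')=\gamma/2>0$ directly rather than via the product. Everything else is bookkeeping. I would also remark that uniqueness of the negative eigenvalue uses in an essential way that $\mathbb{H}_U$ has exactly one negative eigenvalue, i.e. that $\sigma$ is a saddle of index one, which is part of Assumption~\ref{ass:U} together with the Morse property. I expect no serious obstacle; the main care is to keep the algebra of the block multiplication and the two-to-one root correspondence straight, and to treat the complex-root subcase of the sign argument cleanly.
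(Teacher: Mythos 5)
Your proposal is correct, and it rests on the same core idea as the paper's proof: block-multiply $\mathbb{H}_V\mathbb{M}$ and reduce the eigenvalue problem to that of $\mathbb{H}_U$ via the scalar quadratic $\mu'(\gamma-\mu')=\lambda$. Where you differ is in completeness. The paper's proof is terse: it supposes a negative eigenvalue $-\mu$ exists, deduces that $v_p$ must then be an eigenvector of $\mathbb{H}_U$ for the negative eigenvalue $-\mu(\mu+\gamma)$, hence $\mu(\mu+\gamma)=\lambda_1$, which pins down $\mu$. It does not explicitly establish existence of a negative eigenvalue, nor does it verify that the remaining eigenvalues of $\mathbb{H}_V\mathbb{M}$ are not negative; both are left implicit. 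Your proposal fills those gaps by running the correspondence forward (each eigenpair $(\lambda,w)$ of $\mathbb{H}_U$ produces two eigenvalues of $\mathbb{H}_V\mathbb{M}$ with eigenvector $((\gamma-\mu')w,w)^\dagger$, exhausting the spectrum by a count of multiplicities) and then doing the sign analysis root by root, including the complex case where one argues via $\mathrm{Re}\,\mu'=\gamma/2>0$. You also explicitly tie the one-dimensionality of the eigenspace to the Morse hypothesis. The sign-convention difference (you solve for $\mu'$ directly rather than writing the eigenvalue as $-\mu$) is cosmetic and you land on $v_q=(\mu+\gamma)v_p$ as required. In short: same approach, but your argument is the more careful of the two.
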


\begin{proof}
Let $-\mu<0$ be a negative eigenvalue of $\mathbb{H}_{V}\mathbb{M}$
with eigenvector $v:=(v_{q},\,v_{p})\in\mathbb{R}^{2d}$ then 
\[
\mathbb{H}_{U}v_{p}=-\mu v_{q}=-\mu(\mu+\gamma)v_{p}\;.
\]
In particular, this ensures that $v_{p}\neq0$ because $v=(v_{q},\,v_{p})\neq0$.
Since $\mathbb{H}_{U}$ admits a unique negative eigenvalue $-\lambda_{1}$,
one has that $\lambda_{1}=\mu(\mu+\gamma)$ which ensures~\eqref{eq:expression mu}.
Moreover, since $v_{p}\neq0$, we can rescale the eigenvector $v$
such that it satisfies $|v_{p}|=1$. 
\end{proof}
\begin{lem}
\label{lem:matrix equality} Let $v$ be the eigenvector defined in
Lemma~\ref{lem:eigenvalue mu}, then 
\[
-\frac{v_{1}^{2}}{\lambda_{1}}+\sum_{i=2}^{2d}\frac{v_{i}^{2}}{\lambda_{i}}=-\frac{\gamma}{\mu}<0\;,
\]
where $v_{i}=\langle v,\,e_{i}\rangle$. 
\end{lem}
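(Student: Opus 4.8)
The plan is to reduce the identity to a statement purely about the negative eigenvalue of $\mathbb{H}_U$ by unwinding the structure of $\mathbb{H}_V$ and $v$. First I would use the explicit form $\mathbb{H}_V=\mathrm{diag}(\mathbb{H}_U,\mathbb{I}_d)$ from~\eqref{eq:expression H_V} together with the description of $v=(v_q,v_p)$ from Lemma~\ref{lem:eigenvalue mu}: $|v_p|=1$ and $v_q=(\mu+\gamma)v_p$. Writing the quantity $Q(v):=-\frac{v_1^2}{\lambda_1}+\sum_{i=2}^{2d}\frac{v_i^2}{\lambda_i}$ in terms of the eigenbasis $(e_1,\dots,e_{2d})$ of $\mathbb{H}_V$, I observe that $Q(v)=\langle v,\,\mathbb{H}_V^{-1}v\rangle$ (using $\lambda_i=1$ for $i>d$ and the signed eigenvalues of $\mathbb{H}_V$). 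So the claim is exactly
\[
\langle v,\,\mathbb{H}_V^{-1}v\rangle=-\frac{\gamma}{\mu}\;.
\]

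Next I would compute $\mathbb{H}_V^{-1}v$ blockwise. Since $\mathbb{H}_V^{-1}=\mathrm{diag}(\mathbb{H}_U^{-1},\mathbb{I}_d)$, one gets $\mathbb{H}_V^{-1}v=(\mathbb{H}_U^{-1}v_q,\,v_p)$, hence
\[
\langle v,\,\mathbb{H}_V^{-1}v\rangle=\langle v_q,\,\mathbb{H}_U^{-1}v_q\rangle+|v_p|^2=\langle v_q,\,\mathbb{H}_U^{-1}v_q\rangle+1\;.
\]
From the eigenvector relation in Lemma~\ref{lem:eigenvalue mu} we have $\mathbb{H}_U v_p=-\mu v_q$, i.e. $v_q=-\frac{1}{\mu}\mathbb{H}_U v_p$, and also $v_q=(\mu+\gamma)v_p$. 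Therefore $\mathbb{H}_U^{-1}v_q=-\frac{1}{\mu}v_p$, so
\[
\langle v_q,\,\mathbb{H}_U^{-1}v_q\rangle=\Big\langle (\mu+\gamma)v_p,\,-\tfrac{1}{\mu}v_p\Big\rangle=-\frac{\mu+\gamma}{\mu}|v_p|^2=-\frac{\mu+\gamma}{\mu}\;.
\]
Adding $1$ gives $-\frac{\mu+\gamma}{\mu}+1=-\frac{\gamma}{\mu}$, which is the desired identity; negativity is immediate since $\gamma>0$ and $\mu>0$.

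The only point requiring a word of care is the identification $Q(v)=\langle v,\mathbb{H}_V^{-1}v\rangle$: this uses that $-\lambda_1,\lambda_2,\dots,\lambda_{2d}$ are precisely the eigenvalues of $\mathbb{H}_V$ with orthonormal eigenbasis $(e_i)$, so $\mathbb{H}_V^{-1}e_1=-\lambda_1^{-1}e_1$ and $\mathbb{H}_V^{-1}e_i=\lambda_i^{-1}e_i$ for $i\ge2$, and $v_i=\langle v,e_i\rangle$ are the coordinates of $v$ in that basis. I do not anticipate a genuine obstacle here; the lemma is a short computation once the eigenvector relations of Lemma~\ref{lem:eigenvalue mu} are invoked. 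An equivalent route, avoiding $\mathbb{H}_V^{-1}$ altogether, is to verify directly that $\mathbb{H}_V^{-1}v = -\tfrac{1}{\mu}(\,v_p,\,\mu v_p\,)$ — equivalently $\mathbb{H}_V\big(v_p,\mu v_p\big)=(\mathbb{H}_U v_p,\,\mu v_p)=(-\mu v_q,\,\mu v_p)=-\mu(v_q,-v_p)$, wait, this needs the sign bookkeeping to be done carefully — so I would present the blockwise computation above, which is the cleanest and least error-prone.
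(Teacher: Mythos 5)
Your proof is correct. Both you and the paper begin with the same reduction, identifying the left-hand side as $\langle v,\,\mathbb{H}_{V}^{-1}v\rangle$ via the spectral decomposition of $\mathbb{H}_V$ (with eigenvalues $-\lambda_1,\lambda_2,\dots,\lambda_{2d}$ in the orthonormal basis $(e_i)$). You then evaluate $\langle v,\,\mathbb{H}_V^{-1}v\rangle$ by brute-force block computation: $\mathbb{H}_V^{-1}=\mathrm{diag}(\mathbb{H}_U^{-1},\mathbb{I}_d)$, and you unwind $\mathbb{H}_U^{-1}v_q=-\frac{1}{\mu}v_p$ from the eigenvector relations $\mathbb{H}_U v_p=-\mu v_q$ and $v_q=(\mu+\gamma)v_p$. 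The paper instead takes the algebraic shortcut $\mathbb{H}_V^{-1}=\mathbb{M}(\mathbb{H}_V\mathbb{M})^{-1}$, applies $(\mathbb{H}_V\mathbb{M})^{-1}v=-\frac{1}{\mu}v$ directly (since $v$ is the eigenvector of $\mathbb{H}_V\mathbb{M}$ with eigenvalue $-\mu$), and finishes with $\langle v,\,\mathbb{M}v\rangle=\gamma|v_p|^2=\gamma$. The paper's route is more compact and exploits the defining eigenvector identity for $\mathbb{H}_V\mathbb{M}$ without ever peeling apart the block structure; yours is more elementary and makes the role of the two scalar relations $v_q=(\mu+\gamma)v_p$ and $\lambda_1=\mu(\mu+\gamma)$ explicit, at the cost of slightly more bookkeeping. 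Both are fully rigorous. One stylistic remark: the closing "wait, this needs sign bookkeeping" digression about the alternative route should be deleted — you correctly fall back to the blockwise computation, and leaving exploratory asides in a final write-up obscures an otherwise clean argument.
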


\begin{proof}
It is equivalent to showing that 
\[
-\langle v,\,\mathbb{H}_{V}^{-1}v\rangle=\frac{\gamma}{\mu}\;.
\]
Moreover, 
\[
-\langle v,\,\mathbb{H}_{V}^{-1}v\rangle=-\langle v,\,\mathbb{M}(\mathbb{H}_{V}\mathbb{M})^{-1}v\rangle=\frac{1}{\mu}\langle v,\,\mathbb{M}v\rangle=\frac{\gamma}{\mu}|v_{p}|^{2}=\frac{\gamma}{\mu}\;.
\]
\end{proof}

\subsection{Partitioning the phase space}

In order to properly define the test function $f_{\epsilon}$ used in~\eqref{eq:cap} for the computation of the capacity $\mathrm{Cap}_{\epsilon}(\mathcal{M}_{\epsilon},\,\mathcal{S}_{\epsilon})$,
we shall partition the phase space and describe the behaviour of $f_{\epsilon}$ in each subspace.

First, let $K>0$ be a large constant independent of $\epsilon$, which shall be precised later. Let also $\delta=\delta(\epsilon)=\sqrt{\epsilon\log(1/\epsilon)}$ and define the box $\mathcal{K}_{\epsilon}$ centered around $(\sigma,\,0)$
\begin{equation}
\mathcal{K}_{\epsilon}:=\left[-\frac{K\delta}{\sqrt{\lambda_{1}}},\,\frac{K\delta}{\sqrt{\lambda_{1}}}\right]\times\prod_{i=2}^{2d}\left[-\frac{2K\delta}{\sqrt{\lambda_{i}}},\,\frac{2K\delta}{\sqrt{\lambda_{i}}}\right]\;.\label{eq:def box K}
\end{equation}
Let also 
\[
\partial\mathcal{K}_{\epsilon}^{\pm}=\partial\mathcal{K}_{\epsilon}\cap\left\{ x_{1}=\pm\frac{K\delta}{\sqrt{\lambda_{1}}}\right\}\;.
\]

\begin{lem}
\label{lem:energy upper boundary box} For all $x\in\partial\mathcal{K}_{\epsilon}\setminus(\partial\mathcal{K}_{\epsilon}^{+}\cup\partial\mathcal{K}_{\epsilon}^{-})$
and for $\epsilon$ small enough, 
\[
V(x)\geq U(\sigma)+\frac{5}{4}K^{2}\delta^{2}\;.
\]
\end{lem}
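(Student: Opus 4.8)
**The plan is to expand $V$ around the saddle point $(\sigma,0)=0$ in the diagonalizing coordinates $(x_1,\dots,x_{2d})$ and use the fact that on $\partial\mathcal{K}_\epsilon\setminus(\partial\mathcal{K}_\epsilon^+\cup\partial\mathcal{K}_\epsilon^-)$ at least one of the $x_i$ with $i\geq 2$ attains its extremal value.**

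Since $V$ is $C^2$ with $\nabla V(0)=0$ and Hessian $\mathbb{H}_V$ diagonalized by $(e_1,\dots,e_{2d})$ with eigenvalues $-\lambda_1,\lambda_2,\dots,\lambda_{2d}$ (recall $\lambda_i=1$ for $i>d$), a second-order Taylor expansion gives, for $x$ in a neighbourhood of $0$,
\[
V(x)=U(\sigma)-\frac{\lambda_1}{2}x_1^2+\sum_{i=2}^{2d}\frac{\lambda_i}{2}x_i^2+R(x),\qquad |R(x)|\leq C|x|^3
\]
for some constant $C>0$ independent of $\epsilon$. First I would record that every $x\in\mathcal{K}_\epsilon$ satisfies $|x_1|\leq K\delta/\sqrt{\lambda_1}$ and $|x_i|\leq 2K\delta/\sqrt{\lambda_i}$ for $i\geq 2$, hence $|x|\leq c_0 K\delta$ for a constant $c_0$ depending only on the $\lambda_i$; consequently $|R(x)|\leq C c_0^3 K^3\delta^3=o(\delta^2)$ as $\epsilon\to 0$ since $\delta=\sqrt{\epsilon\log(1/\epsilon)}\to 0$ (here $K$ is fixed).

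Next I would use the geometric hypothesis: if $x\in\partial\mathcal{K}_\epsilon$ but $x\notin\partial\mathcal{K}_\epsilon^+\cup\partial\mathcal{K}_\epsilon^-$, then $|x_1|<K\delta/\sqrt{\lambda_1}$ and there exists some index $j\in\{2,\dots,2d\}$ with $|x_j|=2K\delta/\sqrt{\lambda_j}$, so that $\frac{\lambda_j}{2}x_j^2=2K^2\delta^2$. Discarding the other nonnegative quadratic terms with $i\geq 2$ and bounding $-\frac{\lambda_1}{2}x_1^2\geq -\frac{K^2\delta^2}{2}$, I get
\[
V(x)\geq U(\sigma)-\frac{K^2\delta^2}{2}+2K^2\delta^2+R(x)=U(\sigma)+\frac{3}{2}K^2\delta^2+R(x).
\]
Since $R(x)=o(\delta^2)$ uniformly over $x\in\mathcal{K}_\epsilon$, for $\epsilon$ small enough $|R(x)|\leq \frac14 K^2\delta^2$, which yields $V(x)\geq U(\sigma)+\frac54 K^2\delta^2$, as claimed.

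**The only mild subtlety** is making sure the Taylor remainder bound is genuinely uniform and beats $\delta^2$: this is immediate because $K$ is a fixed constant (chosen independently of $\epsilon$ earlier in the construction) while $\delta\to 0$, so the cubic error $K^3\delta^3$ is negligible compared to the quadratic gain $K^2\delta^2$. No serious obstacle is expected; the argument is a routine Morse-lemma-type estimate, and I would simply present the expansion, the two-line bound on the remainder, and the case split on which face of the box $x$ lies on.
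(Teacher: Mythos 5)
Your proof is correct and matches the paper's argument: both Taylor-expand $V$ around the saddle in the diagonalizing coordinates, use that some $j\geq 2$ attains $|x_j|=2K\delta/\sqrt{\lambda_j}$ on the excluded boundary faces to gain $2K^2\delta^2$, bound $-\tfrac{\lambda_1}{2}x_1^2\geq -\tfrac{1}{2}K^2\delta^2$, and absorb the $O(\delta^3)$ remainder for $\epsilon$ small. No substantive difference.
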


\begin{proof}
By Taylor's expansion, for all $x\in\partial\mathcal{K}_{\epsilon}\setminus(\partial\mathcal{K}_{\epsilon}^{+}\cup\partial\mathcal{K}_{\epsilon}^{-})$,
\begin{align*}
V(x) & =U(\sigma)-\frac{1}{2}\lambda_{1}|x_{1}|^{2}+\frac{1}{2}\sum_{i=2}^{2d}\lambda_{i}|x_{i}|^{2}+O(|x|^{3})\\
 & \geq U(\sigma)-\frac{1}{2}K^{2}\delta^{2}+2K^{2}\delta^{2}+O(\delta^{3})\;.
\end{align*}
Therefore, for $\epsilon$ small enough, 
\[
V(x)\geq U(\sigma)+\frac{5}{4}K^{2}\delta^{2}\;.
\]
\end{proof}
Let us now consider the following set 
\[
\mathcal{J}_{\epsilon}:=\{x\in\mathbb{R}^{2d}:V(x)<U(\sigma)+K^{2}\delta^{2}\}\;.
\]
Let us also denote by $\mathcal{J}_{\epsilon}^{m}$ (resp. $\mathcal{J}_{\epsilon}^{s}$)
the connected component of the set $\mathcal{J}_{\epsilon}\setminus\mathcal{K}_{\epsilon}$
containing the point $(m,\,0)$ (resp. $(s,\,0)$). The goal of this
subsection is to show that $\mathcal{J}_{\epsilon}^{m},\,\mathcal{J}_{\epsilon}^{s}$ and $\mathcal{K}_{\epsilon}\cap\mathcal{J}_{\epsilon}$ constitute a partition of $\mathcal{J}_{\epsilon}$.

Analogously to~\eqref{eq:def box K}, let us define the box below
for some $\beta>0$ independent of $\epsilon$,
\[
\mathcal{K}_{\beta}:=\left(-\frac{K\beta}{\sqrt{\lambda_{1}}},\,\frac{K\beta}{\sqrt{\lambda_{1}}}\right)\times\prod_{i=2}^{2d}\left(-\frac{2K\beta}{\sqrt{\lambda_{i}}},\,\frac{2K\beta}{\sqrt{\lambda_{i}}}\right)\;.
\]
Notice that $\mathcal{K}_{\epsilon}\subset\mathcal{K}_{\beta}$ for
$\epsilon$ small enough.
\begin{lem}
\label{lem:max energy path away from K beta} For all $\beta>0$, there
exists a constant $c_{\beta}>0$ such that for any $\phi\in C^{m,\,s}$ satisfying $\phi(t)\notin\mathcal{K}_{\beta}$ for all $t\in[0,\,1]$,
then 
\begin{equation}
\sup_{t\in[0,\,1]}V(\phi(t))>U(\sigma)+c_{\beta}\;.\label{eq:sup energy trajectory outside K beta}
\end{equation}
\end{lem}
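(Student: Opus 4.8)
The plan is to combine a description of the topology of the sublevel sets of $V$ near the saddle value with an elementary compactness argument, reducing the whole lemma to the fact that a connected set contained in a disjoint union of two open sets lies entirely in one of them.

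First I would record the structural facts about $V(q,p)=U(q)+|p|^2/2$ that follow from Assumption~\ref{ass:U} (equivalently from Remark~\ref{rem:path across saddle point}). Since $V$ is a Morse function whose critical points are exactly $(m,0),(s,0),(\sigma,0)$, with $V(m,0),V(s,0)<V(\sigma,0)=U(\sigma)$ and $(\sigma,0)$ of Morse index $1$, the open set $\{V<U(\sigma)\}$ has exactly two connected components, namely $\mathcal{W}_m$ and $\mathcal{W}_s$; these are distinct because a path joining $(m,0)$ and $(s,0)$ below the level $U(\sigma)$ would contradict Remark~\ref{rem:path across saddle point}. Moreover $\{V=U(\sigma)\}=\partial\mathcal{W}_m\cup\partial\mathcal{W}_s$ and $\partial\mathcal{W}_m\cap\partial\mathcal{W}_s=\{(\sigma,0)\}$: indeed any $x$ with $V(x)=U(\sigma)$ and $x\neq(\sigma,0)$ is a regular point of $V$, so near $x$ the set $\{V<U(\sigma)\}$ is a connected half-ball and hence touches at most one of $\mathcal{W}_m,\mathcal{W}_s$, while the unstable manifold of the index-$1$ saddle $(\sigma,0)$ enters both wells. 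I would also observe that we may assume $(m,0),(s,0)\notin\overline{\mathcal{K}_\beta}$, since otherwise no $\phi\in C^{m,s}$ avoids $\mathcal{K}_\beta$ and the statement is vacuous; under this assumption the sets $A:=\partial\mathcal{W}_m\setminus\mathcal{K}_\beta$ and $B:=\partial\mathcal{W}_s\setminus\mathcal{K}_\beta$ are nonempty (a Jordan--Brouwer-type separation argument shows $\partial\mathcal{W}_m\subseteq\mathcal{K}_\beta$ would force $(m,0)\in\overline{\mathcal{K}_\beta}$), compact (the $\mathcal{W}_\bullet$ are bounded by coercivity of $V$, which follows from Assumption~\ref{ass:non-negU}, and $\mathcal{K}_\beta$ is open), and disjoint (their only possible common point $(\sigma,0)$ lies in $\mathcal{K}_\beta$).

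Next comes a compactness estimate. Let $r_0:=\mathrm{dist}(A,B)>0$ and let $A',B'$ be the open $r_0/3$-neighborhoods of $A,B$, which are disjoint. I claim there is $c_0>0$ with $\{x:U(\sigma)\le V(x)\le U(\sigma)+c_0\}\setminus\mathcal{K}_\beta\subset A'\cup B'$. If not, there would be points $x_n$ in this set with $V(x_n)\to U(\sigma)$ and $x_n\notin A'\cup B'$; by coercivity of $V$ the $x_n$ stay in a fixed compact set, so along a subsequence $x_n\to x_\infty$ with $V(x_\infty)=U(\sigma)$, $x_\infty\notin\mathcal{K}_\beta$ and $x_\infty\notin A'\cup B'$ (both complements being closed), contradicting $\{V=U(\sigma)\}\setminus\mathcal{K}_\beta=A\cup B\subset A'\cup B'$. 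I then fix any $c_\beta\in(0,c_0)$ and let $\phi\in C^{m,s}$ with $\phi(t)\notin\mathcal{K}_\beta$ for all $t$, supposing for contradiction that $\sup_t V(\phi(t))\le U(\sigma)+c_\beta$. Since $(m,0)\in\mathcal{W}_m$ and $(s,0)\in\mathcal{W}_s$ lie in distinct components of $\{V<U(\sigma)\}$, set $\tau^m:=\sup\{t:\phi(t)\in\mathcal{W}_m\}$ and $\tau^s:=\inf\{t>\tau^m:\phi(t)\in\mathcal{W}_s\}$; routine continuity arguments give $\tau^m<\tau^s$, $\phi(\tau^m)\in\partial\mathcal{W}_m$, $\phi(\tau^s)\in\partial\mathcal{W}_s$, and $\phi(t)\notin\mathcal{W}_m\cup\mathcal{W}_s=\{V<U(\sigma)\}$ for $t\in(\tau^m,\tau^s)$, hence $U(\sigma)\le V(\phi(t))\le U(\sigma)+c_\beta$ on all of $[\tau^m,\tau^s]$. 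Therefore the connected set $\phi([\tau^m,\tau^s])$ is contained in $\{U(\sigma)\le V\le U(\sigma)+c_\beta\}\setminus\mathcal{K}_\beta\subset A'\cup B'$, yet it meets $A\subset A'$ at $\phi(\tau^m)$ and $B\subset B'$ at $\phi(\tau^s)$ (both points lying outside $\mathcal{K}_\beta$), which is impossible for a connected subset of the disjoint union $A'\sqcup B'$. This contradiction yields $\sup_t V(\phi(t))>U(\sigma)+c_\beta$, and since $U(\sigma)=V(\sigma,0)$ this is exactly \eqref{eq:sup energy trajectory outside K beta}.

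The step I expect to be the main obstacle is the first one: extracting from Assumption~\ref{ass:U} the precise topological facts that $\{V<U(\sigma)\}$ has exactly two components, that $\{V=U(\sigma)\}=\partial\mathcal{W}_m\cup\partial\mathcal{W}_s$ with $\partial\mathcal{W}_m\cap\partial\mathcal{W}_s=\{(\sigma,0)\}$, and that $A,B$ are nonempty for every admissible $\beta$. These rely on the Morse/index-$1$ structure at the saddle and on a separation argument, and must be stated carefully; once they are in hand, the compactness estimate and the connectedness contradiction are completely standard.
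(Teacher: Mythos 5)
Your argument is correct, but it takes a genuinely different route from the paper's. The paper proceeds by contradiction via an Arzel\`a--Ascoli compactness argument: assuming the assertion fails, one constructs a sequence of uniformly Lipschitz paths $\phi_n\in C^{m,\,s}$ avoiding $\mathcal{K}_\beta$ with $\sup_{t}V(\phi_n(t))\le U(\sigma)+1/n$, extracts a uniform limit $\phi\in C^{m,\,s}$ that still avoids $\mathcal{K}_\beta$ and satisfies $\sup_t V(\phi(t))\le U(\sigma)$, and then invokes Remark~\ref{rem:path across saddle point} to force $\phi$ through $(\sigma,\,0)\in\mathcal{K}_\beta$, a contradiction. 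Your proof instead analyzes the topology of the sublevel set $\{V<U(\sigma)\}$ and the level set $\{V=U(\sigma)\}$ directly, isolates the two compact gate pieces $A=\partial\mathcal{W}_m\setminus\mathcal{K}_\beta$ and $B=\partial\mathcal{W}_s\setminus\mathcal{K}_\beta$, proves by a compactness estimate that a thin shell $\{U(\sigma)\le V\le U(\sigma)+c_0\}\setminus\mathcal{K}_\beta$ sits inside disjoint open neighborhoods of $A$ and $B$, and then contradicts the connectedness of the excursion $\phi([\tau^m,\,\tau^s])$. Both approaches work. The paper's is leaner because it piggybacks entirely on Remark~\ref{rem:path across saddle point} and needs nothing about the component structure of $\{V<U(\sigma)\}$. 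Yours gives a sharper geometric picture but, as you yourself flag, leans on the additional topological facts that $\{V<U(\sigma)\}$ has exactly the two components $\mathcal{W}_m$ and $\mathcal{W}_s$ and that $\partial\mathcal{W}_m\cap\partial\mathcal{W}_s=\{(\sigma,\,0)\}$. The first of these is essential and not spelled out in the paper: if $\{V<U(\sigma)\}$ had a third component, the excursion $\phi|_{(\tau^m,\,\tau^s)}$ could dip back below level $U(\sigma)$ and your containment of $\phi([\tau^m,\,\tau^s])$ in the shell would fail. Under the double-well/Morse hypothesis this cannot happen, but if you take your route this should be argued explicitly (e.g.\ by Morse theory below the saddle value, noting that $(m,\,0)$, $(s,\,0)$, $(\sigma,\,0)$ are the only critical points of $V$ with value $\le U(\sigma)$) rather than asserted.
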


\begin{proof}
If the assertion above is not true then by covering the bounded domain $\mathcal{J}_{\epsilon}\setminus\mathcal{K}_{\beta}$
with balls of radius $1/n$ and drawing a polynomial trajectory between
them one can build a set of $M$-Lipschitz continuous functions $\phi_{n}\in C^{m,\,s}$,
for some $M>0$ independent of $n\geq1$, such that for all $n\geq1$,
\[
\sup_{t\in[0,\,1]}V(\phi_{n}(t))\leq U(\sigma)+\frac{1}{n}
\]
and for all $t\in[0,\,1]$, $\phi_{n}(t)\notin\mathcal{K}_{\beta}$.
Moreover, the $\phi_{n}$ are bounded independently of $n$ since
they are all $M$-Lipschitz continuous on $[0,\,1]$. As a result,
by Arzela-Ascoli's theorem, up to taking an appropriate subsequence, one has the existence of $\phi\in C^{m,\,s}$
such that 
\[
\sup_{t\in[0,\,1]}|\phi_{n}(t)-\phi(t)|\underset{n\rightarrow\infty}{\longrightarrow}0\;.
\]
In particular, this ensures that for all $t\in[0,\,1]$, $\phi(t)\notin\mathcal{K}_{\beta}$.
Also, since $V$ is $C^{1}$ on $\mathbb{R}^{2d}$ and the functions
$\phi_{n}$ are uniformly bounded on $[0,\,1]$, one has the existence of
a constant $C>0$ independent of $n\geq1$ such that for all $t\in[0,\,1]$,
\[
|V(\phi_{n}(t))-V(\phi(t))|\leq C\sup_{t\in[0,\,1]}|\phi_{n}(t)-\phi(t)|\;.
\]
As a result, for all $t\in[0,\,1]$, 
\begin{align*}
V(\phi(t)) & \leq|V(\phi(t))-V(\phi_{n}(t))|+\sup_{t\in[0,\,1]}V(\phi_{n}(t))\\
 & \leq C\sup_{t\in[0,\,1]}|\phi_{n}(t)-\phi(t)|+U(\sigma)+\frac{1}{n}\underset{n\rightarrow\infty}{\longrightarrow}U(\sigma)\;.
\end{align*}
Therefore, $\sup_{t\in[0,\,1]}V(\phi(t))\leq U(\sigma)$. By Remark~\ref{rem:path across saddle point},
necessarily, 
\[
\sup_{t\in[0,\,1]}V(\phi(t))=U(\sigma)\;.
\]
Also, by Remark~\ref{rem:path across saddle point}, the path
$\phi$ necessarily attains $(\sigma,\,0)$ at a certain time which
is in contradiction with the fact that for all $t\in[0,\,1]$, $\phi(t)\notin\mathcal{K}_{\beta}$.
Hence the validity of Lemma~\ref{lem:max energy path away from K beta}. 
\end{proof}
In order to prove that $\mathcal{J}_{\epsilon}^{m},\mathcal{J}_{\epsilon}^{s}$
and $\mathcal{K}_{\epsilon}\cap\mathcal{J}_{\epsilon}$ constitute a partition of $\mathcal{J}_{\epsilon}$,
we first show the following disjointness result. 
\begin{lem}\label{lem:disjoint}
For $\epsilon$ sufficiently small, $\mathcal{J}_{\epsilon}^{m}\cap\mathcal{J}_{\epsilon}^{s}=\emptyset$. 
\end{lem}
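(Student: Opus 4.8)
The plan is to argue by contradiction: suppose that for a sequence $\epsilon_n \to 0$ there exists a point $x_n \in \mathcal{J}_{\epsilon_n}^{m}\cap\mathcal{J}_{\epsilon_n}^{s}$. Since $\mathcal{J}_\epsilon^m$ and $\mathcal{J}_\epsilon^s$ are (path-)connected subsets of $\mathcal{J}_\epsilon\setminus\mathcal{K}_\epsilon$ containing $(m,0)$ and $(s,0)$ respectively, if they intersect then their union is a connected subset of $\mathcal{J}_\epsilon\setminus\mathcal{K}_\epsilon$ containing both $(m,0)$ and $(s,0)$. Because this union is an open connected subset of $\mathbb{R}^{2d}$, it is path-connected, so there exists a continuous path $\phi_n \in C^{m,s}$ whose image lies entirely in $\mathcal{J}_{\epsilon_n}\setminus\mathcal{K}_{\epsilon_n}$.

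The key point is then to extract the contradiction from the fact that $\phi_n$ avoids $\mathcal{K}_{\epsilon_n}$ while staying in the low-energy set $\mathcal{J}_{\epsilon_n}$. First I would fix any $\beta>0$ independent of $\epsilon$ and observe that for $\epsilon$ small enough $\mathcal{K}_\epsilon \subset \mathcal{K}_\beta$, so avoiding $\mathcal{K}_\epsilon$ does not immediately mean avoiding $\mathcal{K}_\beta$ — this is the crux. The path $\phi_n$ might still pass through $\mathcal{K}_\beta \setminus \mathcal{K}_{\epsilon_n}$. So the plan is to analyze what the path does inside $\mathcal{K}_\beta$: on the complement $\mathcal{K}_\beta \setminus \mathcal{K}_{\epsilon_n}$, at least one coordinate is controlled from above or below, and I would use the Taylor expansion $V(x) = U(\sigma) - \tfrac12\lambda_1 x_1^2 + \tfrac12\sum_{i=2}^{2d}\lambda_i x_i^2 + O(|x|^3)$ near $\sigma$ together with the constraint $V(\phi_n(t)) < U(\sigma)+K^2\delta_n^2$ with $\delta_n = \sqrt{\epsilon_n\log(1/\epsilon_n)}\to 0$. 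The point is that inside $\mathcal{K}_\beta$ the energy constraint forces $|x_1|$ to be bounded below (away from zero) by roughly $\sqrt{2/\lambda_1}\,|x_i|$ for the transverse coordinates whenever the path is outside the narrow box $\mathcal{K}_{\epsilon_n}$; combined with the smallness of all transverse coordinates forced by the energy bound $\tfrac12\sum_{i\geq 2}\lambda_i x_i^2 < \tfrac12\lambda_1 x_1^2 + K^2\delta_n^2$, one sees that the path near $\sigma$ is confined to a thin neighborhood of the unstable axis, split into the region $x_1 > 0$ and $x_1 < 0$, with the connecting region $\{|x_1| \le K\delta_n/\sqrt{\lambda_1}\}$ being precisely (a slab through) $\mathcal{K}_{\epsilon_n}$ — which $\phi_n$ avoids.

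Thus I would set up the following dichotomy for the path $\phi_n$: either $\phi_n$ never enters $\mathcal{K}_\beta$, in which case Lemma~\ref{lem:max energy path away from K beta} immediately gives $\sup_t V(\phi_n(t)) > U(\sigma) + c_\beta$, contradicting the energy bound $\sup_t V(\phi_n(t)) < U(\sigma) + K^2\delta_n^2 \to U(\sigma)$ for $n$ large; or $\phi_n$ does enter $\mathcal{K}_\beta$, and then using the sign structure of the first coordinate $x_1$ described above together with the continuity of $\phi_n$ and the intermediate value theorem applied to $t \mapsto \langle \phi_n(t), e_1\rangle$, the path must cross the slab $\{|x_1| \le K\delta_n/\sqrt{\lambda_1}\}$; I then check that the energy constraint on this crossing forces all the transverse coordinates $x_i$, $i \ge 2$, to satisfy $|x_i| \le 2K\delta_n/\sqrt{\lambda_i}$ at the crossing point (because $\tfrac12\sum_{i\ge2}\lambda_i x_i^2 \le \tfrac12\lambda_1 x_1^2 + K^2\delta_n^2 + O(\delta_n^3) \le \tfrac32 K^2\delta_n^2 + o(\delta_n^2)$ there, which gives $\lambda_i x_i^2 \le 3K^2\delta_n^2 + o(\delta_n^2) < (2K\delta_n)^2 = 4K^2\delta_n^2$), so the crossing point lies in $\mathcal{K}_{\epsilon_n}$, contradicting $\phi_n(t) \notin \mathcal{K}_{\epsilon_n}$.

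The main obstacle I anticipate is making the "sign structure of $x_1$ along $\phi_n$ inside $\mathcal{K}_\beta$" rigorous — i.e., ruling out that the path wanders around inside $\mathcal{K}_\beta$ in a way that reaches both $(m,0)$-side and $(s,0)$-side without ever having $|x_1|$ small. To handle this cleanly I would not try to track the full geometry but instead argue purely topologically: the set $\mathcal{J}_{\epsilon_n}\setminus\mathcal{K}_{\epsilon_n}$, near $\sigma$, separates into (at least) two pieces according to the sign of $x_1$ — one must verify using the quadratic-form estimates above that $\mathcal{J}_{\epsilon_n}\cap\mathcal{K}_\beta\setminus\mathcal{K}_{\epsilon_n}$ has no connected component meeting both $\{x_1 > 0\}$ and $\{x_1 < 0\}$, since any point in this set with $|x_1| \le K\delta_n/\sqrt{\lambda_1}$ would already be in $\mathcal{K}_{\epsilon_n}$ by the transverse estimate. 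Given that separation, a connected set containing $(m,0)$ and $(s,0)$ and avoiding $\mathcal{K}_{\epsilon_n}$ must leave $\mathcal{K}_\beta$, reducing to the first horn of the dichotomy and Lemma~\ref{lem:max energy path away from K beta}. I would write this part carefully, as it is where the geometry of the saddle genuinely enters; the rest is routine Taylor expansion and compactness bookkeeping.
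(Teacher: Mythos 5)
Your overall strategy mirrors the paper's: from the contradiction hypothesis obtain a path $\phi\in C^{m,s}$ lying in $\mathcal{J}_\epsilon\setminus\mathcal{K}_\epsilon$, show it must enter the $\epsilon$-independent box $\mathcal{K}_\beta$ (else Lemma~\ref{lem:max energy path away from K beta} is violated), and then use a Taylor expansion near the saddle to argue that $\phi$ cannot traverse the slab $\{|x_1|\le K\delta/\sqrt{\lambda_1}\}$ inside $\mathcal{K}_\beta$ without landing in $\mathcal{K}_\epsilon$. Your topological reformulation --- no component of $\mathcal{J}_\epsilon\cap\mathcal{K}_\beta\setminus\mathcal{K}_\epsilon$ meets both $\{x_1>0\}$ and $\{x_1<0\}$ --- is equivalent to the paper's observation that $\phi$ can only pass through the $\pm x_1$-faces of $\mathcal{K}_\beta$ and never from the $+$ face to the $-$ face.

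There are two places where the execution does not yet close. First, your transverse estimate is circular as written: you take the Taylor remainder to be $O(\delta_n^3)$, but that bound presupposes $|x|=O(\delta_n)$, which is precisely what you are trying to prove. Before you know anything about the transverse coordinates you only have $x\in\mathcal{K}_\beta$, so the remainder is merely $O(\beta^3)$ with $\beta$ a \emph{fixed} constant --- this is not $o(\delta_n^2)$. The correct move is to bound the remainder by $L|x|^3\le LC\beta\bigl(x_1^2+\sum_{i\ge2}x_i^2\bigr)$ on $\mathcal{K}_\beta$ and absorb the $\sum_{i\ge 2}x_i^2$ part into the positive definite quadratic form $\frac12\sum_{i\ge2}\lambda_i x_i^2$; this requires choosing $\beta$ small relative to $\min_i\lambda_i$ and the cubic constant, and one must then verify that the surviving inequality (of the form $V(x)\ge U(\sigma)+(\tfrac{3}{2}-O(\beta))K^2\delta^2$) beats the threshold $U(\sigma)+K^2\delta^2$, which is exactly why the transverse half-widths of $\mathcal{K}_\epsilon$ carry the extra factor $2$. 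Second, your final sentence ("must leave $\mathcal{K}_\beta$, reducing to the first horn") does not yet yield a contradiction: the path also \emph{enters} $\mathcal{K}_\beta$, so you are not in the first horn. The separation only shows that each excursion of $\phi$ into $\mathcal{K}_\beta$ enters and exits through the same $\pm$ face; you still have to replace each such excursion by a detour in $\mathcal{J}_\epsilon\setminus\mathcal{K}_\beta$ joining its endpoints, producing a new $\psi\in C^{m,s}$ that genuinely avoids $\mathcal{K}_\beta$ and is then handled by Lemma~\ref{lem:max energy path away from K beta}. This splicing step is exactly the path $g$ and the concatenation at the end of the paper's proof, and is needed in your argument as well.
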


\begin{proof}
Assume that there exists $x\in\mathcal{J}_{\epsilon}^{m}\cap\mathcal{J}_{\epsilon}^{s}$
then one can construct a path $\phi\in C^{m,\,s}$ with
values in $\mathcal{J}_{\epsilon}\setminus\mathcal{K}_{\epsilon}$. We fix $\beta>0$ independent of $\epsilon$. Necessarily $\phi$ enters $\mathcal{K}_{\beta}$, otherwise, by
Lemma~\ref{lem:max energy path away from K beta}, there exists a
constant $c_{\beta}>0$ such that 
\[
\sup_{t\in[0,\,1]}V(\phi(t))>U(\sigma)+c_{\beta}\;,
\]
which leads to a contradiction by taking $\epsilon$ small enough such that $K^{2}\delta^{2}<c_{\beta}$.

As a result, $\phi$ enters the set $\mathcal{K}_{\beta}$
at a time $T_{1}>0$ such that for all $t\leq T_{1}$, $\phi(t)\in\mathcal{J}_{\epsilon}^{m}$.
Following the same computation done in Lemma~\ref{lem:energy upper boundary box} for $\beta$ small, necessarily $\phi$ enters $\mathcal{K}_{\beta}$
through its boundary $\partial\mathcal{K}_{\beta}\cap\{x_{1}=\pm K\beta/\sqrt{\lambda_{1}}\}$
since the values of $\phi$ are assumed to be in $\mathcal{J}_{\epsilon}$.
Therefore, up to changing the sign of $e_{1}$ we can assume that
$\phi$ enters through the boundary $\partial\mathcal{K}_{\beta}\cap\{x_{1}=K\beta/\sqrt{\lambda_{1}}\}$.
Furthermore, $\phi$ never exits the set $\mathcal{K}_{\beta}$ through
its boundary $\partial\mathcal{K}_{\beta}\cap\{x_{1}=-K\beta/\sqrt{\lambda_{1}}\}$.
Otherwise, using Lemma~\ref{lem:energy upper boundary box} again, necessarily $\phi$ enters $\mathcal{K}_{\epsilon}$
which is in contradiction with the definition of $\phi$.

Let 
\[
T_{2}:=\inf\{T\in[0,\,1]:\phi(t)\notin\mathcal{K}_{\beta}\text{ for all }t\in(T,\,1]\}\;.
\]
We define $g$ as a continuous path on $[0,T_2]$ with values in $\mathcal{J}_{\epsilon}^{m}\setminus\mathcal{K}_{\beta}$ such that $g(0)=(m,\,0)$ and $g(T_2)=\phi(T_2)$. Let us now define 
\begin{align*}
\psi(t)=\begin{cases} 
g(t), & t\in[0,T_2]\\
\phi(t), & t\in[T_2,1]\;.
\end{cases}
\end{align*}
Then, $\psi\in\mathcal{C}^{m,\,s}$ and by construction,
$\psi(t)\notin\mathcal{K}_{\beta}$ for all $t\in[0,1]$, which leads to a contradiction by Lemma~\ref{lem:max energy path away from K beta} for $\epsilon$ sufficiently small. 
\end{proof}
\begin{lem}
\label{lem:partitioning} For $\epsilon$ sufficiently small, the
sets $\mathcal{J}_{\epsilon}^{m}$, $\mathcal{J}_{\epsilon}^{s}$
and $\mathcal{K}_{\epsilon}\cap\mathcal{J}_{\epsilon}$ constitute
a partition of $\mathcal{J}_{\epsilon}$. 
\end{lem}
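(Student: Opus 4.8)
The statement is that $\mathcal{J}_\epsilon^m$, $\mathcal{J}_\epsilon^s$, and $\mathcal{K}_\epsilon\cap\mathcal{J}_\epsilon$ partition $\mathcal{J}_\epsilon$ for $\epsilon$ small. Since by definition $\mathcal{J}_\epsilon^m$ and $\mathcal{J}_\epsilon^s$ are connected components of $\mathcal{J}_\epsilon\setminus\mathcal{K}_\epsilon$, disjointness of the three pieces is almost immediate: $\mathcal{K}_\epsilon\cap\mathcal{J}_\epsilon$ is disjoint from $\mathcal{J}_\epsilon\setminus\mathcal{K}_\epsilon\supset\mathcal{J}_\epsilon^m\cup\mathcal{J}_\epsilon^s$, and $\mathcal{J}_\epsilon^m\cap\mathcal{J}_\epsilon^s=\emptyset$ is exactly Lemma~\ref{lem:disjoint}. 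So the whole content is the covering statement: every point of $\mathcal{J}_\epsilon$ lies in one of the three sets. Equivalently, $\mathcal{J}_\epsilon\setminus\mathcal{K}_\epsilon$ has exactly two connected components, namely the ones containing $(m,0)$ and $(s,0)$.

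\textbf{Key steps.} First I would reduce to showing that any connected component $\mathcal{C}$ of $\mathcal{J}_\epsilon\setminus\mathcal{K}_\epsilon$ contains $(m,0)$ or $(s,0)$. Pick any $x\in\mathcal{C}$; since $V(x)<U(\sigma)+K^2\delta^2$ and this energy threshold is below the other critical values of $V$ for $\epsilon$ small (the only critical points of $V$ are $(m,0),(s,0),(\sigma,0)$ with $(\sigma,0)\in\mathcal{K}_\epsilon$), the gradient flow of $-V$ (or, more robustly, the deterministic zero-noise trajectory of Lemma~\ref{lem:convergence deterministic Langevin}) starting at $x$ and staying in the sublevel set $\mathcal{J}_\epsilon$ converges to $(m,0)$ or $(s,0)$; without loss of generality say $(m,0)$. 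I would then argue that this descending path, \emph{possibly after a local modification near where it might clip the box $\mathcal{K}_\epsilon$}, can be taken to avoid $\mathcal{K}_\epsilon$ entirely except at its very end, using Lemma~\ref{lem:energy upper boundary box}: the only way to enter $\mathcal{K}_\epsilon$ while staying in $\mathcal{J}_\epsilon$ (energy $<U(\sigma)+K^2\delta^2$, strictly less than the $\tfrac54 K^2\delta^2$ lower bound on the side faces) is through the faces $\partial\mathcal{K}_\epsilon^{\pm}$, i.e. along the unstable direction $e_1$; and the descending dynamics pushes the $x_1$-coordinate monotonically away from $\sigma$, so once the path leaves the box it cannot re-enter. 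Hence $x$ is joined to $(m,0)$ by a path in $\mathcal{J}_\epsilon\setminus\mathcal{K}_\epsilon$, so $\mathcal{C}=\mathcal{J}_\epsilon^m$. The same argument gives $\mathcal{C}=\mathcal{J}_\epsilon^s$ in the other case, and Lemma~\ref{lem:disjoint} forbids both, so there are exactly two components. Finally I would assemble: every $y\in\mathcal{J}_\epsilon$ is either in $\mathcal{K}_\epsilon$ (hence in $\mathcal{K}_\epsilon\cap\mathcal{J}_\epsilon$) or in $\mathcal{J}_\epsilon\setminus\mathcal{K}_\epsilon$, and in the latter case its component is $\mathcal{J}_\epsilon^m$ or $\mathcal{J}_\epsilon^s$; combined with disjointness this is the claimed partition.

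\textbf{Main obstacle.} The delicate point is the claim that the descending path from an arbitrary $x\in\mathcal{J}_\epsilon\setminus\mathcal{K}_\epsilon$ reaches $(m,0)$ or $(s,0)$ \emph{without being forced through the interior of $\mathcal{K}_\epsilon$}, and that it stays inside $\mathcal{J}_\epsilon$. Controlling the energy along the trajectory is easy (it is non-increasing along the zero-noise flow by~\eqref{eq:time derivative energy equality}), so the trajectory stays in $\mathcal{J}_\epsilon$; the real work is the geometry near the saddle — showing the $e_1$-coordinate of the flow is monotone near $\sigma$ so the box is entered/exited at most once and only through $\partial\mathcal{K}_\epsilon^\pm$, exactly the kind of local analysis used in Lemma~\ref{lem:disjoint} and Lemma~\ref{lem:exit ball sigma}. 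I expect to be able to recycle the Morse-coordinate expansion $U(q)=U(\sigma)-\tfrac{\lambda_1}{2}(q_1-\sigma_1)^2+\sum_{i\ge2}\tfrac{\lambda_i}{2}(q_i-\sigma_i)^2+O(|q-\sigma|^3)$ together with the sign analysis of $\langle\nabla U(q),e_1\rangle$ from the proof of Lemma~\ref{lem:exit ball sigma} to make this rigorous; once that monotonicity is in hand the rest is bookkeeping. An alternative, purely topological route avoids the dynamics entirely: a connected set in $\mathcal{J}_\epsilon\setminus\mathcal{K}_\epsilon$ that contains neither $(m,0)$ nor $(s,0)$ would, joined by descending arcs to one of them, produce a path in $C^{m,s}$ avoiding some small box $\mathcal{K}_\beta$, contradicting Lemma~\ref{lem:max energy path away from K beta} as in Lemma~\ref{lem:disjoint} — this is probably the cleanest way to write it.
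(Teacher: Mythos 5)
The paper settles this lemma with a single sentence, pointing back to the argument of Lemma~\ref{lem:disjoint}; so the topological route you flag at the end as ``probably the cleanest'' is indeed the intended one. But both of your sketches have concrete gaps.

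Your main (dynamical) route rests on a claim that is false. You assert that the zero-noise flow ``pushes the $x_1$-coordinate monotonically away from $\sigma$, so once the path leaves the box it cannot re-enter.'' In the eigenbasis of $\mathbb{H}_V$ the linearized zero-noise flow restricted to the unstable $(q_1,p_1)$-plane is $\dot q_1=p_1$, $\dot p_1=\lambda_1 q_1-\gamma p_1$, whose eigenvalues are $\mu>0$ and $-\gamma-\mu<0$: there is a decaying mode, and a trajectory with $q_1>0$ but $p_1<0$ initially drives $q_1$ \emph{back toward} $\sigma_1$. Such initial data certainly occur on $\partial\mathcal{K}_\epsilon^{+}\cap\mathcal{J}_\epsilon$, so the flow can re-enter the box. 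The quantity that \emph{is} monotone under the linearized flow is $\langle x,v\rangle$, since $v$ from Lemma~\ref{lem:eigenvalue mu} is a left eigenvector of the linearization with eigenvalue $\mu$, whence $\langle x(t),v\rangle=\mathrm{e}^{\mu t}\langle x(0),v\rangle$; but the box $\mathcal{K}_\epsilon$ in~\eqref{eq:def box K} is cut by $x_1$, not by $\langle x,v\rangle$, so the monotonicity you invoke is precisely the one that fails. Lemma~\ref{lem:decomp boundary K+} relates $x_1$ and $\langle x,v\rangle$ on the faces, but only modulo a high-energy region, so it does not rescue the argument as written.

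The topological alternative is the right idea, but the assembly ``joined by descending arcs to one of them, produce a path in $C^{m,\,s}$'' does not follow: a descending arc from a point of a putative third component $\mathcal{C}$ lands at a single minimum and therefore yields a path from $\mathcal{C}$ to $(m,0)$ (say), not a path joining $(m,0)$ to $(s,0)$; nothing forces different points of $\mathcal{C}$ to descend to different minima, and in any case the descending arc may cross $\mathcal{K}_\epsilon$ exactly because of the issue above. The correct adaptation of Lemma~\ref{lem:disjoint} uses no dynamics: start from the connectedness of $\mathcal{J}_\epsilon$ (a sublevel set of $V$ at a level slightly above the saddle value), so any $x\in\mathcal{J}_\epsilon\setminus\mathcal{K}_\epsilon$ is joined to $(m,0)$ by a path inside $\mathcal{J}_\epsilon$; if that path avoids $\mathcal{K}_\epsilon$ you are done, and otherwise you track its first and last intersections with $\mathcal{K}_\beta$, observe (via Lemma~\ref{lem:energy upper boundary box}) that these lie on the two faces, and reroute along a face exactly as in the proof of Lemma~\ref{lem:disjoint}, invoking Lemma~\ref{lem:max energy path away from K beta} to rule out the bad configuration.
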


\begin{proof}
The proof is now immediate from Lemma~\ref{lem:disjoint}.
\end{proof}

\subsection{Definition of the test function}

The idea of the proof of Proposition~\ref{prop:main2} is to consider
a suitable test function $f_{\epsilon}$ in~\eqref{eq:cap} for the computation of the
capacity $\mathrm{Cap}_{\epsilon}(\mathcal{M}_{\epsilon},\,\mathcal{S}_{\epsilon})$. The test function $f_{\epsilon}$ is such that $\mathcal{L}_{\epsilon}f_{\epsilon}\simeq0$ on the neighbourhood
$\mathcal{K}_{\epsilon}\cap\mathcal{J}_{\epsilon}$ of the saddle
point $(\sigma,\,0)$. The capacity $\mathrm{Cap}_{\epsilon}(\mathcal{M}_{\epsilon},\,\mathcal{S}_{\epsilon})$
is then shown to behave in its highest order as an integral over the
boundary $\partial\mathcal{K}_{\epsilon}$ for which we are able to
compute the asymptotic when $\epsilon$ goes to zero.

When $x=(q,\,p)\in\mathcal{K}_{\epsilon}\cap\mathcal{J}_{\epsilon}$,
the operator $\mathcal{L}_{\epsilon}$ defined in~\eqref{eq:gen}
behaves in the first order as the following operator: 
\begin{equation}
\widetilde{\mathcal{L}}_{\epsilon}=\langle p,\,\nabla_{q}\rangle-\langle\mathbb{H}_{U}q,\,\nabla_{p}\rangle-\gamma\langle p,\,\nabla_{p}\rangle+\gamma\epsilon\Delta_{p}\;.\label{eq:approx gen}
\end{equation}
Therefore, we consider the function 
\begin{equation}
j_{\epsilon}(x)=\sqrt{\frac{\mu}{2\pi\gamma\epsilon}}\int_{-\infty}^{\langle x,\,v\rangle}\exp\left(-\frac{\mu}{2\gamma\epsilon}t^{2}\right)\mathrm{d}t\;,\label{eq:definition j epsilon}
\end{equation}
where $\mu$ is defined in Lemma~\ref{lem:eigenvalue mu}. An easy computation shows that for all $x\in\mathbb{R}^{2d}$, 
\begin{equation}
\widetilde{\mathcal{L}}_{\epsilon}j_{\epsilon}(x)=\sqrt{\frac{\mu}{2\pi\gamma\epsilon}}\exp\left(-\frac{\mu}{2\gamma\epsilon}\langle x,\,v\rangle^{2}\right)\left[-\langle\mathbb{H}_{V}\mathbb{M}v,\,x\rangle-\mu\langle x,\,v\rangle\right]=0\;.\label{eq:j epsilon harmonic}
\end{equation}

The idea is then to define the test function $f_{\epsilon}=j_{\epsilon}$
on $\mathcal{K}_{\epsilon}\cap\mathcal{J}_{\epsilon}$ and extend
it smoothly on $\mathcal{J}_{\epsilon}$ such that $f_{\epsilon}=1$
on $\partial\mathcal{M}_{\epsilon}$ and $f_{\epsilon}=0$ on $\partial\mathcal{S}_{\epsilon}$
using the partitioning of Lemma~\ref{lem:partitioning}. In order to do that, let us take $\varphi\in C^{\infty}(\mathbb{R},\,[0,\,1])$
satisfying 
\begin{align*}
\varphi(t)=\begin{cases}
0, & t\leq0\\
1, & t\geq1\;.
\end{cases}
\end{align*}
In particular, the smoothness of $\varphi$ in $\mathbb{R}$ implies
that for all $n\geq1$, 
\begin{equation}
\frac{\mathrm{d}^{n}\varphi}{\mathrm{d}t^{n}}(0)=\frac{\mathrm{d}^{n}\varphi}{\mathrm{d}t^{n}}(1)=0\;.\label{eq:derivatives varphi}
\end{equation}
For $\theta>0$, we shall denote by $\varphi_{\theta}$ the function
defined for $x\in\mathbb{R}^{2d}$ as follows: 
\begin{equation}
\varphi_{\theta}(x)=\varphi\left(\frac{|x_{1}|-K\delta/\sqrt{\lambda_{1}}}{\theta}\right)\;.\label{eq:def varphi theta}
\end{equation}

Furthermore, in order to extend the function $f_{\epsilon}$ smoothly
outside of $\mathcal{J}_{\epsilon}$, we shall need the following lemma. 
\begin{lem}
\label{lem:g_epsilon definition} There exists a smooth function $g_{\epsilon}\in C^{\infty}(\mathbb{R}^{2d},\,[0,\,1])$
satisfying 
\begin{align*}
g_{\epsilon}(x)=\begin{cases}
1, & V(x)\leq U(\sigma)+\frac{K^{2}\delta^{2}}{2}\\
0, & V(x)\geq U(\sigma)+K^{2}\delta^{2}\;.
\end{cases}
\end{align*}
Besides, there exists a constant $C>0$ independent of $\epsilon$
such that 
\begin{equation}
\sup_{x\in\mathbb{R}^{2d}}\left(|\nabla g_{\epsilon}(x)|+|\nabla^{2}g_{\epsilon}(x)|\right)\leq\frac{C}{\delta^{6}}\;.\label{eq:derivative controls g}
\end{equation}
\end{lem}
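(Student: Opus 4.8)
The plan is to build $g_{\epsilon}$ by composing the Hamiltonian $V$ with a fixed, rescaled smooth cutoff profile. Fix once and for all a function $\chi\in C^{\infty}(\mathbb{R},[0,1])$ with $\chi\equiv 1$ on $(-\infty,0]$ and $\chi\equiv 0$ on $[1,\infty)$ (so that, as in~\eqref{eq:derivatives varphi}, all derivatives of $\chi$ vanish outside $(0,1)$), and set
\[
g_{\epsilon}(x):=\chi\!\left(\frac{2}{K^{2}\delta^{2}}\big(V(x)-U(\sigma)\big)-1\right).
\]
By definition $g_{\epsilon}$ takes values in $[0,1]$, equals $1$ exactly on $\{V\le U(\sigma)+K^{2}\delta^{2}/2\}$ and equals $0$ exactly on $\{V\ge U(\sigma)+K^{2}\delta^{2}\}$, which is the required profile. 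Since $U\in C^{2}$ we have $V\in C^{2}(\mathbb{R}^{2d})$, hence $g_{\epsilon}\in C^{2}$; to upgrade this to $C^{\infty}$ one replaces $V$ above by a mollification $V*\rho_{\eta}$ with $\eta=\eta(\epsilon)$ small enough that $\|V*\rho_{\eta}-V\|_{\infty}$ over the relevant bounded region is at most $K^{2}\delta^{2}/8$ (possible for $\epsilon$ small since $V\in C^{2}$, the region stays in a fixed compact set — see below — and $\|V*\rho_{\eta}-V\|_{\infty}$ is of order $\eta^{2}$ there). I will carry out the estimates with $V$ itself, the mollification changing nothing since only first and second derivatives of $g_{\epsilon}$ are constrained and mollification does not increase the $C^{1},C^{2}$ norms of $V$ on a fixed compact set.

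Next I would localise the estimate. The gradient and Hessian of $g_{\epsilon}$ vanish outside the region $R_{\epsilon}:=\{x:\,U(\sigma)+K^{2}\delta^{2}/2\le V(x)\le U(\sigma)+K^{2}\delta^{2}\}$, since $\chi'$ and $\chi''$ of the argument are supported there. By Assumption~\ref{ass:nabla U} (cf. the proof of Proposition~\ref{prop:tight}, where $U(q)\ge a+\tfrac{c}{4}|q|^{2}$ for $|q|$ large while $|p|^{2}/2\le V$), every sublevel set of $V$ is compact; hence for $\epsilon$ small enough that $K^{2}\delta^{2}\le 1$ we have $R_{\epsilon}\subset\mathcal{C}:=\{V\le U(\sigma)+1\}$, a compact set independent of $\epsilon$. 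Put $M:=\sup_{\mathcal{C}}\big(|\nabla V|+|\nabla^{2}V|\big)<\infty$, which is finite and $\epsilon$-independent since $\nabla V=(\nabla U,p)$ and $\nabla^{2}V=\mathrm{diag}(\mathbb{H}_{U}^{q},\mathbb{I}_{d})$ are continuous.

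Finally I would differentiate, writing $a_{\epsilon}:=2/(K^{2}\delta^{2})$ and $F(x):=a_{\epsilon}(V(x)-U(\sigma))-1$. The chain rule gives $\nabla g_{\epsilon}=\chi'(F)\,a_{\epsilon}\nabla V$ and $\nabla^{2}g_{\epsilon}=\chi''(F)\,a_{\epsilon}^{2}\,\nabla V\otimes\nabla V+\chi'(F)\,a_{\epsilon}\,\nabla^{2}V$, so that on $R_{\epsilon}$ (and trivially elsewhere)
\[
|\nabla g_{\epsilon}(x)|+|\nabla^{2}g_{\epsilon}(x)|\le \|\chi'\|_{\infty}a_{\epsilon}M+\|\chi''\|_{\infty}a_{\epsilon}^{2}M^{2}+\|\chi'\|_{\infty}a_{\epsilon}M\le \frac{C}{\delta^{4}}
\]
with $C$ depending only on $\chi,K,M$; since $\delta=\delta(\epsilon)\le 1$ for $\epsilon$ small this is $\le C/\delta^{6}$, which is~\eqref{eq:derivative controls g}. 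The only mildly subtle point is the $C^{\infty}$ regularity — the bare composition is only $C^{2}$ because $U$ is merely $C^{2}$ — handled by the mollification step, which is harmless precisely because~\eqref{eq:derivative controls g} constrains only first- and second-order derivatives of $g_{\epsilon}$.
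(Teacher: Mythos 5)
Your construction is genuinely different from the paper's. The paper defines $g_{\epsilon}$ by mollifying the indicator of a sublevel set of $V$ at spatial scale $\delta^{3}$, i.e.\ $g_{\epsilon}=\Psi_{\delta^{3}}*\mathbf{1}_{\{V\le U(\sigma)+2K^{2}\delta^{2}/3\}}$; this is automatically $C^{\infty}$ and yields the derivative bounds $|\nabla g_{\epsilon}|\lesssim\delta^{-3}$, $|\nabla^{2}g_{\epsilon}|\lesssim\delta^{-6}$ purely from the mollification scale, with the boundary conditions checked by Lipschitz continuity of $V$ and $\delta^{3}\ll\delta^{2}$. You instead compose $V$ with a fixed smooth cutoff profile whose transition occurs over a band of energy-width $\sim\delta^{2}$. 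This is more direct and actually gives the sharper bound $\delta^{-4}$; you also correctly identify the only real obstacle, namely that $V$ is merely $C^{2}$ so the naive composition is only $C^{2}$, and correctly observe that mollifying $V$ is harmless for the estimate because~\eqref{eq:derivative controls g} involves only first and second derivatives.

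However, the mollification step as written does not preserve the required boundary conditions, and this is a genuine (if easily repairable) flaw. You propose keeping the same formula $\chi\bigl(\tfrac{2}{K^{2}\delta^{2}}(V_{\eta}-U(\sigma))-1\bigr)$ with $\|V_{\eta}-V\|_{\infty}\le K^{2}\delta^{2}/8$ on the transition region. But take $x$ with $V(x)=U(\sigma)+K^{2}\delta^{2}/2$ exactly: then $V_{\eta}(x)$ can be as large as $U(\sigma)+5K^{2}\delta^{2}/8$, so the argument of $\chi$ can reach $1/4>0$, where $\chi$ need not equal $1$. So the constraint $g_{\epsilon}\equiv1$ on $\{V\le U(\sigma)+K^{2}\delta^{2}/2\}$ fails. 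The fix is to widen and re-center the profile so that the transition of $\chi$ is driven by $V_{\eta}$ but covers the perturbed interval, e.g.\ $g_{\epsilon}(x)=\chi\bigl(\tfrac{4}{K^{2}\delta^{2}}(V_{\eta}(x)-U(\sigma))-\tfrac{5}{2}\bigr)$ with the same $\eta=\eta(\epsilon)\lesssim\delta$; then $V\le U(\sigma)+K^{2}\delta^{2}/2$ forces $V_{\eta}\le U(\sigma)+5K^{2}\delta^{2}/8$ hence argument $\le0$, and $V\ge U(\sigma)+K^{2}\delta^{2}$ forces $V_{\eta}\ge U(\sigma)+7K^{2}\delta^{2}/8$ hence argument $\ge1$. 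With that adjustment your chain-rule bounds on a fixed compact containing the transition region go through unchanged (up to a fixed factor), and the argument is complete. The paper's indicator-mollification approach sidesteps this bookkeeping entirely, which is why it uses an intermediate threshold $2K^{2}\delta^{2}/3$ strictly between $K^{2}\delta^{2}/2$ and $K^{2}\delta^{2}$, at the cost of the slightly weaker $\delta^{-6}$ bound.
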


\begin{proof}
Let $\Psi\in C_{c}^{\infty}(\mathbb{R}^{2d},\,\mathbb{R})$ be a non-negative
function with compact support in $\mathrm{B}(0,\,1)$ and integrating
to~$1$. For $x\in\mathbb{R}^{2d}$, let us define 
\begin{equation}
g_{\epsilon}(x)=\frac{1}{\delta^{6d}}\int_{\mathbb{R}^{2d}}\mathbf{1}_{V(x-y)\leq U(\sigma)+2K^{2}\delta^{2}/3}\Psi(y/\delta^{3})\mathrm{d}y\;.\label{eq:def g epsilon}
\end{equation}

As a mollified function it is clear that $g_{\epsilon}\in C^{\infty}(\mathbb{R}^{2d},\,[0,\,1])$.
Also, using the change of variable $z=(x-y)/\delta^3$ and using the fact that
$\Psi$ admits a compact support, one easily shows the derivative
controls satisfied by $g_{\epsilon}$ in~\eqref{eq:derivative controls g}.

Additionally, using Assumption~\ref{ass:nabla U} and following the proof of Proposition~\ref{prop:tight},
one shows that $V(x)\underset{|x|\rightarrow\infty}{\longrightarrow}\infty$.
In particular, there exists $M>0$ such that for all $|x|\geq M$,
$V(x)>U(\sigma)+2K^{2}\delta^{2}/3$. Therefore, for $\epsilon$ small
enough, if $|x|\geq M+1$ and $|y|\leq\delta^{3}$, then 
\[
V(x-y)>U(\sigma)+2K^{2}\delta^{2}/3\;,
\]
hence $g_{\epsilon}(x)=0$ for $|x|\geq M+1$ since $\Psi$ has support in the ball $\mathrm{B}(0,\,1)$.
Moreover, since $V$ is $C^{1}$ on $\mathbb{R}^{2d}$ it
is $C$-Lipschitz continuous on the ball $\mathrm{B}(0,\,M+2)$ for
some constant $C>0$.

Let us now take $x\in\mathrm{B}(0,\,M+1)$ such that $V(x)\leq U(\sigma)+\frac{K^{2}\delta^{2}}{2}$.
Then for $\epsilon$ small enough and $|y|\leq\delta^{3}$ one has
that 
\begin{align*}
V(x-y) & \leq|V(x-y)-V(x)|+V(x)\\
 & \leq C\delta^{3}+U(\sigma)+\frac{K^{2}\delta^{2}}{2}\\
 & \leq U(\sigma)+\frac{2K^{2}\delta^{2}}{3}\;.
\end{align*}
Therefore, reinjecting into~\eqref{eq:def g epsilon} ensures that
$g_{\epsilon}(x)=1$. Consider now the case $V(x)\geq U(\sigma)+K^{2}\delta^{2}$.
Then similarly, 
\begin{align*}
V(x-y) & \geq V(x)-|V(x-y)-V(x)|\\
 & \geq U(\sigma)+K^{2}\delta^{2}-C\delta^{3}\\
 & >U(\sigma)+\frac{2K^{2}\delta^{2}}{3}
\end{align*}
for $\epsilon$ small enough, hence $g_{\epsilon}(x)=0$ which concludes
the proof. 
\end{proof}
Let us conclude this subsection with the expression of the test function
$f_{\epsilon}$. In order to do that let us define the boxes 
\[
\mathcal{K}_{\epsilon,\,\theta}^{m}=\left[\frac{K\delta}{\sqrt{\lambda_{1}}},\,\frac{K\delta}{\sqrt{\lambda_{1}}}+\theta\right]\times\prod_{i=2}^{2d}\left[-\frac{2K\delta}{\sqrt{\lambda_{i}}},\,\frac{2K\delta}{\sqrt{\lambda_{i}}}\right]
\]
and 
\[
\mathcal{K}_{\epsilon,\,\theta}^{s}=\left[-\frac{K\delta}{\sqrt{\lambda_{1}}}-\theta,\,-\frac{K\delta}{\sqrt{\lambda_{1}}}\right]\times\prod_{i=2}^{2d}\left[-\frac{2K\delta}{\sqrt{\lambda_{i}}},\,\frac{2K\delta}{\sqrt{\lambda_{i}}}\right]\;.
\]
Now let 
\begin{align*}
f_{\epsilon,\,\theta}(x)=\begin{cases}
g_{\epsilon}(x)\left[(1-\varphi_{\theta}(x))j_{\epsilon}(x)+\varphi_{\theta}(x)\right], & x\in\mathcal{K}_{\epsilon,\theta}^{m}\cap\mathcal{J}_{\epsilon}\\
g_{\epsilon}(x)j_{\epsilon}(x), & x\in\mathcal{K}_{\epsilon}\cap\mathcal{J}_{\epsilon}\\
g_{\epsilon}(x)(1-\varphi_{\theta}(x))j_{\epsilon}(x), & x\in\mathcal{K}_{\epsilon,\theta}^{s}\cap\mathcal{J}_{\epsilon}\\
1, & x\in\mathcal{J}_{\epsilon}^{m}\\
0, & x\in\mathcal{J}_{\epsilon}^{s}\cup(\R^{2d}\setminus\mathcal{J}_{\epsilon})\;.
\end{cases}
\end{align*}

\begin{lem}
The function $f_{\epsilon,\,\theta}\in C_{c}^{\infty}(\overline{\mathcal{S}_{\epsilon}}^{c})$
and satisfies $f_{\epsilon,\,\theta}(x)=1$ for $x\in\partial\mathcal{M}_{\epsilon}$. 
\end{lem}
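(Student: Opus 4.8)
The plan is to verify the three claimed properties of $f_{\epsilon,\,\theta}$ --- smoothness, compact support in $\overline{\mathcal{S}_{\epsilon}}^{c}$, and the boundary value $1$ on $\partial\mathcal{M}_{\epsilon}$ --- by carefully inspecting the piecewise definition and checking that the pieces glue together in a $C^{\infty}$ fashion along the interfaces. First I would record where the interfaces lie: the pieces are indexed by the sets $\mathcal{K}_{\epsilon,\theta}^{m}\cap\mathcal{J}_{\epsilon}$, $\mathcal{K}_{\epsilon}\cap\mathcal{J}_{\epsilon}$, $\mathcal{K}_{\epsilon,\theta}^{s}\cap\mathcal{J}_{\epsilon}$, $\mathcal{J}_{\epsilon}^{m}$, and $\mathcal{J}_{\epsilon}^{s}\cup(\mathbb{R}^{2d}\setminus\mathcal{J}_{\epsilon})$; by Lemma~\ref{lem:partitioning} the three sets $\mathcal{J}_{\epsilon}^{m}$, $\mathcal{J}_{\epsilon}^{s}$, $\mathcal{K}_{\epsilon}\cap\mathcal{J}_{\epsilon}$ partition $\mathcal{J}_{\epsilon}$, and for $\theta$ small enough $\mathcal{K}_{\epsilon,\theta}^{m}$ (resp. $\mathcal{K}_{\epsilon,\theta}^{s}$) is the thin slab attached to the face $\partial\mathcal{K}_{\epsilon}^{+}$ (resp. $\partial\mathcal{K}_{\epsilon}^{-}$) extending toward $\mathcal{J}_{\epsilon}^{m}$ (resp. $\mathcal{J}_{\epsilon}^{s}$). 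Since all pieces are built from $g_{\epsilon}$, $j_{\epsilon}$, and $\varphi_{\theta}$, each of which is $C^{\infty}$ on $\mathbb{R}^{2d}$ by Lemma~\ref{lem:g_epsilon definition} and by~\eqref{eq:definition j epsilon},~\eqref{eq:def varphi theta}, the only issue is matching at the interfaces.

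The key step is the interface analysis, which I would organize by the three types of boundary. At the face $x_{1}=K\delta/\sqrt{\lambda_{1}}$ separating $\mathcal{K}_{\epsilon}\cap\mathcal{J}_{\epsilon}$ from $\mathcal{K}_{\epsilon,\theta}^{m}\cap\mathcal{J}_{\epsilon}$: there $\varphi_{\theta}(x)=\varphi(0)=0$ and all derivatives of $\varphi_{\theta}$ vanish by~\eqref{eq:derivatives varphi}, so the formula $g_{\epsilon}[(1-\varphi_{\theta})j_{\epsilon}+\varphi_{\theta}]$ reduces smoothly to $g_{\epsilon}j_{\epsilon}$ along with all its normal derivatives. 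At the face $x_{1}=K\delta/\sqrt{\lambda_{1}}+\theta$ separating $\mathcal{K}_{\epsilon,\theta}^{m}\cap\mathcal{J}_{\epsilon}$ from $\mathcal{J}_{\epsilon}^{m}$: there $\varphi_{\theta}(x)=\varphi(1)=1$ with vanishing derivatives, so the formula reduces to $g_{\epsilon}(x)$, and on $\mathcal{J}_{\epsilon}^{m}$ we need this to match the constant $1$; this holds because on $\mathcal{J}_{\epsilon}^{m}$ (and on a neighborhood of the relevant face, taking $K^{2}\delta^{2}/2$-sublevel containment) one has $V(x)\le U(\sigma)+K^{2}\delta^{2}/2$ so $g_{\epsilon}\equiv1$ with all derivatives zero --- here I would invoke Lemma~\ref{lem:g_epsilon definition} together with the fact that near $(m,0)$ the energy is much smaller than $U(\sigma)$, and more carefully that the slab $\mathcal{K}_{\epsilon,\theta}^{m}$ was designed so that $j_{\epsilon}$ and $g_{\epsilon}$ transition appropriately. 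The symmetric analysis handles the $s$-side, where $\varphi_{\theta}=0$ on the inner face gives $g_{\epsilon}j_{\epsilon}$, and $\varphi_{\theta}=1$ on the outer face kills the formula giving $0$, matching $\mathcal{J}_{\epsilon}^{s}$. Finally, on the part of $\partial\mathcal{K}_{\epsilon}\setminus(\partial\mathcal{K}_{\epsilon}^{+}\cup\partial\mathcal{K}_{\epsilon}^{-})$, Lemma~\ref{lem:energy upper boundary box} gives $V(x)\ge U(\sigma)+\tfrac54 K^{2}\delta^{2}>U(\sigma)+K^{2}\delta^{2}$, so $g_{\epsilon}$ already vanishes identically (with all derivatives) in a neighborhood of that part of the boundary, making the transition from $\mathcal{K}_{\epsilon}\cap\mathcal{J}_{\epsilon}$ to $\mathbb{R}^{2d}\setminus\mathcal{J}_{\epsilon}$ seamless.

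For the remaining two assertions: compact support follows because $f_{\epsilon,\theta}$ is supported in $\overline{\mathcal{J}_{\epsilon}}$ (it is $0$ off $\mathcal{J}_{\epsilon}$ by the last line of the definition, together with the $g_{\epsilon}$-vanishing just discussed), and $\mathcal{J}_{\epsilon}=\{V<U(\sigma)+K^{2}\delta^{2}\}$ is bounded by the coercivity $V(x)\to\infty$ as $|x|\to\infty$ (itself a consequence of Assumption~\ref{ass:nabla U}, as used in the proof of Lemma~\ref{lem:g_epsilon definition}); one also needs $\overline{\mathcal{S}_{\epsilon}}\cap\operatorname{supp}(f_{\epsilon,\theta})=\emptyset$, which holds because $(s,0)$ lies in $\mathcal{J}_{\epsilon}^{s}$ where $f_{\epsilon,\theta}\equiv0$, and for $\epsilon$ small $\mathcal{S}_{\epsilon}\subset\mathcal{J}_{\epsilon}^{s}$. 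The boundary value $f_{\epsilon,\theta}=1$ on $\partial\mathcal{M}_{\epsilon}$ follows since for $\epsilon$ small $\partial\mathcal{M}_{\epsilon}\subset\mathcal{J}_{\epsilon}^{m}$ (the ball $\mathrm{B}(m,\epsilon)$ sits in the connected component of $(m,0)$), on which the function is the constant $1$ by definition.

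The main obstacle I anticipate is not conceptual but bookkeeping: making precise that the slabs $\mathcal{K}_{\epsilon,\theta}^{m}$, $\mathcal{K}_{\epsilon,\theta}^{s}$ --- as \emph{boxes} in the $(e_{1},\dots,e_{2d})$ coordinates --- actually attach flush to the faces $\partial\mathcal{K}_{\epsilon}^{\pm}$ and that their outer faces land inside $\mathcal{J}_{\epsilon}^{m}$ resp. $\mathcal{J}_{\epsilon}^{s}$ for $\theta=\theta(\epsilon)$ chosen suitably small (one may need $\theta\ll\delta$, or a $\theta$ depending on $\epsilon$, so that the slab stays within the region where $g_{\epsilon}\equiv1$ on the $m$-side; note $g_{\epsilon}$ itself is not identically $1$ throughout $\mathcal{K}_{\epsilon,\theta}^{m}$ unless the energy there stays below $U(\sigma)+K^{2}\delta^{2}/2$, which has to be checked from the quadratic expansion of $V$ near $\sigma$ as in Lemma~\ref{lem:energy upper boundary box}). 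Once the geometry of these slabs relative to the sublevel sets of $V$ is pinned down --- using the Taylor expansion $V(x)=U(\sigma)-\tfrac12\lambda_{1}x_{1}^{2}+\tfrac12\sum_{i\ge2}\lambda_{i}x_{i}^{2}+O(|x|^{3})$ --- the $C^{\infty}$-gluing is a routine consequence of~\eqref{eq:derivatives varphi} and the locally-constant-with-vanishing-derivatives behaviour of $g_{\epsilon}$.
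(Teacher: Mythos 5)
Your overall strategy coincides with the paper's own one-line proof: verify $C^{\infty}$-gluing at the interfaces via the vanishing of all derivatives of $\varphi_{\theta}$ at $t=0,1$ (equation~\eqref{eq:derivatives varphi}), the locally constant behaviour of $g_{\epsilon}$ where $V\ge U(\sigma)+K^{2}\delta^{2}$ (Lemma~\ref{lem:g_epsilon definition}), and the disjointness from Lemma~\ref{lem:partitioning}; then read off compact support from the boundedness of $\mathcal{J}_{\epsilon}$ and the boundary value from $\partial\mathcal{M}_{\epsilon}\subset\mathcal{J}_{\epsilon}^{m}$. So the route is the same.

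There is, however, a genuine gap in your argument (which is admittedly inherited from the terseness of the paper's statement). You assert that, on the outer face $x_{1}=K\delta/\sqrt{\lambda_{1}}+\theta$ of the slab $\mathcal{K}_{\epsilon,\theta}^{m}$ and on a neighbourhood of it, one has $V(x)\le U(\sigma)+K^{2}\delta^{2}/2$, hence $g_{\epsilon}\equiv 1$, so the first branch of the definition reduces smoothly to the constant $1$ that is declared on $\mathcal{J}_{\epsilon}^{m}$. This is false: by the quadratic expansion of $V$, on $\partial\mathcal{K}_{\epsilon}^{+}\cap\mathcal{J}_{\epsilon}$ the value of $V$ ranges over the full interval $\bigl[U(\sigma)-\tfrac12K^{2}\delta^{2}+O(\delta^{3}),\;U(\sigma)+K^{2}\delta^{2}\bigr)$, so there are points on (and near) that face where $V>U(\sigma)+K^{2}\delta^{2}/2$ and hence $g_{\epsilon}<1$. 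The same problem arises independently of the slab: the case ``$f_{\epsilon,\theta}\equiv 1$ on $\mathcal{J}_{\epsilon}^{m}$'' would give a function that jumps from $1$ to $0$ across $\partial\mathcal{J}_{\epsilon}\cap\overline{\mathcal{J}_{\epsilon}^{m}}$, so $f_{\epsilon,\theta}$ as literally written could not be $C^{\infty}$ (let alone $C^{\infty}_{c}$).

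The repair, which is what the paper's one-sentence proof is implicitly relying on when it invokes ``the smoothness of $g_{\epsilon}$,'' is that the fourth branch of the definition should be $g_{\epsilon}(x)$ rather than the constant $1$. With that reading, on the outer face of $\mathcal{K}_{\epsilon,\theta}^{m}$ one has $\varphi_{\theta}=1$ and the first branch collapses to $g_{\epsilon}(x)[(1-1)j_{\epsilon}(x)+1]=g_{\epsilon}(x)$, which matches the fourth branch with all derivatives (by~\eqref{eq:derivatives varphi}); across $\partial\mathcal{J}_{\epsilon}\cap\overline{\mathcal{J}_{\epsilon}^{m}}$ the function transitions to $0$ smoothly because $g_{\epsilon}\equiv 0$ on $\{V\ge U(\sigma)+K^{2}\delta^{2}\}$; and at $\partial\mathcal{K}_{\epsilon}\setminus(\partial\mathcal{K}_{\epsilon}^{+}\cup\partial\mathcal{K}_{\epsilon}^{-})$ the transition is taken care of by Lemma~\ref{lem:energy upper boundary box} exactly as you describe. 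Finally, the boundary condition $f_{\epsilon,\theta}=1$ on $\partial\mathcal{M}_{\epsilon}$ is then no longer free: one must observe that $V\le U(m)+O(\epsilon)<U(\sigma)+K^{2}\delta^{2}/2$ on $\partial\mathcal{M}_{\epsilon}$ for $\epsilon$ small, so that $g_{\epsilon}\equiv 1$ there by Lemma~\ref{lem:g_epsilon definition}. This last step is missing from your sketch and should be made explicit.
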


\begin{proof}
The smoothness of $f_{\epsilon,\,\theta}$ follows from the partitioning
of $\mathcal{J}_{\epsilon}$ in Lemma~\ref{lem:partitioning}, the
vanishing of the derivatives of $\varphi_{\theta}$ using~\eqref{eq:derivatives varphi}
and the smoothness of $g_{\epsilon}$ defined in Lemma~\ref{lem:g_epsilon definition}. 
\end{proof}

\subsection{Computation of $\mathrm{Cap}_{\epsilon}(\mathcal{M}_{\epsilon},\,\mathcal{S}_{\epsilon})$}

We show here that the capacity $\mathrm{Cap}_{\epsilon}(\mathcal{M}_{\epsilon},\,\mathcal{S}_{\epsilon})$
in~\eqref{eq:cap} can be computed
by choosing the test function $f_{\epsilon,\,\theta}$ and taking the limit of the integral when $\theta\rightarrow0$. The rest of this section is thus devoted to the proof of the theorem below which directly implies Proposition~\ref{prop:main2}.

Let 
\begin{equation}
\alpha_{\epsilon}=\frac{1}{Z_{\epsilon}}\frac{(2\pi\epsilon)^{d}}{2\pi}\frac{\mu}{\sqrt{|\det{\mathbb{H}_{U}}|}}e^{-U(\sigma)/\epsilon}\;,\label{eq:def alpha epsilon}
\end{equation}
where $\mu$ is defined in Lemma~\ref{lem:eigenvalue mu}. 
\begin{thm}
\label{thm:result prop 3.7} 
\[
\lim_{\theta\rightarrow0}\int_{\mathbb{R}^{2d}}h_{\mathcal{M}_{\epsilon},\,\mathcal{S}_{\epsilon}}^{*}(x)(-\mathcal{L}_{\epsilon}f_{\epsilon,\,\theta}(x))\mu_{\epsilon}(x)\mathrm{d}x=[1+o_{\epsilon}(1)]\alpha_{\epsilon}\;.
\]
\end{thm}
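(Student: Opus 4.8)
The plan is to compute the integral $\int_{\mathbb{R}^{2d}}h_{\mathcal{M}_{\epsilon},\,\mathcal{S}_{\epsilon}}^{*}(x)(-\mathcal{L}_{\epsilon}f_{\epsilon,\,\theta}(x))\mu_{\epsilon}(\mathrm{d}x)$ by exploiting the partition of $\mathcal{J}_\epsilon$ from Lemma~\ref{lem:partitioning} together with the fact that $f_{\epsilon,\,\theta}$ is locally constant outside a small region around the saddle. First I would observe that $\mathcal{L}_\epsilon f_{\epsilon,\,\theta}$ is supported in the set where $f_{\epsilon,\,\theta}$ is non-constant, which is contained in $\mathcal{K}_{\epsilon}\cap\mathcal{J}_{\epsilon}$ (where $f_{\epsilon,\theta}=g_\epsilon j_\epsilon$), the two transition slabs $\mathcal{K}_{\epsilon,\,\theta}^{m}\cap\mathcal{J}_\epsilon$ and $\mathcal{K}_{\epsilon,\,\theta}^{s}\cap\mathcal{J}_\epsilon$ (where the cutoff $\varphi_\theta$ acts), and the region near $\partial\mathcal{J}_\epsilon$ where $g_\epsilon$ transitions from $1$ to $0$. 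The contribution from the $g_\epsilon$-transition region is negligible: there $V(x)\geq U(\sigma)+K^2\delta^2/2$, so $\mu_\epsilon$ carries a factor $e^{-V/\epsilon}\leq \frac{1}{Z_\epsilon}e^{-U(\sigma)/\epsilon}e^{-K^2\log(1/\epsilon)/2}=\frac{1}{Z_\epsilon}e^{-U(\sigma)/\epsilon}\epsilon^{K^2/2}$, while $|\mathcal{L}_\epsilon f_{\epsilon,\theta}|$ is bounded polynomially in $1/\epsilon$ (using the derivative bounds~\eqref{eq:derivative controls g} on $g_\epsilon$ and the Gaussian form of $j_\epsilon$), and the volume of the region is $O(1)$; choosing $K$ large enough kills this term relative to $\alpha_\epsilon$.

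Next I would handle the main terms. On $\mathcal{K}_{\epsilon}\cap\mathcal{J}_{\epsilon}$, write $\mathcal{L}_\epsilon(g_\epsilon j_\epsilon)=g_\epsilon\mathcal{L}_\epsilon j_\epsilon+j_\epsilon\mathcal{L}_\epsilon g_\epsilon+2\gamma\epsilon\langle\nabla_p g_\epsilon,\nabla_p j_\epsilon\rangle$; since $g_\epsilon\equiv1$ on a neighbourhood of $\mathcal{K}_\epsilon\cap\mathcal{J}_\epsilon$ (because on $\mathcal{K}_\epsilon$ one has $V(x)\le U(\sigma)+\tfrac14 K^2\delta^2$ by a Taylor expansion, well below $U(\sigma)+\tfrac12K^2\delta^2$), only the $g_\epsilon\mathcal{L}_\epsilon j_\epsilon$ term survives there, and $\mathcal{L}_\epsilon j_\epsilon=(\mathcal{L}_\epsilon-\widetilde{\mathcal{L}}_\epsilon)j_\epsilon$ by~\eqref{eq:j epsilon harmonic}. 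The difference $\mathcal{L}_\epsilon-\widetilde{\mathcal{L}}_\epsilon$ involves only the error $-\langle\nabla U(q)-\mathbb{H}_U q,\nabla_p\rangle=O(|q|^2)\partial_p$; since $j_\epsilon$ has $p$-derivative of size $\sqrt{\mu/(2\pi\gamma\epsilon)}|v_p|\,e^{-\mu\langle x,v\rangle^2/(2\gamma\epsilon)}$ and $|q|^2\le CK^2\delta^2=CK^2\epsilon\log(1/\epsilon)$ on $\mathcal{K}_\epsilon$, one bounds $|\mathcal{L}_\epsilon j_\epsilon|\leq C\epsilon^{1/2}\log(1/\epsilon)\,e^{-\mu\langle x,v\rangle^2/(2\gamma\epsilon)}$ there; integrating against $\mu_\epsilon$ over $\mathcal{K}_\epsilon$ (where $V(x)\approx U(\sigma)-\tfrac12\lambda_1 x_1^2+\tfrac12\sum_{i\ge2}\lambda_i x_i^2$) gives a contribution which is $o(\alpha_\epsilon)$. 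The genuine leading contribution comes from the two slabs. On $\mathcal{K}_{\epsilon,\theta}^{m}\cap\mathcal{J}_\epsilon$ one has $f_{\epsilon,\theta}=g_\epsilon[(1-\varphi_\theta)j_\epsilon+\varphi_\theta]$ with $g_\epsilon\equiv1$, so $\mathcal{L}_\epsilon f_{\epsilon,\theta}=\mathcal{L}_\epsilon[(1-\varphi_\theta)j_\epsilon+\varphi_\theta]$; I would compute $-\mathcal{L}_\epsilon f_{\epsilon,\theta}$ explicitly, recognising (as in~\cite{JS}) that the dominant piece is a total divergence in the $x_1$-direction that, upon integration by parts in $\theta$ and letting $\theta\to0$, collapses to a boundary flux across $\partial\mathcal{K}_\epsilon^{+}$ (and analogously $\partial\mathcal{K}_\epsilon^{-}$ for the $s$-slab).

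The key computational step is then evaluating the boundary-flux integral. After integration by parts in $x_1$, the $\theta\to0$ limit produces, up to lower-order terms, an expression of the form $\int_{\partial\mathcal{K}_\epsilon^{\pm}}h_{\mathcal{M}_{\epsilon},\,\mathcal{S}_{\epsilon}}^{*}(x)\,(\text{flux of }j_\epsilon)\,\mu_\epsilon(x)\,\mathrm{d}S(x)$. Here I would use: (i) the Gaussian profile of $\partial_{x_1}j_\epsilon$, concentrated on the hyperplane $\langle x,v\rangle=0$; (ii) the estimate on $h_{\mathcal{M}_{\epsilon},\,\mathcal{S}_{\epsilon}}$ from Proposition~\ref{prop:ineq h_AB}, which forces $h^*_{\mathcal{M}_{\epsilon},\,\mathcal{S}_{\epsilon}}\to1$ on the $\mathcal{W}_m$-side of the saddle and $\to0$ on the $\mathcal{W}_s$-side, exponentially fast in $\epsilon$, so that on $\partial\mathcal{K}_\epsilon^{+}$ (which is on the $\mathcal{W}_m$-side after the sign convention for $e_1$) we may replace $h^*$ by $1$ with error $O(\epsilon^{\text{large}})$, and the $s$-side flux is negligible; (iii) a Gaussian Laplace-type asymptotic for $\int e^{-V(x)/\epsilon}\,\mathrm{d}S$ over the face, using $\det\mathbb{H}_V=-\lambda_1\prod_{i\ge2}\lambda_i=-\det\mathbb{H}_U$ and the identity $-\langle v,\mathbb{H}_V^{-1}v\rangle=\gamma/\mu$ from Lemma~\ref{lem:matrix equality} to produce the factor $\mu/\sqrt{|\det\mathbb{H}_U|}$. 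Assembling these, the $1/(2\pi)$ and $(2\pi\epsilon)^d$ and $e^{-U(\sigma)/\epsilon}/Z_\epsilon$ factors emerge from the Gaussian integration, matching $\alpha_\epsilon$ in~\eqref{eq:def alpha epsilon}. The main obstacle I anticipate is controlling the replacement of $\mathcal{L}_\epsilon$ by $\widetilde{\mathcal{L}}_\epsilon$ and of $h^*_{\mathcal{M}_{\epsilon},\,\mathcal{S}_{\epsilon}}$ by its $0/1$ limit uniformly on the slabs while keeping all error terms strictly below $\alpha_\epsilon$ — this requires a careful choice of the scale $\delta=\sqrt{\epsilon\log(1/\epsilon)}$ and of the constant $K$, and a delicate bookkeeping of the polynomial-in-$1/\epsilon$ prefactors against the exponential gains, exactly the place where the argument of~\cite{JS} needs to be adapted to dispense with a priori capacity bounds.
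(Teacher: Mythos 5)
Your overall plan matches the paper's proof closely: the same decomposition into the central box $\mathcal{K}_{\epsilon}\cap\mathcal{J}_{\epsilon}$, the two slabs $\mathcal{K}_{\epsilon,\theta}^{m},\mathcal{K}_{\epsilon,\theta}^{s}$, and the $g_{\epsilon}$-transition region; the use of Proposition~\ref{prop:control Lj_epsilon} (through $\widetilde{\mathcal{L}}_{\epsilon}j_{\epsilon}=0$) to kill the $\mathcal{L}_{\epsilon}j_{\epsilon}$ term; passing $\theta\to0$ to collapse the slab integrals onto the faces $\partial\mathcal{K}_{\epsilon}^{\pm}$; and the final Gaussian Laplace computation relying on Proposition~\ref{prop:ineq h_AB}, Lemma~\ref{lem:decomp boundary K+}, Lemma~\ref{lem:matrix equality}, and the rank-one determinant formula. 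However, there is one step in your treatment of $I_{\epsilon}^{1}$ that, as written, would fail.

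You claim that $g_{\epsilon}\equiv1$ on a neighbourhood of $\mathcal{K}_{\epsilon}\cap\mathcal{J}_{\epsilon}$ because ``on $\mathcal{K}_{\epsilon}$ one has $V(x)\le U(\sigma)+\tfrac14K^{2}\delta^{2}$,'' and then discard the $j_{\epsilon}\mathcal{L}_{\epsilon}g_{\epsilon}$ and cross terms outright. This is not correct: by the Taylor expansion of $V$, the supremum of $V$ over $\mathcal{K}_{\epsilon}$ is attained at a corner (e.g.\ $x_{1}=0$, $x_{i}=2K\delta/\sqrt{\lambda_{i}}$ for $i\ge2$) and is of order $U(\sigma)+2(2d-1)K^{2}\delta^{2}$, well above $U(\sigma)+K^{2}\delta^{2}$. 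The only bound available on $\mathcal{K}_{\epsilon}\cap\mathcal{J}_{\epsilon}$ is $V<U(\sigma)+K^{2}\delta^{2}$, which does not put you inside the plateau $\{V\le U(\sigma)+K^{2}\delta^{2}/2\}$ where $g_{\epsilon}\equiv1$; there genuinely are points of $\mathcal{K}_{\epsilon}\cap\mathcal{J}_{\epsilon}$ on which $\nabla g_{\epsilon}\neq0$. The paper fixes this not by ignoring the extra terms but by retaining them and observing that $\nabla g_{\epsilon}$ is supported in $\{V>U(\sigma)+K^{2}\delta^{2}/2\}$, where the Gibbs factor is at most $e^{-U(\sigma)/\epsilon}\epsilon^{K^{2}/2}$; combining this with the polynomial bound $|\nabla g_{\epsilon}|+|\nabla^{2}g_{\epsilon}|\le C\delta^{-6}$ from Lemma~\ref{lem:g_epsilon definition} and taking $K$ large makes the $j_{\epsilon}\mathcal{L}_{\epsilon}g_{\epsilon}$ and cross terms $o_{\epsilon}(1)\alpha_{\epsilon}$ (rather than zero). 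With this repair your $I_{\epsilon}^{1}$ estimate goes through, and the remainder of your outline agrees with the paper's argument.
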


The proof of this theorem relies on the following propositions. 
\begin{prop}
\label{prop:control Lj_epsilon} 
\[
\int_{\mathcal{K}_{\epsilon}\cap\mathcal{J}_{\epsilon}}|\mathcal{L}_{\epsilon}j_{\epsilon}(x)|\mu_{\epsilon}(x)\mathrm{d}x=o_{\epsilon}(1)\alpha_{\epsilon}\;.
\]
\end{prop}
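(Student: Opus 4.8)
The plan is to exploit the near-harmonicity of $j_\epsilon$ with respect to the full operator $\mathcal{L}_\epsilon$ inside the small box $\mathcal{K}_\epsilon\cap\mathcal{J}_\epsilon$. By~\eqref{eq:j epsilon harmonic} we know $\widetilde{\mathcal{L}}_\epsilon j_\epsilon\equiv 0$, so the key is to estimate the error $\mathcal{L}_\epsilon j_\epsilon-\widetilde{\mathcal{L}}_\epsilon j_\epsilon$. Writing out both generators from~\eqref{eq:gen} and~\eqref{eq:approx gen}, the only discrepancy is in the drift term acting on $\nabla_p$: $\mathcal{L}_\epsilon$ has $-\langle\nabla U(q),\nabla_p\rangle$ whereas $\widetilde{\mathcal{L}}_\epsilon$ has $-\langle\mathbb{H}_U q,\nabla_p\rangle$. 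Hence
\[
\mathcal{L}_\epsilon j_\epsilon(x)=-\big\langle \nabla U(q)-\mathbb{H}_U q,\,\nabla_p j_\epsilon(x)\big\rangle .
\]
Since $U$ is a Morse function with $\sigma=0$ a critical point (recall $\nabla U(\sigma)=0$), a second-order Taylor expansion gives $|\nabla U(q)-\mathbb{H}_U q|\leq C|q|^2$ for $q$ in a fixed neighbourhood of the origin, hence $\leq C(K\delta/\sqrt{\lambda_1})^2=O(\delta^2)$ uniformly on $\mathcal{K}_\epsilon$. From the explicit formula~\eqref{eq:definition j epsilon} one has $\nabla_p j_\epsilon(x)=\sqrt{\tfrac{\mu}{2\pi\gamma\epsilon}}\,\exp\!\big(-\tfrac{\mu}{2\gamma\epsilon}\langle x,v\rangle^2\big)\,v_p$, so $|\nabla_p j_\epsilon(x)|\leq C\epsilon^{-1/2}$ and the Gaussian factor is bounded by $1$. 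Therefore $|\mathcal{L}_\epsilon j_\epsilon(x)|\leq C\delta^2\epsilon^{-1/2}$ on $\mathcal{K}_\epsilon\cap\mathcal{J}_\epsilon$.

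Next I would control the measure of the integration region. By Lemma~\ref{lem:energy upper boundary box}, the part of $\partial\mathcal{K}_\epsilon$ away from $\partial\mathcal{K}_\epsilon^\pm$ sits at energy $\geq U(\sigma)+\tfrac54 K^2\delta^2$, while $\mathcal{J}_\epsilon=\{V<U(\sigma)+K^2\delta^2\}$; consequently $\mathcal{K}_\epsilon\cap\mathcal{J}_\epsilon$ is contained in a box whose every side has length $O(\delta)=O(\sqrt{\epsilon\log(1/\epsilon)})$, so its Lebesgue volume is $O(\delta^{2d})$. On this set $V(x)\geq U(\sigma)-\tfrac12\lambda_1 x_1^2+O(\delta^3)\geq U(\sigma)-CK^2\delta^2$, so $\mu_\epsilon(x)=Z_\epsilon^{-1}e^{-V(x)/\epsilon}\leq C Z_\epsilon^{-1}e^{-U(\sigma)/\epsilon}e^{CK^2\delta^2/\epsilon}=C Z_\epsilon^{-1}e^{-U(\sigma)/\epsilon}\epsilon^{-CK^2}$ since $\delta^2/\epsilon=\log(1/\epsilon)$. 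Combining the three bounds,
\[
\int_{\mathcal{K}_\epsilon\cap\mathcal{J}_\epsilon}|\mathcal{L}_\epsilon j_\epsilon(x)|\,\mu_\epsilon(\mathrm{d}x)
\leq C\,\delta^2\epsilon^{-1/2}\cdot\delta^{2d}\cdot\frac{1}{Z_\epsilon}e^{-U(\sigma)/\epsilon}\epsilon^{-CK^2}
= \frac{C}{Z_\epsilon}e^{-U(\sigma)/\epsilon}\,\epsilon^{d+1/2-CK^2}(\log(1/\epsilon))^{d+1}.
\]
Dividing by $\alpha_\epsilon$, which is of order $Z_\epsilon^{-1}(2\pi\epsilon)^d e^{-U(\sigma)/\epsilon}$ up to an $\epsilon$-independent constant, leaves a factor $O(\epsilon^{1/2-CK^2}(\log(1/\epsilon))^{d+1})$. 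This would be $o_\epsilon(1)$ only if $K$ is not too large relative to the exponent $1/2$, which signals the main subtlety: the naive pointwise bound on $\mu_\epsilon$ is too lossy.

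The hard part, and the step I would handle with more care, is exploiting the Gaussian weight $\exp(-\tfrac{\mu}{2\gamma\epsilon}\langle x,v\rangle^2)$ in $\nabla_p j_\epsilon$ together with $e^{-V(x)/\epsilon}$ rather than bounding each by a worst-case constant. The right move is to change variables to the eigencoordinates $(x_1,\dots,x_{2d})$ diagonalising $\mathbb{H}_V$ and to integrate the product $\exp(-\tfrac{\mu}{2\gamma\epsilon}\langle x,v\rangle^2)\exp(-\tfrac{1}{\epsilon}(U(\sigma)-\tfrac12\lambda_1 x_1^2+\tfrac12\sum_{i\geq2}\lambda_i x_i^2))$ as a genuine (non-degenerate, after accounting for the unstable direction being cut off at $|x_1|\le K\delta/\sqrt{\lambda_1}$) Gaussian integral over the box, picking up the natural normalisation $\epsilon^{d}$ from the $2d$ Gaussian directions combined with the $\epsilon^{-1/2}$ from $j_\epsilon$. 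One then gets a clean gain: the $\delta^2$ from the Taylor error produces an extra $\epsilon\log(1/\epsilon)$ relative to $\alpha_\epsilon$, so the ratio is $O(\epsilon\log(1/\epsilon))=o_\epsilon(1)$, uniformly in the fixed constant $K$. I would also need the elementary bound on $\int_{|x_1|\le K\delta/\sqrt{\lambda_1}}e^{\lambda_1 x_1^2/(2\epsilon)-\mu v_1^2\,\text{(quadratic)}/(2\gamma\epsilon)}\,\mathrm{d}x_1$, which is controlled because on the cut-off interval the positive exponent $\lambda_1 x_1^2/(2\epsilon)\le K^2\log(1/\epsilon)/2$ contributes only a polynomial factor $\epsilon^{-K^2/2}$ that is absorbed by the genuine Gaussian decay coming from the quadratic form $\mathbb{H}_V$ restricted appropriately — here Lemma~\ref{lem:matrix equality}, giving $-\langle v,\mathbb{H}_V^{-1}v\rangle=\gamma/\mu>0$, is the structural fact ensuring the combined exponent is net-decaying. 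Assembling these estimates yields the claimed $o_\epsilon(1)\alpha_\epsilon$ bound.
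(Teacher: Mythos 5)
Your plan is essentially the paper's proof: both start from the identity $\mathcal{L}_\epsilon j_\epsilon(x)=-\langle\nabla U(q)-\mathbb{H}_U q,\nabla_p j_\epsilon(x)\rangle$, bound the Taylor error by $O(\delta^2)$, keep the Gaussian factor $\exp\!\big(-\tfrac{\mu}{2\gamma\epsilon}\langle x,v\rangle^2\big)$ rather than bounding it by $1$, and combine it with $e^{-V(x)/\epsilon}$. You correctly recognise that the naive pointwise bound you try first is too lossy for a fixed large $K$ and that the matrix identity $-\langle v,\mathbb{H}_V^{-1}v\rangle=\gamma/\mu$ of Lemma~\ref{lem:matrix equality} is the structural fact saving the day.

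One detail is worth tightening. You propose to diagonalise $\mathbb{H}_V$ and then argue that the unstable-direction growth $e^{\lambda_1 x_1^2/2\epsilon}\le\epsilon^{-K^2/2}$ is ``absorbed by the genuine Gaussian decay''. The cleaner and correct bookkeeping (which the paper uses via Lemma~\ref{lem:spectrum H + vv}) is to diagonalise the \emph{combined} quadratic form $\mathbb{H}_V+\tfrac{\mu}{\gamma}\,v\otimes v^\dagger$ appearing in the exponent after multiplying the two Gaussians. Lemma~\ref{lem:matrix equality} shows this matrix has exactly one zero eigenvalue and the rest positive, so the combined exponent is $\le 0$ everywhere: there is no $\epsilon^{-K^2/2}$ factor to absorb at all. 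The $2d-1$ positive directions contribute $\epsilon^{(2d-1)/2}$, and the single null direction contributes its cut-off width $O(\delta)$, giving $\delta\,\epsilon^{(2d-1)/2}$ for the integral rather than $\epsilon^d$; multiplying by the prefactor $\delta^2/\sqrt\epsilon$ yields a ratio $\delta^3/\epsilon=\sqrt{\epsilon}(\log(1/\epsilon))^{3/2}$ relative to $\alpha_\epsilon$ (not the $\epsilon\log(1/\epsilon)$ you quote), but both are $o_\epsilon(1)$, so the conclusion is unchanged.
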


For any vector $x=(x_{1},\ldots,x_{2d})\in\mathbb{R}^{2d}$, let us
denote the vector 
\begin{equation}
\widetilde{x}=(x_{2},\ldots,x_{2d})\in\mathbb{R}^{2d-1}\label{eq:notation x tilde}
\end{equation}
such that for any $i\in\llbracket1,\,2d-1\rrbracket$, $\widetilde{x}_{i}=x_{i+1}$.
Also, let 
\begin{equation}
\mathcal{Q}_{\epsilon}^{\pm}:=\left\{ \widetilde{x}\in\prod_{i=2}^{2d}\left[-\frac{2K\delta}{\sqrt{\lambda_{i}}},\,\frac{2K\delta}{\sqrt{\lambda_{i}}}\right]:V\left(\pm\frac{K\delta}{\sqrt{\lambda_{1}}},\,\widetilde{x}\right)<U(\sigma)+K^{2}\delta^{2}\right\} \;.\label{eq:def Q epsilon}
\end{equation}

\begin{prop}
\label{prop:asymptotics boundary integral} One has that 
\begin{equation}
\frac{1}{Z_{\epsilon}}\int_{\mathcal{Q}_{\epsilon}^{+}}h_{\mathcal{M}_{\epsilon},\,\mathcal{S}_{\epsilon}}^{*}\left(\frac{K\delta}{\sqrt{\lambda_{1}}},\,\widetilde{x}\right)(-\widetilde{x}_{d})\left(1-j_{\epsilon}\left(\frac{K\delta}{\sqrt{\lambda_{1}}},\,\widetilde{x}\right)\right)\mathrm{exp}\left(-\frac{V\left(\frac{K\delta}{\sqrt{\lambda_{1}}},\,\widetilde{x}\right)}{\epsilon}\right)\mathrm{d}\widetilde{x}=[1+o_{\epsilon}(1)]\alpha_{\epsilon}\;.\label{eq:asymptotic expansion boundary K+}
\end{equation}
Also, 
\begin{equation}
\frac{1}{Z_{\epsilon}}\int_{\mathcal{Q}_{\epsilon}^{-}}h_{\mathcal{M}_{\epsilon},\,\mathcal{S}_{\epsilon}}^{*}\left(-\frac{K\delta}{\sqrt{\lambda_{1}}},\,\widetilde{x}\right)\widetilde{x}_{d}\,j_{\epsilon}\left(-\frac{K\delta}{\sqrt{\lambda_{1}}},\,\widetilde{x}\right)\mathrm{exp}\left(-\frac{V\left(-\frac{K\delta}{\sqrt{\lambda_{1}}},\,\widetilde{x}\right)}{\epsilon}\right)\mathrm{d}\widetilde{x}=o_{\epsilon}(1)\alpha_{\epsilon}\;.\label{eq:asymptotic expansion boundary K-}
\end{equation}
\end{prop}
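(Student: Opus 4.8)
The plan is to prove the two asymptotic identities in Proposition~\ref{prop:asymptotics boundary integral} by performing a careful Laplace-type expansion on the boundary faces $\partial\mathcal{K}_\epsilon^{\pm}$, exploiting the factorized structure of $V$ near the saddle point and the known behaviour of the cutting function $j_\epsilon$. First I would Taylor-expand $V$ around $(\sigma,0)=0$ in the eigenbasis $(e_1,\dots,e_{2d})$ of $\mathbb{H}_V$: on the face $x_1=\pm K\delta/\sqrt{\lambda_1}$ one has
\[
V(x)=U(\sigma)-\tfrac12 K^2\delta^2+\tfrac12\sum_{i=2}^{2d}\lambda_i x_i^2+O(\delta^3).
\]
Since $\delta^2=\epsilon\log(1/\epsilon)$, the prefactor $e^{-V/\epsilon}$ contains the explicit factor $e^{-U(\sigma)/\epsilon}e^{K^2\delta^2/2\epsilon}=e^{-U(\sigma)/\epsilon}\epsilon^{-K^2/2}$, and the Gaussian weight $\exp(-\tfrac1{2\epsilon}\sum_{i\ge2}\lambda_i x_i^2)$ concentrates $\widetilde{x}$ on a scale $\sqrt{\epsilon}$, which is much smaller than the box side $\delta$; hence the constraint $\widetilde{x}\in\mathcal{Q}_\epsilon^{\pm}$ can be replaced by $\widetilde{x}\in\mathbb{R}^{2d-1}$ up to a relatively negligible error (the tail being super-polynomially small since the box half-width is $2K\delta/\sqrt{\lambda_i}\gg\sqrt\epsilon$). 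The cubic remainder $O(\delta^3)=O((\epsilon\log(1/\epsilon))^{3/2})$ divided by $\epsilon$ is $o(1)$, so $e^{-V/\epsilon}$ is, to leading order, the pure Gaussian times the explicit constants.

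Next I would handle the two non-Gaussian factors in the integrand, namely $h^*_{\mathcal{M}_\epsilon,\mathcal{S}_\epsilon}$ and the quantity $(1-j_\epsilon)$ (resp. $j_\epsilon$). For $h^*$, Proposition~\ref{prop:ineq h_AB} applied at points of $\partial\mathcal{K}_\epsilon^{+}$, which lie in $\overline{\mathcal{W}_m}$ since $x_1>0$ corresponds to the $m$-side (after the sign normalization of $e_1$ made in Lemma~\ref{lem:disjoint}), gives $h^*(x)=h_{\mathcal{M}_\epsilon,\mathcal{S}_\epsilon}(q,-p)=1-O(\epsilon^{-\beta}e^{(V(x)-V(\sigma,0))/\epsilon})=1-O(\epsilon^{-\beta}\epsilon^{K^2/2})$ on the relevant Gaussian bulk, hence $h^*\to1$ uniformly once $K$ is chosen large enough (precisely $K^2/2>\beta$); symmetrically $h^*\to0$ on $\partial\mathcal{K}_\epsilon^{-}$, giving directly~\eqref{eq:asymptotic expansion boundary K-} after noting the integrand there is bounded by $j_\epsilon\le1$ times the negligible Gaussian. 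For the factor $1-j_\epsilon$ on $\partial\mathcal{K}_\epsilon^{+}$: by definition~\eqref{eq:definition j epsilon}, $j_\epsilon(x)$ depends only on $\langle x,v\rangle$, and since $v_q=(\mu+\gamma)v_p$ with $|v_p|=1$, on the face $x_1=K\delta/\sqrt{\lambda_1}$ the inner product $\langle x,v\rangle$ has a dominant contribution from the $e_1$-direction whose magnitude, rescaled by $\sqrt{\gamma\epsilon/\mu}$, is of order $\delta/\sqrt\epsilon=\sqrt{\log(1/\epsilon)}\to\infty$ with the sign making $1-j_\epsilon\to1$ (this is exactly why $\partial\mathcal{K}_\epsilon^{+}$ is the "$m$-side": $j_\epsilon\approx1$ near $s$, $j_\epsilon\approx0$ near $m$). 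One must check this is uniform over the $\sqrt\epsilon$-bulk of $\widetilde{x}$, which follows from a crude bound on how much $\langle x,v\rangle$ can move when $\widetilde{x}$ ranges over a $\sqrt\epsilon$-ball. Thus both $h^*$ and $1-j_\epsilon$ can be replaced by $1$ up to $o_\epsilon(1)$ relative error.

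With these replacements, the left side of~\eqref{eq:asymptotic expansion boundary K+} becomes, to leading order,
\[
\frac{1}{Z_\epsilon}e^{-U(\sigma)/\epsilon}\epsilon^{-K^2/2}\int_{\mathbb{R}^{2d-1}}(-\widetilde{x}_d)\,\exp\Big(-\frac{1}{2\epsilon}\sum_{i=2}^{2d}\lambda_i x_i^2\Big)\mathbf{1}_{\widetilde{x}_d<0?}\,\mathrm{d}\widetilde{x},
\]
and here I would be careful: $\widetilde{x}_d=x_{d+1}$ is the first momentum coordinate in the $e$-basis, and the factor $-\widetilde{x}_d$ is odd, so the integral over the full Gaussian would vanish — this tells me the correct reading is that on $\partial\mathcal{K}_\epsilon^{+}$ the process exits with $x_1$-velocity of a definite sign, i.e. the relevant contribution comes with the restriction coming from $j_\epsilon$ not being exactly $0$ or $1$ but having a transition layer; more precisely, the product $(1-j_\epsilon)$ times the Gaussian in the $\langle x,v\rangle$ direction, integrated against $-\widetilde{x}_d$, produces a half-Gaussian-type integral whose value involves $\sqrt{\mu/(\gamma)}$ and the $1$-dimensional Gaussian integral $\int_0^\infty t e^{-t^2/2}\mathrm{d}t=1$. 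I would therefore keep the factor $1-j_\epsilon$ in the $e_1$-direction, integrate out the transverse coordinates $x_2,\dots,x_{d},x_{d+2},\dots,x_{2d}$ as exact Gaussians (contributing $\prod_{i\ne 1,d+1}\sqrt{2\pi\epsilon/\lambda_i}$), and do the remaining two-dimensional integral in $(x_1,x_{d+1})$ against $-x_{d+1}(1-j_\epsilon)$ explicitly using the change of variables to $\langle x,v\rangle$; collecting $\prod_{i=2}^{2d}\lambda_i=|\det\mathbb{H}_U|/\lambda_1\cdot 1$ wait — rather $\prod_{i=1}^{2d}\lambda_i=|\det\mathbb{H}_V|=|\det\mathbb{H}_U|$ — and matching against the expression~\eqref{eq:def alpha epsilon} for $\alpha_\epsilon$, the powers of $\epsilon$, the $\epsilon^{-K^2/2}$, and the $e^{-U(\sigma)/\epsilon}$ all combine correctly, with the surviving prefactor being exactly $(2\pi\epsilon)^d\mu/(2\pi\sqrt{|\det\mathbb{H}_U|})/Z_\epsilon$.

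The main obstacle I anticipate is the bookkeeping in the last step: correctly identifying which Gaussian directions integrate out trivially versus the coupled $(x_1,x_{d+1})$ block where the cutting function $j_\epsilon$ lives, and verifying that the leftover one-dimensional Gaussian integral against $-\widetilde{x}_d(1-j_\epsilon)$ evaluates to the constant that makes the powers of $\epsilon$ and the $\epsilon^{-K^2/2}$ factor cancel exactly against those hidden in $\delta$ and in $\alpha_\epsilon$ — in particular one must confirm the $K$-dependence drops out entirely in the limit, which is the consistency check that the test function was designed correctly. A secondary technical point is justifying the uniformity of the approximations $h^*\approx 1$ and $j_\epsilon\approx 0$ over the bulk $\{|\widetilde{x}|\lesssim\sqrt{\epsilon\log(1/\epsilon)}\}$ rather than merely pointwise, and controlling the contribution of the region where these approximations fail by the super-polynomial smallness of the Gaussian tail beyond scale $\sqrt\epsilon$; this is routine but must be stated. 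The estimate~\eqref{eq:asymptotic expansion boundary K-} is strictly easier since there $h^*\to0$ kills the integrand and only an upper bound is needed.
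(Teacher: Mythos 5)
Your overall strategy is sound and agrees with the paper's: replace $h^*_{\mathcal{M}_\epsilon,\mathcal{S}_\epsilon}$ by $1$ (resp.\ $0$) on $\partial\mathcal{K}_\epsilon^+$ (resp.\ $\partial\mathcal{K}_\epsilon^-$) via Proposition~\ref{prop:ineq h_AB} (the paper packages this as Lemma~\ref{lem:h close to 1 or 0 boundary}), Taylor-expand $V$ on the face, retain $1-j_\epsilon$, and evaluate a Gaussian integral. But there are concrete errors that would derail the execution if followed literally.

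First, you assert that $1-j_\epsilon\to1$ on $\partial\mathcal{K}_\epsilon^+$ and that ``$j_\epsilon\approx1$ near $s$, $j_\epsilon\approx0$ near $m$.'' Both are reversed. By Lemma~\ref{lem:decomp boundary K+}, on the low-energy part of $\partial\mathcal{K}_\epsilon^+$ one has $\langle x,v\rangle\ge aK\delta>0$, so $j_\epsilon(x)\to1$ (a Gaussian CDF at argument $\gg\sqrt\epsilon$); this is forced anyway because $f_{\epsilon,\theta}$ must match continuously the value $1$ imposed on $\mathcal{J}_\epsilon^m$ across $\partial\mathcal{K}_\epsilon^+$. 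Thus $1-j_\epsilon$ is a Gaussian \emph{tail}, of size comparable to $e^{-\mu\langle x,v\rangle^2/(2\gamma\epsilon)}$, and it is precisely this exponential that compensates the boost $e^{K^2\delta^2/(2\epsilon)}$ coming from the $-\tfrac12 K^2\delta^2$ term in the expansion of $V$ on the face. Your self-correction catches that a constant replacement yields an odd, vanishing integral, which is the right symptom, but your diagnosis (``$j_\epsilon$ has a transition layer on $\partial\mathcal{K}_\epsilon^+$'') is still off: $j_\epsilon$ is uniformly close to $1$ there; the issue is the magnitude of $1-j_\epsilon$, not any sign variation.

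Second, you claim the Gaussian weight from $e^{-V/\epsilon}$ concentrates $\widetilde{x}$ at scale $\sqrt\epsilon$ around $0$. Once $1-j_\epsilon$ is retained, the combined exponent $-\frac{1}{2\epsilon}\langle\widetilde{x},\widetilde{\mathbb{H}}_V\widetilde{x}\rangle-\frac{\mu}{2\gamma\epsilon}\langle x,v\rangle^2$ has a linear part in $\widetilde{x}$, and completing the square --- the paper's shift $\widetilde{x}\mapsto\widetilde{x}+w$ --- centers the effective Gaussian at $-w$ with $|w|$ of order $\delta\gg\sqrt\epsilon$, not at $0$. The extension to $\R^{2d-1}$ is still legitimate, but because $-w$ lies at distance of order $\delta$ from the box boundary, not because the bulk is at $0$; and it is this completion of the square that produces the $e^{-K^2\delta^2/(2\epsilon)}$ killing the $K$-dependence, so the $\epsilon^{-K^2/2}$ in your intermediate display is the uncancelled artifact of having dropped $1-j_\epsilon$. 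Finally, the genuine content of the proof --- the shift by $w$, the evaluation of the $(2d-1)$-dimensional Gaussian, and the algebraic identities of Lemmas~\ref{lem:eigenvalue mu} and~\ref{lem:matrix equality} with the rank-one determinant update for $\widetilde{\mathbb{H}}_V+\tfrac{\mu}{\gamma}\widetilde v\otimes\widetilde v$ --- is exactly what you defer, so the argument as written is incomplete.
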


Let us now prove Theorem~\ref{thm:result prop 3.7}. 
\begin{proof}
First notice that for any $x\in\mathbb{R}^{2d}$ such that $|x_{1}|>K\delta/\sqrt{\lambda_{1}}$,
one has that 
\[
1-\varphi_{\theta}(x)\underset{\theta\rightarrow0}{\longrightarrow}0\;,
\]
by definition of $\varphi_{\theta}$ in~\eqref{eq:def varphi theta}.
Therefore, by definition of $f_{\epsilon,\,\theta}$, one has that
\begin{align*}
 & \int_{\mathbb{R}^{2d}}h_{\mathcal{M}_{\epsilon},\,\mathcal{S}_{\epsilon}}^{*}(x)(-\mathcal{L}_{\epsilon}f_{\epsilon,\,\theta}(x))\mu_{\epsilon}(x)\mathrm{d}x\\
 & =\underbrace{\int_{\mathcal{K}_{\epsilon}\cap\mathcal{J}_{\epsilon}}h_{\mathcal{M}_{\epsilon},\,\mathcal{S}_{\epsilon}}^{*}(x)(-\mathcal{L}_{\epsilon}(g_{\epsilon}j_{\epsilon}))(x)\mu_{\epsilon}(x)\mathrm{d}x}_{=I_{\epsilon}^{1}}\;+\;\underbrace{\int_{\mathcal{K}_{\epsilon,\,\theta}^{m}\cap\mathcal{J}_{\epsilon}}h_{\mathcal{M}_{\epsilon},\,\mathcal{S}_{\epsilon}}^{*}(x)(-\mathcal{L}_{\epsilon}\varphi_{\theta}(x))(1-j_{\epsilon}(x))g_{\epsilon}(x)\mu_{\epsilon}(x)\mathrm{d}x}_{=I_{\epsilon,\,\theta}^{2}}\\
 & +\underbrace{\int_{\mathcal{K}_{\epsilon,\,\theta}^{s}\cap\mathcal{J}_{\epsilon}}h_{\mathcal{M}_{\epsilon},\,\mathcal{S}_{\epsilon}}^{*}(x)\mathcal{L}_{\epsilon}\varphi_{\theta}(x)j_{\epsilon}(x)g_{\epsilon}(x)\mu_{\epsilon}(x)\mathrm{d}x}_{=I_{\epsilon,\,\theta}^{3}}\; + \;o_{\theta}(1)\;.
\end{align*}
Let us first prove that 
\begin{equation}
I_{\epsilon}^{1}=o_{\epsilon}(1)\alpha_{\epsilon}\;.\label{eq:control integral I_1}
\end{equation}
Notice that by the definition~\eqref{eq:definition j epsilon}, $j_{\epsilon}(x)\in[0,\,1]$
for all $x\in\mathbb{R}^{2d}$. Moreover, there exists a constant
$C_{1}>0$ independent of $\epsilon$ such that 
\[
\sup_{x\in\mathbb{R}^{2d}}|\nabla j_{\epsilon}(x)|\leq\frac{C_{1}}{\sqrt{\epsilon}}\;.
\]
Additionally, by Lemma~\ref{lem:g_epsilon definition}, $g_{\epsilon}(x)\in[0,\,1]$ for all $x\in\mathbb{R}^{2d}$
and there exists a constant $C_{2}>0$ independent of $\epsilon$
such that 
\begin{equation}
\sup_{x\in\mathbb{R}^{2d}}\left(|\nabla g_{\epsilon}(x)|+|\nabla^{2}g_{\epsilon}(x)|\right)\leq\frac{C_{2}}{\delta^{6}}\;.\label{eq:control derivatives g epsilon}
\end{equation}
Besides, for $x\in\mathbb{R}^{2d}$, 
\[
\mathcal{L}_{\epsilon}(g_{\epsilon}j_{\epsilon})(x)=g_{\epsilon}(x)\mathcal{L}_{\epsilon}j_{\epsilon}(x)+j_{\epsilon}(x)\mathcal{L}_{\epsilon}g_{\epsilon}(x)+2\gamma\epsilon\langle\nabla_{p}j_{\epsilon}(x),\,\nabla_{p}g_{\epsilon}(x)\rangle\;.
\]
As a result, since $h_{\mathcal{M}_{\epsilon},\,\mathcal{S}_{\epsilon}}^{*}(x)\in[0,\,1]$
by~\eqref{eq:def h^*}, 
\[
|I_{\epsilon}^{1}|\leq\int_{\mathcal{K}_{\epsilon}\cap\mathcal{J}_{\epsilon}}|\mathcal{L}_{\epsilon}j_{\epsilon}(x)|\mu_{\epsilon}(x)\mathrm{d}x+\int_{\mathcal{K}_{\epsilon}\cap\mathcal{J}_{\epsilon}}|\mathcal{L}_{\epsilon}g_{\epsilon}(x)|\mu_{\epsilon}(x)\mathrm{d}x+2\gamma C_{1}\sqrt{\epsilon}\int_{\mathcal{K}_{\epsilon}\cap\mathcal{J}_{\epsilon}}|\nabla g_{\epsilon}(x)|\mu_{\epsilon}(x)\mathrm{d}x\;.
\]
Proposition~\ref{prop:control Lj_epsilon} ensures that the first
integral in the right-hand side of the inequality above is of order
$o_{\epsilon}(1)\alpha_{\epsilon}$. By Lemma~\ref{lem:g_epsilon definition}, $\nabla g_{\epsilon}(x)=0$
whenever $V(x)\leq U(\sigma)+K^{2}\delta^{2}/2$. Moreover, when $V(x)>U(\sigma)+K^{2}\delta^{2}/2$,
\[
\mathrm{e}^{-V(x)/\epsilon}\leq\mathrm{e}^{-U(\sigma)/\epsilon}\epsilon^{K^{2}/2}\,,
\]
since $\delta=\sqrt{\epsilon\log(1/\epsilon)}$. Thus, using the gradient control~\eqref{eq:control derivatives g epsilon} and taking the constant $K$ large enough and independent of $\epsilon$, we deduce~\eqref{eq:control integral I_1}.

Let us now show that 
\begin{equation}
\lim_{\theta\rightarrow0}I_{\epsilon,\,\theta}^{2}=[1+o_{\epsilon}(1)]\alpha_{\epsilon}\;.\label{eq:control integral I2}
\end{equation}
By definition of $\varphi_{\theta}$ in~\eqref{eq:def varphi theta}
one has that 
\[
I_{\epsilon,\,\theta}^{2}=\frac{1}{Z_{\epsilon}}\int_{\mathcal{K}_{\epsilon,\,\theta}^{m}\cap\mathcal{J}_{\epsilon}}h_{\mathcal{M}_{\epsilon},\;\mathcal{S}_{\epsilon}}^{*}(x)\frac{-x_{d+1}}{\theta}\frac{\mathrm{d}\varphi}{\mathrm{d}t}\left(\frac{x_{1}-K\delta/\sqrt{\lambda_{1}}}{\theta}\right)(1-j_{\epsilon}(x))g_{\epsilon}(x)\mathrm{e}^{-V(x)/\epsilon}\mathrm{d}x_{1}\ldots\mathrm{d}x_{2d}\;.
\]
Let us consider only the integral with respect to the first variable
$x_{1}$ and let us perform the change of variable $t=(x_{1}-K\delta/\sqrt{\lambda_{1}})/\theta$.
Thus, we obtain using the notation~\eqref{eq:notation x tilde},
\begin{align*}
 & \int_{0}^{1}h_{\mathcal{M}_{\epsilon},\,\mathcal{S}_{\epsilon}}^{*}\left(\frac{K\delta}{\sqrt{\lambda_{1}}}+t\theta,\,\widetilde{x}\right)(-\widetilde{x}_{d})\frac{\mathrm{d}\varphi}{\mathrm{d}t}(t)\left(1-j_{\epsilon}\left(\frac{K\delta}{\sqrt{\lambda_{1}}}+t\theta,\,\widetilde{x}\right)\right)g_{\epsilon}\left(\frac{K\delta}{\sqrt{\lambda_{1}}}+t\theta,\,\widetilde{x}\right)\mathrm{e}^{-V\left(\frac{K\delta}{\sqrt{\lambda_{1}}}+t\theta,\,\widetilde{x}\right)/\epsilon}\mathrm{d}t\\
 & =h_{\mathcal{M}_{\epsilon},\,\mathcal{S}_{\epsilon}}^{*}\left(\frac{K\delta}{\sqrt{\lambda_{1}}},\,\widetilde{x}\right)(-\widetilde{x}_{d})\left(1-j_{\epsilon}\left(\frac{K\delta}{\sqrt{\lambda_{1}}},\,\widetilde{x}\right)\right)g_{\epsilon}\left(\frac{K\delta}{\sqrt{\lambda_{1}}},\,\widetilde{x}\right)\mathrm{e}^{-V\left(\frac{K\delta}{\sqrt{\lambda_{1}}},\,\widetilde{x}\right)/\epsilon}\int_{0}^{1}\frac{\mathrm{d}\varphi}{\mathrm{d}t}(t)\mathrm{d}t+o_{\theta}(1)\;.
\end{align*}
Therefore, since 
\[
\int_{0}^{1}\frac{\mathrm{d}\varphi}{\mathrm{d}t}(t)\mathrm{d}t=\varphi(1)-\varphi(0)=1\;,
\]
one has using the notation~\eqref{eq:def Q epsilon}, 
\begin{align*}
 & \lim_{\theta\rightarrow0}I_{\epsilon,\,\theta}^{2}\\
 & =\int_{\mathcal{Q}_{\epsilon}^{+}}h_{\mathcal{M}_{\epsilon},\,\mathcal{S}_{\epsilon}}^{*}\left(\frac{K\delta}{\sqrt{\lambda_{1}}},\,\widetilde{x}\right)(-\widetilde{x}_{d})\left(1-j_{\epsilon}\left(\frac{K\delta}{\sqrt{\lambda_{1}}},\,\widetilde{x}\right)\right)\mathrm{exp}\left(-\frac{V\left(\frac{K\delta}{\sqrt{\lambda_{1}}},\,\widetilde{x}\right)}{\epsilon}\right)\mathrm{d}\widetilde{x}\\
 & +\int_{\mathcal{Q}_{\epsilon}^{+}}h_{\mathcal{M}_{\epsilon},\,\mathcal{S}_{\epsilon}}^{*}\left(\frac{K\delta}{\sqrt{\lambda_{1}}},\,\widetilde{x}\right)(-\widetilde{x}_{d})\left(1-j_{\epsilon}\left(\frac{K\delta}{\sqrt{\lambda_{1}}},\,\widetilde{x}\right)\right)\left(1-g_{\epsilon}\left(\frac{K\delta}{\sqrt{\lambda_{1}}},\,\widetilde{x}\right)\right)\mathrm{exp}\left(-\frac{V\left(\frac{K\delta}{\sqrt{\lambda_{1}}},\,\widetilde{x}\right)}{\epsilon}\right)\mathrm{d}\widetilde{x}\;.
\end{align*}
Using Proposition~\ref{prop:asymptotics boundary integral}, it remains
to show that the second integral above is of order $o_{\epsilon}(1)\alpha_{\epsilon}$.
By definition of $g_{\epsilon}$ in Lemma~\ref{lem:g_epsilon definition},
if $1-g_{\epsilon}(x)\neq0$ then $V(x)\geq U(\sigma)+K^{2}\delta^{2}/2$.
Therefore, taking again $K$ large enough ensures that the second integral is of order $o_{\epsilon}(1)\alpha_{\epsilon}$ which yields~\eqref{eq:control integral I2}.

Finally, it remains to show that 
\begin{equation}
\lim_{\theta\rightarrow0}I_{\epsilon,\,\theta}^{3}=o_{\epsilon}(1)\alpha_{\epsilon}\;.
\end{equation}
Similarly to the computation of $I_{\epsilon,\,\theta}^{2}$, one has that 
\begin{align*}
\lim_{\theta\rightarrow0}I_{\epsilon,\,\theta}^{3} & =\int_{\mathcal{Q}_{\epsilon}^{-}}h_{\mathcal{M}_{\epsilon},\,\mathcal{S}_{\epsilon}}^{*}\left(-\frac{K\delta}{\sqrt{\lambda_{1}}},\,\widetilde{x}\right)\widetilde{x}_{d}\,j_{\epsilon}\left(-\frac{K\delta}{\sqrt{\lambda_{1}}},\,\widetilde{x}\right)g_{\epsilon}\left(-\frac{K\delta}{\sqrt{\lambda_{1}}},\,\widetilde{x}\right)\mathrm{exp}\left(-\frac{V\left(-\frac{K\delta}{\sqrt{\lambda_{1}}},\,\widetilde{x}\right)}{\epsilon}\right)\mathrm{d}\tilde{x}\\
 & =\int_{\mathcal{Q}_{\epsilon}^{-}}h_{\mathcal{M}_{\epsilon},\,\mathcal{S}_{\epsilon}}^{*}\left(-\frac{K\delta}{\sqrt{\lambda_{1}}},\,\widetilde{x}\right)\widetilde{x}_{d}\,j_{\epsilon}\left(-\frac{K\delta}{\sqrt{\lambda_{1}}},\,\widetilde{x}\right)\mathrm{exp}\left(-\frac{V\left(-\frac{K\delta}{\sqrt{\lambda_{1}}},\,\widetilde{x}\right)}{\epsilon}\right)\mathrm{d}\widetilde{x}+o_{\epsilon}(1)\alpha_{\epsilon}\;.
\end{align*}
Proposition~\ref{prop:asymptotics boundary integral} then concludes
the proof of Theorem~\ref{thm:result prop 3.7}. 
\end{proof}
The rest of this section is devoted to the proof of Propositions~\ref{prop:control Lj_epsilon}
and~\ref{prop:asymptotics boundary integral}. Let us start with
Proposition~\ref{prop:control Lj_epsilon} which relies on the following
lemma. 
\begin{lem}
\label{lem:spectrum H + vv} The matrix $\mathbb{H}_{V}+\frac{\mu}{\gamma}v\otimes v^{\dagger}$
admits one zero eigenvalue of dimension 1 and all other eigenvalues
are positive. 
\end{lem}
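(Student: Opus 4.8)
The statement concerns the symmetric matrix $\mathbb{H}_{V}+\tfrac{\mu}{\gamma}\,v\otimes v^{\dagger}$, which is a rank-one perturbation of $\mathbb{H}_{V}$. Recall from the spectral discussion that $\mathbb{H}_{V}$ has exactly one negative eigenvalue $-\lambda_{1}$ (with unit eigenvector $e_{1}$) and $2d-1$ positive eigenvalues $\lambda_{2},\dots,\lambda_{2d}$ (with eigenvectors $e_{2},\dots,e_{2d}$). Writing $v=\sum_{i=1}^{2d} v_{i} e_{i}$ with $v_{i}=\langle v,\,e_{i}\rangle$, we must show that adding the positive-semidefinite rank-one term $\tfrac{\mu}{\gamma} v\otimes v^{\dagger}$ turns the single negative eigenvalue into a zero eigenvalue and keeps everything else strictly positive.

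\textbf{First step: reduce to a scalar secular equation.} Since $\mathbb{H}_{V}+\tfrac{\mu}{\gamma} v\otimes v^{\dagger}$ is symmetric, it is diagonalizable with real eigenvalues. For $\xi\notin\{-\lambda_{1},\lambda_{2},\dots,\lambda_{2d}\}$, the determinant identity $\det(\mathbb{H}_{V}+\tfrac{\mu}{\gamma} v\otimes v^{\dagger}-\xi\,\mathbb{I})=\det(\mathbb{H}_{V}-\xi\,\mathbb{I})\bigl(1+\tfrac{\mu}{\gamma}\langle v,\,(\mathbb{H}_{V}-\xi\,\mathbb{I})^{-1}v\rangle\bigr)$ shows that the eigenvalues of the perturbed matrix not already among the $\lambda_{i}$ are the roots of the secular function
\[
\Phi(\xi):=1+\frac{\mu}{\gamma}\left(-\frac{v_{1}^{2}}{-\lambda_{1}-\xi}+\sum_{i=2}^{2d}\frac{v_{i}^{2}}{\lambda_{i}-\xi}\right)\;.
\]
By Lemma~\ref{lem:matrix equality}, $-\tfrac{v_{1}^{2}}{\lambda_{1}}+\sum_{i=2}^{2d}\tfrac{v_{i}^{2}}{\lambda_{i}}=-\tfrac{\gamma}{\mu}$, which is precisely the statement that $\Phi(0)=0$; hence $\xi=0$ is an eigenvalue (note $0\notin\{-\lambda_{1},\lambda_{2},\dots,\lambda_{2d}\}$, and that $v_{1}\neq0$ must be checked separately, which follows since otherwise the rank-one term would leave $-\lambda_{1}$ as a negative eigenvalue, contradicting what we prove next).

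\textbf{Second step: locate the remaining eigenvalues by interlacing.} Standard rank-one perturbation theory (Weyl interlacing for $A\preceq A+$ positive semidefinite rank one) gives that if $\nu_{1}\le\dots\le\nu_{2d}$ are the eigenvalues of $\mathbb{H}_{V}$ and $\tilde\nu_{1}\le\dots\le\tilde\nu_{2d}$ those of the perturbed matrix, then $\nu_{k}\le\tilde\nu_{k}\le\nu_{k+1}$ for $k<2d$ and $\nu_{2d}\le\tilde\nu_{2d}$. Thus $\tilde\nu_{1}\in[-\lambda_{1},\lambda_{2}]$ and $\tilde\nu_{k}>0$ for $k\ge 2$ (since $\tilde\nu_{k}\ge\nu_{k}=\lambda_{k}>0$ for $k\ge 3$, and $\tilde\nu_{2}\ge\nu_{2}=\lambda_{2}>0$). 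Since we have already exhibited $0$ as an eigenvalue and it is the only candidate in $[-\lambda_{1},\lambda_{2}]$ for the smallest one—indeed $\Phi$ is strictly increasing on the interval $(-\lambda_{1},\lambda_{2})$ because each summand $-v_{1}^{2}/(-\lambda_{1}-\xi)$ and $v_{i}^{2}/(\lambda_{i}-\xi)$ has positive derivative there, so $\Phi$ has at most one root in that interval—we conclude $\tilde\nu_{1}=0$ and all other $\tilde\nu_{k}>0$. Finally, multiplicity one of the zero eigenvalue follows from the simplicity of the secular root together with the fact that $0$ is not among the $\lambda_{i}$, so no eigenvalue of $\mathbb{H}_{V}$ ``survives'' at $0$.

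\textbf{Main obstacle.} The only genuinely delicate point is verifying $v_{1}\neq 0$ and ruling out degenerate cases where $v$ is orthogonal to some $e_{i}$, which would make the corresponding pole in $\Phi$ disappear and could in principle allow a surviving eigenvalue at $\lambda_{i}$ or $-\lambda_{1}$. For the negative eigenvalue this is handled by the argument above (if $v_{1}=0$, the perturbation does not touch the $e_{1}$-direction and $-\lambda_{1}$ remains an eigenvalue, contradicting the interlacing-plus-secular conclusion, or more directly contradicting Lemma~\ref{lem:matrix equality} since $\Phi(0)=0$ forces $v_{1}^{2}/\lambda_{1}>0$). For the positive eigenvalues, even if some $v_{i}=0$ the corresponding $\lambda_{i}$ simply remains a (positive) eigenvalue, which is harmless. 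So the argument goes through, but the bookkeeping of which poles are active should be stated carefully rather than glossed over.
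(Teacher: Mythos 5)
Your approach — write the characteristic polynomial of the rank-one update via the secular function and then invoke Weyl interlacing — is sound and gives a self-contained proof where the paper simply cites \cite[Lemma 8.2]{JS}. Two small issues to fix.

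\emph{Sign error in $\Phi$.} Since $\mathbb{H}_{V}e_{1}=-\lambda_{1}e_{1}$, the $e_{1}$-component of $\langle v,(\mathbb{H}_{V}-\xi\mathbb{I})^{-1}v\rangle$ is $\tfrac{v_{1}^{2}}{-\lambda_{1}-\xi}$, not $-\tfrac{v_{1}^{2}}{-\lambda_{1}-\xi}$ as you wrote. With your displayed formula one gets $\Phi(0)=1+\tfrac{\mu}{\gamma}\bigl(\tfrac{v_{1}^{2}}{\lambda_{1}}+\sum_{i\ge2}\tfrac{v_{i}^{2}}{\lambda_{i}}\bigr)>0$, so $\Phi(0)\neq0$, and the first summand has \emph{negative} derivative, contradicting the monotonicity you assert. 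With the correct sign $\Phi(0)=1+\tfrac{\mu}{\gamma}\langle v,\mathbb{H}_{V}^{-1}v\rangle=1+\tfrac{\mu}{\gamma}\cdot(-\tfrac{\gamma}{\mu})=0$ by Lemma~\ref{lem:matrix equality}, and each summand $\tfrac{v_{i}^{2}}{\mu_{i}-\xi}$ (with $\mu_{1}=-\lambda_{1}$, $\mu_{i}=\lambda_{i}$ for $i\ge2$) is strictly increasing away from its pole, so the monotonicity holds. Everything you wrote is consistent with the corrected sign, so this is only a typo, but it must be fixed.

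\emph{Minor points.} The right endpoint of your monotonicity interval should be $\min_{i\ge2}\lambda_{i}$ rather than $\lambda_{2}$, since the paper does not order the positive eigenvalues. Also, once you know $\Phi(0)=0$ (so $0$ is an eigenvalue, as $0$ is not in the spectrum of $\mathbb{H}_{V}$) and $\tilde\nu_{k}\ge\nu_{k}>0$ for $k\ge2$ by interlacing, the conclusion $\tilde\nu_{1}=0$ with multiplicity one is already forced; the monotonicity of $\Phi$ and the separate verification $v_{1}\neq0$ are not logically needed for the statement, though $v_{1}\neq0$ does follow immediately from Lemma~\ref{lem:matrix equality} as you observe.
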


\begin{proof}
The proof is exactly similar to the proof of~\cite[Lemma 8.2]{JS}
and mostly relies on Lemma~\ref{lem:matrix equality}. 
\end{proof}
Let us move on now to the proof of Proposition~\ref{prop:control Lj_epsilon} which is similar to the proof of~\cite[Proposition 8.5]{JS}. 
\begin{proof}[Proof of Proposition~\ref{prop:control Lj_epsilon}]
By~\eqref{eq:j epsilon harmonic}, for any $x\in\mathcal{K}_{\epsilon}$,
\begin{align*}
\mathcal{L}_{\epsilon}j_{\epsilon}(x) & =-\sqrt{\frac{\mu}{2\pi\gamma\epsilon}}\exp\left(-\frac{\mu}{2\gamma\epsilon}\langle x,\,v\rangle^{2}\right)\langle\nabla U(q)-\mathbb{H}_{U}q,\,v_p\rangle\\
 & =\frac{\delta^{2}}{\sqrt{\epsilon}}\exp\left(-\frac{\mu}{2\gamma\epsilon}\langle x,\,v\rangle^{2}\right)O_{\epsilon}(1)\;.
\end{align*}
Therefore, 
\begin{align*}
\int_{\mathcal{K}_{\epsilon}\cap\mathcal{J}_{\epsilon}}|\mathcal{L}_{\epsilon}j_{\epsilon}(x)|\mu_{\epsilon}(\mathrm{d}x) & =\frac{1}{Z_{\epsilon}}\frac{\delta^{2}}{\sqrt{\epsilon}}\int_{\mathcal{K}_{\epsilon}\cap\mathcal{J}_{\epsilon}}\exp\left(-\frac{\mu}{2\gamma\epsilon}\langle x,\,v\rangle^{2}\right)\mathrm{e}^{-V(x)/\epsilon}\mathrm{d}x\,O_{\epsilon}(1)\\
 & =\frac{1}{Z_{\epsilon}}\frac{\delta^{2}}{\sqrt{\epsilon}}\,\mathrm{e}^{-U(\sigma)/\epsilon}\int_{\mathcal{K}_{\epsilon}\cap\mathcal{J}_{\epsilon}}\mathrm{exp}\left(-\frac{1}{2\epsilon}\left\langle\left(\mathbb{H}_{V}+\frac{\mu}{\gamma}v\otimes v^{\dagger}\right)x,\;x\right\rangle \right)\mathrm{d}x\,O_{\epsilon}(1)\;.
\end{align*}
By Lemma~\ref{lem:spectrum H + vv}, let $\rho_{1}=0$ and $\rho_{2},\ldots,\rho_{2d}>0$
be the eigenvalues of $\mathbb{H}_{V}+\frac{\mu}{\gamma}v\otimes v^{\dagger}$.
Therefore, there exists $M>0$ independent of $\epsilon$
such that 
\begin{align*}
\int_{\mathcal{K}_{\epsilon}\cap\mathcal{J}_{\epsilon}}|\mathcal{L}_{\epsilon}j_{\epsilon}(x)|\mu_{\epsilon}(\mathrm{d}x) & \leq\frac{1}{Z_{\epsilon}}\frac{\delta^{2}}{\sqrt{\epsilon}}\mathrm{e}^{-U(\sigma)/\epsilon}\int_{\mathbb{R}^{2d-1}}\int_{-M\delta}^{M\delta}\mathrm{exp}\left(-\frac{1}{2\epsilon}\sum_{k=2}^{2d}\rho_{k}y_{k}^{2}\right)\mathrm{d}y_{1}\ldots\mathrm{d}y_{2d}\,O_{\epsilon}(1)\\
 & =\frac{1}{Z_{\epsilon}}\frac{\delta^{3}}{\epsilon}\mathrm{e}^{-U(\sigma)/\epsilon}\epsilon^{d}O_{\epsilon}(1)=o_{\epsilon}(1)\alpha_{\epsilon}\;,
\end{align*}
since $\delta^3/\epsilon=o_{\epsilon}(1)$. 
\end{proof}
In order to prove Proposition~\ref{prop:asymptotics boundary integral}
we need the following lemmas which follow the same ideas as~\cite[Lemma 8.10 and Proposition 8.6]{JS}
\begin{lem}
\label{lem:decomp boundary K+} There exists $a>0$ independent of
$\epsilon$ such that
\begin{equation}
x\in\partial\mathcal{K}_{\epsilon}^{+}\Longrightarrow\langle x,\,v\rangle\geq aK\delta\quad\text{ or }\quad V(x)\geq U(\sigma)+aK^{2}\delta^{2}\;.\label{eq:boundary K+}
\end{equation}
Moreover, 
\begin{equation}
x\in\partial\mathcal{K}_{\epsilon}^{-}\Longrightarrow\langle x,\,v\rangle\leq-aK\delta\quad\text{ or }\quad V(x)\geq U(\sigma)+aK^{2}\delta^{2}\;.\label{eq:boundary K-}
\end{equation}
\end{lem}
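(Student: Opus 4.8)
The plan is to work in the eigenbasis $(e_1,\dots,e_{2d})$ of $\mathbb{H}_V$ and use the second-order Taylor expansion of $V$ around the saddle point. Recall that $x\in\partial\mathcal{K}_\epsilon^+$ means $x_1 = K\delta/\sqrt{\lambda_1}$ and $|\widetilde x_i|\leq 2K\delta/\sqrt{\lambda_i}$ for $i=2,\dots,2d$; in particular $|x|=O(\delta)$, so the cubic remainder in
\[
V(x) = U(\sigma) - \tfrac12\lambda_1 x_1^2 + \tfrac12\sum_{i=2}^{2d}\lambda_i x_i^2 + O(|x|^3)
\]
is $O(\delta^3)=o(\delta^2)$. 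First I would dispose of the case where the ``transverse energy'' $\sum_{i=2}^{2d}\lambda_i x_i^2$ is not small: fix a small threshold $\kappa>0$ (independent of $\epsilon$) and observe that if $\sum_{i=2}^{2d}\lambda_i x_i^2 \geq (\lambda_1 + \kappa)\,x_1^2 = (\lambda_1+\kappa)K^2\delta^2/\lambda_1$, then $V(x) \geq U(\sigma) + \tfrac{\kappa}{2}\tfrac{K^2\delta^2}{\lambda_1} + O(\delta^3) \geq U(\sigma) + aK^2\delta^2$ for $\epsilon$ small, with $a = \kappa/(4\lambda_1)$, which is the second alternative in~\eqref{eq:boundary K+}.

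Second, in the complementary regime $\sum_{i=2}^{2d}\lambda_i x_i^2 \leq (\lambda_1+\kappa)K^2\delta^2/\lambda_1$, I would bound $\langle x, v\rangle$ from below. Here the key input is Lemma~\ref{lem:eigenvalue mu}, which gives that $-\mu$ is the unique negative eigenvalue of $\mathbb{H}_V\mathbb{M}$, together with the sign conventions fixed in the surrounding text: as in the analogous argument in Section~\ref{sec:Esitmate of dominator} (cf. the discussion around $\partial\mathcal K_\epsilon^\pm$ and the sign of $e_1$), on $\partial\mathcal{K}_\epsilon^+$ the first coordinate $x_1 = K\delta/\sqrt{\lambda_1}$ contributes a term of definite sign to $\langle x,v\rangle$ of size comparable to $K\delta$, namely $v_1 x_1$ with $v_1=\langle v,e_1\rangle\neq 0$ (one checks $v_1\neq 0$ because $v$ has a nonzero component along the unstable direction, since $\mathbb H_U v_p = -\mu(\mu+\gamma)v_p = -\lambda_1 v_p$ forces $v_p$ to lie in the $-\lambda_1$ eigenspace). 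Writing $\langle x,v\rangle = v_1 x_1 + \sum_{i=2}^{2d} v_i x_i$, the transverse part is controlled by Cauchy--Schwarz: $\big|\sum_{i\geq2} v_i x_i\big| \leq \big(\sum_{i\geq2} v_i^2/\lambda_i\big)^{1/2}\big(\sum_{i\geq2}\lambda_i x_i^2\big)^{1/2} = O\big(\sqrt{\kappa}\,K\delta\big)$ once $\kappa$ is small. Hence $\langle x,v\rangle \geq |v_1| K\delta/\sqrt{\lambda_1} - C\sqrt\kappa K\delta \geq aK\delta$ for $\kappa$ chosen small enough depending only on $v$ and $\lambda_1$ — this is the first alternative. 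The same computation with $x_1 = -K\delta/\sqrt{\lambda_1}$ on $\partial\mathcal{K}_\epsilon^-$ gives~\eqref{eq:boundary K-} verbatim, since the transverse estimates are identical and only the sign of $v_1 x_1$ flips.

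The main obstacle, and the only genuinely delicate point, is the bookkeeping of signs: one must pin down the orientation of $e_1$ and of $v$ so that $v_1 x_1 > 0$ (rather than $<0$) precisely on $\partial\mathcal{K}_\epsilon^+$, consistently with the normalization $|v_p|=1$, $v_q=(\mu+\gamma)v_p$ from Lemma~\ref{lem:eigenvalue mu} and with the convention on $e_1$ already used in Lemma~\ref{lem:disjoint}. Once the orientation is fixed, everything else is a two-line Taylor estimate plus Cauchy--Schwarz, with $a$ and the admissible range of $K$ emerging from the single free parameter $\kappa$; no estimate on the capacity or on $h^*_{\mathcal{M}_\epsilon,\mathcal{S}_\epsilon}$ is needed, since the statement is purely geometric.
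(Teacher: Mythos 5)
Your overall strategy—Taylor expansion of $V$ at the saddle plus a two-case split on the size of the transverse quadratic form $\sum_{i\geq2}\lambda_ix_i^2$—is the right one and is the same as in~\cite[Lemma~8.10]{JS}, which the paper invokes. The first case (large transverse energy gives $V(x)\geq U(\sigma)+aK^2\delta^2$) is fine.

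There is however a genuine gap in the second case, and it is exactly the point the paper flags as the heart of the proof. You claim that under $\sum_{i\geq2}\lambda_ix_i^2\leq(\lambda_1+\kappa)K^2\delta^2/\lambda_1$, Cauchy--Schwarz gives $\big|\sum_{i\geq2}v_ix_i\big|=O(\sqrt{\kappa}\,K\delta)$. That is false: the bound on the transverse energy is of order $K^2\delta^2$, not $\kappa K^2\delta^2$, so Cauchy--Schwarz only yields
\[
\Big|\sum_{i\geq2}v_ix_i\Big|\ \leq\ \Big(\sum_{i\geq2}\frac{v_i^2}{\lambda_i}\Big)^{1/2}\,\Big(\sum_{i\geq2}\lambda_ix_i^2\Big)^{1/2}\ \leq\ \Big(\sum_{i\geq2}\frac{v_i^2}{\lambda_i}\Big)^{1/2}\,\sqrt{\tfrac{\lambda_1+\kappa}{\lambda_1}}\,K\delta\ =\ O(K\delta)\,,
\]
which is of the same order as $|v_1x_1|=|v_1|K\delta/\sqrt{\lambda_1}$. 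With only $v_1\neq0$ this does not imply $\langle x,v\rangle\geq aK\delta$; you need the constant in front of the transverse term to be \emph{strictly smaller} than $|v_1|/\sqrt{\lambda_1}$. That is precisely what Lemma~\ref{lem:matrix equality} provides: $\sum_{i\geq2}v_i^2/\lambda_i=v_1^2/\lambda_1-\gamma/\mu<v_1^2/\lambda_1$. Plugging this in and taking $\kappa$ small (by continuity) closes the argument; without it, the sign of $\langle x,v\rangle$ is not controlled. So you should replace the erroneous $O(\sqrt{\kappa}\,K\delta)$ step by the Cauchy--Schwarz bound above combined with Lemma~\ref{lem:matrix equality}, and then choose $\kappa$ (hence $a$) accordingly. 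The sign bookkeeping and the remark that~\eqref{eq:boundary K-} follows by flipping $x_1\mapsto-x_1$ are otherwise correct.
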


\begin{lem}
\label{lem:h close to 1 or 0 boundary} One has that 
\begin{equation}
\int_{\partial\mathcal{K}_{\epsilon}^{+}\cap\mathcal{J}_{\epsilon}}\left(1-h_{\mathcal{M}_{\epsilon},\,\mathcal{S}_{\epsilon}}^{*}\left(x\right)\right)\left(1-j_{\epsilon}\left(x\right)\right)\mu_{\epsilon}(\mathrm{d}x)=o_{\epsilon}(1)\alpha_{\epsilon}\;.\label{eq:first integral boundary}
\end{equation}
Also, 
\begin{equation}
\int_{\partial\mathcal{K}_{\epsilon}^{-}\cap\mathcal{J}_{\epsilon}}h_{\mathcal{M}_{\epsilon},\,\mathcal{S}_{\epsilon}}^{*}\left(x\right)j_{\epsilon}\left(x\right)\mu_{\epsilon}(\mathrm{d}x)=o_{\epsilon}(1)\alpha_{\epsilon}\;.\label{eq:second integral boundary}
\end{equation}
\end{lem}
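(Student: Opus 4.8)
The plan is to prove~\eqref{eq:first integral boundary}; the estimate~\eqref{eq:second integral boundary} then follows by the $m\leftrightarrow s$ symmetry of the construction (flipping $e_{1}\mapsto-e_{1}$ exchanges $\partial\mathcal{K}_{\epsilon}^{+}\leftrightarrow\partial\mathcal{K}_{\epsilon}^{-}$ and $\mathcal{W}_{m}\leftrightarrow\mathcal{W}_{s}$, sends $v$ to $-v$ hence $j_{\epsilon}$ to $1-j_{\epsilon}$, and, using $h_{\mathcal{M}_{\epsilon},\mathcal{S}_{\epsilon}}+h_{\mathcal{S}_{\epsilon},\mathcal{M}_{\epsilon}}\equiv1$ which holds by positive recurrence, sends $h_{\mathcal{M}_{\epsilon},\mathcal{S}_{\epsilon}}^{*}$ to $1-h_{\mathcal{M}_{\epsilon},\mathcal{S}_{\epsilon}}^{*}$). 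For~\eqref{eq:first integral boundary}, I would first invoke Lemma~\ref{lem:decomp boundary K+} to split $\partial\mathcal{K}_{\epsilon}^{+}\cap\mathcal{J}_{\epsilon}=\mathcal{B}\cup\mathcal{A}$, with $\mathcal{B}:=\{x\in\partial\mathcal{K}_{\epsilon}^{+}\cap\mathcal{J}_{\epsilon}:V(x)\ge U(\sigma)+aK^{2}\delta^{2}\}$ and $\mathcal{A}:=\{x\in\partial\mathcal{K}_{\epsilon}^{+}\cap\mathcal{J}_{\epsilon}:\langle x,v\rangle\ge aK\delta\}$, and further write $\mathcal{A}=\mathcal{A}_{1}\cup\mathcal{A}_{2}$ with $\mathcal{A}_{1}:=\mathcal{A}\cap\{V<U(\sigma)\}$ and $\mathcal{A}_{2}:=\mathcal{A}\cap\{V\ge U(\sigma)\}$. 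Since $\delta^{2}/\epsilon=\log(1/\epsilon)$ and $\partial\mathcal{K}_{\epsilon}^{+}$ has $(2d-1)$-volume $O(\delta^{2d-1})$, on $\mathcal{B}$ one bounds $(1-h_{\mathcal{M}_{\epsilon},\mathcal{S}_{\epsilon}}^{*})(1-j_{\epsilon})\le1$ and $\mathrm{e}^{-V(x)/\epsilon}\le\mathrm{e}^{-U(\sigma)/\epsilon}\epsilon^{aK^{2}}$ to get a contribution $O\bigl(Z_{\epsilon}^{-1}\mathrm{e}^{-U(\sigma)/\epsilon}\epsilon^{aK^{2}}\delta^{2d-1}\bigr)=o_{\epsilon}(1)\alpha_{\epsilon}$ once $K$ is fixed large (recall from~\eqref{eq:def alpha epsilon} that $\alpha_{\epsilon}$ is of order $Z_{\epsilon}^{-1}\epsilon^{d}\mathrm{e}^{-U(\sigma)/\epsilon}$); on $\mathcal{A}_{2}$ one uses instead the Gaussian tail $1-j_{\epsilon}(x)\le\exp\bigl(-\frac{\mu}{2\gamma\epsilon}\langle x,v\rangle^{2}\bigr)\le\epsilon^{\mu a^{2}K^{2}/(2\gamma)}$ together with $\mathrm{e}^{-V(x)/\epsilon}\le\mathrm{e}^{-U(\sigma)/\epsilon}$ and the same volume bound to reach the same conclusion for $K$ large.

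The core is the estimate on $\mathcal{A}_{1}$. For $x=(q,p)\in\mathcal{A}_{1}$ one has $|x|=O(K\delta)\to0$, $x_{1}=\langle x,e_{1}\rangle>0$ and $V(x)<U(\sigma)$; by the Morse structure of $V$ at $(\sigma,0)$ the sublevel set $\{V<U(\sigma)\}$ has, near the saddle, exactly two components distinguished by $\mathrm{sign}(x_{1})$, the one with $x_{1}>0$ lying in $\mathcal{W}_{m}$. Since the reflection $p\mapsto-p$ leaves $q$ (hence $x_{1}$) and $V$ unchanged, both $(q,p)$ and $(q,-p)$ lie in $\mathcal{W}_{m}$, so Proposition~\ref{prop:ineq h_AB} gives $1-h_{\mathcal{M}_{\epsilon},\mathcal{S}_{\epsilon}}^{*}(x)=1-h_{\mathcal{M}_{\epsilon},\mathcal{S}_{\epsilon}}(q,-p)\le C\epsilon^{-\beta}\exp\bigl(\frac{V(x)-U(\sigma)}{\epsilon}\bigr)$. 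Bounding $1-h_{\mathcal{M}_{\epsilon},\mathcal{S}_{\epsilon}}^{*}\le(1-h_{\mathcal{M}_{\epsilon},\mathcal{S}_{\epsilon}}^{*})^{1/2}$ and combining with the Gaussian tail of $1-j_{\epsilon}$,
\[
\mathrm{e}^{-V(x)/\epsilon}\bigl(1-h_{\mathcal{M}_{\epsilon},\mathcal{S}_{\epsilon}}^{*}(x)\bigr)\bigl(1-j_{\epsilon}(x)\bigr)\le C\epsilon^{-\beta/2}\,\mathrm{e}^{-U(\sigma)/(2\epsilon)}\,\mathrm{e}^{-V(x)/(2\epsilon)}\exp\!\Bigl(-\frac{\mu}{2\gamma\epsilon}\langle x,v\rangle^{2}\Bigr).
\]
I would then parametrise $\partial\mathcal{K}_{\epsilon}^{+}$ by the eigencoordinates $(x_{2},\dots,x_{2d})$, expand $V(x)=U(\sigma)-\frac12 K^{2}\delta^{2}+\frac12\sum_{i\ge2}\lambda_{i}x_{i}^{2}+O(\delta^{3})$ (using $\lambda_{1}x_{1}^{2}=K^{2}\delta^{2}$ on $\partial\mathcal{K}_{\epsilon}^{+}$), and note that, after a harmless choice of the $p$-block eigenbasis of $\mathbb{H}_{V}$, one has $\langle x,v\rangle=bK\delta+x_{d+1}$ with $b=(\mu+\gamma)/\sqrt{\lambda_{1}}=\sqrt{\lambda_{1}}/\mu$, so that $\mu b^{2}=\mu+\gamma$ by Lemma~\ref{lem:eigenvalue mu}. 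Completing the square in $x_{d+1}$ in the exponents $-\frac{1}{4\epsilon}x_{d+1}^{2}$ (from $\mathrm{e}^{-V/(2\epsilon)}$, since $\lambda_{d+1}=1$) and $-\frac{\mu}{2\gamma\epsilon}(bK\delta+x_{d+1})^{2}$, this identity forces the $x_{d+1}$-free part of the resulting quadratic to equal $\frac{\mu+\gamma}{\gamma+2\mu}\cdot\frac{K^{2}\delta^{2}}{2\epsilon}$, which strictly exceeds the $+\frac{K^{2}\delta^{2}}{4\epsilon}$ produced by the unstable direction; the two combine into a net decay $\mathrm{e}^{-\gamma K^{2}\delta^{2}/(4(\gamma+2\mu)\epsilon)}=\epsilon^{\gamma K^{2}/(4(\gamma+2\mu))}$. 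Since $\delta^{3}/\epsilon=\sqrt{\epsilon}(\log(1/\epsilon))^{3/2}\to0$ the Taylor remainder contributes $1+o_{\epsilon}(1)$, and integrating the remaining centred Gaussian factors over $\mathcal{K}_{\epsilon}$ gives $O(\epsilon^{(2d-1)/2})$; hence the $\mathcal{A}_{1}$-contribution is $O\bigl(Z_{\epsilon}^{-1}\epsilon^{-\beta/2}\mathrm{e}^{-U(\sigma)/\epsilon}\epsilon^{\gamma K^{2}/(4(\gamma+2\mu))+(2d-1)/2}\bigr)=o_{\epsilon}(1)\alpha_{\epsilon}$ once $K$ is fixed large enough that $\gamma K^{2}/(4(\gamma+2\mu))>(1+\beta)/2$. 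Summing the three contributions yields~\eqref{eq:first integral boundary}.

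The hard part is the bookkeeping of the second paragraph: tracking the exact completion of the square and checking that the cancellation $\mu b^{2}=\mu+\gamma$ leaves a strictly positive power of $\epsilon$, while making all the error terms — the cubic Taylor remainder of $V$ on $\mathcal{K}_{\epsilon}$, the truncation of the Gaussian integrals to the box~\eqref{eq:def box K}, and the $\epsilon^{-\beta}$ prefactor inherited from Proposition~\ref{prop:ineq h_AB} — uniformly negligible after dividing by $\alpha_{\epsilon}$ (which also determines how large $K$ must be taken, consistently with the rest of the section). A subsidiary but necessary ingredient is the local-geometry fact that the $p$-reflection of $\partial\mathcal{K}_{\epsilon}^{+}\cap\{V<U(\sigma)\}$ sits inside $\mathcal{W}_{m}$, which is what licenses the use of Proposition~\ref{prop:ineq h_AB} on $\mathcal{A}_{1}$.
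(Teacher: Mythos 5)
Your proof is correct, but on the core estimate it takes a noticeably more laborious route than the paper, and the extra work is avoidable. Your treatment of $\mathcal{B}$ is exactly the paper's. On $\mathcal{A}$, the paper does not subdivide into $\mathcal{A}_1\cup\mathcal{A}_2$: it observes that the bound from Proposition~\ref{prop:ineq h_AB}, namely $1-h_{\mathcal{M}_{\epsilon},\,\mathcal{S}_{\epsilon}}^{*}(x)\le C\epsilon^{-\beta}\mathrm{e}^{(V(x)-U(\sigma))/\epsilon}$, remains (trivially) valid also when $V(x)\ge U(\sigma)$, because the right-hand side is then $\ge C\epsilon^{-\beta}\ge 1\ge 1-h^{*}$. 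With the full (un-halved) exponent one has
\[
\bigl(1-h^{*}_{\mathcal{M}_{\epsilon},\,\mathcal{S}_{\epsilon}}(x)\bigr)\mathrm{e}^{-V(x)/\epsilon}\le C\epsilon^{-\beta}\mathrm{e}^{-U(\sigma)/\epsilon}\,,
\]
so the potential dependence cancels outright, and the remaining polynomially large prefactor $\epsilon^{-\beta}$ is beaten by the Gaussian tail $1-j_{\epsilon}(x)\le\epsilon^{a^{2}K^{2}\mu/(2\gamma)}$ available on all of $C_{\epsilon}^{1}$, upon choosing $K$ large; one then just integrates against the bounded surface measure of $\partial\mathcal{K}_{\epsilon}^{+}$. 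Your square-root trick $(1-h^{*})\le(1-h^{*})^{1/2}$ deliberately keeps an $\mathrm{e}^{-V/(2\epsilon)}$ around, which then forces the full completion-of-square analysis of the Gaussian in $x_{d+1}$ and the cancellation $\mu b^{2}=\mu+\gamma$; all of this is correct (I checked the arithmetic, and the net decay $\epsilon^{\gamma K^{2}/(4(\gamma+2\mu))}$ is right), but it is a long detour buying nothing, and it relies on the specific structure of $v$ (supported on $e_{1}$, $e_{d+1}$ after a choice of $p$-basis) which the paper never has to invoke here.

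Two smaller remarks. First, your explicit observation that the $p$-reflection $(q,p)\mapsto(q,-p)$ preserves both $V$ and $x_{1}$ (since $e_{1}$ has no $p$-component), hence preserves $\mathcal{W}_{m}$, is a genuine precision that the paper glosses over; it is needed to apply Proposition~\ref{prop:ineq h_AB} to the argument $(q,-p)$ appearing in $h^{*}$. Second, for~\eqref{eq:second integral boundary} the paper simply says the proof is ``exactly similar'', which is what your symmetry discussion amounts to; just be aware that your parenthetical ``flipping $e_{1}\mapsto-e_{1}$'' is not a literal symmetry of $V$ (the cubic Taylor remainder has no reason to be even in $x_{1}$, and $\mathcal{M}_{\epsilon},\mathcal{S}_{\epsilon}$ are not exchanged), so this should be read as a formal symmetry of the proof structure — swapping $\partial\mathcal{K}_{\epsilon}^{+}\leftrightarrow\partial\mathcal{K}_{\epsilon}^{-}$, $\mathcal{W}_{m}\leftrightarrow\mathcal{W}_{s}$, the first and second bullets of Proposition~\ref{prop:ineq h_AB}, and $j_{\epsilon}\leftrightarrow1-j_{\epsilon}$ via Lemma~\ref{lem:decomp boundary K+}'s~\eqref{eq:boundary K-} — rather than an invariance of the data.
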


\begin{proof}[Proof of Lemma~\ref{lem:decomp boundary K+}]
The proof of~\eqref{eq:boundary K+} is identical to the proof done
in~\cite[Lemma 8.10]{JS} and mostly relies on the identity given
in Lemma~\ref{lem:matrix equality}. Additionally, the proof of~\eqref{eq:boundary K-}
can be deduced from~\eqref{eq:boundary K+} by changing the sign
of $x_{1}$ and noticing that the asymptotic expansion of $V(x)$
around the saddle point satisfies 
\[
V(x)=U(\sigma)-\frac{1}{2}\lambda_{1}|x_{1}|^{2}+\frac{1}{2}\sum_{i=2}^{2d}\lambda_{i}|x_{i}|^{2}+O(|x|^{3})\;,
\]
which does not depend on the sign of $x_{1}$ at the highest order. 
\end{proof}
\begin{proof}[Proof of Lemma~\ref{lem:h close to 1 or 0 boundary}]
Let us first prove~\eqref{eq:first integral boundary}. In order
to do that, let us define the following sets: 
\[
C_{\epsilon}^{1}=\{x\in\mathbb{R}^{2d}:\langle x,\,v\rangle\geq aK\delta\},\qquad C_{\epsilon}^{2}=\{x\in\mathbb{R}^{2d}:V(x)\geq U(\sigma)+aK^{2}\delta^{2}\}\;.
\]
Then, by Lemma~\ref{lem:decomp boundary K+}, it is enough to consider
the integral in~\eqref{eq:first integral boundary} on the sets $C_{\epsilon}^{1}\cap\partial\mathcal{K}_{\epsilon}^{+}$
and $C_{\epsilon}^{2}\cap\partial\mathcal{K}_{\epsilon}^{+}$. Let
us start with the latter, then 
\[
\frac{1}{Z_\epsilon}\int_{C_{\epsilon}^{2}\cap\partial\mathcal{K}_{\epsilon}^{+}}\left(1-h_{\mathcal{M}_{\epsilon},\,\mathcal{S}_{\epsilon}}^{*}\left(x\right)\right)\left(1-j_{\epsilon}\left(x\right)\right)\mathrm{e}^{-V(x)/\epsilon}\mathrm{d}x\leq\frac{1}{Z_\epsilon}\mathrm{e}^{-U(\sigma)/\epsilon}\epsilon^{aK^{2}}|\partial\mathcal{K}_{\epsilon}^{+}|\;.
\]
Since the volume of $\partial\mathcal{K}_{\epsilon}^{+}$ can be bounded
independently of $\epsilon$, taking $K$ large enough ensures that the integral above is of order $o_{\epsilon}(1)\alpha_{\epsilon}$. Consider now the integral on the set $C_{\epsilon}^{1}\cap\partial\mathcal{K}_{\epsilon}^{+}$.
First, notice that for $x\in\partial\mathcal{K}_{\epsilon}^{+}$,
if $V(x)<U(\sigma)$ then necessarily $x\in\mathcal{W}_{m}$ and by
definition of $h_{\mathcal{M}_{\epsilon},\,\mathcal{S}_{\epsilon}}^{*}$
in~\eqref{eq:def h^*}, the estimate in Proposition~\ref{prop:ineq h_AB}
ensures the existence of constants $C,\,\beta>0$ independent of $\epsilon$
such that 
\[
\left|1-h_{\mathcal{M}_{\epsilon},\,\mathcal{S}_{\epsilon}}^{*}(x)\right|\leq\frac{C}{\epsilon^{\beta}}\mathrm{e}^{(V(x)-U(\sigma))/\epsilon}\;.
\]
Besides, since $h_{\mathcal{M}_{\epsilon},\,\mathcal{S}_{\epsilon}}^{*}(x)\in[0,\,1]$,
this inequality still remains valid if $V(x)-U(\sigma)\geq0$. Therefore,
\[
\int_{C_{\epsilon}^{1}\cap\partial\mathcal{K}_{\epsilon}^{+}}\left(1-h_{\mathcal{M}_{\epsilon},\,\mathcal{S}_{\epsilon}}^{*}\left(x\right)\right)\left(1-j_{\epsilon}\left(x\right)\right)\mathrm{e}^{-V(x)/\epsilon}\mathrm{d}x\leq\frac{C}{\epsilon^{\beta}}\mathrm{e}^{-U(\sigma)/\epsilon}\int_{C_{\epsilon}^{1}\cap\partial\mathcal{K}_{\epsilon}^{+}}\left(1-j_{\epsilon}\left(x\right)\right)\mathrm{d}x\;.
\]
Moreover, for $x\in C_{\epsilon}^{1}\cap\partial\mathcal{K}_{\epsilon}^{+}$,
\begin{align*}
1-j_{\epsilon}(x) & =\frac{1}{\sqrt{2\pi}}\int_{\sqrt{\frac{\mu}{\gamma\epsilon}}\langle x,\,v\rangle}^{\infty}e^{-t^{2}/2}dt\\
 & \leq e^{-\frac{\mu}{2\gamma\epsilon}\langle x,\,v\rangle^{2}}\\
 & \leq\epsilon^{a^{2}K^{2}\mu/2\gamma}\;,
\end{align*}
which ensures again by taking $K$ large enough that 
\[
\int_{C_{\epsilon}^{1}\cap\partial\mathcal{K}_{\epsilon}^{+}}\left(1-h_{\mathcal{M}_{\epsilon},\,\mathcal{S}_{\epsilon}}^{*}\left(x\right)\right)\left(1-j_{\epsilon}\left(x\right)\right)\mu_{\epsilon}(\mathrm{d}x)=o_{\epsilon}(1)\alpha_{\epsilon}\;.
\]
The proof of~\eqref{eq:second integral boundary} is exactly similar.
\end{proof}
Finally, the proof of Proposition~\ref{prop:asymptotics boundary integral} follows the steps of~\cite[Proposition 8.6 and Lemmas 8.7, 8.8 and 8.11]{JS}
\begin{proof}[Proof of Proposition~\ref{prop:asymptotics boundary integral}]
Let us start with the proof of~\eqref{eq:asymptotic expansion boundary K+}.
By Lemma~\ref{lem:h close to 1 or 0 boundary}, 
\begin{align*}
 & \frac{1}{Z_{\epsilon}}\left|\int_{\mathcal{Q}_{\epsilon}^{+}}\left(1-h_{\mathcal{M}_{\epsilon},\,\mathcal{S}_{\epsilon}}^{*}\left(\frac{K\delta}{\sqrt{\lambda_{1}}},\,\widetilde{x}\right)\right)(-\widetilde{x}_{d})\left(1-j_{\epsilon}\left(\frac{K\delta}{\sqrt{\lambda_{1}}},\,\widetilde{x}\right)\right)\mathrm{exp}\left(-\frac{V\left(\frac{K\delta}{\sqrt{\lambda_{1}}},\,\widetilde{x}\right)}{\epsilon}\right)\mathrm{d}\widetilde{x}\right|\\
 & \leq2K\delta\int_{\partial\mathcal{K}_{\epsilon}^{+}\cap\mathcal{J}_{\epsilon}}\left(1-h_{\mathcal{M}_{\epsilon},\,\mathcal{S}_{\epsilon}}^{*}\left(x\right)\right)\left(1-j_{\epsilon}\left(x\right)\right)\mu_{\epsilon}(\mathrm{d}x)=o_{\epsilon}(1)\alpha_{\epsilon}\;.
\end{align*}
Therefore, it remains to show that 
\[
\frac{1}{Z_{\epsilon}}\int_{\mathcal{Q}_{\epsilon}^{+}}(-\widetilde{x}_{d})\left(1-j_{\epsilon}\left(\frac{K\delta}{\sqrt{\lambda_{1}}},\;\widetilde{x}\right)\right)\mathrm{exp}\left(-\frac{V\left(\frac{K\delta}{\sqrt{\lambda_{1}}},\;\widetilde{x}\right)}{\epsilon}\right)\mathrm{d}\widetilde{x}=[1+o_{\epsilon}(1)]\alpha_{\epsilon}\;.
\]
Let $\widetilde{x}\in\mathcal{Q}_{\epsilon}^{+}$, then $x:=(K\delta/\sqrt{\lambda_{1}},\,\widetilde{x})\in\partial\mathcal{K}_{\epsilon}^{+}$
and by Lemma~\ref{lem:decomp boundary K+}, either 
\[
V(x)\geq U(\sigma)+aK^{2}\delta^{2},\quad\text{ or }\quad\langle x,v\rangle\geq aK\delta\;.
\]
If $V(x)\geq U(\sigma)+aK^{2}\delta^{2}$ then reinjecting in the
integral above and taking $K$ large enough provides us with a term
of order $o_{\epsilon}(1)\alpha_{\epsilon}$. Consider now the case
$\langle x,\,v\rangle\geq aK\delta$.

Let $\widetilde{\mathbb{H}}_{V}=(\mathbb{H}_{V})_{2\leq i,\,j\leq2d}\in\mathbb{R}^{(2d-1)\times(2d-1)}$
then the Taylor expansion of $V$ on $\partial\mathcal{K}_{\epsilon}^{+}$
ensures that 
\[
V\left(\frac{K\delta}{\sqrt{\lambda_{1}}},\,\widetilde{x}\right)=U(\sigma)-\frac{1}{2}K^{2}\delta^{2}+\frac{1}{2}\langle\widetilde{x},\,\widetilde{\mathbb{H}}_{V}\widetilde{x}\rangle+O(\delta^{3})\;.
\]
Moreover, standard estimates on the error function ensure that for
any $z>0$, 
\[
\frac{z}{z^{2}+1}e^{-z^{2}/2}\le\int_{z}^{\infty}e^{-t^{2}/2}dt\le\frac{1}{z}e^{-z^{2}/2}\;.
\]
Therefore, since $\langle x,\,v\rangle\geq aK\delta$, 
\begin{align}
1-j_{\epsilon}(x) & =\frac{1}{\sqrt{2\pi}}\int_{\sqrt{\frac{\mu}{\gamma\epsilon}}\langle x,\,v\rangle}^{\infty}e^{-t^{2}/2}dt\\
 & =[1+o_{\epsilon}(1)]\sqrt{\frac{\gamma\epsilon}{2\pi\mu}}\frac{1}{\langle x,\,v\rangle}e^{-\frac{\mu}{2\gamma\epsilon}\langle x,\,v\rangle^{2}}\;.
\end{align}
Consequently, 
\begin{align*}
 & \int_{\mathcal{Q}_{\epsilon}^{+}}(-\widetilde{x}_{d})\left(1-j_{\epsilon}\left(\frac{K\delta}{\sqrt{\lambda_{1}}},\,\widetilde{x}\right)\right)\mathrm{exp}\left(-\frac{V\left(\frac{K\delta}{\sqrt{\lambda_{1}}},\,\widetilde{x}\right)}{\epsilon}\right)\mathrm{d}\widetilde{x}\\
 & =[1+o_{\epsilon}(1)]\sqrt{\frac{\gamma\epsilon}{2\pi\mu}}\mathrm{e}^{-U(\sigma)/\epsilon}\mathrm{e}^{K^{2}\delta^{2}/2\epsilon}\int_{\mathcal{Q}_{\epsilon}^{+}}\frac{-\widetilde{x}_{d}}{\frac{K\delta}{\sqrt{\lambda_{1}}}v_{1}+\langle\widetilde{x},\,\widetilde{v}\rangle}\mathrm{e}^{-\frac{1}{2\epsilon}\left\langle \widetilde{x},\,\widetilde{\mathbb{H}}_{V}\widetilde{x}\right\rangle }\mathrm{e}^{-\frac{\mu}{2\gamma\epsilon}\left(\frac{K\delta}{\sqrt{\lambda_{1}}}v_{1}+\langle\widetilde{x},\,\widetilde{v}\rangle\right)^{2}}\mathrm{d}\widetilde{x}\;.
\end{align*}

Define now the vector $w\in\mathbb{R}^{2d-1}$ as follows: 
\[
\forall i\in\llbracket1,\,2d-1\rrbracket,\qquad w_{i}=K\delta\frac{\sqrt{\lambda}_{1}}{v_{1}}\frac{v_{i+1}}{\lambda_{i+1}}\;.
\]
We would like to perform the change of variables $y=\widetilde{x}+w\in\mathbb{R}^{2d-1}$
in the integral above. Therefore, let us rewrite the integrand in
term of the variable $y$. Namely, since $\lambda_{d+1}=1$, one has
that 
\begin{align*}
\widetilde{x}_{d} & =\langle y,\,e_{d}\rangle-K\delta\frac{\sqrt{\lambda_{1}}}{v_{1}}v_{d+1}\\
 & =\langle y,\,e_{d}\rangle-K\delta\frac{\sqrt{\lambda_{1}}}{\mu+\gamma}\;,
\end{align*}
since $v_{1}=(\mu+\gamma)v_{d+1}$ by Lemma~\ref{lem:eigenvalue mu}.
Moreover, using the notation from~\eqref{eq:notation x tilde} for
$\tilde{v}$ we have that 
\begin{align*}
\langle\widetilde{x},\,\widetilde{v}\rangle & =\langle y,\,\widetilde{v}\rangle-K\delta\frac{\sqrt{\lambda_{1}}}{v_{1}}\sum_{i=2}^{2d}\frac{v_{i}^{2}}{\lambda_{i}}\\
 & =\langle y,\,\widetilde{v}\rangle-\frac{K\delta}{\sqrt{\lambda_{1}}}v_{1}+K\delta\frac{\gamma\sqrt{\lambda_{1}}}{\mu v_{1}}
\end{align*}
using the identity in Lemma~\ref{lem:matrix equality}. Moreover,
by Lemma~\ref{lem:matrix equality} also, 
\[
\langle w,\,\widetilde{\mathbb{H}}_{V}w\rangle=K^{2}\delta^{2}\frac{\lambda_{1}}{v_{1}^{2}}\sum_{i=2}^{2d}\frac{v_{i}^{2}}{\lambda_{i}}=\left(1-\frac{\gamma\lambda_{1}}{\mu v_{1}^{2}}\right)K^{2}\delta^{2}\;.
\]
Finally, 
\[
\langle y,\,\widetilde{\mathbb{H}}_{V}w\rangle=K\delta\frac{\sqrt{\lambda_{1}}}{v_{1}}\langle y,\,\widetilde{v}\rangle\;.
\]

Using these computations, we have that 
\begin{align*}
 & -\frac{\mu}{2\gamma\epsilon}\Big(\langle\widetilde{x},\,\widetilde{v}\rangle+\frac{K\delta}{\sqrt{\lambda}_{1}}v_{1}\Big)^{2}-\frac{1}{2\epsilon}\langle\widetilde{x},\,\widetilde{\mathbb{H}}_{V}\widetilde{x}\rangle\\
 & =-\frac{\mu}{2\gamma\epsilon}\Big(\langle y,\,\widetilde{v}\rangle+K\delta\frac{\gamma\sqrt{\lambda}_{1}}{\mu v_{1}}\Big)^{2}-\frac{1}{2\epsilon}\left(\langle y,\,\widetilde{\mathbb{H}}_{V}y\rangle+\langle w,\,\widetilde{\mathbb{H}}_{V}w\rangle-2\langle y,\,\widetilde{\mathbb{H}}_{V}w\rangle\right)\\
 & =-\frac{1}{2\epsilon}\Big(\langle y,\,\widetilde{\mathbb{H}}_{V}y\rangle+\frac{\mu}{\gamma}\langle y,\,\widetilde{v}\rangle^2+K^{2}\delta^{2}\Big)
\end{align*}
Hence, 
\begin{align*}
 & \sqrt{\frac{\gamma\epsilon}{2\pi\mu}}\mathrm{e}^{-U(\sigma)/\epsilon}\mathrm{e}^{K^{2}\delta^{2}/2\epsilon}\int_{\mathcal{Q}_{\epsilon}^{+}}\frac{-\widetilde{x}_{d}}{\frac{K\delta}{\sqrt{\lambda_{1}}}v_{1}+\langle\widetilde{x},\,\widetilde{v}\rangle}\mathrm{e}^{-\frac{1}{2\epsilon}\left\langle \widetilde{x},\,\widetilde{\mathbb{H}}_{V}\widetilde{x}\right\rangle }\mathrm{e}^{-\frac{\mu}{2\gamma\epsilon}\left(\frac{K\delta}{\sqrt{\lambda_{1}}}v_{1}+\langle\widetilde{x},\,\widetilde{v}\rangle\right)^{2}}\mathrm{d}\tilde{x}\\
 & =\sqrt{\frac{\gamma\epsilon}{2\pi\mu}}\mathrm{e}^{-U(\sigma)/\epsilon}\int_{\mathcal{Q}_{\epsilon}^{+}+w}\frac{-\langle y,\,e_{d}\rangle+K\delta\frac{\sqrt{\lambda_{1}}}{\mu+\gamma}}{\langle y,\,\widetilde{v}\rangle+K\delta\frac{\gamma\sqrt{\lambda_{1}}}{\mu v_{1}}}\mathrm{e}^{-\frac{1}{2\epsilon}\left\langle y,\,\left(\widetilde{\mathbb{H}}_{V}+\frac{\mu}{\gamma}\widetilde{v}\otimes\widetilde{v}\right)y\right\rangle }\mathrm{d}y\;.
\end{align*}

Following the proof done in~\cite[Lemmas 8.8 and 8.11]{JS} one has
that 
\begin{align*}
  \int_{\mathcal{Q}_{\epsilon}^{+}+w}\frac{-\langle y,\,e_{d}\rangle+K\delta\frac{\sqrt{\lambda_{1}}}{\mu+\gamma}}{\langle y,\,\widetilde{v}\rangle+K\delta\frac{\gamma\sqrt{\lambda_{1}}}{\mu v_{1}}}\mathrm{e}^{-\frac{1}{2\epsilon}\left\langle y,\,\left(\widetilde{\mathbb{H}}_{V}+\frac{\mu}{\gamma}\widetilde{v}\otimes\widetilde{v}\right)y\right\rangle }\mathrm{d}y
 & =[1+o_{\epsilon}(1)]\frac{K\frac{\sqrt{\lambda_{1}}}{\mu+\gamma}}{K\frac{\gamma\sqrt{\lambda_{1}}}{\mu v_{1}}}\frac{(2\pi\epsilon)^{(2d-1)/2}}{\sqrt{\mathrm{det}\left(\widetilde{\mathbb{H}}_{V}+\frac{\mu}{\gamma}\widetilde{v}\otimes\widetilde{v}\right)}}\\
 & =[1+o_{\epsilon}(1)]\frac{\mu v_{1}}{\gamma(\mu+\gamma)}\frac{(2\pi\epsilon)^{(2d-1)/2}}{\sqrt{\mathrm{det}\left(\widetilde{\mathbb{H}}_{V}+\frac{\mu}{\gamma}\widetilde{v}\otimes\widetilde{v}\right)}}\;.
\end{align*}
Consequently, 
\begin{align*}
 & \int_{\mathcal{Q}_{\epsilon}^{+}}(-\widetilde{x}_{d})\left(1-j_{\epsilon}\left(\frac{K\delta}{\sqrt{\lambda_{1}}},\,\widetilde{x}\right)\right)\mathrm{exp}\left(-\frac{V\left(\frac{K\delta}{\sqrt{\lambda_{1}}},\,\widetilde{x}\right)}{\epsilon}\right)\mathrm{d}\widetilde{x}\\
 & =[1+o_{\epsilon}(1)]\sqrt{\frac{\gamma\epsilon}{2\pi\mu}}\mathrm{e}^{-U(\sigma)/\epsilon}\frac{\mu v_{1}}{\gamma(\mu+\gamma)}\frac{(2\pi\epsilon)^{(2d-1)/2}}{\sqrt{\mathrm{det}\left(\widetilde{\mathbb{H}}_{V}+\frac{\mu}{\gamma}\widetilde{v}\otimes\widetilde{v}\right)}}\;.
\end{align*}
By the well-known formula, e.g., \cite[Page 416]{Harville} one has that 
\begin{align*}
\det\left(\widetilde{\mathbb{H}}_{V}+\frac{\mu}{\gamma}\widetilde{v}\otimes\widetilde{v}\right) & =\left(1+\frac{\mu}{\gamma}\langle\widetilde{v},\,\widetilde{\mathbb{H}}_{V}^{-1}\widetilde{v}\rangle\right)\det{\widetilde{\mathbb{H}}_{V}}\\
 & =\frac{\mu v_{1}^{2}}{\gamma\lambda_{1}}\det{\widetilde{\mathbb{H}}_{V}}=\frac{\mu v_{1}^{2}}{\gamma\lambda_{1}^{2}}\left|\det{\mathbb{H}_{V}}\right|\;,
\end{align*}
by Lemma~\ref{lem:matrix equality}. Therefore, 
\begin{align*}
 & \frac{1}{Z_{\epsilon}}\int_{\mathcal{Q}_{\epsilon}^{+}}(-\widetilde{x}_{d})\left(1-j_{\epsilon}\left(\frac{K\delta}{\sqrt{\lambda_{1}}},\,\widetilde{x}\right)\right)\mathrm{exp}\left(-\frac{V\left(\frac{K\delta}{\sqrt{\lambda_{1}}},\,\widetilde{x}\right)}{\epsilon}\right)\mathrm{d}\widetilde{x}\\
 & =[1+o_{\epsilon}(1)]\frac{1}{Z_{\epsilon}}\mathrm{e}^{-U(\sigma)/\epsilon}\frac{1}{2\pi}(2\pi\epsilon)^{d}\frac{\lambda_{1}}{\mu+\gamma}=[1+o_{\epsilon}(1)]\alpha_{\epsilon}\;,
\end{align*}
since $\lambda_{1}=\mu(\mu+\gamma)$ by Lemma~\ref{lem:eigenvalue mu}
and $\det{\mathbb{H}_{U}}=\det{\mathbb{H}_{V}}$. 
\end{proof}

\medskip

\noindent\textbf{Acknowledgement.} S. Lee, M. Ramil, and I. Seo were supported by the National Research Foundation of Korea (NRF) grant funded by the Korean government (MEST) No. 2023R1A2C100517311 and Samsung Science and Technology Foundation (Project Number SSTF-BA1901-03). S. Lee and I. Seo were also supported by a NRF grant funded by the Korean government No. 2016K2A9A2A1300381525. I. Seo was supported by the NRF grant funded by the Korean government No. 2019R1A6A1A10073437 and a Seoul National University Research Grant in 2023. I. Seo thanks KIAS for their support as a visiting scholar from March 2024 to February 2025. 

\bibliographystyle{plain}
\bibliography{biblio}

@article{LelRamRey,
  title={A probabilistic study of the kinetic {F}okker-{P}lanck equation in cylindrical domains},
  author={Leli{\`e}vre, T. and Ramil, M. and Reygner, J.},
  journal={ J. Evol. Equ. 22, 38},
  year={2022}
}

@article {WeylTube,
    AUTHOR = {Weyl, H.},
     TITLE = {On the {V}olume of {T}ubes},
   JOURNAL = {Amer. J. Math.},
  FJOURNAL = {American Journal of Mathematics},
    VOLUME = {61},
      YEAR = {1939},
    NUMBER = {2},
     PAGES = {461--472}, 
}

@article{aronson1967bounds,
  title={Bounds for the fundamental solution of a parabolic equation},
  author={Aronson, D. G.},
  journal={Bull. Amer. Math. Soc.},
  year={1967}
}

@article {Menozzi,
    AUTHOR = {Konakov, V. and Menozzi, S. and Molchanov,
              S.},
     TITLE = {Explicit parametrix and local limit theorems for some
              degenerate diffusion processes},
   JOURNAL = {Ann. Inst. Henri Poincar\'{e} Probab. Stat.},
  FJOURNAL = {Annales de l'Institut Henri Poincar\'{e} Probabilit\'{e}s et
              Statistiques},
    VOLUME = {46},
      YEAR = {2010},
    NUMBER = {4},
     PAGES = {908---923}, 
}

@article{gaudilliere2014dirichlet,
  title={A {D}irichlet principle for non reversible {M}arkov chains and some recurrence theorems},
  author={Gaudilliere, A. and Landim, C.},
  journal={Probability Theory and Related Fields},
  volume={158},
  pages={55--89},
  year={2014},
  publisher={Springer}
}

@article{ventsel1970small,
  title={On small random perturbations of dynamical systems},
  author={Ventsel, A. and Freidlin, M.},
  journal={Russian Mathematical Surveys},
  volume={25},
  number={1},
  pages={R01},
  year={1970}
}

@article{FriedmanParabolic,
  title={Partial differential equations of parabolic type, {Englwood Cliffs NJ}},
  author={Friedman, A.},
  journal={Prentice Hall Inc},
  year={1964}
}

@article{berglund2011kramers,
  title={Kramers' law: Validity, derivations and generalisations},
  author={Berglund, N.},
  journal={arXiv preprint arXiv:1106.5799},
  year={2011}
}

@article{Chaudru2022_preprint,
  title={Heat kernel and gradient estimates for kinetic {SDE}s with low regularity coefficients},
  author={Chaudru de Raynal, P. and Menozzi, S. and Pesce, A. and Zhang, X.},
  journal={arXiv e-prints},
  pages={arXiv--2203},
  year={2022}
}

@book {LelRouSto10,
    AUTHOR = {Leli\`evre, T. and Rousset, M. and Stoltz, G.},
     TITLE = {Free energy computations},
      NOTE = {A mathematical perspective},
 PUBLISHER = {Imperial College Press, London},
      YEAR = {2010},
     PAGES = {xiv+458}, 
}

@inproceedings{herau2008tunnel,
  title={Tunnel effect for Kramers--Fokker--Planck type operators},
  author={H{\'e}rau, F. and Hitrik, M. and Sj{\"o}strand, J.},
  booktitle={Annales Henri Poincar{\'e}},
  volume={9},
  number={2},
  pages={209--274},
  year={2008},
  organization={Springer}
}

@book {Friedman,
    AUTHOR = {Friedman, A.},
     TITLE = {Stochastic differential equations and applications. {V}ol. 1},
      NOTE = {Probability and Mathematical Statistics, Vol. 28},
 PUBLISHER = {Academic Press, New
              York-London},
      YEAR = {1975},   
}

@incollection{LeimMatt,
  title={Numerical methods for stochastic molecular dynamics},
  author={Leimkuhler, B. and Matthews, C.},
  booktitle={Molecular Dynamics},
  pages={261--328},
  year={2015},
  publisher={Springer}
}

@article {Har,
    AUTHOR = {Golse, F. and Imbert, C. and Mouhot, C. and
              Vasseur, A. F.},
     TITLE = {Harnack inequality for kinetic {F}okker-{P}lanck equations
              with rough coefficients and application to the {L}andau
              equation},
   JOURNAL = {Ann. Sc. Norm. Super. Pisa Cl. Sci. (5)},
  FJOURNAL = {Annali della Scuola Normale Superiore di Pisa. Classe di
              Scienze. Serie V},
    VOLUME = {19},
      YEAR = {2019},
    NUMBER = {1},
     PAGES = {253--295}, 
}

@book{varadhan1980lectures,
  title={Lectures on diffusion problems and partial differential equations},
  author={Varadhan, SR S. and Muthuramalingam, Pl and Nanda, T R.},
  volume={64},
  year={1980},
  publisher={Springer Berlin/New York}
}

@article{cutoff,
  title={Asymptotic stability and cut-off phenomenon for the underdamped {L}angevin dynamics},
  author={Lee, S. and Ramil, M. and Seo, I.},
  journal={arXiv preprint arXiv:2311.18263},
  year={2023}
}

@book{Tuck,
  title={Statistical mechanics: theory and molecular simulation},
  author={Tuckerman, M.},
  year={2010},
  publisher={Oxford university press}
}

@article {LelSto16,
    AUTHOR = {Leli\`evre, T. and Stoltz, G.},
     TITLE = {Partial differential equations and stochastic methods in
              molecular dynamics},
   JOURNAL = {Acta Numer.},
  FJOURNAL = {Acta Numerica},
    VOLUME = {25},
      YEAR = {2016},
     PAGES = {681--880}, 
}

@article{AllenTild,
  title={Computer simulation of liquids. 1987, vol. 385},
  author={Allen, M. and Tildesley, D.},
  journal={New York: Oxford},
  year={1989}
}

@book{Friedman2,
  title={Stochastic differential equations and applications. {V}ol. 2},
  author={Friedman, A.},
  NOTE = {Probability and Mathematical Statistics, Vol. 28},  
  year={1976},
  publisher={Academic Press, New
              York-London}
}

@book{pinsky1995positive,
  title={Positive harmonic functions and diffusion},
  author={Pinsky, R. G},
  volume={45},
  year={1995},
  publisher={Cambridge university press}
}

@incollection{GT,
  title={Laplace Equation},
  author={Gilbarg, D. and Trudinger, Neil S.},
  booktitle={Elliptic partial differential equations of second order},
  pages={13--30},
  year={2011},
  publisher={Springer}
}

@article {LS,
    AUTHOR = {C. Landim, I. Seo},
     TITLE = {Metastability of non-reversible random walks in a potential field, the {E}yring-{K}ramers transition rate formula},
   JOURNAL = {Commun. Pure Appl. Math. },
  FJOURNAL = {Communications on pure and applied mathematics LXXI},
    VOLUME = {19},
      YEAR = {2018},
    NUMBER = {1},
     PAGES = {203--266}, 
}

@article{LMS,
  title={Dirichlet's and {T}homson's principles for non-selfadjoint elliptic operators with application to non-reversible metastable diffusion processes},
  author={Landim, C. and Mariani, M. and Seo, I.},
  journal={Archive for rational mechanics and analysis},
  volume={231},
  number={2},
  pages={887--938},
  year={2019},
  publisher={Springer}
}

@article{JS,
  title={Non-reversible metastable diffusions with {G}ibbs invariant measure I: Eyring--{K}ramers formula},
  author={Lee, J. and Seo, I.},
  journal={Probability Theory and Related Fields},
  volume={182},
  number={3},
  pages={849--903},
  year={2022},
  publisher={Springer}
}

@article{benaim2025degenerate,
  title={Degenerate processes killed at the boundary of a domain},
  author={Bena{\"\i}m, M. and Champagnat, N. and O{\c{c}}afrain, W. and Villemonais, D.},
  journal={The Annals of Probability},
  volume={53},
  number={2},
  pages={720--752},
  year={2025},
  publisher={Institute of Mathematical Statistics}
}

@article{guillin2024quasi,
  title={Quasi-stationary distribution for strongly {F}eller {M}arkov processes by {L}yapunov functions and applications to hypoelliptic {H}amiltonian systems.},
  author={Guillin, A. and Nectoux, B. and Wu, L.},
  journal={Journal of the European Mathematical Society (EMS Publishing)},
  volume={26},
  number={8},
  year={2024}
}

@article{QSDLelRamRey,
  title={Quasi-stationary distribution for the {L}angevin process in
  cylindrical domains, part {I}: existence, uniqueness and long-time
  convergence.},
  author={Leli{\`e}vre, T. and Ramil, M. and Reygner, J.},
  journal={Stochastic Process. Appl., 144:173--201},
  year={2022}
}

@article{champagnat2024quasi,
  title={Quasi-stationary distribution for kinetic {SDE}s with low regularity coefficients},
  author={Champagnat, N. and Leli{\`e}vre, T. and Ramil, M. and Reygner, Julien and Villemonais, D.},
  journal={arXiv preprint arXiv:2410.01042},
  year={2024}
}

@article{avelin2023geometric,
  title={Geometric characterization of the {E}yring--{K}ramers formula},
  author={Avelin, B. and Julin, V. and Viitasaari, L.},
  journal={Communications in Mathematical Physics},
  volume={404},
  number={1},
  pages={401--437},
  year={2023},
  publisher={Springer}
}

@article{Hill,
  title={Mean reaction statistics estimation through the {H}ill relation for {L}angevin dynamics.},
  author={Leli{\`e}vre, T. and Ramil, M. and Reygner, J.},
  journal={Accepted in Annales de l'Institut Henri Poincare : Probabilités et Statistiques},
  year={2022}
}

@article{bony2024eyring,
  title={{E}yring--{K}ramers law for {F}okker--{P}lanck type differential operators},
  author={Bony, J.-F. and Le Peutrec, D. and Michel, L.},
  journal={Journal of the European Mathematical Society},
  year={2024}
}

@article{le2024exit,
  title={Exit time and principal eigenvalue of non-reversible elliptic diffusions},
  author={Le Peutrec, D. and Michel, L. and Nectoux, B.},
  journal={Communications in Mathematical Physics},
  volume={405},
  number={9},
  pages={1--51},
  year={2024},
  publisher={Springer}
}

@inproceedings{bouchet2016generalisation,
  title={Generalisation of the {E}yring--{K}ramers transition rate formula to irreversible diffusion processes},
  author={Bouchet, F. and Reygner, J.},
  booktitle={Annales Henri Poincar{\'e}},
  volume={17},
  number={12},
  pages={3499--3532},
  year={2016},
  organization={Springer}
}

@book{Bovier,
  title={Metastability: a potential-theoretic approach},
  author={Bovier, A. and Den Hollander, F.},
  volume={351},
  year={2016},
  publisher={Springer}
}

@article{BEGK,
  title={Metastability in reversible diffusion processes. I. {S}harp asymptotics for capacities and exit times},
  author={Bovier, A. and Eckhoff, M. and Gayrard, V. and Klein, M.},
  journal={J. Eur. Math. Soc.(JEMS)},
  volume={6},
  number={4},
  pages={399--424},
  year={2004}
}

@book {Harville,
    AUTHOR = {Harville, D. A.},
     TITLE = {Matrix Algebra From a Statistician's Perspective},
      PUBLISHER = {New York, Springer-Verlag},
      YEAR = {1997},   
}
 
\end{document}